\renewcommand\subsection{\@startsection{subsection}{2}%
  \z@{-.5\linespacing\@plus-.7\linespacing}{.5\linespacing}%
  {\normalfont\scshape}}
\renewcommand\subsubsection{\@startsection{subsubsection}{3}%
  \z@{.5\linespacing\@plus.7\linespacing}{-.5em}%
  {\normalfont\scshape}}
\numberwithin{equation}{section} \swapnumbers
\newtheorem{satz}{Satz}[section]
\newtheorem{theorem}[satz]{Theorem}
\newtheorem{proposition}[satz]{Proposition}
\newtheorem{corollary}[satz]{Corollary}
\newtheorem{lemma}[satz]{Lemma}
\newtheorem{assumption}[satz]{Assumption}
\newtheorem{definition}[satz]{Definition}
\newtheorem{remark}[satz]{Remark}
\newtheorem{example}[satz]{Example}
\newtheorem{examples}[satz]{Examples}
\newcommand{\bbr}{\mathbb{R}}
\newcommand{\bbe}{\mathbb{E}}
\newcommand{\bbn}{\mathbb{N}}
\newcommand{\bbp}{\mathbb{P}}
\newcommand{\bbs}{\mathbb{S}}
\newcommand{\bbb}{\mathbb{B}}
\newcommand{\cala}{\mathscr{A}}
\newcommand{\calb}{\mathscr{B}}
\newcommand{\calf}{\mathscr{F}}
\newcommand{\calk}{\mathscr{K}}
\newcommand{\call}{\mathscr{L}}
\newcommand{\calm}{\mathscr{M}}
\newcommand{\caln}{\mathscr{N}}
\newcommand{\cals}{\mathscr{S}}
\newcommand{\supp}{{\rm supp}}
\newcommand{\Id}{{\rm Id}}
\newcommand{\lin}{{\rm lin}}
\newcommand{\rk}{{\rm rk}}
\newcommand{\tr}{{\rm tr}}
\newcommand{\bbI}{\mathbbm{1}}
\newcommand{\la}{\langle}
\newcommand{\ra}{\rangle}
\newcommand{\IL}{[\![}
\begin{document}

\hyphenation{rea-li-za-tion pa-ra-me-tri-za-ti-ons pa-ra-me-tri-za-ti-on Schau-der sub-ma-ni-fold}

\title[Invariant manifolds in continuously embedded Hilbert spaces]{Invariant manifolds for stochastic partial differential equations in continuously embedded Hilbert spaces}
\author{Rajeev Bhaskaran \and Stefan Tappe}
\address{Indian Statistical Institute, 8th Mile, Mysore Road, Bangalore 560 059, India}
\email{brajeev@isibang.ac.in}
\address{Albert Ludwig University of Freiburg, Department of Mathematical Stochastics, Ernst-Zermelo-Stra\ss{}e 1, D-79104 Freiburg, Germany}
\email{stefan.tappe@math.uni-freiburg.de}
\date{6 April, 2022}
\thanks{Rajeev Bhaskaran gratefully acknowledges financial support from the Science and Engineering Research Board (SERB), India -- vide grant CRG/2019/002594. Stefan Tappe gratefully acknowledges financial support from the Deutsche Forschungsgemeinschaft (DFG, German Research Foundation) -- project number 444121509, and from the Deutsche Mathematiker-Vereinigung (DMV, German Mathematical Society) -- Fachgruppe Stochastik.}
\begin{abstract}
We provide necessary and sufficient conditions for stochastic invariance of finite dimensional submanifolds  for solutions of stochastic partial differential equations (SPDEs) in continuously embedded Hilbert spaces with non-smooth coefficients. Furthermore, we establish a link between invariance of submanifolds for such SPDEs in Hermite Sobolev spaces and invariance of submanifolds for finite dimensional SDEs. This provides a new method for analyzing stochastic invariance of submanifolds for finite dimensional It\^{o} diffusions, which we will use in order to derive new invariance results for finite dimensional SDEs.
\end{abstract}
\keywords{Stochastic partial differential equation, continuously embedded Hilbert spaces, invariant manifold, finite dimensional diffusion, multi-parameter group, Hermite Sobolev space, translation invariant solution}
\subjclass[2020]{60H15, 60H10, 60G17}

\maketitle\thispagestyle{empty}

\section{Introduction}

The problem of finding invariant submanifolds of solutions of stochastic partial differential equations (SPDEs) arises, for example, in connection with stochastic models in finance wherein the submanifolds offer the possibility of finite dimensional realizations of the solutions which are otherwise infinite dimensional (see, for example \cite{Bj_Sv, Bj_La, Bjoerk, Filipovic-Teichmann, Filipovic-Teichmann-royal, Tappe-Wiener, Tappe-Levy, Tappe-affin-real, Tappe-affine}). The problem, related to the computability of ``interest rate term structure models'', is also known as the ``consistency problem'' for such models; see \cite{fillnm}. In this paper we study the mathematical problem of finding invariant submanifolds for a general class of SPDEs that includes apart from quasi-semilinear and semilinear SPDEs (see, for example \cite{Filipovic-inv, Nakayama, Tappe-affin-real}) a more recent class of SPDEs studied in \cite{Rajeev, Rajeev-2019}. We will refer to this latter class as It\^{o} type SPDEs.

In this paper, we develop a general framework, which covers the aforementioned types of SPDEs, and we present an invariance result for finite dimensional submanifolds, which generalizes existing results in this direction. In particular, the usual assumption that the volatilities must be smooth, is not required in our framework (see Theorem \ref{thm-SPDE}). Furthermore, we establish a link between invariance of submanifolds for such SPDEs in Hermite Sobolev spaces and invariance of submanifolds for finite dimensional SDEs (see Theorem \ref{thm-M-N}). Using this connection, we will also contribute new invariance results for finite dimensional SDEs (see, in particular Theorems \ref{thm-inv-b} and \ref{thm-nullstellen}). As we will see, our results are stable under the dimension of the driving noise, which may in particular be infinite dimensional.

In order to outline our findings, let $(G,H)$ be a pair of continuously embedded separable Hilbert spaces; this means that $G \subset H$ as sets, and that the embedding operator from $(G,\| \cdot \|_G)$ into $(H,\| \cdot \|_H)$ is continuous. Consider an SPDE of the form 
\begin{align}\label{SPDE}
\left\{
\begin{array}{rcl}
dY_t & = & L(Y_t) dt + A(Y_t) dW_t
\\ Y_0 & = & y_0
\end{array}
\right.
\end{align}
driven by a $\bbr^{\infty}$-Wiener process $W$ with continuous coefficients $L : G \to H$ and $A : G \to \ell^2(H)$; we refer to Section \ref{sec-SPDE} for further details. We emphasize that SPDEs of the type (\ref{SPDE}) in particular cover the following two types of SPDEs:
\begin{itemize}
\item Semilinear SPDEs of the type
\begin{align}\label{SPDE-semi-intro}
\left\{
\begin{array}{rcl}
dY_t & = & ( B Y_t + \alpha(Y_t) ) dt + \sigma(Y_t) dW_t
\\ Y_0 & = & y_0,
\end{array}
\right.
\end{align}
where $B : H \supset D(B) \to H$ is a densely defined, closed operator, and $\alpha : H \to H$ and $\sigma : H \to \ell^2(H)$ are continuous mappings. Here the Hilbert space $G$ is given by the domain $G := D(B)$, equipped with the graph norm
\begin{align}\label{graph-norm-intro}
\| y \|_G = \sqrt{ \| y \|_H^2 + \| B y \|_H^2 }, \quad y \in G,
\end{align}
and the coefficients in (\ref{SPDE}) are given by $L = B + \alpha$ and $A = \sigma$. This includes SPDEs in the framework of the semigroup approach (see, for example \cite{Da_Prato, Atma-book}), which also arise for the modeling of interest rate curves. We refer to Section \ref{sub-sec-semilinear} for more details.

\item The above mentioned It\^{o} type SPDEs (see \cite{Rajeev, Rajeev-2019}), where the pair $(G,H)$ of continuously embedded Hilbert spaces is given by Hermite Sobolev spaces $G = \cals_{p+1}(\bbr^d)$ and $H = \cals_p(\bbr^d)$ for some $p \in \bbr$, and the coefficients $L : G \to H$ and $A : G \to \ell^2(H)$ are given by second and first order differential operators of the form
\begin{align}\label{L-intro}
L(y) &:= \frac{1}{2} \sum_{i,j=1}^d ( \langle \sigma,y \rangle \langle \sigma,y \rangle^{\top} )_{ij} \partial_{ij}^2 y - \sum_{i=1}^d \langle b_i,y \rangle \partial_i y,
\\ \label{A-intro} A^j(y) &:= - \sum_{i=1}^d \langle \sigma_{i}^j,y \rangle \partial_i y, \quad j \in \bbn.
\end{align}
where $b_i \in \cals_{-(p+1)}(\bbr^d)$ for $i=1,\ldots,d$ and $\sigma_i^j \in \cals_{-(p+1)}(\bbr^d)$ for $i=1,\ldots,d$ and $j \in \bbn$, and where $\la \cdot,\cdot \ra$ denotes the dual pair on $\cals_{-(p+1)}(\bbr^d) \times \cals_{p+1}(\bbr^d)$. We refer to Section \ref{sec-examples-HS} for further details.
\end{itemize}
Let $\calm \subset H$ be a finite dimensional $C^2$-submanifold of $H$. We are interested in local invariance of $\calm$, which means that for each starting point $y_0 \in \calm$ there exists a local solution $Y$ to the SPDE (\ref{SPDE}) with $Y_0 = y_0$ such that $Y^{\tau} \in \calm$, where the positive stopping time $\tau > 0$ denotes the lifetime of $Y$. Let us first recall a known result for semilinear SPDEs of the type (\ref{SPDE-semi-intro}). If $\sigma^j \in C^1(H)$ for each $j \in \bbn$, then the following statements are equivalent:
\begin{enumerate}
\item[(i)] $\calm$ is locally invariant for the semilinear SPDE (\ref{SPDE-semi-intro}).

\item[(ii)] We have
\begin{align}\label{semi-cond-intro-1}
&\calm \subset D(B),
\\ \label{semi-cond-intro-2} &\sigma^j|_{\calm} \in \Gamma(T \calm), \quad j \in \bbn,
\\ \label{semi-cond-intro-3} &B|_{\calm} + \alpha|_{\calm} - \frac{1}{2} \sum_{j=1}^{\infty} D \sigma^j \cdot \sigma^j|_{\calm} \in \Gamma(T \calm).
\end{align}
\end{enumerate}
Here $\Gamma(T \calm)$ denotes the space of all vector fields on $\calm$; that it, the space of all mappings $A : \calm \to H$ such that $A(y) \in T_y \calm$ for each $y \in \calm$, where $T_y \calm$ denotes the tangent space to $\calm$ at $y$. Furthermore, for each $j \in \bbn$ we denote by $D \sigma^j \cdot \sigma^j|_{\calm}$ the mapping $y \mapsto D \sigma^j(y) \sigma^j(y)$, $y \in \calm$.

For this result we refer to \cite{Filipovic-inv, Nakayama}; see also \cite{FTT-manifolds}, where the more general situation with jump-diffusions and submanifolds with boundary has been treated. In \cite{Filipovic-inv}, the conditions (\ref{semi-cond-intro-2}) and (\ref{semi-cond-intro-3}) above are called ``Nagumo type consistency'' conditions. However the term $\frac{1}{2} \sum_{j=1}^{\infty} D \sigma^j \cdot \sigma^j$ in condition (\ref{semi-cond-intro-3}) can also be viewed as a ``Stratonovich'' correction term, which requires smoothness of the volatilities $\sigma^j$, $j \in \bbn$.

When dealing with the more general SPDE (\ref{SPDE}), the smoothness of the coefficients $A^j$, $j \in \bbn$ becomes problematic, since they are defined between two different Hilbert spaces $A^j : G \to H$. In particular, for It\^{o} type SPDEs with coefficients of the form (\ref{L-intro}) and (\ref{A-intro}), the volatilities $A^j$, $j \in \bbn$ are typically not of class $C^1$ (see Remark \ref{rem-non-smoothness}). Therefore, one of the principal challenges that we deal with in this paper is to find a suitable generalization of condition (\ref{semi-cond-intro-3}) for these SPDEs.

This leads to a geometric framework where we consider $(G,H)$-submanifolds. More precisely, a $C^2$-submanifold $\calm$ of $H$ is called a $(G,H)$-submanifold of class $C^2$ if $\calm \subset G$ and $\tau_H \cap \calm = \tau_G \cap \calm$, where $\tau_H$ and $\tau_G$ denote the topologies of $H$ and $G$. In our main result we will show that for such a submanifold $\calm$ the following statements are equivalent:
\begin{enumerate}
\item[(i)] $\calm$ is locally invariant for the SPDE (\ref{SPDE}).

\item[(ii)] We have
\begin{align}\label{tang-A-intro}
&A^j|_{\calm} \in \Gamma(T \calm), \quad j \in \bbn,
\\ \label{tang-L-intro} &[ L|_{\calm} ]_{\Gamma(T \calm)} - \frac{1}{2} \sum_{j=1}^{\infty} [A^j|_{\calm}, A^j|_{\calm}]_{\calm} = [0]_{\Gamma(T \calm)}.
\end{align}
\end{enumerate}
We refer to Theorem \ref{thm-SPDE} for the precise result and further details. The condition (\ref{tang-L-intro}) is an equation in the quotient space $A(\calm) / \Gamma(T \calm)$, where $A(\calm)$ denotes the space of all mappings $A : \calm \to H$. Furthermore, for each $j \in \bbn$ the element $[A^j|_{\calm}, A^j|_{\calm}]_{\calm}$ arises from the quadratic variation term in It\^{o}'s formula, when we realize the solutions $Y$ of the SPDE (\ref{SPDE}) on $\calm$ as the image $Y = \phi(X)$ of a finite dimensional process $X$ and a local parametrization $\phi : V \rightarrow U \cap \calm$ of the submanifold $\calm$; we refer to Definition \ref{def-A-Strat} for more details. The advantage in this formulation is clearly that it does not require smoothness of the vector fields $A^j$, $j \in \bbn$, which is also seen in subsequent results; see, for example Theorem \ref{thm-main-3}.

In particular, our main result applies to semilinear SPDEs of the type (\ref{SPDE-semi-intro}), where $\sigma$ is only assumed to be continuous. Recalling that $G = D(B)$ endowed with the graph norm (\ref{graph-norm-intro}), in this situation we will show that for a finite dimensional $C^2$-submanifold $\calm$ of $H$ the following statements are equivalent:
\begin{enumerate}
\item[(i)] $\calm$ is locally invariant for the semilinear SPDE (\ref{SPDE-semi-intro}).

\item[(ii)] $\calm$ is a $(G,H)$-submanifold of class $C^2$, which is locally invariant for the semilinear SPDE (\ref{SPDE-semi-intro}).

\item[(iii)] $\calm$ is a $(G,H)$-submanifold of class $C^2$, and we have
\begin{align}\label{semi-intro-4}
&\sigma^j|_{\calm} \in \Gamma(T \calm), \quad j \in \bbn,
\\ \label{semi-intro-5} &[ (B + \alpha)|_{\calm} ]_{\Gamma(T \calm)} - \frac{1}{2} \sum_{j=1}^{\infty} [\sigma^j|_{\calm},\sigma^j|_{\calm}]_{\calm} = [0]_{\Gamma(T \calm)}.
\end{align}
\end{enumerate}
Furthermore, if $\sigma^j \in C^1(H)$ for each $j \in \bbn$, then condition (\ref{semi-intro-5}) is equivalent to (\ref{semi-cond-intro-3}). We refer to Theorem \ref{thm-semi-lin} and Remark \ref{rem-semi-lin} for further details. These findings are a consequence a more general result for so-called quasi-semilinear SPDEs, which we establish in this paper; see Theorem \ref{thm-quasi-semilinear}.

Note that in the aforementioned result for semilinear SPDEs we only assume that $\calm$ is a finite dimensional $C^2$-submanifold of $H$, whereas in our main result we assume that $\calm$ is a $(G,H)$-submanifold of class $C^2$. Indeed, as the previous equivalences (i)--(iii) show, for semilinear SPDEs the submanifold $\calm$ is automatically a $(G,H)$-submanifold in case of local invariance, which is due to the fact that $G = D(B)$ endowed with the graph norm (\ref{graph-norm-intro}).

Our main result also applies to It\^{o} type SPDEs (\ref{SPDE}), where the coefficients are of the form (\ref{L-intro}) and (\ref{A-intro}), and where we recall that $G = \cals_{p+1}(\bbr^d)$ and $H = \cals_p(\bbr^d)$ for some $p \in \bbr$. Then, for any $\Phi \in G$ the submanifold
\begin{align*}
\calm = \{ \tau_x \Phi : x \in \bbr^d \}
\end{align*}
is locally invariant for the SPDE (\ref{SPDE}), where $(\tau_x)_{x \in \bbr^d}$ denotes the group of translation operators on $H$. This shows that the solutions to the It\^{o} type SPDE (\ref{SPDE}) are translation invariant; that is, we have $Y = \tau_X \Phi$ for some $\bbr^d$-valued diffusion $X$; see also \cite{Rajeev}.

We will generalize this result to SPDEs (\ref{SPDE}) with a general pair of continuously embedded Hilbert spaces $(G,H)$ be as follows. Let $T = (T(t))_{t \in \bbr^d}$ be a multi-parameter $C_0$-group on $H$, let $\caln$ be an $m$-dimensional $C^2$-submanifold of $\bbr^d$ for some $m \leq d$, and consider the submanifold
\begin{align}\label{structure-intro}
\calm = \{ T(t) y_0 : t \in \caln \}
\end{align}
for some $y_0 \in G$. Denoting by $\psi : \bbr^d \to H$ the orbit map $\psi(t) := T(t) y_0$ for $t \in \bbr^d$, we will show that the following statements are equivalent:
\begin{enumerate}
\item[(i)] $\calm$ is locally invariant for the SPDE (\ref{SPDE}).

\item[(ii)] $\caln$ is locally invariant for the $\bbr^d$-valued SDE
\begin{align*}
\left\{
\begin{array}{rcl}
dX_t & = & \bar{b}(X_t) dt + \bar{\sigma}(X_t) dW_t
\\ X_0 & = & x_0,
\end{array}
\right.
\end{align*}
where $\bar{\sigma} : \caln \to \ell^2(\bbr^d)$ and $\bar{b} : \caln \to \bbr^d$ are the unique solutions of the equations
\begin{align}\label{L-Rajeev-intro}
L|_{\calm} &= \frac{1}{2} \sum_{i,j=1}^d (\bar{\sigma} \bar{\sigma}^{\top})_{ij} \circ \psi^{-1}|_{\calm} \, B_{ij}|_{\calm} + \sum_{i=1}^d \bar{b}_i \circ \psi^{-1}|_{\calm} \, B_i|_{\calm},
\\ \label{A-Rajeev-intro} A^j|_{\calm} &= \sum_{i=1}^d \bar{\sigma}_{i}^j \circ \psi^{-1}|_{\calm} \, B_i|_{\calm}, \quad j \in \bbn.
\end{align}
\end{enumerate}

We refer to Theorem \ref{thm-SPDE-group} for the precise statement. Note that the structures of the coefficients in (\ref{L-intro}) and (\ref{A-intro}) are particular cases of (\ref{L-Rajeev-intro}) and (\ref{A-Rajeev-intro}). This result is a consequence of a more general result for arbitrary $(G,H)$-submanifolds, which we establish in this paper; see Theorem \ref{thm-inv-embedded}. Moreover, we will show that the structure (\ref{structure-intro}) of the submanifold $\calm$ appears naturally with coefficients of the kind (\ref{L-Rajeev-intro}) and (\ref{A-Rajeev-intro}) in case of local invariance; see Theorem \ref{thm-structure-manifold} for the precise result.

Diffusions on manifolds in $\mathbb R^d$ is a well studied topic (see for a partial list \cite{Ito-manifold, Elworthy, Emery, Hsu, Ikeda, Stroock}). In this paper, we will also establish new results concerning the invariance of finite dimensional submanifolds for $\bbr^d$-valued diffusions of the type
\begin{align}\label{SDE-b-sigma-intro}
\left\{
\begin{array}{rcl}
dX_t & = & b(X_t)dt + \sigma(X_t) dW_t
\\ X_0 & = & x_0
\end{array}
\right.
\end{align}
with coefficients $b : \bbr^d \to \bbr^d$ and $\sigma : \bbr^d \to \ell^2(\bbr^d)$. Our essential assumption is that these coefficients belong to a Hermite Sobolev space with sufficient regularity. More precisely, we assume that for some $q > \frac{d}{4}$ we have $b_i \in \cals_q(\bbr^d)$ for $i=1,\ldots,d$ and $\sigma_i^j \in \cals_q(\bbr^d)$ for $i=1,\ldots,d$ and $j \in \bbn$. Let $\caln$ be an $m$-dimensional $C^2$-submanifold of $\bbr^d$ for some $m \leq d$. We set $G := \cals_{-q}(\bbr^d)$, $H := \cals_{-(q+1)}(\bbr^d)$, define the coefficients of the SPDE (\ref{SPDE}) as (\ref{L-intro}), (\ref{A-intro}) with $p := -(q+1)$, and consider the submanifold
\begin{align*}
\calm := \{ \delta_x : x \in \caln \},
\end{align*}
where $\delta_x$ denotes the Dirac distribution at point $x$. Then the following statements are equivalent:
\begin{enumerate}
\item[(i)] $\calm$ is locally invariant for the SPDE (\ref{SPDE}).

\item[(ii)] $\caln$ is locally invariant for the SDE (\ref{SDE-b-sigma-intro}).
\end{enumerate}
We refer to Theorem \ref{thm-M-N}, which establishes the announced link between the invariance of submanifolds for SPDEs in Hermite Sobolev spaces and the invariance of submanifolds for finite dimensional SDEs. In particular, in some situations it turns out that locally invariance of $\calm$ for the SPDE (\ref{SPDE}) is easier to prove, which is the key for providing new invariance results for finite dimensional SDEs.

One application of this connection appears in the situation, where we consider the conditions
\begin{align}\label{conditions-1-intro}
b|_{\caln} &\in \Gamma(T \caln), 
\\ \label{conditions-2-intro} \sigma^j|_{\caln} &\in \Gamma(T \caln), \quad j \in \bbn,
\end{align}
and where we are interested in finding an additional condition ensuring that $\caln$ is locally invariant for the SDE (\ref{SDE-b-sigma-intro}). In this regard, we will show that under conditions (\ref{conditions-1-intro}) and (\ref{conditions-2-intro}) the following conditions are equivalent:
\begin{enumerate}
\item[(i)] $\caln$ is locally invariant for the SDE (\ref{SDE-b-sigma-intro}).

\item[(ii)] We have
\begin{align*}
\sum_{j=1}^{\infty} \big( [A^j|_{\calm},A^j|_{\calm}]_{\calm} - [ \bar{A}^j(A^j(\cdot),\cdot)|_{\calm} ]_{\Gamma(T \calm)} \big) = [0]_{\Gamma(T \calm)},
\end{align*}
where, in accordance with (\ref{A-intro}), we have set
\begin{align*}
\bar{A}^j(y,z) := - \sum_{i=1}^d \langle \sigma_{i}^j,z \rangle \partial_i y, \quad j \in \bbn.
\end{align*}
\end{enumerate}
We refer to Theorem \ref{thm-inv-b} for further details. A consequence of this result is that the conditions
\begin{align*}
b|_{\caln} &\in \Gamma(T \caln),
\\ \sigma^j|_{\caln} &\in \Gamma^*(T \caln), \quad j \in \bbn,
\end{align*}
where $\Gamma^*(T \caln)$ denotes the space of all locally simultaneous vector fields on $\caln$, are sufficient for local invariance of $\caln$ for the SDE (\ref{SDE-b-sigma-intro}); see Proposition \ref{prop-inv-fin-dim}. This is a generalization of the result that an affine submanifold $\caln$ is locally invariant if and only if we have (\ref{conditions-1-intro}) and (\ref{conditions-2-intro}). We also establish such a result in the general framework for SPDEs of the type (\ref{SPDE}); see Corollary \ref{cor-affine}.

Another application of the connection between invariance of submanifolds for SDEs and SPDEs occurs in the situation, where the submanifold $\caln$ is given by the zeros of smooth functions. More precisely, we assume that the dimension of $\caln$ is given by $m = d-n$, where $n < d$, and that there exist an open subset $O \subset \bbr^d$ and a mapping $f : \bbr^d \to \bbr^n$ such that
\begin{align*}
\caln = \{ x \in O : f(x) = 0 \}.
\end{align*}
Concerning the components of $f$ we assume that $f_k \in \cals_{q+1}(\bbr^d)$ for all $k=1,\ldots,n$. As we will show, then the following statements are equivalent:
\begin{enumerate}
\item[(i)] The submanifold $\caln$ is locally invariant for the SDE (\ref{SDE-b-sigma-intro}).

\item[(ii)] For all $k = 1,\ldots,n$ we have
\begin{align*}
\bigg( \sum_{i=1}^d b_i \partial_i f_k + \frac{1}{2} \sum_{i,j=1}^d ( \sigma \sigma^{\top} )_{ij} \partial_{ij}^2 f_k \bigg) \bigg|_{\caln} &= 0,
\\ \sum_{i=1}^d \sigma_{i}^j \partial_i f_k \bigg|_{\caln} &= 0, \quad j \in \bbn.
\end{align*}
\end{enumerate}
For this result, we refer to Theorem \ref{thm-nullstellen}. We illustrate the latter result with the example of the unit sphere $\bbs^{d-1}$ (Corollary \ref{cor-unit-sphere} and Example \ref{example-Stroock}) and recover an earlier result of Stroock.

The paper is organized as follows: Section \ref{sec-SPDE} introduces SPDEs in the framework of continuously embedded Hilbert spaces. In Section \ref{sec-manifolds} we introduce the notion of a submanifold in embedded Hilbert spaces. Section \ref{sec-manifolds-general} is devoted to calculus on such submanifolds. In Section \ref{sec-manifolds-HS} we consider submanifolds generated by the infinitesimal generator of a multi-parameter strongly continuous group. In Section \ref{sec-SPDE-general} we present our main result concerning invariant manifolds. Afterwards, in Section \ref{sec-quasi-semilinear} we present consequences for quasi-semilinear SPDEs, which includes the particular case of semilinear SPDEs. In Section \ref{sec-quasi-linear} we study the invariance of manifolds which are generated by orbit maps; this includes It\^{o} type SPDEs. In Section \ref{sec-interplay} we provide the link between the invariance of submanifolds for finite dimensional SDEs and the invariance of submanifolds for SPDEs in Hermite Sobolev spaces, and provide new invariance results for finite dimensional SDEs. Appendix \ref{app-groups} is devoted to results on multi-parameter groups and Appendix \ref{app-HS-spaces} to results on Hermite Sobolev spaces. This includes a proof of the Sobolev embedding theorem for Hermite Sobolev spaces.

\section{Stochastic partial differential equations in continuously embedded Hilbert spaces}\label{sec-SPDE}

In this section we provide the required prerequisites about SPDEs in continuously embedded Hilbert spaces.

\begin{definition}
We call $W = (W^j)_{j \in \bbn}$ a \emph{standard $\bbr^{\infty}$-Wiener process} if $(W^j)_{j \in \bbn}$ is a sequence of independent real-valued standard Wiener processes on some stochastic basis.
\end{definition}

For a Hilbert space $H$ we denote by $\ell^2(H)$ the Hilbert space of all $H$-valued sequences $y = (y^j)_{j \in \bbn}$ such that
\begin{align*}
\| y \|_{\ell^2(H)} := \bigg( \sum_{j=1}^{\infty} \| y^j \|_H^2 \bigg)^{1/2} < \infty.
\end{align*}

\begin{proposition}
Let $W = (W^j)_{j \in \bbn}$ be a standard $\bbr^{\infty}$-Wiener process on a stochastic basis $(\Omega,\calf,(\calf_t)_{t \in \bbr_+},\bbp)$, let $H$ be a separable Hilbert space, and let $A$ be a predictable $\ell^2(H)$-valued process such that we have $\bbp$-almost surely
\begin{align}\label{A-integrability}
\int_0^t \| A_s \|_{\ell^2(H)}^2 ds < \infty, \quad t \in \bbr_+.
\end{align}
Then the process $(\int_0^t A_s dW_s)_{t \in \bbr_+}$ given by
\begin{align}\label{integral-series}
\int_0^t A_s dW_s := \sum_{j=1}^{\infty} \int_0^t A_s^j dW_s^j, \quad t \in \bbr_+
\end{align}
is a well-defined $H$-valued continuous local martingale, and the convergence is in probability, uniformly on compact intervals.
\end{proposition}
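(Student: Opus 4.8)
The plan is to construct the series (\ref{integral-series}) as a limit of the partial sums $M_t^{(n)} := \sum_{j=1}^{n} \int_0^t A_s^j \, dW_s^j$, each of which is a well-defined $H$-valued continuous local martingale by the standard theory of stochastic integration against a finite collection of independent real Wiener processes. The integrability hypothesis (\ref{A-integrability}) guarantees that each scalar integrand $A^j$ is integrable with respect to $W^j$, so each summand is meaningful. The essential work is to show that the sequence $(M^{(n)})_{n \in \bbn}$ converges in the sense claimed, namely in probability, uniformly on compact time intervals, and that the limit inherits the local-martingale property.

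First I would establish convergence via a Cauchy criterion. For $m < n$ the increment $M^{(n)} - M^{(m)} = \sum_{j=m+1}^{n} \int_0^{\cdot} A_s^j \, dW_s^j$ is again a continuous local martingale, and by orthogonality of the $W^j$ its quadratic variation is $\sum_{j=m+1}^{n} \int_0^{\cdot} \| A_s^j \|_H^2 \, ds$. The natural tool is a localization followed by a Doob/Burkholder-type maximal inequality in the Hilbert space $H$: one introduces stopping times $\tau_k \uparrow \infty$ on which the integrand is square-integrable in the strong sense, controls $\bbe \big[ \sup_{t \leq T \wedge \tau_k} \| M_t^{(n)} - M_t^{(m)} \|_H^2 \big]$ by a constant times $\bbe \int_0^{T \wedge \tau_k} \sum_{j=m+1}^{n} \| A_s^j \|_H^2 \, ds$, and then passes from $L^2$-convergence on each stopped interval to convergence in probability uniformly on $[0,T]$. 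Since (\ref{A-integrability}) ensures $\sum_{j=1}^{\infty} \int_0^t \| A_s^j \|_H^2 \, ds < \infty$ almost surely, the tail of this series vanishes, which drives the Cauchy estimate to zero.

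Next I would identify the limit and verify its properties. Uniform convergence in probability on compacts yields a subsequence converging uniformly almost surely, so the limit process has continuous paths; denoting it $\int_0^{\cdot} A_s \, dW_s$, it is adapted as a limit of adapted processes. To see it is a local martingale, I would localize once more: on a suitable sequence of stopping times the stopped partial sums are genuine $L^2$-bounded martingales (using the strong square-integrability of the stopped integrand together with orthogonality), $L^2$-convergence is preserved under conditional expectation, and hence the stopped limit is a martingale; letting the localizing times increase to infinity gives the local-martingale property of the limit itself. The continuity of the limit has already been secured.

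The main obstacle I anticipate is the interchange of the infinite summation with the stochastic integration and the careful handling of localization in the Hilbert-space-valued setting: one must ensure that the localizing stopping times can be chosen \emph{uniformly in $n$}, so that the maximal inequality applies simultaneously to all partial sums and the Cauchy estimate is genuinely uniform. A clean way to arrange this is to localize directly using the almost-surely finite quantity $\int_0^t \| A_s \|_{\ell^2(H)}^2 \, ds$ from (\ref{A-integrability}), setting $\tau_k := \inf \{ t \geq 0 : \int_0^t \| A_s \|_{\ell^2(H)}^2 \, ds \geq k \}$; this single family of stopping times bounds every tail $\sum_{j > m} \int_0^{\cdot \wedge \tau_k} \| A_s^j \|_H^2 \, ds$ and thereby decouples the localization from the index $n$, making the maximal inequality uniform and the whole argument go through.
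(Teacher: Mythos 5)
Your proposal is correct and follows essentially the same route as the paper: reduce to the square-integrable case by localization (the paper localizes after treating the integrable case, you localize upfront with the natural stopping times built from (\ref{A-integrability})), then combine the It\^{o} isometry and orthogonality of the summands with Doob's maximal inequality to get Cauchy convergence in the space of $H$-valued $L^2$-martingales, and finally deduce continuity, the local martingale property, and convergence in probability uniformly on compacts. The only cosmetic difference is that the paper packages the $L^2$ estimate as convergence of an orthogonal series in the Hilbert space $M_T^2(H)$ with the equivalent norms $\| \cdot \|_{\infty}$ and $\| \cdot \|_T$, rather than as an explicit Cauchy/tail estimate.
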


\begin{proof}
Let $T > 0$ be arbitrary. We denote by $M_T^2(H)$ the space of all $H$-valued square-integrable martingales $M = (M_t)_{t \in [0,T]}$, which, endowed with the norm
\begin{align*}
\| M \|_{\infty} = \bbe \bigg[ \sup_{t \in [0,T]} \| M_t \|_H^2 \bigg]^{1/2}, \quad M \in M_T^2(H)
\end{align*}
is a Hilbert space. Furthermore, by Doob's martingale inequality, an equivalent norm is given by
\begin{align*}
\| M \|_T = \bbe \big[ \| M_T \|_H^2 \big]^{1/2}, \quad M \in M_T^2(H).
\end{align*}
Concerning the predictable process $A$, we first suppose that
\begin{align*}
\bbe \bigg[ \int_0^T \| A_s \|_{\ell^2(H)}^2 ds \bigg] < \infty.
\end{align*}
Then by the It\^{o} isometry and the monotone convergence theorem we have
\begin{align*}
\sum_{j=1}^{\infty} \bbe \Bigg[ \bigg\| \int_0^T A_s^j d W_s^j \bigg\|_H^2 \Bigg] = \sum_{j=1}^{\infty} \bbe \bigg[ \int_0^T \| A_s^j \|_H^2 d s \bigg] = \bbe \bigg[ \int_0^T \| A_s \|_{\ell^2(H)}^2 ds \bigg] < \infty,
\end{align*}
and hence the series $\sum_{j=1}^{\infty} \int_0^T A_s^j d W_s^j$ converges in $M_T^2(H)$. The situation with a general predictable process $A$ satisfying (\ref{A-integrability}) follows by localization, and, by the definition of the norm $\| \cdot \|_{\infty}$, the convergence is in probability, uniformly on compact intervals.
\end{proof}

\begin{definition}
Let $G$ and $H$ be two normed spaces. Then we call $(G,H)$ \emph{continuously embedded normed spaces} (or \emph{normed spaces with continuous embedding}) if the following conditions are fulfilled:
\begin{enumerate}
\item We have $G \subset H$ as sets.

\item The embedding operator $\Id : (G, \| \cdot \|_G) \to (H, \| \cdot \|_H)$ is continuous; that is, there is a constant $K > 0$ such that
\begin{align*}
\| x \|_H \leq K \| x \|_G \quad \text{for all $x \in G$.}
\end{align*}
\end{enumerate}
\end{definition}

\begin{definition}
Let $H_1,\ldots,H_n$ be normed spaces. Then we call $(H_1,\ldots,H_n)$ \emph{continuously embedded normed spaces} if for each $k=1,\ldots,n-1$ the pair $(H_k,H_{k+1})$ is a pair of continuously embedded normed spaces.
\end{definition}

Now, let $(G,H)$ be separable Hilbert spaces with continuous embedding. Furthermore, let $L : G \to H$ and $A : G \to \ell^2(H)$ be continuous\footnote{More precisely, here and in the sequel, we call a mapping $L : G \to H$ \emph{continuous} if $L : (G,\| \cdot \|_G) \to (H,\| \cdot \|_H)$ is continuous. The continuity of $A$ is understood analogously.} mappings. Then for each $j \in \bbn$ the component $A^j : G \to H$ is continuous.

\begin{definition}\label{def-martingale-solution}
Let $y_0 \in G$ be arbitrary. A triplet $(\bbb,W,Y)$ is called a \emph{local martingale solution} to the SPDE (\ref{SPDE}) with $Y_0 = y_0$ if the following conditions are fulfilled:
\begin{enumerate}
\item $\bbb = (\Omega,\calf,(\calf_t)_{t \in \bbr_+},\bbp)$ is a stochastic basis; that is, a filtered probability space satisfying the usual conditions.

\item $W$ is a standard $\bbr^{\infty}$-Wiener process on the stochastic basis $\bbb$.

\item $Y$ is a $G$-valued adapted\footnote{See Remark \ref{rem-adapted} for details about this notion.} process such that for some strictly positive stopping time $\tau > 0$ we have $\bbp$-almost surely
\begin{align}\label{SPDE-int-cond}
\int_0^{t \wedge \tau} \big( \| L(Y_s) \|_H + \| A(Y_s) \|_{\ell^2(H)}^2 \big) ds < \infty, \quad t \in \bbr_+
\end{align}
and $\bbp$-almost surely
\begin{align}\label{SPDE-integral-form}
Y_{t \wedge \tau} = y_0 + \int_0^{t \wedge \tau} L(Y_s) ds + \int_0^{t \wedge \tau} A(Y_s) dW_s, \quad t \in \bbr_+,
\end{align}
where the stochastic integral is defined according to (\ref{integral-series}). The stopping time $\tau$ is also called the \emph{lifetime} of $Y$.
\end{enumerate}
If we can choose $\tau = \infty$, then $(\bbb,W,Y)$ is also called a \emph{global martingale solution} (or simply a \emph{martingale solution}) to the SPDE (\ref{SPDE}) with $Y_0 = y_0$.
\end{definition}

\begin{remark}
As it is apparent from the integrability condition (\ref{SPDE-int-cond}), the stochastic integrals appearing in (\ref{SPDE-integral-form}) are understood as stochastic integrals in the Hilbert space $(H,\| \cdot \|_H)$. Therefore, the right-hand side of (\ref{SPDE-integral-form}) is generally $H$-valued, whereas the left-hand side is $G$-valued. This indicates that the existence of martingale solutions to the SPDE (\ref{SPDE}) can generally not be warranted. If there exists a martingale solution $Y$, then its sample paths are continuous with respect to the norm $\| \cdot \|_H$, but they do not need to be continuous with respect to the norm $\| \cdot \|_G$.
\end{remark}

\begin{remark}\label{rem-adapted}
Let $\bbb$ be a stochastic basis. In our situation, there are two reasonable ways to define what it means that a $G$-valued process $Y$ is \emph{adapted}; namely:
\begin{enumerate}
\item We regard $Y$ as a process taking its values in the subspace $G$ of the Hilbert space $(H,\| \cdot \|_H)$ and call it \emph{adapted} if for each $t \in \bbr_+$ the mapping $Y_t : \Omega \to G$ is $\calf_t$-$\calb(H)_G$-measurable, where $\calb(H)_G$ denotes the trace $\sigma$-algebra
\begin{align*}
\calb(H)_G = \{ B \cap G : B \in \calb(H) \}.
\end{align*}

\item We regard $Y$ as a process taking its values in the Hilbert space $(G,\| \cdot \|_G)$ and call it \emph{adapted} if for each $t \in \bbr_+$ the mapping $Y_t : \Omega \to G$ is $\calf_t$-$\calb(G)$-measurable.
\end{enumerate}
However, by Kuratowski's theorem (see, for example \cite[Thm. I.3.9]{Parthasarathy}) we have $\calb(G) = \calb(H)_G$, showing that these two concepts of adaptedness are equivalent.
\end{remark}

\begin{remark}\label{rem-SPDEs}
The SPDE (\ref{SPDE}) can also be realized as an SPDE driven by a trace class Wiener process, as considered, for example in \cite{Da_Prato, Atma-book}. Indeed, let $U$ be a separable Hilbert space, and let $\bar{W}$ be an $U$-valued $Q$-Wiener process for some nuclear, self-adjoint, positive definite linear operator $Q \in L_1^{++}(U)$; see, for example \cite[Def. 4.2]{Da_Prato}. There exist an orthonormal basis $\{ e_j \}_{j \in \bbn}$ of $U$ and a sequence $(\lambda_j)_{j \in \bbn} \subset (0,\infty)$ with $\sum_{j \in \bbn} \lambda_j < \infty$ such that
\begin{align*}
Q e_j = \lambda_j e_j \quad \text{for all $j \in \bbn$.}
\end{align*}
The space $U_0 := Q^{1/2}(U)$, equipped with the inner product
\begin{align*}
\langle u,v \rangle_{U_0} := \langle Q^{-1/2}u, Q^{-1/2}v \rangle_U, \quad u,v \in U_0
\end{align*}
is another separable Hilbert space. We fix the orthonormal basis $\{ g_j \}_{j \in \bbn}$ of $U_0$ given by $g_j := \sqrt{\lambda_j} e_j$ for each $j \in \bbn$, and we denote by $L_2^0(H) := L_2(U_0,H)$ the space of all Hilbert-Schmidt operators from $U_0$ into $H$. Note that $L_2^0(H) \cong \ell^2(H)$, because $T \mapsto (T g_j)_{j \in \bbn}$ is an isometric isomorphism. By \cite[Prop. 4.3]{Da_Prato} the sequence $(\bar{W}^j)_{j \in \bbn}$ defined as
\begin{align*}
\bar{W}^j := \frac{1}{\sqrt{\lambda_j}} \langle \bar{W},e_j \rangle_U, \quad j \in \bbn
\end{align*}
is a sequence of independent real-valued standard Wiener processes. Hence $W = (\bar{W}^j)_{j \in \bbn}$ is a standard $\bbr^{\infty}$-Wiener process. As a consequence of the series representation of the stochastic integral with respect to the trace class Wiener process $\bar{W}$ (see, for example \cite[Prop. 2.4.5]{Liu-Roeckner}), the SPDE (\ref{SPDE}) can be expressed as
\begin{align}\label{SPDE-trace-class}
\left\{
\begin{array}{rcl}
dY_t & = & L(Y_t) dt + \bar{A}(Y_t) d\bar{W}_t
\\ Y_0 & = & y_0
\end{array}
\right.
\end{align}
where the continuous mapping $\bar{A} : G \to L_2^0(H)$ is given by
\begin{align*}
\bar{A}(y) := \sum_{j=1}^{\infty} \la \bullet, g_j \ra_{U_0} \, A^j(y), \quad y \in G,
\end{align*}
and, vice versa, the SPDE (\ref{SPDE-trace-class}) can be expressed by the SPDE (\ref{SPDE}), where the continuous mapping $A : G \to \ell^2(H)$ is given by
\begin{align*}
A(y) := (\bar{A}(y) g_j)_{j \in \bbn}, \quad y \in G.
\end{align*}
\end{remark}

\begin{remark}\label{rem-global-weak-solution}
In the particular case $G = H = \bbr^d$ the SPDE (\ref{SPDE}) is rather an SDE, and a martingale solution $(\bbb,W,Y)$ is a weak solution. If, in this case, the continuous mappings $L : \bbr^d \to \bbr^d$ and $A : \bbr^d \to \ell^2(\bbr^d)$ satisfy the linear growth condition, then for each $y_0 \in \bbr^d$ there exists a global weak solution $(\bbb,W,Y)$ to the SDE (\ref{SPDE}) with $Y_0 = y_0$. Indeed, taking into account Remark \ref{rem-SPDEs}, this follows from \cite[Thm. 2]{GMR} (or \cite[Thm. 3.12]{Atma-book}), applied with $H = H_{-1} = \bbr^m$ and $J = \Id_{\bbr^m}$.
\end{remark}

\begin{remark}
The situation where the Wiener process $W$ is $\bbr^r$-valued is covered by choosing $A^j \equiv 0$ for all $j > r$. If we are additionally in the situation of Remark \ref{rem-global-weak-solution}, then the existence of global weak solutions also follows from \cite[Thms. IV.2.3 and IV.2.4]{Ikeda}.
\end{remark}

\begin{remark}
If there is no ambiguity, we will simply call $Y$ a local martingale solution or a global martingale solution to the SPDE (\ref{SPDE}) with $Y_0 = y_0$.
\end{remark}

Now, let $\calm \subset G$ be a subset. In this paper, the subset $\calm$ will typically be a finite dimensional submanifold.

\begin{definition}
The subset $\calm$ is called \emph{locally invariant} for the SPDE (\ref{SPDE}) if for each $y_0 \in \calm$ there exists a local martingale solution $Y$ to the SPDE (\ref{SPDE}) with $Y_0 = y_0$ and lifetime $\tau > 0$ such that $Y^{\tau} \in \calm$ up to an evanescent set\footnote{A random set $A \subset \Omega \times \mathbb{R}_+$ is called \emph{evanescent} if the set $\{ \omega \in \Omega : (\omega,t) \in A \text{ for some } t \in \mathbb{R}_+ \}$ is a $\mathbb{P}$-nullset, cf. \cite[1.1.10]{Jacod-Shiryaev}.}.
\end{definition}

\begin{definition}
The subset $\calm$ is called \emph{globally invariant} (or simply \emph{invariant}) for the SPDE (\ref{SPDE}) if for each $y_0 \in \calm$ there exists a global martingale solution $Y$ to the SPDE (\ref{SPDE}) with $Y_0 = y_0$ such that $Y \in \calm$ up to an evanescent set.
\end{definition}

\section{Finite dimensional submanifolds in embedded Hilbert spaces}\label{sec-manifolds}

In this section we provide the required background about finite dimensional submanifolds in embedded Hilbert spaces. It is divided into two parts. In Section \ref{sec-manifolds-general} we provide the preliminaries about submanifolds in Hilbert spaces, and later on we introduce submanifolds in embedded Hilbert spaces. In Section \ref{sec-manifolds-HS} we deal with submanifolds given by orbit maps of group actions, in particular in Hermite Sobolev spaces.

\subsection{Finite dimensional submanifolds in Hilbert spaces}\label{sec-manifolds-general}

In this section we deal with finite dimensional submanifolds in Hilbert spaces. Let $H$ be a Hilbert space. Furthermore, let $m \in \bbn$ and $k \in \overline{\bbn}$ be positive integers. Here we use the notation $\overline{\bbn} = \bbn \cup \{ \infty \}$, which means that $k = \infty$ is allowed.

\begin{definition}\label{def-immersion}
Let $V \subset \mathbb{R}^m$ be an open subset, and let $\phi \in C^k(V;H)$ be a mapping.
\begin{enumerate}
\item Let $x_0 \in V$ be arbitrary. The mapping $\phi$ is called a \emph{$C^k$-immersion at $x_0$} if $D \phi(x_0) \in L(\bbr^m,H)$ is one-to-one.

\item The mapping $\phi$ is called a \emph{$C^k$-immersion} if it is a $C^k$-immersion at $x_0$ for each $x_0 \in V$. 
\end{enumerate}
\end{definition}

\begin{definition}
A subset $\calm \subset H$ is called an $m$-dimensional \emph{$C^k$-submanifold} of $H$ if for every $y \in \calm$ there exist an open neighborhood $U \subset H$ of $y$, an open set $V \subset \mathbb{R}^m$ and a mapping $\phi \in C^k(V;H)$ such that:
\begin{enumerate}
\item The mapping $\phi : V \rightarrow U \cap \calm$ is a homeomorphism.

\item $\phi$ is a $C^k$-immersion.
\end{enumerate}
The mapping $\phi$ is called a \emph{local parametrization} of $\calm$ around $h$.
\end{definition}

For what follows, let $\calm$ be an $m$-dimensional $C^k$-submanifold of $H$.

\begin{lemma}\cite[Lemma~6.1.1]{fillnm}\label{lemma-change-para}
Let $\phi_i : V_i \rightarrow U_i \cap \calm$, $i=1,2$ be two local parametrizations of $\calm$ with $W := U_1 \cap U_2 \cap \calm \neq \emptyset$. Then the mapping
\begin{align*}
\varphi := \phi_1^{-1} \circ \phi_2 : \phi_2^{-1}(W) \rightarrow \phi_1^{-1}(W)
\end{align*}
is a $C^k$-diffeomorphism.
\end{lemma}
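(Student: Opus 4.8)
The plan is to reduce the whole statement to the single assertion that $\varphi$ is of class $C^k$. The map $\varphi$ is automatically a homeomorphism between the open subsets $\phi_2^{-1}(W)$ and $\phi_1^{-1}(W)$ of $\bbr^m$: indeed $W$ is open in $\calm$, so $\phi_2^{-1}(W)$ and $\phi_1^{-1}(W)$ are open in $\bbr^m$ since $\phi_1,\phi_2$ are homeomorphisms onto their images, and $\varphi$ is a composition of such homeomorphisms restricted to preimages of $W$. Moreover the inverse $\varphi^{-1} = \phi_2^{-1}\circ\phi_1$ has exactly the same form with the roles of $\phi_1$ and $\phi_2$ interchanged. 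Hence, once I show that any such change-of-parametrization map is $C^k$, applying this to both $\varphi$ and $\varphi^{-1}$ yields that $\varphi$ is a $C^k$-diffeomorphism. Since $H$ is infinite dimensional I cannot invert $\phi_1$ directly; instead the strategy is to build, near each point, a $C^k$ left inverse of $\phi_1$ and to write $\varphi$ as the composition of this left inverse with $\phi_2$.

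Next I would fix $x_0 \in \phi_2^{-1}(W)$, put $p_0 := \phi_2(x_0) \in W$ and $a_0 := \phi_1^{-1}(p_0) \in V_1$. Because $\phi_1$ is a $C^k$-immersion at $a_0$, the operator $D\phi_1(a_0) \in L(\bbr^m,H)$ is injective and its range is the $m$-dimensional, hence closed, subspace $T_{p_0}\calm \subset H$. The key step is to produce a bounded linear map $\Lambda : H \to \bbr^m$ with $\Lambda \circ D\phi_1(a_0) = \Id_{\bbr^m}$; one may take $\Lambda := (D\phi_1(a_0))^{-1} \circ \pi$, where $\pi$ denotes the orthogonal projection of $H$ onto $\ran D\phi_1(a_0)$ (equivalently, choose $\lambda_1,\ldots,\lambda_m \in H$ dual via the inner product to a basis of the tangent space). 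Then $g := \Lambda \circ \phi_1 : V_1 \to \bbr^m$ is of class $C^k$ and satisfies $Dg(a_0) = \Lambda \circ D\phi_1(a_0) = \Id_{\bbr^m}$. By the inverse function theorem in $\bbr^m$, the map $g$ restricts to a $C^k$-diffeomorphism from an open neighborhood $V_1' \subset V_1$ of $a_0$ onto an open neighborhood $g(V_1')$ of $g(a_0)$.

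It then remains to glue. For $a \in V_1'$ one has $g^{-1}(\Lambda(\phi_1(a))) = g^{-1}(g(a)) = a$, so on the set $\phi_1(V_1')$, which is open in $U_1 \cap \calm$, the inverse parametrization is given by the $C^k$ formula $\phi_1^{-1} = g^{-1} \circ \Lambda$. Since $\phi_2$ is continuous and $\phi_2(x_0) = p_0 \in \phi_1(V_1')$, there is a neighborhood of $x_0$ in $\phi_2^{-1}(W)$ on which $\phi_2$ takes values in $\phi_1(V_1')$, and correspondingly $\Lambda \circ \phi_2$ takes values in the domain $g(V_1')$ of $g^{-1}$. On this neighborhood $\varphi = \phi_1^{-1}\circ \phi_2 = g^{-1}\circ \Lambda \circ \phi_2$ is a composition of the $C^k$ map $\phi_2$, the bounded linear map $\Lambda$, and the $C^k$-diffeomorphism $g^{-1}$, hence $C^k$. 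As $x_0$ was arbitrary, $\varphi$ is $C^k$ on all of $\phi_2^{-1}(W)$, and by the symmetry noted in the first paragraph so is $\varphi^{-1}$.

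The main obstacle is the construction of the bounded linear left inverse $\Lambda$ and the resulting reduction to the finite-dimensional inverse function theorem: this is precisely where the infinite dimensionality of $H$ must be circumvented, and it works only because an immersion has finite-dimensional, hence closed and complemented, range, so that $\Lambda$ exists and $g = \Lambda \circ \phi_1$ becomes a genuine map between open subsets of $\bbr^m$ with invertible differential at $a_0$. Everything else is routine bookkeeping of domains, handled by the continuity of the maps involved.
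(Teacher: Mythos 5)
Your proof is correct. The paper supplies no argument of its own for Lemma \ref{lemma-change-para} — the statement is quoted directly from \cite[Lemma~6.1.1]{fillnm} — and your construction (a bounded linear left inverse $\Lambda$ of $D\phi_1(a_0)$, available because the range is finite dimensional and hence closed and complemented in $H$, followed by the finite-dimensional inverse function theorem and the local formula $\phi_1^{-1}=g^{-1}\circ\Lambda$) is essentially the standard proof given in that reference; the same mechanism underlies Proposition \ref{prop-linear-inverse} of the paper. One cosmetic remark: refer to $\ran D\phi_1(a_0)$ simply as an $m$-dimensional subspace of $H$ rather than as $T_{p_0}\calm$, since the independence of the tangent space from the choice of parametrization is itself deduced from this lemma; your argument only uses finite dimensionality of the range, so nothing is actually circular, but the notation suggests otherwise.
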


\begin{definition}\label{def-tang-raum}
Let $y \in \calm$ be arbitrary. The \emph{tangent space} of $\calm$ to $y$ is the subspace
\begin{align*}
T_y \calm := D \phi(x)\mathbb{R}^m,
\end{align*}
where $x := \phi^{-1}(y)$, and $\phi : V \rightarrow U \cap \calm$ denotes a local parametrization of $\calm$ around $y$.
\end{definition}

\begin{remark}
By Lemma \ref{lemma-change-para} the Definition \ref{def-tang-raum} of the tangent space does not depend on the choice of the parametrization.
\end{remark}

\begin{remark}
Let $y \in \calm$ be arbitrary, and let $\phi : V \rightarrow U \cap \calm$ be a local parametrization of $\calm$ around $y$. Then $D \phi(x) \in L(\bbr^m,T_y \calm)$ is a linear isomorphism, where $x := \phi^{-1}(y) \in V$.
\end{remark}

\begin{remark}
Let $U \subset H$ be an open subset such that $U \cap \calm \neq \emptyset$. Then $\calm_U := U \cap \calm$ is also an $m$-dimensional $C^k$-submanifold of $H$, and we have
\begin{align*}
T_y \calm_U = T_y \calm \quad \text{for all $y \in \calm_U$.}
\end{align*}
\end{remark}

\begin{definition}\label{def-tangent-bundle}
The \emph{tangent bundle} of $\calm$ is defined as
\begin{align*}
T \calm := \bigsqcup_{y \in \calm} T_y \calm := \{ (y,z) : y \in \calm \text{ and } z \in T_y \calm \}.
\end{align*}
\end{definition}

\begin{definition}\label{def-vector-field}
A mapping $A : \calm \to H$ is called a \emph{vector field} on $\calm$ if
\begin{align*}
A(y) \in T_y \calm \quad \text{for each $y \in \calm$,}
\end{align*}
that is
\begin{align*}
\{ (y,A(y)) : y \in \calm \} \subset T \calm.
\end{align*}
We denote by $\Gamma(T \calm)$ the space of all vector fields on $\calm$.
\end{definition}

\begin{definition}\label{def-local-vector-field}
Let $z \in \calm$ be arbitrary. A mapping $A : \calm \to H$ is called a \emph{local vector field} on $\calm$ around $z$ if there is an open neighborhood $U \subset H$ of $z$ such that
\begin{align*}
A(y) \in T_y \calm \quad \text{for each $y \in U \cap \calm$,}
\end{align*}
that is
\begin{align*}
\{ (y,A(y)) : y \in U \cap \calm \} \subset T \calm.
\end{align*}
We denote by $\Gamma_z(T \calm)$ the space of all local vector fields on $\calm$ around $z$.
\end{definition}

\begin{definition}\label{def-simultaneous}
A mapping $A : \calm \to H$ is called a \emph{locally simultaneous vector field} on $\calm$ if for each $y \in \calm$ there exists an open neighborhood $U \subset H$ of $y$ such that
\begin{align*}
A(y) \in T_z \calm \quad \text{for each $z \in U \cap \calm$,}
\end{align*}
that is
\begin{align*}
\{ (z,A(y)) : z \in U \cap \calm \} \subset T \calm.
\end{align*}
We denote by $\Gamma^*(T \calm)$ the space of all locally simultaneous vector fields on $\calm$.
\end{definition}

\begin{definition}\label{def-push}
Let $\phi : V \to U \cap \calm$ be a local parametrization of $\calm$.
\begin{enumerate}
\item For a mapping $a : V \to \bbr^m$ we define $\phi_* a : U \cap \calm \to H$ as
\begin{align*}
(\phi_* a)(y) := D \phi(x) a(x), \quad y \in U \cap \calm,
\end{align*}
where $x := \phi^{-1}(y) \in V$. 

\item Similarly, for two mappings $a,b : V \to \bbr^m$ we define $\phi_{**}(a,b) : U \cap \calm \to H$ as
\begin{align*}
(\phi_{**}(a,b))(y) := D^2 \phi(x) ( a(x), b(x) ), \quad y \in U \cap \calm,
\end{align*}
where $x := \phi^{-1}(y) \in V$. 

\item Setting $\calm_U := U \cap \calm$, for a vector field $A \in \Gamma(T \calm_U)$ we define $\phi_*^{-1} A : V \to \bbr^m$ as
\begin{align*}
(\phi_*^{-1} A)(x) := D \phi(x)^{-1} A(y), \quad x \in V,
\end{align*}
where $y := \phi(x) \in U \cap \calm$.
\end{enumerate}
\end{definition}

\begin{remark}
Note that for every vector field $A \in \Gamma(T \calm)$ and every local parametrization $\phi : V \to U \cap \calm$ there exists a unique mapping $a : V \to \bbr^m$ such that $A|_{U \cap \calm} = \phi_* a$.
\end{remark}

\begin{proposition}\label{prop-linear-inverse}
Let $D \subset H$ be a dense subset. Furthermore, let $y_0 \in \calm$ be arbitrary. There exist a local parametrization $\phi : V \to U \cap \calm$ around $y_0$ and a bounded linear operator $\psi \in L(H,\bbr^m)$ of the form
\begin{align*}
\psi = \la \zeta,\cdot \ra_H := \big( \la \zeta_1,\cdot \ra_H, \ldots, \la \zeta_m,\cdot \ra_H \big)
\end{align*}
with $\zeta_1,\ldots,\zeta_m \in D$ such that $\phi^{-1} = \psi|_{U \cap \calm}$ and we have
\begin{align}\label{inverse-phi}
D \psi(y)|_{T_y \calm} = D \phi(x)^{-1} \quad \text{for all $y \in U \cap \calm$,}
\end{align}
where $x := \psi(y) \in V$.
\end{proposition}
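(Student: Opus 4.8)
The plan is to start from an arbitrary local parametrization around $y_0$ and then correct it by precomposition with a reparametrization built out of a bounded linear functional with coefficients in $D$, so that the resulting chart is literally inverted by that functional.

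First I would pick any local parametrization $\phi_0 : V_0 \to U_0 \cap \calm$ around $y_0$ and set $x_0 := \phi_0^{-1}(y_0)$, so that $D\phi_0(x_0) \in L(\bbr^m,H)$ is a linear isomorphism onto the tangent space $T_{y_0}\calm$. The goal is then to produce $\zeta_1,\ldots,\zeta_m \in D$ such that the associated bounded operator $\psi := \la \zeta,\cdot \ra_H \in L(H,\bbr^m)$ restricts to an isomorphism on $T_{y_0}\calm$; once this is achieved, the inverse function theorem will let me reparametrize so that $\phi^{-1}$ coincides with $\psi$ on the chart.

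The construction of the $\zeta_i$ is the key step, and it is here that density of $D$ enters. I would fix an orthonormal basis $e_1,\ldots,e_m$ of the finite dimensional subspace $T_{y_0}\calm$. If the $e_i$ happened to lie in $D$, then $\psi = \la e,\cdot \ra_H$ would restrict to the coordinate isomorphism on $T_{y_0}\calm$; in general I would instead choose $\zeta_i \in D$ with $\| \zeta_i - e_i \|_H$ arbitrarily small, which is possible since $D$ is dense. Then the matrix $(\la \zeta_i,e_j \ra_H)_{i,j=1}^m$ differs from the identity matrix by entries bounded by $\max_i \| \zeta_i - e_i \|_H$, hence is invertible once the approximation is close enough. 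Consequently $\psi|_{T_{y_0}\calm} : T_{y_0}\calm \to \bbr^m$ is an isomorphism, and since $D\phi_0(x_0) : \bbr^m \to T_{y_0}\calm$ is an isomorphism as well, the composition $\psi \circ D\phi_0(x_0) = D(\psi \circ \phi_0)(x_0)$ is an invertible $m \times m$ matrix.

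Finally I would apply the inverse function theorem to the $C^k$-map $\psi \circ \phi_0 : V_0 \to \bbr^m$ at $x_0$: after shrinking $V_0$ (and correspondingly $U_0$ to some $U$), it becomes a $C^k$-diffeomorphism onto an open set $\tilde V \subset \bbr^m$. Setting $\phi := \phi_0 \circ (\psi \circ \phi_0)^{-1} : \tilde V \to U \cap \calm$ yields a new local parametrization of $\calm$ around $y_0$, being the composition of $\phi_0$ with a $C^k$-diffeomorphism, and by construction $\psi \circ \phi = \Id_{\tilde V}$, that is $\phi^{-1} = \psi|_{U \cap \calm}$. Differentiating the identity $\psi \circ \phi = \Id_{\tilde V}$ at $x := \psi(y)$ and using that $\psi$ is linear (so $D\psi(y) = \psi$) gives $\psi|_{T_y\calm} \circ D\phi(x) = \Id_{\bbr^m}$, which is exactly (\ref{inverse-phi}) since $D\phi(x) : \bbr^m \to T_y\calm$ is an isomorphism. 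I expect the main obstacle to be the first step, namely guaranteeing that finitely many elements of $D$ can be chosen so as to separate the tangent directions; but this follows cleanly from density together with the finite dimensionality of $T_{y_0}\calm$, and everything after that is the inverse function theorem and the chain rule.
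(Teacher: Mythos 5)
Your proposal is correct and takes essentially the same route as the paper, whose own proof is simply a citation of Filipovi\'c's Proposition 6.1.2 and Lemma 6.1.3: there, too, the chart is produced by picking $\zeta_1,\ldots,\zeta_m$ from the dense set $D$ close to an orthonormal basis of $T_{y_0} \calm$ (so that the resulting $m \times m$ matrix is a small perturbation of the identity, hence invertible) and then applying the inverse function theorem to $\psi \circ \phi_0$ and reparametrizing. Your write-up is a correct, self-contained reconstruction of that argument, including the final differentiation of $\psi \circ \phi = \Id$ which yields (\ref{inverse-phi}) at every point of $U \cap \calm$, not just at $y_0$.
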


\begin{proof}
This follows from \cite[Prop. 6.1.2 and Lemma 6.1.3]{fillnm}.
\end{proof}

As we will see now, tangent spaces can also be characterized by means of curves.

\begin{definition}
Every mapping $\gamma \in C^1((-\epsilon,\epsilon);H)$ for some $\epsilon > 0$ with $\gamma(t) \in \calm$ for all $t \in (-\epsilon,\epsilon)$ is called a \emph{curve}.
\end{definition}

\begin{proposition}\label{prop-tang-curve}
For each $y \in \calm$ we have
\begin{align*}
T_y \calm = \{ w \in H : \gamma(0) = y \text{ and } \gamma'(0) = w \text{ for some curve } \gamma : (-\epsilon,\epsilon) \to \calm \}.
\end{align*}
\end{proposition}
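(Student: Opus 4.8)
The plan is to prove the two inclusions separately; the forward inclusion is routine, while the reverse inclusion is the substantial one and hinges on the linearized local inverse furnished by Proposition \ref{prop-linear-inverse}. Throughout, I fix $y \in \calm$ and a local parametrization $\phi : V \to U \cap \calm$ around $y$, and write $x_0 := \phi^{-1}(y) \in V$, so that by Definition \ref{def-tang-raum} we have $T_y \calm = D\phi(x_0)\bbr^m$.

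For the inclusion $T_y\calm \subseteq \{ w \in H : w = \gamma'(0) \text{ for some curve } \gamma \text{ with } \gamma(0) = y \}$, I would start from an arbitrary $w \in T_y\calm$ and write $w = D\phi(x_0) v$ for some $v \in \bbr^m$. Since $V$ is open, the curve $\gamma(t) := \phi(x_0 + tv)$ is well defined for $t$ in a sufficiently small interval $(-\epsilon,\epsilon)$, takes values in $U\cap\calm\subset\calm$, and belongs to $C^1((-\epsilon,\epsilon);H)$ because $\phi \in C^k(V;H)$ with $k \geq 1$. The chain rule then gives $\gamma(0)=y$ and $\gamma'(0)=D\phi(x_0)v=w$, exhibiting $w$ as the velocity of a curve.

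For the reverse inclusion, I would take any curve $\gamma:(-\epsilon,\epsilon)\to\calm$ with $\gamma(0)=y$ and show $\gamma'(0)\in T_y\calm$. The difficulty, and the main obstacle, is that the abstract inverse $\phi^{-1}$ is only a homeomorphism onto $U\cap\calm$, so the coordinate representation $c:=\phi^{-1}\circ\gamma$ is a priori merely continuous and cannot be differentiated directly. This is exactly what Proposition \ref{prop-linear-inverse} removes: applying it with $D:=H$ (which is dense in $H$), I may assume the parametrization $\phi$ is chosen so that $\phi^{-1}=\psi|_{U\cap\calm}$ for some bounded linear operator $\psi\in L(H,\bbr^m)$.

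With such a $\phi$ in hand the argument closes quickly. By continuity of $\gamma$ we have $\gamma(t)\in U\cap\calm$ for $t$ in a possibly smaller interval, so $c(t):=\psi(\gamma(t))$ is defined there; since $\psi$ is bounded and linear and $\gamma\in C^1$, the curve $c$ is $C^1$ with $c'(0)=\psi(\gamma'(0))$. Moreover $\phi\circ\psi|_{U\cap\calm}=\Id_{U\cap\calm}$ yields $\gamma(t)=\phi(c(t))$, and differentiating this identity at $t=0$ via the chain rule gives $\gamma'(0)=D\phi(c(0))\,c'(0)=D\phi(x_0)\,c'(0)\in D\phi(x_0)\bbr^m=T_y\calm$. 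Combining the two inclusions proves the claimed equality.
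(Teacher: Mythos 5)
Your proof is correct and follows essentially the same route as the paper: the forward inclusion via the curve $t \mapsto \phi(x_0 + tv)$, and the reverse inclusion by invoking Proposition \ref{prop-linear-inverse} to obtain a parametrization whose inverse is the restriction of a bounded linear operator $\psi$, so that $c = \psi \circ \gamma$ is $C^1$ and the chain rule applied to $\gamma = \phi \circ c$ finishes the argument. The only cosmetic difference is that you fix a parametrization up front and then replace it (implicitly using the parametrization-independence of $T_y\calm$ from Lemma \ref{lemma-change-para}), whereas the paper simply chooses the special parametrization directly in the reverse inclusion.
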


\begin{proof}
Let $w \in T_y \calm$ be arbitrary. Furthermore, let $\phi : V \to U \cap \calm$ be a local parametrization around $y$, and set $x := \phi^{-1}(y) \in V$. There exists $v \in \bbr^m$ such that $w = D \phi(x) v$. Moreover, there exists $\epsilon > 0$ such that $x + t v \in V$ for each $t \in (-\epsilon,\epsilon)$. Hence, the curve $\gamma : (-\epsilon,\epsilon) \to U \cap \calm$ given by
\begin{align*}
\gamma(t) := \phi(x+tv), \quad t \in (-\epsilon,\epsilon)
\end{align*}
is well-defined, and we have $\gamma(0) = \phi(x) = y$ as well as $\gamma'(0) = D \phi(x) v = w$.

Now, let $w \in H$ be such that $\gamma(0) = y$ and $\gamma'(0) = w$ for some curve $\gamma : (-\epsilon,\epsilon) \to \calm$. By Proposition \ref{prop-linear-inverse} there exist a local parametrization $\phi : V \to U \cap \calm$ around $y$ and a bounded linear operator $\psi \in L(H,\bbr^m)$ such that $\phi^{-1} = \psi|_{U \cap \calm}$. We may assume that $\epsilon > 0$ is small enough such that $\gamma(t) \in U \cap \calm$ for all $t \in (-\epsilon,\epsilon)$. Now, we define $c : (-\epsilon,\epsilon) \to V$ as $c := \psi \circ \gamma$. Then we have $c \in C^1((-\epsilon,\epsilon);\bbr^m)$ with $c(0) = \psi(y) = x$ as well as $\gamma = \phi \circ c$. Therefore, we obtain
\begin{align*}
w = \gamma'(0) = D \phi(x) c'(0) \in T_y \calm, 
\end{align*}
completing the proof.
\end{proof}

In the next result we consider the particular situation of submanifolds in Euclidean space which are the zeros of smooth functions.

\begin{lemma}\label{lemma-tang-nullstellen}
Let $\calm$ be a $(d-n)$-dimensional $C^k$-submanifold of $\bbr^d$, where $d,n \in \bbn$ are such that $n < d$. Suppose there exist an open subset $O \subset \bbr^d$ and a mapping $f \in C^1(O;\bbr^n)$ such that
\begin{align*}
\calm = \{ y \in O : f(y) = 0 \}.
\end{align*}
Let $y \in \calm$ be such that $Df(y)\bbr^d = \bbr^n$. Then we have
\begin{align*}
T_y \calm = \ker Df(y).
\end{align*}
\end{lemma}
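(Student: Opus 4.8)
The plan is to prove the two sides are equal by establishing one inclusion directly and then observing that both subspaces have the same dimension $d-n$.

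First I would show $T_y \calm \subseteq \ker Df(y)$ using the curve characterization of the tangent space from Proposition \ref{prop-tang-curve}. Let $w \in T_y \calm$ be arbitrary. Then there exist $\epsilon > 0$ and a curve $\gamma : (-\epsilon,\epsilon) \to \calm$ with $\gamma(0) = y$ and $\gamma'(0) = w$. Since $\gamma(t) \in \calm \subset O$ for all $t$, and $f$ vanishes identically on $\calm$, the composition $f \circ \gamma$ is identically zero on $(-\epsilon,\epsilon)$. As $f \in C^1(O;\bbr^n)$ and $\gamma \in C^1$, the chain rule yields
\begin{align*}
0 = (f \circ \gamma)'(0) = Df(\gamma(0)) \gamma'(0) = Df(y) w,
\end{align*}
so that $w \in \ker Df(y)$. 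This gives the desired inclusion.

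Next I would compare dimensions. On the one hand, since $\calm$ is a $(d-n)$-dimensional $C^k$-submanifold, a local parametrization $\phi : V \to U \cap \calm$ around $y$ satisfies $T_y \calm = D\phi(x)\bbr^{d-n}$ by Definition \ref{def-tang-raum}, where $x := \phi^{-1}(y)$; because $\phi$ is a $C^k$-immersion, $D\phi(x)$ is one-to-one, and hence $\dim T_y \calm = d-n$. On the other hand, the hypothesis $Df(y)\bbr^d = \bbr^n$ states precisely that $Df(y) \in L(\bbr^d,\bbr^n)$ is surjective, so the rank-nullity theorem gives $\dim \ker Df(y) = d - \rk Df(y) = d - n$.

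Finally, since $T_y \calm \subseteq \ker Df(y)$ and both are subspaces of $\bbr^d$ of the same finite dimension $d-n$, they must coincide, completing the proof. I do not anticipate any genuine obstacle here: the only point requiring care is the computation $\dim T_y \calm = d-n$, but this is immediate from the definition of submanifold together with the injectivity of $D\phi(x)$, both of which are already in place.
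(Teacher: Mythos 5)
Your proposal is correct and follows essentially the same route as the paper's proof: the inclusion $T_y \calm \subset \ker Df(y)$ via the curve characterization of Proposition \ref{prop-tang-curve} and the chain rule, followed by the dimension count $\dim \ker Df(y) = d-n$ from rank-nullity. The only difference is that you spell out explicitly why $\dim T_y \calm = d-n$ (injectivity of $D\phi(x)$), which the paper leaves implicit.
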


\begin{proof}
Let $w \in T_y \calm$ be arbitrary. By Proposition \ref{prop-tang-curve} there exists a curve $\gamma : (-\epsilon,\epsilon) \to \calm$ such that $\gamma(0) = y$ and $\gamma'(0) = w$. We have $f(\gamma(t)) = 0$ for all $t \in (-\epsilon,\epsilon)$. Therefore, we have $\frac{d}{dt} f(\gamma(t)) = 0$ for all $t \in (-\epsilon,\epsilon)$, and hence, in particular
\begin{align*}
0 = (f \circ \gamma)'(0) = Df(\gamma(0)) \gamma'(0) = D f(y) w,
\end{align*}
proving the inclusion
\begin{align*}
T_y \calm \subset \ker Df(y).
\end{align*}
Moreover, by the rank-nullity theorem we have $\dim \ker Df(y) = d-n$, completing the proof.
\end{proof}

For two normed spaces $X$ and $Y$ and an integer $n \in \bbn$ we denote by $L^n(X,Y)$ the space of all continuous $n$-multilinear maps $T : X^n \to Y$.

\begin{lemma}\label{lemma-second-order-chain}
Let $X,Y,Z$ be normed spaces, and let $U \subset X$ and $V \subset Y$ be open subsets. Let $f \in C^2(U;Y)$ with $f(U) \subset V$ and $g \in C^2(V;Z)$ be mappings. Then we have $g \circ f \in C^2(U;Z)$, and for each $x \in U$ we have
\begin{align*}
\underbrace{D^2 (g \circ f)(x)}_{\in L^{2}(X,Z)} = \underbrace{D^2 g(f(x))}_{\in L^{2}(Y,Z)} \circ \underbrace{(Df(x),Df(x))}_{\in L(X,Y) \times L(X,Y)} + \underbrace{Dg(f(x))}_{\in L(Y,Z)} \circ \underbrace{D^2 f(x)}_{\in L^{2}(X,Y)}.
\end{align*}
\end{lemma}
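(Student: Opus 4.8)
The plan is to reduce everything to the first-order chain rule together with the Leibniz (product) rule for continuous bilinear maps. The starting point is the first-order chain rule, which gives $g \circ f \in C^1(U;Z)$ with
$$D(g \circ f)(x) = Dg(f(x)) \circ Df(x), \quad x \in U.$$
I would view the right-hand side as the image of the pair $(Dg(f(x)), Df(x))$ under the composition map $c : L(Y,Z) \times L(X,Y) \to L(X,Z)$, $c(S,T) := S \circ T$, which is continuous and bilinear, hence smooth, with derivative $Dc(S,T)(S',T') = S' \circ T + S \circ T'$.

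Concretely, I would set $\Phi := (Dg) \circ f : U \to L(Y,Z)$ and $\Psi := Df : U \to L(X,Y)$, so that $D(g \circ f) = c \circ (\Phi, \Psi)$. Since $f \in C^2$ and $g \in C^2$, both $\Phi$ (a composition of the $C^1$ map $Dg$ with the $C^1$ map $f$) and $\Psi$ are of class $C^1$; hence $c \circ (\Phi, \Psi)$ is $C^1$ and therefore $g \circ f \in C^2(U;Z)$. Applying the first-order chain rule once more, for $x \in U$ and $h \in X$,
$$D^2(g \circ f)(x) h = Dc(\Phi(x),\Psi(x)) \big( D\Phi(x)h, D\Psi(x)h \big) = (D\Phi(x)h) \circ \Psi(x) + \Phi(x) \circ (D\Psi(x)h).$$

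It then remains to identify the two summands. By the chain rule applied to $\Phi = (Dg)\circ f$ we have $D\Phi(x)h = D^2 g(f(x))(Df(x)h, \cdot) \in L(Y,Z)$, while $D\Psi(x)h = D^2 f(x)(h,\cdot) \in L(X,Y)$ by definition of the second derivative of $f$. Evaluating the displayed identity at an arbitrary $k \in X$ and unwinding the compositions yields
$$D^2(g \circ f)(x)(h,k) = D^2 g(f(x))(Df(x)h, Df(x)k) + Dg(f(x)) \big( D^2 f(x)(h,k) \big),$$
which is precisely the asserted formula once both sides are read as elements of $L^2(X,Z)$.

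The routine parts are the verification that $c$ is smooth and the standard first-order chain rule; the step requiring the most care is the bookkeeping of the canonical identifications between continuous linear maps into operator spaces and continuous multilinear maps --- e.g.\ $L^2(Y,Z) \cong L(Y,L(Y,Z))$ for $D^2 g(f(x))$ and $L^2(X,Y) \cong L(X,L(X,Y))$ for $D^2 f(x)$ --- so that the compositions $(D\Phi(x)h) \circ \Psi(x)$ and $\Phi(x) \circ (D\Psi(x)h)$ are correctly matched, upon evaluation at $k \in X$, with the bilinear expressions $D^2 g(f(x))(Df(x)h, Df(x)k)$ and $Dg(f(x))(D^2 f(x)(h,k))$. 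I expect this identification, rather than any analytic difficulty, to be the main obstacle to a fully rigorous write-up.
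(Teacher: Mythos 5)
Your proposal is correct. Note, however, that the paper does not actually carry out any argument here: its ``proof'' consists of a single citation to the higher-order chain rule in \cite{Abraham} (pages 87--88). Your write-up therefore supplies what the paper leaves to the reference, and it does so by the standard route: first-order chain rule, smoothness of the continuous bilinear composition map $c(S,T) = S \circ T$ with $Dc(S,T)(S',T') = S' \circ T + S \circ T'$, and one more differentiation of $D(g \circ f) = c \circ (\Phi,\Psi)$ with $\Phi = (Dg) \circ f$ and $\Psi = Df$. All steps check out, including the two identifications you single out as the delicate point: with the convention $D^2 F(x)(h,k) = \big( D(DF)(x)h \big)(k)$, one has $D\Phi(x)h = D^2 g(f(x))(Df(x)h,\cdot)$ and $D\Psi(x)h = D^2 f(x)(h,\cdot)$, and evaluating $(D\Phi(x)h) \circ \Psi(x) + \Phi(x) \circ (D\Psi(x)h)$ at $k$ gives exactly the asserted formula, so the bookkeeping you were worried about is routine and done correctly. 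The trade-off between the two approaches is the expected one: the citation buys brevity and covers derivatives of arbitrary order at once (which the paper never needs beyond order two), while your argument buys a self-contained proof at order two from nothing more than the first-order chain rule and boundedness of composition, and it works verbatim in arbitrary normed spaces, which is the generality the lemma is stated in.
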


\begin{proof}
This follows from the higher order chain rule; see \cite[pages 87, 88]{Abraham}.
\end{proof}

Now, we turn back to the situation where $\calm$ be an $m$-dimensional $C^k$-submanifold of the Hilbert space $H$.

\begin{lemma}\label{lemma-second-tang}
Suppose $k \geq 2$. Let $\phi_i : V_i \rightarrow U_i \cap \calm$, $i=1, 2$ be two local parametrizations of $\calm$ with $U \cap \calm \neq \emptyset$, where $U := U_1 \cap U_2$. Then for all $y \in U \cap \calm$ and $w_1,w_2 \in T_y \calm$ we have
\begin{align*}
D^2 \phi_1(x_1) ( v_1, v_2 ) - D^2 \phi_2(x_2) ( u_1, u_2 ) \in T_y \calm,
\end{align*}
where $x_i := \phi_i^{-1}(y) \in V_i$ and $v_i := D \phi_1(x_1)^{-1} w_i, u_i := D \phi_2(x_2)^{-1} w_i \in \bbr^m$ for $i=1, 2$.
\end{lemma}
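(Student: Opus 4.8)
The plan is to reduce the statement to the change-of-parametrization map and invoke the second-order chain rule. By Lemma \ref{lemma-change-para}, the transition map $\varphi := \phi_1^{-1} \circ \phi_2 : \phi_2^{-1}(W) \to \phi_1^{-1}(W)$ is a $C^k$-diffeomorphism, where $W := U \cap \calm$; since $k \geq 2$ it is in particular of class $C^2$. On the overlap we have the identity $\phi_2 = \phi_1 \circ \varphi$, and evaluating the transition map at $x_2 = \phi_2^{-1}(y)$ gives $\varphi(x_2) = \phi_1^{-1}(\phi_2(x_2)) = \phi_1^{-1}(y) = x_1$.

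First I would apply Lemma \ref{lemma-second-order-chain} to the composition $\phi_2 = \phi_1 \circ \varphi$ at the point $x_2$, which, using $\varphi(x_2) = x_1$, yields
\begin{align*}
D^2 \phi_2(x_2)(u_1,u_2) = D^2 \phi_1(x_1)\big( D\varphi(x_2) u_1, D\varphi(x_2) u_2 \big) + D\phi_1(x_1) D^2 \varphi(x_2)(u_1,u_2).
\end{align*}
Next I would identify the arguments $D\varphi(x_2) u_i$ with $v_i$. Differentiating $\phi_2 = \phi_1 \circ \varphi$ once gives $D\phi_2(x_2) = D\phi_1(x_1) D\varphi(x_2)$, so from the defining relation $D\phi_2(x_2) u_i = w_i$ we obtain $D\phi_1(x_1) D\varphi(x_2) u_i = w_i$, hence $D\varphi(x_2) u_i = D\phi_1(x_1)^{-1} w_i = v_i$ for $i = 1,2$. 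Substituting this into the chain-rule identity and rearranging yields
\begin{align*}
D^2 \phi_1(x_1)(v_1,v_2) - D^2 \phi_2(x_2)(u_1,u_2) = - D\phi_1(x_1) D^2 \varphi(x_2)(u_1,u_2).
\end{align*}
The right-hand side lies in the range of $D\phi_1(x_1)$, which by Definition \ref{def-tang-raum} equals $T_y\calm$, completing the argument.

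This is essentially a bookkeeping computation, so I do not expect a genuine obstacle; the only point requiring care is the identification $D\varphi(x_2)u_i = v_i$, which hinges on correctly applying the first-order chain rule together with the invertibility of $D\phi_1(x_1)$ and $D\phi_2(x_2)$ as isomorphisms onto $T_y\calm$. Conceptually, the lemma expresses that the normal component of the second derivative of a parametrization — the second fundamental form — is independent of the chosen parametrization, the tangential discrepancy being absorbed entirely into the correction term $D\phi_1(x_1) D^2\varphi(x_2)(u_1,u_2)$.
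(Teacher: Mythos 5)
Your proof is correct and follows essentially the same route as the paper's: both pass to the transition map $\varphi = \phi_1^{-1} \circ \phi_2$ via Lemma \ref{lemma-change-para}, apply the second-order chain rule (Lemma \ref{lemma-second-order-chain}) to $\phi_2 = \phi_1 \circ \varphi$, identify $D\varphi(x_2)u_i = v_i$ through the first-order chain rule, and observe that the leftover term $D\phi_1(x_1) D^2\varphi(x_2)(u_1,u_2)$ lies in $T_y\calm$. No gaps.
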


\begin{proof}
By assumption we have $W := U \cap \calm \neq \emptyset$. Thus, by Lemma~\ref{lemma-change-para} the mapping
\begin{align*}
\varphi := \phi_1^{-1} \circ \phi_2 : \phi_2^{-1}(W) \rightarrow \phi_1^{-1}(W)
\end{align*}
is a $C^k$-diffeomorphism. By the usual chain rule, we have
\begin{align*}
D \phi_2(x_2) = D(\phi_1 \circ \varphi)(x_2) = D \phi_1(x_1) D \varphi(x_2),
\end{align*}
and hence
\begin{align*}
D \varphi(x_2) = D \phi_1(x_1)^{-1} D \phi_2 (x_2).
\end{align*}
Therefore, by the second order chain rule (Lemma \ref{lemma-second-order-chain}) we obtain
\begin{align*}
&D^2 \phi_2(x_2)(u_1,u_2) = D^2 (\phi_1 \circ \varphi)(x_2)(u_1,u_2)
\\ &= D^2 \phi_1(x_1) ( D \varphi(x_2) u_1, D \varphi(x_2) u_2 ) + D \phi_1(x_1) D^2 \varphi(x_2)(u_1,u_2)
\\ &= D^2 \phi_1(x_1) ( v_1, v_2 ) + D \phi_1(x_1) D^2 \varphi(x_2)(u_1,u_2).
\end{align*}
Since $D \phi_1(x_1) D^2 \varphi(x_2)(u_1,u_2) \in T_y \calm$, this completes the proof.
\end{proof}

Now, let $G$ be another Hilbert space such that $(G,H)$ is a pair of continuously embedded Hilbert spaces. Denoting by $\tau_G$ and $\tau_H$ the respective topologies, we have $\tau_H \cap G \subset \tau_G$. Recall that $\calm$ denotes an $m$-dimensional $C^k$-submanifold of $H$. If $\calm \subset G$, then we have $\tau_H \cap \calm \subset \tau_G \cap \calm$.

\begin{definition}\label{def-manifold-G}
We call $\calm$ an $m$-dimensional $(G,H)$-submanifold of class $C^k$ if $\calm \subset G$ and $\tau_H \cap \calm = \tau_G \cap \calm$.
\end{definition}

\begin{proposition}\label{prop-manifold-scaled}
Let $\calm$ be an $m$-dimensional $C^k$-submanifold of $H$. Then the following statements are equivalent:
\begin{enumerate}
\item[(i)] $\calm$ is a $(G,H)$-submanifold of class $C^k$.

\item[(ii)] $\calm \subset G$ and the identity $\Id : (\calm, \| \cdot \|_H) \to (\calm, \| \cdot \|_G)$ is continuous.

\item[(iii)] $\calm \subset G$ and the identity $\Id : (\calm, \| \cdot \|_H) \to (\calm, \| \cdot \|_G)$ is a homeomorphism.

\item[(iv)] $\calm \subset G$ and each local parametrization $\phi : V \to U \cap \calm$\footnote{More precisely, here in the following statements we mean a local parametrization of the $C^k$-submanifold $\calm$ of $H$.} is also a homeomorphism $\phi : V \to (U \cap \calm,\| \cdot \|_G)$.

\item[(v)] Each local parametrization $\phi : V \to U \cap \calm$ satisfies $\phi \in C(V;G) \cap C^k(V;H)$.

\item[(vi)] For each $y \in \calm$ there exists a local parametrization $\phi : V \to U \cap \calm$ around $y$, which satisfies $\phi \in C(V;G) \cap C^k(V;H)$.

\item[(vii)] $\calm \subset G$ and for each $y \in \calm$ there exists a local parametrization $\phi : V \to U \cap \calm$ around $y$, which is also a homeomorphism $\phi : V \to (U \cap \calm,\| \cdot \|_G)$.
\end{enumerate}
If any of the previous conditions is fulfilled, then we have $T \calm \subset G \times H$.
\end{proposition}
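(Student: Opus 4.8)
The plan is to establish the seven conditions as equivalent by proving the cycle
\[
\text{(i)} \Leftrightarrow \text{(ii)} \Leftrightarrow \text{(iii)} \Rightarrow \text{(iv)} \Rightarrow \text{(v)} \Rightarrow \text{(vi)} \Rightarrow \text{(vii)} \Rightarrow \text{(iii)},
\]
and then to read off the final assertion as an immediate consequence. The single organizing observation is that the continuous embedding makes one direction free of charge: since $\| \cdot \|_H \le K \| \cdot \|_G$ on $G$, the map $\Id \colon (\calm, \| \cdot \|_G) \to (\calm, \| \cdot \|_H)$ is always continuous, equivalently $\tau_H \cap \calm \subset \tau_G \cap \calm$ whenever $\calm \subset G$. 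Hence every one of the conditions is really a statement about the reverse direction, namely continuity of $\Id \colon (\calm, \| \cdot \|_H) \to (\calm, \| \cdot \|_G)$. With this in mind, (i) $\Leftrightarrow$ (ii) is just the reformulation of continuity in terms of open sets (the condition $\tau_G \cap \calm \subset \tau_H \cap \calm$), and (ii) $\Leftrightarrow$ (iii) holds because the missing direction of the homeomorphism in (iii) is precisely the free continuity noted above.

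For the parametrization conditions I would exploit that, by the very definition of a $C^k$-submanifold of $H$, each local parametrization $\phi \colon V \to U \cap \calm$ is a homeomorphism onto $(U \cap \calm, \| \cdot \|_H)$ and lies in $C^k(V;H)$. Given (iii), composing this homeomorphism with the homeomorphism $\Id \colon (\calm, \| \cdot \|_H) \to (\calm, \| \cdot \|_G)$ yields (iv); dropping the continuity of the inverse gives (v), the $C^k(V;H)$ part being automatic, and (v) $\Rightarrow$ (vi) is trivial. For (vi) $\Rightarrow$ (vii) I would again use the free direction: the inverse $\phi^{-1} \colon (U \cap \calm, \| \cdot \|_G) \to V$ factors as the continuous inclusion $\Id \colon (U\cap\calm, \|\cdot\|_G) \to (U\cap\calm, \|\cdot\|_H)$ followed by the $H$-homeomorphism $\phi^{-1}$, hence is automatically continuous, upgrading the merely continuous $\phi$ of (vi) to a $G$-homeomorphism. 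Finally, since continuity is a local property, the locally available $G$-homeomorphisms $\phi$ from (vii) realize $\Id \colon (\calm, \| \cdot \|_H) \to (\calm, \| \cdot \|_G)$ as a homeomorphism on each $U \cap \calm$, written there as $\phi \circ \phi^{-1}$ through the two topologies, and these patch to the global statement (iii).

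The concluding inclusion $T\calm \subset G \times H$ then requires no extra work: each of the equivalent conditions forces $\calm \subset G$, either explicitly or, in the case of (v) and (vi), because $\phi \in C(V;G)$ puts $U \cap \calm = \phi(V) \subset G$ and these sets cover $\calm$; meanwhile $T_y \calm \subset H$ always holds by Definition \ref{def-tang-raum}. Thus every $(y,z) \in T\calm$ lies in $G \times H$. I expect the only genuine care to be demanded in the bookkeeping of the three relevant topologies (on $V$, on $(\calm, \|\cdot\|_H)$, and on $(\calm, \|\cdot\|_G)$) and in the locality argument for (vii) $\Rightarrow$ (iii); there is no analytic obstacle, the content being purely point-set topological once the free direction supplied by the embedding is isolated.
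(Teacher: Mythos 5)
Your proof is correct and follows essentially the same route as the paper: the trivial equivalences (i) $\Leftrightarrow$ (ii) $\Leftrightarrow$ (iii) via the ``free'' continuity $\Id : (\calm,\|\cdot\|_G) \to (\calm,\|\cdot\|_H)$ supplied by the embedding, then the chain (iii) $\Rightarrow$ (iv) $\Rightarrow$ (v) $\Rightarrow$ (vi) $\Rightarrow$ (vii) $\Rightarrow$ (iii) by composing local parametrizations with the identity through the two topologies, exactly as in the paper's proof. Your treatment is in fact slightly more explicit than the paper's on two points it leaves implicit, namely the patching of the local homeomorphisms in (vii) $\Rightarrow$ (iii) using that the sets $U \cap \calm$ are $\tau_H$-open in $\calm$, and the derivation of $T\calm \subset G \times H$ from $\calm \subset G$ together with $T_y\calm \subset H$.
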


\begin{proof}
The equivalences (i) $\Leftrightarrow$ (ii) $\Leftrightarrow$ (iii) are obvious.

\noindent(iii) $\Rightarrow$ (iv): By hypothesis, $\phi : V \to (U \cap \calm, \| \cdot \|_H) \to (U \cap \calm, \| \cdot \|_G)$ is a homeomorphism.

\noindent(iv) $\Rightarrow$ (v) $\Rightarrow$ (vi): These implications are obvious.

\noindent(vi) $\Rightarrow$ (vii): $\phi : V \to (U \cap \calm, \| \cdot \|_H) \to (U \cap \calm, \| \cdot \|_G)$ is a homeomorphism, because $\phi \in C(V;G)$.

\noindent(vii) $\Rightarrow$ (iii): Let $y \in \calm$ be arbitrary, and let $\phi : V \to U \cap \calm$ be a local parametrization  around $y$, which is also a homeomorphism $\phi : V \to (U \cap \calm,\| \cdot \|_G)$. Then the restricted identity 
\begin{align*}
\Id|_{U \cap \calm} : (U \cap \calm, \| \cdot \|_H) \overset{\phi^{-1}}{\longrightarrow} V \overset{\phi}{\longrightarrow} (U \cap \calm, \| \cdot \|_G)
\end{align*}
is homeomorphism.

\noindent The additional statement $T \calm \subset G \times H$ is a direct consequence of (vi).
\end{proof}

\begin{remark}
If $\calm$ is an $m$-dimensional $(G,H)$-submanifold of class $C^k$, then, according to Proposition \ref{prop-manifold-scaled}, it is also an $m$-dimensional topological submanifold of $G$. 
\end{remark}

Recall that $(G,H)$ denotes a pair of continuously embedded Hilbert spaces, and that $\calm$ is an $m$-dimensional $C^k$-submanifold of $H$. Now, let $(H_0,H_1,\ldots,H_{k-1},H_k)$ be continuously embedded Hilbert spaces such that $G = H_0$ and $H = H_k$.

\begin{definition}
We call $\calm$ an $m$-dimensional $(H_0,\ldots,H_k)$-submanifold of class $C^k$ if the following conditions are fulfilled:
\begin{enumerate}
\item $\calm$ is an $m$-dimensional $(G,H)$-submanifold of class $C^k$.

\item $\calm$ is an $m$-dimensional $C^j$-submanifold of $H_j$ for each $j=1,\ldots,k$.
\end{enumerate}
\end{definition}

\begin{proposition}\label{prop-degrees}
The following statements are equivalent:
\begin{enumerate}
\item[(i)] $\calm$ is an $m$-dimensional $(H_0,\ldots,H_k)$-submanifold of class $C^k$.

\item[(ii)] $\calm$ is an $m$-dimensional $(G,H_j)$-submanifold of class $C^j$ for each $j=1,\ldots,k$.

\item[(iii)] For each $y \in \calm$ there exists a local parametrization $\phi : V \to U \cap \calm$\footnote{Also here we mean a local parametrization of the $C^k$-submanifold $\calm$ of $H$.} around $y$, which satisfies $\phi \in \bigcap_{j=0}^k C^j(V;H_j)$.
\end{enumerate}
If any of the previous conditions is fulfilled, then we have $T \calm \subset G \times H_1$.
\end{proposition}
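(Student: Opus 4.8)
The plan is to prove the cycle (i) $\Rightarrow$ (ii) $\Rightarrow$ (iii) $\Rightarrow$ (i) and then to read the additional statement off from (iii). The basic tool is Proposition \ref{prop-manifold-scaled}, applied with the ambient space $H_j$ and smoothness degree $j$ in place of $H$ and $k$: it lets me pass freely between the $(G,H_j)$-submanifold property, the coincidence of the trace topologies $\tau_{H_j}\cap\calm = \tau_G\cap\calm$, and the existence of local parametrizations lying in $C(V;G)\cap C^j(V;H_j)$. I will repeatedly use that the continuous embeddings $G=H_0\hookrightarrow\cdots\hookrightarrow H_k=H$ induce the chain $\tau_H\cap\calm\subseteq\tau_{H_j}\cap\calm\subseteq\tau_G\cap\calm$ of trace topologies on the fixed set $\calm$.

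For (i) $\Rightarrow$ (ii) I only need a topological squeeze. Condition (i) contains the $(G,H)$-submanifold property, which by Definition \ref{def-manifold-G} gives $\tau_G\cap\calm=\tau_H\cap\calm$; inserting this into the chain above forces $\tau_{H_j}\cap\calm=\tau_G\cap\calm$ for every $j$. Since (i) also provides that $\calm$ is a $C^j$-submanifold of $H_j$, Definition \ref{def-manifold-G} (with $H_j$, $j$) shows that $\calm$ is a $(G,H_j)$-submanifold of class $C^j$, which is (ii).

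The heart of the argument is (ii) $\Rightarrow$ (iii), where the difficulty is that (ii) a priori supplies a different local parametrization for each index $j$, whereas (iii) demands a single parametrization that is simultaneously $C^j$ into $H_j$ for all $j$. Here is how I would reconcile them. Condition (ii) gives $\tau_{H_j}\cap\calm=\tau_G\cap\calm$ for all $j$ (Definition \ref{def-manifold-G}), so all trace topologies on $\calm$ coincide. Fix $y_0\in\calm$ and a single local parametrization $\phi:V\to U\cap\calm$ of $\calm$ regarded as a $C^k$-submanifold of $H$. For each $j\in\{1,\dots,k-1\}$, Proposition \ref{prop-manifold-scaled}(vi) (ambient $H_j$) furnishes a local parametrization $\psi_j$ around $y_0$ with $\psi_j\in C(W_j;G)\cap C^j(W_j;H_j)$. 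Composing with the embedding $\iota_j:H_j\hookrightarrow H$, the map $\iota_j\circ\psi_j$ is $C^j$ into $H$, is an immersion into $H$ (its injective derivative factors through $H_j$), and, because the trace topologies coincide, is a homeomorphism onto a relatively $H$-open piece of $\calm$; hence it is a genuine local parametrization of $\calm$ regarded as a $C^j$-submanifold of $H$. Applying Lemma \ref{lemma-change-para} to the two $C^j$-parametrizations $\phi$ and $\iota_j\circ\psi_j$ shows that the transition map $\varphi_j:=\phi^{-1}\circ\psi_j$ is a $C^j$-diffeomorphism between open subsets of $\bbr^m$. Consequently $\phi=\psi_j\circ\varphi_j^{-1}$ is $C^j$ into $H_j$ on the overlap. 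Intersecting the finitely many neighborhoods obtained for $j=1,\dots,k-1$, and noting that $\phi\in C(\cdot;G)=C^0(\cdot;H_0)$ by Proposition \ref{prop-manifold-scaled}(v) and $\phi\in C^k(\cdot;H_k)$ by construction, the restriction of $\phi$ to this common neighborhood lies in $\bigcap_{j=0}^k C^j(V';H_j)$, which is (iii).

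Finally, (iii) $\Rightarrow$ (i) is a verification. Given a parametrization $\phi\in\bigcap_{j=0}^k C^j(V;H_j)$ around each point, the fact that $\phi\in C(V;G)\cap C^k(V;H)$ yields the $(G,H)$-submanifold property via Proposition \ref{prop-manifold-scaled}(vi). For each $j$, the map $\phi$ is $C^j$ into $H_j$ with injective derivative (again because $D\phi(x)$ is injective into $H$ and factors through $H_j$), and it is a homeomorphism onto a relatively $H_j$-open piece of $\calm$ because its inverse, being continuous for the coarser topology $\tau_H$, is a fortiori continuous for the finer topology $\tau_{H_j}$; thus $\calm$ is a $C^j$-submanifold of $H_j$, giving (i). The additional statement is then immediate: from (iii), $\phi\in C^1(V;H_1)$, so $T_y\calm=D\phi(x)\bbr^m\subset H_1$ by Definition \ref{def-tang-raum}, while $y\in\calm\subset G$, whence $T\calm\subset G\times H_1$. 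I expect the only genuinely delicate point to be the reconciliation step in (ii) $\Rightarrow$ (iii): the embedding trick $\psi_j\rightsquigarrow\iota_j\circ\psi_j$ is what allows Lemma \ref{lemma-change-para} to be invoked, and it works precisely because the coincidence of the trace topologies guarantees that the embedded chart remains a homeomorphism onto an open subset of $\calm$.
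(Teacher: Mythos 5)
Your proposal is correct and follows essentially the same route as the paper's own proof: the trace-topology squeeze $\tau_H \cap \calm \subseteq \tau_{H_j} \cap \calm \subseteq \tau_G \cap \calm$ for (i) $\Rightarrow$ (ii), the reconciliation of the $C^k$-chart with the individual $C^j$-charts via Lemma \ref{lemma-change-para} (viewing each $H_j$-parametrization as an $H$-parametrization through the coincidence of trace topologies) for (ii) $\Rightarrow$ (iii), and the factoring of the injective derivative through $H_j$ together with Proposition \ref{prop-manifold-scaled} for (iii) $\Rightarrow$ (i). The only cosmetic difference is that you argue the homeomorphism property in (iii) $\Rightarrow$ (i) directly from the fineness of $\tau_{H_j}\cap\calm$ rather than from the already-established equality of trace topologies, which is equally valid.
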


\begin{proof}
(i) $\Rightarrow$ (ii): This implication follows because $\tau_H \cap \calm = \tau_G \cap \calm$ implies $\tau_{H_j} \cap \calm = \tau_G \cap \calm$ for all $j=1,\ldots,k$.

\noindent (ii) $\Rightarrow$ (iii): Let $y \in \calm$ be arbitrary, and let $\phi_k : V_k \to U_k \cap \calm$ be a local parametrization of the $C^k$-submanifold $\calm$ around $y$. By Proposition \ref{prop-manifold-scaled} we have $\phi_k \in C(V_k;G) \cap C^k(V_k;H)$. Let $j \in \{ 1,\ldots,k-1 \}$ be arbitrary, and let $\phi_j : V_j \to U_j \cap \calm$ be a local parametrization of the $C^j$-submanifold $\calm$ of $H_j$ around $y$. Of course, $\calm$ is also a $C^j$-submanifold of $H$, and $\phi_k : V_k \to U_k \cap \calm$ is a local parametrization around $y$. The mapping $\phi_j : V_j \to U_j \cap \calm$ is also such a local parametrization around $y$, because
\begin{align*}
\phi_j : V_j \to ( U_j \cap \calm, \| \cdot \|_{H_j} ) \to ( U_j \cap \calm, \| \cdot \|_H )
\end{align*}
is a homeomorphism, because $\tau_H \cap \calm = \tau_{H_j} \cap \calm$. Setting $W_j := U_k \cap U_j \cap \calm$, by Lemma \ref{lemma-change-para} the mapping
\begin{align*}
\varphi_j := \phi_j^{-1} \circ \phi_k : \phi_k^{-1}(W_j) \to \phi_j^{-1}(W_j)
\end{align*}
is a $C^j$-diffeomorphism, and hence we have $\phi_k|_{\phi_k^{-1}(W_j)} = \phi_j \circ \varphi_j \in C^j(\phi^{-1}(W_j);H_j)$. Therefore, setting $V := \bigcap_{j=1}^{k-1} \phi^{-1}(W_j)$ and $\phi := \phi_k|_V$, we obtain
\begin{align*}
\phi \in \bigcap_{j=0}^k C^j(V;H_j).
\end{align*}
(iii) $\Rightarrow$ (i): By Proposition \ref{prop-manifold-scaled} we have $\calm \subset G$ and $\tau_H \cap \calm = \tau_G \cap \calm$. Let $y \in \calm$ be arbitrary, and let $\phi : V \to U \cap \calm$ be a local parametrization around $y$ such that $\phi \in \bigcap_{j=0}^k C^j(V;H_j)$.  Let $j \in \{ 1,\ldots,k-1 \}$ be arbitrary. Then
\begin{align*}
\phi : V \to ( U \cap \calm, \| \cdot \|_H ) \to ( U \cap \calm, \| \cdot \|_{H_j} )
\end{align*}
is a homeomorphism, because $\tau_H \cap \calm = \tau_{H_j} \cap \calm$. Furthermore $\phi \in C^j(V;H_j)$ is a $C^j$-immersion, because $\phi \in C^k(V;H)$ is a $C^k$-immersion. This proves that $\calm$ is a $C^j$-submanifold of $H_j$.

\noindent The additional statement $T \calm \subset G \times H_1$ is a direct consequence of (iii).
\end{proof}

For what follows, let $H_0$ be another Hilbert space such that $(G,H_0,H)$ are continuously embedded Hilbert spaces. We assume that $\calm$ is an $m$-dimensional $(G,H_0,H)$-submanifold of class $C^2$. By Proposition \ref{prop-degrees} we have $T \calm \subset G \times H_0$. For the following result, recall the notation from Definition \ref{def-push}.

\begin{proposition}\label{prop-decomp-Damir-scale}
Let $A : H_0 \to H$ and $B : G \to H_0$ be continuous mappings, and let $\phi : V \to U \cap \calm$ be a local parametrization. Setting $\calm_U := U \cap \calm$, we assume that $A|_{\calm_U}, B|_{\calm_U} \in \Gamma(T \calm_U)$. We define $a,b : V \to \bbr^m$ as $a := \phi_*^{-1} A|_{\calm_U}$ and $b := \phi_*^{-1} B|_{\calm_U}$. Then the following statements are true:
\begin{enumerate}
\item If $A \in C^1(H_0;H)$, then we have $a \in C^1(V;\bbr^m)$, $b \in C(V;\bbr^m)$ and the decomposition
\begin{align}\label{zerl-general}
D A \cdot B|_{\calm_U} = \phi_* (D a \cdot b) + \phi_{**}(a,b).
\end{align}
\item Suppose there is a mapping $\bar{A} \in C^{1,0}(H_0 \times G;H)$ such that $A(y) = \bar{A}(y,y)$ for all $y \in G$ and $\bar{A}(\cdot,z)|_{\calm_U} \in \Gamma(T \calm_U)$ for each $z \in \calm_U$. We define $\bar{a} : V \times V \to \bbr^m$ as $\bar{a} := \phi_*^{-1} \bar{A}$. Then we have $a,b \in C(V;\bbr^m)$, $\bar{a} \in C^{1,0}(V \times V;\bbr^m)$, and the decomposition
\begin{align}\label{zerl-1}
D_1 \bar{A} \cdot B|_{\calm_U} = \phi_*(D_1 \bar{a} \cdot b) + \phi_{**}(a,b).
\end{align}
In particular, if $\bar{A}(\cdot,z) \in L(H_0,H)$ for each $z \in G$, then we have the decomposition
\begin{align}\label{zerl-1-linear}
\bar{A}(B(\cdot),\cdot)|_{\calm_U} = \phi_* ( D_1 \bar{a} \cdot b ) + \phi_{**}(a,b).
\end{align}
\item Suppose there is a mapping $\bar{A} \in C^1(H_0 \times H_0;H)$ such that $A(y) = \bar{A}(y,y)$ for all $y \in G$ and $\bar{A}(\cdot,z)|_{\calm_U} \in \Gamma(T \calm_U)$ for each $z \in \calm_U$. We define $\bar{a} : V \times V \to \bbr^m$ as $\bar{a} := \phi_*^{-1} \bar{A}$. Then we have $a \in C^1(V;\bbr^m)$, $b \in C(V;\bbr^m)$, $\bar{a} \in C^1(V \times V;\bbr^m)$, and the decomposition
\begin{align}\label{zerl-2}
D A \cdot B|_{\calm_U} = \phi_* ( D_2 \bar{a} \cdot b ) + D_1 \bar{A} \cdot B|_{\calm_U}.
\end{align}
In particular, if $\bar{A}(\cdot,z) \in L(H_0,H)$ for each $z \in G$, then we have the decomposition
\begin{align}\label{zerl-2-linear}
D A \cdot B|_{\calm_U} = \phi_* ( D_2 \bar{a} \cdot b ) + \bar{A}(B(\cdot),\cdot)|_{\calm_U}.
\end{align}
\end{enumerate}
\end{proposition}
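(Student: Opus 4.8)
My plan is to derive all three decompositions from a single source: the defining identity $A|_{\calm_U} = \phi_* a$, which spelled out reads $A(\phi(x)) = D\phi(x)\,a(x)$ for $x\in V$, together with its two-variable analogue $\bar{A}(\phi(x_1),\phi(x_2)) = D\phi(x_1)\,\bar{a}(x_1,x_2)$ coming from $\bar{A}(\cdot,z)|_{\calm_U}\in\Gamma(T\calm_U)$. Each asserted decomposition will be obtained by differentiating the appropriate identity and then substituting the tangential direction $v=b(x)$, using $B(\phi(x)) = D\phi(x)\,b(x)$. The genuinely technical content, and the prerequisite for differentiating at all, is the claimed regularity of $a$, $b$ and $\bar{a}$; I would establish this first.

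For the regularity I would invoke Proposition \ref{prop-degrees} (applied to the triple $(G,H_0,H)$ with $k=2$), which lets me take $\phi \in C(V;G)\cap C^1(V;H_0)\cap C^2(V;H)$. Fixing $x_0\in V$, the vectors $\{D\phi(x_0)e_j\}_{j=1}^m$ are linearly independent in $H$, so, as in Proposition \ref{prop-linear-inverse}, I can choose $\zeta_1,\dots,\zeta_m\in H$ biorthogonal to them and form the Gram-type matrix $P(x) := (\langle \zeta_i, D\phi(x)e_j\rangle_H)_{i,j}$. Then $P(x_0)=\Id$, so $P$ is invertible on a neighbourhood of $x_0$, and $P\in C^1$ because $\phi\in C^2(V;H)$. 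Pairing the defining identities with the $\zeta_i$ gives the explicit formulas $a(x) = P(x)^{-1}(\langle\zeta_i, A(\phi(x))\rangle_H)_i$, $b(x) = P(x)^{-1}(\langle\zeta_i, B(\phi(x))\rangle_H)_i$ and $\bar{a}(x_1,x_2) = P(x_1)^{-1}(\langle\zeta_i,\bar{A}(\phi(x_1),\phi(x_2))\rangle_H)_i$. The regularity is then inherited from the compositions inside: $B\circ\phi\in C(V;H_0)$ gives $b\in C(V;\bbr^m)$ throughout; in (1) the hypothesis $A\in C^1(H_0;H)$ with $\phi\in C^1(V;H_0)$ yields $A\circ\phi\in C^1(V;H)$ and hence $a\in C^1$; in (2) joint continuity of $\bar{A}\in C^{1,0}(H_0\times G;H)$ gives only $a\in C$, while differentiability of $\bar{A}$ in its first slot plus $\phi\in C^1(V;H_0)$ gives $\bar{A}(\phi(\cdot),\phi(\cdot))\in C^{1,0}$ and thus $\bar{a}\in C^{1,0}(V\times V;\bbr^m)$; in (3) the full $\bar{A}\in C^1(H_0\times H_0;H)$ with $\phi\in C^1(V;H_0)$ gives both $a\in C^1$ and $\bar{a}\in C^1(V\times V;\bbr^m)$.

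With regularity in hand the decompositions are short computations, carried out entirely for maps into $H$ (where $\phi$ is $C^2$), while the chain rule for $A\circ\phi$ uses $\phi$ as a $C^1$ map into $H_0$. For (1), differentiating $A(\phi(x))=D\phi(x)a(x)$ gives
\begin{align*}
DA(\phi(x))\,D\phi(x)\,v = D^2\phi(x)(v,a(x)) + D\phi(x)\,(Da(x)\,v),
\end{align*}
and putting $v=b(x)$ yields exactly (\ref{zerl-general}). For (2), differentiating $\bar{A}(\phi(x_1),\phi(x_2))=D\phi(x_1)\bar{a}(x_1,x_2)$ in $x_1$, then setting $x_1=x_2=x$ and $v=b(x)$, produces (\ref{zerl-1}); when $\bar{A}(\cdot,z)$ is linear one has $D_1\bar{A}(y,z)w=\bar{A}(w,z)$, so $D_1\bar{A}\cdot B|_{\calm_U}=\bar{A}(B(\cdot),\cdot)|_{\calm_U}$, giving (\ref{zerl-1-linear}). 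For (3), since $A(y)=\bar{A}(y,y)$ with $\bar{A}\in C^1$ one has $DA(y)=D_1\bar{A}(y,y)+D_2\bar{A}(y,y)$; differentiating the two-variable identity in $x_2$ and specializing to $v=b(x)$ shows $D_2\bar{A}(\phi(x),\phi(x))B(\phi(x))=D\phi(x)(D_2\bar{a}(x,x)b(x))$, i.e. $D_2\bar{A}\cdot B|_{\calm_U}=\phi_*(D_2\bar{a}\cdot b)$, and adding $D_1\bar{A}\cdot B|_{\calm_U}$ gives (\ref{zerl-2}); the linear case (\ref{zerl-2-linear}) follows as before.

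The main obstacle is the regularity step, not the differentiations. Two points need care: first, producing a left inverse of $D\phi(x)$ that is simultaneously meaningful across the three embedding levels $G\hookrightarrow H_0\hookrightarrow H$ — this is exactly what the functionals $\langle\zeta_i,\cdot\rangle_H$ with $\zeta_i\in H$ and the matrix $P(x)$ accomplish, using $\phi\in C^2(V;H)$ for the $C^1$-dependence of $P^{-1}$. Second, tracking the asymmetric regularity in (2): one must verify that $\bar{a}$ is $C^1$ in its first variable but only continuous jointly, which mirrors precisely the mismatch between $H_0$ (first slot, differentiable) and $G$ (second slot, merely continuous) in the hypothesis $\bar{A}\in C^{1,0}(H_0\times G;H)$. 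Once these are settled, substituting $v=b(x)$ into the differentiated identities is routine.
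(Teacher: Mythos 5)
Your proof is correct and follows essentially the same route as the paper: both rest on the layered regularity $\phi \in C(V;G)\cap C^1(V;H_0)\cap C^2(V;H)$ supplied by Proposition \ref{prop-degrees}, and both obtain each decomposition by differentiating the tangency identity $A\circ\phi = D\phi\cdot a$ (respectively its two-variable analogue for $\bar{A}$) with the chain rule taken in $H_0$ and the Leibniz rule taken in $H$, then evaluating in the direction $b(x)$. The only deviations are organizational: you establish the regularity of $a$, $b$, $\bar{a}$ by an explicit local Gram-matrix left inverse where the paper simply cites \cite[Prop. 6.1.1]{fillnm} (the paper's curve argument for (\ref{zerl-general}) is just your directional derivative along $v=b(x)$), and in statement (3) you split $DA = D_1\bar{A} + D_2\bar{A}$ at the level of the maps on $H_0$ and compute $D_2\bar{A}\cdot B|_{\calm_U}=\phi_*(D_2\bar{a}\cdot b)$ directly, whereas the paper splits $Da = D_1\bar{a} + D_2\bar{a}$ in coordinates and combines (\ref{zerl-general}) with (\ref{zerl-1}); both computations yield (\ref{zerl-2}) identically.
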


\begin{remark}
Before we proceed with the proof, let us clarify some notation. In general, the symbols $D_1$ and $D_2$ denote the partial derivatives with respect to the first and the second coordinate. We use the notation $D A \cdot B|_{\calm_U}$ for the mapping $\calm_U \to H$, $y \mapsto D A(y) B(y)$, and the mapping $D a \cdot b$ is defined analogously. Furthermore, we use the notation $D_1 \bar{A} \cdot B|_{\calm_U}$ for the mapping $\calm_U \to H$, $y \mapsto D_1 \bar{A}(y,y) B(y)$, and the mapping $D_1 \bar{a} \cdot b$ is defined analogously. The mapping $\phi_*^{-1} \bar{A} : V \times V \to \bbr^m$ is defined as
\begin{align*}
(\phi_*^{-1} \bar{A})(x,\xi) := D \phi(x)^{-1} \bar{A}(y,z), \quad x \in V,
\end{align*}
where $y := \phi(x) \in U \cap \calm$ and $z := \phi(\xi) \in U \cap \calm$.
\end{remark}

\begin{proof}[Proof of Proposition \ref{prop-decomp-Damir-scale}]
By Proposition \ref{prop-degrees} we have
\begin{align*}
\phi \in C(V;G) \cap C^1(V;H_0) \cap C^2(V;H).
\end{align*}
Therefore, we have $A \circ \phi \in C^1(V;H)$ and $B \circ \phi \in C(V;H_0)$, and hence, by \cite[Prop. 6.1.1]{fillnm} we deduce that $a \in C^1(V;\bbr^m)$ and $b \in C(V;\bbr^m)$. Let $y \in U \cap \calm$ be arbitrary and set $x := \phi^{-1}(y) \in V$. There exists $\epsilon > 0$ such that
\begin{align*}
x + t b(x) \in V \quad \text{for all $t \in (-\epsilon,\epsilon)$.}
\end{align*}
Consequently, the curve
\begin{align*}
\gamma : (-\epsilon,\epsilon) \rightarrow U \cap \calm, \quad \gamma(t) := \phi(x + t b(x))
\end{align*}
is well-defined and satisfies $\gamma(0) = y$. Since $\phi \in C^1(V;H_0)$, we have
\begin{align*}
\gamma \in C^1((-\epsilon,\epsilon);H_0) \quad \text{and} \quad \frac{d}{dt}\gamma(t) |_{t=0} = D \phi(x) b(x) = B(y),
\end{align*}
because $b = \phi_*^{-1} B|_{\calm_U}$. Therefore, since $A \in C^1(H_0;H)$, by the chain rule we have $A \circ \gamma \in C^1((-\epsilon,\epsilon);H)$ and
\begin{align*}
\frac{d}{dt} A(\gamma(t)) |_{t=0} = DA(y)B(y).
\end{align*}
On the other hand, since $A|_{\calm_U} = \phi_* a$, we have
\begin{align*}
A(\gamma(t)) = D \phi(x + t b(x)) a(x + tb(x)), \quad t \in (-\epsilon,\epsilon).
\end{align*}
Thus, noting that $D \phi \in C^1(V;L(\bbr^m,H))$, by the Leibniz Rule we have
\begin{align*}
\frac{d}{dt} A(\gamma(t)) |_{t=0} &= \frac{d}{dt} D\phi(x + t b(x)) a(x + t b(x)) |_{t=0}
\\ &= D\phi(x)(D a(x) b(x)) + D^2 \phi(x)(a(x),b(x)).
\end{align*}
Combining the latter two identities we obtain the decomposition (\ref{zerl-general}).

Now, suppose that the additional assumptions from the second statement are fulfilled. Similar as above, by \cite[Prop. 6.1.1]{fillnm} we deduce that $a,b \in C(V;\bbr^m)$ and $\bar{a} \in C^{1,0}(V \times V;\bbr^m)$. Let $y,z \in U \cap \calm$ be arbitrary, and set $x := \phi^{-1}(y) \in V$ and $\xi := \phi^{-1}(z) \in V$. By the decomposition (\ref{zerl-general}) we have
\begin{align*}
D_1 \bar{A}(y,z) B(y) = D \phi(x) ( D_1 \bar{a}(x,\xi) b(x) ) + D^2 \phi(x) ( \bar{a}(x,\xi), b(x) ).
\end{align*}
With $y=z$ this in particular proves the decomposition (\ref{zerl-1}). If $\bar{A}(\cdot,z) \in L(H_0,H)$ for each $z \in G$, then the decomposition (\ref{zerl-1-linear}) is a direct consequence.

Now, suppose that the additional assumptions from the third statement are fulfilled. Similar as above, by \cite[Prop. 6.1.1]{fillnm} we deduce that $a \in C^1(V;\bbr^m)$, $b \in C(V;\bbr^m)$ and $\bar{a} \in C^1(V \times V;\bbr^m)$. Let $T : \bbr^m \to \bbr^m \times \bbr^m = \bbr^{2m}$ be the linear operator $T(x) = (x,x)$. By the chain rule and \cite[Prop. 2.4.12.ii]{Abraham}, for all $x \in V$ and $v \in \bbr^m$ we have
\begin{align*}
D a(x) v &= D (\bar{a} \circ T)(x)v = D \bar{a}(T(x)) DT(x) v = D \bar{a}(T(x)) Tv
\\ &= D \bar{a}(Tx)(v,v) = D_1 \bar{a}(Tx)v + D_2 \bar{a}(Tx)v = D_1 \bar{a}(x,x)v + D_2 \bar{a}(x,x)v.
\end{align*}
Therefore, for all $x \in V$ we have
\begin{align*}
Da(x)b(x) = D_1 \bar{a}(x,x)b(x) + D_2 \bar{a}(x,x)b(x).
\end{align*}
Hence, by the decompositions (\ref{zerl-general}) and (\ref{zerl-1}) for all $y \in U \cap \calm$ we obtain
\begin{align*}
D A(y) B(y) &= D \phi(x) ( D_1 \bar{a}(x,x)b(x) ) + D \phi(x) ( D_2 \bar{a}(x,x)b(x) )
\\ &\quad + D^2 \phi(x) ( a(x), b(x) )
\\ &= D \phi(x) ( D_2 \bar{a}(x,x) b(x) ) + D_1 \bar{A}(y,y) B(y),
\end{align*}
where $x := \phi^{-1}(y) \in V$, proving the decomposition (\ref{zerl-2}). If $\bar{A}(\cdot,z) \in L(H_0,H)$ for each $z \in G$, then the decomposition (\ref{zerl-2-linear}) is a direct consequence.
\end{proof}

\begin{definition}
We say that an $m$-dimensional $C^k$-submanifold $\calm$ of $H$ has \emph{one chart} if there exists a parametrization $\phi \in C^k(V;H)$ such that $\phi(V) = \calm$. In this case, we call the mapping $\phi : V \to \calm$ a \emph{global parametrization} of $\calm$.
\end{definition}

For what follows, let $d \in \bbn$ be a positive integer such that $m \leq d$, and let $\caln$ be an $m$-dimensional $C^k$-submanifold of $\bbr^d$. The following definition generalizes the concept of an immersion from Definition \ref{def-immersion}.

\begin{definition}
Let $X \subset \bbr^d$ be an open subset such that $X \cap \caln \neq \emptyset$, and let $\psi \in C^k(X;H)$ be a mapping.
\begin{enumerate}
\item Let $x_0 \in X \cap \caln$ be arbitrary. The mapping $\psi$ is called a \emph{$C^k$-immersion on $\caln$ at $x_0$} if $D \psi(x_0)|_{T_{x_0} \caln} \in L(T_{x_0} \caln,H)$ is one-to-one.

\item The mapping $\psi$ is called a \emph{$C^k$-immersion on $\caln$} if it is a $C^k$-immersion on $\caln$ at $x_0$ for each point $x_0 \in X \cap \caln$.
\end{enumerate}
\end{definition}

For what follows, we fix a mapping $\psi \in C^k(\bbr^d;H)$. Thus, we consider the situation $X = \bbr^d$.

\begin{lemma}\label{lemma-matrix-m-m}
Let $x_0 \in \caln$ be arbitrary, let $\{ v_1,\ldots,v_m \}$ be a basis of $T_{x_0} \caln$, and let $h_1,\ldots,h_m \in H$ be such that the matrix
\begin{align}\label{matrix-m-m}
\big( \la D \psi(x_0) v_i, h_j \ra_H \big) \in \bbr^{m \times m}
\end{align}
is invertible. Then $\psi$ is a $C^k$-immersion on $\caln$ at $x_0$.
\end{lemma}

\begin{proof}
It suffices to show that the vectors $D \psi(x_0) v_i$, $i=1,\ldots,m$ are linearly independent. For this purpose, let $c_1,\ldots,c_m \in \bbr$ be such that
\begin{align*}
\sum_{i=1}^m c_i D \psi(x_0) v_i = 0.
\end{align*}
Then for each $j = 1,\ldots,m$ we have
\begin{align*}
\sum_{i=1}^m c_i \la D \psi(x_0)v_i, h_j \ra_H = \bigg\la \sum_{i=1}^m c_i D \psi(x_0)v_i, h_j \bigg\ra_H = 0,
\end{align*}
and by invertibility of the matrix (\ref{matrix-m-m}) we deduce that $c_1 = \ldots = c_m = 0$.
\end{proof}

\begin{lemma}\label{lemma-induced-0}
Let $x_0 \in \caln$ be such that $\psi$ is a $C^k$-immersion on $\caln$ at $x_0$. Then there exists an open neighborhood $W_0 \subset \bbr^d$ of $x_0$ such that:
\begin{enumerate}
\item The submanifold $W_0 \cap \caln$ has one chart.

\item $\psi|_{W_0 \cap \caln} : W_0 \cap \caln \to \psi(W_0 \cap \caln)$ is a homeomorphism.

\item $\psi$ is a $C^k$-immersion on $W_0 \cap \caln$.
\end{enumerate}
\end{lemma}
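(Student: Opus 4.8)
The plan is to reduce the statement to the finite-dimensional inverse function theorem by pulling $\psi$ back along a local parametrization of $\caln$ and then projecting the infinite-dimensional target $H$ down to $\bbr^m$ via finitely many inner products. First I would pick a local parametrization $\phi_0 : V_0 \to U_0 \cap \caln$ of $\caln$ around $x_0$ with $\phi_0(t_0) = x_0$, and form the composition $\Phi := \psi \circ \phi_0 \in C^k(V_0;H)$. Since $D\phi_0(t_0) : \bbr^m \to T_{x_0}\caln$ is a linear isomorphism and $D\psi(x_0)|_{T_{x_0}\caln}$ is one-to-one by hypothesis, the chain rule yields that $D\Phi(t_0) = D\psi(x_0) \circ D\phi_0(t_0) : \bbr^m \to H$ is injective; that is, $\Phi$ is a $C^k$-immersion at $t_0$ in the sense of Definition \ref{def-immersion}.

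Next -- and this is where the infinite-dimensionality of $H$ is dealt with -- I would set $w_i := D\Phi(t_0) e_i$ for the standard basis $e_1,\ldots,e_m$ of $\bbr^m$. These are linearly independent, so their Gram matrix $(\langle w_i, w_j \rangle_H)$ is invertible. Taking $h_j := w_j$ and defining the bounded linear map $P := (\langle h_1,\cdot \rangle_H, \ldots, \langle h_m,\cdot \rangle_H) \in L(H,\bbr^m)$, the composition $g := P \circ \Phi : V_0 \to \bbr^m$ is $C^k$ with $Dg(t_0)$ invertible. The finite-dimensional inverse function theorem then furnishes an open neighborhood $V_1 \subset V_0$ of $t_0$ on which $g$ is a $C^k$-diffeomorphism onto an open subset of $\bbr^m$.

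Then I would harvest the three conclusions. Since $g|_{V_1}$ is a diffeomorphism, $Dg(t)$ is invertible for every $t \in V_1$, so each $D\Phi(t)$ is injective (by the argument of Lemma \ref{lemma-matrix-m-m}), whence $\Phi$ is an immersion on all of $V_1$. As $g = P \circ \Phi$ is injective on $V_1$, so is $\Phi|_{V_1}$, and its inverse factors as $(\Phi|_{V_1})^{-1} = (g|_{V_1})^{-1} \circ P$, which is continuous; thus $\Phi|_{V_1} : V_1 \to \Phi(V_1)$ is a homeomorphism. Finally, since $\phi_0$ is a homeomorphism onto $U_0 \cap \caln$ for the subspace topology, the image $\phi_0(V_1)$ is open in $U_0 \cap \caln$, so I may choose an open set $W_0 \subset \bbr^d$ with $W_0 \cap \caln = \phi_0(V_1)$ and $x_0 \in W_0$. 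Then $\phi_0|_{V_1}$ is a global parametrization of $W_0 \cap \caln$, giving (1); the identity $\psi|_{W_0 \cap \caln} = \Phi|_{V_1} \circ (\phi_0|_{V_1})^{-1}$ exhibits $\psi|_{W_0 \cap \caln}$ as a composition of homeomorphisms onto $\psi(W_0 \cap \caln) = \Phi(V_1)$, giving (2); and for each $x = \phi_0(t) \in W_0 \cap \caln$ the injectivity of $D\Phi(t) = D\psi(x) \circ D\phi_0(t)$ together with the surjectivity of $D\phi_0(t)$ onto $T_x\caln$ forces $D\psi(x)|_{T_x\caln}$ to be one-to-one, giving (3).

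The main obstacle is precisely the passage from ``$D\Phi$ injective'' to ``$\Phi$ is a local homeomorphism onto its image'', which in the infinite-dimensional target $H$ cannot invoke the usual finite-dimensional immersion-is-a-local-embedding theorem directly. The projection $P$ built from the Gram matrix is the device that collapses this to a genuine inverse function theorem in $\bbr^m$ and simultaneously delivers the continuity of the inverse; this is the same mechanism underlying Lemma \ref{lemma-matrix-m-m} and Proposition \ref{prop-linear-inverse}.
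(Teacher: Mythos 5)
Your proof is correct, and its skeleton is the same as the paper's: pull $\psi$ back along a local parametrization of $\caln$ around $x_0$, note via the chain rule that the composition is a $C^k$-immersion at the corresponding parameter point, show that this composition is locally an injective immersion and a homeomorphism onto its image, and transfer the three conclusions back through the parametrization. The only divergence is in the middle step: the paper treats it as a black box, citing Filipovi\'c's Prop.~6.1.1 together with the Local Injectivity Theorem of Abraham--Marsden--Ratiu, whereas you prove it from scratch by projecting $H$ onto $\bbr^m$ with the bounded linear map $P$ built from the vectors $w_i = D\Phi(t_0)e_i$, observing that $Dg(t_0)$ for $g = P \circ \Phi$ is the (invertible) Gram matrix of the $w_i$, and running the finite-dimensional inverse function theorem. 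This is precisely the standard proof of the cited Local Injectivity Theorem when the domain is finite dimensional (the closed-split-image hypothesis appearing there is automatic here, since $\ran \, D\Phi(t_0)$ is finite dimensional), and it is the same Gram-matrix device the paper itself deploys in Lemma \ref{lemma-matrix-m-m} and Proposition \ref{prop-linear-inverse}. What your version buys is self-containedness and an explicit formula $(\Phi|_{V_1})^{-1} = (g|_{V_1})^{-1} \circ P$ for the continuous inverse; what the citation buys the paper is brevity. The remaining bookkeeping in your argument (openness of $\phi_0(V_1)$ in $\caln$, the choice of $W_0$, the factorization $\psi|_{W_0 \cap \caln} = \Phi|_{V_1} \circ (\phi_0|_{V_1})^{-1}$, and the deduction that $D\psi(x)|_{T_x \caln}$ is one-to-one from the injectivity of $D\Phi(t)$ and the surjectivity of $D\phi_0(t)$ onto $T_x \caln$) matches the paper's proof and is carried out correctly.
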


\begin{proof}
Let $\varphi : V \to W \cap \caln$ be a local parametrization around $x_0$. We set $\xi_0 := \varphi^{-1}(x_0) \in V$ and $\phi := \psi \circ \varphi$. Then by the chain rule we have $\phi \in C^k(V;H)$ and $D \phi(\xi_0) = D \psi(x_0) D \varphi(\xi_0)$, showing that $\phi$ is a $C^k$-immersion at $\xi_0$. By \cite[Prop. 6.1.1]{fillnm} and the Local Injectivity Theorem (see \cite[Thm. 2.5.10]{Abraham}), there exists an open neighborhood $V_0 \subset V$ of $\xi_0$ such that $\phi|_{V_0}$ is an injective $C^k$-immersion and $\phi|_{V_0} : V_0 \to \phi(V_0)$ is a homeomorphism. Since $\varphi$ is a homeomorphism, there is an open neighborhood $W_0 \subset W$ of $x_0$ such that $\varphi(V_0) = W_0 \cap \caln$. Hence, the submanifold $W_0 \cap \caln$ has one chart with global parametrization $\varphi|_{V_0} : V_0 \to W_0 \cap \caln$, and $\psi = \phi \circ \varphi^{-1} : W_0 \cap \caln \to \psi(W_0 \cap \caln)$ is a homeomorphism. Furthermore, by the chain rule, for each $x \in W_0 \cap \caln$ we have
\begin{align*}
D \psi(x)|_{T_x \caln} = D \phi(\xi) D \varphi(\xi)^{-1} \in L(T_x \caln, H),
\end{align*}
where $\xi := \varphi^{-1}(x) \in V$, showing that $\psi$ is a $C^k$-immersion on $W_0 \cap \caln$.
\end{proof}

\begin{lemma}\label{lemma-induced}
Suppose that $\psi|_{\caln} : \caln \to \psi(\caln)$ is a homeomorphism, and that $\psi$ is a $C^k$-immersion on $\caln$. Then the following statements are true:
\begin{enumerate}
\item $\calm := \psi(\caln)$ is an $m$-dimensional $C^k$-submanifold $\calm$ of $H$.

\item For each local parametrization $\varphi : V \to W \cap \caln$ of $\caln$ there exists an open subset $U \subset H$ such that the mapping $\phi := \psi \circ \varphi : V \to U \cap \calm$ is a local parametrization of $\calm$.

\item If $(H_0,\ldots,H_k)$ are continuously embedded Hilbert spaces for such that $H_k = H$ and $\psi \in \bigcap_{j=0}^k C^j(\bbr^d;H_j)$, then $\calm$ is a $(H_0,\ldots,H_k)$-submanifold of class $C^k$.
\end{enumerate}
\end{lemma}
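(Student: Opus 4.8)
The plan is to establish statement (2) first, deduce (1) from it immediately, and then obtain (3) from the characterization in Proposition \ref{prop-degrees}.

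I would fix an arbitrary local parametrization $\varphi : V \to W \cap \caln$ of $\caln$ and set $\phi := \psi \circ \varphi$, then verify the three defining properties of a local parametrization of $\calm$ in turn. Smoothness $\phi \in C^k(V;H)$ is immediate from the chain rule, since $\varphi \in C^k(V;\bbr^d)$ and $\psi \in C^k(\bbr^d;H)$. For the immersion property I would write $D\phi(\xi) = D\psi(\varphi(\xi)) \circ D\varphi(\xi)$ for $\xi \in V$; here $D\varphi(\xi)$ is an isomorphism onto $T_{\varphi(\xi)}\caln$ by Definition \ref{def-tang-raum}, while $D\psi(\varphi(\xi))|_{T_{\varphi(\xi)}\caln}$ is one-to-one because $\psi$ is a $C^k$-immersion on $\caln$, so the composition is one-to-one. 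Finally $\phi$ is a homeomorphism onto its image, being the composition $\psi|_{W \cap \caln} \circ \varphi$ of two homeomorphisms, the second being $\varphi$ and the first being the restriction of the global homeomorphism $\psi|_{\caln}$ to the open subset $W \cap \caln$ of $\caln$.

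The crucial and only non-routine step is to produce an open set $U \subset H$ with $\psi(W \cap \caln) = U \cap \calm$. Here I would exploit the global homeomorphism assumption: since $W \cap \caln$ is open in $\caln$ and $\psi|_{\caln} : \caln \to \calm$ is a homeomorphism, the image $\psi(W \cap \caln)$ is open in $\calm$ with respect to the subspace topology inherited from $H$, and hence equals $U \cap \calm$ for some open $U \subset H$. This is precisely the point at which the global hypothesis is indispensable; the purely local Lemma \ref{lemma-induced-0} would guarantee only local injectivity, not relative openness of the image. With this $U$ the map $\phi : V \to U \cap \calm$ is a local parametrization, proving (2). Statement (1) then follows by observing that every $y_0 \in \calm$ equals $\psi(x_0)$ for some $x_0 \in \caln$; choosing a local parametrization $\varphi$ of $\caln$ around $x_0$ yields, via (2), a local parametrization of $\calm$ around $y_0$, and these cover $\calm$.

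For statement (3) I would invoke the implication (iii) $\Rightarrow$ (i) of Proposition \ref{prop-degrees}. Given $y \in \calm$, take the local parametrization $\phi = \psi \circ \varphi$ constructed above. For each $j \in \{0,\ldots,k\}$ we have $\varphi \in C^k(V;\bbr^d) \subset C^j(V;\bbr^d)$ and, by hypothesis, $\psi \in C^j(\bbr^d;H_j)$, so the chain rule gives $\phi = \psi \circ \varphi \in C^j(V;H_j)$. Hence $\phi \in \bigcap_{j=0}^k C^j(V;H_j)$, which is exactly condition (iii) of Proposition \ref{prop-degrees}, and therefore $\calm$ is an $(H_0,\ldots,H_k)$-submanifold of class $C^k$.
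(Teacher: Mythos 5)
Your proof is correct and takes essentially the same route as the paper's: compose $\psi$ with an arbitrary local parametrization $\varphi$ of $\caln$, use the global homeomorphism $\psi|_{\caln}$ to obtain the open set $U \subset H$ with $\psi(W \cap \caln) = U \cap \calm$, verify the immersion property via the chain rule, and deduce (3) from Proposition \ref{prop-degrees}. Your write-up merely spells out in more detail the steps the paper compresses (in particular the injectivity argument for $D\phi(\xi)$ and the explicit use of implication (iii) $\Rightarrow$ (i) of Proposition \ref{prop-degrees}).
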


\begin{proof}
Let $\varphi : V \to W \cap \caln$ be a local parametrization of $\caln$, and set $\phi := \psi \circ \varphi$. Since $\psi|_{\caln}$ is a homeomorphism, there exists an open subset $U \subset H$ such that $\psi(W \cap \caln) = U \cap \calm$. Hence, the mapping $\phi : V \to U \cap \calm$ is a homeomorphism. Furthermore, by the chain rule, for each $\xi \in V$ we have $D \phi(\xi) = D \psi(x) D \varphi(\xi)$, where $x := \varphi(\xi) \in W \cap \caln$, showing that $\phi$ is a $C^k$-immersion. Hence, the first two statements follow, and the third statement is a consequence of Proposition \ref{prop-degrees}.
\end{proof}

\begin{definition}\label{def-manifold-embedded}
We say that a submanifold $\calm$ as in Lemma \ref{lemma-induced} is \emph{induced by $(\psi,\caln)$}.
\end{definition}

From now on, we assume that $\psi|_{\caln} : \caln \to \psi(\caln)$ is a homeomorphism, and that $\psi$ is a $C^k$-immersion on $\caln$. According to Lemma \ref{lemma-induced}, let $\calm$ be the $m$-dimensional $C^k$-submanifold of $H$, which is induced by $(\psi,\caln)$. The structure of local parametrizations is illustrated in the following diagram:

\begin{center}
\begin{tikzcd}
& U \cap \calm \\
V \arrow[r, "\varphi"] \arrow[ru, "\phi"] & W \cap \caln \arrow[u, "\psi"']
\end{tikzcd}
\end{center}

\begin{lemma}\label{lemma-M-N-one-chart}
If the submanifold $\caln$ has one chart, then the submanifold $\calm$ has one chart, and if $\varphi : V \to \caln$ is a global parametrization of $\caln$, then $\phi := \psi \circ \varphi : V \to \calm$ is a global parametrization of $\calm$.
\end{lemma}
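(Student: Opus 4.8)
The plan is to derive this almost immediately from Lemma \ref{lemma-induced}, which already manufactures a local parametrization of $\calm$ out of any local parametrization of $\caln$ post-composed with $\psi$. The single extra point to notice is that a \emph{global} parametrization of $\caln$ is just a local parametrization whose image happens to be all of $\caln$, and that the surjectivity onto $\caln$ passes through $\psi$ to give surjectivity onto $\calm = \psi(\caln)$. So the argument is a bookkeeping reduction rather than a new computation.

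Concretely, I would start by letting $\varphi : V \to \caln$ be a global parametrization of $\caln$. Since $\caln \subset \bbr^d$, I would regard $\varphi$ as a local parametrization $\varphi : V \to W \cap \caln$ with the choice $W := \bbr^d$, so that $W \cap \caln = \caln$ and the hypotheses of Lemma \ref{lemma-induced}(2) are literally met. That lemma then provides an open subset $U \subset H$ such that $\phi := \psi \circ \varphi : V \to U \cap \calm$ is a local parametrization of $\calm$; in particular $\phi$ is automatically a homeomorphism onto $U \cap \calm$ and a $C^k$-immersion, so these properties require no further verification.

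It then remains only to upgrade this local parametrization to a global one by computing its image. Using $\varphi(V) = \caln$ together with the defining identity $\calm = \psi(\caln)$, I would observe
\begin{align*}
\phi(V) = \psi(\varphi(V)) = \psi(\caln) = \calm,
\end{align*}
which forces $U \cap \calm = \phi(V) = \calm$. Hence $\phi : V \to \calm$ is a $C^k$-immersion and a homeomorphism with $\phi(V) = \calm$; that is, a global parametrization of $\calm$, and in particular $\calm$ has one chart.

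I do not expect any real obstacle here, since all the analytic content—the immersion and homeomorphism properties of $\phi$—is already carried by Lemma \ref{lemma-induced}. The only place warranting a moment's care is the identification of the global parametrization $\varphi$ with a local parametrization for which $W \cap \caln$ exhausts $\caln$, so that Lemma \ref{lemma-induced}(2) applies verbatim; everything else is the one-line image computation above.
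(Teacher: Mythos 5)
Your proof is correct and follows exactly the paper's route: the paper's own proof is just the one-line citation ``This is a consequence of Lemma \ref{lemma-induced}'', and your argument spells out precisely how that lemma applies (taking $W = \bbr^d$ so that $\varphi$ becomes a local parametrization with $W \cap \caln = \caln$, then computing $\phi(V) = \psi(\caln) = \calm$ to force $U \cap \calm = \calm$). No gaps; your write-up is simply a more detailed version of the paper's proof.
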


\begin{proof}
This is a consequence of Lemma \ref{lemma-induced}.
\end{proof}

\begin{lemma}\label{lemma-tang-embedded}
Let $y \in \calm$ be arbitrary, and set $x := \psi^{-1}(y) \in \caln$. Then we have $T_y \calm = D \psi(x) T_x \caln$.
\end{lemma}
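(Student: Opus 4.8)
The plan is to reduce the statement to the definition of the tangent space (Definition \ref{def-tang-raum}) together with the chain rule, using the explicit description of local parametrizations of $\calm$ furnished by Lemma \ref{lemma-induced}. Since $\calm$ is induced by $(\psi,\caln)$, every local parametrization of $\caln$ gives rise, via composition with $\psi$, to a local parametrization of $\calm$, and this is exactly the bridge that relates the two tangent spaces.

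First I would choose a local parametrization $\varphi : V \to W \cap \caln$ of $\caln$ around $x$, and set $\xi := \varphi^{-1}(x) \in V$. By Lemma \ref{lemma-induced}(2) there is an open set $U \subset H$ such that $\phi := \psi \circ \varphi : V \to U \cap \calm$ is a local parametrization of $\calm$. Since $\phi(\xi) = \psi(\varphi(\xi)) = \psi(x) = y$, this parametrization is centred at $y$ and satisfies $\phi^{-1}(y) = \xi$. Next I would compute both tangent spaces from Definition \ref{def-tang-raum}: applying it to $\varphi$ gives $T_x \caln = D\varphi(\xi)\bbr^m$, and applying it to the induced parametrization $\phi$ gives $T_y \calm = D\phi(\xi)\bbr^m$. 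The chain rule then yields $D\phi(\xi) = D\psi(x)\,D\varphi(\xi)$, so that
\begin{align*}
T_y \calm = D\phi(\xi)\bbr^m = D\psi(x)\big(D\varphi(\xi)\bbr^m\big) = D\psi(x)\,T_x \caln,
\end{align*}
which is the claim.

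There is no genuine obstacle here; the only point requiring care is the legitimacy of evaluating $T_y\calm$ using precisely the parametrization $\phi = \psi\circ\varphi$ induced by $\varphi$, rather than an arbitrary parametrization of $\calm$. This is guaranteed by the fact that the tangent space is independent of the chosen parametrization (Lemma \ref{lemma-change-para}). Once this compatibility is recorded, the identity follows from a single application of the chain rule, so the proof is essentially a one-line computation.
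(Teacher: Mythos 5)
Your proof is correct and follows essentially the same route as the paper: choose a local parametrization $\varphi$ of $\caln$ around $x$, use Lemma \ref{lemma-induced} to obtain the induced parametrization $\phi = \psi \circ \varphi$ of $\calm$ around $y$, and apply the chain rule to get $T_y \calm = D\phi(\xi)\mathbb{R}^m = D\psi(x) D\varphi(\xi)\mathbb{R}^m = D\psi(x) T_x \caln$. Your additional remark on parametrization-independence of the tangent space (Lemma \ref{lemma-change-para}) is a fine point of care that the paper leaves implicit.
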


\begin{proof}
Let $\varphi : V \to W \cap \caln$ be a local parametrization around $x$. By Lemma \ref{lemma-induced} there exists an open subset $U \subset H$ such that the mapping $\phi := \psi \circ \varphi : V \to U \cap \calm$ is a local parametrization around $y$. Setting $\xi := \varphi^{-1}(x) \in V$, by the chain rule we obtain
\begin{align*}
T_y \calm = D \phi(\xi) \bbr^m = D \psi(x) D \varphi(\xi) \bbr^m = D \psi(x) T_x \caln,
\end{align*}
completing the proof.
\end{proof}

For the upcoming result, let $\varphi : V \to W \cap \caln$ be a local parametrization of $\caln$, and let $\phi := \psi \circ \varphi : V \to U \cap \calm$ be the corresponding local parametrization of $\calm$; see Lemma \ref{lemma-induced}. For a mapping $a : V \to \bbr^m$ we define $\phi_* a : U \cap \calm \to H$ and $\varphi_* a : W \cap \caln \to \bbr^d$ according to Definition \ref{def-push}, and for a mapping $b : W \cap \caln \to \bbr^d$ we define $\psi_* b : U \cap \calm \to H$ analogously.

\begin{lemma}\label{lemma-push-chain-rule}
The following statements are true:
\begin{enumerate}
\item For a mapping $a : V \to \bbr^m$ we have $\phi_* a = \psi_* \varphi_* a$.

\item If $k \geq 2$, then for two mappings $a,b : V \to \bbr^m$ we have
\begin{align*}
\phi_{**} (a,b) = \psi_{**}(\varphi_* a, \varphi_* b) + \psi_* \varphi_{**}(a,b).
\end{align*}
\end{enumerate}
\end{lemma}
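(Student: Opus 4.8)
The plan is to unwind the definitions of the three push-forward operations and then apply the first- and second-order chain rules to the composition $\phi = \psi \circ \varphi$. Throughout, the key is to keep track of the three base points connected by the diagram above: a point $x \in V$, its image $z := \varphi(x) \in W \cap \caln$, and $y := \phi(x) = \psi(z) \in U \cap \calm$; since $\phi = \psi \circ \varphi$ and both $\varphi$ and $\psi|_{\caln}$ are homeomorphisms, we have $z = \varphi(\phi^{-1}(y)) = \psi^{-1}(y)$, so the point at which each push-forward is evaluated is unambiguous.

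For the first statement I would fix $y \in U \cap \calm$, set $x := \phi^{-1}(y)$ and $z := \varphi(x) = \psi^{-1}(y)$, and apply the ordinary chain rule to get $D\phi(x) = D\psi(z) D\varphi(x)$. Plugging this into Definition \ref{def-push} yields
\begin{align*}
(\phi_* a)(y) = D\phi(x) a(x) = D\psi(z) \big( D\varphi(x) a(x) \big) = D\psi(z) (\varphi_* a)(z) = (\psi_* \varphi_* a)(y),
\end{align*}
where the last equality is the definition of $\psi_*$ together with $z = \psi^{-1}(y)$.

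For the second statement, which requires $k \geq 2$, I would apply the second-order chain rule (Lemma \ref{lemma-second-order-chain}) to $\phi = \psi \circ \varphi$, obtaining
\begin{align*}
D^2 \phi(x) = D^2 \psi(z) \big( D\varphi(x), D\varphi(x) \big) + D\psi(z) D^2 \varphi(x).
\end{align*}
Evaluating both sides at $(a(x), b(x))$ and using the identities $(\varphi_* a)(z) = D\varphi(x) a(x)$, $(\varphi_* b)(z) = D\varphi(x) b(x)$, and $(\varphi_{**}(a,b))(z) = D^2 \varphi(x)(a(x), b(x))$, the first summand becomes $D^2 \psi(z)\big( (\varphi_* a)(z), (\varphi_* b)(z) \big) = (\psi_{**}(\varphi_* a, \varphi_* b))(y)$ and the second becomes $D\psi(z) (\varphi_{**}(a,b))(z) = (\psi_* \varphi_{**}(a,b))(y)$; summing gives the claimed decomposition.

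The only real obstacle here is bookkeeping rather than analysis: one must apply each push-forward at the correct base point and invoke $z = \varphi(x) = \psi^{-1}(y)$ at the right moments. No genuine regularity difficulty arises, since $\psi \in C^k(\bbr^d;H)$ and $\varphi$ is a $C^k$-parametrization make both chain rules applicable (with $k \geq 2$ supplying the second derivatives needed in part~(2)).
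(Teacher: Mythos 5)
Your proof is correct and takes essentially the same route as the paper's own proof: both unwind Definition \ref{def-push} at the matching base points, apply the ordinary chain rule to $\phi = \psi \circ \varphi$ for part (1), and apply the second-order chain rule (Lemma \ref{lemma-second-order-chain}) for part (2). The only difference is notational (you call the points $x$ and $z$ where the paper uses $\xi$ and $x$).
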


\begin{proof}
Let $y \in U \cap \calm$ be arbitrary. We set $\xi := \phi^{-1}(y) \in V$ and $x := \varphi(\xi) \in W \cap \caln$. Then we also have $x = \psi^{-1}(y)$. By the usual chain rule we obtain
\begin{align*}
(\phi_* a)(y) &= D \phi(\xi) a(\xi) = D(\psi \circ \varphi)(\xi) a(\xi) = D \psi(x) D \varphi(\xi) a(\xi)
\\ &= D \psi(x) (\varphi_* a)(x) = (\psi_* \varphi_* a)(y),
\end{align*}
and, if $k \geq 2$, then by the second order chain rule (see Lemma \ref{lemma-second-order-chain}) we obtain
\begin{align*}
(\phi_{**} (a,b))(y) &= D^2 \phi(\xi) ( a(\xi), b(\xi) ) = D^2(\psi \circ \varphi)(\xi) ( a(\xi), b(\xi) )
\\ &= D^2 \psi(x) (D \varphi(\xi) a(\xi), D \varphi(\xi) b(\xi)) + D \psi(x) D^2 \varphi(\xi) (a(\xi), b(\xi))
\\ &= D^2 \psi(x) ((\varphi_* a)(x), (\varphi_* b)(x)) + D \psi(x) \varphi_{**}(a,b)(x)
\\ &= (\psi_{**}(\varphi_* a, \varphi_* b))(y) + (\psi_* \varphi_{**}(a,b))(y),
\end{align*}
completing the proof.
\end{proof}

For the following auxiliary result, recall the Definition \ref{def-local-vector-field} of a local vector field, and the Definition \ref{def-simultaneous} of a locally simultaneous vector field.

\begin{lemma}\label{lemma-eq-loc-fields}
For every mapping $a : \caln \to \bbr^d$ the following statements are true:
\begin{enumerate}
\item Let $A : \calm \to H$ be the mapping $A := \psi_* a$. If $a \in \Gamma(T \caln)$, then we have $A \in \Gamma(T \calm)$.

\item Let $\bar{A} : \calm \times \calm \to H$ be a mapping such that for each $z \in \calm$ the mapping $\bar{A}_z := \bar{A}(\cdot,z)$ is of the form
\begin{align*}
\bar{A}_z(y) := D \psi(x) a(\xi), \quad y \in \calm,
\end{align*}
where $x := \psi^{-1}(y) \in \caln$ and $\xi := \psi^{-1}(z) \in \caln$. If $a \in \Gamma^*(T \caln)$, then we have $\bar{A}_z \in \Gamma_z(T \calm)$ for each $z \in \calm$.
\end{enumerate}
\end{lemma}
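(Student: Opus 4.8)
The plan is to reduce both statements to the tangent-space identity $T_y\calm = D\psi(x)\,T_x\caln$ from Lemma \ref{lemma-tang-embedded}, where $x := \psi^{-1}(y)$, and then to use only the homeomorphism property of $\psi|_{\caln}$ to transfer neighborhoods from $\bbr^d$ to $H$. Both parts are essentially pointwise consequences of this identity together with the linearity of each $D\psi(x)$.

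For the first statement I would argue pointwise. Fixing $y \in \calm$ and setting $x := \psi^{-1}(y) \in \caln$, the definition of the push-forward gives $A(y) = (\psi_* a)(y) = D\psi(x) a(x)$. Since $a \in \Gamma(T\caln)$ means $a(x) \in T_x\caln$, Lemma \ref{lemma-tang-embedded} yields $A(y) = D\psi(x) a(x) \in D\psi(x)\,T_x\caln = T_y\calm$. As $y$ was arbitrary, this shows $A \in \Gamma(T\calm)$, and no topological input is needed.

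For the second statement I would fix $z \in \calm$, set $\xi := \psi^{-1}(z) \in \caln$, and treat $a(\xi) \in \bbr^d$ as a fixed vector. For $y \in \calm$ with $x := \psi^{-1}(y)$ we have $\bar{A}_z(y) = D\psi(x) a(\xi)$, so by Lemma \ref{lemma-tang-embedded} the membership $\bar{A}_z(y) \in T_y\calm = D\psi(x)\,T_x\caln$ follows as soon as $a(\xi) \in T_x\caln$. The hypothesis $a \in \Gamma^*(T\caln)$ provides an open neighborhood $W \subset \bbr^d$ of $\xi$ with $a(\xi) \in T_{x'}\caln$ for every $x' \in W \cap \caln$. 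The final step, which I regard as the only genuinely topological point, is to promote this to a neighborhood in the ambient space: since $\psi|_{\caln} : \caln \to \calm$ is a homeomorphism, $\psi(W \cap \caln)$ is open in $\calm$, so there is an open $U \subset H$ with $U \cap \calm = \psi(W \cap \caln)$. Then for every $y \in U \cap \calm$ the point $x = \psi^{-1}(y)$ lies in $W \cap \caln$, hence $a(\xi) \in T_x\caln$ and $\bar{A}_z(y) \in T_y\calm$; by Definition \ref{def-local-vector-field} this is precisely $\bar{A}_z \in \Gamma_z(T\calm)$.

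The main obstacle, though a mild one, is exactly this neighborhood transfer in the second part: the locally simultaneous property of $a$ is stated in the intrinsic topology of $\caln \subset \bbr^d$, whereas the conclusion requires a neighborhood in the Hilbert space $H$. The homeomorphism property of $\psi|_{\caln}$ bridges this gap, and everything else reduces to Lemma \ref{lemma-tang-embedded} and the linearity of $D\psi(x)$.
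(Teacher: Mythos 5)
Your proof is correct and follows essentially the same route as the paper's: part (1) is the pointwise identity $A(y) = D\psi(x)a(x) \in D\psi(x)T_x\caln = T_y\calm$ via Lemma \ref{lemma-tang-embedded}, and part (2) combines the $\Gamma^*(T\caln)$ hypothesis with the same lemma. Your treatment of the neighborhood transfer is in fact slightly more explicit than the paper's, which leaves the construction of the open set $U \subset H$ (via the homeomorphism $\psi|_{\caln}$) implicit.
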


\begin{proof}
For the proof of the first statement, let $y \in \calm$ be arbitrary, and set $x := \psi^{-1}(y) \in \caln$. By Lemma \ref{lemma-tang-embedded} we obtain
\begin{align*}
A(y) = D \psi(x) a(x) \in D \psi(x) T_x \caln = T_y \calm.
\end{align*}
We proceed with the proof of the second statement. Let $z \in \calm$ be arbitrary, and set $\xi := \psi^{-1}(z) \in \caln$. Since $a \in \Gamma^*(T \caln)$, there exists an open neighborhood $W \subset \bbr^d$ of $x$ such that
\begin{align*}
a(\xi) \in T_x \caln \quad \text{for each $x \in W \cap \caln$.} 
\end{align*}
Therefore, by Lemma \ref{lemma-tang-embedded} for each $y \in U \cap \calm$ we obtain
\begin{align*}
\bar{A}_z(y) = D \psi(x) a(\xi) \in D \psi(x) T_x \caln = T_y \calm,
\end{align*}
where $x := \psi^{-1}(y) \in W \cap \caln$.
\end{proof}

\subsection{Finite dimensional submanifolds generated by orbit maps of group actions}\label{sec-manifolds-HS}

In this section we deal with finite dimensional submanifolds given by orbit maps of group actions, in particular in Hermite Sobolev spaces. Let $H$ be a separable Hilbert space. We also fix positive integers $k \in \overline{\bbn}$ and $m,d \in \bbn$ such that $m \leq d$. If $G$ is the higher-order domain of closed operators, then there is another criterion for a $(G,H)$-submanifold, which adds to Proposition \ref{prop-manifold-scaled}.

\begin{proposition}\label{prop-group-top-manifold}
Let $A_i : H \supset D(A_i) \to H$, $i=1,\ldots,d$ be closed operators, and set $G := D(A^n)$ for some $n \in \bbn$. Let $\calm$ be an $m$-dimensional $C^k$-submanifold of $H$. Then the following statements are equivalent:
\begin{enumerate}
\item[(i)] $\calm$ is an $m$-dimensional $(G,H)$-submanifold of class $C^k$.

\item[(ii)] $\calm \subset G$ and for all $j = 1,\ldots,n$ and $\alpha \in \{ 1,\ldots,d \}^j$ the restricted operator $A^{\alpha}|_{\calm} : (\calm,\| \cdot \|_H) \to (H,\| \cdot \|_H)$ is continuous.
\end{enumerate}
\end{proposition}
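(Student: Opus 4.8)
The plan is to reduce both implications to the intrinsic characterization of $(G,H)$-submanifolds already proved in Proposition \ref{prop-manifold-scaled}, namely the criterion that $\calm$ is a $(G,H)$-submanifold of class $C^k$ if and only if $\calm \subset G$ and the identity $\Id : (\calm,\|\cdot\|_H) \to (\calm,\|\cdot\|_G)$ is continuous. First I would record the structure of $G = D(A^n)$: equipped with the graph norm
\begin{align*}
\|y\|_G := \Bigg( \sum_{j=0}^n \sum_{\alpha \in \{1,\ldots,d\}^j} \|A^{\alpha} y\|_H^2 \Bigg)^{1/2},
\end{align*}
where the $j=0$ term is $\|y\|_H$ and $A^{\alpha} := A_{\alpha_1} \cdots A_{\alpha_j}$, the closedness of the operators $A_i$ makes $(G,\|\cdot\|_G)$ a Hilbert space continuously embedded in $H$, so that Proposition \ref{prop-manifold-scaled} is indeed applicable. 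Moreover, directly from this definition each operator $A^{\alpha} : (G,\|\cdot\|_G) \to (H,\|\cdot\|_H)$ with $|\alpha| \leq n$ is bounded, since $\|A^{\alpha} y\|_H \leq \|y\|_G$.

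For the implication (i) $\Rightarrow$ (ii) I would argue that $\calm \subset G$ by definition of a $(G,H)$-submanifold, and that by Proposition \ref{prop-manifold-scaled} the identity $\Id : (\calm,\|\cdot\|_H) \to (\calm,\|\cdot\|_G)$ is continuous. For each multi-index $\alpha$ the restriction then factors as
\begin{align*}
(\calm,\|\cdot\|_H) \overset{\Id}{\longrightarrow} (\calm,\|\cdot\|_G) \hookrightarrow (G,\|\cdot\|_G) \overset{A^{\alpha}}{\longrightarrow} (H,\|\cdot\|_H),
\end{align*}
a composition of continuous maps, whence $A^{\alpha}|_{\calm}$ is continuous. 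This direction is immediate.

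For the converse (ii) $\Rightarrow$ (i) the essential point is that the graph norm is a \emph{finite} sum and that the $A^{\alpha}$ are \emph{linear}. Given $y \in \calm$ and a sequence $(y_l) \subset \calm$ with $\|y_l - y\|_H \to 0$, the assumed continuity of each $A^{\alpha}|_{\calm}$ yields $\|A^{\alpha} y_l - A^{\alpha} y\|_H \to 0$; since $y_l - y \in G$ and $A^{\alpha}$ is linear, $A^{\alpha} y_l - A^{\alpha} y = A^{\alpha}(y_l - y)$, so that
\begin{align*}
\|y_l - y\|_G^2 = \sum_{j=0}^n \sum_{\alpha \in \{1,\ldots,d\}^j} \|A^{\alpha}(y_l - y)\|_H^2 \longrightarrow 0,
\end{align*}
the interchange of limit and summation being trivial because only finitely many indices occur. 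Hence $\Id : (\calm,\|\cdot\|_H) \to (\calm,\|\cdot\|_G)$ is sequentially continuous, and therefore continuous since both spaces are metric; Proposition \ref{prop-manifold-scaled} then gives that $\calm$ is a $(G,H)$-submanifold of class $C^k$.

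I do not expect a deep obstacle here: the argument is essentially a bookkeeping reduction to Proposition \ref{prop-manifold-scaled}. The only genuine point to watch is the preliminary claim that $(G,H)$ really is a pair of continuously embedded Hilbert spaces, i.e.\ that $D(A^n)$ with the above graph norm is complete; this is precisely where the closedness hypothesis on the $A_i$ enters (via the standard induction on $|\alpha|$ using closedness of each factor), and it is needed for Proposition \ref{prop-manifold-scaled} to apply.
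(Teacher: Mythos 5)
Your proof is correct and follows essentially the same route as the paper: both reduce the statement to Proposition \ref{prop-manifold-scaled} (continuity of the identity $\Id : (\calm,\| \cdot \|_H) \to (\calm,\| \cdot \|_G)$) and then observe that, by the definition (\ref{norm-D-An}) of the graph norm on $D(A^n)$ as a finite sum, this continuity is equivalent to the continuity of each restricted operator $A^{\alpha}|_{\calm}$. Your additional remarks on the completeness of $D(A^n)$ and the factorization in the direction (i) $\Rightarrow$ (ii) merely spell out details that the paper leaves implicit (the completeness is recorded in Appendix \ref{app-groups}).
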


\begin{proof}
By Proposition \ref{prop-manifold-scaled} the submanifold $\calm$ is a $(G,H)$-submanifold of class $C^k$ if and only if $\calm \subset G$ and the identity $\Id : (\calm,\| \cdot \|_H) \to (\calm,\| \cdot \|_{D(A^n)})$ is continuous. Therefore, by the definition (\ref{norm-D-An}) of the norm $\| \cdot \|_{D(A^n)}$, the claimed equivalence follows.
\end{proof}

Now, let $T = (T(t))_{t \in \bbr^d}$ be a multi-parameter $C_0$-group on $H$ with generator $A$; see Appendix \ref{app-groups} for details. As a consequence of Lemmas \ref{lemma-induced-0}, \ref{lemma-induced} and Proposition \ref{prop-degrees-smoothness}, we obtain the following examples of submanifolds generated by the orbit maps of the group $T$. 

\begin{examples}\label{ex-manifolds-domain}
Let $\Phi \in D(A^k)$ be arbitrary, let $\psi := \xi_{\Phi} : \bbr^d \to H$ be the orbit map given by $\psi(t) = T(t) \Phi$ for each $t \in \bbr^d$, and let $\caln$ be an $m$-dimensional $C^k$-submanifold of $\bbr^d$. Then the following statements are true:
\begin{enumerate}
\item If $\psi$ is a $C^k$-immersion on $\caln$ at $x_0$ for some $x_0 \in \caln$, then there exists an open neighborhood $W_0 \subset \bbr^d$ of $x_0$ such that $\calm := \psi(W_0 \cap \caln)$ is an $m$-dimensional $(D(A^k),\ldots,H)$-submanifold of class $C^k$ with one chart, which is induced by $(\psi,\caln)$.

\item If $\psi$ is a $C^k$-immersion on $\caln$ such that $\psi|_{\caln} : \caln \to \psi(\caln)$ is a homeomorphism, then $\calm := \psi(\caln)$ is an $m$-dimensional $(D(A^k),\ldots,H)$-submanifold of class $C^k$, which is induced by $(\psi,\caln)$. 
\end{enumerate}
\end{examples}

Now, we turn to Hermite Sobolev spaces; see Appendix \ref{app-HS-spaces} for further details. For submanifolds in Hermite Sobolev spaces, there is another criterion for a $(G,H)$-submanifold, which adds to Proposition \ref{prop-manifold-scaled}. Recall that $\mathbf{H}$ denotes the Hermite operator.

\begin{proposition}
Let $p \in \bbr$ and $l \in \bbn$ be arbitrary. We set $G := \cals_{p+l}(\bbr^d)$ and $H := \cals_p(\bbr^d)$. Let $\calm$ be an $m$-dimensional $C^k$-submanifold of $H$. Then the following statements are equivalent:
\begin{enumerate}
\item[(i)] $\calm$ is an $m$-dimensional $(G,H)$-submanifold of class $C^k$.

\item[(ii)] $\calm \subset G$ and the restriction $\mathbf{H}^l|_{\calm} : (\calm,\| \cdot \|_H) \to (\mathbf{H}^l(\calm),\| \cdot \|_H)$ is a homeomorphism.
\end{enumerate}
\end{proposition}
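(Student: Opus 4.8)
The plan is to reduce the claimed equivalence to the already-established criterion of Proposition \ref{prop-manifold-scaled} by exploiting that the power $\mathbf{H}^l$ of the Hermite operator is a topological (indeed isometric) isomorphism between the two Hermite Sobolev spaces in play.

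First I would recall from the theory of Hermite Sobolev spaces (Appendix \ref{app-HS-spaces}) that, for every $q \in \bbr$, the norm on $\cals_q(\bbr^d)$ is characterized by $\| \phi \|_q = \| \mathbf{H}^q \phi \|_0$, where $\| \cdot \|_0$ denotes the norm of $\cals_0(\bbr^d) = L^2(\bbr^d)$. Consequently $\mathbf{H}^l : \cals_{p+l}(\bbr^d) \to \cals_p(\bbr^d)$ is a bijective linear isometry, since $\| \mathbf{H}^l \phi \|_p = \| \mathbf{H}^{p+l} \phi \|_0 = \| \phi \|_{p+l}$ for all $\phi \in \cals_{p+l}(\bbr^d)$. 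Writing $G = \cals_{p+l}(\bbr^d)$ and $H = \cals_p(\bbr^d)$, this means that $\mathbf{H}^l : (G, \| \cdot \|_G) \to (H, \| \cdot \|_H)$ is a linear homeomorphism with $\| \mathbf{H}^l y \|_H = \| y \|_G$.

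Next, by Proposition \ref{prop-manifold-scaled} (the equivalence (i) $\Leftrightarrow$ (iii) therein), statement (i) holds if and only if $\calm \subset G$ and the identity $\Id : (\calm, \| \cdot \|_H) \to (\calm, \| \cdot \|_G)$ is a homeomorphism. Since both (i) and (ii) already presuppose $\calm \subset G$, it remains to match the two homeomorphism conditions. Assuming $\calm \subset G$, I would factor the map appearing in (ii) as
\begin{align*}
\mathbf{H}^l|_{\calm} : (\calm, \| \cdot \|_H) \overset{\Id}{\longrightarrow} (\calm, \| \cdot \|_G) \overset{\mathbf{H}^l}{\longrightarrow} (\mathbf{H}^l(\calm), \| \cdot \|_H),
\end{align*}
noting that for $y \in \calm$ this composition indeed sends $y \mapsto \mathbf{H}^l y$. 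By the previous step, the restriction of the linear homeomorphism $\mathbf{H}^l$ to $\calm$, namely $\mathbf{H}^l : (\calm, \| \cdot \|_G) \to (\mathbf{H}^l(\calm), \| \cdot \|_H)$ with the respective subspace topologies, is itself a homeomorphism onto its image $\mathbf{H}^l(\calm)$. Hence $\mathbf{H}^l|_{\calm} : (\calm, \| \cdot \|_H) \to (\mathbf{H}^l(\calm), \| \cdot \|_H)$ is a homeomorphism if and only if $\Id : (\calm, \| \cdot \|_H) \to (\calm, \| \cdot \|_G)$ is a homeomorphism: for one direction one composes with $\mathbf{H}^l$, for the other with its inverse. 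Combining this with the reduction from Proposition \ref{prop-manifold-scaled} yields (i) $\Leftrightarrow$ (ii).

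I do not expect a serious obstacle here; the only points requiring care are to use the isometry statement for $\mathbf{H}^l$ with the correct indexing of the Hermite Sobolev scale, so that $\mathbf{H}^l$ raises the smoothness index by exactly $l$ and lands in $H$, and to verify that restricting a global linear homeomorphism to the subset $\calm$ genuinely yields a homeomorphism for the subspace topologies. Both are routine once the norm identity $\| \cdot \|_q = \| \mathbf{H}^q \cdot \|_0$ from Appendix \ref{app-HS-spaces} is in hand.
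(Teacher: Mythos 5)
Your proposal is correct and follows essentially the same route as the paper: reduce (i) via Proposition \ref{prop-manifold-scaled} to the statement that $\Id : (\calm,\| \cdot \|_H) \to (\calm,\| \cdot \|_G)$ is a homeomorphism, and then factor $\mathbf{H}^l|_{\calm}$ through this identity using that $\mathbf{H}^l : G \to H$ is an isometric isomorphism (the paper obtains this by iterating Lemma \ref{lemma-Hermite}, whereas you invoke the equivalent norm identity $\| \cdot \|_{p+l} = \| \mathbf{H}^{p+l} \cdot \|_0$). The two-way composition with $\mathbf{H}^l$ and $\mathbf{H}^{-l}$ is exactly the paper's argument for both implications.
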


\begin{proof}
By Proposition \ref{prop-manifold-scaled} the submanifold $\calm$ is a $(G,H)$-submanifold of class $C^k$ if and only if $\calm \subset G$ and the identity $\Id|_{\calm} : (\calm,\| \cdot \|_H) \to (\calm,\| \cdot \|_G)$ is a homeomorphism. 

\noindent(i) $\Rightarrow$ (ii): If $\Id : (\calm,\| \cdot \|_H) \to (\calm,\| \cdot \|_G)$ is a homeomorphism, then by Lemma \ref{lemma-Hermite} the restriction
\begin{align*}
\mathbf{H}^l|_{\calm} : (\calm,\| \cdot \|_H) \overset{\Id}{\longrightarrow} (\calm,\| \cdot \|_G) \overset{\mathbf{H}^l}{\longrightarrow} (\mathbf{H}^l(\calm),\| \cdot \|_H)
\end{align*}
is a homeomorphism as well.

\noindent(ii) $\Rightarrow$ (i): If $\mathbf{H}^l|_{\calm} : (\calm,\| \cdot \|_H) \to (\mathbf{H}^l(\calm),\| \cdot \|_H)$ is a homeomorphism, then by Lemma \ref{lemma-Hermite} the identity
\begin{align*}
\Id : (\calm,\| \cdot \|_H) \overset{\mathbf{H}^l}{\longrightarrow} (\mathbf{H}^l(\calm),\| \cdot \|_H) \overset{\mathbf{H}^{-l}}{\longrightarrow} (\calm,\| \cdot \|_G)
\end{align*}
is a homeomorphism as well.
\end{proof}

For the rest of this section, we will present examples of submanifolds in Hermite Sobolev spaces which are generated by the orbit maps of the translation group. For this purpose, recall the translation group $\tau = (\tau_x)_{x \in \bbr^d}$ from Appendix \ref{app-HS-spaces}; see in particular Lemma \ref{lemma-tau-group}. For each $i=1,\ldots,d$ we define the family $\tau^i = (\tau_x^i)_{x \in \bbr}$ as
\begin{align*}
\tau_x^i := \tau_{x e_i}, \quad x \in \bbr.
\end{align*}
Let $p \in \bbr$ be arbitrary. Then $\tau^1,\ldots,\tau^d$ are commutative $C_0$-groups on $\cals_p(\bbr^d)$, and we have
\begin{align*}
\tau_x = \tau_{x_1}^1 \circ \ldots \circ \tau_{x_d}^d, \quad x \in \bbr^d.
\end{align*}
For each $i=1,\ldots,d$ we denote by $A_{p,i} : \cals_p(\bbr^d) \supset D(A_{p,i}) \to \cals_p(\bbr^d)$ the generator of the $C_0$-group $\tau^i$ on $\cals_p(\bbr^d)$. Then $A_p = (A_{p,1},\ldots,A_{p,d})$ is the generator of the multi-parameter $C_0$-group $\tau$. The following result shows that for each $i=1,\ldots,d$ the subspace $\cals_{p+\frac{1}{2}}(\bbr^d)$ is contained in the domain $D(A_{p,i})$, and that it is even a large subspace of $D(A_{p,i})$ in the sense that it is a core for $A_{p,i}$.

\begin{theorem}\label{thm-Sp-D-1}
For each $p \in \bbr$ and each $i=1,\ldots,d$ the following statements are true:
\begin{enumerate}
\item We have $\cals_{p+\frac{1}{2}}(\bbr^d) \subset D(A_{p,i})$.

\item $\cals_{p+\frac{1}{2}}(\bbr^d)$ is a core for $A_{p,i}$.

\item We have $A_{p,i} \Phi = -\partial_i \Phi$ for each $\Phi \in \cals_{p+\frac{1}{2}}(\bbr^d)$.
\end{enumerate}
\end{theorem}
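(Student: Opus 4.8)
The plan is to prove all three statements at once by showing that $-\partial_i$ is the generator on the explicit domain $\cals_{p+\frac12}(\bbr^d)$, and then deducing the core property from an abstract criterion for $C_0$-groups. The backbone of the argument is the mapping property
\begin{align*}
\partial_i : \cals_{p+\frac12}(\bbr^d) \to \cals_p(\bbr^d) \quad \text{is a bounded operator,} \tag{$\ast$}
\end{align*}
which expresses that one differentiation costs exactly half a unit on the Hermite scale. Everything else reduces to standard manipulations with the strongly continuous group $\tau^i$, so $(\ast)$ is where the specific structure of the Hermite Sobolev spaces enters, and it is the main obstacle.

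To establish $(\ast)$ I would expand in the Hermite basis $\{ h_\alpha \}_{\alpha \in \bbn_0^d}$, using $\mathbf{H} h_\alpha = \lambda_\alpha h_\alpha$ with $\lambda_\alpha = 2|\alpha| + d$ (up to the paper's normalization) and the characterization $\| \Phi \|_{\cals_p}^2 = \sum_\alpha \lambda_\alpha^{2p} |c_\alpha|^2$ for $\Phi = \sum_\alpha c_\alpha h_\alpha$. Writing $\partial_i$ through the ladder operators $a_i = \tfrac{1}{\sqrt2}(x_i + \partial_i)$ and $a_i^* = \tfrac{1}{\sqrt2}(x_i - \partial_i)$, which act by $a_i h_\alpha = \sqrt{\alpha_i}\, h_{\alpha - e_i}$ and $a_i^* h_\alpha = \sqrt{\alpha_i + 1}\, h_{\alpha + e_i}$, one has $\partial_i = \tfrac{1}{\sqrt2}(a_i - a_i^*)$, so the $h_\beta$-coefficient of $\partial_i \Phi$ is $\tfrac{1}{\sqrt2}(\sqrt{\beta_i + 1}\, c_{\beta + e_i} - \sqrt{\beta_i}\, c_{\beta - e_i})$. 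Squaring, summing against $\lambda_\beta^{2p}$, and shifting the summation index by $\pm e_i$ reduces the claim to the elementary comparison $\lambda_{\alpha \mp e_i}^{2p} \, \alpha_i \le C \lambda_\alpha^{2p+1}$, which holds because $\alpha_i \le |\alpha| \asymp \lambda_\alpha$ and $\lambda_{\alpha \mp e_i} \asymp \lambda_\alpha$; this index-shifting bound, with attention to the sign of $p$ and to the finitely many low-frequency terms, is the one genuinely computational point.

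Granting $(\ast)$, I would prove (i) and (iii) together by an integral representation. For $\Phi$ in the Schwartz space $\cals(\bbr^d) = \bigcap_q \cals_q(\bbr^d)$ the curve $s \mapsto \tau^i_s \Phi$ is smooth in every $\cals_q(\bbr^d)$ with $\tfrac{d}{ds} \tau^i_s \Phi = - \tau^i_s \partial_i \Phi$, so the fundamental theorem of calculus gives
\begin{align*}
\tau^i_h \Phi - \Phi = - \int_0^h \tau^i_s \partial_i \Phi \, ds
\end{align*}
as an $\cals_p(\bbr^d)$-valued Bochner integral. Both sides depend continuously on $\Phi$ for the norm of $\cals_{p+\frac12}(\bbr^d)$: the left side because $\tau^i_h$ is bounded on $\cals_p(\bbr^d)$, the right side because $\partial_i \Phi$ varies continuously in $\cals_p(\bbr^d)$ by $(\ast)$ while $\sup_{s \in [0,h]} \| \tau^i_s \|_{L(\cals_p(\bbr^d))} < \infty$. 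Since $\cals(\bbr^d)$ is dense in $\cals_{p+\frac12}(\bbr^d)$, the identity extends to all $\Phi \in \cals_{p+\frac12}(\bbr^d)$. Dividing by $h$ and letting $h \to 0$, the strong continuity of $\tau^i$ on $\cals_p(\bbr^d)$ together with $\partial_i \Phi \in \cals_p(\bbr^d)$ yields
\begin{align*}
\lim_{h \to 0} \frac{\tau^i_h \Phi - \Phi}{h} = - \partial_i \Phi \quad \text{in } \cals_p(\bbr^d),
\end{align*}
which is exactly $\Phi \in D(A_{p,i})$ with $A_{p,i} \Phi = - \partial_i \Phi$, establishing (i) and (iii).

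Finally, for the core property (ii) I would invoke the standard criterion that a subspace of $D(A_{p,i})$ which is dense in $\cals_p(\bbr^d)$ and invariant under the group $\tau^i$ is a core for $A_{p,i}$. The space $\cals_{p+\frac12}(\bbr^d)$ contains $\cals(\bbr^d)$ and is therefore dense in $\cals_p(\bbr^d)$, and since $\tau^i$ is also a $C_0$-group on $\cals_{p+\frac12}(\bbr^d)$ (Lemma \ref{lemma-tau-group}) it satisfies $\tau^i_s \cals_{p+\frac12}(\bbr^d) \subset \cals_{p+\frac12}(\bbr^d)$ for every $s \in \bbr$; together with (i) this verifies the hypotheses, and (ii) follows. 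As noted above, the only substantial work lies in $(\ast)$; once it is in hand, (i), (iii) and (ii) are clean consequences of the $C_0$-group calculus.
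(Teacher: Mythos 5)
Your proof is correct, but it reaches the generator identity by a genuinely different mechanism than the paper. The key estimate $(\ast)$ is not proved in the paper at all: it is imported as Lemma \ref{lemma-diff-continuous} (cited from the literature), so your ladder-operator computation $\partial_i = \tfrac{1}{\sqrt{2}}(a_i - a_i^*)$ with the index-shift bound $\lambda_{\alpha \pm e_i} = \lambda_\alpha \pm 2$ makes this step self-contained --- a genuine addition, and the estimate does go through with the care you indicate (the $\alpha_i = 0$ terms vanish, so $\lambda_{\alpha - e_i}$ is only ever evaluated where $\lambda_\alpha \geq d+2$). Where the routes diverge is the derivative computation. The paper proves a pointwise Taylor expansion (Lemma \ref{lemma-Taylor}): it tests against Schwartz functions, applies the classical Taylor formula to them, and transfers back by duality, obtaining for \emph{every} $\Phi \in \cals_{p+\frac{1}{2}}(\bbr^d)$ directly the identity $\tau_{x+h}^i \Phi = \tau_x^i \Phi - h \partial_i \tau_x^i \Phi - h R(x,h)$ with a continuous remainder vanishing at $h = 0$; the converse Taylor theorem of \cite{Abraham} then gives $\xi_{\Phi}^i \in C^1(\bbr;\cals_p(\bbr^d))$ in one stroke, with no density argument. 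You instead establish the FTC identity $\tau_h^i \Phi - \Phi = -\int_0^h \tau_s^i \partial_i \Phi \, ds$ on $\cals(\bbr^d)$, extend it by density using $(\ast)$ to make both sides $\| \cdot \|_{p+\frac{1}{2}}$-continuous into $\cals_p(\bbr^d)$, and differentiate at $0$. Both are valid; note only that the paper defines $D(A_{p,i})$ by $C^1$-regularity of the whole orbit map rather than by differentiability at the origin, so you should add the standard one-line upgrade --- either via $\tau_{x+h}^i \Phi - \tau_x^i \Phi = \tau_x^i (\tau_h^i \Phi - \Phi)$, or by observing that your integral identity exhibits the orbit map as an antiderivative of the continuous curve $s \mapsto -\tau_s^i \partial_i \Phi$. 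Part (2) is identical in both proofs: the core criterion \cite[Prop. II.1.7]{Engel-Nagel} applied to the dense, $\tau^i$-invariant subspace $\cals_{p+\frac{1}{2}}(\bbr^d)$. What the paper's route buys is reusability (Lemma \ref{lemma-Taylor} is invoked again, e.g. in the proof of Lemma \ref{lemma-Sp-continuous-k}); what yours buys is a self-contained proof of the half-derivative estimate and a shorter path to the value of the generator.
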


\begin{proof}
Let $\Phi \in \cals_{p+\frac{1}{2}}(\bbr^d)$ be arbitrary. By Lemma \ref{lemma-Taylor} there exists a continuous mapping $R : \bbr \times \bbr \to \cals_p(\bbr^d)$ with $R(x,0) = 0$ for all $x \in \bbr$ such that
\begin{align*}
\tau_{x+h}^i \Phi = \tau_x^i \Phi - h \partial_i \tau_x^i \Phi - h R(x,h), \quad x,h \in \bbr
\end{align*}
in the space $\cals_p(\bbr^d)$. Thus, denoting by $\xi_{\Phi}^i : \bbr \to \cals_{p+\frac{1}{2}}(\bbr^d)$ the orbit map given by $\xi_{\Phi}^i(x) = \tau_x^i \Phi$ for each $x \in \bbr$, we have
\begin{align*}
\xi_{\Phi}^i(x+h) \Phi = \xi_{\Phi}^i(x) - h \partial_i \xi_{\Phi}^i(x) - h R(x,h), \quad x,h \in \bbr
\end{align*}
in the space $\cals_p(\bbr^d)$. By Taylor's theorem (see \cite[Thm. 2.4.15]{Abraham}) we obtain $\xi_{\Phi}^i \in C^1(\bbr;\cals_p(\bbr^d))$ with $\dot{\xi}_{\Phi}^i = -\partial_i \xi_{\Phi}^i$. We deduce that $\Phi \in D(A_{p,i})$ and $A_{p,i} \Phi = -\partial_i \Phi$. Furthermore, the subspace $\cals_{p+\frac{1}{2}}(\bbr^d)$ is $\| \cdot \|_p$-dense in $\cals_p(\bbr^d)$ and invariant under the group $\tau^i$; see Lemma \ref{lemma-tau-group}. Therefore, by virtue of \cite[Prop. II.1.7]{Engel-Nagel} the space $\cals_{p+\frac{1}{2}}(\bbr^d)$ is a core for $A_{p,i}$.
\end{proof}

\begin{lemma}\label{lemma-D-pq}
Let $p,q \in \bbr$ with $p \leq q$ and $n \in \bbn_0$ be arbitrary. Then the following statements are true:
\begin{enumerate}
\item We have $D(A_q^n) \subset D(A_p^n)$.

\item We have $A_q^{\alpha} \Phi = A_p^{\alpha} \Phi$ for all $m \in \bbn_0$ with $m \leq n$ and $\alpha \in \{ 1,\ldots,d \}^m$.
\end{enumerate}
\end{lemma}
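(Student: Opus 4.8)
The plan is to prove both statements simultaneously by induction on $n$, after isolating the one genuinely analytic point in the single-generator case. The key observation is that for $p \leq q$ we have the continuous embedding $\cals_q(\bbr^d) \hookrightarrow \cals_p(\bbr^d)$, and that for each $i$ the group $\tau^i$ on $\cals_q(\bbr^d)$ is simply the restriction of $\tau^i$ on $\cals_p(\bbr^d)$, since $\tau_x^i$ acts as the same translation operator on every Hermite Sobolev space (cf. Lemma \ref{lemma-tau-group}). Everything beyond this is bookkeeping with multi-indices.

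First I would establish the base case for a single generator: $D(A_{q,i}) \subset D(A_{p,i})$ and $A_{q,i}\Phi = A_{p,i}\Phi$ for every $\Phi \in D(A_{q,i})$. Indeed, if $\Phi \in D(A_{q,i})$, then the difference quotient $h^{-1}(\tau_h^i \Phi - \Phi)$ converges as $h \to 0$ in $\cals_q(\bbr^d)$ to $A_{q,i}\Phi$. Since the embedding into $\cals_p(\bbr^d)$ is continuous and the operators $\tau_h^i$ agree on both spaces, the very same difference quotient converges in $\cals_p(\bbr^d)$ to the same limit; hence $\Phi \in D(A_{p,i})$ with $A_{p,i}\Phi = A_{q,i}\Phi$. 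This is the entire analytic content, and it is essentially immediate from continuity of the embedding.

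Then I would run the induction. For $n = 0$ there is nothing to prove, since $A^{\emptyset}$ is the identity and $\cals_q(\bbr^d) \subset \cals_p(\bbr^d)$. For the inductive step I would use the recursive description of the domain coming from the defining norm (\ref{norm-D-An}): $\Phi \in D(A_q^n)$ if and only if $\Phi \in D(A_{q,i})$ and $A_{q,i}\Phi \in D(A_q^{n-1})$ for all $i = 1,\ldots,d$. Given $\Phi \in D(A_q^n)$, the single-generator case gives $\Phi \in D(A_{p,i})$ with $A_{p,i}\Phi = A_{q,i}\Phi$, and the induction hypothesis applied to $A_{q,i}\Phi \in D(A_q^{n-1})$ yields $A_{q,i}\Phi = A_{p,i}\Phi \in D(A_p^{n-1})$ together with $A_q^{\beta}(A_{q,i}\Phi) = A_p^{\beta}(A_{p,i}\Phi)$ for all $\beta$ with $|\beta| \leq n-1$. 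This shows $A_{p,i}\Phi \in D(A_p^{n-1})$ for each $i$, hence $\Phi \in D(A_p^n)$; and writing any multi-index $\alpha$ with $1 \leq |\alpha| \leq n$ as $\alpha = (\beta,i)$ with $|\beta| \leq n-1$ gives $A_q^{\alpha}\Phi = A_q^{\beta}(A_{q,i}\Phi) = A_p^{\beta}(A_{p,i}\Phi) = A_p^{\alpha}\Phi$, which completes the step.

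I expect the main obstacle to be purely notational rather than analytic: one must pin down precisely the recursive meaning of $A^{\alpha}$ and $D(A^n)$ for the multi-parameter group (as encoded by the norm (\ref{norm-D-An}) in Appendix \ref{app-groups}) so that peeling off the innermost generator is legitimate, and one must use the commutativity of $\tau^1,\ldots,\tau^d$ consistently so that the recursive characterization of $D(A^n)$ is valid and independent of the order in which generators are applied. Once this convention is fixed, the induction closes mechanically from the single-generator base case.
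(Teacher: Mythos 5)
Your proof is correct, but it takes a genuinely different route from the paper's. The paper's own proof is essentially two lines: by Proposition \ref{prop-D-An-multi}, $\Phi \in D(A_q^n)$ means exactly that the orbit map $\xi_\Phi$ belongs to $C^n(\bbr^d;\cals_q(\bbr^d))$; since the embedding $\cals_q(\bbr^d) \subset \cals_p(\bbr^d)$ is continuous and linear (Lemma \ref{lemma-scale-pq}) and the translations act consistently on the whole scale, the same map is $C^n$ into $\cals_p(\bbr^d)$ with the same derivatives, and applying Proposition \ref{prop-D-An-multi} again yields both claims simultaneously, with no induction and no multi-index bookkeeping. You instead rebuild this by hand: a difference-quotient base case for a single generator, then induction using an ``inner peeling'' recursion for $D(A^n)$. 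Two remarks on your version. First, the recursion you use ($\Phi \in D(A^n)$ iff $\Phi \in \bigcap_i D(A_i)$ and $A_i \Phi \in D(A^{n-1})$ for all $i$) is not literally the paper's definition in Appendix \ref{app-groups}, which adjoins new generators on the \emph{outside}; the two do coincide, since both unfold to the statement that every word of length at most $n-1$ applied to $\Phi$ lands in $D(A)$, and this equivalence is purely combinatorial --- so your worry that commutativity of $\tau^1,\ldots,\tau^d$ is needed here is unfounded (though harmless). Second, since the paper defines $D(A_{p,i})$ as $\{\Phi : \xi_\Phi^i \in C^1(\bbr;\cals_p(\bbr^d))\}$ rather than via convergence of the difference quotient at $0$, the cleanest phrasing of your base case is that post-composition with a continuous linear embedding preserves $C^1$ and the derivative; otherwise you must invoke the standard semigroup fact that the two descriptions of the generator domain agree. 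What your approach buys is self-containedness (no appeal to Proposition \ref{prop-D-An-multi}); what the paper's buys is brevity, since that proposition already packages the entire iteration.
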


\begin{proof}
Let $\Phi \in D(A_q^n)$ be arbitrary. Then, by Proposition \ref{prop-D-An-multi} we have $\xi_{\Phi} \in C^n(\bbr^d;\cals_q(\bbr^d))$, where $\xi_{\Phi} : \bbr^d \to \cals_q(\bbr^d)$ denotes the orbit map given by $\xi_{\Phi}(x) = \tau_x \Phi$ for each $x \in \bbr^d$. Taking into account Lemma \ref{lemma-scale-pq}, we deduce that $\xi_{\Phi} \in C^n(\bbr^d;\cals_p(\bbr^p))$, and hence, by Proposition \ref{prop-D-An-multi} we have $\Phi \in D(A_p^n)$. Furthermore, taking into account Lemma \ref{lemma-scale-pq} again, we obtain $A_q^{\alpha} \Phi = A_p^{\alpha} \Phi$ for all $m \in \bbn_0$ with $m \leq n$ and $\alpha \in \{ 1,\ldots,d \}^m$.
\end{proof}

\begin{proposition}\label{prop-Sp-in-domain}
Let $p \in \bbr$ and $n \in \bbn$ be arbitrary. Then the following statements are true:
\begin{enumerate}
\item The pair
\begin{align}\label{scale-Sp-domain}
\big( \cals_{p+\frac{n}{2}}(\bbr^d), D(A_p^n) \big)
\end{align}
consists of separable Hilbert spaces with continuous embedding.

\item For all $m \in \bbn_0$ with $m \leq n$ and $\alpha \in \{ 1,\ldots,d \}^m$ we have
\begin{align}\label{generator-on-Sp}
A_p^{\alpha} \Phi = (-1)^m \partial^{\alpha} \Phi \quad \text{for each $\Phi \in \cals_{p+\frac{n}{2}}(\bbr^d)$.}
\end{align}
\end{enumerate}
\end{proposition}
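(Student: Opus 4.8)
The plan is to prove both parts together by induction on $n$, with the pointwise identity (\ref{generator-on-Sp}) carrying the argument and the continuous embedding (\ref{scale-Sp-domain}) then following almost for free. The two external inputs I would use are Theorem \ref{thm-Sp-D-1}, which handles a single differentiation, and the mapping property of Hermite Sobolev spaces that each partial derivative acts continuously as $\partial_i : \cals_q(\bbr^d) \to \cals_{q-\frac{1}{2}}(\bbr^d)$ for every $q \in \bbr$ (Appendix \ref{app-HS-spaces}); it is exactly this loss of half an index per derivative that pairs with the half-index steps in $p + \frac{n}{2}$. Throughout I use the conventions $A_p^{\alpha} = A_{p,\alpha_1} \circ \cdots \circ A_{p,\alpha_m}$ and $\partial^{\alpha} = \partial_{\alpha_1} \cdots \partial_{\alpha_m}$ for $\alpha \in \{1,\ldots,d\}^m$.

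The induction hypothesis at level $n$ reads: $\cals_{p+\frac{n}{2}}(\bbr^d) \subset D(A_p^n)$ and $A_p^{\gamma}\Psi = (-1)^{|\gamma|}\partial^{\gamma}\Psi$ for every $\Psi \in \cals_{p+\frac{n}{2}}(\bbr^d)$ and every $\gamma$ with $|\gamma| \leq n$; it is convenient to include the trivial case $n=0$ as base. For the step, let $\Phi \in \cals_{p+\frac{n+1}{2}}(\bbr^d)$. Since $\cals_{p+\frac{n+1}{2}}(\bbr^d) \subset \cals_{p+\frac{n}{2}}(\bbr^d)$, the hypothesis already gives $\Phi \in D(A_p^n)$ and (\ref{generator-on-Sp}) for all $|\gamma| \leq n$. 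Consider any $\beta$ with $|\beta| = n$; then $A_p^{\beta}\Phi = (-1)^n \partial^{\beta}\Phi$, and by the mapping property $\partial^{\beta}\Phi \in \cals_{p+\frac{n+1}{2}-\frac{n}{2}}(\bbr^d) = \cals_{p+\frac{1}{2}}(\bbr^d)$. Hence Theorem \ref{thm-Sp-D-1} applies to $A_p^{\beta}\Phi$: for each $i$ we get $A_p^{\beta}\Phi \in D(A_{p,i})$ with $A_{p,i}(A_p^{\beta}\Phi) = -\partial_i A_p^{\beta}\Phi = (-1)^{n+1}\partial_i\partial^{\beta}\Phi$. This shows, via the orbit-map characterization (Proposition \ref{prop-D-An-multi}), that $\Phi \in D(A_p^{n+1})$, and writing an arbitrary $\alpha \in \{1,\ldots,d\}^{n+1}$ as $\alpha = (i,\beta)$ with $i = \alpha_1$ and $|\beta| = n$ yields $A_p^{\alpha}\Phi = A_{p,i}(A_p^{\beta}\Phi) = (-1)^{n+1}\partial^{\alpha}\Phi$. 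Together with the case $|\gamma| \leq n$ this closes the induction.

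Given (\ref{generator-on-Sp}), the first statement is immediate. For $\Phi \in \cals_{p+\frac{n}{2}}(\bbr^d)$ and $|\alpha| = m \leq n$ one has $\| A_p^{\alpha}\Phi \|_p = \| \partial^{\alpha}\Phi \|_p$, and iterating the mapping property gives $\partial^{\alpha} : \cals_{p+\frac{n}{2}}(\bbr^d) \to \cals_{p+\frac{n-m}{2}}(\bbr^d) \hookrightarrow \cals_p(\bbr^d)$ continuously, so $\| A_p^{\alpha}\Phi \|_p \leq C_{\alpha}\| \Phi \|_{p+\frac{n}{2}}$. Summing the finitely many $\alpha$ appearing in the graph norm (\ref{norm-D-An}) gives a constant $C$ with $\| \Phi \|_{D(A_p^n)} \leq C \| \Phi \|_{p+\frac{n}{2}}$, which is the asserted continuity of the embedding. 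That $\cals_{p+\frac{n}{2}}(\bbr^d)$ is a separable Hilbert space is built into the definition of the Hermite Sobolev scale, while $D(A_p^n)$ is a separable Hilbert space because the $A_{p,i}$ are closed (giving completeness under the graph norm) and because $\Phi \mapsto (A_p^{\alpha}\Phi)_{|\alpha| \leq n}$ embeds it isometrically into a finite product of copies of $\cals_p(\bbr^d)$.

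I do not expect a serious obstacle; the content is the interplay between the abstract generator $A_p$ and the concrete operator $\partial_i$. The two points requiring care are the correct orbit-map description of $D(A_p^{n+1})$ for the multi-parameter group, which is what lets me conclude membership from the differentiability of the order-$n$ derivatives, and the consistent tracking of the sign $(-1)^{|\alpha|}$. The genuinely analytic ingredient, continuity of $\partial_i : \cals_q(\bbr^d) \to \cals_{q-\frac{1}{2}}(\bbr^d)$, is imported from Appendix \ref{app-HS-spaces} and is precisely what forces the sharp index $p+\frac{n}{2}$ rather than anything coarser.
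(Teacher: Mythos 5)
Your proof is correct, and it reaches the result by a slightly different organization of the induction than the paper's. The paper also inducts on $n$ with Theorem \ref{thm-Sp-D-1} as the engine, but it runs the step $n-1 \to n$ by shifting the index: writing $\cals_{p+\frac{n}{2}}(\bbr^d) = \cals_{(p+\frac{1}{2})+\frac{n-1}{2}}(\bbr^d)$, it applies the induction hypothesis at level $n-1$ for the group on $\cals_{p+\frac{1}{2}}(\bbr^d)$, so that the key membership $\partial^{\alpha}\Phi = (-1)^{n-1}A_{p+\frac{1}{2}}^{\alpha}\Phi \in \cals_{p+\frac{1}{2}}(\bbr^d)$ comes for free from the fact that the generator of the translation group on $\cals_{p+\frac{1}{2}}(\bbr^d)$ acts within that space; this forces an appeal to Lemma \ref{lemma-D-pq} to identify $A_{p+\frac{1}{2}}^{\alpha}\Phi$ with $A_p^{\alpha}\Phi$. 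You instead keep $p$ fixed, use the scale inclusion $\cals_{p+\frac{n+1}{2}}(\bbr^d) \subset \cals_{p+\frac{n}{2}}(\bbr^d)$, and obtain the needed extra half-unit of regularity of $\partial^{\beta}\Phi$ directly from the mapping property $\partial_i \in L(\cals_q(\bbr^d),\cals_{q-\frac{1}{2}}(\bbr^d))$ of Lemma \ref{lemma-diff-continuous}. The trade-off: your route avoids Lemma \ref{lemma-D-pq} entirely, so its ingredients are a strict subset of the paper's, at the cost of invoking Lemma \ref{lemma-diff-continuous} already inside the induction rather than only for the embedding estimate at the end; the paper's shifted-index induction is the more structural argument, exploiting the consistency of the translation group across the Hermite Sobolev scale. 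One cosmetic remark: the membership $\Phi \in D(A_p^{n+1})$ follows directly from the definition of the higher-order domain (you have $\Phi \in D(A_p^n)$ and $A_p^{\beta}\Phi \in D(A_p)$ for every $\beta$ of length $n$), so your appeal to the orbit-map characterization of Proposition \ref{prop-D-An-multi} is unnecessary, though harmless. Your embedding estimate and the separability argument for $D(A_p^n)$ coincide with what the paper does via Lemma \ref{lemma-diff-continuous} and the remarks preceding Proposition \ref{prop-domain-scale}.
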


\begin{proof}
By induction we prove $\cals_{p+\frac{n}{2}}(\bbr^d) \subset D(A_p^n)$ and the identity (\ref{generator-on-Sp}) for each $n \in \bbn$. For $n=1$ this is a consequence of Theorem \ref{thm-Sp-D-1}. We proceed with the induction step $n-1 \to n$: By induction hypothesis and Lemma \ref{lemma-D-pq} we have
\begin{align*}
\cals_{p+\frac{n}{2}}(\bbr^d) = \cals_{(p+\frac{1}{2}) + \frac{n-1}{2}}(\bbr^d) \subset D(A_{p+\frac{1}{2}}^{n-1}) \subset D(A_p^{n-1}).
\end{align*}
Now, let $\Phi \in \cals_{p+\frac{n}{2}}(\bbr^d)$ be arbitrary. By Lemma \ref{lemma-D-pq} and induction hypothesis, for all $\alpha \in \{ 1,\ldots,d \}^{n-1}$ we have
\begin{align*}
A_p^{\alpha} \Phi = A_{p+\frac{1}{2}}^{\alpha} \Phi = (-1)^{n-1} \partial^{\alpha} \Phi \in \cals_{p+\frac{1}{2}}(\bbr^d) \subset D(A_p),
\end{align*}
and hence $\Phi \in D(A_p^n)$. Furthermore, using Theorem \ref{thm-Sp-D-1} we obtain (\ref{generator-on-Sp}). Finally, by Lemma \ref{lemma-diff-continuous} the pair (\ref{scale-Sp-domain}) consists of separable Hilbert spaces with continuous embedding for each $n \in \bbn$.
\end{proof}

The following result generalizes \cite[Prop. 1.4]{Rajeev-T}.

\begin{proposition}\label{prop-orbit-in-Sp}
Let $p \in \bbr$, $n \in \bbn_0$ and $\Phi \in \cals_{p + \frac{n}{2}}(\bbr^d)$ be arbitrary. Then the following statements are true:
\begin{enumerate}
\item We have
\begin{align*}
\xi_{\Phi} \in \bigcap_{k=0}^n C^k(\bbr^d;\cals_{p + \frac{n-k}{2}}(\bbr^d)),
\end{align*}
where $\xi_{\Phi} : \bbr^d \to \cals_{p+\frac{n}{2}}(\bbr^d)$ denotes the orbit map given by $\xi_{\Phi}(x) = \tau_x \Phi$ for each $x \in \bbr^d$.

\item In particular, we have $\xi_{\Phi} \in C^n(\bbr^d;\cals_p(\bbr^d))$, and for each $m \in \bbn_0$ with $m \leq n$ we have
\begin{align*}
D^m \xi_{\Phi}(x)v = (-1)^m \sum_{\alpha \in \{ 1,\ldots,d \}^m} \partial^{\alpha} \xi_{\Phi}(x) v_{\alpha}, \quad x \in \bbr^d \text{ and } v \in (\bbr^d)^m,
\end{align*}
where we use the notation $v_{\alpha} := v_{\alpha_1} \cdot \ldots \cdot v_{\alpha_m}$.
\end{enumerate}
\end{proposition}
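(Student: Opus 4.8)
The plan is to derive both assertions from Proposition \ref{prop-Sp-in-domain} together with the characterization of the domains $D(A_p^n)$ in terms of smoothness of orbit maps (Proposition \ref{prop-D-An-multi}). The only genuine content is a bookkeeping of indices in the Hermite Sobolev scale, arranged so that each derivative of the orbit map costs exactly one half step of regularity.

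First I would fix $k \in \{ 0,\ldots,n \}$ and show that $\Phi \in D(A_{p + \frac{n-k}{2}}^k)$. This follows from Proposition \ref{prop-Sp-in-domain}(1) applied with base index $p' := p + \frac{n-k}{2}$ and order $k$, because $(p + \frac{n-k}{2}) + \frac{k}{2} = p + \frac{n}{2}$, so that $\Phi \in \cals_{p + \frac{n}{2}}(\bbr^d) = \cals_{p' + \frac{k}{2}}(\bbr^d) \subset D(A_{p'}^k)$. Applying the orbit-map characterization (Proposition \ref{prop-D-An-multi}) to the multi-parameter $C_0$-group $\tau$ on $\cals_{p'}(\bbr^d)$ then yields $\xi_{\Phi} \in C^k(\bbr^d;\cals_{p + \frac{n-k}{2}}(\bbr^d))$; here I use Lemmas \ref{lemma-scale-pq} and \ref{lemma-D-pq} to guarantee that the orbit map and its derivatives are computed consistently across the scale, so that the single element $\xi_{\Phi}(x) = \tau_x \Phi$ is indeed the value of all of these maps. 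Intersecting over $k = 0,\ldots,n$ gives the first assertion.

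For the second assertion I would take $k = n$ above, obtaining $\xi_{\Phi} \in C^n(\bbr^d;\cals_p(\bbr^d))$, and it remains to identify the derivatives. Proposition \ref{prop-D-An-multi} expresses $D^m \xi_{\Phi}(x)$ through the iterated generators, i.e.\ $D^m \xi_{\Phi}(x) v = \sum_{\alpha \in \{ 1,\ldots,d \}^m} A_p^{\alpha} \xi_{\Phi}(x) \, v_{\alpha}$ for $m \leq n$. Since the translation group commutes with spatial differentiation and leaves $D(A_p^n)$ invariant, I would use $A_p^{\alpha} \xi_{\Phi}(x) = A_p^{\alpha} \tau_x \Phi = \tau_x A_p^{\alpha} \Phi$, and then Proposition \ref{prop-Sp-in-domain}(2) to replace $A_p^{\alpha} \Phi = (-1)^m \partial^{\alpha} \Phi$. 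Together with $\partial^{\alpha} \tau_x \Phi = \tau_x \partial^{\alpha} \Phi$ this gives $A_p^{\alpha} \xi_{\Phi}(x) = (-1)^m \partial^{\alpha} \xi_{\Phi}(x)$, and substituting into the formula for $D^m \xi_{\Phi}$ yields the claimed identity.

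The main obstacle is purely organizational: getting the half-integer index shift right (choosing the base index $p + \frac{n-k}{2}$ so that each of the $k$ derivatives consumes one half step while the seed $\Phi$ still lies in $\cals_{p + \frac{n}{2}}(\bbr^d)$), and making sure the derivative formula supplied by Proposition \ref{prop-D-An-multi} is available in terms of the iterated generators $A_p^{\alpha}$. If that formula were only accessible as a differentiability statement and not in explicit form, I would instead establish the derivative identity by induction on $n$, using Theorem \ref{thm-Sp-D-1} in the form $\partial_{x_i} \xi_{\Phi} = \xi_{-\partial_i \Phi}$ with $-\partial_i \Phi \in \cals_{p + \frac{n-1}{2}}(\bbr^d)$ and applying the induction hypothesis to $-\partial_i \Phi$.
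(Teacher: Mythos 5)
Your proposal is correct and takes essentially the same route as the paper, whose proof is the one-line citation of Propositions \ref{prop-degrees-smoothness}, \ref{prop-Sp-in-domain} and Lemma \ref{lemma-diff-continuous}; your index-shift argument with base index $p + \frac{n-k}{2}$, together with the identification $A_p^{\alpha} \xi_{\Phi}(x) = (-1)^m \partial^{\alpha} \xi_{\Phi}(x)$ via Proposition \ref{prop-Sp-in-domain}(2) and the commutation of $\tau_x$ with $\partial_i$, is exactly the bookkeeping that citation leaves implicit.
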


\begin{proof}
This is a consequence of Propositions \ref{prop-degrees-smoothness}, \ref{prop-Sp-in-domain} and Lemma \ref{lemma-diff-continuous}.
\end{proof}

For the next result, recall that every finite signed measure $\mu$ on $(\bbr^d,\calb(\bbr^d))$ may be regarded as a distribution $\mu \in \cals_p(\bbr^d)$ for each $p < - \frac{d}{4}$; see Lemma \ref{lemma-distr-int-ex}.

\begin{proposition}\label{prop-psi-measure}
Let $p \in \bbr$ and $k \in \bbn$ be such that $p+\frac{k}{2} < -\frac{d}{4}$, and let $\mu \in \cals_{p+\frac{k}{2}}(\bbr^d)$ be a finite signed measure on $(\bbr^d,\calb(\bbr^d))$ with compact support and $\mu(\bbr^d) \neq 0$. Furthermore, let $\psi := \xi_{\mu} : \bbr^d \to \cals_p(\bbr^d)$ be the orbit map given by $\psi(x) = \tau_x \mu$ for $x \in \bbr^d$. Then $\psi : \bbr^d \to \psi(\bbr^d)$ is a homeomorphism and $\psi$ is a $C^k$-immersion.
\end{proposition}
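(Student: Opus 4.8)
The plan is to verify the three ingredients separately — $C^k$-regularity, injectivity of every differential, and the homeomorphism property — and I expect the continuity of $\psi^{-1}$ (excluding ``escape to infinity'') to be the only delicate point.

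First I would record the regularity and the derivative formula. Since $\mu \in \cals_{p+\frac{k}{2}}(\bbr^d)$, Proposition \ref{prop-orbit-in-Sp} applied with $n=k$ and $\Phi = \mu$ gives $\psi = \xi_{\mu} \in \bigcap_{j=0}^k C^j(\bbr^d;\cals_{p+\frac{k-j}{2}}(\bbr^d))$, so in particular $\psi \in C^k(\bbr^d;\cals_p(\bbr^d))$; this already yields continuity of $\psi$ and the smoothness required in Definition \ref{def-immersion}. The same proposition (with $m=1$) gives $D \psi(x) v = -\sum_{i=1}^d v_i\, \partial_i \tau_x \mu = -\tau_x \big( \sum_{i=1}^d v_i\, \partial_i \mu \big)$, where I use that translation commutes with $\partial_i$ (the generators $A_{p,i} = -\partial_i$ commute with the group).

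For the immersion property, note that $\tau_x$ is a linear homeomorphism of $\cals_p(\bbr^d)$ with inverse $\tau_{-x}$, so $D\psi(x) v = 0$ if and only if $\sum_{i=1}^d v_i \partial_i \mu = 0$ in $\cals_p(\bbr^d)$; hence injectivity of $D\psi(x)$ for \emph{every} $x$ reduces to the linear independence of $\partial_1 \mu, \ldots, \partial_d \mu$. To prove this I fix $\chi \in C_c^\infty(\bbr^d)$ with $\chi \equiv 1$ on a neighborhood of the compact set $\supp \mu$, and let $f_j$ denote the function $y \mapsto \chi(y) y_j$, so that $f_j \in \mathcal{S}(\bbr^d) \subset \cals_{-p}(\bbr^d)$. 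Since $\partial_i f_j = \delta_{ij}$ on $\supp \mu$, I obtain $\la \partial_i \mu, f_j \ra = -\la \mu, \partial_i f_j \ra = -\delta_{ij}\, \mu(\bbr^d)$, so the matrix $(\la \partial_i \mu, f_j \ra)_{i,j} = -\mu(\bbr^d)\,\Id$ is invertible because $\mu(\bbr^d) \neq 0$. The linear-algebra argument of Lemma \ref{lemma-matrix-m-m} then forces linear independence, and the immersion property follows.

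It remains to show $\psi : \bbr^d \to \psi(\bbr^d)$ is a homeomorphism. Injectivity reduces to $\tau_z \mu = \mu \Rightarrow z = 0$: as $\supp(\tau_z \mu) = z + \supp \mu$, invariance of the nonempty compact set $\supp \mu$ under translation by $z$ forces $z = 0$. For continuity of $\psi^{-1}$, I consider, for $f \in \mathcal{S}(\bbr^d)$, the functional $F_f(x) := \la \tau_x \mu, f \ra = \int f(y+x)\,d\mu(y)$, which is continuous on $\cals_p(\bbr^d)$ since $f \in \cals_{-p}(\bbr^d)$; it is continuous in $x$, the family $(F_f)_{f \in \mathcal{S}}$ determines the measure $\tau_x \mu$, and $F_f(x) \to 0$ as $|x| \to \infty$ because $\supp \mu$ is compact and $f$ vanishes at infinity. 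Now if $\psi(x_n) \to \psi(x)$, then $F_f(x_n) \to F_f(x)$ for all $f$, and I argue $x_n \to x$ by the subsequence principle: a bounded subsequence has a convergent sub-subsequence $x_{n_j} \to x^*$ with $\tau_{x^*}\mu = \tau_x \mu$, hence $x^* = x$ by injectivity; an unbounded subsequence $|x_{n_j}| \to \infty$ forces $F_f(x_{n_j}) \to 0$, whence $F_f(x) = 0$ for all $f$, i.e. $\mu = 0$, contradicting $\mu(\bbr^d) \neq 0$. The genuinely hard part is precisely this last step, where both hypotheses are essential: compact support supplies the decay of $F_f$, and $\mu(\bbr^d) \neq 0$ rules out the degenerate limit; the regularity and immersion parts are routine once Proposition \ref{prop-orbit-in-Sp} and the measure pairing are available.
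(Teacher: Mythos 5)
Your proof is correct, and it rests on the same pillars as the paper's own: Proposition \ref{prop-orbit-in-Sp} for the $C^k$-regularity and the derivative formula, pairings against test functions built from the compactness of $\supp \mu$, and the hypothesis $\mu(\bbr^d) \neq 0$ feeding into the linear-algebra argument of Lemma \ref{lemma-matrix-m-m}. The differences lie in the sub-arguments, and they are worth recording. (i) For the immersion property you exploit $\tau_x \partial_i = \partial_i \tau_x$ and the invertibility of $\tau_x$ to reduce injectivity of $D\psi(x)$ for \emph{all} $x$ to the linear independence of $\partial_1 \mu, \ldots, \partial_d \mu$, i.e.\ to a single pairing computation; the paper instead repeats the computation at each $x_0$ with translated test functions satisfying $\varphi_j(z+x_0) = z_j$ on $\supp \mu$. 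Your reduction is a genuine, if mild, simplification. (ii) For injectivity of $\psi$ you argue that $\tau_z \mu = \mu$ forces $z + \supp \mu = \supp \mu$, which is impossible for a nonempty compact set unless $z = 0$; the paper instead pairs $\psi(x) - \psi(y)$ against a coordinate-type Schwartz function to obtain $(x-y)\mu(\bbr^d) = 0$. Your argument needs only $\mu \neq 0$ rather than $\mu(\bbr^d) \neq 0$, but it silently uses that equality of $\tau_z \mu$ and $\mu$ as elements of $\cals_p(\bbr^d)$ implies equality as measures (a finite signed measure is determined by its integrals against Schwartz functions) --- standard, but a fact the paper's pairing computation avoids invoking. (iii) For continuity of $\psi^{-1}$ both proofs first rule out escape to infinity by the same decay-versus-nondegeneracy contradiction; after that the paper concludes directly, pairing $\psi(x_n) - \psi(x)$ against the coordinate-type test function to get $(x_n - x)\mu(\bbr^d) \to 0$, whereas you invoke Bolzano--Weierstrass together with injectivity via the subsequence principle. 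Your route is precisely the one the paper itself uses in Proposition \ref{prop-psi-function}, where no such test function is available; the paper's direct computation in Proposition \ref{prop-psi-measure} avoids the compactness extraction at the cost of one more application of Lemma \ref{lemma-fct-comp-supp}. In short: correct, same skeleton, with two sub-arguments replaced by equally valid alternatives.
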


\begin{proof}
First, we show that $\psi$ is injective. Let $x,y \in \bbr^d$ be such that $\psi(x) = \psi(y)$. Then we have $\tau_x \mu = \tau_y \mu$. Since $\supp(\mu)$ is compact, by Lemma \ref{lemma-fct-comp-supp} there exists a Schwartz function $\varphi \in \cals(\bbr^d)$ such that $\varphi(x+z) = x+z$ and $\varphi(y+z) = y+z$ for all $z \in \supp(\mu)$. By Lemma \ref{lemma-distr-int-ex} we obtain
\begin{align*}
0 &= \la \psi(x) - \psi(y), \varphi \ra = \int_{\bbr^d} \big( \varphi(z+x) - \varphi(z+y) \big) \mu(dz)
\\ &= \int_{\bbr^d} \big( (z+x) - (z+y) \big) \mu(dz) = (x-y) \mu(\bbr^d).
\end{align*}
Since $\mu(\bbr^d) \neq 0$, we deduce that $x=y$.

Next, we show that $\psi : \bbr^d \to \psi(\bbr^d)$ is a homeomorphism. For this purpose, let $(x_n)_{n \in \bbn} \subset \bbr^d$ and $x \in \bbr^d$ be such that $\psi(x_n) \to \psi(x)$. We will show that $x_n \to x$. First, note that the sequence $(x_n)_{n \in \bbn}$ is bounded. Indeed, suppose, on the contrary, that $(x_n)_{n \in \bbn}$ is unbounded. Then there is a subsequence $(x_{n_k})_{k \in \bbn}$ such that $|x_{n_k}| \to \infty$ for $k \to \infty$. Since $\supp(\mu)$ is compact, by Lemma \ref{lemma-fct-comp-supp} there exists a Schwartz function $\varphi \in \cals(\bbr^d)$ with compact support such that $\varphi(z+x) = 1$ for all $z \in \supp(\mu)$. Therefore, by Lemma \ref{lemma-distr-int-ex} we obtain
\begin{align*}
\la \psi(x),\varphi \ra = \int_{\bbr^d} \varphi(z+x) \mu(dz) = \mu(\bbr^d) \neq 0. 
\end{align*}
Since $\varphi$ has compact support, by Lebesgue's dominated convergence theorem we deduce that
\begin{align*}
\la \psi(x_{n_k}), \varphi \ra = \int_{\bbr^d} \varphi(z+x_{n_k}) \mu(dz) \to 0 \quad \text{for $k \to \infty$.}
\end{align*}
On the other hand, we have $\psi(x_{n_k}) \to \psi(x)$, and hence the contradiction 
\begin{align*}
\la \psi(x_{n_k}), \varphi \ra \to \la \psi(x), \varphi \ra \neq 0 \quad \text{for $k \to \infty$.}
\end{align*}
Hence, the sequence $(x_n)_{n \in \bbn}$ is bounded. Since $\psi(x_n) \to \psi(x)$, by Lemma \ref{lemma-distr-int-ex} for each $\varphi \in \cals(\bbr^d)$ we have
\begin{align*}
\int_{\bbr^d} \big( \varphi(z + x_n) - \varphi(z+x) \big) \mu(dz) = \la \psi(x_n) - \psi(x), \varphi \ra \to 0.
\end{align*}
Since the sequence $(x_n)_{n \in \bbn}$ is bounded and $\supp(\mu)$ is compact, by Lemma \ref{lemma-fct-comp-supp} there exists a Schwartz function $\varphi \in \cals(\bbr^d)$ such that $\varphi(z+x) = z+x$ for all $z \in \supp(\mu)$ as well as $\varphi(z+x_n) = z+x_n$ for all $z \in \supp(\mu)$ and all $n \in \bbn$. This gives us
\begin{align*}
\int_{\bbr^d} \big( \varphi(z + x_n) - \varphi(z+x) \big) \mu(dz) &= \int_{\bbr^d} \big( (z + x_n) - (z+x) \big) \mu(dz)
\\ &= (x_n - x) \mu(\bbr^d)
\end{align*}
for each $n \in \bbn$. Since $\mu(\bbr^d) \neq 0$, we deduce that $x_n \to x$, showing that $\psi : \bbr^d \to \psi(\bbr^d)$ is a homeomorphism.

Now, we prove that $\psi$ is a $C^k$-immersion. By Proposition \ref{prop-orbit-in-Sp} we have $\psi \in C^k(\bbr^d;\cals_p(\bbr^d))$. Let $x_0 \in \bbr^d$ be arbitrary. Since $\supp(\mu)$ is compact, by Lemma \ref{lemma-fct-comp-supp} for each $j=1,\ldots,d$ there exists a Schwartz function $\varphi_j \in \cals(\bbr^d)$ such that $\varphi_j(z+x_0) = z_j$ for all $z \in \supp(\mu)$. Therefore, by Proposition \ref{prop-orbit-in-Sp} and Lemma \ref{lemma-distr-int-ex} for all $i,j = 1,\ldots,d$ we have
\begin{align*}
\la D \psi(x_0)e_i, \varphi_j \ra &= - \la \partial_i \tau_{x_0} \mu, \varphi_j \ra = \la \tau_{x_0} \mu, \partial_i \varphi_j \ra
\\ &= \int_{\bbr^d} \partial_i \varphi_j(z + x_0) \mu(dz) = \delta_{ij} \mu(\bbr^d).
\end{align*}
Since $\mu(\bbr^d) \neq 0$, by Lemma \ref{lemma-matrix-m-m} it follows that $\psi$ is a $C^k$-immersion at $x_0$.
\end{proof}

For the next results, recall that every polynomial $f : \bbr^d \to \bbr$ in several variables with $\deg(f) = n$ for some $n \in \bbn_0$ may be regarded as a distribution $f \in \cals_p(\bbr^d)$ for each $p < - \frac{d}{4} - \frac{n}{2}$; see Lemma \ref{lemma-polynomial}.

\begin{lemma}\label{lemma-functionals-poly}
Let $p \in \bbr$ and $n \in \bbn$ with $n \leq d$ be such that $p < - \frac{d}{4} - \frac{n}{2}$. Let $f \in \cals_p(\bbr^d)$ be the polynomial
\begin{align*}
f : \bbr^d \to \bbr, \quad f(x) = x_1 \cdot \ldots \cdot x_n.
\end{align*}
Then for each $z \in \bbr^d$ there exists a Schwartz function $\varphi_z \in \cals(\bbr^d)$ such that for each $x \in \bbr^d$ we have
\begin{align*}
\la \tau_x f, \varphi_z \ra &= (z_1 - x_1) \cdot \ldots \cdot (z_n - x_n),
\\ \la \partial_i \tau_x f, \varphi_z \ra &= (z_1 - x_1) \cdot \ldots \cdot (z_{i-1} - x_{i-1}) \cdot (z_{i+1} - x_{i+1}) \cdot \ldots \cdot (z_n - x_n)
\end{align*}
for all $i = 1,\ldots,d$.
\end{lemma}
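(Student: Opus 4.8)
The plan is to produce the single test function $\varphi_z$ by a tensor-product ansatz and to reduce both identities to one-dimensional moment computations. First I would fix, once and for all, an \emph{even} function $\rho \in \cals(\bbr)$ with $\int_{\bbr} \rho(t)\,dt = 1$; being even it automatically has vanishing first moment, $\int_{\bbr} t\,\rho(t)\,dt = 0$. For a given $z \in \bbr^d$ I would then set
\begin{align*}
\varphi_z(y) := \prod_{k=1}^n \rho(y_k - z_k) \cdot \prod_{k=n+1}^d \rho(y_k), \quad y \in \bbr^d,
\end{align*}
which lies in $\cals(\bbr^d)$ as a finite product of Schwartz functions in separate variables. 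Translating by $z_k$ in the first $n$ coordinates is precisely what prescribes the zeroth and first moments: $\int_{\bbr} \rho(y_k - z_k)\,dy_k = 1$ and $\int_{\bbr} y_k\,\rho(y_k - z_k)\,dy_k = z_k$ for $k = 1,\ldots,n$ (the latter using the vanishing first moment of $\rho$), while $\int_{\bbr} \rho(y_k)\,dy_k = 1$ for $k > n$.

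For the first identity I would record that, since $f$ is the polynomial $x \mapsto x_1 \cdots x_n$ regarded as an element of $\cals_p(\bbr^d)$ via Lemma \ref{lemma-polynomial} (legitimate because $p < -\frac{d}{4} - \frac{n}{2}$), its pairing against a Schwartz function is the ordinary Lebesgue integral, and the translate satisfies
\begin{align*}
\la \tau_x f, \varphi_z \ra = \int_{\bbr^d} \prod_{k=1}^n (y_k - x_k)\, \varphi_z(y)\,dy,
\end{align*}
in complete analogy with the computation for measures in Lemma \ref{lemma-distr-int-ex} used in Proposition \ref{prop-psi-measure}. Because $\varphi_z$ factorizes over the coordinates and the integrand involves each $y_k$ to at most the first power, the integral splits into a product of one-dimensional integrals. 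For $k \le n$ the factor is $\int_{\bbr} (y_k - x_k)\rho(y_k - z_k)\,dy_k = z_k - x_k$, while each factor with $k > n$ equals $1$. Multiplying the factors yields exactly $(z_1 - x_1)\cdots(z_n - x_n)$.

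For the second identity I would transfer the derivative onto the test function through the definition of the distributional derivative, $\la \partial_i \tau_x f, \varphi_z \ra = -\la \tau_x f, \partial_i \varphi_z \ra$, and factorize again. Since $\partial_i \varphi_z$ differs from $\varphi_z$ only in that its $i$-th factor is replaced by the derivative of the corresponding one-dimensional function, the sole coordinate that changes in the product of integrals is the $i$-th. When $i \le n$, integration by parts gives $\int_{\bbr} (y_i - x_i)\rho'(y_i - z_i)\,dy_i = -\int_{\bbr} \rho(y_i - z_i)\,dy_i = -1$, the boundary term vanishing because $\rho$ is Schwartz; together with the overall sign and the remaining factors this produces the claimed product over $\{1,\ldots,n\} \setminus \{i\}$. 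When $i > n$ the changed factor is $\int_{\bbr} \rho'(y_i)\,dy_i = 0$, so the pairing vanishes, consistently with $\partial_i f = 0$ for such $i$.

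The computation itself is routine; the only genuine points requiring care are conceptual. The first is recognizing that the apparently rigid demand — a \emph{single} Schwartz function reproducing $(z_1 - x_1)\cdots(z_n - x_n)$ together with all of its partial products simultaneously for \emph{every} $x$ — decouples, upon expanding the polynomial, into finitely many moment conditions in which each coordinate enters to order at most one, and is therefore met by a tensor product of translated even mollifiers with no higher-moment constraints. The second, and the main point to justify rather than merely assert, is that the Hermite--Sobolev pairing of the polynomial distribution $f$ (and of $\partial_i \tau_x f$, which sits in a space of lower regularity) against the Schwartz function $\varphi_z$ coincides with the Lebesgue integral, so that the manipulations above are legitimate; this is exactly where Lemmas \ref{lemma-polynomial} and \ref{lemma-distr-int-ex} and the hypothesis $p < -\frac{d}{4} - \frac{n}{2}$ are used.
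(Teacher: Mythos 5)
Your proof is correct and takes essentially the same route as the paper: the paper's proof chooses $\varphi_z = \tau_z \varphi$ with $\varphi$ the standard Gaussian density, which is precisely a tensor product of translated even unit-mass Schwartz functions, i.e.\ a special case of your construction with $\rho$ Gaussian. The only cosmetic difference is that the paper evaluates the resulting one-dimensional moment integrals probabilistically, as expectations for a ${\rm N}(z,\Id)$ random vector (using $\partial_i \varphi_z(y) = -(y_i - z_i)\varphi_z(y)$ in place of your integration by parts), but both arguments rest on the same zeroth- and first-moment conditions and the same duality identities $\la \tau_x f,\varphi\ra = \int f(y-x)\varphi(y)\,dy$ and $\la \partial_i \tau_x f, \varphi_z \ra = -\la \tau_x f, \partial_i \varphi_z \ra$.
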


\begin{proof}
Let $\varphi \in \cals(\bbr^d)$ be the density of a $d$-dimensional standard normal distribution; that is
\begin{align*}
\varphi(y) = \frac{1}{(2 \pi)^{d/2}} \exp \bigg( -\frac{\| y \|^2}{2} \bigg), \quad y \in \bbr^d.
\end{align*}
Now, let $z \in \bbr^d$ be arbitrary. We set $\varphi_z := \tau_z \varphi$. Then $\varphi_z$ is the density of the $d$-dimensional normal distribution ${\rm N}(z,\Id)$. Now, let $Y \sim {\rm N}(z,\Id)$ be a normally distributed random vector. Then, for all $x \in \bbr^d$ we have
\begin{align*}
\la \tau_x f, \varphi_z \ra &= \int_{\bbr^d} f(y-x) \varphi_z(y) dy = \bbe[(Y_1-x_1) \cdot \ldots \cdot (Y_n - x_n)]
\\ &= \bbe[Y_1-x_1] \cdot \ldots \cdot \bbe[Y_n - x_n] = (z_1 - x_1) \cdot \ldots \cdot (z_n - x_n).
\end{align*}
Now, let $i=1,\ldots,d$ be arbitrary. Note that
\begin{align*}
&\partial_i \varphi_z(y) = -(y_i - z_i) \varphi_z(y), \quad y \in \bbr^d,
\\ &\bbe[(Y_i - x_i)(Y_i - z_i)] = \bbe[(Y_i - z_i)^2] + (z_i - x_i) \bbe[Y_i - z_i] = 1, \quad x \in \bbr^d.
\end{align*}
Therefore, for all $x \in \bbr^d$ we obtain
\begin{align*}
\la \partial_i \tau_x f, \varphi_z \ra &= - \la \tau_x f, \partial_i \varphi_z \ra = \int_{\bbr^d} f(y-x) (y_i - z_i) \varphi_z(y) dy
\\ &= \bbe[ (Y_1 - x_1) \cdot \ldots \cdot (Y_n - x_n) \cdot (Y_i - z_i) ]
\\ &= (z_1 - x_1) \cdot \ldots \cdot (z_{i-1} - x_{i-1}) \cdot (z_{i+1} - x_{i+1}) \cdot \ldots \cdot (z_n - x_n),
\end{align*}
completing the proof.
\end{proof}

For $m \in \bbn$ with $m \leq d$ we denote by $\bbr^m \times \{ 0 \} \subset \bbr^d$ be the subspace $\bbr^m \times \{ 0 \} := \lin \{ e_1,\ldots,e_m \}$, where $e_1,\ldots,e_m \in \bbr^d$ denote the first $m$ unit vectors.

\begin{proposition}\label{prop-psi-poly}
Let $p \in \bbr$ and $k,m,n \in \bbn$ with $m \leq n \leq d$ be such that $p + \frac{k}{2} < - \frac{d}{4} - \frac{n}{2}$. Let $f \in \cals_{p+\frac{k}{2}}(\bbr^d)$ be the polynomial
\begin{align*}
f : \bbr^d \to \bbr, \quad f(x) = x_1 \cdot \ldots \cdot x_n,
\end{align*}
and let $\psi := \xi_f : \bbr^d \to \cals_p(\bbr^d)$ be the orbit map given by $\psi(x) = \tau_x f$ for $x \in \bbr^d$. Set $\caln := \bbr^m \times \{ 0 \} $. Then $\psi|_{\caln} : \caln \to \psi(\caln)$ is a homeomorphism and $\psi$ is a $C^k$-immersion on $\caln$.
\end{proposition}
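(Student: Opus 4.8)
\textit{The plan is} to mimic the structure of the proof of Proposition \ref{prop-psi-measure}, but to replace the compactly supported test functions available in the measure case by the explicit Schwartz functions $\varphi_z$ supplied by Lemma \ref{lemma-functionals-poly}. I note first that, since $p+\frac{k}{2} < -\frac{d}{4}-\frac{n}{2}$ forces $f \in \cals_{p+\frac{k}{2}}(\bbr^d) \subset \cals_p(\bbr^d)$ (and also $p < -\frac{d}{4}-\frac{n}{2}$, so Lemma \ref{lemma-functionals-poly} applies with the ambient index $p$), Proposition \ref{prop-orbit-in-Sp} gives $\psi = \xi_f \in C^k(\bbr^d;\cals_p(\bbr^d))$ together with $D\psi(x)e_i = -\partial_i \tau_x f$ for $i=1,\ldots,d$. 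Throughout I use the identity $\la \partial_i \tau_x f, \varphi_z \ra = -\la \tau_x f, \partial_i \varphi_z \ra$ and the fact that $x_l = 0$ for $l > m$ whenever $x \in \caln$.

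The key device for the topological statement is a coordinate-recovery identity. Fixing $z \in \bbr^d$, Lemma \ref{lemma-functionals-poly} produces $\varphi_z \in \cals(\bbr^d)$ for which the continuous linear functionals $\ell_0 := \la \cdot, \varphi_z \ra$ and $\ell_i := -\la \cdot, \partial_i \varphi_z \ra$ on $\cals_p(\bbr^d)$ satisfy, for $x \in \caln$ and $i \leq m$,
\begin{align*}
\ell_0(\psi(x)) = \prod_{l=1}^n (z_l - x_l), \qquad \ell_i(\psi(x)) = \prod_{\substack{l=1 \\ l \neq i}}^n (z_l - x_l),
\end{align*}
whence the crucial relation $\ell_0(\psi(x)) = (z_i - x_i)\,\ell_i(\psi(x))$. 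First I would prove injectivity: given $x,y \in \caln$ with $\psi(x) = \psi(y)$, I choose $z$ with $z_l \neq x_l, z_l \neq y_l$ for $l \leq m$ and $z_l \neq 0$ for $m < l \leq n$, so that $\ell_i(\psi(x)) = \ell_i(\psi(y)) \neq 0$; the relation then yields $z_i - x_i = z_i - y_i$, i.e.\ $x_i = y_i$, for each $i \leq m$, hence $x = y$. For the continuity of the inverse I would take $(x_n) \subset \caln$ with $\psi(x_n) \to \psi(x)$ and choose $z$ with $z_l \neq x_l$ for $l \leq m$ and $z_l \neq 0$ for $m < l \leq n$, so that $\ell_i(\psi(x)) \neq 0$. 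Continuity of the $\ell_i$ then gives $\ell_i(\psi(x_n)) \to \ell_i(\psi(x)) \neq 0$ and $\ell_0(\psi(x_n)) \to \ell_0(\psi(x))$, so for large $n$ the formula $(x_n)_i = z_i - \ell_0(\psi(x_n))/\ell_i(\psi(x_n))$ is valid and converges to $z_i - \ell_0(\psi(x))/\ell_i(\psi(x)) = x_i$. As $(x_n)_l = x_l = 0$ for $l > m$, this gives $x_n \to x$, so $\psi|_{\caln} : \caln \to \psi(\caln)$ is a homeomorphism.

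For the immersion property I would fix $x_0 \in \caln$ and apply Lemma \ref{lemma-matrix-m-m} with the basis $\{e_1,\ldots,e_m\}$ of $T_{x_0}\caln$. For each $j = 1,\ldots,m$ I choose $z^{(j)} \in \bbr^d$ with $z^{(j)}_j = (x_0)_j$ and $z^{(j)}_l = (x_0)_l + 1$ for $l \in \{1,\ldots,n\} \setminus \{j\}$, and set $h_j := \varphi_{z^{(j)}}$. Then $\la D\psi(x_0)e_i, h_j \ra = -\la \partial_i \tau_{x_0} f, \varphi_{z^{(j)}} \ra = -\prod_{l \leq n,\, l \neq i}(z^{(j)}_l - (x_0)_l)$, which equals $-1$ when $i = j$ (all surviving factors are $1$) and $0$ when $i \neq j$ (the factor at $l = j$ vanishes, using $j \leq m \leq n$). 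Hence the matrix $\big( \la D\psi(x_0)e_i, h_j \ra \big)$ is $-\Id$, which is invertible, and Lemma \ref{lemma-matrix-m-m} shows that $\psi$ is a $C^k$-immersion on $\caln$ at $x_0$. Since $x_0 \in \caln$ is arbitrary, this proves that $\psi$ is a $C^k$-immersion on $\caln$.

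The main obstacle is the continuity of the inverse of $\psi|_{\caln}$. In the measure case (Proposition \ref{prop-psi-measure}) this required a separate boundedness argument for the preimage sequence, for lack of a multiplicative structure. Here the product form of the pairings in Lemma \ref{lemma-functionals-poly} lets one divide, recovering each coordinate $x_i$ as an explicit continuous function of $\psi(x)$ and bypassing the boundedness step entirely; the only care needed is to choose $z$ avoiding the finitely many hyperplanes $\{z_l = x_l\}$ and $\{z_l = 0\}$, which is always possible since $x$ is fixed.
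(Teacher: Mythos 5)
Your proof is correct, and it shares the paper's skeleton: Lemma \ref{lemma-functionals-poly} supplies the test functions, the homeomorphism is proved sequentially, and the immersion via Lemma \ref{lemma-matrix-m-m}. The differences are in how the product structure is exploited, and they are worth recording. For injectivity and inverse continuity the paper keeps the full polynomial identities $\prod_{l=1}^n(z_l-x_l)=\prod_{l=1}^n(z_l-y_l)$ (resp.\ their pointwise convergence in $z$) and ``takes partial derivatives in $z$ inductively''; you instead divide, using $\ell_0(\psi(x))=(z_i-x_i)\,\ell_i(\psi(x))$ with $z$ chosen off finitely many hyperplanes, to get the explicit local inversion formula $x_i = z_i - \ell_0(\psi(x))/\ell_i(\psi(x))$. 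This buys you two things: the inverse is exhibited as an explicit continuous function of $\psi(x)$, and you never need to justify interchanging a pointwise limit with $z$-differentiation (the paper's one-line induction implicitly uses that pointwise convergence of polynomials of fixed degree forces convergence of coefficients, hence of derivatives --- repairable, but glossed over). For the immersion both arguments aim at the matrix $-\Id$ in Lemma \ref{lemma-matrix-m-m}, but your choice of evaluation points is actually the more robust one: the paper sets $z_j=\bbI+x_0-e_j$ with $\bbI=\sum_{i=1}^m e_i$, which makes the factors $z_{j,l}-x_{0,l}$ vanish for $m<l\leq n$, so the displayed computation $\la D\psi(x_0)e_i,\varphi_{z_j}\ra=-\delta_{ij}$ only holds when $n=m$ (for $n>m$ the matrix as written is $0$; evidently $\bbI$ should be $\sum_{i=1}^n e_i$). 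Your choice $z^{(j)}_j=(x_0)_j$ and $z^{(j)}_l=(x_0)_l+1$ for $l\in\{1,\ldots,n\}\setminus\{j\}$ makes every surviving factor equal to $1$ and is valid for all $m\leq n$, silently repairing this defect.
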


\begin{proof}
First, we show that $\psi|_{\caln}$ is injective. Let $x,y \in \caln$ be such that $\psi(x) = \psi(y)$, that is $\tau_x f = \tau_y f$. By Lemma \ref{lemma-functionals-poly} we have
\begin{align*}
(z_1 - x_1) \cdot \ldots \cdot (z_n - x_n) = (z_1 - y_1) \cdot \ldots \cdot (z_n - y_n), \quad z \in \bbr^d.
\end{align*}
Taking partial derivatives with respect to $z$, inductively we deduce that $x=y$.

Next, we show that $\psi|_{\caln} : \caln \to \psi(\caln)$ is a homeomorphism. Let $(x_m)_{m \in \bbn} \subset \bbr^n$ and $x \in \bbr^n$ be such that $\psi(x_m) \to \psi(x)$. By Lemma \ref{lemma-functionals-poly} we have
\begin{align*}
(z_1 - x_{m,1}) \cdot \ldots \cdot (z_n - x_{m,n}) \to (z_1 - x_1) \cdot \ldots \cdot (z_n - x_n), \quad z \in \bbr^d.
\end{align*}
Taking partial derivatives with respect to $z$, inductively we deduce that $x_m \to x$.

Now, we prove that $\psi$ is a $C^k$-immersion on $\caln$. By Proposition \ref{prop-orbit-in-Sp} we have $\psi \in C^k(\bbr^d;\cals_p(\bbr^d))$. Let $x_0 \in \caln$ be arbitrary. We set $z_j := \bbI + x_0 - e_j \in \caln$ for $j=1,\ldots,m$, where $\bbI := \sum_{i=1}^m e_i = (1,\ldots,1,0,\ldots,0) \in \caln$. Then for all $i,j = 1,\ldots,m$ we have $z_{j,i} - x_{0,i} = 1 - \delta_{ij}$, and by Lemma \ref{lemma-functionals-poly} we obtain
\begin{align*}
\la D \psi(x_0) e_i, \varphi_{z_j} \ra = -\la \partial_i \tau_{x_0} f, \varphi_{z_j} \ra = -\delta_{ij}.
\end{align*}
Therefore, by Lemma \ref{lemma-matrix-m-m} it follows that $\psi$ is a $C^k$-immersion on $\caln$ at $x_0$.
\end{proof}

For the next result, recall that $\cals_p(\bbr^d) \subset C_0^1(\bbr^d)$ for each $p > \frac{d}{4} + \frac{1}{2}$; see the Sobolev embedding theorem for Hermite Sobolev spaces (Theorem \ref{thm-Ck-versions}).

\begin{proposition}\label{prop-psi-function}
Let $p \in \bbr$ and $k \in \bbn$ be such that $p + \frac{k}{2} > \frac{d}{4} + \frac{1}{2}$, and let $\varphi \in \cals_{p+\frac{k}{2}}(\bbr^d)$ be arbitrary. Let $n \in \bbn$ with $m \leq n \leq d$ be arbitrary, let $\caln$ be an $m$-dimensional $C^k$-submanifold of $\bbr^d$, and let $E \subset \bbr^d$ be an $n$-dimensional subspace such that $T \caln \subset \caln \times E$. Suppose there are $v_1,\ldots,v_n \in E$ and $z_1,\ldots,z_n \in \bbr^d$ such that the matrix
\begin{align}\label{matrix}
\big( D_{v_i} \varphi(z_j) \big)_{i,j=1,\ldots,n} \in \bbr^{n \times n}
\end{align}
is invertible. Let $\psi := \xi_{\varphi} : \bbr^d \to \cals_p(\bbr^d)$ be the orbit map given by $\psi(x) = \tau_x \varphi$ for $x \in \bbr^d$. Then $\psi|_{\caln} : \caln \to \psi(\caln)$ is a homeomorphism and $\psi$ is a $C^k$-immersion on $\caln$.
\end{proposition}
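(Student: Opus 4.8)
The plan is to follow the pattern of the proofs of Propositions \ref{prop-psi-measure} and \ref{prop-psi-poly}, establishing in turn that $\psi|_{\caln}$ is injective, that it is a homeomorphism onto its image, and that $\psi$ is a $C^k$-immersion on $\caln$. Two facts will be used throughout. First, since $p + \frac{k}{2} > \frac{d}{4} + \frac{1}{2}$, the Sobolev embedding theorem (Theorem \ref{thm-Ck-versions}) gives $\varphi \in C_0^1(\bbr^d)$; in particular $\varphi$ and all its first-order partial derivatives are continuous and vanish at infinity, so $\varphi$ is bounded. Second, by Proposition \ref{prop-orbit-in-Sp} we have $\psi = \xi_{\varphi} \in C^k(\bbr^d;\cals_p(\bbr^d))$ with $D\psi(x) v = - \tau_x(D_v \varphi)$ for $x,v \in \bbr^d$, where $D_v \varphi = \sum_{i=1}^d v_i \partial_i \varphi$. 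I record two consequences of the hypothesis (\ref{matrix}): the $v_1,\ldots,v_n$ are linearly independent and hence a basis of the $n$-dimensional space $E$, and $\varphi \not\equiv 0$ (otherwise all entries $D_{v_i}\varphi(z_j)$ would vanish).

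For injectivity and the homeomorphism property, the key observation is that a non-zero element of $C_0(\bbr^d)$ cannot be invariant under a non-trivial translation. Concretely, if $\psi(x) = \psi(x')$ for $x,x' \in \bbr^d$, then $\tau_{x-x'}\varphi = \varphi$, so $\varphi(u + j(x-x')) = \varphi(u)$ for all $j \in \bbz$; letting $j \to \infty$ and using $\varphi \in C_0$ forces $\varphi \equiv 0$ unless $x = x'$. Since $\varphi \not\equiv 0$, this gives injectivity of $\psi$ on all of $\bbr^d$, in particular on $\caln$. For continuity of the inverse I would take $(x_l) \subset \caln$ with $\psi(x_l) \to \psi(x)$ in $\cals_p$ for some $x \in \caln$. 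Pairing with Schwartz functions $g \in \cals(\bbr^d) \subset \cals_{-p}(\bbr^d)$, using $\la \tau_{x_l}\varphi, g\ra = \int_{\bbr^d} \varphi(y - x_l) g(y)\, dy$ together with dominated convergence ($\varphi$ bounded, $\varphi \in C_0$), I first show $(x_l)$ is bounded, since otherwise a subsequence with $|x_{l_k}| \to \infty$ would yield $\la \psi(x),g\ra = 0$ for every $g$, forcing $\varphi \equiv 0$. Every convergent subsequence $x_{l_k} \to x^*$ then satisfies $\psi(x^*) = \psi(x)$ by continuity of $\psi$, whence $x^* = x$ by the injectivity just proved; boundedness together with uniqueness of subsequential limits gives $x_l \to x$.

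For the immersion claim I would fix $x_0 \in \caln$ and verify directly that $D\psi(x_0)|_{T_{x_0}\caln}$ is one-to-one. Since $T_{x_0}\caln \subset E$ by the hypothesis $T\caln \subset \caln \times E$, it suffices to show $D\psi(x_0)|_E$ is injective. Let $v \in E$ with $D\psi(x_0) v = 0$ in $\cals_p$. From $D\psi(x_0) v = -\tau_{x_0}(D_v\varphi)$ and the injectivity of $\tau_{x_0}$, the distribution $D_v\varphi$ vanishes in $\cals_p$; but $D_v\varphi$ is a continuous function because $\varphi \in C_0^1$, so $D_v\varphi \equiv 0$ pointwise. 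Writing $v = \sum_{i=1}^n a_i v_i$ and evaluating at the points $z_j$ gives $\sum_{i=1}^n a_i D_{v_i}\varphi(z_j) = D_v\varphi(z_j) = 0$ for all $j = 1,\ldots,n$; invertibility of the matrix (\ref{matrix}) forces $a = 0$, i.e. $v = 0$. As this holds for every $x_0 \in \caln$, $\psi$ is a $C^k$-immersion on $\caln$.

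I expect the main obstacle to be the legitimacy of \emph{testing at points}: here $p$ may be small, even negative, so point evaluation need not be continuous on $\cals_p$ and Dirac masses need not lie in $\cals_{-p}(\bbr^d)$, which is why the matrix (\ref{matrix}) cannot simply be read off by pairing with $\delta_{z_j}$ as in the measure and polynomial cases. The resolution is to avoid point functionals on $\cals_p$ entirely: the relevant objects $D_v\varphi$ are honest continuous functions -- guaranteed precisely by $\varphi \in C_0^1$, which is where the hypothesis $p + \frac{k}{2} > \frac{d}{4} + \frac{1}{2}$ enters -- so pointwise evaluation is applied only after the distributional identity has been reduced to a statement about a genuine function. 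A secondary point to handle with care is that the invertibility condition (\ref{matrix}) is phrased on the $n$-dimensional space $E$ rather than on the varying $m$-dimensional tangent spaces $T_{x_0}\caln$; the inclusion $T_{x_0}\caln \subset E$ makes injectivity on $E$ the correct, $x_0$-independent statement to establish.
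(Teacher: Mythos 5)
Your proof is correct, and it follows the same skeleton as the paper's: injectivity because a nonzero $C_0$-function cannot be invariant under a nontrivial translation, the homeomorphism property via boundedness of preimage sequences plus a subsequence argument, and the immersion property from the invertibility of the matrix (\ref{matrix}). The genuine difference is in how distributional identities are converted into pointwise ones. The paper pairs with Dirac masses throughout: $\varphi(z-x) = \la \delta_z, \tau_x \varphi \ra$ in the injectivity and boundedness steps, and $\la D\psi(x_0)v_i, \delta_{x_0+z_j} \ra$ together with Lemma \ref{lemma-matrix-m-m} in the immersion step. You instead work with the continuous representatives directly ($\tau_{x-x'}\varphi = \varphi$ as functions, and $D_v\varphi \equiv 0$ pointwise after cancelling the bijective operator $\tau_{x_0}$) and pair only with Schwartz functions $g \in \cals(\bbr^d) \subset \cals_{-p}(\bbr^d)$. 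This buys you something concrete: the paper's Dirac pairings are harmless where they hit fixed elements of $\cals_{p+\frac{k}{2}}(\bbr^d)$ or $\cals_{p+\frac{k-1}{2}}(\bbr^d)$ (injectivity, immersion), but in the boundedness step the limit $\la \delta_z, \tau_{x_{n_k}}\varphi \ra \to \la \delta_z, \tau_x\varphi \ra$ is taken along a sequence converging only in $\cals_p(\bbr^d)$, which requires $\delta_z \in \cals_{-p}(\bbr^d)$, i.e. $p > \frac{d}{4}$; the hypothesis $p + \frac{k}{2} > \frac{d}{4} + \frac{1}{2}$ guarantees this only when $k=1$, whereas your Schwartz pairing with dominated convergence is valid for every admissible $p$, so your argument is the more robust one at exactly the point you flagged. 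Your kernel formulation of the immersion step (trivial kernel of $D\psi(x_0)|_E$, using that $v_1,\ldots,v_n$ form a basis of $E$ and evaluating the continuous function $D_v\varphi$ at the $z_j$) is the dual of the paper's formulation (linear independence of the images $D\psi(x_0)v_i$ tested against functionals); the two are equivalent linear algebra resting on the same matrix. Two citations you should make explicit when writing this up: the identity $D\psi(x_0)v = -\tau_{x_0}(D_v\varphi)$ combines Proposition \ref{prop-orbit-in-Sp} with the commutation $\partial_i \tau_x = \tau_x \partial_i$ of Lemma \ref{lemma-tau-properties}, and the identification of the distributional derivative $D_v\varphi$ with the classical one uses Lemma \ref{lemma-Sp-continuous-k}.
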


\begin{proof}
First, we show that $\psi$ is injective. Let $x,y \in \bbr^d$ be such that $\psi(x) = \psi(y)$. Then we have $\tau_x \varphi = \tau_y \varphi$. Suppose that $x \neq y$. Then for all $z \in \bbr^d$ we have
\begin{align*}
\varphi(z-x) = \la \delta_z, \tau_x \varphi \ra = \la \delta_z, \tau_y \varphi \ra = \varphi(z-y).
\end{align*}
We set $\Delta := y-x \neq 0$. Inductively, for all $z \in \bbr^d$ and $n \in \bbn_0$ we obtain
\begin{align*}
\varphi(z - x) = \varphi(z - x - \Delta) = \ldots = \varphi(z - x - n \Delta).
\end{align*}
Since the matrix (\ref{matrix}) is invertible, we have $\varphi \neq 0$. Hence, there exists $z \in \bbr^d$ such that $\varphi(z-x) \neq 0$. However, by Theorem \ref{thm-Ck-versions} we have $\varphi \in C_0^1(\bbr^d)$, and hence, we obtain the contradiction
\begin{align*}
\lim_{n \to \infty} \varphi(z-x - n \Delta) = 0,
\end{align*}
showing that $\psi$ is injective.

Next, we show that $\psi : \bbr^d \to \psi(\bbr^d)$ is a homeomorphism. Let $(x_n)_{n \in \bbn} \subset \bbr^d$ and $x \in \bbr^d$ be such that $\psi(x_n) \to \psi(x)$. We will show that $x_n \to x$. First, note that the sequence $(x_n)_{n \in \bbn}$ is bounded. Indeed, suppose, on the contrary, that $(x_n)_{n \in \bbn}$ is unbounded. Then there is a subsequence $(x_{n_k})_{k \in \bbn}$ such that $|x_{n_k}| \to \infty$ for $k \to \infty$. Since $\varphi \neq 0$, there exists $z \in \bbr^d$ such that $\varphi(z-x) \neq 0$. Since $\tau_{x_{n_k}} \varphi \to \tau_x \varphi$, we have
\begin{align*}
\lim_{k \to \infty} \varphi(z - x_{n_k}) = \lim_{k \to \infty} \la \delta_{z}, \tau_{x_{n_k}} \varphi \ra = \la \delta_{z}, \tau_x \varphi \ra = \varphi(z - x) \neq 0.
\end{align*}
However, by Theorem \ref{thm-Ck-versions} we obtain the contradiction $\varphi \in C_0^1(\bbr^d)$, showing that the sequence $(x_n)_{n \in \bbn}$ is bounded. Now, let $(n_k)_{k \in \bbn}$ be an arbitrary subsequence. Since $(x_{n_k})_{k \in \bbn}$ is bounded, there exists another subsequence $(n_{k_l})_{l \in \bbn}$ such that $\lim_{l \to \infty} x_{n_{k_l}} = y$ for some $y \in \bbr^d$. This gives us $\psi(x_{n_{k_l}}) \to \psi(y)$, and hence $\psi(x) = \psi(y)$. By the injectivity of $\psi$ we deduce that $x = y$. Therefore, we have $\lim_{l \to \infty} x_{n_{k_l}} = x$. Since the subsequence $(n_k)_{k \in \bbn}$ was arbitrary, we deduce that $x_n \to x$, showing that $\psi : \bbr^d \to \psi(\bbr^d)$ is a homeomorphism.

Now, we show that $\psi$ is a $C^k$-immersion on $\caln$. By Proposition \ref{prop-orbit-in-Sp} we have $\psi \in C^k(\bbr^d;\cals_p(\bbr^d))$. Let $x_0 \in \caln$ be arbitrary. We set $\Phi_j := \delta_{x_0 + z_j}$ for $j=1,\ldots,n$. By Proposition \ref{prop-orbit-in-Sp} and Lemmas \ref{lemma-tau-properties}, \ref{lemma-delta-distr}, for all $i,j = 1,\ldots,n$ we have
\begin{align*}
\la D \psi(x_0) v_i, \Phi_j \ra = -\sum_{l=1}^d v_{il} \la \partial_l \tau_{x_0} \varphi, \delta_{x_0 + z_i} \ra = -\sum_{l=1}^d v_{il} \partial_l \varphi(z_i) = -D_{v_i} \varphi(z_j).
\end{align*}
Since the matrix (\ref{matrix}) is invertible, by Lemma \ref{lemma-matrix-m-m} we deduce that $\psi$ is an immersion on $\caln$ at $x_0$.
\end{proof}

As an immediate application of Lemma \ref{lemma-induced}, Proposition \ref{prop-orbit-in-Sp} and our previous findings (Propositions \ref{prop-psi-measure}, \ref{prop-psi-poly} and \ref{prop-psi-function}), we obtain the following examples of submanifolds generated by the orbit maps of the translation group.

\begin{examples}\label{ex-manifolds-HS-spaces}
Let $k \in \bbn$ be arbitrary, and let $\caln$ be an $m$-dimensional $C^k$-submanifold of $\bbr^d$. We assume that $\Phi \in \cals_{p+\frac{k}{2}}(\bbr^d)$ with a suitable $p \in \bbr$ belongs to one of the following three types:
\begin{itemize}
\item We choose $p \in \bbr$ such that $p + \frac{k}{2} < -\frac{d}{4}$, and let $\Phi = \mu$, where $\mu$ is a finite signed measure on $(\bbr^d,\calb(\bbr^d))$ with compact support such that $\mu(\bbr^d) \neq 0$.

\item We choose $p \in \bbr$ such that $p + \frac{k}{2} < -\frac{d}{4} - \frac{n}{2}$ for some $n \in \bbn$ with $m \leq n \leq d$, and let $\Phi = f$ be the polynomial $f : \bbr^d \to \bbr$ given by $f(x) = x_1 \cdot \ldots \cdot x_n$. Furthermore, we assume that $\caln \subset \bbr^m \times \{ 0 \}$.

\item We choose $p \in \bbr$ such that $p + \frac{k}{2} > \frac{d}{4} + \frac{1}{2}$, and let $\Phi = \varphi \in \cals_{p+\frac{k}{2}}(\bbr^d)$ be arbitrary. We assume there are $n \in \bbn$ with $m \leq n \leq d$, an $n$-dimensional subspace $E \subset \bbr^d$ such that $T \caln \subset \caln \times E$, and elements $v_1,\ldots,v_n \in E$ and $z_1,\ldots,z_n \in \bbr^d$ such that the matrix $( D_{v_i} \varphi(z_j) )_{i,j=1,\ldots,n} \in \bbr^{n \times n}$ is invertible.
\end{itemize}
Let $\psi := \xi_{\Phi} : \bbr^d \to \cals_{p+\frac{k}{2}}(\bbr^d)$ be the orbit map given by $\psi(x) = \tau_x \Phi$ for $x \in \bbr^d$. Then $\calm := \psi(\caln)$ is an $m$-dimensional $(\cals_{p+\frac{k}{2}}(\bbr^d), \ldots, \cals_p(\bbr^d))$-submanifold of class $C^k$, which is induced by $(\psi,\caln)$.
\end{examples}

\section{The general invariance result}\label{sec-SPDE-general}

In this section we provide the general invariance result. Let $(G,H)$ be separable Hilbert spaces with continuous embedding, and consider the SPDE (\ref{SPDE}) with continuous mappings $L : G \to H$ and $A : G \to \ell^2(H)$. Let $\calm$ be a $(G,H)$-submanifold of class $C^2$. We denote by $A(\calm)$ be the linear space of all mappings $A : \calm \to H$. Recall that $\Gamma(T \calm)$ denotes the subspace of all vector fields on $\calm$; see Definition \ref{def-vector-field}. In the following definition we consider the quotient space $A(\calm) / \Gamma(T \calm)$, and for each $A \in A(\calm)$ we denote by $[A]_{\Gamma(T \calm)}$ the corresponding equivalence class.

\begin{definition}\label{def-A-Strat}
Let $A,B \in \Gamma(T \calm)$ be two vector fields on $\calm$. We define the mapping
\begin{align*}
[A,B]_{\calm} \in A(\calm) / \Gamma(T \calm)
\end{align*}
as follows. For each local parametrization $\phi : V \to U \cap \calm$ a local representative of $[A,B]_{\calm}$ on $U \cap \calm$ is given by
\begin{align*}
\phi_{**} ( \phi_*^{-1} A|_{U \cap \calm}, \phi_*^{-1} B|_{U \cap \calm} ),
\end{align*}
where we recall the notation from Definition \ref{def-push}.
\end{definition}

\begin{remark}
Note that, according to Lemma \ref{lemma-second-tang}, the Definition \ref{def-A-Strat} of $[A,B]_{\calm}$ does not depend on the choice of the parametrization.
\end{remark}

\begin{theorem}\label{thm-SPDE}
The following statements are equivalent:
\begin{enumerate}
\item[(i)] The submanifold $\calm$ is locally invariant for the SPDE (\ref{SPDE}).

\item[(ii)] We have
\begin{align}\label{tang-A}
&A^j|_{\calm} \in \Gamma(T \calm), \quad j \in \bbn,
\\ \label{tang-L} &[ L|_{\calm} ]_{\Gamma(T \calm)} - \frac{1}{2} \sum_{j=1}^{\infty} [A^j|_{\calm}, A^j|_{\calm}]_{\calm} = [0]_{\Gamma(T \calm)}.
\end{align}

\item[(iii)] The mappings
\begin{align}\label{map-1-main}
&A|_{\calm} : (\calm,\| \cdot \|_H) \to (\ell^2(H),\| \cdot \|_{\ell^2(H)}),
\\ \label{map-2-main} &L|_{\calm} : (\calm,\| \cdot \|_H) \to (H,\| \cdot \|_H)
\end{align}
are continuous, and for each $y_0 \in \calm$ there exists a local martingale solution $Y$ to the SPDE (\ref{SPDE}) with $Y_0 = y_0$ and lifetime $\tau$ such that $Y^{\tau} \in \calm$ up to an evanescent set and the sample paths of $Y^{\tau}$ are continuous with respect to $\| \cdot \|_G$.
\end{enumerate}
\end{theorem}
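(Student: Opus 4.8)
The plan is to establish the cycle of implications (iii) $\Rightarrow$ (i) $\Rightarrow$ (ii) $\Rightarrow$ (iii). The implication (iii) $\Rightarrow$ (i) is immediate, since (iii) asserts exactly the existence of a manifold-valued local solution for every starting point, together with additional path regularity. Moreover, because $\calm$ is a $(G,H)$-submanifold, Proposition \ref{prop-manifold-scaled} shows that the $\| \cdot \|_G$- and $\| \cdot \|_H$-topologies coincide on $\calm$; hence the continuity of $A|_{\calm}$ and $L|_{\calm}$ in (\ref{map-1-main})--(\ref{map-2-main}) is automatic from the continuity of $A : G \to \ell^2(H)$ and $L : G \to H$, and the only content of (iii) beyond (i) is the $\| \cdot \|_G$-continuity of the sample paths.

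For (i) $\Rightarrow$ (ii), I would fix $y_0 \in \calm$ together with a local martingale solution $Y$ satisfying $Y^{\tau} \in \calm$. Using Propositions \ref{prop-manifold-scaled} and \ref{prop-degrees} I pick a local parametrization $\phi : V \to U \cap \calm$ around $y_0$ with $\phi \in C(V;G) \cap C^2(V;H)$, and Proposition \ref{prop-linear-inverse} (applied with the dense set $D = H$) to arrange $\phi^{-1} = \psi|_{U \cap \calm}$ for a bounded linear $\psi = \la \zeta, \cdot \ra_H \in L(H,\bbr^m)$. After stopping $Y$ at the (strictly positive) exit time from $U$, the process $X := \psi(Y)$ is a continuous $\bbr^m$-valued semimartingale with $dX^i = \la \zeta_i, L(Y) \ra_H \, dt + \sum_j \la \zeta_i, A^j(Y) \ra_H \, dW^j$, and $Y = \phi(X)$ since $Y$ remains in $U \cap \calm$. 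Applying Itô's formula to $\phi(X)$ in $H$ and matching against the SPDE, the uniqueness of the semimartingale decomposition and identifiability of the stochastic integrands against $W$ force $A^j(Y_t) = D\phi(X_t) \gamma_t^{\cdot j}$ (with $\gamma_t^{ij} = \la \zeta_i, A^j(Y_t) \ra_H$) and $L(Y_t) = D\phi(X_t)\beta_t + \tfrac{1}{2} \sum_j D^2 \phi(X_t)(\gamma_t^{\cdot j}, \gamma_t^{\cdot j})$ for $dt \otimes d\bbp$-a.e. $(t,\omega)$. Both sides are $\| \cdot \|_H$-continuous in $t$, so the identities hold at $t=0$; evaluating there gives $A^j(y_0) \in T_{y_0} \calm$ and, substituting $\gamma_0^{\cdot j} = a^j(x_0)$ with $a^j := \phi_*^{-1}(A^j|_{U \cap \calm})$, that $L(y_0) - \tfrac{1}{2} \sum_j \phi_{**}(a^j,a^j)(y_0) \in T_{y_0} \calm$. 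Recognizing $\phi_{**}(a^j,a^j)$ as a local representative of $[A^j|_{\calm}, A^j|_{\calm}]_{\calm}$ (well-defined by Lemma \ref{lemma-second-tang}), and letting $y_0$ range over $\calm$, yields (\ref{tang-A}) and (\ref{tang-L}).

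For (ii) $\Rightarrow$ (iii), fix $y_0 \in \calm$ and the same type of parametrization $\phi$. Tangency (\ref{tang-A}) lets me define the continuous map $a := \phi_*^{-1}(A|_{U \cap \calm}) : V \to \ell^2(\bbr^m)$ and, using (\ref{tang-L}), the continuous map $\ell := \phi_*^{-1}\big( L|_{U \cap \calm} - \tfrac{1}{2} \sum_j \phi_{**}(a^j,a^j) \big) : V \to \bbr^m$; continuity follows from $\phi \in C(V;G)$, continuity of $A^j \circ \phi$ and $L \circ \phi$, and the continuous dependence of $D\phi^{-1}$ on the base point via Proposition \ref{prop-linear-inverse}. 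I would then produce a weak solution $X$ of the $\bbr^m$-valued SDE $dX = \ell(X) \, dt + a(X) \, dW$ with $X_0 = \psi(y_0)$ --- after truncating $\ell, a$ outside a compact neighborhood to obtain linear growth, invoking Remark \ref{rem-global-weak-solution}, and stopping at the exit time $\tau$ from that neighborhood --- and set $Y := \phi(X)$. Since $\phi \in C(V;G)$ and $X$ has continuous $\bbr^m$-paths, $Y$ has $\| \cdot \|_G$-continuous paths and remains in $\calm$; applying Itô's formula to $\phi(X)$ in $H$, the martingale part equals $\sum_j (\phi_* a^j)(Y) \, dW^j = A(Y) \, dW$ and the drift equals $(\phi_* \ell)(Y) + \tfrac{1}{2} \sum_j \phi_{**}(a^j,a^j)(Y) = L(Y)$, so $Y$ solves the SPDE, while the integrability (\ref{SPDE-int-cond}) holds because $X$ stays in a compact subset of $V$ on $[0,\tau)$.

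The hard part is the coefficient matching in (i) $\Rightarrow$ (ii): one must justify realizing the merely $\| \cdot \|_H$-continuous semimartingale $Y$ as $\phi(X)$, apply the Itô formula for $\phi \in C^2(V;H)$ composed with a finite-dimensional semimartingale, and then pass from the $dt \otimes d\bbp$-a.e. equality of the two decompositions to a pointwise identity at $t=0$ via path continuity --- this last upgrade being what converts an almost-everywhere statement along trajectories into the desired geometric conditions at each $y_0 \in \calm$. The convergence of the infinite series $\sum_j$ in both directions is controlled by $A : G \to \ell^2(H)$ together with the boundedness of $D^2 \phi$, and should cause no difficulty.
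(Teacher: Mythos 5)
Your proposal is correct and takes essentially the same route as the paper's proof: the same cycle of implications, the linear-inverse parametrization from Proposition \ref{prop-linear-inverse}, It\^{o}'s formula applied to $\phi(X)$ with $X = \psi(Y)$, identification of the two semimartingale decompositions (the paper packages your ``a.e.\ identity plus path continuity, then evaluate at $t=0$'' step into Lemmas \ref{lemma-compare-1} and \ref{lemma-compare-2}), and the truncation/weak-existence argument (Lemma \ref{lemma-V-SDE}, Remark \ref{rem-global-weak-solution}) for (ii) $\Rightarrow$ (iii). The only differences are organizational: the paper converts the matched identities into the tangency conditions via Propositions \ref{prop-tangential} and \ref{prop-second-coordinate-zero} instead of evaluating at the initial point directly, which amounts to the same computation.
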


\begin{proposition}\label{prop-SPDE}
Suppose that the submanifold $\calm$ is locally invariant for the SPDE (\ref{SPDE}). If the submanifold $\calm$ has one chart with a global parametrization $\phi : V \to \calm$, and the open set $V$ is globally invariant for the $\bbr^m$-valued SDE
\begin{align*}
\left\{
\begin{array}{rcl}
d X_t & = & \ell(X_t) dt + a(X_t) dW_t
\\ X_0 & = & x_0,
\end{array}
\right.
\end{align*}
where the continuous mappings $\ell : V \to \bbr^m$ and $a : V \to \ell^2(\bbr^m)$ are the unique solutions of the equations
\begin{align}\label{A-local}
A^j|_{\calm} &= \phi_* a^j, \quad j \in \bbn,
\\ \label{L-local} L|_{\calm} &= \phi_* \ell + \frac{1}{2} \sum_{j=1}^{\infty} \phi_{**}(a^j,a^j),
\end{align}
then the submanifold $\calm$ is globally invariant for the SPDE (\ref{SPDE}).
\end{proposition}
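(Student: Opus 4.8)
The plan is to realize a global solution of the SPDE on $\calm$ as the image under $\phi$ of a global solution of the finite dimensional SDE, and to verify via It\^o's formula that this image indeed solves (\ref{SPDE}). First I would observe that, since $\calm$ is locally invariant, Theorem \ref{thm-SPDE} guarantees that $A^j|_{\calm} \in \Gamma(T \calm)$ for each $j \in \bbn$ and that $[L|_{\calm}]_{\Gamma(T \calm)} = \frac{1}{2} \sum_{j=1}^{\infty} [A^j|_{\calm},A^j|_{\calm}]_{\calm}$. This tangency is exactly what makes (\ref{A-local}) and (\ref{L-local}) solvable, with $a^j := \phi_*^{-1} A^j|_{\calm}$ and $\ell := \phi_*^{-1} \big( L|_{\calm} - \frac{1}{2} \sum_{j=1}^{\infty} \phi_{**}(a^j,a^j) \big)$, and the continuity of $a$ and $\ell$ follows from that of $A$ and $L$ together with Proposition \ref{prop-linear-inverse}.

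Next, I would fix a starting point $y_0 \in \calm$ and set $x_0 := \phi^{-1}(y_0) \in V$. By the assumed global invariance of $V$ for the SDE, there exists a global weak solution $(\bbb,W,X)$ with $X_0 = x_0$ whose paths remain in $V$. I then define the process $Y := \phi(X)$; it takes values in $\calm \subset G$, so it is $G$-valued, and it is adapted because $X$ is adapted and $\phi$ is continuous.

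The heart of the argument is It\^o's formula applied to the $C^2$-map $\phi$ composed with the finite dimensional It\^o process $X$. Since $d[X^i,X^k]_t = \sum_{j=1}^{\infty} a_i^j(X_t) a_k^j(X_t) \, dt$, the formula yields
\begin{align*}
dY_t = D\phi(X_t) \ell(X_t) \, dt + \frac{1}{2} \sum_{j=1}^{\infty} D^2 \phi(X_t)(a^j(X_t),a^j(X_t)) \, dt + \sum_{j=1}^{\infty} D\phi(X_t) a^j(X_t) \, dW_t^j.
\end{align*}
Using $D\phi(x) a^j(x) = (\phi_* a^j)(\phi(x))$, $D^2\phi(x)(a^j(x),a^j(x)) = (\phi_{**}(a^j,a^j))(\phi(x))$ and $D\phi(x)\ell(x) = (\phi_*\ell)(\phi(x))$ together with the defining relations (\ref{A-local}) and (\ref{L-local}), the drift collapses to $L(Y_t)$ and the diffusion to $A(Y_t)\, dW_t$, so that $Y$ solves (\ref{SPDE}) with $Y_0 = y_0$. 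Since $X$ is global and stays in $V$, the process $Y = \phi(X)$ is global and stays in $\calm$, yielding global invariance.

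Finally, I would discharge the integrability conditions (\ref{SPDE-int-cond}) for $Y$: on any compact time interval the continuous path of $X$ has compact image inside $V$, on which the continuous maps $D\phi$ and $D^2\phi$ are bounded, so the integrability of $\ell(X)$ and $a(X)$ built into the weak solution of the SDE transfers to $L(Y)$ and $A(Y)$. The main obstacle I anticipate is the rigorous justification of It\^o's formula in this setting — in particular controlling the convergence in $H$ of the series $\sum_{j} D^2\phi(X_t)(a^j(X_t),a^j(X_t))$ and $\sum_{j} D\phi(X_t) a^j(X_t)\, dW^j_t$, and confirming that the series representation (\ref{integral-series}) of the $H$-valued stochastic integral is compatible with pushing the finite dimensional semimartingale $X$ forward through the bounded linear operators $D\phi(X_t)$.
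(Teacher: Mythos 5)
Your proposal is correct and takes essentially the same route as the paper: the paper proves Proposition \ref{prop-SPDE} by inspecting the proof of the implication (ii) $\Rightarrow$ (iii) of Theorem \ref{thm-SPDE}, which is exactly your construction $Y = \phi(X)$ from a weak solution $X$ of the SDE in $V$ together with It\^{o}'s formula, with the local solution there replaced by the global one supplied by the assumed global invariance of $V$. One minor remark: the continuity of $a$ and $\ell$ (and the convergence issues you flag at the end) are handled in the paper by Lemma \ref{lemma-push} (building on Lemma \ref{lemma-evaluation} and $\phi \in C(V;G) \cap C^2(V;H)$ from Proposition \ref{prop-manifold-scaled}) rather than by Proposition \ref{prop-linear-inverse}, which concerns a specially chosen local parametrization rather than the given global one.
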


\begin{remark}
Choosing $G = H = \bbr^d$, we see that Theorem \ref{thm-SPDE} and Proposition \ref{prop-SPDE} cover the well-known situation of finite dimensional SDEs.
\end{remark}

Before we provide the proofs of Theorem \ref{thm-SPDE} and Proposition \ref{prop-SPDE}, let us state some consequences of these results. Consider the conditions
\begin{align}\label{condition-SPDE-1}
L|_{\calm} &\in \Gamma(T \calm), 
\\ \label{condition-SPDE-2} A^j|_{\calm} &\in \Gamma(T \calm), \quad j \in \bbn.
\end{align}
We are interested in finding an additional condition which ensures such that $\calm$ is locally invariant for the SPDE (\ref{SPDE}).

\begin{proposition}\label{prop-L-tang}
Suppose that conditions (\ref{condition-SPDE-1}) and (\ref{condition-SPDE-2}) is fulfilled. Then the following statements are equivalent:
\begin{enumerate}
\item[(i)] $\calm$ is locally invariant for the SPDE (\ref{SPDE}).

\item[(ii)] We have
\begin{align*}
\sum_{j=1}^{\infty} [A^j|_{\calm}, A^j|_{\calm}]_{\calm} = [0]_{\Gamma(T \calm)}.
\end{align*}
\end{enumerate}
\end{proposition}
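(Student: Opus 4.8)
The plan is to deduce this directly from the main invariance result, Theorem \ref{thm-SPDE}, by specializing its characterization under the standing hypotheses (\ref{condition-SPDE-1}) and (\ref{condition-SPDE-2}). The key observation is that condition (\ref{condition-SPDE-2}) is literally identical to the tangency condition (\ref{tang-A}) appearing in statement (ii) of Theorem \ref{thm-SPDE}. Hence, once we invoke that theorem, local invariance of $\calm$ for the SPDE (\ref{SPDE}) is equivalent to the single remaining equation (\ref{tang-L}) in the quotient space $A(\calm)/\Gamma(T\calm)$, the tangency of the volatilities being automatic.

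Next I would exploit condition (\ref{condition-SPDE-1}). Since $L|_{\calm} \in \Gamma(T\calm)$, by the very definition of the quotient $A(\calm)/\Gamma(T\calm)$ its equivalence class vanishes, that is $[L|_{\calm}]_{\Gamma(T\calm)} = [0]_{\Gamma(T\calm)}$. Substituting this into (\ref{tang-L}) makes the drift term drop out, so that (\ref{tang-L}) collapses to
\[
-\frac{1}{2} \sum_{j=1}^{\infty} [A^j|_{\calm}, A^j|_{\calm}]_{\calm} = [0]_{\Gamma(T\calm)}.
\]
Multiplying by the nonzero scalar $-2$ (a legitimate operation on the vector space $A(\calm)/\Gamma(T\calm)$, under which the zero class is fixed) yields precisely condition (ii) of the proposition. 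This establishes the equivalence (i) $\Leftrightarrow$ (ii) in one stroke.

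The only point deserving a moment's care is the well-definedness of the series $\sum_{j=1}^{\infty} [A^j|_{\calm}, A^j|_{\calm}]_{\calm}$ as an element of $A(\calm)/\Gamma(T\calm)$, together with the parametrization-independence of each bracket $[A^j|_{\calm}, A^j|_{\calm}]_{\calm}$; but both are already guaranteed by Definition \ref{def-A-Strat} and are part of the content of Theorem \ref{thm-SPDE}, so nothing new must be verified here. In effect there is no genuine obstacle: the entire analytic and geometric difficulty — constructing local martingale solutions, realizing them on $\calm$ through a local parametrization $\phi$, and extracting the Stratonovich-type correction from the quadratic-variation term in It\^o's formula — has been absorbed into the proof of Theorem \ref{thm-SPDE}. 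The present statement is a direct corollary obtained by matching the two standing assumptions (\ref{condition-SPDE-1}) and (\ref{condition-SPDE-2}) against the two conditions (\ref{tang-A}) and (\ref{tang-L}).
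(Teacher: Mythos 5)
Your proof is correct and is exactly the paper's argument: the paper proves Proposition \ref{prop-L-tang} simply by citing Theorem \ref{thm-SPDE}, and your specialization — matching (\ref{condition-SPDE-2}) with (\ref{tang-A}), using $[L|_{\calm}]_{\Gamma(T \calm)} = [0]_{\Gamma(T \calm)}$ from (\ref{condition-SPDE-1}) to reduce (\ref{tang-L}) to condition (ii) — is precisely what that citation amounts to.
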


\begin{proof}
This is a consequence of Theorem \ref{thm-SPDE}.
\end{proof}

We say that the submanifold $\calm$ is \emph{affine} if for any local parametrization $\phi : V \to U \cap \calm$ we have $D^2 \phi = 0$.

\begin{corollary}\label{cor-affine}
Suppose the submanifold $\calm$ is affine. Then the following statements are equivalent:
\begin{enumerate}
\item[(i)] $\calm$ is locally invariant for the SPDE (\ref{SPDE}).

\item[(ii)] We have (\ref{condition-SPDE-1}) and (\ref{condition-SPDE-2}).
\end{enumerate}
\end{corollary}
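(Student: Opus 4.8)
The plan is to deduce the equivalence directly from Theorem \ref{thm-SPDE}, together with the single observation that affineness annihilates every bracket term $[\,\cdot\,,\,\cdot\,]_{\calm}$. Indeed, recall from Definition \ref{def-A-Strat} that for vector fields $A,B \in \Gamma(T\calm)$ a local representative of $[A,B]_{\calm}$ with respect to a local parametrization $\phi : V \to U \cap \calm$ is $\phi_{**}(a,b)$ with $a := \phi_*^{-1}A|_{U \cap \calm}$ and $b := \phi_*^{-1}B|_{U \cap \calm}$, whose value at $y = \phi(x)$ equals $D^2\phi(x)(a(x),b(x))$. If $\calm$ is affine, then $D^2\phi = 0$ for every such $\phi$, so this local representative vanishes identically, and hence $[A,B]_{\calm} = [0]_{\Gamma(T\calm)}$ for all $A,B \in \Gamma(T\calm)$. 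This single fact is the engine of the whole argument.

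For the implication (i) $\Rightarrow$ (ii), I would invoke Theorem \ref{thm-SPDE}: local invariance of $\calm$ yields both (\ref{tang-A}) and (\ref{tang-L}). Condition (\ref{tang-A}) is exactly (\ref{condition-SPDE-2}). By the vanishing of all bracket terms just noted, the sum $\frac{1}{2}\sum_{j=1}^{\infty}[A^j|_{\calm},A^j|_{\calm}]_{\calm}$ appearing in (\ref{tang-L}) equals $[0]_{\Gamma(T\calm)}$, so (\ref{tang-L}) collapses to $[L|_{\calm}]_{\Gamma(T\calm)} = [0]_{\Gamma(T\calm)}$, which is precisely condition (\ref{condition-SPDE-1}).

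For the converse (ii) $\Rightarrow$ (i), I would observe that conditions (\ref{condition-SPDE-1}) and (\ref{condition-SPDE-2}) place us exactly in the setting of Proposition \ref{prop-L-tang}. Since every bracket term vanishes by affineness, the criterion $\sum_{j=1}^{\infty}[A^j|_{\calm},A^j|_{\calm}]_{\calm} = [0]_{\Gamma(T\calm)}$ of Proposition \ref{prop-L-tang}(ii) is automatically satisfied, whence local invariance of $\calm$ follows. (Alternatively, one may read off (\ref{tang-A}) from (\ref{condition-SPDE-2}) and (\ref{tang-L}) from (\ref{condition-SPDE-1}) plus the vanishing brackets, and conclude directly via Theorem \ref{thm-SPDE}.)

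There is no serious obstacle here: the corollary is essentially a direct reading of the general invariance result under the simplifying hypothesis that the Stratonovich-type correction terms disappear. The only point requiring care is the justification --- already supplied by the remark following Definition \ref{def-A-Strat} --- that $[A,B]_{\calm}$ is well defined independently of the chosen parametrization, so that the vanishing of $D^2\phi$ for one (equivalently, by Lemma \ref{lemma-second-tang}, for every) local parametrization genuinely forces $[A,B]_{\calm} = [0]_{\Gamma(T\calm)}$.
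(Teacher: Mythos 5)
Your proposal is correct and follows essentially the same route as the paper, which proves the corollary by combining Theorem \ref{thm-SPDE} with Proposition \ref{prop-L-tang}; you have merely spelled out the key observation (affineness forces $D^2\phi = 0$, hence all brackets $[\,\cdot\,,\,\cdot\,]_{\calm}$ vanish) that the paper leaves implicit. No gaps.
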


\begin{proof}
This is a consequence of Theorem \ref{thm-SPDE} and Proposition \ref{prop-L-tang}.
\end{proof}

\begin{remark}\label{rem-Strat-2}
Consider the situation $G=H$ and $A^j \in C^1(H)$ for all $j \in \bbn$. If $\sum_{j=1}^{\infty} D A^j(y) A^j(y)$ converges for each $y \in H$, and the mapping $\sum_{j=1}^{\infty} D A^j \cdot A^j$ is continuous, then we can rewrite the SPDE (\ref{SPDE}) in Stratonovich form as
\begin{align*}
\left\{
\begin{array}{rcl}
dY_t & = & K(Y_t)dt + A(Y_t) \circ dW_t
\\ Y_0 & = & y_0,
\end{array}
\right.
\end{align*}
where $K : H \to H$ is given by
\begin{align*}
K = L - \frac{1}{2} \sum_{j=1}^{\infty} D A^j \cdot A^j. 
\end{align*}
If we have (\ref{tang-A}), then by the decomposition (\ref{zerl-general}) from Proposition \ref{prop-decomp-Damir-scale} we have
\begin{align*}
[K|_{\calm}]_{\Gamma(T \calm)} = [ L|_{\calm} ]_{\Gamma(T \calm)} - \frac{1}{2} \sum_{j=1}^{\infty} [A^j|_{\calm}, A^j|_{\calm}]_{\calm},
\end{align*}
and hence condition (\ref{tang-L}) is equivalent to
\begin{align*}
K|_{\calm} \in \Gamma(T \calm).
\end{align*}
We will present a corresponding result for continuously embedded Hilbert spaces with an additional intermediate space later on; see Theorem \ref{thm-main-2} below.
\end{remark}

We can express the statement of Theorem \ref{thm-SPDE} in local coordinates as follows.

\begin{proposition}\label{prop-inv-para-2}
The following statements are equivalent:
\begin{enumerate}
\item[(i)] The submanifold $\calm$ is locally invariant for the SPDE (\ref{SPDE}).

\item[(ii)] For each local parametrization $\phi : V \to U \cap \calm$ there are continuous mappings $\ell : V \to \bbr^m$ and $a : V \to \ell^2(\bbr^m)$ which are the unique solutions of the equations
\begin{align}\label{A-local-para}
A^j|_{U \cap \calm} &= \phi_* a^j, \quad j \in \bbn,
\\ \label{L-local-para} L|_{U \cap \calm} &= \phi_* \ell + \frac{1}{2} \sum_{j=1}^{\infty} \phi_{**}(a^j,a^j).
\end{align}

\item[(iii)] For each $y \in \calm$ there exist a local parametrization $\phi : V \to U \cap \calm$ around $y$ and continuous mappings $\ell : V \to \bbr^m$ and $a : V \to \ell^2(\bbr^m)$ which are the unique solutions of the equations (\ref{A-local-para}) and (\ref{L-local-para}).
\end{enumerate}
\end{proposition}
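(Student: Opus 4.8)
The plan is to obtain this statement directly from Theorem \ref{thm-SPDE} by rewriting the intrinsic tangency conditions (\ref{tang-A}) and (\ref{tang-L}) in terms of the pushforward operators associated with a local parametrization. By Theorem \ref{thm-SPDE} the assertion (i) is equivalent to the conjunction of (\ref{tang-A}) and (\ref{tang-L}), so it suffices to prove that these two conditions are equivalent both to (ii) and to (iii). Since, by the remark following Definition \ref{def-A-Strat} (which rests on Lemma \ref{lemma-second-tang}), the object $[A^j|_{\calm},A^j|_{\calm}]_{\calm}$ does not depend on the choice of parametrization, the conditions (\ref{tang-A}) and (\ref{tang-L}) are purely local and intrinsic: they hold if and only if their chart expressions hold around every point, and whether one demands this for every parametrization (as in (ii)) or for one parametrization around each point (as in (iii)) makes no difference. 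Hence the heart of the matter is the translation of (\ref{tang-A})--(\ref{tang-L}) on a single chart.

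First I would fix a local parametrization $\phi : V \to U \cap \calm$ and write $\calm_U := U \cap \calm$. The condition $A^j|_{\calm} \in \Gamma(T \calm)$ means exactly that $A^j(y) \in T_y \calm = D\phi(x) \bbr^m$ for each $y \in \calm_U$ with $x := \phi^{-1}(y)$; since $D\phi(x)$ is injective, this is equivalent to the existence of a unique map $a^j : V \to \bbr^m$ with $A^j|_{\calm_U} = \phi_* a^j$, namely $a^j = \phi_*^{-1} A^j|_{\calm_U}$, which is precisely (\ref{A-local-para}). Continuity of each $a^j$ follows from \cite[Prop. 6.1.1]{fillnm} together with $\phi \in C(V;G)$ (Proposition \ref{prop-manifold-scaled}) and the continuity of $A^j : G \to H$, exactly as in the proof of Proposition \ref{prop-decomp-Damir-scale}. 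To upgrade this to continuity of $a = (a^j)_{j \in \bbn}$ as a map $V \to \ell^2(\bbr^m)$, I would use Proposition \ref{prop-linear-inverse} to realize $D\phi(x)^{-1}$ locally as the restriction of a bounded operator $\psi \in L(H,\bbr^m)$, giving the locally uniform bound $\sum_{j} \| a^j(x) \|^2 \le \| \psi \|^2 \, \| A(\phi(x)) \|_{\ell^2(H)}^2$, whence $a$ is $\ell^2(\bbr^m)$-valued and continuous.

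Next I would translate (\ref{tang-L}). Once (\ref{tang-A}) holds, $a^j = \phi_*^{-1} A^j|_{\calm_U}$ is defined, and by Definition \ref{def-A-Strat} a local representative of $[A^j|_{\calm},A^j|_{\calm}]_{\calm}$ on $\calm_U$ is $\phi_{**}(a^j,a^j)$. The bound above, combined with the continuity of the bilinear map $D^2\phi(x)$, yields $\sum_j \| \phi_{**}(a^j,a^j)(y) \|_H \le \| D^2\phi(x) \| \sum_j \| a^j(x) \|^2 < \infty$, so the series converges absolutely and defines a continuous map on $\calm_U$. Consequently (\ref{tang-L}) holds on $\calm_U$ if and only if $L|_{\calm_U} - \frac{1}{2} \sum_{j=1}^{\infty} \phi_{**}(a^j,a^j) \in \Gamma(T \calm_U)$, which is equivalent to the existence of a unique continuous $\ell : V \to \bbr^m$ with $L|_{\calm_U} = \phi_* \ell + \frac{1}{2} \sum_{j=1}^{\infty} \phi_{**}(a^j,a^j)$, i.e.\ (\ref{L-local-para}); continuity of $\ell = \phi_*^{-1}\big( L|_{\calm_U} - \frac{1}{2} \sum_j \phi_{**}(a^j,a^j) \big)$ again follows from \cite[Prop. 6.1.1]{fillnm} and the continuity of $L : G \to H$, and uniqueness of both $a$ and $\ell$ is in each case a consequence of the injectivity of $D\phi(x)$.

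Assembling these equivalences over the charts gives (i) $\Leftrightarrow$ (ii) and (i) $\Leftrightarrow$ (iii), using the intrinsic character of (\ref{tang-A})--(\ref{tang-L}) to pass between the ``for all charts'' and ``for one chart around each point'' formulations. The hard part will not be conceptual but bookkeeping: the only genuinely delicate points are the uniform control over $j$ needed to conclude that $a$ is $\ell^2(\bbr^m)$-valued and continuous and that $\sum_j \phi_{**}(a^j,a^j)$ converges to a continuous map, and the careful reading of the quotient-space identity (\ref{tang-L}) through its local representatives; once the pushforward calculus is in place, the remaining steps are routine.
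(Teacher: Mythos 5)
Your proof is correct and takes essentially the same route as the paper: the paper's own proof is the one-line observation that the proposition is an immediate consequence of Theorem \ref{thm-SPDE}, and your chart-by-chart translation of (\ref{tang-A})--(\ref{tang-L}) into (\ref{A-local-para})--(\ref{L-local-para}) (uniqueness from injectivity of $D\phi(x)$, $\ell^2$-valued continuity of $a$ and convergence of $\sum_j \phi_{**}(a^j,a^j)$, chart-independence via Lemma \ref{lemma-second-tang}) is exactly the unwinding that citation presupposes, with the continuity bookkeeping being the content of the paper's Lemma \ref{lemma-push}. The only small imprecision is that for an \emph{arbitrary} chart, Proposition \ref{prop-linear-inverse} does not directly exhibit $D\phi(x)^{-1}$ as the restriction of a bounded operator; one first passes to the special chart it provides and composes with the transition diffeomorphism of Lemma \ref{lemma-change-para}, which is precisely how the paper's Lemma \ref{lemma-push} handles it.
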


\begin{proof}
This is an immediate consequence of Theorem \ref{thm-SPDE}.
\end{proof}

In the following two results we assume that the submanifold $\calm$ is induced $(\psi,\caln)$, where $\caln$ is an $m$-dimensional $C^2$-submanifold of $\bbr^d$, and $\psi \in C^2(\bbr^d;H)$ is a $C^2$-immersion on $\caln$ such that $\psi|_{\caln} : \caln \to \psi(\caln)$ is a homeomorphism; see Definition \ref{def-manifold-embedded}.

\begin{theorem}\label{thm-inv-embedded}
The following statements are equivalent:
\begin{enumerate}
\item[(i)] The submanifold $\calm$ is locally invariant for the SPDE (\ref{SPDE}).

\item[(ii)] The submanifold $\caln$ is locally invariant for the SDE
\begin{align}\label{SDE-embedded}
\left\{
\begin{array}{rcl}
dX_t & = & b(X_t) dt + \sigma(X_t) dW_t
\\ X_0 & = & x_0,
\end{array}
\right.
\end{align}
where the continuous mappings\footnote{If the SDE (\ref{SDE-embedded}) is locally invariant, then it suffices to specify the coefficients $b$ and $\sigma$ on the submanifold $\caln$.} $b : \caln \to \bbr^d$ and $\sigma : \caln \to \ell^2(\bbr^d)$ are the unique solutions of the equations
\begin{align}\label{eqn-push-1}
A^j|_{\calm} &= \psi_* \sigma^j, \quad j \in \bbn,
\\ \label{eqn-push-2} L|_{\calm} &= \psi_* b + \frac{1}{2} \sum_{j=1}^{\infty} \psi_{**}(\sigma^j,\sigma^j).
\end{align}
\end{enumerate}
\end{theorem}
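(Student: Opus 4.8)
The plan is to reduce both invariance properties to their local-coordinate characterizations from Proposition \ref{prop-inv-para-2} and then transport the coordinate data across the immersion $\psi$ by means of the chain-rule identities in Lemma \ref{lemma-push-chain-rule}. Since the SDE \eqref{SDE-embedded} is the SPDE \eqref{SPDE} in the special case $G=H=\bbr^d$, and every $C^2$-submanifold of $\bbr^d$ is trivially a $(\bbr^d,\bbr^d)$-submanifold, Proposition \ref{prop-inv-para-2} applies to both $\calm$ and $\caln$. For $\calm$ I would use parametrizations of the form $\phi=\psi\circ\varphi$, where $\varphi:V\to W\cap\caln$ is a local parametrization of $\caln$; by Lemma \ref{lemma-induced} these are precisely the local parametrizations of $\calm$, and every point of $\calm$ admits one.

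First I would record the dictionary between the coordinate data and the coefficients. Given $\varphi$ and continuous $a:V\to\ell^2(\bbr^m)$, $\ell:V\to\bbr^m$, set
\[
\sigma^j:=\varphi_* a^j, \qquad b:=\varphi_*\ell+\tfrac12\sum_{j=1}^{\infty}\varphi_{**}(a^j,a^j)
\]
on $W\cap\caln$. These are exactly the coefficients of the SDE in the form produced by the coordinate version of Proposition \ref{prop-inv-para-2} (with data $a,\ell$), so $\caln$ is locally invariant for the resulting SDE. Using $\phi_*=\psi_*\varphi_*$ from Lemma \ref{lemma-push-chain-rule}(1) one checks $\phi_* a^j=\psi_*\sigma^j$, which is \eqref{eqn-push-1}; and the second-order identity $\phi_{**}(a,b)=\psi_{**}(\varphi_* a,\varphi_* b)+\psi_*\varphi_{**}(a,b)$ from Lemma \ref{lemma-push-chain-rule}(2) gives
\begin{align*}
\phi_*\ell+\tfrac12\sum_{j}\phi_{**}(a^j,a^j)
=\psi_*\Big(\varphi_*\ell+\tfrac12\sum_{j}\varphi_{**}(a^j,a^j)\Big)+\tfrac12\sum_{j}\psi_{**}(\sigma^j,\sigma^j),
\end{align*}
whose right-hand side is $\psi_* b+\tfrac12\sum_{j}\psi_{**}(\sigma^j,\sigma^j)$; comparing with the coordinate equation \eqref{L-local-para} for $\calm$ yields \eqref{eqn-push-2}. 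Thus the single triple $(\varphi,\ell,a)$ simultaneously witnesses the coordinate form of local invariance of $\calm$ and of $\caln$.

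With this dictionary the equivalence is symmetric. For (i)$\Rightarrow$(ii): applying Proposition \ref{prop-inv-para-2} to the chosen parametrization $\phi=\psi\circ\varphi$ furnishes data $(\ell,a)$ solving \eqref{A-local-para}, \eqref{L-local-para}; defining $(b,\sigma)$ as above makes them solve \eqref{eqn-push-1}, \eqref{eqn-push-2} and makes $\caln$ locally invariant around the point under consideration. For (ii)$\Rightarrow$(i) I would run the computation in reverse: local invariance of $\caln$ supplies, for a parametrization $\varphi$, data $(\ell,a)$ solving the SDE coordinate equations; setting $\phi=\psi\circ\varphi$ and invoking the defining equations \eqref{eqn-push-1}, \eqref{eqn-push-2} together with the same two identities recovers \eqref{A-local-para}, \eqref{L-local-para} for $\calm$, so Proposition \ref{prop-inv-para-2} gives local invariance of $\calm$.

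It remains to justify that $b,\sigma$ are well defined and unique. The key input is that $\psi_*$ restricts to a linear bijection $\Gamma(T\caln)\to\Gamma(T\calm)$: both injectivity and surjectivity follow from $T_y\calm=D\psi(x)T_x\caln$ (Lemma \ref{lemma-tang-embedded}) together with the fact that $\psi$ is an immersion on $\caln$, so that $D\psi(x)|_{T_x\caln}$ is one-to-one. Consequently \eqref{eqn-push-1} has a unique solution $\sigma^j\in\Gamma(T\caln)$ exactly when $A^j|_{\calm}\in\Gamma(T\calm)$ (a condition that local invariance forces through Theorem \ref{thm-SPDE}), and once $\sigma$ is fixed the drift $b$ is the unique $\psi_*$-preimage of $L|_{\calm}-\tfrac12\sum_{j}\psi_{**}(\sigma^j,\sigma^j)$. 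I expect the only genuine bookkeeping to be the careful tracking of the second-order correction: it is precisely the splitting in Lemma \ref{lemma-push-chain-rule}(2) that separates the tensorial part $\psi_{**}(\sigma^j,\sigma^j)$, which stays in the It\^{o}-to-Stratonovich correction, from the part $\psi_*\varphi_{**}(a^j,a^j)$, which is absorbed into $b$; getting this split right is the crux of the argument.
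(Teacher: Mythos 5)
Your proposal is correct and follows essentially the same route as the paper's proof: both reduce local invariance of $\calm$ and of $\caln$ to the coordinate characterization in Proposition \ref{prop-inv-para-2}, work with the parametrizations $\phi = \psi \circ \varphi$ supplied by Lemma \ref{lemma-induced}, and transfer the data $(\ell, a)$ between the two sides via the identities $\phi_* = \psi_* \varphi_*$ and $\phi_{**}(a,b) = \psi_{**}(\varphi_* a, \varphi_* b) + \psi_* \varphi_{**}(a,b)$ of Lemma \ref{lemma-push-chain-rule}, with exactly the dictionary $\sigma^j = \varphi_* a^j$ and $b = \varphi_* \ell + \tfrac{1}{2}\sum_j \varphi_{**}(a^j,a^j)$. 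Your additional remark on uniqueness (via Lemma \ref{lemma-tang-embedded} and the injectivity of $D\psi(x)|_{T_x \caln}$) makes explicit a point the paper leaves implicit, but it does not change the argument.
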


\begin{proof}
(i) $\Rightarrow$ (ii): Let $y \in \calm$ be arbitrary, and let $\varphi : V \to W \cap \caln$ be a local parametrization around $x := \psi^{-1}(y) \in \caln$. By Lemma \ref{lemma-induced} there exists an open neighborhood $U \subset H$ of $y$ such that $\phi := \psi \circ \varphi : V \to U \cap \calm$ is a local parametrization around $y$. Furthermore, by Proposition \ref{prop-inv-para-2} there are continuous mappings $\ell : V \to \bbr^m$ and $a : V \to \ell^2(\bbr^m)$ which are the unique solutions of the equations (\ref{A-local-para}) and (\ref{L-local-para}). We define the continuous mappings $b : W \cap \caln \to \bbr^d$ and $\sigma : W \cap \caln \to \ell^2(\bbr^d)$ as
\begin{align*}
\sigma^j &:= \varphi_{*} a^j, \quad j \in \bbn,
\\ b &:= \varphi_{*} \ell + \frac{1}{2} \sum_{j=1}^{\infty} \varphi_{**}(a^j,a^j).
\end{align*}
Since $y \in \calm$ was arbitrary, by Proposition \ref{prop-inv-para-2} we deduce that the submanifold $\caln$ is locally invariant for the SDE (\ref{SDE-embedded}). Furthermore, by Lemma \ref{lemma-push-chain-rule} we obtain
\begin{align*}
A^j|_{U \cap \calm} = \phi_* a^j = \psi_* \varphi_{*} a^j = \psi_* \sigma^j, \quad j \in \bbn
\end{align*}
as well as
\begin{align*}
L|_{U \cap \calm} &= \phi_* \ell + \frac{1}{2} \sum_{j=1}^{\infty} \phi_{**}(a^j,a^j)
\\ &= \psi_* \varphi_* \ell + \frac{1}{2} \sum_{j=1}^{\infty} \big( \psi_{**} ( \varphi_* a^j, \varphi_* a^j ) + \psi_* \varphi_{**} (a^j,a^j) \big)
\\ &= \psi_* b + \frac{1}{2} \sum_{j=1}^{\infty} \psi_{**}(\sigma^j,\sigma^j).
\end{align*}
Since the element $y \in \calm$ was arbitrary, this procedure provides us with continuous mappings $b : \caln \to \bbr^d$ and $\sigma : \caln \to \ell^2(\bbr^d)$ which are the unique solutions of the equations (\ref{eqn-push-1}) and (\ref{eqn-push-2}).

\noindent (ii) $\Rightarrow$ (i): Let $y \in \calm$ be arbitrary, and let $\varphi : V \to W \cap \caln$ be a local parametrization around $x := \psi^{-1}(y) \in \caln$. By Lemma \ref{lemma-induced} there exists an open neighborhood $U \subset H$ of $y$ such that $\phi := \psi \circ \varphi : V \to U \cap \calm$ is a local parametrization around $y$. Since $\caln$ is locally invariant for the SDE (\ref{SDE-embedded}), by Proposition \ref{prop-inv-para-2} there are continuous mappings $\ell : V \to \bbr^m$ and $a : V \to \ell^2(\bbr^m)$ which are the unique solutions of the equations
\begin{align*}
\sigma^j|_{W \cap \caln} &= \varphi_* a^j, \quad j \in \bbn,
\\ b|_{W \cap \caln} &= \varphi_* \ell + \frac{1}{2} \sum_{j=1}^{\infty} \varphi_{**}(a^j,a^j).
\end{align*}
By Lemma \ref{lemma-push-chain-rule} we obtain
\begin{align*}
A^j|_{U \cap \calm} = \psi_* \sigma^j|_{W \cap \caln} = \psi_* \varphi_{*} a^j = \phi_{*} a^j, \quad j \in \bbn
\end{align*}
as well as
\begin{align*}
L|_{U \cap \calm} &= \psi_* b|_{W \cap \caln} + \frac{1}{2} \sum_{j=1}^{\infty} \psi_{**}(\sigma^j|_{W \cap \caln},\sigma^j|_{W \cap \caln})
\\ &= \psi_* \varphi_* \ell + \frac{1}{2} \sum_{j=1}^{\infty} \big( \psi_{**} ( \varphi_* a^j, \varphi_* a^j ) + \psi_* \varphi_{**} (a^j,a^j) \big)
\\ &= \phi_{*} \ell + \frac{1}{2} \sum_{j=1}^{\infty} \phi_{**}(a^j,a^j).
\end{align*}
Therefore, by Proposition \ref{prop-inv-para-2} the submanifold $\calm$ is locally invariant for the SPDE (\ref{SPDE}).
\end{proof}

For the next result, recall that the submanifold $\calm$ has one chart if $\caln$ has one chart; see Lemma \ref{lemma-M-N-one-chart}.

\begin{proposition}\label{prop-inv-embedded}
If the submanifold $\calm$ is locally invariant for the SPDE (\ref{SPDE}) and the submanifold $\caln$ has one chart with a global parametrization $\varphi : V \to \caln$, then for continuous mappings $\ell : V \to \bbr^m$ and $a : V \to \ell^2(\bbr^m)$ the following statements are equivalent:
\begin{enumerate}
\item[(i)] $\ell : V \to \bbr^m$ and $a : V \to \ell^2(\bbr^m)$ are the unique solutions of the equations (\ref{A-local}) and (\ref{L-local}).

\item[(ii)] $\ell : V \to \bbr^m$ and $a : V \to \ell^2(\bbr^m)$ are the unique solutions of the equations
\begin{align}\label{eqn-push-1b}
\sigma^j &= \varphi_* a^j, \quad j \in \bbn,
\\ \label{eqn-push-2b} b &= \varphi_* \ell + \frac{1}{2} \sum_{j=1}^{\infty} \varphi_{**}(a^j,a^j),
\end{align}
where the continuous mappings $b : \caln \to \bbr^d$ and $\sigma : \caln \to \ell^2(\bbr^d)$ are the unique solutions of the equations (\ref{eqn-push-1}) and (\ref{eqn-push-2}).
\end{enumerate}
If any of the previous two conditions is fulfilled and the open set $V$ is globally invariant for the $\bbr^m$-valued SDE
\begin{align*}
\left\{
\begin{array}{rcl}
d \Xi_t & = & \ell(\Xi_t) dt + a(\Xi_t) dW_t
\\ \Xi_0 & = & \xi_0,
\end{array}
\right.
\end{align*}
then the submanifold $\calm$ is globally invariant for the SPDE (\ref{SPDE}), and the submanifold $\caln$ is globally invariant for the SPDE (\ref{SDE-embedded}).
\end{proposition}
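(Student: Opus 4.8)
The plan is to derive everything from the functoriality of the pushforward operations recorded in Lemma \ref{lemma-push-chain-rule}, together with two applications of Proposition \ref{prop-SPDE}. First I would observe that, by Lemma \ref{lemma-M-N-one-chart}, the composition $\phi := \psi \circ \varphi : V \to \calm$ is a global parametrization of $\calm$, so that the equations (\ref{A-local}) and (\ref{L-local}) refer precisely to this $\phi$. Throughout I would exploit that both $\psi_*$ and $\varphi_*$ are injective on vector fields: since $\psi$ is a $C^2$-immersion on $\caln$ and $\varphi$ is a local parametrization, the operators $D \psi(x)|_{T_x \caln}$ and $D \varphi(\xi)$ are one-to-one, whence $\sigma, b$ in (\ref{eqn-push-1}), (\ref{eqn-push-2}) and $\ell, a$ in (\ref{A-local}), (\ref{L-local}) are indeed uniquely determined.

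For the implication (i) $\Rightarrow$ (ii), I would assume that $\ell, a$ solve (\ref{A-local}) and (\ref{L-local}). Applying Lemma \ref{lemma-push-chain-rule}(1) to (\ref{A-local}) gives $A^j|_{\calm} = \phi_* a^j = \psi_* \varphi_* a^j$; comparison with the defining equation (\ref{eqn-push-1}) and injectivity of $\psi_*$ yield $\sigma^j = \varphi_* a^j$, which is (\ref{eqn-push-1b}). Next I would insert Lemma \ref{lemma-push-chain-rule} into (\ref{L-local}), using $\phi_* \ell = \psi_* \varphi_* \ell$ and $\phi_{**}(a^j,a^j) = \psi_{**}(\varphi_* a^j, \varphi_* a^j) + \psi_* \varphi_{**}(a^j,a^j)$, and regroup to obtain
\begin{align*}
L|_{\calm} = \psi_* \Big( \varphi_* \ell + \tfrac{1}{2} \sum_{j=1}^{\infty} \varphi_{**}(a^j,a^j) \Big) + \tfrac{1}{2} \sum_{j=1}^{\infty} \psi_{**}(\sigma^j,\sigma^j),
\end{align*}
where I have already substituted $\sigma^j = \varphi_* a^j$ in the last sum. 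Comparison with (\ref{eqn-push-2}) and another use of injectivity of $\psi_*$ produces (\ref{eqn-push-2b}). The reverse implication (ii) $\Rightarrow$ (i) is the same chain of identities read backwards: starting from (\ref{eqn-push-1}), (\ref{eqn-push-2}) with $\sigma^j = \varphi_* a^j$ and $b$ as in (\ref{eqn-push-2b}), I would apply linearity of $\psi_*$ and Lemma \ref{lemma-push-chain-rule}(2) in the opposite direction to recover (\ref{A-local}) and (\ref{L-local}).

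For the final claim I would invoke Proposition \ref{prop-SPDE} twice. On the one hand, $\calm$ is locally invariant by hypothesis, has one chart with global parametrization $\phi$, and $\ell, a$ solve (\ref{A-local}), (\ref{L-local}) by part (i); since $V$ is globally invariant for the SDE driven by $(\ell,a)$, Proposition \ref{prop-SPDE} delivers global invariance of $\calm$ for the SPDE (\ref{SPDE}). On the other hand, by Theorem \ref{thm-inv-embedded} local invariance of $\calm$ is equivalent to local invariance of $\caln$ for the SDE (\ref{SDE-embedded}); applying Proposition \ref{prop-SPDE} in the finite dimensional case $G = H = \bbr^d$ to the submanifold $\caln$ with global parametrization $\varphi$ and coefficients $b, \sigma$, whose defining relations are exactly (\ref{eqn-push-1b}), (\ref{eqn-push-2b}) by part (ii), then yields global invariance of $\caln$ for (\ref{SDE-embedded}).

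The argument is essentially mechanical, so the only points requiring genuine care are the injectivity steps used to cancel $\psi_*$ and $\varphi_*$, and the handling of the infinite sums: the series $\sum_j \phi_{**}(a^j,a^j)$ and $\sum_j \psi_{**}(\sigma^j,\sigma^j)$ converge because they occur in the defining equations (\ref{L-local}) and (\ref{eqn-push-2}), so the intermediate series $\sum_j \psi_* \varphi_{**}(a^j,a^j)$ arising from Lemma \ref{lemma-push-chain-rule}(2) converges as their difference, and the term-by-term regrouping is legitimate. I expect no deeper obstacle, since all the analytic content is already packaged in Lemma \ref{lemma-push-chain-rule} and Proposition \ref{prop-SPDE}.
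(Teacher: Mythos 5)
Your proposal is correct and follows essentially the same route as the paper's own proof: both rest on Lemma \ref{lemma-M-N-one-chart} to identify $\phi = \psi \circ \varphi$ as a global parametrization, use the two chain-rule identities of Lemma \ref{lemma-push-chain-rule} to translate between (\ref{A-local})--(\ref{L-local}) and (\ref{eqn-push-1b})--(\ref{eqn-push-2b}) (your injectivity argument is just the paper's cancellation $\psi_*^{-1}\psi_* = \Id$ in different words), and conclude the global-invariance statements from Proposition \ref{prop-SPDE}. Your final paragraph merely spells out in more detail (via Theorem \ref{thm-inv-embedded} and the finite-dimensional case $G = H = \bbr^d$) what the paper compresses into the single sentence that the additional statement is a consequence of Proposition \ref{prop-SPDE}.
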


\begin{proof}
By Lemma \ref{lemma-M-N-one-chart} the submanifold $\calm$ has one chart with global parametrization $\phi := \psi \circ \varphi : V \to \calm$.

\noindent(i) $\Rightarrow$ (ii): Taking into account Lemma \ref{lemma-push-chain-rule}, by (\ref{eqn-push-1}) and (\ref{A-local}) we obtain
\begin{align*}
\sigma^j = \psi_*^{-1} \psi_* \sigma^j = \psi_*^{-1} A^j|_{\calm} = \psi_*^{-1} \phi_* a^j = \psi_*^{-1} \psi_* \varphi_* a^j = \varphi_* a^j, \quad j \in \bbn,
\end{align*}
and by (\ref{eqn-push-2}) and (\ref{L-local}) we obtain
\begin{align*}
&b - \frac{1}{2} \sum_{j=1}^{\infty} \varphi_{**}(a^j,a^j) = \psi_*^{-1} \psi_* \bigg( b - \frac{1}{2} \sum_{j=1}^{\infty} \varphi_{**}(a^j,a^j) \bigg)
\\ &= \psi_*^{-1} \bigg( L|_{\calm} - \frac{1}{2} \sum_{j=1}^{\infty} \big( \psi_{**}(\varphi_* a^j, \varphi_* a^j) + \psi_* \varphi_{**}(a^j,a^j) \big) \bigg)
\\ &= \psi_*^{-1} \bigg( L|_{\calm} - \frac{1}{2} \sum_{j=1}^{\infty} \phi_{**}(a^j,a^j) \bigg) = \psi_*^{-1} \phi_* \ell = \psi_*^{-1} \psi_* \varphi_* \ell = \varphi_* \ell.
\end{align*}

\noindent(ii) $\Rightarrow$ (i): Taking into account Lemma \ref{lemma-push-chain-rule}, by (\ref{eqn-push-1}) and (\ref{eqn-push-1b}) we obtain
\begin{align*}
A^j|_{\calm} = \psi_* \sigma^j = \psi_* \varphi_* a^j = \phi_* a^j, \quad j \in \bbn,
\end{align*}
and by  (\ref{eqn-push-2}) and (\ref{eqn-push-2b}) we obtain
\begin{align*}
L|_{\calm} &= \psi_* b + \frac{1}{2} \sum_{j=1}^{\infty} \psi_{**}(\sigma^j,\sigma^j) = \psi_* b + \frac{1}{2} \sum_{j=1}^{\infty} \psi_{**}(\varphi_* a^j,\varphi_* a^j)
\\ &= \psi_* b + \frac{1}{2} \sum_{j=1}^{\infty} \big( \phi_{**}(a^j,a^j) - \psi_* \varphi_{**}(a^j,a^j) \big)
\\ &= \psi_* \bigg( b - \frac{1}{2} \sum_{j=1}^{\infty} \varphi_{**}(a^j,a^j) \bigg) + \frac{1}{2} \sum_{j=1}^{\infty} \phi_{**}(a^j,a^j)
\\ &= \psi_* \varphi_* \ell + \frac{1}{2} \sum_{j=1}^{\infty} \phi_{**}(a^j,a^j) = \phi_* \ell + \frac{1}{2} \sum_{j=1}^{\infty} \phi_{**}(a^j,a^j).
\end{align*}
The additional statement is a consequence of Proposition \ref{prop-SPDE}.
\end{proof}

Now, we approach the proofs of Theorem \ref{thm-SPDE} and Proposition \ref{prop-SPDE}. For this purpose, we prepare some auxiliary results. For normed spaces $X_1,\ldots,X_n$ and $Y$ we denote by $L(X_1,\ldots,X_n;Y)$ the space of all continuous $n$-multilinear maps $T : X_1 \times \ldots \times X_n \to Y$.

\begin{lemma}\label{lemma-evaluation}
Let $X,Y,Z$ be Banach spaces. Then the following statements are true:
\begin{enumerate}
\item The mapping
\begin{align*}
\Phi : L(X,Y) \times \ell^2(X) \to \ell^2(Y), \quad (T,x) \mapsto ( T x^j )_{j \in \bbn}
\end{align*}
belongs to $L(L(X,Y), \ell^2(X); \ell^2(Y))$, and we have $\|  \Phi \| \leq 1$.

\item The mapping
\begin{align*}
\Psi : L(X,Y;Z) \times \ell^2(X) \times \ell^2(Y) \to \ell^1(Z), \quad (T,x,y) \mapsto \big( T(x^j,y^j) \big)_{j \in \bbn}
\end{align*}
belongs to $L(L(X,Y;Z), \ell^2(X), \ell^2(Y); \ell^1(Z))$, and we have $\| \Psi \| \leq 1$.

\item The mapping 
\begin{align*}
z' : \ell^1(Z) \to Z, \quad z \mapsto \sum_{j=1}^{\infty} z^j
\end{align*}
belongs to $L(\ell^1(Z),Z)$, and we have $\| z' \| \leq 1$.
\end{enumerate}
\end{lemma}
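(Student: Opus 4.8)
The plan is to verify each of the three claims by establishing the defining norm inequality directly, from which both well-definedness (i.e.\ convergence of the relevant series) and membership in the stated space of continuous multilinear maps follow at once; recall that for multilinear maps between normed spaces boundedness is equivalent to continuity, so once the norm estimate is in hand there is nothing further to check beyond the (evident) multilinearity of each mapping.

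For the first statement, multilinearity of $\Phi$ in the pair $(T,x)$ is immediate from its definition. The crucial estimate uses the operator bound $\| T x^j \|_Y \leq \| T \| \, \| x^j \|_X$ for each $j \in \bbn$; squaring and summing over $j$ yields $\| \Phi(T,x) \|_{\ell^2(Y)}^2 = \sum_{j=1}^{\infty} \| T x^j \|_Y^2 \leq \| T \|^2 \sum_{j=1}^{\infty} \| x^j \|_X^2 = \| T \|^2 \, \| x \|_{\ell^2(X)}^2$. This simultaneously shows that $(T x^j)_{j \in \bbn}$ indeed lies in $\ell^2(Y)$ and that $\| \Phi \| \leq 1$. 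The third statement is even simpler: linearity of $z'$ is clear, and the triangle inequality for series in the Banach space $Z$ gives $\| z'(z) \|_Z \leq \sum_{j=1}^{\infty} \| z^j \|_Z = \| z \|_{\ell^1(Z)}$, where the absolute convergence of $\sum_j z^j$ in $Z$ needed for $z'$ to be well-defined is guaranteed by completeness of $Z$ together with finiteness of $\| z \|_{\ell^1(Z)}$.

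For the second statement I would again start from the evident multilinearity and then bound the $\ell^1(Z)$-norm componentwise via $\| T(x^j,y^j) \|_Z \leq \| T \| \, \| x^j \|_X \, \| y^j \|_Y$. Summing over $j$ and applying the Cauchy--Schwarz inequality for the sequences $(\| x^j \|_X)_{j \in \bbn}$ and $(\| y^j \|_Y)_{j \in \bbn}$ in $\ell^2$ gives $\| \Psi(T,x,y) \|_{\ell^1(Z)} \leq \| T \| \sum_{j=1}^{\infty} \| x^j \|_X \| y^j \|_Y \leq \| T \| \, \| x \|_{\ell^2(X)} \, \| y \|_{\ell^2(Y)}$, which establishes both that $(T(x^j,y^j))_{j \in \bbn} \in \ell^1(Z)$ and that $\| \Psi \| \leq 1$. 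The only step requiring any thought is this Cauchy--Schwarz passage from the summed product of the two $\ell^2$-sequences to the product of their norms; everything else is a routine application of the defining inequalities for operator and multilinear-map norms, so I do not anticipate a genuine obstacle in this lemma.
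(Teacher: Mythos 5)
Your proposal is correct and follows essentially the same route as the paper's own proof: the componentwise operator-norm bound and summation for $\Phi$, the Cauchy--Schwarz estimate on the sequences $(\| x^j \|_X)_j$ and $(\| y^j \|_Y)_j$ for $\Psi$, and the triangle inequality for $z'$. Your additional remarks on multilinearity and on absolute convergence via completeness of $Z$ are fine points the paper leaves implicit, but the substance is identical.
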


\begin{proof}
For all $(T,x) \in L(X,Y) \times \ell^2(X)$ we have
\begin{align*}
\| \Phi(T,x) \|_{\ell^2(Y)}^2 = \sum_{j=1}^{\infty} \| T x^j \|^2 \leq \sum_{j=1}^{\infty} \| T \|^2 \| x^j \|^2 = \| T \|^2 \| x \|_{\ell^2(X)}^2.
\end{align*}
Furthermore, by the Cauchy-Schwarz inequality, for all $(T,x,y) \in L(X,Y;Z) \times \ell^2(X) \times \ell^2(Y)$ we have
\begin{align*}
\| \Psi(T,x,y) \|_{\ell^1(X)} = \sum_{j=1}^{\infty} \| T(x^j,y^j) \| \leq \sum_{j=1}^{\infty} \| T \| \| x^j \| \| y^j \| \leq \| T \| \| x \|_{\ell^2(X)} \| y \|_{\ell^2(X)}
\end{align*}
Moreover, for all $z \in \ell^1(Z)$ we have
\begin{align*}
\| z'(z) \| = \bigg\| \sum_{j=1}^{\infty} z^j \bigg\| \leq \sum_{j=1}^{\infty} \| z^j \| = \| z \|_{\ell^1(Z)},
\end{align*}
completing the proof.
\end{proof}

Recall the notation introduced in Definition \ref{def-push}. More generally, for a mapping $a : V \to \ell^2(\bbr^m)$ we can define $\phi_* : U \cap \calm \to \ell^2(H)$, and for a mapping $A : U \cap \calm \to \ell^2(H)$ with $A^j \in \Gamma(T \calm_U)$ we can define $\phi_*^{-1} : V \to \ell^2(\bbr^m)$.

\begin{lemma}\label{lemma-push}
Let $\phi : V \to U \cap \calm$ be a local parametrization of $\calm$, and set $\calm_U := U \cap \calm$. Then following statements are true:
\begin{enumerate}
\item If $a : V \to \ell^2(\bbr^m)$ is continuous, then $\phi_* a : (\calm_U,\| \cdot \|_H) \to \ell^2(H)$ is continuous.

\item If $a,b : V \to \ell^2(\bbr^m)$ are continuous, then the mapping
\begin{align*}
\sum_{j=1}^{\infty} \phi_{**}(a^j, b^j) : (\calm_U,\| \cdot \|_H) \to H
\end{align*}
is well-defined an continuous.

\item If $A : (\calm_U,\| \cdot \|_G) \to \ell^2(H)$ is continuous with $A^j \in \Gamma(T \calm_U)$ for each $j \in \bbn$, then $\phi_*^{-1} A : V \to \ell^2(\bbr^m)$ is continuous.

\item If $A \in \Gamma(T \calm_U)$ is a vector field, then we have $\phi_* \phi_*^{-1} A = A$.
\end{enumerate}
\end{lemma}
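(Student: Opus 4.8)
The plan is to treat all four assertions through a single device: reading each push-forward as a composition of continuous maps, where the bookkeeping over the index $j \in \bbn$ is absorbed into the bounded multilinear maps $\Phi$, $\Psi$ and $z'$ supplied by Lemma \ref{lemma-evaluation}. Throughout I would use that $\phi^{-1} : (\calm_U, \| \cdot \|_H) \to V$ is continuous (being a homeomorphism onto $V$) and that $\phi \in C^2(V;H)$ yields continuous maps $x \mapsto D\phi(x) \in L(\bbr^m,H)$ and $x \mapsto D^2\phi(x) \in L^2(\bbr^m,H)$.

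For (1) I would observe that, writing $x := \phi^{-1}(y)$, Definition \ref{def-push} gives $(\phi_* a)(y) = (D\phi(x) a^j(x))_{j \in \bbn} = \Phi(D\phi(x), a(x))$, with $\Phi$ the evaluation map of Lemma \ref{lemma-evaluation} for $X = \bbr^m$, $Y = H$; hence $\phi_* a$ is a composition of continuous maps with the bounded bilinear $\Phi$. For (2) I would similarly write $\sum_{j=1}^{\infty} \phi_{**}(a^j,b^j)(y) = z'\big( \Psi(D^2\phi(x), a(x), b(x)) \big)$, where $\Psi$ (with $X = Y = \bbr^m$, $Z = H$) produces an element of $\ell^1(H)$ — which simultaneously shows that the series converges absolutely, so the map is well defined — and $z'$ performs the summation; continuity again follows since every factor is continuous and $\Psi$, $z'$ are bounded.

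The delicate assertion is (3), and it is where the $(G,H)$-structure enters. The obstruction is that $D\phi(x)^{-1}$ is a priori only defined on $T_y\calm$, so it cannot be inserted directly into $\Phi$, which requires a genuine element of $L(H,\bbr^m)$. To circumvent this I would fix $x_0 \in V$ and, by Proposition \ref{prop-linear-inverse}, choose a parametrization $\tilde\phi : \tilde V \to \tilde U \cap \calm$ around $y_0 := \phi(x_0)$ together with a single bounded operator $\psi \in L(H,\bbr^m)$ satisfying $\psi|_{T_y\calm} = D\tilde\phi(\xi)^{-1}$ for all $y = \tilde\phi(\xi)$. Since $A^j(y) \in T_y\calm$, this gives $(\tilde\phi_*^{-1} A)(\xi) = (\psi A^j(\tilde\phi(\xi)))_{j \in \bbn} = \Phi(\psi, A(\tilde\phi(\xi)))$, where now $\psi$ is constant in $\xi$; moreover $\tilde\phi \in C(\tilde V;G)$ by the $(G,H)$-submanifold property (Proposition \ref{prop-manifold-scaled}), so $A \circ \tilde\phi$ is continuous because $A$ is $\| \cdot \|_G$-continuous, whence $\tilde\phi_*^{-1} A$ is continuous. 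Finally I would transfer this to the given $\phi$ via the transition diffeomorphism $\varphi := \tilde\phi^{-1}\circ\phi$ (Lemma \ref{lemma-change-para}): the chain rule yields $(\phi_*^{-1} A)(x) = \Phi\big( D\varphi(x)^{-1}, (\tilde\phi_*^{-1}A)(\varphi(x)) \big)$, continuous near $x_0$, and since $x_0$ was arbitrary, $\phi_*^{-1}A$ is continuous on $V$.

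Assertion (4) is immediate from the definitions: for $y \in \calm_U$ and $x := \phi^{-1}(y)$ we have $(\phi_*\phi_*^{-1}A)(y) = D\phi(x)\,D\phi(x)^{-1}A(y) = A(y)$, since $A(y) \in T_y\calm = \ran D\phi(x)$. I expect the main work to lie entirely in (3), and specifically in replacing the fibrewise inverse $D\phi(x)^{-1}$ by the constant bounded extension $\psi$ of Proposition \ref{prop-linear-inverse}, which is precisely what makes $\Phi$ applicable and the $\| \cdot \|_G$-continuity of $A$ exploitable.
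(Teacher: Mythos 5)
Your proof is correct. For (1), (2) and (4) it is exactly the paper's argument: the sequence-space bookkeeping is absorbed into the bounded multilinear maps $\Phi$, $\Psi$, $z'$ of Lemma \ref{lemma-evaluation}, and (4) is the one-line computation $D\phi(x)\,D\phi(x)^{-1}A(y) = A(y)$. The divergence is in (3). There the paper simply cites \cite[Prop. 6.1.1]{fillnm} --- for a tangential map $A$, continuity of $A \circ \phi$ already implies continuity of the coordinate representation $\phi_*^{-1}A$ --- together with Proposition \ref{prop-manifold-scaled}, which gives $\phi \in C(V;G)$ and hence continuity of $A \circ \phi$ from the assumed $\| \cdot \|_G$-continuity of $A$. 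You instead reprove the cited implication with the paper's own tools: Proposition \ref{prop-linear-inverse} furnishes, around each point, a parametrization $\tilde\phi$ whose fibrewise inverse is the restriction of one fixed operator $\psi \in L(H,\bbr^m)$, so that $\tilde\phi_*^{-1}A = \Phi(\psi, A \circ \tilde\phi)$ is continuous, and the chain-rule identity $(\phi_*^{-1}A)(x) = D\varphi(x)^{-1}\,(\tilde\phi_*^{-1}A)(\varphi(x))$ through the transition diffeomorphism of Lemma \ref{lemma-change-para} carries this back to the given $\phi$; both steps are sound, including the localization (``$x_0$ arbitrary'') that is needed because $\psi$ exists only locally, so the argument really lives on the overlap of the two charts. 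What your route buys is self-containedness --- it is in effect a proof of Filipovi\'c's proposition in the continuity case --- at the cost of a patching step; what the paper's route buys is brevity, at the cost of leaning on the external reference. Both arguments turn on the same two points you isolated: replacing the fibrewise inverse $D\phi(x)^{-1}$ by a constant bounded extension, and using the $(G,H)$-submanifold property (via $\phi \in C(V;G)$) to make the $\| \cdot \|_G$-continuity of $A$ exploitable.
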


\begin{proof}
The first three statements follow Lemma \ref{lemma-evaluation}, and the last statement is easily checked. For the proof of the third statement we also take into account \cite[Prop. 6.1.1]{fillnm}, and that by Proposition \ref{prop-manifold-scaled} we have $\phi \in C(V;G)$.
\end{proof}

Now, let $\phi : V \to U \cap \calm$ be a local parametrization of $\calm$, and set $\calm_U := U \cap \calm$. We assume there exists $\psi \in C^1(H;\bbr^m)$ such that $\phi^{-1} = \psi|_{\calm_U}$, and we have 
\begin{align}\label{inverse-coordinate}
D \psi(y)|_{T_y \calm} = D \phi(x)^{-1} \quad \text{for all $y \in \calm_U$,}
\end{align}
where $x := \psi(y) \in V$. For a mapping $A : U \cap \calm \to \ell^2(H)$ we define $\psi_* A : V \to \ell^2(\bbr^m)$ as $\psi_* A := (\psi_* A^j)_{j \in \bbn}$ with
\begin{align*}
(\psi_* A^j)(x) := D \psi(y) A^j(y), \quad x \in V,
\end{align*}
where $y := \phi(x) \in U \cap \calm$.

\begin{lemma}\label{lemma-push-cont}
If $A : (\calm_U,\| \cdot \|_G) \to \ell^2(H)$ is continuous, then $\psi_* A : V \to \ell^2(\bbr^m)$ is continuous.
\end{lemma}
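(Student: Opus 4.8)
The plan is to exhibit $\psi_* A$ as a composition of continuous maps, the nontrivial one being the bounded bilinear evaluation operator from Lemma \ref{lemma-evaluation}. First I would isolate the two building blocks and record their continuity. Since $\calm$ is a $(G,H)$-submanifold of class $C^k$, Proposition \ref{prop-manifold-scaled} (condition (v)) gives $\phi \in C(V;G) \cap C^k(V;H)$, so in particular $\phi : V \to (\calm_U, \| \cdot \|_G)$ is continuous. Composing with the hypothesis that $A : (\calm_U, \| \cdot \|_G) \to \ell^2(H)$ is continuous, the map $A \circ \phi : V \to \ell^2(H)$ is continuous. On the other hand, $\psi \in C^1(H;\bbr^m)$ yields a continuous derivative $D \psi : H \to L(H,\bbr^m)$, and since $\phi : V \to H$ is continuous, the map $x \mapsto D \psi(\phi(x))$ is a continuous map $V \to L(H,\bbr^m)$.

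Next I would invoke Lemma \ref{lemma-evaluation}(1) with $X = H$ and $Y = \bbr^m$: the evaluation operator
\begin{align*}
\Phi : L(H,\bbr^m) \times \ell^2(H) \to \ell^2(\bbr^m), \quad (T,x) \mapsto (T x^j)_{j \in \bbn}
\end{align*}
is bounded bilinear with $\| \Phi \| \leq 1$, hence continuous. By the very definition of $\psi_* A$, for each $x \in V$ we have
\begin{align*}
(\psi_* A)(x) = \big( D \psi(\phi(x)) A^j(\phi(x)) \big)_{j \in \bbn} = \Phi \big( D \psi(\phi(x)), (A \circ \phi)(x) \big),
\end{align*}
so that $\psi_* A = \Phi \circ ( D \psi \circ \phi, A \circ \phi )$. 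Being the composition of the continuous map $x \mapsto ( D \psi(\phi(x)), (A \circ \phi)(x) )$ into $L(H,\bbr^m) \times \ell^2(H)$ with the continuous operator $\Phi$, the map $\psi_* A : V \to \ell^2(\bbr^m)$ is continuous, which is the claim.

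I do not expect a serious obstacle here; the only point requiring care is that the target space of $A$ is $\ell^2(H)$ rather than $H$, so applying $D \psi$ componentwise must be controlled uniformly in $j$ in order to land in $\ell^2(\bbr^m)$ and to preserve continuity. This is precisely what the bound $\| \Phi \| \leq 1$ in Lemma \ref{lemma-evaluation}(1) supplies, and it is the reason I route the argument through that evaluation operator rather than verifying square-summability and joint continuity of the sequence $(D\psi(\phi(x)) A^j(\phi(x)))_{j}$ by hand.
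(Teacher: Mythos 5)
Your proof is correct and follows essentially the same route as the paper: the paper's (much terser) proof likewise obtains $\phi \in C(V;G)$ from Proposition \ref{prop-manifold-scaled} and then cites Lemma \ref{lemma-evaluation} for the bounded bilinear evaluation operator, which is exactly the factorization $\psi_* A = \Phi \circ (D\psi \circ \phi, A \circ \phi)$ that you spell out. Your version simply makes explicit the intermediate continuity of $D\psi \circ \phi$ and $A \circ \phi$, which the paper leaves implicit.
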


\begin{proof}
By Proposition \ref{prop-manifold-scaled} we have $\phi \in C(V;G)$. Therefore, the assertion is a consequence of Lemma \ref{lemma-evaluation}.
\end{proof}

\begin{proposition}\label{prop-tangential}
For a mapping $A : \calm_U \to H$ the following statements are equivalent:
\begin{enumerate}
\item[(i)] We have $A \in \Gamma(T \calm_U)$.

\item[(ii)] We have $A = \phi_* \psi_* A$.
\end{enumerate}
If any of the previous two conditions is fulfilled, then we have $\psi_* A = \phi_*^{-1} A$.
\end{proposition}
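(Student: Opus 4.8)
The plan is to compute the composition $\phi_* \psi_* A$ pointwise and recognize it as the image of $A$ under a projection onto the tangent space. First I would fix $y \in \calm_U$, set $x := \psi(y) = \phi^{-1}(y) \in V$, and unwind the two definitions: by the definition of $\psi_*$ we have $(\psi_* A)(x) = D\psi(y) A(y)$, and then by the definition of $\phi_*$ (Definition \ref{def-push}) we obtain
\begin{align*}
(\phi_* \psi_* A)(y) = D\phi(x) (\psi_* A)(x) = D\phi(x) D\psi(y) A(y).
\end{align*}
Thus everything reduces to understanding the operator $P_y := D\phi(x) D\psi(y) \in L(H,H)$.

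The key step --- and the only place where the hypothesis (\ref{inverse-coordinate}) enters --- is to show that $P_y$ is a projection of $H$ onto $T_y \calm$. Its range is contained in $T_y \calm$, since $\ran D\phi(x) = T_y \calm$. Moreover, for $w \in T_y \calm$ we have, by (\ref{inverse-coordinate}), that $D\psi(y) w = D\phi(x)^{-1} w$, and hence $P_y w = D\phi(x) D\phi(x)^{-1} w = w$; that is, $P_y$ restricts to the identity on $T_y \calm$. Together these two facts show $P_y^2 = P_y$ with $\ran P_y = T_y \calm$.

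With this in hand both implications are immediate. If $A \in \Gamma(T \calm_U)$, then $A(y) \in T_y \calm$, so $P_y A(y) = A(y)$, giving $(\phi_* \psi_* A)(y) = A(y)$ for every $y$, i.e.\ (ii). Conversely, if $A = \phi_* \psi_* A$, then for each $y$ we have $A(y) = D\phi(x)(\psi_* A)(x) \in D\phi(x) \bbr^m = T_y \calm$, so $A \in \Gamma(T \calm_U)$, i.e.\ (i). Finally, assuming (i), the additional identity $\psi_* A = \phi_*^{-1} A$ follows by comparing definitions: since $A(y) \in T_y \calm$, the relation (\ref{inverse-coordinate}) gives $(\psi_* A)(x) = D\psi(y) A(y) = D\phi(x)^{-1} A(y) = (\phi_*^{-1} A)(x)$, recalling the Definition \ref{def-push} of $\phi_*^{-1}$.

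I do not expect any serious obstacle here: the content is entirely linear-algebraic at each fixed point, and the sole subtlety is making sure that the hypothesis (\ref{inverse-coordinate}) is invoked correctly, namely only on the tangent directions, where $D\psi(y)$ inverts $D\phi(x)$. The global regularity $\psi \in C^1(H;\bbr^m)$ plays no role in this pointwise statement; it is needed elsewhere only to secure continuity, as in Lemma \ref{lemma-push-cont}.
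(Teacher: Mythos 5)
Your proof is correct and is essentially the paper's argument: the paper also deduces $\psi_* A = \phi_*^{-1} A$ from (\ref{inverse-coordinate}) for tangent-valued $A$, applies $\phi_* \phi_*^{-1} A = A$ (Lemma \ref{lemma-push}) for (i) $\Rightarrow$ (ii), and gets (ii) $\Rightarrow$ (i) from the fact that $\phi_*$ takes values in the tangent spaces by definition. Your pointwise ``projection $P_y = D\phi(x) D\psi(y)$'' framing is just an explicit unwinding of that same computation.
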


\begin{proof}
(i) $\Rightarrow$ (ii): Noting (\ref{inverse-coordinate}), we see that $\psi_* A = \phi_*^{-1} A$. Therefore, using Lemma \ref{lemma-push} we obtain
\begin{align*}
\phi_* \psi_* A = \phi_* \phi_*^{-1} A = A.
\end{align*}

\noindent (ii) $\Rightarrow$ (i): Taking into account the definition of $\phi_*$, we obtain
\begin{align*}
A = \phi_* \psi_* A \in \Gamma(T \calm_U),
\end{align*}
completing the proof.
\end{proof}

\begin{proposition}\label{prop-second-coordinate-zero}
Suppose that $\psi \in L(H,\bbr^m)$. Then for all mappings $A,B : U \cap \calm \to H$ we have
\begin{align*}
\psi_* \phi_{**} ( \psi_* A, \psi_* B ) = 0.
\end{align*}
\end{proposition}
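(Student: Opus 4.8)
The plan is to exploit the defining relation $\phi^{-1} = \psi|_{\calm_U}$, which says precisely that $\psi \circ \phi = \Id_V$ on $V$, and to differentiate this identity twice, using crucially that $\psi \in L(H,\bbr^m)$ is linear.

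First I would unwind the definitions to see what the claimed equality means pointwise. Given $A,B : U \cap \calm \to H$, write $a := \psi_* A$ and $b := \psi_* B$, so that $a,b : V \to \bbr^m$ are given by $a(x) = D\psi(\phi(x)) A(\phi(x))$ and $b(x) = D\psi(\phi(x)) B(\phi(x))$. Since $\psi$ is linear we have $D\psi(y) = \psi$ for every $y \in H$, hence $a(x) = \psi A(\phi(x))$ and $b(x) = \psi B(\phi(x))$. By Definition \ref{def-push}, $\phi_{**}(a,b)$ is the mapping $y \mapsto D^2\phi(x)(a(x),b(x))$ with $x := \phi^{-1}(y)$, and applying $\psi_*$ once more yields the mapping
\begin{align*}
\big( \psi_* \phi_{**}(\psi_* A, \psi_* B) \big)(x) = D\psi(\phi(x))\, D^2\phi(x)(a(x),b(x)) = \psi\, D^2\phi(x)(a(x),b(x)),
\end{align*}
again using $D\psi = \psi$. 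So it suffices to show that $\psi \, D^2\phi(x) = 0$ as an element of $L^2(\bbr^m,\bbr^m)$ for every $x \in V$.

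The key step is to differentiate $\psi \circ \phi = \Id_V$ twice. By the second order chain rule (Lemma \ref{lemma-second-order-chain}), for each $x \in V$ we have
\begin{align*}
D^2(\psi \circ \phi)(x) = D^2\psi(\phi(x)) \circ (D\phi(x), D\phi(x)) + D\psi(\phi(x)) \circ D^2\phi(x).
\end{align*}
Because $\psi$ is linear, its second derivative vanishes, $D^2\psi \equiv 0$, so the first summand is zero, and $D\psi(\phi(x)) = \psi$, leaving $D^2(\psi \circ \phi)(x) = \psi \, D^2\phi(x)$. On the other hand $\psi \circ \phi = \Id_V$ is affine, whence $D^2(\psi \circ \phi) \equiv 0$. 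Combining the two gives $\psi \, D^2\phi(x) = 0$ for every $x \in V$, and therefore
\begin{align*}
\big( \psi_* \phi_{**}(\psi_* A, \psi_* B) \big)(x) = \psi\, D^2\phi(x)(a(x),b(x)) = 0,
\end{align*}
which is the assertion.

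There is no genuine obstacle here; the only care required is the bookkeeping of tracking two applications of $\psi_*$ and one of $\phi_{**}$ through Definition \ref{def-push}, together with the observation that linearity of $\psi$ collapses $D\psi$ to the constant $\psi$ and kills $D^2\psi$, so that the entire computation reduces to twice-differentiating the identity $\psi \circ \phi = \Id_V$.
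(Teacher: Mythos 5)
Your proof is correct and follows essentially the same route as the paper's: both differentiate the identity $\psi \circ \phi = \Id_V$ twice via Lemma \ref{lemma-second-order-chain}, use linearity of $\psi$ to kill $D^2\psi$ and collapse $D\psi$ to $\psi$, and then unwind Definition \ref{def-push} to conclude. The only cosmetic difference is that you spell out $D\psi(y)=\psi$ explicitly, which the paper leaves implicit.
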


\begin{proof}
Note that $\psi \circ \phi = \Id_V$ and that $D^2 \psi = 0$, because $\psi$ is linear. Therefore, by the second order chain rule (Lemma \ref{lemma-second-order-chain}), for each $x \in V$ we have
\begin{align*}
D \psi(y) \circ D^2 \phi(x) = D^2( \psi \circ \phi )(x) = 0,
\end{align*}
where $y := \phi(x) \in U \cap \calm$. We set $a := \psi_* A$ and $b := \psi_* B$ as well as $C := \phi_{**}(a,b)$. Let $x \in V$ be arbitrary, and set $y := \phi(x) \in U \cap \calm$. Then we obtain
\begin{align*}
(\psi_* \phi_{**} ( \psi_* A, \psi_* B ))(x) = (\psi_* C)(x) = D \psi(y) C(y) = D \psi(y) D^2 \phi(x)(a(x),b(x)) = 0,
\end{align*}
completing the proof.
\end{proof}

\begin{lemma}\label{lemma-compare-1}
Let $E \subset G$ be a subset, let $K : (E,\| \cdot \|_G) \to (H,\| \cdot \|_H)$ be a continuous function, let $y_0 \in G$ be arbitrary, let $\tau > 0$ be a positive constant, and let $Y : [0,\tau] \to (E,\| \cdot \|_G)$ be a continuous mapping with $Y_0 = y_0$ such that
\begin{align*}
\int_0^t K(Y_s) ds = 0 \quad \text{for all $t \in [0,\tau]$.}
\end{align*}
Then we have $K(y_0) = 0$. 
\end{lemma}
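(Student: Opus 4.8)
The plan is to reduce the statement to the elementary fact that the average of a continuous $H$-valued function over $[0,t]$ converges to its value at $0$ as $t \downarrow 0$. The key observation is that, although the hypotheses mix the two norms, the composition behaves well: since $Y : [0,\tau] \to (E,\| \cdot \|_G)$ is continuous and $K : (E,\| \cdot \|_G) \to (H,\| \cdot \|_H)$ is continuous by assumption, the map
\begin{align*}
g : [0,\tau] \to (H,\| \cdot \|_H), \quad g(s) := K(Y_s)
\end{align*}
is continuous as a composition of continuous maps. In particular $g$ is bounded and Bochner integrable on $[0,\tau]$, so the $H$-valued integral $\int_0^t g(s)\,ds$ appearing in the hypothesis is well-defined for every $t \in [0,\tau]$.

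The next step is the averaging argument. By hypothesis we have $\int_0^t g(s)\,ds = 0$ in $H$ for every $t \in [0,\tau]$, hence also
\begin{align*}
\frac{1}{t} \int_0^t g(s)\,ds = 0 \quad \text{for all $t \in (0,\tau]$.}
\end{align*}
On the other hand, continuity of $g$ at $s=0$ gives, for each $\epsilon > 0$, a $\delta > 0$ such that $\| g(s) - g(0) \|_H \leq \epsilon$ for all $s \in [0,\delta]$, whence for $t \in (0,\delta]$
\begin{align*}
\bigg\| \frac{1}{t} \int_0^t g(s)\,ds - g(0) \bigg\|_H = \bigg\| \frac{1}{t} \int_0^t \big( g(s) - g(0) \big)\,ds \bigg\|_H \leq \frac{1}{t} \int_0^t \| g(s) - g(0) \|_H\,ds \leq \epsilon.
\end{align*}
Therefore $\frac{1}{t} \int_0^t g(s)\,ds \to g(0)$ in $H$ as $t \downarrow 0$, and combining this with the fact that each average vanishes yields $g(0) = 0$, that is, $K(Y_0) = K(y_0) = 0$, which is the claim.

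I do not expect any genuine obstacle here: the whole content is that the seemingly delicate interplay between the $G$-topology (in which $Y$ moves continuously) and the $H$-topology (in which $K$ takes values and in which the integral is formed) is resolved immediately once one notes that the composition $g = K \circ Y$ lands continuously in $(H,\| \cdot \|_H)$. The only point requiring minimal care is to interpret $\int_0^t K(Y_s)\,ds$ consistently as a Bochner integral in $H$ and to invoke the standard estimate $\| \int_0^t h\,ds \|_H \leq \int_0^t \| h \|_H\,ds$ for continuous $H$-valued integrands, which is exactly what justifies the inequality above. (Equivalently, one could phrase the conclusion via the fundamental theorem of calculus: $t \mapsto \int_0^t g(s)\,ds$ is continuously differentiable into $H$ with derivative $g$, so its being identically zero forces $g \equiv 0$, and in particular $g(0) = K(y_0) = 0$.)
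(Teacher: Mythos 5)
Your proof is correct, but it takes a genuinely different route from the paper's. The paper scalarizes: it fixes an arbitrary continuous linear functional $y' \in H'$, pulls it inside the integral to get $\int_{[0,t]} y'(K(Y_s))\,ds = 0$ for all $t$, upgrades this via a monotone class argument to $\int_B y'(K(Y_s))\,ds = 0$ for every Borel set $B \subset [0,\tau]$, concludes that $y'(K(Y_s)) = 0$ for Lebesgue-almost every $s$, removes the null set by continuity of $y' \circ K \circ Y$, and finally lets $y'$ range over $H'$ to deduce $K(Y_s) = 0$ for all $s$, in particular at $s = 0$. You instead work directly with the $H$-valued Bochner integral and run the standard averaging (Lebesgue differentiation at a point of continuity) argument at $t = 0$: the averages $\frac{1}{t}\int_0^t K(Y_s)\,ds$ vanish identically and converge to $K(Y_0)$, forcing $K(y_0) = 0$. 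Your argument is more elementary and self-contained (no duality, no monotone class theorem), and your closing remark via the fundamental theorem of calculus recovers the stronger conclusion $K(Y_s) = 0$ for all $s$ that the paper's proof also yields. What the paper's scalarization buys is methodological consistency with the companion Lemma \ref{lemma-compare-2}, where applying functionals $\la \zeta, \cdot \ra_H$ is essential in order to invoke the uniqueness of the decomposition of a (real-valued) special semimartingale; for the present lemma in isolation, your approach is arguably the cleaner one.
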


\begin{proof}
Let $y \in H'$ be an arbitrary continuous linear functional. By assumption we have
\begin{align*}
\int_{[0,t]} y'(K(Y_s)) ds = 0 \quad \text{for all $t \in [0,\tau]$.}
\end{align*}
By a monotone class argument, we even have
\begin{align*}
\int_B y'(K(Y_s)) ds = 0 \quad \text{for each $B \in \calb([0,\tau])$,}
\end{align*}
and therefore
\begin{align*}
y'(K(Y_s)) = 0 \quad \text{for $\lambda$-almost all $s \in [0,\tau]$.}
\end{align*}
By the continuity of the mapping $y' \circ K \circ Y : [0,\tau] \to \bbr$ we deduce that
\begin{align*}
y'(K(Y_s)) = 0 \quad \text{for all $s \in [0,\tau]$,}
\end{align*}
and hence, in particular $y'(K(y_0)) = 0$. Since the functional $y' \in H'$ was arbitrary, we arrive at $K(y_0) = 0$.
\end{proof}

\begin{lemma}\label{lemma-compare-2}
Let $E \subset G$ be a subset, let $L : (E,\| \cdot \|_G) \to (H,\| \cdot \|_H)$ and $A : (E,\| \cdot \|_G) \to (\ell^2(H),\| \cdot \|_{\ell^2(H)})$ be continuous mappings, let $y_0 \in G$ be arbitrary, let $Y$ be an $E$-valued process with $Y_0 = y_0$ such that the sample paths are continuous with respect to $\| \cdot \|_G$, and let $\tau > 0$ be a positive stopping time such that $\bbp$-almost surely
\begin{align*}
\int_0^{t \wedge \tau} L(Y_s) ds + \int_0^{t \wedge \tau} A(Y_s) dW_s = 0 \quad \text{for all $t \in \bbr_+$.}
\end{align*}
Then we have $L(y_0) = 0$ and $A(y_0) = 0$.
\end{lemma}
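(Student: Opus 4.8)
The plan is to read the identity as the statement that a continuous semimartingale vanishes identically, and to exploit the uniqueness of its decomposition into a finite-variation part and a local martingale part. Set
$V_t := \int_0^{t \wedge \tau} L(Y_s)\,ds$ and $M_t := \int_0^{t \wedge \tau} A(Y_s)\,dW_s$ for $t \in \bbr_+$. Since the sample paths of $Y$ are $\| \cdot \|_G$-continuous and $L$, $A$ are continuous on $(E,\| \cdot \|_G)$, the compositions $s \mapsto L(Y_s)$ and $s \mapsto \| A(Y_s) \|_{\ell^2(H)}^2$ have continuous, hence locally bounded, sample paths. Therefore $V$ is a continuous adapted process of finite variation with $V_0 = 0$, and $M$ is a continuous $H$-valued local martingale with $M_0 = 0$ (the stochastic integral being well-defined by the local boundedness just noted). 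The hypothesis asserts $M = -V$.

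First I would argue that both $M$ and $V$ vanish identically. Indeed, the identity $M = -V$ exhibits the continuous local martingale $M$ as a process of finite variation, and a continuous local martingale of finite variation starting at $0$ is a.s. constant. In the Hilbert space setting this is seen by pairing with $h \in H$: each $\la M_t, h \ra_H$ is a real continuous local martingale of finite variation, hence a.s.\ equal to $\la M_0, h \ra_H = 0$; as $H$ is separable, a single $\bbp$-nullset handles all $h$ in a countable dense set simultaneously, giving $M \equiv 0$ and consequently $V \equiv 0$ up to a $\bbp$-nullset.

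To extract $A(y_0) = 0$ I would pass to the quadratic variation. Since $M \equiv 0$, its quadratic variation vanishes, and by the It\^{o} isometry together with the series representation (\ref{integral-series}) and the independence of the $W^j$, the quadratic variation of $M$ equals $\int_0^{t \wedge \tau} \| A(Y_s) \|_{\ell^2(H)}^2\,ds$. Hence $\int_0^{t \wedge \tau} \| A(Y_s) \|_{\ell^2(H)}^2\,ds = 0$ for all $t$, which forces $\| A(Y_s) \|_{\ell^2(H)}^2 = 0$ for Lebesgue-almost all $s \in [0,\tau]$. By continuity of $s \mapsto A(Y_s)$ (again using that $Y$ has $\| \cdot \|_G$-continuous paths and $A$ is $\| \cdot \|_G$-to-$\ell^2(H)$ continuous) and the positivity $\tau > 0$, this yields $A(Y_0) = A(y_0) = 0$.

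For $L(y_0) = 0$ I would invoke Lemma \ref{lemma-compare-1}. On the almost sure event where $V \equiv 0$ and $\tau > 0$, fix a sample point $\omega$ and set $\epsilon := \tau(\omega) > 0$; the path $t \mapsto Y_t(\omega)$ is $\| \cdot \|_G$-continuous on $[0,\epsilon]$ with $Y_0(\omega) = y_0$ and satisfies $\int_0^t L(Y_s(\omega))\,ds = V_t(\omega) = 0$ for all $t \in [0,\epsilon]$. Lemma \ref{lemma-compare-1}, applied with the constant $\epsilon$ in place of $\tau$, then gives $L(y_0) = 0$. The main obstacle is the justification, in the Hilbert space setting, that the finite-variation and martingale parts vanish separately; once the uniqueness of the decomposition is established, the quadratic variation computation and the appeal to Lemma \ref{lemma-compare-1} are routine.
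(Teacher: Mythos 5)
Your proposal is correct and follows essentially the same route as the paper's proof: both split the identity into the finite-variation part and the local martingale part, show each vanishes by a uniqueness-of-decomposition argument reduced to the scalar case via pairing with a countable dense subset of $H$, then obtain $A(y_0)=0$ from the vanishing quadratic variation and $L(y_0)=0$ from Lemma \ref{lemma-compare-1}. The only cosmetic differences are that you justify the uniqueness step by the classical fact that a continuous local martingale of finite variation is constant (where the paper cites the uniqueness of the special semimartingale decomposition), and you conclude $A(y_0)=0$ directly from continuity of $s \mapsto A(Y_s)$ rather than by a second application of Lemma \ref{lemma-compare-1}.
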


\begin{proof}
We define the $H$-valued processes $B$ and $M$ as
\begin{align*}
B_t &:= \int_0^{t \wedge \tau} L(Y_s) ds, \quad t \in \bbr_+,
\\ M_t &:= \int_0^{t \wedge \tau} A(Y_s) dW_s, \quad t \in \bbr_+.
\end{align*}
Then we have $B + M = 0$ up to an evanescent set. Let $\zeta \in H$ be arbitrary. In the terminology of \cite[Def. I.4.21.b]{Jacod-Shiryaev} the process $\langle \zeta,B \rangle_{H} + \langle \zeta,M \rangle_{H}$ is a special semimartingale with predictable finite variation part $\langle \zeta,B \rangle_{H}$ and local martingale part $\langle \zeta,M \rangle_{H}$. Since $\langle \zeta,B \rangle_{H} + \langle \zeta,M \rangle_{H} = 0$ and the decomposition of a special semimartingale is unique (see \cite[Cor. I.3.16]{Jacod-Shiryaev}), we deduce that $\langle \zeta,B \rangle_{H} = 0$ and $\langle \zeta,M \rangle_{H} = 0$ up to an evanescent set. Since $\zeta \in H$ was arbitrary, by separability of $H$ we infer that $B = 0$ and $M = 0$ up to an evanescent set. Using Lemma \ref{lemma-compare-1} we deduce that $L(y_0) = 0$. Furthermore, we also have $\langle M \rangle = 0$ up to an evanescent set, where $\langle M \rangle$ denotes the quadratic variation according to \cite[Def. 2.9]{Atma-book}. Therefore, by Remark \ref{rem-SPDEs} and \cite[Thm. 2.3]{Atma-book} we obtain $\bbp$-almost surely
\begin{align*}
\int_0^{t \wedge \tau} \| A(Y_s) \|_{\ell^2(H)}^2 ds = 0, \quad t \in \bbr_+.
\end{align*}
By Lemma \ref{lemma-compare-1} it follows that
\begin{align*}
\| A(y_0) \|_{\ell^2(H)} = 0,
\end{align*}
and hence $A(y_0) = 0$, completing the proof.
\end{proof}

Now, we consider the $\bbr^m$-valued SDE
\begin{align}\label{SDE}
\left\{
\begin{array}{rcl}
dX_t & = & \ell(X_t) dt + a(X_t) dW_t
\\ X_0 & = & x_0
\end{array}
\right.
\end{align}
with continuous mappings $\ell : V \to \bbr^m$ and $a : V \to \ell^2(\bbr^m)$. 

\begin{lemma}\label{lemma-V-SDE}
Every open subset $V \subset \bbr^m$ is a $C^{\infty}$-submanifold of $\bbr^m$, which is locally invariant for the SDE (\ref{SDE}).
\end{lemma}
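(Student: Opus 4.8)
The first assertion requires no real work: since $V$ is open, I would exhibit the identity map $\Id_V : V \to V$ as a global $C^\infty$-parametrization. Taking $U = \bbr^m$ in the definition of a submanifold, the map $\Id_V : V \to U \cap V = V$ is a homeomorphism and a $C^\infty$-immersion, because $D \Id_V(x) = \Id_{\bbr^m}$ is injective for every $x \in V$; hence $V$ is an $m$-dimensional $C^\infty$-submanifold of $\bbr^m$ with $T_x V = \bbr^m$ for each $x \in V$.

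For the local invariance the plan is to localize the coefficients around a fixed starting point $x_0 \in V$ and then invoke the finite dimensional existence result from Remark \ref{rem-global-weak-solution}. First I would choose $\rho > 0$ with $\overline{B(x_0,\rho)} \subset V$ and a cutoff $\chi \in C^\infty(\bbr^m;[0,1])$ satisfying $\chi \equiv 1$ on $\overline{B(x_0,\rho/2)}$ and $\supp(\chi) \subset B(x_0,\rho)$, and then set $\tilde\ell := \chi \cdot \ell$ and $\tilde a := \chi \cdot a$ on $V$, extended by $0$ outside $V$. Since $\supp(\chi)$ is a compact subset of $V$, these extensions are continuous on all of $\bbr^m$, and since $\ell$ and $a$ are bounded on the compact set $\overline{B(x_0,\rho)}$, the global mappings $\tilde\ell : \bbr^m \to \bbr^m$ and $\tilde a : \bbr^m \to \ell^2(\bbr^m)$ are bounded, in particular of linear growth.

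Next I would apply Remark \ref{rem-global-weak-solution} to obtain a global weak solution $(\bbb,W,\tilde X)$ of the SDE (\ref{SDE}) with $\ell,a$ replaced by $\tilde\ell,\tilde a$ and $\tilde X_0 = x_0$, and define the exit time
\begin{align*}
\tau := \inf \{ t \geq 0 : |\tilde X_t - x_0| \geq \rho/2 \}.
\end{align*}
Continuity of the paths together with $\tilde X_0 = x_0$ forces $\tau > 0$, and the stopped process $X := \tilde X^\tau$ takes values in $\overline{B(x_0,\rho/2)} \subset V$, so that $X^\tau \in V$ up to an evanescent set. On the stochastic interval $\IL 0,\tau \IR$ we have $\tilde\ell(\tilde X_s) = \ell(X_s)$ and $\tilde a(\tilde X_s) = a(X_s)$ because $\chi \equiv 1$ there, and the integrability condition (\ref{SPDE-int-cond}) holds since $\ell,a$ are bounded on the compact set $\overline{B(x_0,\rho/2)}$; hence $(\bbb,W,X)$ is a local martingale solution to the SDE (\ref{SDE}) with lifetime $\tau$ and $X^\tau \in V$, which is exactly local invariance.

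I expect the only genuinely delicate point to be the localization step: one has to verify that the cutoff coefficients are continuous on all of $\bbr^m$ (which rests on $\supp(\chi)$ being a compact subset of the open set $V$, so that the extension by $0$ matches across $\partial V$) and of linear growth so that Remark \ref{rem-global-weak-solution} is applicable, while still agreeing with $\ell$ and $a$ on the region $\overline{B(x_0,\rho/2)}$ where the stopped solution actually lives.
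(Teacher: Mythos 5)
Your proof is correct and follows essentially the same route as the paper: both localize the coefficients so that they become globally defined, continuous and bounded, invoke Remark \ref{rem-global-weak-solution} to obtain a global weak solution, and then stop that solution at the exit time of a compact neighborhood of $x_0$ contained in $V$. The only difference is the localization device --- you multiply by a smooth cutoff $\chi$, so the modified coefficients agree with $\ell, a$ on the smaller ball where $\chi \equiv 1$, whereas the paper composes with the orthogonal projection $P_K$ onto a compact convex neighborhood $K \subset V$, so they agree on all of $K$; both devices serve the argument equally well.
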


\begin{proof}
It is obvious that $V$ is a $C^{\infty}$-submanifold of $\bbr^m$. Let $x_0 \in V$ be arbitrary. Since $V$ is open, there exists a compact, convex neighborhood $K \subset V$ of $x_0$. Let $P_K : \bbr^m \to K$ be the orthogonal projection on $K$. We consider the SDE
\begin{align}\label{SDE-weak-sol}
\left\{
\begin{array}{rcl}
d\bar{X}_t & = & \bar{\ell}(\bar{X}_t) dt + \bar{a}(\bar{X}_t) dW_t
\\ \bar{X}_0 & = & x_0,
\end{array}
\right.
\end{align}
where the coefficients are given by
\begin{align*}
\bar{\ell} &:= \ell \circ P_K : \bbr^m \to \bbr^m,
\\ \bar{a} &:= a \circ P_K : \bbr^m \to \ell^2(\bbr^m).
\end{align*}
Note that $\bar{\ell}$ and $\bar{a}$ are continuous and bounded. Hence, by Remark \ref{rem-global-weak-solution} there exists a global weak solution $(\bbb,W,\bar{X})$ to the SDE (\ref{SDE-weak-sol}) with $\bar{X}_0 = x_0$. Now, we define the positive stopping time $\tau > 0$ as
\begin{align*}
\tau := \inf \{ t \in \bbr_+ : \bar{X}_t \notin K \}.
\end{align*}
Setting $X := \bar{X}^{\tau}$, we have $X^{\tau} \in K \subset V$, and, since $\ell|_K = \bar{\ell}|_K$ and $a|_K = \bar{a}|_K$, the triplet $(\bbb,W,X)$ is a local weak solution to the SDE (\ref{SDE}) with $X_0 = x_0$ and lifetime $\tau$.
\end{proof}

Now, we are ready to provide the proof of Theorem \ref{thm-SPDE}.

\begin{proof}[Proof of Theorem \ref{thm-SPDE}]
(i) $\Rightarrow$ (ii): Let $y_0 \in \calm$ be arbitrary. According to Proposition \ref{prop-linear-inverse} there exist a local parametrization $\phi : V \to U \cap \calm$ around $y_0$ and a bounded linear operator $\psi \in L(H,\bbr^m)$ such that $\phi^{-1} = \psi|_{U \cap \calm}$ and we have
\begin{align*}
D \psi(y)|_{T_y \calm} = D \phi(x)^{-1} \quad \text{for all $y \in U \cap \calm$,}
\end{align*}
where $x := \psi(y) \in V$. By Proposition \ref{prop-manifold-scaled} we have $\phi \in C(V;G) \cap C^2(V;H)$. Now, let $y \in U \cap \calm$ be arbitrary, and set $x := \psi(y) \in V$. Since the submanifold $\calm$ is locally invariant for the SPDE (\ref{SPDE}), there exist a positive stopping time $\tau > 0$ and a local martingale solution $Y$ to (\ref{SPDE}) with $Y_0 = y$ and lifetime $\tau$ such that $Y^{\tau} \in \calm$ up to an evanescent set. Since $U$ is an open subset of $H$ and the sample paths of $Y$ are continuous with respect to $\| \cdot \|_H$, we may assume that $Y^{\tau} \in U \cap \calm$ up to an evanescent set. Now, we define the continuous $\bbr^m$-valued process $X := \psi(Y)$. Then we have $X^{\tau} \in V$, and since $\psi$ is linear, the process $X$ is a local weak solution to the SDE
\begin{align*}
\left\{
\begin{array}{rcl}
dX_t & = & (\psi_* L)(X_t) dt + (\psi_* A)(X_t) d W_t
\\ X_0 & = & x
\end{array}
\right.
\end{align*}
with lifetime $\tau$. The sample paths of $Y^{\tau} = \phi(X^{\tau})$ are continuous with respect to $\| \cdot \|_G$, because $\phi \in C(V;G)$. Since also $\phi \in C^2(V;H)$, by It\^{o}'s formula (see \cite[Thm. 2.3.1]{fillnm}) we obtain that the process $Y$ is a local martingale solution to the SPDE
\begin{align*}
\left\{
\begin{array}{rcl}
dY_t & = & \big( (\phi_* \psi_* L)(Y_t) + \frac{1}{2} \sum_{j=1}^{\infty} \phi_{**}(\psi_* A^j, \psi_* A^j)(Y_t) \big) dt + (\phi_* \psi_* A)(Y_t) dW_t
\\ Y_0 & = & y
\end{array}
\right.
\end{align*}
with lifetime $\tau$. On the other hand, the process $Y$ is a local martingale solution to the original SPDE (\ref{SPDE}) with $Y_0 = y$ and lifetime $\tau$. We set $\calm_U := U \cap \calm$. By Lemmas \ref{lemma-push} and \ref{lemma-push-cont}, the mappings
\begin{align*}
&\phi_* \psi_* A|_{\calm_U} : (\calm_U,\| \cdot \|_G) \to (\ell^2(H),\| \cdot \|_{\ell^2(H)}),
\\ &\phi_* \psi_* L|_{\calm_U} + \frac{1}{2} \sum_{j=1}^{\infty} \phi_{**}(\psi_* A^j|_{\calm_U}, \psi_* A^j|_{\calm_U}) : (\calm_U,\| \cdot \|_G) \to (H,\| \cdot \|_H)
\end{align*}
are continuous. Therefore, and since the sample paths of $Y^{\tau}$ are continuous with respect to $\| \cdot \|_G$, we may apply Lemma \ref{lemma-compare-2}, which gives us
\begin{align*}
A^j|_{\calm_U} &= \phi_* \psi_* A^j|_{\calm_U}, \quad j \in \bbn,
\\ L|_{\calm_U} &= \phi_* \psi_* L|_{\calm_U} + \frac{1}{2} \sum_{j=1}^{\infty} \phi_{**}( \psi_* A^j|_{\calm_U}, \psi_* A^j|_{\calm_U} ).
\end{align*}
Therefore, by Proposition \ref{prop-tangential} we deduce that 
\begin{align*}
A^j|_{\calm_U} \in \Gamma(T \calm_U), \quad j \in \bbn.
\end{align*}
Furthermore, using Proposition \ref{prop-second-coordinate-zero} we obtain
\begin{align*}
&L|_{\calm_U} - \frac{1}{2} \sum_{j=1}^{\infty} \phi_{**}( \psi_* A^j|_{\calm_U}, \psi_* A^j|_{\calm_U} )
\\ &= \phi_* \psi_* \bigg( L|_{\calm_U} - \frac{1}{2} \sum_{j=1}^{\infty} \phi_{**}( \psi_* A^j|_{\calm_U}, \psi_* A^j|_{\calm_U} ) \bigg).
\end{align*}
Therefore, by Proposition \ref{prop-tangential} we deduce that 
\begin{align*}
L|_{\calm_U} - \frac{1}{2} \sum_{j=1}^{\infty} \phi_{**}( \phi_*^{-1} A^j|_{\calm_U}, \phi_*^{-1} A^j|_{\calm_U} ) \in \Gamma(T \calm_U).
\end{align*}
Since the point $y_0 \in \calm$ chosen at the beginning of this proof was arbitrary, we deduce (\ref{tang-A}) and (\ref{tang-L}).

\noindent(ii) $\Rightarrow$ (iii): Let $y_0 \in \calm$ be arbitrary, and let $\phi : V \to U \cap \calm$ be an arbitrary local parametrization around $y_0$. By Proposition \ref{prop-manifold-scaled} we have $\phi \in C(V;G) \cap C^2(V;H)$. We set $x_0 := \phi^{-1}(y_0) \in V$. By Lemma \ref{lemma-push} the mappings
\begin{align*}
a &:= \phi_*^{-1} A|_{\calm_U} : V \to \ell^2(\bbr^m)
\\ \ell &:= \phi_*^{-1} \bigg( L|_{\calm_U} - \frac{1}{2} \sum_{j=1}^{\infty} \phi_{**} ( a^j, a^j ) \bigg) : V \to \bbr^m
\end{align*}
are continuous. Therefore, by Lemma \ref{lemma-V-SDE} the open set $V$ is locally invariant for the SDE (\ref{SDE}). Hence, there exist a stopping time $\tau > 0$ and a local weak solution $X$ to (\ref{SDE}) with $X_0 = x_0$ and lifetime $\tau$ such that $X^{\tau} \in V$ up to an evanescent set. We define the $\calm$-valued process $Y := \phi(X)$. Then we have $Y^{\tau} \in U \cap \calm$. Furthermore, since $\phi \in C(V;G)$, the sample paths of $Y^{\tau}$ are continuous with respect to $\| \cdot \|_G$. Taking into account Lemma \ref{lemma-push}, the mapping
\begin{align}\label{A-id-proof-Ito}
A|_{\calm_U} = \phi_* \phi_*^{-1} A|_{\calm_U} = \phi_* a : (\calm_U,\| \cdot \|_H) \to (\ell^2(H),\| \cdot \|_{\ell^2(H)})
\end{align}
is continuous. Furthermore, taking into account Lemma \ref{lemma-push} we have
\begin{align*}
L|_{\calm_U} - \frac{1}{2} \sum_{j=1}^{\infty} \phi_{**}( a^j,a^j ) = \phi_* \phi_*^{-1} \bigg( L|_{\calm_U} - \frac{1}{2} \sum_{j=1}^{\infty} \phi_{**}( a^j,a^j ) \bigg) = \phi_* \ell,
\end{align*}
and hence, by Lemma \ref{lemma-push} the mapping
\begin{align}\label{L-id-proof-Ito}
L|_{\calm_U} = \phi_* \ell + \frac{1}{2} \sum_{j=1}^{\infty} \phi_{**}( a^j, a^j ) : (\calm_U,\| \cdot \|_H) \to (H,\| \cdot \|_H)
\end{align}
is continuous. Moreover, by It\^{o}'s formula (see \cite[Thm. 2.3.1]{fillnm}) and relations (\ref{A-id-proof-Ito}), (\ref{L-id-proof-Ito}) we obtain that $Y^{\tau}$ is a local martingale solution to the SPDE
\begin{align*}
dY_t &= \bigg( (\phi_* \ell)(Y_t) + \frac{1}{2} \sum_{j=1}^{\infty} \phi_{**}(a^j,a^j)(Y_t) \bigg) dt + (\phi_* a)(Y_t) dW_t
\\ &= L(Y_t) dt + A(Y_t) dW_t,
\end{align*}
which is just the original SPDE (\ref{SPDE}), with $Y_0 = y_0$ and lifetime $\tau$. This proves that $\calm$ is locally invariant for the SPDE (\ref{SPDE}).

\noindent(iii) $\Rightarrow$ (i): This implication is obvious.
\end{proof}

\begin{proof}[Proof of Proposition \ref{prop-SPDE}]
This follows from inspecting the proof of the implication (ii) $\Rightarrow$ (iii) from Theorem \ref{thm-SPDE}.
\end{proof}

Now, let $H_0$ be another separable Hilbert space such that $(G,H_0,H)$ are continuously embedded, and suppose that $\calm$ is a $(G,H_0,H)$-submanifold of class $C^2$.

\begin{theorem}\label{thm-main-2}
Suppose that for each $j \in \bbn$ we have $A^j \in C(G;H_0)$ with an extension $A^j \in C^1(H_0;H)$, and that for each $y \in \calm$ the series $\sum_{j=1}^{\infty} D A^j(y) A^j(y)$ converges in $H$. Then the following statements are equivalent:
\begin{enumerate}
\item[(i)] The submanifold $\calm$ is locally invariant for the SPDE (\ref{SPDE}).

\item[(ii)] We have
\begin{align}\label{tang-A-2}
&A^j|_{\calm} \in \Gamma(T \calm), \quad j \in \bbn,
\\ \label{tang-L-2} &L|_{\calm} - \frac{1}{2} \sum_{j=1}^{\infty} D A^j \cdot A^j|_{\calm} \in \Gamma(T \calm).
\end{align}

\item[(iii)] The mappings (\ref{map-1-main}) and (\ref{map-2-main}) are continuous, and for each $y_0 \in \calm$ there exists a local martingale solution $Y$ to the SPDE (\ref{SPDE}) with $Y_0 = y_0$ and lifetime $\tau$ such that $Y^{\tau} \in \calm$ up to an evanescent set and the sample paths of $Y^{\tau}$ are continuous with respect to $\| \cdot \|_G$.
\end{enumerate}
\end{theorem}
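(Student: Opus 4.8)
The plan is to reduce the assertion to Theorem \ref{thm-SPDE}. Since a $(G,H_0,H)$-submanifold of class $C^2$ is in particular a $(G,H)$-submanifold of class $C^2$, Theorem \ref{thm-SPDE} applies directly and already yields the equivalence of (i), (iii), and the \emph{bracket condition}
\begin{align*}
A^j|_{\calm} \in \Gamma(T\calm) \ \ (j\in\bbn), \qquad [L|_{\calm}]_{\Gamma(T\calm)} - \frac{1}{2}\sum_{j=1}^{\infty}[A^j|_{\calm},A^j|_{\calm}]_{\calm} = [0]_{\Gamma(T\calm)}.
\end{align*}
Moreover, condition (iii) here is literally identical to condition (iii) of Theorem \ref{thm-SPDE}. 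Hence it suffices to show that, under the present smoothness hypotheses and assuming (\ref{tang-A-2}), the bracket condition (\ref{tang-L}) is equivalent to the Stratonovich condition (\ref{tang-L-2}). This is exactly the precise form of Remark \ref{rem-Strat-2}, now available because the intermediate space $H_0$ lets us differentiate the $A^j$ even though they need not be smooth on $G$.

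The key step is a local computation via Proposition \ref{prop-decomp-Damir-scale}. Fix $y_0 \in \calm$, choose a local parametrization $\phi : V \to U \cap \calm$ around $y_0$, and set $\calm_U := U \cap \calm$. For each $j$ we invoke Proposition \ref{prop-decomp-Damir-scale}(1) with $A := A^j \in C^1(H_0;H)$ (the given extension) and $B := A^j \in C(G;H_0)$; this is legitimate since $A^j|_{\calm} \in \Gamma(T\calm)$ by (\ref{tang-A-2}). Writing $a^j := \phi_*^{-1} A^j|_{\calm_U}$, the decomposition (\ref{zerl-general}) reads
\begin{align*}
DA^j \cdot A^j|_{\calm_U} = \phi_*(Da^j \cdot a^j) + \phi_{**}(a^j,a^j),
\end{align*}
where $\phi_*(Da^j \cdot a^j) \in \Gamma(T\calm_U)$, while $\phi_{**}(a^j,a^j)$ is precisely a local representative of $[A^j|_{\calm},A^j|_{\calm}]_{\calm}$.

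It remains to sum over $j$ and pass to the quotient. Since $A : G \to \ell^2(H)$ is continuous with $A^j|_{\calm} \in \Gamma(T\calm)$, Lemma \ref{lemma-push}(3) shows that $a := \phi_*^{-1} A|_{\calm_U} : V \to \ell^2(\bbr^m)$ is continuous, so by Lemma \ref{lemma-push}(2) the series $\sum_{j=1}^{\infty}\phi_{**}(a^j,a^j)$ converges; in particular the bracket series in (\ref{tang-L}) is well-defined. By hypothesis $\sum_{j=1}^{\infty} DA^j(y)A^j(y)$ converges in $H$ for each $y \in \calm$, so subtracting the two convergent series in the decomposition shows that $\sum_{j=1}^{\infty}\phi_*(Da^j \cdot a^j)$ converges as well; as each summand lies in $\Gamma(T\calm_U)$ and each tangent space $T_y\calm$ is finite-dimensional, hence closed, the limit again lies in $\Gamma(T\calm_U)$. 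Passing to $A(\calm_U)/\Gamma(T\calm_U)$ therefore gives
\begin{align*}
\bigg[\sum_{j=1}^{\infty} DA^j \cdot A^j|_{\calm_U}\bigg]_{\Gamma(T\calm_U)} = \sum_{j=1}^{\infty}[A^j|_{\calm},A^j|_{\calm}]_{\calm},
\end{align*}
whence
\begin{align*}
[L|_{\calm}]_{\Gamma(T\calm)} - \frac{1}{2}\sum_{j=1}^{\infty}[A^j|_{\calm},A^j|_{\calm}]_{\calm} = \bigg[L|_{\calm} - \frac{1}{2}\sum_{j=1}^{\infty} DA^j \cdot A^j|_{\calm}\bigg]_{\Gamma(T\calm)}.
\end{align*}
The left-hand side vanishes exactly when the bracketed expression on the right belongs to $\Gamma(T\calm)$, which is (\ref{tang-L-2}). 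This establishes the equivalence of (\ref{tang-L}) and (\ref{tang-L-2}) and completes the reduction.

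I expect the only genuine obstacle to be the bookkeeping in the last paragraph: justifying the interchange of the infinite summation with the passage to the quotient. This rests squarely on the standing convergence hypothesis for $\sum_{j} DA^j(y)A^j(y)$ together with the closedness of the finite-dimensional tangent spaces; everything else is a direct appeal to Proposition \ref{prop-decomp-Damir-scale} and Lemma \ref{lemma-push}.
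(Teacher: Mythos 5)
Your proposal is correct and follows essentially the same route as the paper: the paper's own proof consists precisely of invoking the decomposition (\ref{zerl-general}) from Proposition \ref{prop-decomp-Damir-scale} to obtain $[A^j|_{\calm},A^j|_{\calm}]_{\calm} = [D A^j \cdot A^j]_{\Gamma(T \calm)}$ for each $j$, and then citing Theorem \ref{thm-SPDE}. The only difference is that you spell out the summation-over-$j$ bookkeeping (convergence of the bracket series via Lemma \ref{lemma-push}, and closedness of the finite dimensional tangent spaces to keep the limit of $\sum_j \phi_*(Da^j \cdot a^j)$ tangential), which the paper leaves implicit.
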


\begin{proof}
By the decomposition (\ref{zerl-general}) from Proposition \ref{prop-decomp-Damir-scale} we have
\begin{align*}
[A^j|_{\calm},A^j|_{\calm}]_{\calm} = [D A^j \cdot A^j]_{\Gamma(T \calm)}, \quad j \in \bbn.
\end{align*}
Hence, the result is a consequence of Theorem \ref{thm-SPDE}.
\end{proof}

In the next result we present sufficient conditions for local invariance under the assumption that the volatilities $A^j$, $j \in \bbn$ have a quasi-linear structure. Recall that for any $z \in \calm$ the space $\Gamma_z(T \calm)$ denotes the space of all local vector fields on $\calm$ around $z$; see Definition \ref{def-local-vector-field}.

\begin{theorem}\label{thm-main-3}
We suppose that for each $j \in \bbn$ there exists a continuous mapping $\bar{A}^j : G \times G \to H_0$ such that
\begin{align*}
A^j(y) = \bar{A}^j(y,y), \quad y \in G
\end{align*}
having a continuous extension $\bar{A}^j : H_0 \times G \to H$ such that $\bar{A}_z^j := \bar{A}^j(\cdot,z)$ belongs to $L(H_0,H)$ for each $z \in G$. Furthermore, we assume that for each $y \in \calm$ the series $\sum_{j=1}^{\infty} \bar{A}^j(A^j(y),y)$ converges in $H$, and that
\begin{align}\label{A-vf}
&\bar{A}_z^j|_{\calm} \in \Gamma_z(T \calm), \quad z \in \calm, \quad j \in \bbn,
\\ \label{L-vf-A2} &L|_{\calm} - \frac{1}{2} \sum_{j=1}^{\infty} \bar{A}^j(A^j(\cdot),\cdot)|_{\calm} \in \Gamma(T \calm).
\end{align}
Then mappings (\ref{map-1-main}) and (\ref{map-2-main}) are continuous, and for each $y_0 \in \calm$ there exists a local martingale solution $Y$ to the SPDE (\ref{SPDE}) with $Y_0 = y_0$ and lifetime $\tau$ such that $Y^{\tau} \in \calm$ up to an evanescent set and the sample paths of $Y^{\tau}$ are continuous with respect to $\| \cdot \|_G$. In particular, the submanifold $\calm$ is locally invariant for the SPDE (\ref{SPDE}).
\end{theorem}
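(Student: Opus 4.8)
The plan is to verify that conditions (\ref{tang-A}) and (\ref{tang-L}) of Theorem \ref{thm-SPDE} hold, and then to invoke the implication (ii) $\Rightarrow$ (iii) of that theorem. Its conclusion (iii) is precisely the assertion we must prove: continuity of the mappings (\ref{map-1-main}) and (\ref{map-2-main}) together with the existence, for each $y_0 \in \calm$, of a local martingale solution $Y$ with $Y^{\tau} \in \calm$ up to an evanescent set and $\| \cdot \|_G$-continuous sample paths. Thus the whole problem reduces to translating the quasi-linear hypotheses (\ref{A-vf}) and (\ref{L-vf-A2}) into the tangency conditions of Theorem \ref{thm-SPDE}.

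First I would check (\ref{tang-A}). For $y \in \calm$ we have $A^j(y) = \bar{A}^j(y,y) = \bar{A}_y^j(y)$, and since $\bar{A}_y^j|_{\calm} \in \Gamma_y(T \calm)$ by (\ref{A-vf}), evaluating this local vector field at its base point $y$ gives $\bar{A}_y^j(y) \in T_y \calm$. Hence $A^j|_{\calm} \in \Gamma(T \calm)$ for each $j \in \bbn$, which is (\ref{tang-A}). In particular, since $A : G \to \ell^2(H)$ is continuous, the restriction $A|_{\calm_U} : (\calm_U,\| \cdot \|_G) \to \ell^2(H)$ is continuous and tangential, so for any local parametrization $\phi : V \to U \cap \calm$ the mapping $a := \phi_*^{-1} A|_{\calm_U} \in C(V;\ell^2(\bbr^m))$ is well-defined by Lemma \ref{lemma-push}, and $\sum_{j=1}^{\infty} \phi_{**}(a^j,a^j)$ converges and is continuous.

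The core step is the per-index identity
\begin{align*}
[A^j|_{\calm}, A^j|_{\calm}]_{\calm} = \big[ \bar{A}^j(A^j(\cdot),\cdot)|_{\calm} \big]_{\Gamma(T \calm)}, \quad j \in \bbn,
\end{align*}
which I would obtain from the decomposition (\ref{zerl-1-linear}) of Proposition \ref{prop-decomp-Damir-scale}, applied with $A = B = A^j$ and the linear-in-the-first-variable extension $\bar{A}^j$. That decomposition reads $\bar{A}^j(A^j(\cdot),\cdot)|_{\calm_U} = \phi_*(D_1 \bar{a}^j \cdot a^j) + \phi_{**}(a^j,a^j)$, and since $\phi_*(D_1 \bar{a}^j \cdot a^j) \in \Gamma(T \calm_U)$, the local representative $\phi_{**}(a^j,a^j)$ of $[A^j|_{\calm},A^j|_{\calm}]_{\calm}$ differs from $\bar{A}^j(A^j(\cdot),\cdot)|_{\calm_U}$ by a vector field, which is exactly the claimed identity. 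Summing over $j$ — the series $\sum_{j} \bar{A}^j(A^j(\cdot),\cdot)|_{\calm}$ converges by hypothesis and $\sum_j \phi_{**}(a^j,a^j)$ converges by the previous paragraph, so $\sum_j \phi_*(D_1 \bar{a}^j \cdot a^j)$ converges with tangential limit because each $T_y \calm$ is closed — and combining with (\ref{L-vf-A2}) yields $[L|_{\calm}]_{\Gamma(T\calm)} - \frac{1}{2} \sum_j [A^j|_{\calm},A^j|_{\calm}]_{\calm} = [0]_{\Gamma(T\calm)}$, i.e. (\ref{tang-L}).

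The main obstacle is the precise matching of the tangency hypothesis in Proposition \ref{prop-decomp-Damir-scale}: that proposition assumes $\bar{A}^j(\cdot,z)|_{\calm_U} \in \Gamma(T \calm_U)$ for \emph{every} $z \in \calm_U$ (tangentiality on the whole chart), whereas (\ref{A-vf}) only furnishes the weaker local statement $\bar{A}_z^j|_{\calm} \in \Gamma_z(T\calm)$ for each fixed $z$. I would resolve this by observing that the decomposition is only ever needed on the diagonal $z = y$: inspecting the curve argument in the proof of Proposition \ref{prop-decomp-Damir-scale}, with $\gamma(t) = \phi(x + t a^j(x))$, the only tangency invoked is that of $\bar{A}^j(\cdot,y)$ along $\gamma$ near $t=0$, i.e. in a neighborhood of $y$, which is exactly what $\bar{A}_y^j \in \Gamma_y(T\calm)$ provides; moreover $\bar{a}^j(x,x) = D\phi(x)^{-1}\bar{A}^j(y,y) = a^j(x)$ on the diagonal, so the curve computation reproduces (\ref{zerl-1-linear}) at $z=y$ verbatim. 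Once (\ref{tang-A}) and (\ref{tang-L}) are secured, the implication (ii) $\Rightarrow$ (iii) of Theorem \ref{thm-SPDE} finishes the proof.
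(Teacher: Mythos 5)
Your proposal is correct and follows the paper's own proof: verify (\ref{tang-A}) directly from (\ref{A-vf}) by evaluating each local vector field $\bar{A}^j_y|_{\calm}$ at its base point, use the decomposition (\ref{zerl-1-linear}) of Proposition \ref{prop-decomp-Damir-scale} to obtain the per-index identity $[A^j|_{\calm},A^j|_{\calm}]_{\calm} = [\bar{A}^j(A^j(\cdot),\cdot)|_{\calm}]_{\Gamma(T\calm)}$, so that (\ref{L-vf-A2}) becomes (\ref{tang-L}), and then invoke the implication (ii) $\Rightarrow$ (iii) of Theorem \ref{thm-SPDE}. Your additional observation --- that Proposition \ref{prop-decomp-Damir-scale} formally requires tangentiality of $\bar{A}^j(\cdot,z)$ on the whole chart for every $z$, whereas (\ref{A-vf}) only provides it in a $z$-dependent neighborhood, and that this suffices because the identity is used only on the diagonal and the underlying curve argument stays near the base point --- is a legitimate clarification of a step the paper's proof passes over in silence, not a deviation from its method.
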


\begin{proof}
Note that condition (\ref{A-vf}) implies (\ref{tang-A}). Furthermore, using the decomposition (\ref{zerl-1-linear}) from Proposition \ref{prop-decomp-Damir-scale} we obtain
\begin{align*}
[ A^j|_{\calm}, A^j|_{\calm} ]_{\calm} = [ \bar{A}^j(A^j(\cdot),\cdot) ]_{\Gamma(T \calm)}, \quad j \in \bbn,
\end{align*}
and hence, condition (\ref{L-vf-A2}) is equivalent to (\ref{tang-L}). Consequently, applying Theorem \ref{thm-SPDE} completes the proof.
\end{proof}

\begin{remark}\label{rem-difference}
Suppose that conditions (\ref{A-vf}) and (\ref{L-vf-A2}) from Theorem \ref{thm-main-3} are fulfilled such that $\bar{A}^j$ even has an extension $\bar{A}^j \in C^1(H_0 \times H_0;H)$ for each $j \in \bbn$. Then the submanifold $\calm$ is locally invariant for the SPDE (\ref{SPDE}), and the mapping $A^j \in C(G;H_0)$ has an extension $A^j \in C^1(H_0;H)$ for each $j \in \bbn$. If for each $y \in \calm$ the series $\sum_{j=1}^{\infty} D A^j(y) A^j(y)$ converges in $H$, then by Theorem \ref{thm-main-2} the invariance condition (\ref{tang-L-2}) is satisfied as well. The vector fields in (\ref{tang-L-2}) and (\ref{L-vf-A2}) do not, in general, coincide. Using Proposition \ref{prop-decomp-Damir-scale}, we can determine their difference by using local coordinates. Namely, if $\phi : V \to U \cap \calm$ is a local parametrization, then by the decomposition (\ref{zerl-2-linear}) we have
\begin{align*}
\bigg( L - \frac{1}{2} \sum_{j=1}^{\infty} \bar{A}^j(A^j(\cdot),\cdot) \bigg)\bigg|_{\calm_U} - \bigg( L - \frac{1}{2} \sum_{j=1}^{\infty} D A^j \cdot A^j \bigg)\bigg|_{\calm_U} = \frac{1}{2} \sum_{j=1}^{\infty} \phi_* ( D_2 \bar{a}^j \cdot a^j ),
\end{align*}
where the notation is analogous to that in Proposition \ref{prop-decomp-Damir-scale}. 
\end{remark}

We conclude this section by indicating a result analogous to Theorem \ref{thm-SPDE} for deterministic PDEs of the kind
\begin{align}\label{PDE-Y}
\left\{
\begin{array}{rcl}
dY_t & = & K(Y_t) dt
\\ Y_0 & = & y_0
\end{array}
\right.
\end{align}
with a continuous mapping $K : G \to H$. Here $G$ and $H$ may be Banach spaces, and $\calm$ only needs to be a $(G,H)$-submanifold of class $C^1$. The proof of following result is similar to that of Theorem \ref{thm-SPDE}; indeed the arguments are even simpler.

\begin{theorem}\label{thm-PDE}
The following statements are equivalent:
\begin{enumerate}
\item[(i)] The submanifold $\calm$ is locally invariant for the PDE (\ref{PDE-Y}).

\item[(ii)] We have $K|_{\calm} \in \Gamma(T \calm)$.

\item[(iii)] The mapping $K|_{\calm} : (\calm,\| \cdot \|_H) \to (H,\| \cdot \|_H)$ is continuous, and for each $y_0 \in \calm$ there exists a local solution $Y : [0,T] \to G$ to the PDE (\ref{PDE-Y}) with $Y_0 = y_0$ for some deterministic time $T > 0$ such that $Y \in \calm$ and $Y$ is continuous with respect to $\| \cdot \|_G$.
\end{enumerate}
\end{theorem}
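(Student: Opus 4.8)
The plan is to mirror the proof of Theorem~\ref{thm-SPDE}, exploiting the fact that the deterministic equation~(\ref{PDE-Y}) carries no stochastic integral: since the reducing finite-dimensional process will be of finite variation (indeed $C^1$), the chain rule replaces It\^o's formula and produces \emph{no} second-order correction term. Consequently the second-order pushforwards $\phi_{**}$ and Proposition~\ref{prop-second-coordinate-zero} play no role, the quadratic-variation comparison of Lemma~\ref{lemma-compare-2} is replaced by the purely deterministic Lemma~\ref{lemma-compare-1}, and $C^1$-regularity of $\calm$ suffices. As usual, (iii)~$\Rightarrow$~(i) is immediate, so the content lies in (i)~$\Rightarrow$~(ii) and (ii)~$\Rightarrow$~(iii).

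For (i)~$\Rightarrow$~(ii), I would fix $y_0 \in \calm$ and choose, as in Proposition~\ref{prop-linear-inverse}, a local parametrization $\phi : V \to U \cap \calm$ around $y_0$ together with a bounded linear $\psi$ inverting it and satisfying~(\ref{inverse-phi}); in the Banach setting the inner-product functionals are replaced by dual functionals $\zeta_i \in H'$ furnished by Hahn--Banach. For an arbitrary $y \in U \cap \calm$, local invariance yields a $G$-continuous local solution $Y$ with $Y_0 = y$ and $Y^{\tau} \in U \cap \calm$. Setting $X := \psi(Y)$, and using that $\psi$ is bounded linear and hence commutes with integration, $X$ solves the $\bbr^m$-valued ODE with drift $\psi_* K$; reconstructing $Y = \phi(X)$, the ordinary chain rule shows that $Y$ solves~(\ref{PDE-Y}) with drift $\phi_* \psi_* K$. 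Comparing this with the original drift $K$ and applying Lemma~\ref{lemma-compare-1} to the continuous map $K|_{\calm_U} - \phi_* \psi_* K|_{\calm_U}$ gives $K|_{\calm_U} = \phi_* \psi_* K|_{\calm_U}$, whence $K|_{\calm_U} \in \Gamma(T \calm_U)$ by Proposition~\ref{prop-tangential}. As $y_0$ was arbitrary, $K|_{\calm} \in \Gamma(T \calm)$.

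For (ii)~$\Rightarrow$~(iii), I would fix $y_0 \in \calm$, choose a local parametrization $\phi : V \to U \cap \calm$ around $y_0$, and set $k := \phi_*^{-1} K|_{\calm_U} : V \to \bbr^m$, which is continuous by the argument of Lemma~\ref{lemma-push}. A deterministic analogue of Lemma~\ref{lemma-V-SDE}---truncate $k$ to a compact convex neighborhood of $x_0 := \phi^{-1}(y_0)$ via the orthogonal projection in $\bbr^m$ and invoke Peano's existence theorem---produces a local $C^1$-solution $X$ of $\dot X = k(X)$ with $X_0 = x_0$ staying in $V$. Then $Y := \phi(X)$ lies in $U \cap \calm$, is continuous in $\| \cdot \|_G$ because $\phi \in C(V;G)$ by Proposition~\ref{prop-manifold-scaled}, and by the chain rule satisfies $\dot Y = (\phi_* k)(Y) = K(Y)$, using $\phi_* \phi_*^{-1} K = K$ from Lemma~\ref{lemma-push}. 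Continuity of $K|_{\calm} : (\calm,\| \cdot \|_H) \to H$ then follows from $K|_{\calm_U} = \phi_* k$ and the continuity of pushforwards, establishing~(iii).

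The only genuine subtlety, and the step I would check most carefully, is the passage from the Hilbert-space tools to the Banach-space setting announced in the statement: Proposition~\ref{prop-linear-inverse} must be reproved with Hahn--Banach dual functionals in place of the inner products $\langle \zeta_i, \cdot \rangle_H$, and one must confirm that the monotone-class and continuity argument in Lemma~\ref{lemma-compare-1} remains valid for Banach-valued $K$ (it does, since it uses only continuous linear functionals on $H$). Everything else is a strict simplification of the stochastic proof, so I expect no real difficulty beyond this bookkeeping.
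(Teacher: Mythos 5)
Your proposal is correct and is essentially the proof the paper has in mind: the paper omits an explicit argument for Theorem~\ref{thm-PDE}, stating only that it is ``similar to that of Theorem~\ref{thm-SPDE}; indeed the arguments are even simpler,'' and your proof is precisely that simplification --- chain rule in place of It\^{o}'s formula, no $\phi_{**}$ correction terms, Lemma~\ref{lemma-compare-1} in place of Lemma~\ref{lemma-compare-2}, Peano's theorem in place of weak existence, with the correct observation that only $C^1$-regularity of $\calm$ is needed. Your closing remark also identifies the right bookkeeping point for the Banach setting, namely that Proposition~\ref{prop-linear-inverse} needs dual functionals $\zeta_i \in H'$ rather than inner products, while Lemma~\ref{lemma-compare-1} already argues via continuous linear functionals and carries over unchanged.
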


\section{Quasi-semilinear stochastic partial differential equations}\label{sec-quasi-semilinear}

In this section we investigate invariance of submanifolds for quasi-semilinear SPDEs. It is organized as follows: In Section \ref{sub-sec-qsl} we treat the general situation, and in Section \ref{sub-sec-semilinear} we draw consequences for semilinear SPDEs.

\subsection{The general situation}\label{sub-sec-qsl}

Let $(G,H)$ be separable Hilbert spaces with continuous embedding, and let $L : G \to H$ and $A : G \to \ell^2(H)$ be continuous mappings. Throughout this section, we assume that the following assumption is satisfied.

\begin{assumption}[Quasi-semilinearity]\label{ass-quasi-semi-lin}
We suppose that the following conditions are fulfilled:
\begin{enumerate}
\item $G$ is a dense subspace of $H$.

\item There exist a continuous mapping $\bar{L} : G \times H \to H$ and a continuous mapping $\alpha : H \to H$ such that
\begin{align*}
L(y) = \bar{L}(y,y) + \alpha(y), \quad y \in G,
\end{align*}
and for each $z \in H$ the mapping
\begin{align*}
\bar{L}_z := \bar{L}(\cdot,z) : G \to H
\end{align*}
extends to a closed operator $\bar{L}_z : H \supset D(\bar{L}_z) \to H$.

\item There exist a continuous mapping $\bar{A} : G \times H \to \ell^2(H)$ and a continuous mapping $\sigma : H \to \ell^2(H)$ such that
\begin{align*}
A(y) = \bar{A}(y,y) + \sigma(y), \quad y \in G,
\end{align*}
and for each $z \in H$ and each $j \in \bbn$ the mapping
\begin{align*}
\bar{A}_z^j := \bar{A}^j(\cdot,z) : G \to H
\end{align*}
extends to a closed operator $\bar{A}_z^j : H \supset D(\bar{A}_z^j) \to H$.

\item For each $z \in H$ we have
\begin{align}\label{G-intersections}
G = D(\bar{L}_z) \cap \bigg( \bigcap_{j=1}^{\infty} D(\bar{A}_z^j) \bigg).
\end{align}

\item There is a dense subspace $H_0 \subset H$ such that for each $z \in H$ we have
\begin{align*}
H_0 \subset D(\bar{L}_z^*) \cap \bigg( \bigcap_{j=1}^{\infty} D(\bar{A}_z^{j,*}) \bigg),
\end{align*}
and for each $\zeta \in H_0$ we have $\bar{A}_z^{*} \zeta := (\bar{A}_z^{j,*} \zeta)_{j \in \bbn} \in \ell^2(H)$, and the mappings 
\begin{align}\label{map-cont-1}
&H \to H, \quad z \mapsto \bar{L}_z^* \zeta,
\\ \label{map-cont-2} &H \to \ell^2(H), \quad z \mapsto \bar{A}_z^{*} \zeta
\end{align}
are continuous.
\end{enumerate}
\end{assumption}

In view of condition (5), recall that for a densely defined operator $A : H \supset D(A) \rightarrow H$ the adjoint operator $A^* : H \supset D(A^*) \rightarrow H$ is defined on the subspace
\begin{align}\label{def-domain-adjoint}
D(A^*) := \{ z \in H : \xi \mapsto \langle A \xi, z \rangle_H \text{ is continuous on } D(A) \},
\end{align}
and that it is characterized by the property
\begin{align}\label{adjoint-char}
\la Ay,z \ra_H = \la y,A^* z \ra_H \quad \text{for all $y \in D(A)$ and $z \in D(A^*)$.}
\end{align}

\begin{proposition}\cite[Thm. 13.12]{Rudin}\label{prop-A-adj-dicht}
Let $A : H \supset D(A) \rightarrow H$ be densely defined and closed. Then $A^*$ is densely defined and we have $A = A^{**}$.
\end{proposition}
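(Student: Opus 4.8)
The plan is to pass to graphs inside the product Hilbert space $H \times H$ and to exploit the rotation operator that relates the graph of an operator to the graph of its adjoint. Equip $H \times H$ with the inner product $\la (x_1,y_1),(x_2,y_2) \ra := \la x_1,x_2 \ra_H + \la y_1,y_2 \ra_H$, write $\Gamma(A) := \{ (x,Ax) : x \in D(A) \}$ for the graph of $A$, and introduce the unitary operator
\begin{align*}
V : H \times H \to H \times H, \quad V(x,y) := (-y,x),
\end{align*}
which satisfies $V^2 = -\Id$ and, being unitary, obeys $(VS)^{\perp} = V(S^{\perp})$ for every subset $S$. The first key step is to verify the graph identity
\begin{align*}
\Gamma(A^*) = \big( V \Gamma(A) \big)^{\perp} = V \big( \Gamma(A)^{\perp} \big).
\end{align*}
This follows by unwinding the characterization (\ref{adjoint-char}): a pair $(z,w)$ is orthogonal to every $(-Ax,x)$ precisely when $\la w,x \ra_H = \la z, Ax \ra_H$ for all $x \in D(A)$, which by (\ref{def-domain-adjoint}) and (\ref{adjoint-char}) means exactly $z \in D(A^*)$ and $w = A^* z$. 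Since an orthogonal complement is automatically closed, this already shows in passing that $A^*$ is a closed operator, irrespective of whether $A$ is closed.

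Next I would establish that $D(A^*)$ is dense, and this is where the hypothesis that $A$ is closed enters. Because $\Gamma(A)$ is closed, $V\Gamma(A)$ is a closed subspace of $H \times H$, so $\big( (V\Gamma(A))^{\perp} \big)^{\perp} = V\Gamma(A)$. To see $D(A^*)^{\perp} = \{ 0 \}$, take $v \in H$ with $v \perp D(A^*)$; then $(v,0)$ is orthogonal to $\Gamma(A^*) = (V\Gamma(A))^{\perp}$, hence $(v,0) \in V\Gamma(A)$. By the definition of $V$ this means $(v,0) = (-Ax,x)$ for some $x \in D(A)$, so the second coordinate forces $x = 0$ and therefore $v = -A(0) = 0$. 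Thus $D(A^*)$ is dense, and since $A^*$ is densely defined its adjoint $A^{**}$ is a well-defined (and, by the remark above, closed) operator.

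Finally I would prove $A = A^{**}$ at the level of graphs. Applying the graph identity to the densely defined operator $A^*$ gives $\Gamma(A^{**}) = (V\Gamma(A^*))^{\perp}$. Using $\Gamma(A^*) = V(\Gamma(A)^{\perp})$ together with $V^2 = -\Id$ and the fact that a linear subspace is invariant under multiplication by $-1$, I compute
\begin{align*}
V \Gamma(A^*) = V^2 \big( \Gamma(A)^{\perp} \big) = - \Gamma(A)^{\perp} = \Gamma(A)^{\perp},
\end{align*}
whence $\Gamma(A^{**}) = (\Gamma(A)^{\perp})^{\perp} = \overline{\Gamma(A)} = \Gamma(A)$, the last equality again by closedness of $A$. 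Since equality of graphs is equality of operators, $A = A^{**}$.

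The only genuinely delicate point is the density argument for $D(A^*)$, which is exactly the place where closedness of $A$ is indispensable: without it, $V\Gamma(A)$ need not be closed, the double-complement would only recover $\overline{V\Gamma(A)}$, and the crucial conclusion $(v,0) \in V\Gamma(A)$ could fail. Everything else is bookkeeping with the unitary $V$. If one works over $\bbc$ rather than $\bbr$, one must additionally track the complex conjugation implicit in (\ref{adjoint-char}), but the choice $V(x,y) = (-y,x)$ makes the graph identity go through verbatim in both cases, so the argument is insensitive to the scalar field.
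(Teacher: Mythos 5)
Your proof is correct. The paper gives no proof of this proposition at all---it simply cites Rudin's Theorem 13.12---and your argument is precisely the classical von Neumann graph-rotation proof used in that source: the unitary $V(x,y)=(-y,x)$ on $H \times H$, the identity $\Gamma(A^*) = \big(V\Gamma(A)\big)^{\perp}$, density of $D(A^*)$ via the element $(v,0)$, and $\Gamma(A^{**}) = \big(\Gamma(A)^{\perp}\big)^{\perp} = \Gamma(A)$, so your approach coincides with the cited proof and all steps (including where closedness of $A$ is used) check out.
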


If Assumption \ref{ass-quasi-semi-lin} is fulfilled, then we also call the SPDE (\ref{SPDE}) a quasi-semilinear SPDE. Here are two examples where Assumption \ref{ass-quasi-semi-lin} is satisfied.

\begin{example}
Let $T = (T(t))_{t \in \bbr^d}$ be a multi-parameter $C_0$-group on a separable Hilbert space $H$, and denote by $B = (B_1,\ldots,B_d)$ its generator. We set $G := D(B)$ and assume that the coefficients $L : G \to H$ and $A : G \to \ell^2(H)$ are given by
\begin{align*}
L(y) &= \sum_{i=1}^d \lambda_i(y) B_i y + \alpha(y),
\\ A^j(y) &= \sum_{i=1}^d \kappa_{i}^j(y) B_i y + \sigma^j(y), \quad j \in \bbn
\end{align*}
with continuous mappings $\lambda_i : H \to \bbr$, $i=1,\ldots,d$, $\alpha : H \to H$, $\kappa_{i} : H \to \ell^2(\bbr)$, $i=1,\ldots,d$, and $\sigma : H \to \ell^2(H)$. By Lemma \ref{lemma-domain-multi-dense} and Proposition \ref{prop-adjoint-group} we see that Assumption \ref{ass-quasi-semi-lin} is fulfilled with $H_0 := D(B^*)$.
\end{example}

\begin{example}
Let $H := \cals_p(\bbr^d)$ for some $p \in \bbr$. We set $G := \cals_{p+\frac{1}{2}}(\bbr^d)$ and assume that the coefficients $L : G \to H$ and $A : G \to \ell^2(H)$ are given by
\begin{align*}
L(y) &= \sum_{i=1}^d \lambda_i(y) \partial_i y + \alpha(y),
\\ A^j(y) &= \sum_{i=1}^d \kappa_{i}^j(y) \partial_i y + \sigma^j(y), \quad j \in \bbn
\end{align*}
with continuous mappings $\lambda_i : H \to \bbr$, $i=1,\ldots,d$, $\alpha : H \to H$, $\kappa_{i} : H \to \ell^2(\bbr)$, $i=1,\ldots,d$, and $\sigma : H \to \ell^2(H)$. By Lemma \ref{lemma-diff-closed} we see that Assumption \ref{ass-quasi-semi-lin} is fulfilled with $H_0 := \cals(\bbr^d)$.
\end{example}

\begin{definition}\label{def-weak-q-semi-lin}
Let $y_0 \in H$ be arbitrary. A triplet $(\bbb,W,Y)$ is called a \emph{local analytically weak martingale solution} to the SPDE (\ref{SPDE}) with $Y_0 = y_0$ if the following conditions are fulfilled:
\begin{enumerate}
\item $\bbb = (\Omega,\calf,(\calf_t)_{t \in \bbr_+},\bbp)$ is a stochastic basis; that is, a filtered probability space satisfying the usual conditions.

\item $W$ is a standard $\bbr^{\infty}$-Wiener process on the stochastic basis $\bbb$.

\item $Y$ is an $H$-valued adapted, continuous process such that, for some strictly positive stopping time $\tau > 0$, for each $\zeta \in H_0$ we have $\bbp$-almost surely
\begin{equation}\label{cond-weak-1}
\begin{aligned}
&\int_0^{t \wedge \tau} \bigg( \big| \langle \bar{L}_{Y_s}^* \zeta,Y_s \rangle_H + \langle \zeta,\alpha(Y_s) \rangle_H \big|
\\ &\qquad\quad + \big\| \langle \bar{A}_{Y_s}^* \zeta,Y_s \rangle_H + \langle \zeta,\sigma(Y_s) \rangle_H \big\|_{\ell^2(H)}^2 \bigg) ds < \infty, \quad t \in \bbr_+
\end{aligned}
\end{equation}
and $\bbp$-almost surely
\begin{equation}\label{cond-weak-2}
\begin{aligned}
\langle \zeta,Y_{t \wedge \tau} \rangle_H &= \langle \zeta,y_0 \rangle_H + \int_0^{t \wedge \tau} \big( \langle \bar{L}_{Y_s}^* \zeta,Y_s \rangle_H + \langle \zeta,\alpha(Y_s) \rangle_H \big) ds
\\ &\quad + \int_0^{t \wedge \tau} \big( \langle \bar{A}_{Y_s}^* \zeta,Y_s \rangle_H + \langle \zeta,\sigma(Y_s) \rangle_H \big) dW_s, \quad t \in \bbr_+,
\end{aligned}
\end{equation}
where for each $y \in H$ we agree on the notation
\begin{align*}
\langle \bar{A}_{y}^* \zeta,y \rangle_H &:= \big( \langle \bar{A}_{y}^{j,*} \zeta,y \rangle_H \big)_{j \in \bbn} \in \ell^2(H),
\\ \langle \zeta,\sigma(y) \rangle_H &:= \big( \langle \zeta,\sigma^j(y) \rangle_H \big)_{j \in \bbn} \in \ell^2(H).
\end{align*}
The stopping time $\tau$ is also called the \emph{lifetime} of $Y$.
\end{enumerate}
If we can choose $\tau = \infty$, then $(\bbb,W,Y)$ is also called a \emph{global analytically weak martingale solution} (or simply an \emph{analytically weak martingale solution}) to the SPDE (\ref{SPDE}) with $Y_0 = y_0$.
\end{definition}

\begin{remark}
Note that the integrands in (\ref{cond-weak-1}) and (\ref{cond-weak-2}) are continuous and adapted by virtue of the continuity of the mappings (\ref{map-cont-1}) and (\ref{map-cont-2}).
\end{remark}

\begin{remark}
If there is no ambiguity, we will simply call $Y$ a local analytically weak martingale solution or a global analytically weak martingale solution to the SPDE (\ref{SPDE}) with $Y_0 = y_0$.
\end{remark}

Let $\calm$ be a finite dimensional $C^2$-submanifold of $H$.

\begin{definition}
The submanifold $\calm$ is called \emph{weakly locally invariant} for the SPDE (\ref{SPDE}) if for each $y_0 \in \calm$ there exists a local analytically weak martingale solution $Y$ to the SPDE (\ref{SPDE}) with $Y_0 = y_0$ and lifetime $\tau > 0$ such that $Y^{\tau} \in \calm$ up to an evanescent set.
\end{definition}

\begin{definition}
The submanifold $\calm$ is called \emph{weakly globally invariant} (or simply \emph{weakly invariant}) for the SPDE (\ref{SPDE}) if for each $y_0 \in \calm$ there exists a global analytically weak martingale solution $Y$ to the SPDE (\ref{SPDE}) with $Y_0 = y_0$ such that $Y \in \calm$ up to an evanescent set.
\end{definition}

\begin{remark}\label{rem-weak-invariance}
If $\calm$ is locally invariant (or globally invariant) for the SPDE (\ref{SPDE}), then $\calm$ is also weakly locally invariant (or weakly globally invariant) for the SPDE (\ref{SPDE}).
\end{remark}

\begin{theorem}\label{thm-quasi-semilinear}
Suppose that Assumption \ref{ass-quasi-semi-lin} is fulfilled. Then the following statements are equivalent:
\begin{enumerate}
\item[(i)] The submanifold $\calm$ is weakly locally invariant for the SPDE (\ref{SPDE}).

\item[(ii)] We have $\calm \subset G$, the submanifold $\calm$ is locally invariant for the SPDE (\ref{SPDE}), and the mappings
\begin{align}\label{map-1-qsl}
&L|_{\calm} : (\calm,\| \cdot \|_H) \to (H,\| \cdot \|_H),
\\ \label{map-2-qsl} &A|_{\calm} : (\calm,\| \cdot \|_H) \to (\ell^2(H),\| \cdot \|_{\ell^2(H)})
\end{align}
are continuous.
\end{enumerate}
\end{theorem}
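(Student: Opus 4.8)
The direction (ii) $\Rightarrow$ (i) is immediate: if $\calm \subset G$ and $\calm$ is locally invariant for the SPDE (\ref{SPDE}), then by Remark \ref{rem-weak-invariance} it is weakly locally invariant. The continuity of the mappings (\ref{map-1-qsl}) and (\ref{map-2-qsl}) is not needed here. So the whole work lies in (i) $\Rightarrow$ (ii).

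For (i) $\Rightarrow$ (ii), fix $y_0 \in \calm$ and let $(\bbb,W,Y)$ be a local analytically weak martingale solution with $Y_0 = y_0$, lifetime $\tau$, and $Y^{\tau} \in \calm$ up to an evanescent set. The plan is to \emph{realize $Y$ in local coordinates} and then compare two descriptions of its dynamics. I would invoke Proposition \ref{prop-linear-inverse} with the dense set $D := H_0$ to obtain a local parametrization $\phi : V \to U \cap \calm$ around $y_0$ together with $\psi = \langle \zeta,\cdot\rangle_H \in L(H,\bbr^m)$, $\zeta_1,\dots,\zeta_m \in H_0$, such that $\phi^{-1} = \psi|_{U\cap\calm}$. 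Shrinking $\tau$ so that $Y^{\tau} \in U \cap \calm$, set $X := \psi(Y)$, i.e. $X^i = \langle \zeta_i, Y\rangle_H$. Applying the weak formulation (\ref{cond-weak-2}) to each $\zeta_i \in H_0$ shows that $X$ is an $\bbr^m$-valued continuous semimartingale with $X^{\tau} \in V$, whose drift and diffusion are continuous functions $\beta : V \to \bbr^m$ and $a : V \to \ell^2(\bbr^m)$ of $X$ (via $Y = \phi(X)$), by the continuity of the mappings (\ref{map-cont-1}) and (\ref{map-cont-2}).

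Next I would feed $Y^{\tau} = \phi(X^{\tau})$ back through It\^o's formula (\cite[Thm.~2.3.1]{fillnm}), using $\phi \in C^2(V;H)$, to express $Y$ as an $H$-valued continuous semimartingale whose diffusion equals $w^j := \phi_* a^j \in \Gamma(T\calm_U)$ evaluated along $Y$ (so $w_t^j \in T_{Y_t}\calm$) and whose drift equals $\mu := \phi_*\beta + \tfrac12\sum_{j=1}^{\infty}\phi_{**}(a^j,a^j)$ along $Y$. Testing this decomposition against an arbitrary $\zeta \in H_0$ and matching with (\ref{cond-weak-2}) — using uniqueness of the canonical decomposition of a special semimartingale exactly as in Lemma \ref{lemma-compare-2} — yields, $\bbp\otimes dt$-almost everywhere and for every $\zeta \in H_0$ and $j \in \bbn$, the two identities $\langle \zeta, w_t^j\rangle_H = \langle \bar A_{Y_t}^{j,*}\zeta, Y_t\rangle_H + \langle \zeta, \sigma^j(Y_t)\rangle_H$ and $\langle \zeta, \mu_t\rangle_H = \langle \bar L_{Y_t}^{*}\zeta, Y_t\rangle_H + \langle \zeta, \alpha(Y_t)\rangle_H$.

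The crux — and the step I expect to be the main obstacle — is to upgrade these weak identities into membership $\calm \subset G$. Rewriting the diffusion identity as $\langle Y_t, \bar A_{Y_t}^{j,*}\zeta\rangle_H = \langle w_t^j - \sigma^j(Y_t), \zeta\rangle_H$ for all $\zeta \in H_0$, the right-hand side is $H$-continuous in $\zeta$, so the functional $\zeta \mapsto \langle Y_t, \bar A_{Y_t}^{j,*}\zeta\rangle_H$ is bounded on $H_0$. Since $\bar A_z^j$ is densely defined and closed, Proposition \ref{prop-A-adj-dicht} gives $\bar A_z^j = \bar A_z^{j,**}$, and provided $H_0$ is a core for $\bar A_{Y_t}^{j,*}$ (so that the bound propagates to all of $D(\bar A_{Y_t}^{j,*})$) one concludes $Y_t \in D(\bar A_{Y_t}^{j,**}) = D(\bar A_{Y_t}^j)$ with $\bar A_{Y_t}^j Y_t = w_t^j - \sigma^j(Y_t)$; the same argument applied to $\bar L$ gives $Y_t \in D(\bar L_{Y_t})$ with $\bar L_{Y_t} Y_t = \mu_t - \alpha(Y_t)$. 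By condition (4) of Assumption \ref{ass-quasi-semi-lin} this forces $Y_t \in G$, and since $y_0 = Y_0 \in \calm$ was arbitrary, $\calm \subset G$. This is precisely where the density of $H_0$ must be strong enough to identify the double-adjoint domain; in the two examples ($H_0 = D(B^*)$ and $H_0 = \cals(\bbr^d)$) this core property is the point to be checked.

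Finally, with $\calm \subset G$ established, the decompositions $L = \bar L(\cdot,\cdot) + \alpha$ and $A = \bar A(\cdot,\cdot) + \sigma$ give $A^j(Y_t) = \bar A_{Y_t}^j Y_t + \sigma^j(Y_t) = w_t^j$ and $L(Y_t) = \bar L_{Y_t}Y_t + \alpha(Y_t) = \mu_t$. Hence on $\calm_U = U \cap \calm$ one has $A|_{\calm_U} = \phi_* a$ and $L|_{\calm_U} = \phi_*\beta + \tfrac12\sum_{j=1}^{\infty}\phi_{**}(a^j,a^j)$, which are continuous as mappings $(\calm_U,\|\cdot\|_H) \to \ell^2(H)$ and $(\calm_U,\|\cdot\|_H) \to H$ by Lemma \ref{lemma-push}, yielding continuity of (\ref{map-1-qsl}) and (\ref{map-2-qsl}). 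Moreover the $H$-valued semimartingale $Y$ now satisfies $dY_t = L(Y_t)\,dt + A(Y_t)\,dW_t$ with $Y^{\tau} \in \calm \subset G$, so $Y$ is a genuine local martingale solution in the sense of Definition \ref{def-martingale-solution}; the integrability (\ref{SPDE-int-cond}) follows from the continuity just shown together with path continuity. Therefore $\calm$ is locally invariant for the SPDE (\ref{SPDE}), completing (ii).
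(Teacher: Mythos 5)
Your proposal is correct and follows essentially the same route as the paper's own proof: localization via Proposition \ref{prop-linear-inverse} with $D = H_0$, passage to the finite dimensional process $X = \la \zeta, Y \ra_H$ through the weak formulation (\ref{cond-weak-2}), It\^{o}'s formula applied to $Y = \phi(X)$, identification of the two semimartingale decompositions against test elements of $H_0$, the double-adjoint argument $\bar{L}_y = \bar{L}_y^{**}$ combined with condition (4) of Assumption \ref{ass-quasi-semi-lin} to get $\calm \subset G$, and finally Lemma \ref{lemma-push} for the continuity of (\ref{map-1-qsl}), (\ref{map-2-qsl}) and the upgrade of $Y$ to a genuine (strong) local martingale solution. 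The one substantive point where you go beyond the paper is the caveat you flag at the crux: the matching identities give continuity of $\xi \mapsto \la \bar{L}_y^* \xi, y \ra_H$ only on $H_0$, whereas membership $y \in D(\bar{L}_y^{**})$ requires, by (\ref{def-domain-adjoint}) and Proposition \ref{prop-A-adj-dicht}, continuity on all of $D(\bar{L}_y^*)$; a functional bounded on a merely $\| \cdot \|_H$-dense subspace of the adjoint domain need not be bounded on the whole domain, so one does need the identity to propagate to the graph closure of $\bar{L}_y^*|_{H_0}$ --- i.e.\ precisely the core property you name. The paper's proof asserts $y \in D(\bar{L}_y)$ at this point without comment, so your version is, if anything, the more careful one; note that in the semilinear specialization the issue disappears since there $H_0 = D(B^*)$ is the entire adjoint domain, and in the other examples this is the condition one should verify when checking Assumption \ref{ass-quasi-semi-lin}(5). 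One last presentational remark: to obtain the pointwise identities on all of $U \cap \calm$ (needed for the continuity of (\ref{map-1-qsl}) and (\ref{map-2-qsl}) on $\calm_U$, not just along one trajectory), run your comparison argument with every $y \in U \cap \calm$ as starting point of a weak solution and evaluate at $t=0$ as in Lemma \ref{lemma-compare-2}; this is what the paper does and is implicit in your appeal to that lemma.
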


\begin{proof}
(ii) $\Rightarrow$ (i): See Remark \ref{rem-weak-invariance}.

\noindent(i) $\Rightarrow$ (ii): Let $y_0 \in \calm$ be arbitrary. Since $H_0$ is dense in $H$, by Proposition \ref{prop-linear-inverse} there exist a local parametrization $\phi : V \to U \cap \calm$ around $y_0$ and a bounded linear operator $\psi \in L(H,\bbr^m)$ of the form $\psi = \la \zeta,\cdot \ra_H$ with $\zeta_1,\ldots,\zeta_m \in H_0$ such that we have $\phi^{-1} = \psi|_{U \cap \calm}$. Now, let $y \in U \cap \calm$ be arbitrary, and set $x := \psi(y) \in V$. Since the submanifold $\calm$ is weakly locally invariant for the SPDE (\ref{SPDE}), there exist a positive stopping time $\tau > 0$ and a local analytically weak martingale solution $Y$ to (\ref{SPDE}) with $Y_0 = y$ and lifetime $\tau$ such that $Y^{\tau} \in \calm$ up to an evanescent set. Since $U$ is an open subset of $H$ and the sample paths of $Y$ are continuous, we may assume that $Y^{\tau} \in U \cap \calm$ up to an evanescent set. Now, we define the continuous $\bbr^m$-valued process $X := \psi(Y)$. Then we have $X^{\tau} \in V$, and since $\zeta_1,\ldots,\zeta_m \in H_0$, the process $X$ is a local strong solution to the SDE
\begin{align*}
\left\{
\begin{array}{rcl}
dX_t & = & L_{\zeta}(X_t) dt + A_{\zeta}(X_t) d W_t
\\ X_0 & = & x
\end{array}
\right.
\end{align*}
with lifetime $\tau$, where $L_{\zeta} : V \to \bbr^m$ and $A_{\zeta} : V \to \ell^2(\bbr^m)$ are given by
\begin{align}\label{push-generalized-1}
L_{\zeta}(z) &:= \la \bar{L}_{\phi(z)}^* \zeta, \phi(z) \ra_H + \la \zeta,\alpha(\phi(z)) \ra_H,
\\ \label{push-generalized-2} A_{\zeta}^j(z) &:= \la \bar{A}_{\phi(z)}^{j,*} \zeta, \phi(z) \ra_H + \la \zeta,\sigma^j(\phi(z)) \ra_H, \quad j \in \bbn.
\end{align}
Note that the mappings $L_{\zeta}$ and $A_{\zeta}$ are continuous by virtue of the continuity of the mappings (\ref{map-cont-1}) and (\ref{map-cont-2}). Since $\phi \in C^2(V;H)$, by It\^{o}'s formula (see \cite[Thm. 2.3.1]{fillnm}) we obtain that the process $Y$ is a local solution to the SPDE
\begin{align}\label{SPDE-in-proof}
\left\{
\begin{array}{rcl}
dY_t & = & \big( (\phi_* L_{\zeta})(Y_t) + \frac{1}{2} \sum_{j=1}^{\infty} \phi_{**}(A_{\zeta}^j, A_{\zeta}^j)(Y_t) \big) dt + (\phi_* A_{\zeta})(Y_t) dW_t
\\ Y_0 & = & y
\end{array}
\right.
\end{align}
with lifetime $\tau$, where we recall the notation from Definition \ref{def-push}. Let $\xi \in H_0$ be arbitrary. Then we have
\begin{align*}
\la \xi,Y_{t \wedge \tau} \ra_H &= \la \xi,y \ra_H + \int_0^{t \wedge \tau} \bigg\la \xi, (\phi_* L_{\zeta})(Y_s) + \frac{1}{2} \sum_{j=1}^{\infty} \phi_{**}(A_{\zeta}^j, A_{\zeta}^j)(Y_s) \big) \bigg\ra_H ds
\\ &\quad + \int_0^{t \wedge \tau} \la \xi, (\phi_* A_{\zeta})(Y_s) \ra_H dW_s, \quad t \in \bbr_+
\end{align*}
On the other hand, the process $Y$ is a local analytically weak martingale solution to the original SPDE (\ref{SPDE}) with $Y_0 = y$ and lifetime $\tau$. Therefore, we have
\begin{align*}
\langle \xi,Y_{t \wedge \tau} \rangle_H &= \langle \xi,y \rangle_H + \int_0^{t \wedge \tau} \big( \langle \bar{L}_{Y_s}^* \xi,Y_s \rangle_H + \langle \xi,\alpha(Y_s) \rangle_H \big) ds
\\ &\quad + \int_0^{t \wedge \tau} \big( \langle \bar{A}_{Y_s}^* \xi,Y_s \rangle_H + \langle \xi,\sigma(Y_s) \rangle_H \big) dW_s, \quad t \in \bbr_+.
\end{align*}
Thus, taking into account Lemma \ref{lemma-push} and the continuity of the mappings (\ref{map-cont-1}) and (\ref{map-cont-2}), we have
\begin{align}\label{L-1-semi-q}
\la \bar{L}_y^* \xi,y \ra_H &= \bigg\la \xi, (\phi_* L_{\zeta})(y) + \frac{1}{2} \sum_{j=1}^{\infty} \phi_{**}(A_{\zeta}^j, A_{\zeta}^j)(y) - \alpha(y) \bigg\ra_H,
\\ \label{A-1-semi-q} \la (\bar{A}_y^j)^* \xi,y \ra_H &= \la \xi, (\phi_* A_{\zeta}^j)(y) - \sigma^j(y) \ra_H, \quad j \in \bbn.
\end{align}
Taking into account Proposition \ref{prop-A-adj-dicht} and (\ref{def-domain-adjoint}), we have
\begin{align*}
D(\bar{L}_y) = D(\bar{L}_y^{**}) = \{ z \in H : \xi \mapsto \langle \bar{L}_y^* \xi, z \rangle_H \text{ is continuous on } D(\bar{L}_y^*) \}
\end{align*}
as well as
\begin{align*}
D(\bar{A}_y^j) = D((\bar{A}_y^j)^{**}) = \{ z \in H : \xi \mapsto \langle (\bar{A}_y^j)^* \xi, z \rangle_H \text{ is continuous on } D((\bar{A}_y^j)^*) \}
\end{align*}
for all $j \in \bbn$. This proves $y \in D(\bar{L}_y)$ and $y \in D(\bar{A}_y^j)$ for all $j \in \bbn$. Taking into account (\ref{G-intersections}), we deduce that $y \in G$. Consequently, we have $\calm \subset G$. By (\ref{push-generalized-1}) and (\ref{push-generalized-2}) we obtain
\begin{align*}
L_{\zeta}(x) &= \la \zeta, \bar{L}_{\phi(x)} \phi(x) \ra_H + \la \zeta,\alpha(\phi(x)) \ra_H = \la \zeta, L(\phi(x)) \ra_H,
\\ A_{\zeta}^j(x) &= \la \zeta, \bar{A}_{\phi(x)}^j \phi(x) \ra_H + \la \zeta,\sigma^j(\phi(x)) \ra_H = \la \zeta, A^j(\phi(x)) \ra_H, \quad j \in \bbn
\end{align*}
for each $x \in V$. Furthermore, from (\ref{L-1-semi-q}) and (\ref{A-1-semi-q}) we obtain
\begin{align*}
\la \xi,\bar{L}_y y \ra_H &= \bigg\la \xi, (\phi_* L_{\zeta})(y) + \frac{1}{2} \sum_{j=1}^{\infty} \phi_{**}(A_{\zeta}^j, A_{\zeta}^j)(y) - \alpha(y) \bigg\ra_H,
\\ \la \xi,\bar{A}_y^j y \ra_H &= \la \xi, (\phi_* A_{\zeta}^j)(y) - \sigma^j(y) \ra_H, \quad j \in \bbn
\end{align*}
for all $\xi \in H_0$ and all $y \in U \cap \calm$. Since $H_0$ is dense in $H$, we obtain
\begin{align*}
L(y) &= \bar{L}(y,y) + \alpha(y) = (\phi_* L_{\zeta})(y) + \frac{1}{2} \sum_{j=1}^{\infty} \phi_{**}(A_{\zeta}^j, A_{\zeta}^j)(y),
\\ A^j(y) &= \bar{A}^j(y,y) + \sigma^j(y) = (\phi_* A_{\zeta}^j)(y), \quad j \in \bbn
\end{align*}
for all $y \in U \cap \calm$. Since $y_0 \in \calm$ at the beginning of the proof was chosen arbitrary, by Lemma \ref{lemma-push} we deduce that the mappings (\ref{map-1-qsl}) and (\ref{map-2-qsl}) are continuous. Furthermore, by taking into account (\ref{SPDE-in-proof}), we see that $Y$ is local strong solution to the SPDE (\ref{SPDE}) with $Y_0 = y_0$, proving that $\calm$ is locally invariant for the SPDE (\ref{SPDE}).
\end{proof}

\subsection{Semilinear stochastic partial differential equations}\label{sub-sec-semilinear}

In this section we present consequences of our previous findings for semilinear SPDEs of the form
\begin{align}\label{SPDE-semigroup}
\left\{
\begin{array}{rcl}
dY_t & = & ( B Y_t + \alpha(Y_t) ) dt + \sigma(Y_t) dW_t
\\ Y_0 & = & y_0.
\end{array}
\right.
\end{align}
Such equations have been studied, for example, in \cite{Da_Prato, Atma-book, Liu-Roeckner, Prevot-Roeckner}. Here the state space $H$ is a separable Hilbert space, and $B : H \supset D(B) \to H$ is a densely defined, closed operator. Moreover $\alpha : H \to H$ and $\sigma : H \to \ell^2(H)$ are continuous mappings. We endow $G := D(B)$ with the graph norm
\begin{align}\label{graph-norm}
\| y \|_G := \sqrt{\| y \|_H^2 + \| B y \|_H^2 }, \quad y \in G.
\end{align}
By Proposition \ref{prop-domain-scale}, the pair $(G,H)$ consists of separable Hilbert spaces with continuous embedding.

\begin{remark}
Note that the semilinear SPDE (\ref{SPDE-semigroup}) is of the type (\ref{SPDE}) with $L = B + \alpha$ and $A = \sigma$. Furthermore, note that Assumption \ref{ass-quasi-semi-lin} is fulfilled with
\begin{align*}
\bar{L}(y,z) &= B(y) \quad \text{for all $y \in G$ and $z \in H$,}
\\ \bar{A} &= 0,
\end{align*}
and $H_0 = D(B^*)$. The concept of a local martingale solution (or a global martingale solution) from Definition \ref{def-martingale-solution} is just the concept of a local strong solution (or a global strong solution) for the semilinear SPDE (\ref{SPDE-semigroup}) in the sense of martingale solutions. Accordingly, the concept of a local analytically weak martingale solution (or a global analytically weak martingale solution) from Definition \ref{def-weak-q-semi-lin} is just the concept of a local weak solution (or a global weak solution) for the semilinear SPDE (\ref{SPDE-semigroup}) in the sense of martingale solutions.
\end{remark}

\begin{remark}
If $B$ generates a $C_0$-semigroup on $H$, then we can also consider mild solutions. However, this is not required for our upcoming results.
\end{remark}

Let $\calm$ be a finite dimensional $C^2$-submanifold of $H$. Invariant manifolds of weak solutions to semilinear SPDEs have been studied, for example, in \cite{Filipovic-inv, Nakayama}; see also \cite{FTT-manifolds} for the case of jump-diffusions and submanifolds with boundary.

\begin{lemma}\label{lemma-T-topology}
The following statements are equivalent:
\begin{enumerate}
\item[(i)] $\calm$ is a finite dimensional $(G,H)$-submanifold of class $C^2$

\item[(ii)] $\calm \subset G$ and the restriction $B|_{\calm} : (\calm,\| \cdot \|_H) \to (H,\| \cdot \|_H)$ is continuous.
\end{enumerate}
\end{lemma}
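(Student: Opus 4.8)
The statement to be proved, Lemma \ref{lemma-T-topology}, asserts the equivalence of two conditions for a finite dimensional $C^2$-submanifold $\calm$ of $H$, where $G = D(B)$ carries the graph norm \eqref{graph-norm}. The key structural observation is that the graph norm decomposes as $\| y \|_G^2 = \| y \|_H^2 + \| B y \|_H^2$, so that convergence in $\| \cdot \|_G$ is equivalent to simultaneous convergence in $\| \cdot \|_H$ together with convergence of the images under $B$ in $\| \cdot \|_H$. This is the lever that connects the topological condition (i) to the continuity of $B|_{\calm}$ in condition (ii).

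\emph{Plan of proof.} The plan is to invoke Proposition \ref{prop-manifold-scaled}, which already characterizes $(G,H)$-submanifolds of class $C^k$ through several equivalent conditions. In particular, by the equivalence (i) $\Leftrightarrow$ (ii) of that proposition, $\calm$ is a finite dimensional $(G,H)$-submanifold of class $C^2$ if and only if $\calm \subset G$ and the identity $\Id : (\calm, \| \cdot \|_H) \to (\calm, \| \cdot \|_G)$ is continuous. So the entire task reduces to showing that, given $\calm \subset G$, the continuity of $\Id : (\calm, \| \cdot \|_H) \to (\calm, \| \cdot \|_G)$ is equivalent to the continuity of $B|_{\calm} : (\calm, \| \cdot \|_H) \to (H, \| \cdot \|_H)$.

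\emph{The two implications.} First I would prove (i) $\Rightarrow$ (ii): assume $\Id : (\calm, \| \cdot \|_H) \to (\calm, \| \cdot \|_G)$ is continuous. Since $\| B y \|_H \le \| y \|_G$ for every $y \in G$, and since $B : (G, \| \cdot \|_G) \to (H, \| \cdot \|_H)$ is bounded (being continuous as an operator into $H$ with respect to the graph norm), the composition $B|_{\calm} = B \circ \Id|_{\calm}$ maps $(\calm, \| \cdot \|_H) \to (H, \| \cdot \|_H)$ continuously. Conversely, for (ii) $\Rightarrow$ (i): assume $\calm \subset G$ and $B|_{\calm} : (\calm, \| \cdot \|_H) \to (H, \| \cdot \|_H)$ is continuous. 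Let $(y_n) \subset \calm$ with $y_n \to y$ in $\| \cdot \|_H$ for some $y \in \calm$. Then $\| y_n - y \|_H \to 0$ by assumption, and $\| B y_n - B y \|_H \to 0$ by continuity of $B|_{\calm}$. Hence
\begin{align*}
\| y_n - y \|_G^2 = \| y_n - y \|_H^2 + \| B y_n - B y \|_H^2 \to 0,
\end{align*}
which shows that $\Id : (\calm, \| \cdot \|_H) \to (\calm, \| \cdot \|_G)$ is sequentially continuous, and therefore continuous since $\calm$ is metrizable in both topologies.

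\emph{Main obstacle.} I do not expect a substantive obstacle here, as the argument is essentially a direct unwinding of the definition of the graph norm combined with Proposition \ref{prop-manifold-scaled}. The only point requiring mild care is the implicit use of $B y \in H$ for $y \in \calm$, which is guaranteed by $\calm \subset G = D(B)$; this containment is assumed in both formulations, so no issue arises. The proof is thus short and follows the pattern already established for analogous criteria (compare Proposition \ref{prop-group-top-manifold}, where a similar reduction via Proposition \ref{prop-manifold-scaled} is carried out for higher-order domains of closed operators).
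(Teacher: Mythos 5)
Your proof is correct and is essentially the paper's argument: the paper simply cites Proposition \ref{prop-group-top-manifold} (applied with $d=1$, $n=1$, $A_1=B$), whose own proof is precisely your reduction via Proposition \ref{prop-manifold-scaled} followed by unwinding the graph norm (\ref{graph-norm}). You have merely inlined that two-step argument instead of invoking the intermediate proposition, so there is no substantive difference.
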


\begin{proof}
This is an immediate consequence of Proposition \ref{prop-group-top-manifold}.
\end{proof}

\begin{proposition}\label{prop-semi-lin-pre}
For a finite dimensional $C^2$-submanifold $\calm$ of $H$ the following statements are equivalent:
\begin{enumerate}
\item[(i)] The submanifold $\calm$ is weakly locally invariant for the semilinear SPDE (\ref{SPDE-semigroup}).

\item[(ii)] $\calm$ is a $(G,H)$-submanifold of class $C^2$, which is locally invariant for the semilinear SPDE (\ref{SPDE-semigroup}).
\end{enumerate}
\end{proposition}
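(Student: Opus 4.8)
The plan is to deduce this proposition directly from the general equivalence for quasi-semilinear SPDEs established in Theorem \ref{thm-quasi-semilinear}, and then to translate the abstract continuity conditions appearing there into the geometric condition that $\calm$ be a $(G,H)$-submanifold. First I would recall, as observed in the Remark preceding this statement, that the semilinear SPDE (\ref{SPDE-semigroup}) is of the form (\ref{SPDE}) with $L = B + \alpha$ and $A = \sigma$, and that Assumption \ref{ass-quasi-semi-lin} is satisfied with $\bar{L}(y,z) = By$, $\bar{A} = 0$ and $H_0 = D(B^*)$. Hence Theorem \ref{thm-quasi-semilinear} is applicable and yields that statement (i) is equivalent to the following intermediate condition: $\calm \subset G$, the submanifold $\calm$ is locally invariant for (\ref{SPDE-semigroup}), and the restrictions $L|_{\calm} : (\calm, \| \cdot \|_H) \to (H, \| \cdot \|_H)$ and $A|_{\calm} : (\calm, \| \cdot \|_H) \to (\ell^2(H), \| \cdot \|_{\ell^2(H)})$ are continuous.

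It then remains to check that this intermediate condition coincides with statement (ii). Here the key observation is that the two continuity requirements simplify because the lower-order coefficients are globally continuous on $H$. Since $A = \sigma$ and $\sigma : H \to \ell^2(H)$ is continuous, the map $A|_{\calm}$ is automatically continuous with respect to $\| \cdot \|_H$, so this requirement is vacuous. Likewise, since $\alpha : H \to H$ is continuous and $L = B + \alpha$, the continuity of $L|_{\calm}$ with respect to $\| \cdot \|_H$ is equivalent to the continuity of $B|_{\calm} : (\calm, \| \cdot \|_H) \to (H, \| \cdot \|_H)$.

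Finally I would invoke Lemma \ref{lemma-T-topology}, which asserts precisely that the conjunction ``$\calm \subset G$ and $B|_{\calm}$ is continuous'' is equivalent to ``$\calm$ is a $(G,H)$-submanifold of class $C^2$''. Combining the previous two paragraphs, the intermediate condition produced by Theorem \ref{thm-quasi-semilinear} reduces exactly to: $\calm$ is a $(G,H)$-submanifold of class $C^2$ and $\calm$ is locally invariant for (\ref{SPDE-semigroup}), which is statement (ii). This establishes the equivalence in both directions at once. I do not expect any genuine obstacle here; the proof is essentially a bookkeeping argument. The only points requiring a little care are to notice that one need not impose the continuity of $A|_{\calm}$ as a separate hypothesis in (ii), since it follows for free from the global continuity of $\sigma$, and that the domain condition $\calm \subset G$ together with the continuity of $B|_{\calm}$ is exactly the content of the $(G,H)$-submanifold property via Lemma \ref{lemma-T-topology}.
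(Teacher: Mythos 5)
Your proof is correct and follows essentially the same route as the paper: both rest on applying Theorem \ref{thm-quasi-semilinear} (valid since Assumption \ref{ass-quasi-semi-lin} holds with $\bar{L}(y,z)=By$, $\bar{A}=0$, $H_0=D(B^*)$) and then translating the continuity conditions into the $(G,H)$-submanifold property via Lemma \ref{lemma-T-topology}, noting that the global continuity of $\alpha$ and $\sigma$ on $H$ makes the continuity of $A|_{\calm}$ vacuous and reduces that of $L|_{\calm}$ to $B|_{\calm}$. The only cosmetic difference is that the paper disposes of (ii) $\Rightarrow$ (i) immediately by Remark \ref{rem-weak-invariance} (a strong solution is a weak solution), whereas you run both directions through the theorem's equivalence, which is equally valid.
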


\begin{proof}
(i) $\Rightarrow$ (ii): By Theorem \ref{thm-quasi-semilinear} we have $\calm \subset G$, the submanifold $\calm$ is locally invariant for the semilinear SPDE (\ref{SPDE-semigroup}), and the restriction $B|_{\calm} : (\calm,\| \cdot \|_H) \to (H,\| \cdot \|_H)$ is continuous. Moreover, by Lemma \ref{lemma-T-topology} the submanifold $\calm$ is a $(G,H)$-submanifold of class $C^2$.

\noindent(ii) $\Rightarrow$ (i): This implication is obvious.
\end{proof}

\begin{theorem}\label{thm-semi-lin}
Let $\calm$ be a finite dimensional $C^2$-submanifold of $H$. Then the following statements are equivalent:
\begin{enumerate}
\item[(i)] The submanifold $\calm$ is weakly locally invariant for the semilinear SPDE (\ref{SPDE-semigroup}).

\item[(ii)] $\calm$ is a $(G,H)$-submanifold of class $C^2$, and we have
\begin{align}\label{tang-semi-A}
&\sigma^j|_{\calm} \in \Gamma(T \calm), \quad j \in \bbn,
\\ \label{tang-semi-L} &[ (B + \alpha)|_{\calm} ]_{\Gamma(T \calm)} - \frac{1}{2} \sum_{j=1}^{\infty} [\sigma^j|_{\calm},\sigma^j|_{\calm}]_{\calm} = [0]_{\Gamma(T \calm)}.
\end{align}
\item[(iii)] $\calm$ is a $(G,H)$-submanifold of class $C^2$, the mapping $B|_{\calm} : (\calm,\| \cdot \|_H) \to (H,\| \cdot \|_H)$ is continuous, and for each $y_0 \in \calm$ there exists a local martingale solution $Y$ to the SPDE (\ref{SPDE}) with $Y_0 = y_0$ and lifetime $\tau$ such that $Y^{\tau} \in \calm$ up to an evanescent set and the sample paths of $Y^{\tau}$ are continuous with respect to the graph norm $\| \cdot \|_G$.
\end{enumerate}
\end{theorem}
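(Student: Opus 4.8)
The plan is to recognize the semilinear SPDE (\ref{SPDE-semigroup}) as the special case of the abstract SPDE (\ref{SPDE}) obtained by setting $L = B + \alpha$ and $A = \sigma$, and then to transfer the general invariance result. The central bridge is Proposition \ref{prop-semi-lin-pre}, which already identifies weak local invariance (statement (i)) with the conjunction ``$\calm$ is a $(G,H)$-submanifold of class $C^2$ and $\calm$ is locally invariant for (\ref{SPDE-semigroup})''. This is precisely the step that upgrades the bare hypothesis ``$\calm$ is a finite dimensional $C^2$-submanifold of $H$'' to the setting $\calm \subset G$ in which Theorem \ref{thm-SPDE} becomes applicable; without it the tangential conditions in (ii) would not even be well defined, since they involve $(B+\alpha)|_{\calm}$. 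Once $\calm$ is known to be a $(G,H)$-submanifold, I would apply Theorem \ref{thm-SPDE} to the SPDE (\ref{SPDE-semigroup}) and read off its three equivalent conditions in the present notation.

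Then I would record three elementary translations. First, since $A^j = \sigma^j$ for every $j$, condition (\ref{tang-A}) coincides with (\ref{tang-semi-A}), and since $L = B + \alpha$, condition (\ref{tang-L}) coincides with (\ref{tang-semi-L}); thus Theorem \ref{thm-SPDE}(ii) is literally statement (ii) of the present theorem. Second, continuity of the map (\ref{map-1-main}), namely $A|_{\calm} = \sigma|_{\calm}$, is automatic because $\sigma : H \to \ell^2(H)$ is continuous on all of $H$, while continuity of the map (\ref{map-2-main}), namely $L|_{\calm} = (B+\alpha)|_{\calm}$, is equivalent to continuity of $B|_{\calm}$, because $\alpha|_{\calm}$ is already continuous with respect to $\| \cdot \|_H$ as the restriction of the globally continuous map $\alpha$. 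Third, by Lemma \ref{lemma-T-topology}, for $\calm \subset G$ the continuity of $B|_{\calm}$ is in turn equivalent to $\calm$ being a $(G,H)$-submanifold of class $C^2$. With these identifications, Theorem \ref{thm-SPDE}(iii) becomes exactly statement (iii) of the present theorem.

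It then remains only to assemble the implications. For (i) $\Rightarrow$ (ii): Proposition \ref{prop-semi-lin-pre} turns (i) into ``$(G,H)$-submanifold plus local invariance'', whence Theorem \ref{thm-SPDE}, (i) $\Rightarrow$ (ii), yields (\ref{tang-semi-A}) and (\ref{tang-semi-L}). For (ii) $\Rightarrow$ (iii): starting from the $(G,H)$-submanifold property together with (\ref{tang-semi-A}) and (\ref{tang-semi-L}), Theorem \ref{thm-SPDE}, (ii) $\Rightarrow$ (iii), supplies the required local martingale solution with $\| \cdot \|_G$-continuous paths, while continuity of $B|_{\calm}$ is furnished by Lemma \ref{lemma-T-topology}. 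For (iii) $\Rightarrow$ (i): from (iii) we have the $(G,H)$-submanifold property and continuity of $B|_{\calm}$, hence of $L|_{\calm}$; together with the automatic continuity of $\sigma|_{\calm}$ and the given solution, the hypotheses of Theorem \ref{thm-SPDE}(iii) hold, so $\calm$ is locally invariant for (\ref{SPDE-semigroup}), and Proposition \ref{prop-semi-lin-pre} (equivalently Remark \ref{rem-weak-invariance}) returns weak local invariance, i.e.\ (i).

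I do not anticipate a genuine analytic obstacle here: the entire argument is a transfer of Theorem \ref{thm-SPDE} through the dictionary $L = B+\alpha$, $A = \sigma$. The one point demanding care is logical rather than technical, namely that Theorem \ref{thm-SPDE} presupposes $\calm \subset G$ and cannot be invoked until Proposition \ref{prop-semi-lin-pre} (which itself rests on Theorem \ref{thm-quasi-semilinear}) has forced $\calm$ to be a $(G,H)$-submanifold. One must therefore be scrupulous about which hypotheses are available at each implication, and in particular avoid circularity by extracting the $(G,H)$-submanifold property from weak invariance \emph{before}, and not after, applying the general invariance criterion.
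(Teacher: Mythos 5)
Your proposal is correct and follows essentially the same route as the paper, whose proof of Theorem \ref{thm-semi-lin} is exactly the combination of Proposition \ref{prop-semi-lin-pre} with Theorem \ref{thm-SPDE} under the dictionary $L = B + \alpha$, $A = \sigma$. Your additional care about extracting the $(G,H)$-submanifold property from weak invariance \emph{before} invoking Theorem \ref{thm-SPDE}, and your translation of the continuity conditions via Lemma \ref{lemma-T-topology} and the continuity of $\alpha$ and $\sigma$, simply makes explicit what the paper leaves to the reader.
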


\begin{proof}
This is a consequence of Proposition \ref{prop-semi-lin-pre} and Theorem \ref{thm-SPDE}.
\end{proof}

\begin{remark}\label{rem-semi-lin}
If we even have $\sigma^j \in C^1(H)$ for all $j \in \bbn$, and for each $y \in \calm$ the series $\sum_{j=1}^{\infty} D \sigma^j(y) \sigma^j(y)$ converges in $H$, then conditions (i)--(iii) are equivalent to the following:
\begin{enumerate}
\item[(iv)] $\calm$ is a $(G,H)$-submanifold of class $C^2$, and we have (\ref{tang-semi-A}) as well as
\begin{align*}
B|_{\calm} + \alpha|_{\calm} - \frac{1}{2} \sum_{j=1}^{\infty} D \sigma^j \cdot \sigma^j|_{\calm} \in \Gamma(T \calm).
\end{align*}
\end{enumerate}
This is a consequence of the decomposition (\ref{zerl-general}) from Proposition \ref{prop-decomp-Damir-scale}.
\end{remark}

\begin{remark}\label{rem-regularity}
Let $k \in \bbn$ and $l \in \bbn_0$ be arbitrary, let $\calm$ be a $C^k$-submanifold of $H$ and assume that $\sigma^j \in C^l(H)$ for all $j \in \bbn$. Then $k$ is the degree of smoothness of the submanifold, and $l$ is the degree of smoothness of the volatilities. In the literature, the following situations have been considered:
\begin{enumerate}
\item In \cite{Filipovic-inv} it is assumed that $k=2$ and $l=1$.

\item In \cite{Nakayama} (which uses the support theorem from \cite{Nakayama-Support}) it is assumed that $k=1$ and $l=1$.

\item Here, in Theorem \ref{thm-semi-lin} we assume that $k=2$ and $l=0$.
\end{enumerate}
Summing up these degrees of smoothness, we see that in our result we have also achieved $k+l = 2$.
\end{remark}

\section{Invariant manifolds generated by orbit maps}\label{sec-quasi-linear}

In this section we investigate invariance of submanifolds generated by orbit maps. It is organized as follows: In Section \ref{sec-ql-group} we investigate the structure of the coefficients of the SPDE in case of invariance of such a submanifold, and in Section \ref{sec-structure} we treat the structure of invariant submanifolds for SPDEs with such coefficients. In Section \ref{sec-examples-HS} we apply our findings to SPDEs in Hermite Sobolev spaces.

\subsection{Coefficients given by generators of group actions}\label{sec-ql-group}

Let $(G,H_0,H)$ be separable Hilbert spaces with continuous embeddings. We consider the SPDE (\ref{SPDE}) with continuous mappings $L : G \to H$ and $A : G \to \ell^2(H)$. Let $d \in \bbn$ be a positive integer, and let $T = (T(t))_{t \in \bbr^d}$ be a multi-parameter $C_0$-group on $H$ such that $T|_G$ is a multi-parameter $C_0$-group on $G$, and $T|_{H_0}$ is a multi-parameter $C_0$-group on $H_0$. We denote by $B = (B_1,\ldots,B_d)$ the generator of $T$; see Appendix \ref{app-groups} for further details. We assume that $H_0 \subset D(B)$ and $G \subset D(B^2)$. Furthermore, we assume that $B_i|_{H_0} \in L(H_0,H)$ and $B_i|_G \in L(G,H_0)$ for each $i=1,\ldots,d$. Let $y_0 \in G$ be arbitrary, and denote by $\psi \in C^2(\bbr^d;H)$ the orbit map given by $\psi(t) := T(t) y_0$ for each $t \in \bbr^d$. Let $\caln$ be an $m$-dimensional $C^2$-submanifold of $\bbr^d$ for some $m \leq d$, and let $\calm$ be an $m$-dimensional $(G,H_0,H)$-submanifold of class $C^2$, which is induced by $(\psi,\caln)$; see Definition \ref{def-manifold-embedded}. Recall that this requires that $\psi|_{\caln} : \caln \to \psi(\caln)$ is a homeomorphism, and that $\psi$ is a $C^2$-immersion on $\caln$. 

\begin{remark}\label{rem-matrix-mult}
For a multi-dimensional sequence $\sigma = (\sigma_1,\ldots,\sigma_d) \in \ell^2(\bbr^d) \cong \ell^2(\bbr)^{\times d}$ we denote by $\sigma \sigma^{\top} \in \bbr^{d \times d}$ the matrix with elements $(\sigma \sigma^{\top})_{ik} := \la \sigma_i, \sigma_k \ra_{\ell^2(\bbr)}$ for all $i,k=1,\ldots,d$. If there is an index $r \in \bbn$ such that $\sigma^j = 0$ for all $j > r$, then we may regard the sequence $\sigma$ as a matrix $\sigma \in \bbr^{d \times r}$, and $\sigma \sigma^{\top}$ is just the usual matrix multiplication with the transpose matrix.
\end{remark}

\begin{theorem}\label{thm-SPDE-group}
The following statements are equivalent:
\begin{enumerate}
\item[(i)] The submanifold $\calm$ is locally invariant for the SPDE (\ref{SPDE}).

\item[(ii)] The submanifold $\caln$ is locally invariant for the $\bbr^d$-valued SDE
\begin{align}\label{SDE-bar}
\left\{
\begin{array}{rcl}
dX_t & = & \bar{b}(X_t) dt + \bar{\sigma}(X_t) dW_t
\\ X_0 & = & x_0,
\end{array}
\right.
\end{align}
where the continuous mappings $\bar{\sigma} : \caln \to \ell^2(\bbr^d)$ and $\bar{b} : \caln \to \bbr^d$ are the unique solutions of the equations
\begin{align}\label{L-Rajeev}
L|_{\calm} &= \frac{1}{2} \sum_{i,j=1}^d (\bar{\sigma} \bar{\sigma}^{\top})_{ij} \circ \psi^{-1}|_{\calm} \, B_{ij}|_{\calm} + \sum_{i=1}^d \bar{b}_i \circ \psi^{-1}|_{\calm} \, B_i|_{\calm},
\\ \label{A-Rajeev} A^j|_{\calm} &= \sum_{i=1}^d \bar{\sigma}_{i}^j \circ \psi^{-1}|_{\calm} \, B_i|_{\calm}, \quad j \in \bbn.
\end{align}
\end{enumerate}
\end{theorem}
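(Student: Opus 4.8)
The plan is to obtain this result as a direct specialization of Theorem \ref{thm-inv-embedded}. Since $\calm$ is induced by $(\psi,\caln)$ in the sense of Definition \ref{def-manifold-embedded}, that theorem already furnishes the equivalence between local invariance of $\calm$ for the SPDE (\ref{SPDE}) and local invariance of $\caln$ for the SDE (\ref{SDE-embedded}), whose coefficients $b : \caln \to \bbr^d$ and $\sigma : \caln \to \ell^2(\bbr^d)$ are the unique solutions of the push-forward equations (\ref{eqn-push-1}) and (\ref{eqn-push-2}). Consequently, the entire task reduces to verifying that, for the particular orbit map $\psi(t) = T(t) y_0$, these push-forward equations are nothing but (\ref{A-Rajeev}) and (\ref{L-Rajeev}), under the identifications $\bar{\sigma} = \sigma$ and $\bar{b} = b$. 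Once this is established, the SDEs (\ref{SDE-embedded}) and (\ref{SDE-bar}) literally coincide, so the equivalence (i) $\Leftrightarrow$ (ii) follows, and the uniqueness of $\bar{b},\bar{\sigma}$ is inherited from Theorem \ref{thm-inv-embedded}.

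The key computation concerns the first two derivatives of the orbit map. First I would record that, because $y_0 \in G \subset D(B^2)$, the differentiability properties of orbit maps of multi-parameter $C_0$-groups (see Appendix \ref{app-groups}) yield $\psi \in C^2(\bbr^d;H)$ together with
\begin{align*}
D \psi(x) v = \sum_{i=1}^d v_i B_i \psi(x) \quad \text{and} \quad D^2 \psi(x)(v,w) = \sum_{i,k=1}^d v_i w_k B_i B_k \psi(x)
\end{align*}
for all $x,v,w \in \bbr^d$. Here every summand is well-defined and the relevant restrictions are continuous: since $T|_G$ is a group on $G$ we have $\psi(x) \in G$, hence $B_i \psi(x) \in H_0$ by $B_i|_G \in L(G,H_0)$, and then $B_i B_k \psi(x) \in H$ by $B_i|_{H_0} \in L(H_0,H)$; moreover $B_i|_{\calm}$ and $B_{ij}|_{\calm}$ are continuous on $(\calm,\| \cdot \|_H)$ because the embedding $(\calm,\| \cdot \|_H) \hookrightarrow (\calm,\| \cdot \|_G)$ is continuous by Proposition \ref{prop-manifold-scaled}.

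Next I would substitute these formulas into the definitions of $\psi_*$ and $\psi_{**}$ from Definition \ref{def-push}. For $y \in \calm$ with $x := \psi^{-1}(y)$ we have $B_i \psi(x) = B_i y = B_i|_{\calm}(y)$ and $B_i B_k \psi(x) = B_{ik}|_{\calm}(y)$, hence
\begin{align*}
(\psi_* \sigma^j)(y) = \sum_{i=1}^d \sigma_i^j(x) \, B_i|_{\calm}(y), \qquad \sum_{j=1}^{\infty} \psi_{**}(\sigma^j,\sigma^j)(y) = \sum_{i,k=1}^d (\sigma \sigma^{\top})_{ik}(x) \, B_{ik}|_{\calm}(y),
\end{align*}
where in the second identity I interchange the (convergent) series over $j$ with the finite sum over $i,k$ and use the notation $(\sigma \sigma^{\top})_{ik} = \la \sigma_i, \sigma_k \ra_{\ell^2(\bbr)}$ from Remark \ref{rem-matrix-mult}. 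Substituting the first identity into (\ref{eqn-push-1}) reproduces exactly (\ref{A-Rajeev}), and substituting both into (\ref{eqn-push-2}) reproduces exactly (\ref{L-Rajeev}), with $\bar{\sigma} = \sigma$ and $\bar{b} = b$, which completes the identification.

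I expect the main obstacle to be the rigorous justification of the orbit-map derivative formulas in the multi-parameter setting together with the accompanying domain bookkeeping -- namely that the nested applications $B_i B_k y$ indeed land in $H$, and that the infinite series $\sum_{j} \psi_{**}(\sigma^j,\sigma^j)$ converges in $H$ and may be interchanged with the finite sum over $i,k$. The convergence of this series is already guaranteed by the validity of (\ref{eqn-push-2}) in Theorem \ref{thm-inv-embedded} (equivalently by Proposition \ref{prop-inv-para-2}), so the remaining delicate point is purely the continuity and interchange argument, for which the hypotheses $B_i|_G \in L(G,H_0)$ and $B_i|_{H_0} \in L(H_0,H)$ are precisely what is needed.
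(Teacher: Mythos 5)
Your proposal is correct and follows essentially the same route as the paper: the paper likewise computes $\psi_*\bar{\sigma}^j$ and $\psi_*\bar{b} + \frac{1}{2}\sum_{j}\psi_{**}(\bar{\sigma}^j,\bar{\sigma}^j)$ via the orbit-map derivative formulas of Proposition \ref{prop-degrees-smoothness} (i.e. $D\psi(x)v = \sum_i v_i B_i\psi(x)$ and $D^2\psi(x)(v,w) = \sum_{i,k} v_i w_k B_{ik}\psi(x)$), identifies the result with the right-hand sides of (\ref{A-Rajeev}) and (\ref{L-Rajeev}), and then concludes by applying Theorem \ref{thm-inv-embedded}. Your additional remarks on domain bookkeeping and the series interchange are consistent with the paper's framework and do not alter the argument.
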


\begin{proof}
Let $y \in \calm$ be arbitrary, and set $x := \psi^{-1}(y) \in \caln$. By Proposition \ref{prop-degrees-smoothness} for $j \in \bbn$ we have
\begin{align*}
(\psi_* \bar{\sigma}^j)(y) = D \psi(x) \bar{\sigma}^j(x) = \sum_{i=1}^d B_i \psi(x) \bar{\sigma}_{i}^j(x) = \sum_{i=1}^d \bar{\sigma}_{i}^j(\psi^{-1}(y)) B_i y
\end{align*}
as well as
\begin{align*}
&(\psi_* \bar{b})(y) + \frac{1}{2} \sum_{j=1}^{\infty} \psi_{**}(\bar{\sigma}^j,\bar{\sigma}^j)(y) = D \psi(x) \bar{b}(x) + \frac{1}{2} \sum_{j=1}^{\infty} D^2 \psi(x) ( \bar{\sigma}^j(x), \bar{\sigma}^j(x) )
\\ &= \sum_{i=1}^d B_i \psi(x) \bar{b}_i(x) + \frac{1}{2} \sum_{j=1}^{\infty} \sum_{i,k=1}^d B_{ik} \psi(x) \bar{\sigma}_{i}^j(x) \bar{\sigma}_{k}^j(x)
\\ &= \sum_{i=1}^d \bar{b}_i(\psi^{-1}(y)) B_i y + \frac{1}{2} \sum_{i,k=1}^d \bar{\sigma}(\psi^{-1}(y)) \bar{\sigma}(\psi^{-1}(y))^{\top} B_{ik} y.
\end{align*}
Therefore, applying Theorem \ref{thm-inv-embedded} concludes the proof.
\end{proof}

\begin{proposition}\label{prop-SPDE-group}
Suppose that the following conditions are fulfilled:
\begin{enumerate}
\item The submanifold $\calm$ is locally invariant for the SPDE (\ref{SPDE}).

\item The submanifold $\caln$ has one chart with a global parametrization $\varphi : V \to \caln$.

\item The open set $V$ is globally invariant for the $\bbr^m$-valued SDE
\begin{align}\label{global-1}
\left\{
\begin{array}{rcl}
d \Xi_t & = & \ell(\Xi_t) dt + a(\Xi_t) dW_t
\\ \Xi_0 & = & \xi_0,
\end{array}
\right.
\end{align}
whose coefficients $a : V \to \ell^2(\bbr^m)$ and $\ell : V \to \bbr^m$ are the unique solutions of the equations
\begin{align}\label{global-2}
\bar{\sigma}^j &= \varphi_* a^j, \quad j \in \bbn,
\\ \label{global-3} \bar{b} &= \varphi_* \ell + \frac{1}{2} \sum_{j=1}^{\infty} \varphi_{**}(a^j,a^j),
\end{align}
where the continuous mappings $\bar{\sigma} : \caln \to \ell^2(\bbr^d)$ and $\bar{b} : \caln \to \bbr^d$ are the unique solutions of the equations (\ref{L-Rajeev}) and (\ref{A-Rajeev})
\end{enumerate}
Then the submanifold $\calm$ is globally invariant for the SPDE (\ref{SPDE}), and the submanifold $\caln$ is globally invariant for the SDE (\ref{SDE-bar}).
\end{proposition}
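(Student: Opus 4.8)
The plan is to reduce Proposition \ref{prop-SPDE-group} to Proposition \ref{prop-inv-embedded}, which already treats the one-chart situation for a submanifold $\calm$ induced by $(\psi,\caln)$. The only gap to bridge is that Proposition \ref{prop-inv-embedded} is phrased in terms of the coefficients $b,\sigma$ of the SDE (\ref{SDE-embedded}) defined by the pushforward equations (\ref{eqn-push-1}) and (\ref{eqn-push-2}), whereas the present statement uses the coefficients $\bar{b},\bar{\sigma}$ defined through the generator $B$ by (\ref{L-Rajeev}) and (\ref{A-Rajeev}).

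First I would show that these two pairs of coefficients coincide, namely $b=\bar{b}$ and $\sigma=\bar{\sigma}$, so that the SDE (\ref{SDE-bar}) is literally the SDE (\ref{SDE-embedded}). This is precisely the computation already carried out in the proof of Theorem \ref{thm-SPDE-group}: for $y\in\calm$ with $x:=\psi^{-1}(y)\in\caln$, Proposition \ref{prop-degrees-smoothness} gives $D\psi(x)v=\sum_{i=1}^d B_i\psi(x)\,v_i$ and $D^2\psi(x)(v,w)=\sum_{i,k=1}^d B_{ik}\psi(x)\,v_iw_k$. Feeding the $\bar{\sigma},\bar{b}$ from (\ref{L-Rajeev}) and (\ref{A-Rajeev}) into the pushforward maps, one finds that $\psi_*\bar{\sigma}^j$ reproduces the right-hand side of (\ref{A-Rajeev}) and that $\psi_*\bar{b}+\tfrac12\sum_{j=1}^\infty\psi_{**}(\bar{\sigma}^j,\bar{\sigma}^j)$ reproduces the right-hand side of (\ref{L-Rajeev}). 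Comparing with the defining equations (\ref{eqn-push-1}) and (\ref{eqn-push-2}) of Theorem \ref{thm-inv-embedded} and invoking the uniqueness of their solutions on $\caln$, I conclude $\bar{b}=b$ and $\bar{\sigma}=\sigma$.

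With this identification in hand, the equations (\ref{global-2}) and (\ref{global-3}) defining $a$ and $\ell$ become verbatim the equations (\ref{eqn-push-1b}) and (\ref{eqn-push-2b}) of Proposition \ref{prop-inv-embedded}(ii), and the assumed global invariance of the open set $V$ for the $\bbr^m$-valued SDE (\ref{global-1}) is exactly the additional hypothesis appearing at the end of Proposition \ref{prop-inv-embedded}. Since by hypothesis $\calm$ is locally invariant for the SPDE (\ref{SPDE}) and $\caln$ has one chart with global parametrization $\varphi:V\to\caln$, all assumptions of Proposition \ref{prop-inv-embedded} are met. Its concluding assertion then yields at once that $\calm$ is globally invariant for the SPDE (\ref{SPDE}) and that $\caln$ is globally invariant for the SDE (\ref{SDE-embedded}), which by the coefficient identification is the SDE (\ref{SDE-bar}); this is precisely the claim.

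The proof is therefore almost entirely a matter of matching notation between the ``embedded'' framework of Section \ref{sec-SPDE-general} and the ``group-generated'' framework here. The one substantive point — and the only place where anything could go wrong — is the coefficient identification $\bar{b}=b$, $\bar{\sigma}=\bar{\sigma}$: I must make sure that the uniqueness statements in Theorems \ref{thm-inv-embedded} and \ref{thm-SPDE-group} genuinely pin down the coefficients as functions on $\caln$ (not merely after composition with $\psi^{-1}$), and that the orbit-map derivative formulas from Proposition \ref{prop-degrees-smoothness} are available at the second order required by $\psi_{**}$, which is guaranteed since $\calm$ is a $(G,H_0,H)$-submanifold of class $C^2$ and $\psi\in C^2(\bbr^d;H)$. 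Once that identification is clean, everything else is immediate.
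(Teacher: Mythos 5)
Your proposal is correct and follows essentially the same route as the paper: the paper proves Proposition \ref{prop-SPDE-group} by directly invoking Proposition \ref{prop-inv-embedded}, with the identification $\bar{b}=b$, $\bar{\sigma}=\sigma$ (equivalently, that equations (\ref{L-Rajeev})--(\ref{A-Rajeev}) are a rewriting of (\ref{eqn-push-1})--(\ref{eqn-push-2}) via Proposition \ref{prop-degrees-smoothness}) being exactly the computation already carried out in the proof of Theorem \ref{thm-SPDE-group}. You merely make that implicit identification explicit, which is a faithful expansion of the paper's one-line argument rather than a different proof.
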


\begin{proof}
This is a consequence of Proposition \ref{prop-inv-embedded}.
\end{proof}

\begin{remark}
Examples of submanifolds $\calm$ as in Theorem \ref{thm-SPDE-group} are obtained from Examples \ref{ex-manifolds-domain} with $k=2$ and choosing $G = D(B^2)$ as well as $H_0 = D(B)$. Moreover, regarding Proposition \ref{prop-SPDE-group}, recall that the submanifold $\calm$ has one chart if $\caln$ has one chart; see Lemma \ref{lemma-M-N-one-chart}.
\end{remark}

\subsection{The structure of invariant submanifolds}\label{sec-structure}

In the previous we have considered invariant submanifolds which are induced by $(\psi,\caln)$, and shown that the coefficients of the SPDE (\ref{SPDE}) must be of the form (\ref{L-Rajeev}) and (\ref{A-Rajeev}). In this section, we will show that for such coefficients an invariant submanifold must, subject to appropriate regularity conditions, necessarily be an induced submanifold.

Let $T = (T(t))_{t \in \bbr^d}$ be a multi-parameter $C_0$-group on $H$ as in Section \ref{sec-ql-group}. Furthermore, let $\calm$ be an $m$-dimensional $(G,H_0,H)$-submanifold of class $C^2$, which is locally invariant for the SPDE (\ref{SPDE}). Suppose that for each $j=1,\ldots,m$ we have $A^j \in C(G;H_0)$ with an extension $A^j \in C^1(H_0;H)$. Let $y_0 \in \calm$ be arbitrary. By Proposition \ref{prop-degrees} there exists a local parametrization $\phi : V \to U \cap \calm$ around $y_0$ such that
\begin{align*}
\phi \in C(V;G) \cap C^1(V;H_0) \cap C^2(V;H).
\end{align*}
We assume there exists a mapping $\Lambda : V \to \bbr^{m \times d}$ of class $C^1$ such that
\begin{align}\label{B-Lambda-A}
A(y) = \Lambda(x) B(y), \quad y \in U \cap \calm,
\end{align}
where $x := \phi^{-1}(y) \in V$, and where we use the notations $A = (A^1,\ldots,A^m)$ and $B = (B_1,\ldots,B_d)$. Then the volatilities $A^1,\ldots,A^m$ are locally of the form (\ref{A-Rajeev}). We assume that
\begin{align*}
\dim \lin \{ A^1 y, \ldots, A^m y \} = m \quad \text{for each $y \in U \cap \calm$.}
\end{align*}
By Theorem \ref{thm-SPDE} we have $A^1,\ldots,A^m \in \Gamma(T \calm)$, and hence
\begin{align}\label{tang-Bi}
T_y \calm = \lin \{ A^1 y, \ldots, A^m y \} \quad \text{for each $y \in U \cap \calm$.}
\end{align}

\begin{lemma}
There exists a mapping $\Gamma : V \to \bbr^{m \times m}$ of class $C^1$ such that
\begin{align}\label{nabla-phi}
\nabla \phi(x) = \Gamma(x) A \phi(x), \quad x \in V.
\end{align}
\end{lemma}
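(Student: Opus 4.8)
The plan is to pin down $\Gamma(x)$ pointwise by elementary linear algebra and then to upgrade the pointwise construction to $C^1$ regularity by means of a Gram matrix. First I would read the asserted identity componentwise: writing $\partial_i\phi(x) = D\phi(x)e_i$ for the $i$-th partial derivative, the claim $\nabla\phi(x) = \Gamma(x)A\phi(x)$ means
\[
\partial_i\phi(x) = \sum_{k=1}^m \Gamma(x)_{ik}\, A^k\phi(x), \qquad i=1,\ldots,m.
\]
For fixed $x \in V$, set $y := \phi(x) \in U \cap \calm$. By the definition of the tangent space we have $\partial_i\phi(x) \in T_y\calm$, and by (\ref{tang-Bi}) the vectors $A^1 y,\ldots,A^m y$ span $T_y\calm$; since $\dim T_y\calm = m$ they form a basis. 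Hence the coefficients $\Gamma(x)_{ik}$ exist and are uniquely determined. This settles existence and uniqueness of $\Gamma$ as a set map $V \to \bbr^{m\times m}$, so the whole difficulty is the regularity of $x \mapsto \Gamma(x)$.

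For the regularity I would first record that the $H$-valued maps $x \mapsto \partial_i\phi(x)$ and $x \mapsto A^k\phi(x)$ are of class $C^1$ on $V$. The first holds because $\phi \in C^2(V;H)$. For the second, recall from Proposition \ref{prop-degrees} that $\phi \in C^1(V;H_0)$, while by hypothesis each $A^k$ has an extension $A^k \in C^1(H_0;H)$; the chain rule for $C^1$ maps between Banach spaces then gives $A^k \circ \phi \in C^1(V;H)$ (note that on $G$ this extension agrees with the original volatility, so $A^k\phi(x) = A^k(\phi(x))$ as intended).

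The key step is to extract the coefficients smoothly, which I would do via inner products. Introduce the two matrix-valued maps
\[
M(x)_{kl} := \la A^k\phi(x), A^l\phi(x)\ra_H, \qquad N(x)_{il} := \la \partial_i\phi(x), A^l\phi(x)\ra_H,
\]
which are $C^1$ on $V$, being built from inner products of $C^1$ $H$-valued maps. Taking the inner product of the componentwise identity against $A^l\phi(x)$ yields $N(x) = \Gamma(x)M(x)$. Now $M(x)$ is the Gram matrix of the basis $A^1 y,\ldots,A^m y$ of $T_y\calm$, hence symmetric positive definite and in particular invertible, so $\Gamma(x) = N(x)M(x)^{-1}$. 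Since inversion is smooth on $\GL(m;\bbr)$ and $M$ takes values there, $M^{-1}$ is $C^1$, whence $\Gamma = N M^{-1} \in C^1(V;\bbr^{m\times m})$. Finally I would check that this $\Gamma$ solves the original vector identity and not merely its projections: because $\partial_i\phi(x)$ already lies in $\lin\{A^1\phi(x),\ldots,A^m\phi(x)\}$, and invertibility of the Gram matrix forces the expansion coefficients in that span to be uniquely determined by the inner products against the $A^l\phi(x)$, the coefficients produced by $N(x)M(x)^{-1}$ reconstruct $\partial_i\phi(x)$ exactly.

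The main obstacle is precisely this passage from pointwise linear algebra to $C^1$ dependence on $x$: the Gram-matrix device is what makes it work, and it rests on the hypothesis that the volatilities extend to $C^1$ maps $H_0 \to H$ together with $\phi \in C^1(V;H_0)$ from Proposition \ref{prop-degrees}. A cosmetic alternative would be to write $A^k|_{U\cap\calm} = \phi_* a^k$ and express $\Gamma$ through the inverse of the matrix whose columns are $a^k = \phi_*^{-1}A^k$; but establishing the $C^1$ regularity of the $a^k$ is no easier than the inner-product computation, so I would prefer the Gram-matrix formulation.
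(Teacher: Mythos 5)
Your proof is correct and follows essentially the same route as the paper's: pointwise existence and uniqueness of $\Gamma(x)$ from the fact that $A^1 \phi(x), \ldots, A^m \phi(x)$ form a basis of $T_{\phi(x)} \calm$, followed by the $C^1$ regularity of $\nabla \phi$ (from $\phi \in C^2(V;H)$) and of $A \phi$ (from $\phi \in C^1(V;H_0)$ together with the extensions $A^j \in C^1(H_0;H)$). The only difference is that the paper passes from these two regularity facts to the conclusion ``$\Gamma$ is of class $C^1$'' without further comment, whereas your Gram-matrix identity $\Gamma = N M^{-1}$ makes that final step explicit.
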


\begin{proof}
Let $x \in V$ be arbitrary, and set $y := \phi(x) \in U \cap \calm$. Noting (\ref{tang-Bi}), the two sets
\begin{align*}
\{ \partial_1 \phi(x), \ldots, \partial_m \phi(x) \} \quad \text{and} \quad \{ A^1 \phi(x), \ldots, A^m \phi(x) \}
\end{align*}
are bases of $T_y \calm$. Hence, there is a unique matrix $\Gamma(x) \in \bbr^{m \times m}$ such that $\nabla \phi(x) = \Gamma(x) A \phi(x)$. This gives us a mapping $\Gamma : V \to \bbr^{m \times m}$ satisfying (\ref{nabla-phi}). The mapping $\nabla \phi : V \to H$ is of class $C^1$ because $\phi \in C^2(V;H)$. Furthermore, the mapping $A \phi$ is of class $C^1$ because $\phi \in C^1(V;H_0)$ and $A^j \in C^1(H_0;H)$ for each $j=1,\ldots,m$. Consequently, the mapping $\Gamma$ is of class $C^1$, which concludes the proof.
\end{proof}

Now, we consider the product $\Phi := \Gamma \cdot \Lambda : V \to \bbr^{m \times d}$, which is again of class $C^1$. Furthermore, we set $x_0 := \phi^{-1}(y_0) \in V$. Recall that $\psi \in C^2(\bbr^d;H)$ denotes the orbit map given by $\psi(t) := T(t) y_0$ for each $t \in \bbr^d$.

\begin{theorem}\label{thm-structure-manifold}
Suppose that $\Phi$ has a primitive and satisfies $\rk \, \Phi(x_0) = m$. Then there exist an $m$-dimensional $C^2$-submanifold $\caln$ of $\bbr^d$ and an open neighborhood $U_0 \subset U$ of $y_0$ such that the submanifold $U_0 \cap \calm$ is induced by $(\psi,\caln)$.
\end{theorem}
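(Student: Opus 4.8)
The plan is to construct, near $x_0$, a primitive $g$ of $\Phi$ and to show that the local parametrization $\phi$ factors as $\phi = \psi \circ g$; the submanifold $\caln$ is then obtained as the image of a neighbourhood of $x_0$ under $g$. First I would fix a $C^2$ map $g : V \to \bbr^d$ with $\partial_i g_l = \Phi_{il}$ for all $i,l$ (a primitive of $\Phi$, which exists by hypothesis and is $C^2$ since $\Phi \in C^1$), normalized so that $g(x_0) = 0$; then $Dg(x_0)$ has matrix $\Phi(x_0)^{\top}$, so $\rk\,\Phi(x_0) = m$ says exactly that $g$ is an immersion at $x_0$. Combining (\ref{B-Lambda-A}), (\ref{nabla-phi}) and the identity $A^k\phi(x) = \sum_{l=1}^d \Lambda_{kl}(x) B_l\phi(x)$, the parametrization satisfies the first order system
\begin{align}\label{plan-system}
\partial_i \phi(x) = \sum_{l=1}^d \Phi_{il}(x) \, B_l \phi(x), \quad i = 1,\ldots,m, \ x \in V .
\end{align}

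The heart of the argument is to identify $\phi$ with $u := \psi \circ g$. These maps agree at $x_0$, since $u(x_0) = \psi(g(x_0)) = \psi(0) = y_0 = \phi(x_0)$, and by the derivative formula for the orbit map (Proposition \ref{prop-degrees-smoothness}, as used in the proof of Theorem \ref{thm-SPDE-group}) $u$ satisfies the same system (\ref{plan-system}). Rather than comparing $\phi$ and $u$ directly, which is delicate because the $B_l$ are unbounded, I would consider $F(x) := T(-g(x)) \phi(x)$ and show it is locally constant. Using $\phi(x) \in G \subset D(B^2)$ (so that $B_l\phi(x) \in H_0 \subset D(B)$), the commutativity $T(t) B_l = B_l T(t)$ on $D(B)$, and the strong differentiability of $t \mapsto T(t) w$ for $w \in D(B)$, a product rule computation gives
\begin{align*}
\partial_i F(x) = -\sum_{l=1}^d \Phi_{il}(x) \, B_l T(-g(x)) \phi(x) + T(-g(x)) \partial_i \phi(x) = 0,
\end{align*}
where the last equality uses (\ref{plan-system}) together with $T(-g(x)) B_l \phi(x) = B_l T(-g(x)) \phi(x)$. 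Hence $F \equiv F(x_0) = y_0$ near $x_0$, so $\phi(x) = T(g(x)) y_0 = \psi(g(x))$ on a neighbourhood $V_0$ of $x_0$. I expect the main obstacle to be the rigorous justification of this product rule: one must split the difference quotient of $T(-g(\cdot))\phi(\cdot)$ into a term acting on the \emph{fixed} vector $\phi(x) \in D(B)$, whose limit is the group derivative, and a remainder, and then use the uniform boundedness of $T$ on compact parameter sets together with $\phi \in C^1(V;H_0)$ to see that the remainder vanishes in $H$.

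Finally I would extract the geometry. Since $g$ is an immersion at $x_0$, by the Local Injectivity Theorem (\cite[Thm. 2.5.10]{Abraham}) I may shrink $V_0$ so that $g|_{V_0}$ is an injective $C^2$-immersion and a homeomorphism onto its image; thus $\caln := g(V_0)$ is an $m$-dimensional $C^2$-submanifold of $\bbr^d$ with one chart and global parametrization $g|_{V_0}$. Choosing an open $U_0 \subset U$ with $U_0 \cap \calm = \phi(V_0)$, the identity $\phi = \psi \circ g$ yields $U_0 \cap \calm = \psi(g(V_0)) = \psi(\caln)$. Moreover $\psi|_{\caln} = \phi \circ (g|_{V_0})^{-1}$ is a homeomorphism of $\caln$ onto $\psi(\caln) = U_0 \cap \calm$, and for each $x = g(\xi) \in \caln$ the chain rule gives $D\psi(x)|_{T_x \caln} \, Dg(\xi) = D\phi(\xi)$, which is injective, so $\psi$ is a $C^2$-immersion on $\caln$. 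By Lemma \ref{lemma-induced} and Definition \ref{def-manifold-embedded} this exhibits $U_0 \cap \calm$ as the submanifold induced by $(\psi,\caln)$, completing the proof.
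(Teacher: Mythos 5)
Your proof is correct and takes essentially the same approach as the paper: the paper recasts (\ref{B-Lambda-A}) and (\ref{nabla-phi}) as the PDE $\nabla \phi = \Phi \, B \phi$ and invokes Proposition \ref{prop-solution-of-PDE}, whose proof is precisely your constancy argument for $F(x) = T(-g(x))\phi(x)$ (the paper works with $F(t) = T(\varphi(s)-\varphi(t))\phi(t)$ for fixed $s$, a trivial variant). The subsequent geometric extraction --- Local Injectivity Theorem, $\caln := g(V_0)$, choice of $U_0$, and the chain-rule verification that $\psi$ is a $C^2$-immersion on $\caln$ --- matches the paper's use of Lemmas \ref{lemma-induced-0} and \ref{lemma-induced} step for step.
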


\begin{proof}
We may assume that the open set $V$ is a connected neighborhood of $x_0$. By (\ref{B-Lambda-A}) and (\ref{nabla-phi}) the mapping $\phi \in C^2(V;H)$ is a $D(B)$-valued solution to the PDE
\begin{align*}
\left\{
\begin{array}{rcl}
\nabla \phi(x) & = & \Phi(x) B \phi(x), \quad x \in V,
\\ \phi(x_0) & = & y_0.
\end{array}
\right.
\end{align*}
By assumption the mapping $\Phi$ has a primitive $\varphi : V \to \bbr^d$. We may assume that $\varphi(x_0) = 0$. Thus, by Proposition \ref{prop-solution-of-PDE} we obtain $\phi = \psi \circ \varphi$. Since $\nabla \varphi = \Phi$ and $\rk \, \Phi(x_0) = m$, the mapping $\varphi$ is a $C^2$-immersion at $x_0$. Hence, by Lemma \ref{lemma-induced-0} there exists an open neighborhood $V_0 \subset V$ of zero such that $\varphi|_{V_0} : V_0 \to \varphi(V_0)$ is a homeomorphism and $\varphi|_{V_0}$ is a $C^2$-immersion. Moreover, by Lemma \ref{lemma-induced} the set $\caln := \varphi(V_0)$ is an $m$-dimensional $C^2$-submanifold of $\bbr^d$. Since $\phi : V \to U \cap \calm$ is a homeomorphism, there exists an open neighborhood $U_0 \subset U$ of $y_0$ such that $\phi(V_0) = U_0 \cap \calm$, and hence $U_0 \cap \calm = \psi(\caln)$. Note that $\psi|_{\caln} : \caln \to \psi(\caln)$ is a homeomorphism, because $\phi|_{V_0} : V_0 \to \psi(\caln)$ and $\varphi|_{V_0} : V_0 \to \caln$ are homeomorphisms. Furthermore, by the chain rule, for each $x \in \caln$ we have
\begin{align*}
D \psi(x)|_{T_x \caln} = D \phi(\xi) D \varphi(\xi)^{-1} \in L(T_x \caln,H),
\end{align*}
where $\xi := \varphi^{-1}(x) \in V_0$, showing that $\psi$ is a $C^2$-immersion on $\caln$.
\end{proof}

\begin{remark}
We may assume that the open set $V$ is a simply connected neighborhood of $x_0$. Then $\Phi$ has a primitive if and only if
\begin{align*}
\frac{\partial \Phi_{ik}}{\partial x_j} = \frac{\partial \Phi_{jk}}{\partial x_i} \quad \text{for all $i,j = 1,\ldots,m$ and $k=1,\ldots,d$.}
\end{align*}
\end{remark}

\subsection{Invariant submanifolds in Hermite Sobolev spaces}\label{sec-examples-HS}

In this section we will apply our findings from Section \ref{sec-ql-group} in order to construct examples of invariant submanifolds in Hermite Sobolev spaces; see Appendix \ref{app-HS-spaces} for further details about Hermite Sobolev spaces. Let $p \in \bbr$ be arbitrary and set $G := \cals_{p+1}(\bbr^d)$, $H_0 := \cals_{p+\frac{1}{2}}(\bbr^d)$ and $H := \cals_p(\bbr^d)$. Furthermore, let $\tau = (\tau_x)_{x \in \bbr^d}$ be the translation group. Let $b \in \cals_{-(p+1)}(\bbr^d;\bbr^d)$ and $\sigma \in \ell^2(\cals_{-(p+1)}(\bbr^d;\bbr^d))$ be given, where for any $q \in \bbr$ we agree on the notation
\begin{align*}
\cals_q(\bbr^d;\bbr^d) := \cals_q(\bbr^d)^{\times d},
\end{align*}
which, endowed with the norm
\begin{align*}
\| f \|_{q,d} := \bigg( \sum_{i=1}^d \| f_i \|_q^2 \bigg)^{1/2}, \quad f \in \cals_q(\bbr^d;\bbr^d),
\end{align*}
is also a separable Hilbert space. Furthermore, the norm on $\ell^2(\cals_{q}(\bbr^d;\bbr^d))$ will be denoted by $\| \cdot \|_{q,\ell^2}$. We define the coefficients $L : G \to H$ and $A^j : G \to H_0$ for $j \in \bbn$ of the SPDE (\ref{SPDE}) as
\begin{align}\label{def-L}
L(y) &:= \frac{1}{2} \sum_{i,j=1}^d ( \langle \sigma,y \rangle \langle \sigma,y \rangle^{\top} )_{ij} \partial_{ij}^2 y - \sum_{i=1}^d \langle b_i,y \rangle \partial_i y,
\\ \label{def-A} A^j(y) &:= - \sum_{i=1}^d \langle \sigma_{i}^j,y \rangle \partial_i y, \quad j \in \bbn,
\end{align}
where $\la \cdot,\cdot \ra$ denotes the dual pair on $\cals_{-(p+1)}(\bbr^d) \times \cals_{p+1}(\bbr^d)$; see Lemma \ref{lemma-duality-pairing} and also Remark \ref{rem-dual-pairing}. Furthermore $\langle \sigma,y \rangle \in \ell^2(\bbr^{d})$ is given by $\langle \sigma,y \rangle := (\langle \sigma^j,y \rangle)_{j \in \bbn}$, where for $c \in \cals_{-(p+1)}(\bbr^d;\bbr^d)$ we define $\la c,y \ra \in \bbr^d$ as $\la c,y \ra := ( \la c_i,y \ra )_{i=1,\ldots,d}$. Recalling the notation introduced in Remark \ref{rem-matrix-mult}, it is obvious that $L : G \to H$ is continuous. In order to analyze the mapping $A := (A^j)_{j \in \bbn}$, for each $j \in \bbn$ we define $\bar{A}^j : H_0 \times G \to H$ as
\begin{align}\label{A-bar-y-z}
\bar{A}^j(y,z) &:= - \sum_{i=1}^d \langle \sigma_{i}^j,z \rangle \partial_i y, \quad (y,z) \in H_0 \times G.
\end{align}
Note that
\begin{align}\label{A-bar-A}
A^j(y) = \bar{A}^j(y,y) \quad \text{for all $y \in G$ and $j \in \bbn$.} 
\end{align}
Moreover, by Lemma \ref{lemma-diff-continuous} for each $j \in \bbn$ the definition (\ref{A-bar-y-z}) provides bounded bilinear operators $\bar{A}^j \in L(G,G;H_0)$ and $\bar{A}^j \in L(H_0,G;H)$.

\begin{lemma}
The following statements are true:
\begin{enumerate}
\item The definition (\ref{A-bar-y-z}) provides a bounded bilinear operator 
\begin{align*}
\bar{A} \in L(G,G;\ell^2(H_0)).
\end{align*}

\item The definition (\ref{def-A}) provides a continuous mapping $A : G \to \ell^2(H_0)$.

\item The definition (\ref{A-bar-y-z}) provides a bounded bilinear operator
\begin{align*}
\bar{A} \in L(H_0,G;\ell^2(H)).
\end{align*}

\item The mapping $G \to H$, $y \mapsto \sum_{j=1}^{\infty} \bar{A}^j(A^j(y),y)$ is well-defined and continuous.
\end{enumerate}
\end{lemma}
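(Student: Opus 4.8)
The plan is to exhibit the map in question as the composition of the continuous map $y \mapsto (A(y),y)$ with a single bounded bilinear operator, thereby reducing both well-definedness and continuity to the already-established part (2) together with the elementary mapping properties of the Hermite scale.

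First I would introduce the ``diagonal'' bilinear operator
\begin{align*}
\tilde{A} : \ell^2(H_0) \times G \to H, \quad \tilde{A}\big( (w^j)_{j \in \bbn}, z \big) := \sum_{j=1}^{\infty} \bar{A}^j(w^j, z),
\end{align*}
and verify that it is well-defined and bounded. Writing out $\bar{A}^j(w^j, z) = -\sum_{i=1}^d \langle \sigma_i^j, z \rangle \partial_i w^j$ and using that $\partial_i \in L(H_0,H)$ by Lemma \ref{lemma-diff-continuous} together with the estimate $|\langle \sigma_i^j, z \rangle| \leq \| \sigma_i^j \|_{-(p+1)} \| z \|_G$ from Lemma \ref{lemma-duality-pairing}, one obtains
\begin{align*}
\sum_{j=1}^{\infty} \| \bar{A}^j(w^j, z) \|_H \leq C \| z \|_G \sum_{i=1}^d \sum_{j=1}^{\infty} \| \sigma_i^j \|_{-(p+1)} \| w^j \|_{H_0}.
\end{align*}
Two applications of the Cauchy--Schwarz inequality (in $j$ for each fixed $i$, then in $i$) bound the right-hand side by $C\, d^{1/2} \| \sigma \|_{-(p+1),\ell^2} \| z \|_G \| (w^j)_{j} \|_{\ell^2(H_0)}$, so that the series defining $\tilde{A}$ converges absolutely in $H$ and $\tilde{A} \in L(\ell^2(H_0),G;H)$.

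The second step is to observe that, by construction, $\sum_{j=1}^{\infty} \bar{A}^j(A^j(y),y) = \tilde{A}(A(y),y)$ for every $y \in G$. Since part (2) of the lemma gives that $A : G \to \ell^2(H_0)$ is continuous, the map $y \mapsto (A(y),y)$ is continuous from $G$ into $\ell^2(H_0) \times G$; composing with the bounded bilinear operator $\tilde{A}$ then shows that $y \mapsto \tilde{A}(A(y),y)$ is well-defined and continuous, which is the assertion.

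The main obstacle --- though a routine one --- is the boundedness of $\tilde{A}$: note that part (3) cannot be invoked verbatim, since there the first argument of each $\bar{A}^j$ is a fixed element of $H_0$, whereas here it varies with $j$ through $A^j(y)$, so the relevant domain is $\ell^2(H_0)$ rather than $H_0$. What makes the estimate close is the interplay of the square-summability of $\sigma$ in $\cals_{-(p+1)}(\bbr^d;\bbr^d)$ with the square-summability of $(A^j(y))_j$ in $H_0$ supplied by part (2). As a cross-check one may compute the scalar coefficients explicitly: since $\partial_i A^j(y) = -\sum_{k=1}^d \langle \sigma_k^j, y \rangle \partial_{ik}^2 y$, the series collapses to the finite sum $\sum_{i,k=1}^d (\langle \sigma, y \rangle \langle \sigma, y \rangle^{\top})_{ik}\, \partial_{ik}^2 y$, which manifestly lies in $H$ and depends continuously (indeed quadratically) on $y$, consistently with the structure of the second-order part of $L$ in (\ref{def-L}).
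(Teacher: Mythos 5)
Your argument for part (4) is correct and is essentially the paper's own proof. Your diagonal operator $\tilde{A}$ is exactly the paper's componentwise operator $\bar{B} \in L(\ell^2(H_0),G;\ell^1(H))$, $\bar{B}^j(w,z) := \bar{A}^j(w^j,z)$, composed with the summation map $\ell^1(H) \to H$, $z \mapsto \sum_{j=1}^{\infty} z^j$, which is bounded by Lemma \ref{lemma-evaluation}; your Cauchy--Schwarz estimate is the same one, and the conclusion by composing with the continuous map $y \mapsto (A(y),y)$ is how the paper finishes as well. The only difference is cosmetic: the paper factors explicitly through $\ell^1(H)$, whereas you sum directly into $H$; nothing is gained or lost either way. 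Your closing remark --- that part (3) cannot be cited verbatim because the first argument varies with $j$, so the correct domain is $\ell^2(H_0)$ --- is precisely the reason the paper introduces $\bar{B}$, and your identification of $\sum_{j=1}^{\infty} \bar{A}^j(A^j(y),y)$ with the second-order term of $L$ is the computation the paper carries out later in Lemma \ref{lemma-inv-b}.

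The one genuine shortcoming is scope: the lemma has four parts, and your proposal proves only part (4), invoking part (2) as ``already established'' when it is itself part of the statement. This is not circular (part (2) nowhere depends on part (4)), but it leaves (1)--(3) unproven. They follow from the same two ingredients you already quoted. For (1): for $y,z \in G$ one has $\| \bar{A}(y,z) \|_{\ell^2(H_0)}^2 \leq d \sum_{j=1}^{\infty} \sum_{i=1}^d |\langle \sigma_i^j,z\rangle|^2 \, \|\partial_i y\|_{H_0}^2 \leq C d \, \| \sigma \|_{-(p+1),\ell^2}^2 \, \| y \|_G^2 \, \| z \|_G^2$, using the duality bound together with $\partial_i \in L(G,H_0)$, i.e.\ Lemma \ref{lemma-diff-continuous} applied at level $p+\tfrac{1}{2}$; part (2) then follows since $A(y) = \bar{A}(y,y)$ and a bounded bilinear map is continuous along the diagonal; and part (3) is the identical estimate with $y \in H_0$ and $\partial_i \in L(H_0,H)$. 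With these routine lines added, your proof is complete and coincides with the paper's.
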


\begin{proof}
Using Lemma \ref{lemma-diff-continuous}, for all $y,z \in G$ we have
\begin{align*}
\| \bar{A}(y,z) \|_{\ell^2(H_0)}^2 &\leq d \sum_{j=1}^{\infty} \sum_{i=1}^d |\langle \sigma_{i}^j,z \rangle|^2 \| \partial_i y \|_{H_0}^2 \leq d \sum_{j=1}^{\infty} \sum_{i=1}^d \| \sigma_{i}^j \|_{-(p+1)}^2 \| z \|_G^2 \| y \|_{G}^2
\\ &\leq Cd \| \sigma \|_{-(p+1),\ell^2}^2 \| y \|_G^2 \| z \|_G^2
\end{align*}
with a universal constant $C > 0$, proving the first statement. Since we have (\ref{A-bar-A}), the second statement follows as well. Similarly, using Lemma \ref{lemma-diff-continuous}, for all $y \in H_0$ and $z \in G$ we have
\begin{align*}
\| \bar{A}(y,z) \|_{\ell^2(H)}^2 &\leq d \sum_{i=1}^d |\langle \sigma_{i}^j,z \rangle|^2 \| \partial_i y \|_{H}^2 \leq d \sum_{i=1}^d \| \sigma_{i}^j \|_{-(p+1)}^2 \| z \|_G^2 \| y \|_{H_0}^2
\\ &\leq Cd \| \sigma \|_{-(p+1),\ell^2}^2 \| y \|_{H_0}^2 \| z \|_G^2
\end{align*}
with a universal constant $C > 0$, proving the third statement. Now, for $j \in \bbn$ we define $\bar{B}^j : \ell^2(H_0) \times G \to H$ as
\begin{align*}
\bar{B}^j(y,z) := \bar{A}^j(y^j,z).
\end{align*}
Then $\bar{B} := (\bar{B}^j)_{j \in \bbn}$ provides a bounded bilinear operator
\begin{align*}
\bar{B} \in L(\ell^2(H_0),G;\ell^1(H)).
\end{align*}
Indeed, using Lemma \ref{lemma-diff-continuous} and the Cauchy-Schwarz inequality, for all $y \in \ell^2(H_0)$ and $z \in G$ we obtain
\begin{align*}
\| \bar{B}(y,z) \|_{\ell^1(H)} &= \sum_{j=1}^{\infty} \| \bar{A}^j(y^j,z) \|_H \leq C \sum_{j=1}^{\infty} \| \sigma^j \|_{-(p+1),d} \| y^j \|_{H_0} \| z \|_G
\\ &\leq C \| \sigma \|_{-(p+1),\ell^2} \| y \|_{\ell^2(H_0)} \| z \|_G
\end{align*}
with a universal constant $C > 0$. By Lemma \ref{lemma-evaluation} the mapping $\ell^1(H) \to H$, $z \mapsto \sum_{j=1}^{\infty} z^j$ belongs to $L(\ell^1(H),H)$. Therefore, and since $A : G \to \ell^2(H_0)$ is continuous, the proof is completed.
\end{proof}

\begin{remark}\label{rem-non-smoothness}
Note that the mapping $A : G \to \ell^2(H_0)$ generally does not satisfy the smoothness assumption imposed in Theorem \ref{thm-main-2}, where it is required that for every $j \in \bbn$ the mapping $A^j \in C(G;H_0)$ admits an extension $A^j \in C^1(H_0;H)$. Indeed, for this we would need that for all $i=1,\ldots,d$ and all $j \in \bbn$ the continuous linear functional $\la \sigma_i^j,\cdot \ra : G \to \bbr$ admits a continuous extension $\la \sigma_i^j,\cdot \ra : H_0 \to \bbr$, and this is only true if we make the stronger assumption $\sigma \in \ell^2(\cals_{-(p+\frac{1}{2})}(\bbr^d;\bbr^d))$.
\end{remark}

Let $\Phi \in G$ be arbitrary, and denote by $\psi \in C^2(\bbr^d;H)$ the orbit map given by $\psi(x) = \tau_x \Phi$ for each $x \in \bbr^d$. Due to our results from Section \ref{sec-manifolds-HS} we are in the mathematical setting of Section \ref{sec-ql-group}. In particular, by Proposition \ref{prop-Sp-in-domain} we have $H_0 \subset D(-\partial)$ and $G \subset D((-\partial)^2)$. Let $\caln$ be an $m$-dimensional $C^2$-submanifold of $\bbr^d$, and let $\calm$ be an $m$-dimensional $(G,H_0,H)$-submanifold of class $C^2$, which is induced by $(\psi,\caln)$. Recall that this requires that $\psi|_{\caln} : \caln \to \psi(\caln)$ is a homeomorphism, and that $\psi$ is a $C^2$-immersion on $\caln$.

\begin{theorem}\label{thm-M-N-HS-spaces}
The following statements are equivalent:
\begin{enumerate}
\item[(i)] The submanifold $\calm$ is locally invariant for the SPDE (\ref{SPDE}).

\item[(ii)] The submanifold $\caln$ is locally invariant for the $\bbr^d$-valued SDE
\begin{align}\label{SDE-bar-2}
\left\{
\begin{array}{rcl}
dX_t & = & \bar{b}(X_t) dt + \bar{\sigma}(X_t) dW_t
\\ X_0 & = & x_0,
\end{array}
\right.
\end{align}
where the continuous mappings $\bar{\sigma} : \bbr^d \to \ell^2(\bbr^d)$ and $\bar{b} : \bbr^d \to \bbr^d$ are defined as
\begin{align}\label{bar-sigma}
\bar{\sigma}^j &:= \la \sigma^j,\psi(\cdot) \ra, \quad j \in \bbn,
\\ \label{bar-b} \bar{b} &:= \la b,\psi(\cdot) \ra.
\end{align}
\end{enumerate}
\end{theorem}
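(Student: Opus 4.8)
The plan is to reduce everything to Theorem \ref{thm-SPDE-group}, which already provides exactly this equivalence for a general multi-parameter $C_0$-group, provided the coefficients of the SPDE are of the form (\ref{L-Rajeev}) and (\ref{A-Rajeev}) for the unique solutions $\bar{b},\bar{\sigma}$ of those equations. Thus the whole matter comes down to two tasks: first, confirming that the present Hermite Sobolev setting with the translation group $\tau$ genuinely fits the framework of Section \ref{sec-ql-group}; and second, verifying by a direct calculation that the explicit coefficients (\ref{def-L})--(\ref{def-A}) are precisely of the form (\ref{L-Rajeev})--(\ref{A-Rajeev}) with $\bar{b},\bar{\sigma}$ given by (\ref{bar-b})--(\ref{bar-sigma}).

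First I would confirm the structural hypotheses. The triple $(G,H_0,H)=(\cals_{p+1}(\bbr^d),\cals_{p+\frac{1}{2}}(\bbr^d),\cals_p(\bbr^d))$ consists of continuously embedded separable Hilbert spaces, and $\tau$ restricts to a multi-parameter $C_0$-group on each of them. By Proposition \ref{prop-Sp-in-domain} we have $H_0 \subset D(B)$ and $G \subset D(B^2)$, where $B=(B_1,\ldots,B_d)$ denotes the generator of $\tau$, together with the identities $B_i \Phi = -\partial_i \Phi$ and $B_{ik}\Phi = \partial_{ik}^2 \Phi$ on the respective scales (the two sign changes cancelling). Lemma \ref{lemma-diff-continuous} yields $B_i|_G \in L(G,H_0)$ and $B_i|_{H_0}\in L(H_0,H)$. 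Hence we are precisely in the situation of Section \ref{sec-ql-group}, with $\calm$ induced by $(\psi,\caln)$ as required.

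The key step is the coefficient identification. Fix $y \in \calm$ and set $x := \psi^{-1}(y)\in \caln$, so that $y = \tau_x \Phi = \psi(x)$. From (\ref{bar-sigma}) we read off $\bar{\sigma}_i^j(x) = \la \sigma_i^j, \psi(x)\ra = \la \sigma_i^j, y\ra$, and from (\ref{bar-b}) we get $\bar{b}_i(x) = \la b_i, y\ra$. Using $B_i|_{\calm} y = -\partial_i y$ and $B_{ik}|_{\calm} y = \partial_{ik}^2 y$ (valid since $\calm \subset G = D(B^2)$), the right-hand side of (\ref{A-Rajeev}) becomes $\sum_{i=1}^d \la \sigma_i^j, y\ra(-\partial_i y)$, which is exactly $A^j(y)$ of (\ref{def-A}); and since $(\bar{\sigma}\bar{\sigma}^{\top})_{ik}(x) = \sum_{j=1}^{\infty}\la\sigma_i^j,y\ra\la\sigma_k^j,y\ra = (\la\sigma,y\ra\la\sigma,y\ra^{\top})_{ik}$ by the convention of Remark \ref{rem-matrix-mult}, the right-hand side of (\ref{L-Rajeev}) becomes $\frac{1}{2}\sum_{i,k=1}^d(\la\sigma,y\ra\la\sigma,y\ra^{\top})_{ik}\partial_{ik}^2 y - \sum_{i=1}^d \la b_i,y\ra\partial_i y$, which is $L(y)$ of (\ref{def-L}). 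Therefore the continuous maps $\bar{b},\bar{\sigma}$ of (\ref{bar-b})--(\ref{bar-sigma}), restricted to $\caln$, solve (\ref{L-Rajeev})--(\ref{A-Rajeev}); by the uniqueness asserted in Theorem \ref{thm-SPDE-group} they are the coefficients appearing there, and the equivalence (i) $\Leftrightarrow$ (ii) follows at once. Note that (\ref{bar-b})--(\ref{bar-sigma}) define $\bar{b},\bar{\sigma}$ on all of $\bbr^d$, but local invariance of $\caln$ for the SDE depends only on their restriction to $\caln$, so this causes no discrepancy.

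I expect the only genuinely delicate points to be the bookkeeping around the generator --- keeping the sign conventions $B_i=-\partial_i$ and $B_{ik}=\partial_{ik}^2$ straight and matching $\bar{\sigma}\bar{\sigma}^{\top}$ against $\la\sigma,y\ra\la\sigma,y\ra^{\top}$ through Remark \ref{rem-matrix-mult} --- together with the continuity of $\bar{b},\bar{\sigma}$, which uses the strong continuity of the orbit map $\xi_{\Phi}$ into $G$ (Proposition \ref{prop-orbit-in-Sp}) combined with the $\ell^2$-summability of $\sigma$. None of this is conceptually hard; it merely has to be executed carefully so that the explicit coefficients are seen to coincide exactly with the abstract ones supplied by Theorem \ref{thm-SPDE-group}.
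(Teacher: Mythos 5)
Your proposal is correct and follows exactly the paper's route: the paper proves this theorem by observing that, with the definitions (\ref{def-L}) and (\ref{def-A}) and the generator identities $B_i = -\partial_i$, $B_{ik} = \partial^2_{ik}$ on the Hermite Sobolev scale, the statement is an immediate consequence of Theorem \ref{thm-SPDE-group}. The structural verification and sign bookkeeping you carry out are precisely the points the paper establishes in the text of Section \ref{sec-examples-HS} preceding the theorem (via Proposition \ref{prop-Sp-in-domain} and Lemma \ref{lemma-diff-continuous}) and inside the proof of Theorem \ref{thm-SPDE-group} itself, so your write-up simply makes explicit what the paper's one-line proof leaves implicit.
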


\begin{proof}
Noting the definitions (\ref{def-A}) and (\ref{def-L}), this is a consequence of Theorem \ref{thm-SPDE-group}.
\end{proof}

\begin{proposition}\label{prop-M-N-HS-spaces}
Suppose that the following conditions are fulfilled:
\begin{enumerate}
\item The submanifold $\calm$ is locally invariant for the SPDE (\ref{SPDE}).

\item The submanifold $\caln$ has one chart with a global parametrization $\varphi : V \to \caln$.

\item The open set $V$ is globally invariant for the $\bbr^m$-valued SDE (\ref{global-1}), whose coefficients $a : V \to \ell^2(\bbr^m)$ and $\ell : V \to \bbr^m$ are the unique solutions of the equations
\begin{align*}
\bar{\sigma}^j|_{\caln} &= \varphi_* a^j, \quad j \in \bbn,
\\ \bar{b}|_{\caln} &= \varphi_* \ell + \frac{1}{2} \sum_{j=1}^{\infty} \varphi_{**}(a^j,a^j),
\end{align*}
where the continuous mappings $\bar{\sigma} : \bbr^d \to \ell^2(\bbr^d)$ and $\bar{b} : \bbr^d \to \bbr^d$ are given by (\ref{bar-sigma}) and (\ref{bar-b})
\end{enumerate}
Then the submanifold $\calm$ is globally invariant for the SPDE (\ref{SPDE}), and the submanifold $\caln$ is globally invariant for the SDE (\ref{SDE-bar-2}).
\end{proposition}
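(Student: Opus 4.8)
The plan is to recognize this proposition as the Hermite--Sobolev-space specialization of Proposition \ref{prop-SPDE-group} and to deduce it directly from that result. As set up at the beginning of this section, we are in the abstract framework of Section \ref{sec-ql-group}, with $H$, $H_0$, $G$ taken to be the Hermite Sobolev spaces $\cals_p(\bbr^d)$, $\cals_{p+\frac{1}{2}}(\bbr^d)$, $\cals_{p+1}(\bbr^d)$, and with the multi-parameter $C_0$-group $T$ chosen to be the translation group $\tau$, whose generator is $B = (-\partial_1,\ldots,-\partial_d)$ by Theorem \ref{thm-Sp-D-1} and Proposition \ref{prop-Sp-in-domain}. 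The orbit map $\psi(x) = \tau_x \Phi$ induces $\calm$ from $\caln$, exactly as required by that section.

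First I would invoke the coefficient identification carried out in the proof of Theorem \ref{thm-M-N-HS-spaces}. There, using the explicit forms (\ref{def-L}) and (\ref{def-A}) of $L$ and $A$ together with $B_i = -\partial_i$, one recognizes that $L$ and $A$ are precisely of the shape (\ref{L-Rajeev}) and (\ref{A-Rajeev}), and that the unique solutions $\bar{\sigma}, \bar{b}$ of those equations are exactly the mappings given by (\ref{bar-sigma}) and (\ref{bar-b}), namely $\bar{\sigma}^j = \la \sigma^j,\psi(\cdot) \ra$ and $\bar{b} = \la b,\psi(\cdot) \ra$. Consequently, the functions $\bar{\sigma}$ and $\bar{b}$ appearing in hypothesis (3) of the present proposition coincide with the unique solutions of (\ref{L-Rajeev}) and (\ref{A-Rajeev}) demanded by Proposition \ref{prop-SPDE-group}.

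With this identification in hand, all three hypotheses of Proposition \ref{prop-SPDE-group} are satisfied: its condition (1) is the local invariance of $\calm$ assumed here, its condition (2) is the one-chart assumption on $\caln$ with global parametrization $\varphi : V \to \caln$, and its condition (3) is the global invariance of the open set $V$ for the $\bbr^m$-valued SDE (\ref{global-1}) whose coefficients $a, \ell$ are the unique solutions of (\ref{global-2}) and (\ref{global-3}), now expressed through the $\bar{\sigma}, \bar{b}$ just identified. Applying Proposition \ref{prop-SPDE-group} then yields that $\calm$ is globally invariant for the SPDE (\ref{SPDE}) and that $\caln$ is globally invariant for the SDE (\ref{SDE-bar}), which in the present Hermite Sobolev setting is verbatim the SDE (\ref{SDE-bar-2}).

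The only point requiring genuine attention is the coefficient matching of the second step; since that computation was already discharged in the proof of Theorem \ref{thm-M-N-HS-spaces}, no real obstacle remains, and the statement follows as an immediate corollary of Proposition \ref{prop-SPDE-group}.
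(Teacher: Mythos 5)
Your proposal is correct and follows exactly the paper's route: the paper's own proof is the one-line statement that the proposition is a consequence of Proposition \ref{prop-SPDE-group}, relying on the coefficient identification $\bar{\sigma}^j = \la \sigma^j,\psi(\cdot) \ra$, $\bar{b} = \la b,\psi(\cdot) \ra$ already established via Theorem \ref{thm-M-N-HS-spaces}. You have merely made explicit the matching of hypotheses and coefficients that the paper leaves implicit, which is a faithful elaboration rather than a different argument.
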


\begin{proof}
This is a consequence of Proposition \ref{prop-SPDE-group}.
\end{proof}

Now, we will construct some examples of induced submanifolds which are invariant for the SPDE (\ref{SPDE}) with coefficients given by (\ref{def-L}) and  (\ref{def-A}). Recall that $b \in \cals_{-(p+1)}(\bbr^d;\bbr^d)$ and $\sigma \in \ell^2(\cals_{-(p+1)}(\bbr^d;\bbr^d))$, and that $\la \cdot,\cdot \ra$ denotes the dual pair on $\cals_{-(p+1)}(\bbr^d) \times \cals_{p+1}(\bbr^d)$. In each of the upcoming examples, we will impose conditions on the choice of $p$. Also recall that $\bbr^m \times \{ 0 \} \subset \bbr^d$ denotes the subspace $\bbr^m \times \{ 0 \} = \lin \{ e_1,\ldots,e_m \}$, where $e_1,\ldots,e_m \in \bbr^d$ are the first $m$ unit vectors. The following examples of invariant submanifolds are consequences of Theorem \ref{thm-M-N-HS-spaces}, Proposition \ref{prop-M-N-HS-spaces} and Examples \ref{ex-manifolds-HS-spaces} with $k=2$.

\begin{example}[Distributions given by measures]\label{example-measures}
We choose $p \in \bbr$ such that $p+1 < - \frac{d}{4}$, and let $\Phi = \mu \in G$ be a finite signed measure on $(\bbr^d,\calb(\bbr^d))$ with compact support such that $\mu(\bbr^d) \neq 0$. Furthermore, setting $\caln := \bbr^m \times \{ 0 \}$ we assume that for all $x \in \caln$ we have
\begin{align}\label{linear-1a}
&\la b,\tau_x \mu \ra \in \caln,
\\ \label{linear-1b} &\la \sigma^j, \tau_x \mu \ra \in \caln, \quad j \in \bbn.
\end{align}
Then the set
\begin{align*}
\calm := \psi(\caln) = \{ \tau_x \mu : x \in \caln \}
\end{align*}
is an $m$-dimensional $(G,H_0,H)$-submanifold of class $C^2$ with one chart, which is globally invariant for the SPDE (\ref{SPDE}). The global invariance follows from Remark \ref{rem-global-weak-solution}, because the coefficients $a : \bbr^m \to \ell^2(\bbr^m)$ and $\ell : \bbr^m \to \bbr^m$ are bounded by virtue of Lemma \ref{lemma-distr-int-ex}.
\end{example}

\begin{example}[Distributions given by polynomials]
We choose $p \in \bbr$ such that $p+1 < - \frac{d}{4} - \frac{n}{2}$ for some $n \in \bbn$ such that $m \leq n \leq d$, and let $\Phi = f \in G$ be the polynomial $f : \bbr^d \to \bbr$ given by $f(x) = x_1 \cdot \ldots \cdot x_n$. Furthermore, setting $\caln := \bbr^m \times \{ 0 \}$ we assume that for all $x \in \caln$ we have
\begin{align}\label{linear-2a}
&\la b,\tau_x f \ra \in \caln,
\\ \label{linear-2b} &\la \sigma^j, \tau_x f \ra \in \caln, \quad j \in \bbn.
\end{align}
Then the set
\begin{align*}
\calm := \psi(\caln) = \{ \tau_x f : x \in \caln \}
\end{align*}
is an $m$-dimensional $(G,H_0,H)$-submanifold of class $C^2$ with one chart, which is locally invariant for the SPDE (\ref{SPDE}). If $m = n = 1$, which means that $\caln = \bbr \times \{ 0 \}$ and $f(x) = x_1$, then $\calm$ is even globally invariant for the SPDE (\ref{SPDE}). Taking into account Remark \ref{rem-global-weak-solution}, this follows from Lemma \ref{lemma-pol-lin-growth}, which ensures that the coefficients $a : \bbr^m \to \ell^2(\bbr^m)$ and $\ell : \bbr^m \to \bbr^m$ satisfy the linear growth condition.
\end{example}

For the next example, recall the Sobolev embedding theorem for Hermite Sobolev spaces (see Theorem \ref{thm-Ck-versions}).

\begin{example}[Distributions given by $C^1$-functions]
We choose $p \in \bbr$ such that $p+1 > \frac{d}{4} + \frac{1}{2}$, and let $\Phi = \varphi \in G$ be arbitrary. Setting $\caln := \bbr^m \times \{ 0 \}$, we assume there are $z_1,\ldots,z_m \in \bbr^d$ such that the matrix $(\partial_i \varphi(z_j))_{i,j=1,\ldots,m} \in \bbr^{m \times m}$ is invertible, and we assume that for all $x \in \caln$ we have
\begin{align}\label{linear-3a}
&\la b,\tau_x \varphi \ra \in \caln,
\\ \label{linear-3b} &\la \sigma^j, \tau_x \varphi \ra \in \caln, \quad j \in \bbn.
\end{align}
Then the set
\begin{align*}
\calm := \psi(\caln) = \{ \tau_x \varphi : x \in \caln \}
\end{align*}
is an $m$-dimensional $(G,H_0,H)$-submanifold of class $C^2$ with one chart, which is locally invariant for the SPDE (\ref{SPDE}). Note that the invertibility of the matrix $(\partial_i \varphi(z_j))_{i,j=1,\ldots,m}$ is required in order to ensure that $\psi$ is an immersion on $\caln$; see Proposition \ref{prop-psi-function}. If $b_i \in L^2(\bbr^d)$ for $i=1,\ldots,d$ and $\sigma_{i}^j \in L^2(\bbr^d)$ for $i=1,\ldots,d$ and $j \in \bbn$, then $\calm$ is even globally invariant for the SPDE (\ref{SPDE}). This follows from Remark \ref{rem-global-weak-solution}, because, recalling that $L^2(\bbr^d) = \cals_0(\bbr^d)$, by Lemma \ref{lemma-L2-bounded} the coefficients $a : \bbr^m \to \ell^2(\bbr^m)$ and $\ell : \bbr^m \to \bbr^m$ are bounded.
\end{example}

\begin{remark}
Note that in each of the previous examples we have considered the submanifold $\caln := \bbr^m \times \{ 0 \}$, which ensures that in any case the assumptions from Examples \ref{ex-manifolds-HS-spaces} concerning $\caln$ are fulfilled. Since the submanifold $\caln$ is a linear space, in any case the respective conditions (\ref{linear-1a})--(\ref{linear-1b}), (\ref{linear-2a})--(\ref{linear-2b}) or (\ref{linear-3a})--(\ref{linear-3b}) ensures that $\caln$ is locally invariant for the SDE (\ref{SDE-bar-2}); see Corollary \ref{cor-affine}. Of course, we can also consider other choices of the submanifold $\caln$ such that the assumptions from Examples \ref{ex-manifolds-HS-spaces} are fulfilled. In particular, noting Theorem \ref{thm-SPDE}, in the situation of Example \ref{example-measures} we can choose any $m$-dimensional $C^2$-submanifold $\caln$ of $\bbr^d$ such that
\begin{align*}
&\bar{\sigma}^j|_{\caln} \in \Gamma(T \caln), \quad j \in \bbn,
\\ &[\bar{b}|_{\caln}]_{\Gamma(T \caln)} - \frac{1}{2} \sum_{j=1}^{\infty} [ \bar{\sigma}^j|_{\caln}, \bar{\sigma}^j|_{\caln} ]_{\caln} = [0]_{\Gamma(T \caln)},
\end{align*}
where the continuous mappings $\bar{\sigma} : \bbr^d \to \ell^2(\bbr^d)$ and $\bar{b} : \bbr^d \to \bbr^d$ are defined as
\begin{align*}
\bar{\sigma}^j(x) &:= \la \sigma^j,\tau_x \mu \ra, \quad j \in \bbn,
\\ \bar{b}(x) &:= \la b,\tau_x \mu \ra
\end{align*}
for each $x \in \bbr^d$.
\end{remark}

\begin{remark}\label{rem-delta-0}
Consider the particular situation $m=d$, $\caln = \bbr^d$ and $\Phi = \delta_0$, which is covered by Example \ref{example-measures}. Then, by Lemma \ref{lemma-delta-distr} the invariant submanifold is given by
\begin{align*}
\calm = \{ \delta_x : x \in \bbr^d \},
\end{align*}
and the coefficients of the SDE (\ref{SDE-bar-2}) are simply given by $\bar{b} = b$ and $\bar{\sigma} = \sigma$.
\end{remark}

\begin{remark}
Note that the findings of this section are in accordance with \cite[Lemma 3.6]{Rajeev}, where it was shown that solutions to the SPDE (\ref{SPDE}) with coefficients (\ref{def-L}) and (\ref{def-A}) can be realized locally as $Y_t = \tau_{X_t} \Phi$ with an $\bbr^d$-valued It\^{o} process $X$.
\end{remark}

\section{Interplay between SPDEs and finite dimensional SDEs}\label{sec-interplay}

In this section we illustrate how our findings from the previous Section \ref{sec-examples-HS} can be used in order to study stochastic invariance for finite dimensional diffusions. Consider the $\bbr^d$-valued SDE
\begin{align}\label{SDE-b-sigma}
\left\{
\begin{array}{rcl}
dX_t & = & b(X_t)dt + \sigma(X_t) dW_t
\\ X_0 & = & x_0
\end{array}
\right.
\end{align}
with measurable mappings $b : \bbr^d \to \bbr^d$ and $\sigma : \bbr^d \to \ell^2(\bbr^d)$. We assume that for some $q > \frac{d}{4}$ we have $b \in \cals_q(\bbr^d;\bbr^d)$ and $\sigma \in \ell^2(\cals_q(\bbr^d;\bbr^d))$. 

\begin{remark}
Note that sufficient conditions for the assumption that the components of $b$ and $\sigma$ belong to $\cals_q(\bbr^d)$ are provided by Proposition \ref{prop-Sobolev-embedding} and Corollary \ref{cor-Sobolev-embedding}.
\end{remark}

\begin{lemma}
The mappings $b : \bbr^d \to \bbr^d$ are $\sigma : \bbr^d \to \ell^2(\bbr^d)$ are continuous and bounded.
\end{lemma}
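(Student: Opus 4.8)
The plan is to reduce everything to the Sobolev embedding theorem for Hermite Sobolev spaces (Theorem \ref{thm-Ck-versions}) applied with differentiation order zero. Since $q > \frac{d}{4}$, that result provides a continuous embedding $\cals_q(\bbr^d) \hookrightarrow C_0(\bbr^d)$; in particular there is a constant $C > 0$ such that $\| f \|_{\infty} \le C \| f \|_q$ for every $f \in \cals_q(\bbr^d)$, where $\|\cdot\|_{\infty}$ denotes the supremum norm. Consequently each scalar ingredient $b_i$ and $\sigma_i^j$ (which by assumption lies in $\cals_q(\bbr^d)$, hence, as $q>0$, is identified with a genuine function) has a continuous representative vanishing at infinity, and I identify the given measurable mappings with these representatives.

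First I would dispose of $b$. Its finitely many components $b_1,\dots,b_d$ lie in $\cals_q(\bbr^d) \subset C_0(\bbr^d)$, hence are continuous and satisfy $|b_i(x)| \le C\|b_i\|_q$ for all $x$; therefore $b$ is continuous and bounded, with $\sup_{x \in \bbr^d} |b(x)| \le C \|b\|_{q,d}$. This is immediate precisely because only finitely many coordinates are involved.

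The substantive point is the $\ell^2(\bbr^d)$-valued map $\sigma$. For boundedness I would estimate termwise, for arbitrary $x \in \bbr^d$,
\[
\| \sigma(x) \|_{\ell^2(\bbr^d)}^2 = \sum_{j=1}^{\infty} \sum_{i=1}^d |\sigma_i^j(x)|^2 \le C^2 \sum_{j=1}^{\infty} \sum_{i=1}^d \|\sigma_i^j\|_q^2 = C^2 \sum_{j=1}^{\infty} \|\sigma^j\|_{q,d}^2 = C^2 \|\sigma\|_{q,\ell^2}^2 < \infty .
\]
For continuity I would approximate $\sigma$ uniformly by its truncations $\sigma_N(x) := (\sigma^1(x),\dots,\sigma^N(x),0,0,\dots)$. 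Each $\sigma_N : \bbr^d \to \ell^2(\bbr^d)$ is continuous, being assembled from finitely many continuous scalar functions, and the same termwise estimate gives
\[
\sup_{x \in \bbr^d} \| \sigma(x) - \sigma_N(x) \|_{\ell^2(\bbr^d)}^2 \le C^2 \sum_{j > N} \|\sigma^j\|_{q,d}^2 \longrightarrow 0 \quad \text{as } N \to \infty ,
\]
since the tail of the convergent series $\sum_{j} \|\sigma^j\|_{q,d}^2 = \|\sigma\|_{q,\ell^2}^2$ tends to zero. Thus $\sigma$ is a uniform limit of continuous $\ell^2(\bbr^d)$-valued maps, hence itself continuous.

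The only genuine obstacle is the continuity of $\sigma$: pointwise continuity of each coordinate does not by itself give continuity into $\ell^2(\bbr^d)$, and I expect to need exactly the uniform-approximation argument above, whose convergence is controlled by the $\ell^2$-summability of $(\|\sigma^j\|_{q,d})_{j \in \bbn}$ together with the uniform Sobolev bound $\|\cdot\|_{\infty} \le C \|\cdot\|_q$. Everything else is a direct consequence of Theorem \ref{thm-Ck-versions}.
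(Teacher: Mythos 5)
Your proof is correct and follows essentially the same route as the paper: both rest on the Sobolev embedding theorem for Hermite Sobolev spaces (Theorem \ref{thm-Ck-versions}), which yields the uniform bound $\| \sigma^j(x) \| \leq C \| \sigma^j \|_{q,d}$, combined with the $\ell^2$-summability of $( \| \sigma^j \|_{q,d} )_{j \in \bbn}$. The only cosmetic difference is that where you conclude continuity of $\sigma$ by uniform approximation with truncations, the paper invokes Lebesgue's dominated convergence theorem (with respect to counting measure on $\bbn$) --- the two arguments are interchangeable here, both being controlled by the same tail estimate.
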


\begin{proof}
By the Sobolev embedding theorem for Hermite Sobolev spaces (Theorem \ref{thm-Ck-versions}) the mapping $b : \bbr^d \to \bbr^d$ is continuous and bounded, and for each $j \in \bbn$ the mapping $\sigma^j : \bbr^d \to \bbr^d$ is continuous and bounded. Let $x \in \bbr^d$ and $j \in \bbn$ be arbitrary. By Theorem \ref{thm-Ck-versions} we have
\begin{align*}
\| \sigma^j(x) \| \leq C \| \sigma^j \|_{q,d}
\end{align*}
with a universal constant $C > 0$. Therefore, by Lebesgue's dominated convergence theorem the claim follows.
\end{proof}

Consequently, by Remark \ref{rem-global-weak-solution} for each $x_0 \in \bbr^d$ there exists a global weak solution $X$ to the SDE (\ref{SDE-b-sigma}) with $X_0 = x_0$. Let $\caln$ be an $m$-dimensional $C^2$-submanifold of $\bbr^d$ for some $m \leq d$. Taking into account Remark \ref{rem-delta-0}, our idea is to link invariance of the submanifold $\caln$ for the SDE (\ref{SDE-b-sigma}) with invariance of the submanifold $\calm$ for the SPDE (\ref{SPDE}) in Hermite Sobolev spaces, where $\calm$ is defined in (\ref{M-intro}) below. For this purpose, we set $p := -(q+1)$. Then we have $q = -(p+1)$ as well as $p+1 < -\frac{d}{4}$, and hence, we can consider the SPDE (\ref{SPDE}) with coefficients (\ref{def-L}) and (\ref{def-A}) in the framework of the previous Section \ref{sec-examples-HS} with $G = \cals_{-q}(\bbr^d)$, $H_0 = \cals_{-(q + \frac{1}{2})}(\bbr^d)$, $H = \cals_{-(q+1)}(\bbr^d)$ and $\Phi = \delta_0$. As pointed out in Remark \ref{rem-delta-0}, then the coefficients of the SDE (\ref{SDE-bar-2}) are simply given by $\bar{b} = b$ and $\bar{\sigma} = \sigma$, and hence, the SDE (\ref{SDE-b-sigma}) from this section coincides with the SDE (\ref{SDE-bar-2}). By Lemma \ref{lemma-delta-distr}, the orbit map $\psi \in C^2(\bbr^d;H)$ is given by $\psi(x) = \delta_x$ for each $x \in \bbr^d$. Therefore, by Examples \ref{ex-manifolds-HS-spaces} with $k=2$ the set
\begin{align}\label{M-intro}
\calm := \psi(\caln) = \{ \delta_x : x \in \caln \}
\end{align}
is a $d$-dimensional $(G,H_0,H)$-submanifold of class $C^2$, which is induced by $(\psi,\caln)$. The following result shows how local invariance of the submanifold $\caln$ for the SDE (\ref{SDE-b-sigma}) is connected with local invariance of the submanifold $\calm$ for the SPDE (\ref{SPDE}).

\begin{theorem}\label{thm-M-N}
The following statements are equivalent:
\begin{enumerate}
\item[(i)] The submanifold $\calm$ is locally invariant for the SPDE (\ref{SPDE}).

\item[(ii)] The submanifold $\caln$ is locally invariant for the SDE (\ref{SDE-b-sigma}).
\end{enumerate}
\end{theorem}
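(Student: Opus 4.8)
The plan is to apply Theorem \ref{thm-M-N-HS-spaces}, which is precisely the equivalence (i) $\Leftrightarrow$ (ii) between local invariance of $\calm$ for the SPDE (\ref{SPDE}) and local invariance of $\caln$ for the SDE (\ref{SDE-bar-2}), once I verify that the present setup is a genuine instance of the framework of Section \ref{sec-examples-HS}. The discussion immediately preceding the theorem already does most of this identification: with $q > \frac{d}{4}$ and $p := -(q+1)$, we have $p+1 = -q < -\frac{d}{4}$, so the choice $\Phi = \delta_0$ falls under the measure case of Example \ref{example-measures}, and by Lemma \ref{lemma-delta-distr} the orbit map is $\psi(x) = \tau_x \delta_0 = \delta_x$. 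Thus $\calm = \psi(\caln) = \{ \delta_x : x \in \caln \}$ is an $m$-dimensional $(G,H_0,H)$-submanifold of class $C^2$ induced by $(\psi,\caln)$, which is exactly the hypothesis required to invoke Theorem \ref{thm-M-N-HS-spaces}.

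\medskip

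\noindent\textbf{Matching the coefficients.}

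First I would confirm that the SDE (\ref{SDE-b-sigma}) of this section literally coincides with the SDE (\ref{SDE-bar-2}) produced by Theorem \ref{thm-M-N-HS-spaces}. The coefficients of (\ref{SDE-bar-2}) are $\bar{\sigma}^j = \la \sigma^j, \psi(\cdot) \ra$ and $\bar{b} = \la b, \psi(\cdot) \ra$ by (\ref{bar-sigma}) and (\ref{bar-b}). Since $\psi(x) = \delta_x$, for each $x \in \bbr^d$ the duality pairing evaluates as $\la \sigma^j_i, \delta_x \ra = \sigma^j_i(x)$ and $\la b_i, \delta_x \ra = b_i(x)$ — here I would cite the identification of the Dirac distribution's action (Lemma \ref{lemma-delta-distr}) and the fact that, by the Sobolev embedding theorem for Hermite Sobolev spaces (Theorem \ref{thm-Ck-versions}), the components of $b$ and $\sigma$ have continuous (indeed bounded) versions since $b \in \cals_q(\bbr^d;\bbr^d)$ and $\sigma \in \ell^2(\cals_q(\bbr^d;\bbr^d))$ with $q > \frac{d}{4}$. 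This gives $\bar{b} = b$ and $\bar{\sigma} = \sigma$ as functions on $\bbr^d$, exactly as recorded in Remark \ref{rem-delta-0}. Hence the driving SDE (\ref{SDE-bar-2}) is the same as (\ref{SDE-b-sigma}), so the two invariance notions refer to the identical equation.

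\medskip

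\noindent\textbf{Conclusion and the main obstacle.}

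With these identifications, the equivalence is immediate from Theorem \ref{thm-M-N-HS-spaces}: statement (i) here is statement (i) there, and statement (ii) here (local invariance of $\caln$ for (\ref{SDE-b-sigma})) is statement (ii) there (local invariance of $\caln$ for (\ref{SDE-bar-2})), since the two SDEs coincide. I expect the only genuine point requiring care — the ``main obstacle,'' such as it is — to be the verification that $\calm$ really is an $(G,H_0,H)$-submanifold induced by $(\psi,\caln)$, i.e.\ that $\psi|_{\caln} : \caln \to \psi(\caln)$ is a homeomorphism and $\psi$ is a $C^2$-immersion on $\caln$. This is supplied by Proposition \ref{prop-psi-measure} applied to $\mu = \delta_0$ (which has compact support and $\delta_0(\bbr^d) = 1 \neq 0$), combined with the regularity $\psi \in C^2(\bbr^d;H)$ from Proposition \ref{prop-orbit-in-Sp}; these are packaged in Examples \ref{ex-manifolds-HS-spaces} with $k=2$. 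Once this structural prerequisite is in place, the proof reduces to a single application of Theorem \ref{thm-M-N-HS-spaces}, so I would simply write that the result follows from Theorem \ref{thm-M-N-HS-spaces} together with Remark \ref{rem-delta-0}.
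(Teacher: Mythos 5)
Your proposal is correct and follows exactly the paper's own route: the paper proves Theorem \ref{thm-M-N} by citing Remark \ref{rem-delta-0} (which records $\bar{b}=b$, $\bar{\sigma}=\sigma$ for $\Phi=\delta_0$) together with Theorem \ref{thm-M-N-HS-spaces}, with the structural prerequisites on $\calm$ supplied beforehand via Examples \ref{ex-manifolds-HS-spaces}. Your write-up merely makes explicit the same identifications (Lemma \ref{lemma-delta-distr}, Proposition \ref{prop-psi-measure}, Theorem \ref{thm-Ck-versions}) that the paper's surrounding discussion handles.
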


\begin{proof}
Taking into account Remark \ref{rem-delta-0}, this is a consequence of Theorem \ref{thm-M-N-HS-spaces}.
\end{proof}

\begin{proposition}\label{prop-M-N}
Suppose that the submanifold $\caln$ is locally invariant for the SDE (\ref{SDE-b-sigma}). Then the following statements are true:
\begin{enumerate}
\item If the submanifold $\caln$ has one chart with a global parametrization $\varphi : V \to \caln$, and the open set $V$ is globally invariant for the $\bbr^m$-valued SDE (\ref{global-1}), whose coefficients $a : V \to \ell^2(\bbr^m)$ and $\ell : V \to \bbr^m$ are the unique solutions of the equations
\begin{align*}
\sigma^j|_{\caln} &= \varphi_* a^j, \quad j \in \bbn,
\\ b|_{\caln} &= \varphi_* \ell + \frac{1}{2} \sum_{j=1}^{\infty} \varphi_{**}(a^j,a^j),
\end{align*}
then the submanifold $\calm$ is globally invariant for the SPDE (\ref{SPDE}), and the submanifold $\caln$ is globally invariant for the SDE (\ref{SDE-b-sigma}).

\item If the submanifold $\caln$ is closed as a subset of $\bbr^d$, then it is globally invariant for the SDE (\ref{SDE-b-sigma}).
\end{enumerate}
\end{proposition}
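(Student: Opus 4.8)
The plan is to treat the two parts separately. For the first part I would observe that it is essentially a transcription of Proposition \ref{prop-M-N-HS-spaces} into the present setting. Indeed, by Theorem \ref{thm-M-N} the standing hypothesis that $\caln$ is locally invariant for the SDE (\ref{SDE-b-sigma}) is equivalent to local invariance of $\calm$ for the SPDE (\ref{SPDE}), which is precisely condition (1) of Proposition \ref{prop-M-N-HS-spaces}. Recalling from Remark \ref{rem-delta-0} that here $\Phi = \delta_0$, so that the coefficients (\ref{bar-sigma}) and (\ref{bar-b}) reduce to $\bar{\sigma} = \sigma$ and $\bar{b} = b$ and the SDE (\ref{SDE-bar-2}) coincides with the SDE (\ref{SDE-b-sigma}), the two remaining hypotheses of Part (1) — existence of one chart with global parametrization $\varphi$, and global invariance of $V$ for the SDE (\ref{global-1}) with the stated coefficients — are exactly conditions (2) and (3) of Proposition \ref{prop-M-N-HS-spaces}. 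Hence that proposition applies directly and delivers global invariance of $\calm$ for the SPDE and of $\caln$ for the SDE (\ref{SDE-bar-2}), which coincides with the SDE (\ref{SDE-b-sigma}); this is the assertion.

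For the second part the situation is genuinely different, since $\caln$ need not have one chart and no global invariance of a chart domain is assumed, so closedness of $\caln$ must carry the argument. My plan is a concatenation argument. Fix a starting point $x_0 \in \caln$. Since $b$ and $\sigma$ are bounded and continuous (Theorem \ref{thm-Ck-versions}), Remark \ref{rem-global-weak-solution} guarantees that global weak solutions exist and, more importantly, that drift and diffusion have bounded characteristics, so that no solution explodes in finite time; in particular any solution defined on a finite random interval $[0,T)$ possesses a finite limit $\lim_{t \uparrow T} X_t$. Local invariance supplies, from each point of $\caln$, a local solution with strictly positive lifetime whose stopped paths remain in $\caln$.

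The construction then proceeds by splicing such local solutions: starting from $x_0$ one obtains a solution in $\caln$ up to its lifetime; by continuity of paths and closedness of $\caln$ the value reached at that time again lies in $\caln$, so local invariance permits a restart, and iterating produces a solution staying in $\caln$ up to a possibly finite accumulation time $T$. If $T < \infty$, non-explosion yields a limit $X_T$, which belongs to $\caln$ precisely because $\caln$ is closed, and a further application of local invariance at $X_T$ extends the solution strictly beyond $T$. Formalising this as a maximality statement — via Zorn's lemma, or by passing to the supremum of all times up to which an $\caln$-valued solution starting at $x_0$ exists — forces the lifetime to be infinite and produces the desired global solution in $\caln$, hence global invariance. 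I expect the main obstacle to be the rigorous splicing of weak (martingale) solutions across the gluing stopping times: each restart lives a priori on its own stochastic basis, so one must invoke the strong Markov property (or an explicit concatenation of the associated martingale problems) to assemble them into a single adapted process on one basis, and then verify that the accumulation time can genuinely be pushed to infinity rather than merely up to a finite barrier.
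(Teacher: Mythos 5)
Your proposal is correct and follows essentially the same route as the paper. Part (1) is verbatim the paper's argument: it simply invokes Proposition \ref{prop-M-N-HS-spaces}, with Remark \ref{rem-delta-0} (i.e.\ $\bar{b} = b$, $\bar{\sigma} = \sigma$ for $\Phi = \delta_0$) identifying the hypotheses exactly as you describe. For Part (2) the paper takes a global weak solution $X$ with $X_0 = x_0$ (available since the coefficients are bounded and continuous), sets $\tau := \inf\{ t \in \bbr_+ : X_t \notin \caln \}$, and shows $\bbp(\tau = \infty) = 1$ by contradiction, delegating the details to the proof of \cite[Thm. 2.8]{FTT-manifolds} --- and that cited argument is precisely the concatenation-at-exit-times scheme you sketch, resting on the same three ingredients (closedness of $\caln$, non-explosion from bounded coefficients, restart via local invariance), including the weak-solution gluing technicalities you correctly flag as the delicate point.
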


\begin{proof}
The first statement is a consequence of Proposition \ref{prop-M-N-HS-spaces}. In the situation of the second statement, let $x_0 \in \caln$ be arbitrary, and let $X$ be a global weak solution to the SDE (\ref{SDE-b-sigma}) with $X_0 = x_0$. We define the stopping time
\begin{align*}
\tau := \inf \{ t \in \bbr_+ : X_t \notin \caln \},
\end{align*}
and, since $\caln$ is closed as a subset of $\bbr^d$, arguing by contradiction we can show that $\bbp(\tau = \infty) = 1$; see, for example, the proof of \cite[Thm. 2.8]{FTT-manifolds}.
\end{proof}

Consequently, when we are interested in proving local invariance of the submanifold $\caln$ for the SDE (\ref{SDE-b-sigma}), we can alternatively show local invariance of the submanifold $\calm$ for the SPDE (\ref{SPDE}), which turns out to be simpler in certain situations. We illustrate this procedure in the upcoming two subsections, which are organized as follows: In Section \ref{sub-sec-vector-fields} we treat the invariance of submanifolds for coefficients given by vector fields, and in Section \ref{sub-sec-zeros} we investigate the invariance of submanifolds given by the zeros of smooth functions.

\subsection{Coefficients given by vector fields}\label{sub-sec-vector-fields}

For the following results, consider the conditions
\begin{align}\label{conditions-1}
b|_{\caln} &\in \Gamma(T \caln), 
\\ \label{conditions-2} \sigma^j|_{\caln} &\in \Gamma(T \caln), \quad j \in \bbn.
\end{align}
We are interested in finding an additional condition ensuring that $\caln$ is locally invariant for the SDE (\ref{SDE-b-sigma}). In the general framework of Section \ref{sec-SPDE-general}, such a condition is provided by Proposition \ref{prop-L-tang}. In the present situation, we will establish another equivalent condition by using the connection to the SPDE (\ref{SPDE}). For the following auxiliary result, recall the definition (\ref{A-bar-y-z}) of $\bar{A}^j$ for $j \in \bbn$.

\begin{lemma}\label{lemma-inv-b}
The following statements are true:
\begin{enumerate}
\item For all $y,z \in \calm$ we have
\begin{align*}
\bar{A}^j(y,z) &= D \psi(x) \sigma^j(\xi), \quad j \in \bbn,
\end{align*}
where $x := \psi^{-1}(y) \in \caln$ and $\xi := \psi^{-1}(z) \in \caln$.

\item In particular, we have
\begin{align*}
A^j|_{\calm} = \psi_* \sigma^j|_{\caln}, \quad j \in \bbn.
\end{align*}

\item We have
\begin{align*}
L|_{\calm} - \frac{1}{2} \sum_{j=1}^{\infty} \bar{A}^j(A^j(\cdot),\cdot)|_{\calm} = \psi_* b|_{\caln}.
\end{align*}
\end{enumerate}
\end{lemma}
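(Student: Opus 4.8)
The plan is to verify all three identities by direct pointwise computation, exploiting the explicit structure of the coefficients (\ref{def-L}), (\ref{def-A}), (\ref{A-bar-y-z}), the fact that here $\psi = \xi_\Phi$ is the orbit map of the translation group with $\Phi = \delta_0$ (so $\psi(x) = \delta_x$), and the derivative formulas for $\psi$ provided by Proposition \ref{prop-orbit-in-Sp}. The three parts are nested: statement (2) is statement (1) restricted to the diagonal together with (\ref{A-bar-A}), and statement (3) is the genuine computation, in which the quadratic correction term exactly cancels the second-order part of $L$. At the outset I would record that, by Proposition \ref{prop-orbit-in-Sp}, for all $x \in \bbr^d$ and $v,w \in \bbr^d$,
\[
D\psi(x) v = -\sum_{i=1}^d \partial_i \psi(x)\, v_i, \qquad D^2\psi(x)(v,w) = \sum_{i,k=1}^d \partial_{ik}^2\psi(x)\, v_i w_k,
\]
and that, since $\psi(x) = \delta_x$, Lemma \ref{lemma-delta-distr} gives $\la c, \psi(x)\ra = c(x)$ for each $c \in \cals_{-(p+1)}(\bbr^d)$, applied componentwise to the $\sigma_i^j$ and $b_i$.

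For statement (1), fix $y,z \in \calm$ and set $x := \psi^{-1}(y)$, $\xi := \psi^{-1}(z) \in \caln$. Unwinding the definition (\ref{A-bar-y-z}) and inserting $\la \sigma_i^j, z\ra = \sigma_i^j(\xi)$ and $\partial_i y = \partial_i \psi(x)$ gives
\[
\bar A^j(y,z) = -\sum_{i=1}^d \sigma_i^j(\xi)\,\partial_i \psi(x) = \sum_{i=1}^d \sigma_i^j(\xi)\,\bigl(-\partial_i \psi(x)\bigr) = D\psi(x)\,\sigma^j(\xi),
\]
using the first derivative formula. Putting $z = y$, hence $\xi = x$, and invoking (\ref{A-bar-A}) yields $A^j(y) = \bar A^j(y,y) = D\psi(x)\,\sigma^j(x) = (\psi_* \sigma^j|_{\caln})(y)$, which is statement (2).

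The main work is statement (3). I would first compute $\bar A^j(A^j(y),y)$ for $y \in \calm$. Since $A^j(y) = -\sum_k \la \sigma_k^j, y\ra\,\partial_k y$ with the dual-pairing scalars $\la \sigma_k^j, y\ra$ frozen under spatial differentiation, $\partial_i A^j(y) = -\sum_k \la \sigma_k^j, y\ra\,\partial_{ik}^2 y$, and therefore from (\ref{A-bar-y-z}),
\[
\bar A^j(A^j(y),y) = -\sum_{i=1}^d \la \sigma_i^j, y\ra\,\partial_i A^j(y) = \sum_{i,k=1}^d \la \sigma_i^j, y\ra \la \sigma_k^j, y\ra\,\partial_{ik}^2 y.
\]
Summing over $j$ and recalling the matrix notation of Remark \ref{rem-matrix-mult}, this equals $\sum_{i,k}(\la\sigma,y\ra\la\sigma,y\ra^\top)_{ik}\,\partial_{ik}^2 y$, which is exactly twice the second-order part of $L(y)$ in (\ref{def-L}). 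Consequently
\[
L(y) - \tfrac12\sum_{j=1}^\infty \bar A^j(A^j(y),y) = -\sum_{i=1}^d \la b_i, y\ra\,\partial_i y = \sum_{i=1}^d b_i(x)\bigl(-\partial_i\psi(x)\bigr) = D\psi(x)\,b(x) = (\psi_* b|_{\caln})(y),
\]
using $\la b_i, y\ra = b_i(x)$ and the first derivative formula again; restricting to $\calm$ gives the claim.

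The only delicate point is the cancellation in (3): one must read $\partial_i A^j(y)$ correctly as a spatial derivative of the distribution $A^j(y)$, keeping the scalar coefficients $\la\sigma_k^j, y\ra$ fixed, and then recognize the emergent double-index weight $\sum_j \la\sigma_i^j, y\ra\la\sigma_k^j, y\ra$ as precisely the diffusion matrix $(\la\sigma,y\ra\la\sigma,y\ra^\top)_{ik}$ in $L$. Once the bridge $\partial_{ik}^2\psi(x) = D^2\psi(x)(e_i,e_k)$ (Proposition \ref{prop-orbit-in-Sp}) is in place, the remainder is bookkeeping, and the convergence of the $j$-series as well as the legitimacy of interchanging summation with the pushforward operations are already guaranteed by the boundedness estimates of the preceding lemma.
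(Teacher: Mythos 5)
Your proposal is correct and follows essentially the same route as the paper: statements (1) and (2) are read off from the derivative formula of Proposition \ref{prop-orbit-in-Sp}, and statement (3) rests on the identity $\sum_{j=1}^{\infty} \bar{A}^j(A^j(y),y) = \sum_{i,k=1}^d ( \langle \sigma,y \rangle \langle \sigma,y \rangle^{\top} )_{ik} \partial_{ik}^2 y$, obtained exactly as you do by differentiating $A^j(y)$ with the scalar coefficients $\langle \sigma_k^j, y \rangle$ frozen, so that the second-order part of $L$ cancels and only $-\sum_i \langle b_i, y \rangle \partial_i y = \psi_* b|_{\caln}$ remains. Your added remarks on evaluating the dual pairings at $\delta_x$ and on the convergence of the $j$-series are consistent with what the paper leaves implicit.
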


\begin{proof}
Recalling (\ref{def-L}) and (\ref{def-A}), these statements follow from Proposition \ref{prop-orbit-in-Sp}, where for the third statement we note that for each $y \in \calm$ we have
\begin{align*}
\sum_{j=1}^{\infty} \bar{A}^j(A^j(y),y) &= - \sum_{j=1}^{\infty} \sum_{i=1}^d \la \sigma_{i}^j, y \ra \partial_i A^j(y) = \sum_{j=1}^{\infty} \sum_{i=1}^d \la \sigma_{i}^j, y \ra \sum_{k=1}^d \la \sigma_{k}^j,y \ra \partial_{ik}^2 y
\\ &= \sum_{i,k=1}^d ( \langle \sigma,y \rangle \langle \sigma,y \rangle^{\top} )_{ik} \partial_{ik}^2 y.
\end{align*}
This completes the proof.
\end{proof}

Concerning the notation used in equations (\ref{quad-v-zero}) and (\ref{quad-A}) below, we refer to Definition \ref{def-A-Strat}.

\begin{theorem}\label{thm-inv-b}
Suppose that conditions (\ref{conditions-1}) and (\ref{conditions-2}) are fulfilled. Then the following statements are equivalent:
\begin{enumerate}
\item[(i)] $\caln$ is locally invariant for the SDE (\ref{SDE-b-sigma}).

\item[(ii)] We have
\begin{align}\label{quad-v-zero}
\sum_{j=1}^{\infty} [ \sigma^j|_{\caln}, \sigma^j|_{\caln} ]_{\caln} = [0]_{\Gamma(T \caln)}.
\end{align}
\item[(iii)] We have
\begin{align}\label{quad-A}
\sum_{j=1}^{\infty} \big( [A^j|_{\calm},A^j|_{\calm}]_{\calm} - [ \bar{A}^j(A^j(\cdot),\cdot)|_{\calm} ]_{\Gamma(T \calm)} \big) = [0]_{\Gamma(T \calm)}.
\end{align}
\end{enumerate}
\end{theorem}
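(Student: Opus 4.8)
The plan is to establish the chain of equivalences (i) $\Leftrightarrow$ (iii) $\Leftrightarrow$ (ii), using the connection between the SDE (\ref{SDE-b-sigma}) and the SPDE (\ref{SPDE}) provided by Theorem \ref{thm-M-N} together with the invariance criterion of Theorem \ref{thm-SPDE}. First I would observe that by Lemma \ref{lemma-inv-b}(ii) the hypothesis (\ref{conditions-2}) translates into the tangency condition (\ref{tang-A}) for the SPDE: indeed, since $A^j|_{\calm} = \psi_* \sigma^j|_{\caln}$ and $\sigma^j|_{\caln} \in \Gamma(T \caln)$, Lemma \ref{lemma-eq-loc-fields}(1) yields $A^j|_{\calm} \in \Gamma(T \calm)$ for each $j \in \bbn$. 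Likewise, by Lemma \ref{lemma-inv-b}(3) the hypothesis (\ref{conditions-1}) gives that $L|_{\calm} - \frac{1}{2} \sum_{j=1}^{\infty} \bar{A}^j(A^j(\cdot),\cdot)|_{\calm} = \psi_* b|_{\caln}$ is a genuine vector field on $\calm$.

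For the equivalence (i) $\Leftrightarrow$ (iii), I would argue as follows. By Theorem \ref{thm-M-N}, local invariance of $\caln$ for the SDE (\ref{SDE-b-sigma}) is equivalent to local invariance of $\calm$ for the SPDE (\ref{SPDE}). By Theorem \ref{thm-SPDE}, the latter is equivalent to the conjunction of (\ref{tang-A}) and (\ref{tang-L}). Since we have already verified (\ref{tang-A}) from hypothesis (\ref{conditions-2}), the content of Theorem \ref{thm-SPDE} reduces to condition (\ref{tang-L}), namely
\begin{align*}
[ L|_{\calm} ]_{\Gamma(T \calm)} - \frac{1}{2} \sum_{j=1}^{\infty} [A^j|_{\calm}, A^j|_{\calm}]_{\calm} = [0]_{\Gamma(T \calm)}.
\end{align*}
Using Lemma \ref{lemma-inv-b}(3) again to rewrite $[L|_{\calm}]_{\Gamma(T \calm)} = \frac{1}{2} \sum_{j=1}^{\infty} [\bar{A}^j(A^j(\cdot),\cdot)|_{\calm}]_{\Gamma(T \calm)}$, substituting into (\ref{tang-L}) and multiplying by $2$ yields precisely (\ref{quad-A}). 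This establishes (i) $\Leftrightarrow$ (iii).

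For (ii) $\Leftrightarrow$ (iii), the key is to push the bracket computation on $\caln$ forward to $\calm$ via $\psi$. Applying Theorem \ref{thm-SPDE} (or directly Proposition \ref{prop-L-tang}) in the finite-dimensional setting $G = H = \bbr^d$, local invariance of $\caln$ under hypotheses (\ref{conditions-1})--(\ref{conditions-2}) is equivalent to (\ref{quad-v-zero}). Thus it suffices to show that (\ref{quad-v-zero}) on $\caln$ and (\ref{quad-A}) on $\calm$ express the same obstruction transported through $\psi$. Here I would invoke the push-forward chain rule of Lemma \ref{lemma-push-chain-rule}: for a local parametrization $\varphi : V \to W \cap \caln$ with corresponding $\phi := \psi \circ \varphi$, writing $a^j := \varphi_*^{-1} \sigma^j|_{W \cap \caln}$, one has $[\sigma^j|_{\caln}, \sigma^j|_{\caln}]_{\caln}$ locally represented by $\varphi_{**}(a^j, a^j)$ and, via $\phi_{**}(a^j,a^j) = \psi_{**}(\varphi_* a^j, \varphi_* a^j) + \psi_* \varphi_{**}(a^j,a^j)$, the class $[A^j|_{\calm}, A^j|_{\calm}]_{\calm}$ decomposes correspondingly. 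The main obstacle will be to identify the term $[\bar{A}^j(A^j(\cdot),\cdot)|_{\calm}]_{\Gamma(T \calm)}$ with the $\psi$-push-forward of the second-order chart term $\psi_{**}(\varphi_* a^j, \varphi_* a^j)$, so that the difference $[A^j|_{\calm},A^j|_{\calm}]_{\calm} - [\bar{A}^j(A^j(\cdot),\cdot)|_{\calm}]_{\Gamma(T \calm)}$ reduces modulo $\Gamma(T \calm)$ to $\psi_* \varphi_{**}(a^j,a^j)$, i.e. to $\psi_*$ applied to the local representative of $[\sigma^j|_{\caln},\sigma^j|_{\caln}]_{\caln}$. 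This identification rests on the decomposition (\ref{zerl-1-linear}) of Proposition \ref{prop-decomp-Damir-scale}, which gives $[A^j|_{\calm},A^j|_{\calm}]_{\calm} = [\bar{A}^j(A^j(\cdot),\cdot)]_{\Gamma(T \calm)} + [\phi_{**}(a^j,a^j) - (\text{first-order part})]$ in local coordinates; once this bookkeeping is carried out and one notes that $\psi_*$ maps the zero class on $\caln$ to the zero class on $\calm$ and vice versa (since $D\psi(x)|_{T_x \caln}$ is injective by the immersion property), the equivalence (ii) $\Leftrightarrow$ (iii) follows. The delicate point throughout is keeping track of which terms are already tangential (hence vanish in the respective quotient spaces) so that only the genuinely second-order obstruction survives on both sides.
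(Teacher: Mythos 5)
Your core argument is correct and is essentially the paper's proof. For (i) $\Leftrightarrow$ (iii) you argue exactly as the paper does: Lemmas \ref{lemma-inv-b} and \ref{lemma-eq-loc-fields} give $A^j|_{\calm} \in \Gamma(T\calm)$ and $L|_{\calm} - \frac{1}{2}\sum_{j=1}^{\infty} \bar{A}^j(A^j(\cdot),\cdot)|_{\calm} = \psi_* b|_{\caln} \in \Gamma(T\calm)$, so that the left-hand side of (\ref{tang-L}) and the left-hand side of (\ref{quad-A}) agree up to a factor (and a sign), and the equivalence then follows from Theorem \ref{thm-M-N} combined with Theorem \ref{thm-SPDE}. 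The first sentence of your last paragraph, that Proposition \ref{prop-L-tang} applied with $G = H = \bbr^d$ gives (i) $\Leftrightarrow$ (ii), is also precisely the paper's proof of that equivalence. At this point you are finished: having (i) $\Leftrightarrow$ (iii) and (i) $\Leftrightarrow$ (ii), the equivalence (ii) $\Leftrightarrow$ (iii) is automatic by transitivity, and the paper indeed stops here.

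Everything after ``Thus it suffices to show\ldots'' is therefore superfluous, and you should be aware that, as sketched, it contains an error. The decomposition (\ref{zerl-1-linear}) of Proposition \ref{prop-decomp-Damir-scale} is only available under the hypothesis that $\bar{A}^j(\cdot,z)|_{\calm_U} \in \Gamma(T\calm_U)$ for each $z \in \calm_U$; in the present setting this amounts, via Lemma \ref{lemma-eq-loc-fields}, to the locally simultaneous tangency $\sigma^j|_{\caln} \in \Gamma^*(T\caln)$ (the hypothesis of Proposition \ref{prop-inv-fin-dim}), which is strictly stronger than assumption (\ref{conditions-2}). If (\ref{zerl-1-linear}) were applicable here, it would yield $[A^j|_{\calm},A^j|_{\calm}]_{\calm} = [\bar{A}^j(A^j(\cdot),\cdot)|_{\calm}]_{\Gamma(T\calm)}$ for \emph{every} $j$, so every summand of (\ref{quad-A}) would vanish identically and the theorem would assert that (i)--(iii) always hold under (\ref{conditions-1})--(\ref{conditions-2}) --- which is false (tangency of $b$ and $\sigma^j$ does not imply It\^{o} invariance; think of the unit circle with a rotational volatility field and zero drift). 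Had a direct proof of (ii) $\Leftrightarrow$ (iii) been needed, the correct route is the explicit computation, as inside the proof of Lemma \ref{lemma-inv-b}, that $\bar{A}^j(A^j(\cdot),\cdot)|_{\calm} = \psi_{**}(\sigma^j,\sigma^j)$; combined with Lemma \ref{lemma-push-chain-rule} this represents the sum in (\ref{quad-A}) by $\psi_*\big(\sum_{j=1}^{\infty}\varphi_{**}(a^j,a^j)\big)$, and since $D\psi(x)$ maps $T_x\caln$ isomorphically onto $T_{\psi(x)}\calm$ (Lemma \ref{lemma-tang-embedded} and injectivity of $D\psi(x)$), this class vanishes in $A(\calm)/\Gamma(T\calm)$ precisely when (\ref{quad-v-zero}) holds.
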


\begin{proof}
(i) $\Leftrightarrow$ (ii): This equivalence is a consequence of Proposition \ref{prop-L-tang}.

\noindent(i) $\Leftrightarrow$ (iii): By Lemmas \ref{lemma-inv-b} and \ref{lemma-eq-loc-fields} we have
\begin{align*}
&A^j|_{\calm} \in \Gamma(T \calm), \quad j \in \bbn,
\\ &L|_{\calm} - \frac{1}{2} \sum_{j=1}^{\infty} \bar{A}^j(A^j(\cdot),\cdot)|_{\calm} \in \Gamma(T \calm).
\end{align*}
The latter relation shows that
\begin{align*}
&[ L|_{\calm} ]_{\Gamma(T \calm)} - \frac{1}{2} \sum_{j=1}^{\infty} [A^j|_{\calm}, A^j|_{\calm}]_{\calm}
\\ &= \frac{1}{2} \sum_{j=1}^{\infty} \big( [A^j|_{\calm},A^j|_{\calm}]_{\calm} - [ \bar{A}^j(A^j(\cdot),\cdot)|_{\calm} ]_{\Gamma(T \calm)} \big),
\end{align*}
and thus, the stated equivalence is a consequence of Theorem \ref{thm-M-N} and Theorem \ref{thm-SPDE}.
\end{proof}

If the submanifold $\caln$ is affine, then it is locally invariant for the SDE (\ref{SDE-b-sigma}) if and only if we have (\ref{conditions-1}) and (\ref{conditions-2}). This is a consequence of Corollary \ref{cor-affine}. More generally, we have the following result. Recall that $\Gamma^*(T \caln)$ denotes the space of all locally simultaneous vector fields on $\caln$; see Definition \ref{def-simultaneous}. 

\begin{proposition}\label{prop-inv-fin-dim}
Suppose that
\begin{align}\label{inv-cond-local}
b|_{\caln} &\in \Gamma(T \caln),
\\ \label{inv-cond-local-2} \sigma^j|_{\caln} &\in \Gamma^*(T \caln), \quad j \in \bbn.
\end{align}
Then the submanifold $\caln$ is locally invariant for the SDE (\ref{SDE-b-sigma}).
\end{proposition}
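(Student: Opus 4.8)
The plan is to deduce the result from Theorem \ref{thm-inv-b}. First I would observe that $\Gamma^*(T\caln) \subset \Gamma(T\caln)$: if $\sigma^j \in \Gamma^*(T\caln)$, then taking $z = y$ in Definition \ref{def-simultaneous} gives $\sigma^j(y) \in T_y\caln$ for each $y \in \caln$. Hence conditions (\ref{inv-cond-local}) and (\ref{inv-cond-local-2}) imply (\ref{conditions-1}) and (\ref{conditions-2}), so Theorem \ref{thm-inv-b} applies. By the equivalence (i) $\Leftrightarrow$ (ii) there, it suffices to prove
\begin{align*}
\sum_{j=1}^{\infty} [\sigma^j|_\caln, \sigma^j|_\caln]_\caln = [0]_{\Gamma(T\caln)},
\end{align*}
and for this it is enough to show $[\sigma^j|_\caln, \sigma^j|_\caln]_\caln = [0]_{\Gamma(T\caln)}$ for each individual $j \in \bbn$.

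Next, fix $j \in \bbn$ and $y_0 \in \caln$. Applying Proposition \ref{prop-linear-inverse} (with $D = \bbr^d$), I obtain a local parametrization $\phi : V \to U\cap\caln$ around $y_0$ together with a linear map $\psi \in L(\bbr^d,\bbr^m)$ satisfying $\phi^{-1} = \psi|_{U\cap\caln}$ and, since $\psi$ is linear, $\psi|_{T_y\caln} = D\phi(\psi(y))^{-1}$ for every $y \in U\cap\caln$. Write $s^j := \phi_*^{-1}\sigma^j|_{U\cap\caln}$, so that $\sigma^j(\phi(x)) = D\phi(x)s^j(x)$ and $s^j(x) = \psi\sigma^j(\phi(x))$ for $x \in V$. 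The crucial point, which I would establish using the locally simultaneous property, is that for each fixed $x_0 \in V$ the coordinate vector $s^j(x_0)$ continues to represent $\sigma^j(\phi(x_0))$ at all nearby base points.

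Concretely, set $y_0 := \phi(x_0)$. By Definition \ref{def-simultaneous} there is a neighborhood $W$ of $y_0$ in $\bbr^d$ with $\sigma^j(y_0) \in T_z\caln$ for all $z \in W\cap\caln$; since $\phi$ is a homeomorphism onto $U\cap\caln$, there is an open neighborhood $V_0 \subset V$ of $x_0$ with $\phi(x) \in W$ for $x \in V_0$. For such $x$ the relation $\sigma^j(y_0) \in T_{\phi(x)}\caln$ gives $\psi\sigma^j(y_0) = D\phi(x)^{-1}\sigma^j(y_0)$, whence
\begin{align*}
D\phi(x)\,s^j(x_0) = D\phi(x)\,\psi\sigma^j(y_0) = \sigma^j(y_0);
\end{align*}
that is, the map $x \mapsto D\phi(x)s^j(x_0)$ is constant on $V_0$. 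Differentiating this constant map at $x = x_0$ in an arbitrary direction $v$ yields $D^2\phi(x_0)(s^j(x_0), v) = 0$, and taking $v = s^j(x_0)$ gives $\phi_{**}(s^j, s^j)(y_0) = D^2\phi(x_0)(s^j(x_0), s^j(x_0)) = 0$. Since $x_0 \in V$ was arbitrary, the local representative $\phi_{**}(s^j, s^j)$ of $[\sigma^j|_\caln, \sigma^j|_\caln]_\caln$ vanishes identically, so $[\sigma^j|_\caln, \sigma^j|_\caln]_\caln = [0]_{\Gamma(T\caln)}$, completing the proof.

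I expect the main obstacle to be the careful bookkeeping in the middle step: one must exploit that the linear functional $\psi$ from Proposition \ref{prop-linear-inverse} simultaneously left-inverts $D\phi(x)$ on every tangent space $T_{\phi(x)}\caln$ throughout the chart, and combine this with the simultaneity of $\sigma^j$ to conclude that the pushed-back vector $s^j(x_0)$ stays frozen as a representative of $\sigma^j(y_0)$ over a whole neighborhood. Once this is in place, the vanishing of the second-order term is simply the derivative of a constant.
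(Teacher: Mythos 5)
Your proof is correct, but it takes a genuinely different route from the paper's. The paper verifies condition (iii) of Theorem \ref{thm-inv-b}: it works on the SPDE side, using Lemmas \ref{lemma-inv-b} and \ref{lemma-eq-loc-fields} to convert the hypotheses $b|_{\caln} \in \Gamma(T \caln)$ and $\sigma^j|_{\caln} \in \Gamma^*(T \caln)$ into the tangency conditions (\ref{A-in-proof}) and (\ref{L-in-proof}) for the lifted coefficients on $\calm = \{ \delta_x : x \in \caln \}$, and then invokes the decomposition (\ref{zerl-1-linear}) of Proposition \ref{prop-decomp-Damir-scale} to identify $[A^j|_{\calm}, A^j|_{\calm}]_{\calm}$ with $[\bar{A}^j(A^j(\cdot),\cdot)]_{\Gamma(T \calm)}$, which yields (\ref{quad-A}). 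You instead verify condition (ii) of Theorem \ref{thm-inv-b}, i.e. (\ref{quad-v-zero}), staying entirely in $\bbr^d$: in the charts of Proposition \ref{prop-linear-inverse}, whose inverse is the restriction of a globally defined linear map $\psi$, the simultaneity of $\sigma^j$ makes the map $x \mapsto D\phi(x) s^j(x_0)$ constant (equal to $\sigma^j(\phi(x_0))$) near $x_0$, and differentiating this constant kills the second-order term, so $\phi_{**}(s^j,s^j)$ vanishes identically --- a pointwise statement even stronger than the required tangency of the representative. In effect you re-derive by hand, in the special case where the frozen field is constant in its first argument, what the decomposition (\ref{zerl-1}) encodes abstractly (freeze the second argument, differentiate the first), with the first-order term $D_1 \bar{a} \cdot b$ vanishing outright here because of simultaneity. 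What the paper's route buys is economy given its machinery (the proof is a few lines once Lemmas \ref{lemma-inv-b}, \ref{lemma-eq-loc-fields} and Proposition \ref{prop-decomp-Damir-scale} are in place) and it illustrates the SDE--SPDE transfer principle that is the theme of Section \ref{sec-interplay}; what yours buys is a self-contained, purely finite-dimensional argument that never leaves $\bbr^d$ and makes the geometric content visible: locally simultaneous directions are ``flat'' directions of $\caln$, along which the second derivative of the parametrization vanishes. Your argument is sound as written; the only points deserving explicit mention are that $\Gamma^*(T\caln) \subset \Gamma(T\caln)$ (which you give, and which is needed to invoke Theorem \ref{thm-inv-b} at all) and that the termwise vanishing of the brackets trivially gives the vanishing of the infinite sum in (\ref{quad-v-zero}).
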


\begin{proof}
By Lemmas \ref{lemma-inv-b} and \ref{lemma-eq-loc-fields} we have
\begin{align}\label{A-in-proof}
&\bar{A}_z^j|_{\calm} \in \Gamma_z(T \calm), \quad z \in \calm, \quad j \in \bbn,
\\ \label{L-in-proof} &L|_{\calm} - \frac{1}{2} \sum_{j=1}^{\infty} \bar{A}^j(A^j(\cdot),\cdot)|_{\calm} \in \Gamma(T \calm),
\end{align}
where $\Gamma_z(T \calm)$ denotes the space of all local vector fields on $\calm$ around $z$; see Definition \ref{def-local-vector-field}. Using the decomposition (\ref{zerl-1-linear}) from Proposition \ref{prop-decomp-Damir-scale}, we obtain
\begin{align*}
[ A^j|_{\calm}, A^j|_{\calm} ]_{\calm} = [ \bar{A}^j(A^j(\cdot),\cdot) ]_{\Gamma(T \calm)}, \quad j \in \bbn.
\end{align*}
Therefore, condition (\ref{quad-A}) is fulfilled, and hence, by Theorem \ref{thm-inv-b} the submanifold $\caln$ is locally invariant for the SDE (\ref{SDE-b-sigma}).
\end{proof}

\begin{remark}
Once we have established (\ref{A-in-proof}) and (\ref{L-in-proof}), alternatively we can also use Theorem \ref{thm-main-3} and Theorem \ref{thm-M-N} in order to conclude the proof of Proposition \ref{prop-inv-fin-dim}.
\end{remark}

\begin{remark}\label{rem-Strat-1}
Consider the $\bbr^d$-valued Stratonovich SDE
\begin{align}\label{SDE-Strat}
\left\{
\begin{array}{rcl}
dX_t & = & c(X_t)dt + \sigma(X_t) \circ dW_t
\\ X_0 & = & x_0
\end{array}
\right.
\end{align}
with a continuous mapping $c : \bbr^d \to \bbr^d$. It is well known that the submanifold $\caln$ is locally invariant for the Stratonovich SDE (\ref{SDE-Strat}) if and only if
\begin{align*}
c|_{\caln} &\in \Gamma(T \caln),
\\ \sigma^j|_{\caln} &\in \Gamma(T \caln), \quad j \in \bbn,
\end{align*}
see, for example \cite[Cor. 1.ii]{Milian-manifold}. In Proposition \ref{prop-inv-fin-dim} we present similar conditions, namely (\ref{inv-cond-local}) and (\ref{inv-cond-local-2}), which are sufficient for local invariance of the submanifold $\caln$ for the It\^{o} SDE (\ref{SDE-b-sigma}).
\end{remark}

For the following results we will assume that even $\sigma \in \ell^2(\cals_{q + \frac{1}{2}}(\bbr^d;\bbr^d))$. Note that $p + \frac{1}{2} = -(q + \frac{1}{2})$ and $q + \frac{1}{2} = -(p + \frac{1}{2})$, which shows that
\begin{align*}
H_0 = \cals_{p+\frac{1}{2}}(\bbr^d) = \cals_{-(q+\frac{1}{2})}(\bbr^d).
\end{align*}
Furthermore, note that for each $j \in \bbn$ the definition (\ref{A-bar-y-z}), where $\la \cdot,\cdot \ra$ denotes the dual pair on $\cals_{q + \frac{1}{2}}(\bbr^d) \times \cals_{-(q + \frac{1}{2})}(\bbr^d)$, provides bounded bilinear operators $\bar{A}^j \in L(G,H_0;H_0)$ and $\bar{A}^j \in L(H_0,H_0;H)$.

\begin{lemma}\label{lemma-A-bar-q}
Suppose that $\sigma \in \ell^2(\cals_{q + \frac{1}{2}}(\bbr^d;\bbr^d))$. Then the following statements are true:
\begin{enumerate}
\item The definition (\ref{A-bar-y-z}) provides a bounded bilinear operator
\begin{align*}
\bar{A} \in L(G,H_0;\ell^2(H_0)).
\end{align*}

\item The definition (\ref{def-A}) provides a continuous mapping $A : H_0 \to \ell^2(H)$.

\item The definition (\ref{A-bar-y-z}) provides a bounded bilinear operator
\begin{align*}
\bar{A} \in L(H_0,H_0;\ell^2(H)).
\end{align*}

\item We have $A^j \in C^1(H_0;H)$ for each $j \in \bbn$, and the mapping $G \to H$, $y \mapsto \sum_{j=1}^{\infty} D A^j(y) A^j(y)$ is well-defined and continuous.
\end{enumerate}
\end{lemma}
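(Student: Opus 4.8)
The plan is to prove the four statements of Lemma \ref{lemma-A-bar-q} in sequence, since each rests on the previous one, with the essential input being the continuity estimates for multiplication and differentiation in Hermite Sobolev spaces provided by Lemma \ref{lemma-diff-continuous}. The guiding observation is that the stronger hypothesis $\sigma \in \ell^2(\cals_{q+\frac12}(\bbr^d;\bbr^d))$, equivalently $\sigma \in \ell^2(\cals_{-(p+\frac12)}(\bbr^d;\bbr^d))$, means each functional $\la \sigma_i^j,\cdot\ra$ now extends continuously to $H_0 = \cals_{p+\frac12}(\bbr^d)$, which is precisely what was missing in Remark \ref{rem-non-smoothness} and is what will buy us the $C^1$ regularity of $A^j$ on $H_0$.

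First I would prove statements (1) and (3) exactly as in the preceding (unnumbered) lemma, but with the duality pairing now taken on $\cals_{q+\frac12}(\bbr^d)\times\cals_{-(q+\frac12)}(\bbr^d)$. For (1), given $y\in G$ and $z\in H_0$, I would estimate
\begin{align*}
\| \bar{A}(y,z) \|_{\ell^2(H_0)}^2 &\leq d \sum_{j=1}^{\infty} \sum_{i=1}^d |\langle \sigma_{i}^j,z \rangle|^2 \| \partial_i y \|_{H_0}^2 \leq Cd \, \| \sigma \|_{-(q+\frac12),\ell^2}^2 \, \| y \|_G^2 \, \| z \|_{H_0}^2,
\end{align*}
where the differentiation bound $\|\partial_i y\|_{H_0}\le C\|y\|_G$ and the functional bound $|\la\sigma_i^j,z\ra|\le \|\sigma_i^j\|_{-(q+\frac12)}\|z\|_{H_0}$ both come from Lemma \ref{lemma-diff-continuous}. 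Statement (3) follows by the same computation with the roles adjusted so that $\partial_i y$ is estimated in $H$ and $y$ ranges over $H_0$, giving the bilinear bound into $\ell^2(H)$. Statement (2) is then immediate: since $A^j(y)=\bar{A}^j(y,y)$ by (\ref{A-bar-A}), continuity of $A:H_0\to\ell^2(H)$ follows from the bilinear continuity in (3) restricted to the diagonal, exactly as (\ref{def-A}) gave a continuous map $G\to\ell^2(H_0)$ from the diagonal of (1).

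The main work, and the step I expect to be the genuine obstacle, is statement (4), namely that $A^j\in C^1(H_0;H)$ together with the convergence and continuity of $y\mapsto\sum_{j=1}^\infty DA^j(y)A^j(y)$. The point is that $A^j=\bar{A}^j(\cdot,\cdot)\circ T$ where $T(y)=(y,y)$, and $\bar{A}^j$ is a \emph{bounded bilinear} operator $H_0\times H_0\to H$ by (3); a bounded bilinear map is smooth, with derivative $D\bar{A}^j(y,z)(u,v)=\bar{A}^j(u,z)+\bar{A}^j(y,v)$, so by the chain rule $DA^j(y)u=\bar{A}^j(u,y)+\bar{A}^j(y,u)$, and in particular
\begin{align*}
DA^j(y)A^j(y) = \bar{A}^j(A^j(y),y) + \bar{A}^j(y,A^j(y)).
\end{align*}
This gives $A^j\in C^1(H_0;H)$ cleanly. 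For the series, I would bound each summand using (3) and the already-established continuity of $A:H_0\to\ell^2(H)$ from statement (2), summing over $j$ via Cauchy--Schwarz much as the auxiliary operator $\bar B$ was handled in the preceding lemma: one shows the two families $(\bar{A}^j(A^j(y),y))_j$ and $(\bar{A}^j(y,A^j(y)))_j$ lie in $\ell^1(H)$ with norm controlled by $\|\sigma\|_{-(q+\frac12),\ell^2}\,\|A(y)\|_{\ell^2(H)}\,\|y\|_{H_0}$, and then applies the summation map $\ell^1(H)\to H$ of Lemma \ref{lemma-evaluation}. The obstacle is purely bookkeeping: one must verify that the extra second term $\bar{A}^j(y,A^j(y))$ — absent from the decomposition used earlier where only $\bar{A}^j(A^j(\cdot),\cdot)$ appeared — is also summable and that the resulting map is jointly continuous in $y$; this is where the hypothesis $\sigma\in\ell^2(\cals_{q+\frac12})$ rather than merely $\ell^2(\cals_{q})$ is indispensable, since without it $\bar{A}^j$ would not even be defined as a bounded operator on $H_0\times H_0$.
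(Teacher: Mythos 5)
Your treatment of statements (1)--(3) coincides with the paper's proof: the same two estimates from Lemma \ref{lemma-diff-continuous}, with (2) obtained from the diagonal of the bilinear bound in (3). Your plan for (4) --- Leibniz rule for the diagonal of a bounded bilinear map, termwise summation via Cauchy--Schwarz into $\ell^1(H)$, then the summation functional of Lemma \ref{lemma-evaluation} --- is also structurally the paper's argument; the paper merely packages it more abstractly through the evaluation operator $\Phi \in L(\ell^2(L(H_0,H)),\ell^2(H_0);\ell^1(H))$ and the identification $L(H_0,\ell^2(L(H_0,H))) \cong L(H_0,H_0;\ell^2(H))$, whereas you estimate the two families of summands directly.

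However, your key summation bound is wrong as written, and that step would fail. You claim that $(\bar{A}^j(A^j(y),y))_j$ and $(\bar{A}^j(y,A^j(y)))_j$ lie in $\ell^1(H)$ with norm controlled by $\|\sigma\|\,\|A(y)\|_{\ell^2(H)}\,\|y\|_{H_0}$, citing statement (2). But the bilinear bound from (3) lives on $H_0 \times H_0$: it gives $\|\bar{A}^j(A^j(y),y)\|_H \le C\|\sigma^j\|_{q+\frac{1}{2},d}\,\|A^j(y)\|_{H_0}\,\|y\|_{H_0}$, so the argument $A^j(y)$ must be measured in $H_0$, not in $H$. Indeed, in $\bar{A}^j(A^j(y),y)$ the term $A^j(y)$ enters through $\partial_i A^j(y)$, and $\partial_i$ maps $H_0$ into $H$ but does not map $H$ into $H$; in $\bar{A}^j(y,A^j(y))$ it enters through $\la \sigma_i^j, A^j(y) \ra$, and $\sigma_i^j \in \cals_{q+\frac{1}{2}}(\bbr^d)$ pairs with $\cals_{-(q+\frac{1}{2})}(\bbr^d) = H_0$, not with $H = \cals_{-(q+1)}(\bbr^d)$. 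So controlling $A^j(y)$ merely in $H$ is insufficient. The correct ingredient --- and the reason statement (4) is formulated on $G$ rather than on $H_0$ --- is the bound via $\|A(y)\|_{\ell^2(H_0)}$ together with the continuity of $A : G \to \ell^2(H_0)$ from the earlier unnumbered lemma (still applicable here since $\cals_{q+\frac{1}{2}}(\bbr^d) \hookrightarrow \cals_q(\bbr^d)$); this is exactly what the paper invokes in the last line of its proof. Replacing $\|A(y)\|_{\ell^2(H)}$ by $\|A(y)\|_{\ell^2(H_0)}$ and statement (2) by that lemma, your argument goes through and yields both well-definedness and continuity on $G$.
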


\begin{proof}
Using Lemma \ref{lemma-diff-continuous}, for all $y,z \in G$ we have
\begin{align*}
\| \bar{A}(y,z) \|_{\ell^2(H_0)}^2 &\leq d \sum_{j=1}^{\infty} \sum_{i=1}^d |\langle \sigma_{i}^j,z \rangle|^2 \| \partial_i y \|_{H_0}^2 \leq d \sum_{j=1}^{\infty} \sum_{i=1}^d \| \sigma_{i}^j \|_{q + \frac{1}{2}}^2 \| z \|_{H_0}^2 \| y \|_{G}^2
\\ &\leq Cd \| \sigma \|_{q + \frac{1}{2},\ell^2}^2 \| y \|_{H_0}^2 \| z \|_G^2
\end{align*}
with a universal constant $C > 0$, proving the first statement. Hence, the second statement follows as well. Similarly, using Lemma \ref{lemma-diff-continuous}, for all $y,z \in H_0$ we have
\begin{align*}
\| \bar{A}(y,z) \|_{\ell^2(H)}^2 &\leq d \sum_{i=1}^d |\langle \sigma_{i}^j,z \rangle|^2 \| \partial_i y \|_{H}^2 \leq d \sum_{i=1}^d \| \sigma_{i}^j \|_{q + \frac{1}{2}}^2 \| z \|_{H_0}^2 \| y \|_{H_0}^2
\\ &\leq Cd \| \sigma \|_{q + \frac{1}{2},\ell^2}^2 \| y \|_{H_0}^2 \| z \|_{H_0}^2
\end{align*}
with a universal constant $C > 0$, proving the third statement. For each $j \in \bbn$ we have $A^j \in C^1(H_0;H)$, because $A^j(y) = \bar{A}^j(y,y)$ for each $y \in H_0$ and $\bar{A}^j \in L(H_0,H_0;H)$. Now, for $j \in \bbn$ we define $\Phi^j : \ell^2(L(H_0,H)) \times \ell^2(H_0) \to H$ as
\begin{align*}
\Phi^j(T,z) := T^j z^j.
\end{align*}
Then $\Phi := (\Phi^j)_{j \in \bbn}$ provides a bounded bilinear operator
\begin{align*}
\Phi \in L(\ell^2(L(H_0,H)),\ell^2(H_0);\ell^1(H)). 
\end{align*}
Indeed, by the Cauchy-Schwarz inequality, for all $T \in L(\ell^2(L(H_0,H))$ and $z \in \ell^2(H_0)$ we obtain
\begin{align*}
\| \Phi(T,z) \|_{\ell^1(H)} &= \sum_{j=1}^{\infty} \| T^j z^j \|_H \leq \sum_{j=1}^{\infty} \| T^j \|_{L(H_0,H)} \| z^j \|_{H_0}
\\ &\leq \| T \|_{\ell^2(L(H_0,H))} \| z \|_{\ell^2(H_0)}.
\end{align*}
Furthermore, note that
\begin{align*}
L(H_0,\ell^2(L(H_0,H))) \cong L(H_0,H_0;\ell^2(H)).
\end{align*}
Indeed, for $T \in L(H_0,\ell^2(L(H_0,H)))$ we assign $\phi_T \in L(H_0,H_0;\ell^2(H))$ as
\begin{align*}
\phi_T(y,z) = \big( (Ty)^j z \big)_{j \in \bbn},
\end{align*}
which provides an isometric isomorphism. By identification, for each $j \in \bbn$ we have $B^j := D A^j \in L(H_0,H_0;H)$, and by the Leibniz rule we obtain 
\begin{align*}
B^j(y,z) = \bar{A}^j(y,z) + \bar{A}^j(z,y), \quad y,z \in H_0.
\end{align*}
The series $B := (B^j)_{j \in \bbn}$ provides a bounded bilinear operator
\begin{align*}
B \in L(H_0,H_0;\ell^2(H)).
\end{align*}
Indeed, using Lemma \ref{lemma-diff-continuous}, for all $y,z \in H_0$ we have
\begin{align*}
\| B(y,z) \|_{\ell^2(H)}^2 &= \sum_{j=1}^{\infty} \| B^j(y,z) \|_H^2 \leq 4Cd \sum_{j=1}^{\infty} \| \sigma^j \|_{q + \frac{1}{2},d}^2 \| y \|_{H_0}^2 \| z \|_{H_0}^2
\\ &= 4Cd \| \sigma \|_{q + \frac{1}{2},\ell^2}^2 \| y \|_{H_0}^2 \| z \|_{H_0}^2
\end{align*}
with a universal constant $C > 0$. By Lemma \ref{lemma-evaluation} the mapping $\ell^1(H) \to H$, $z \mapsto \sum_{j=1}^{\infty} z^j$ belongs to $L(\ell^1(H),H)$. Therefore, and since $A : G \to \ell^2(H_0)$ is continuous, the proof is completed.
\end{proof}

\begin{lemma}\label{lemma-inv-b-c}
Suppose that $\sigma \in \ell^2(\cals_{q + \frac{1}{2}}(\bbr^d;\bbr^d))$. Then we have $\sigma^j \in C_0^1(\bbr^d)$ for each $j \in \bbn$, and the mapping $\bbr^d \to \bbr^d$, $x \mapsto \sum_{j=1}^{\infty} D \sigma^j(x) \sigma^j(x)$ is well-defined and continuous.
\end{lemma}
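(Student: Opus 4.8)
The plan is to prove both assertions by invoking the Sobolev embedding theorem for Hermite Sobolev spaces (Theorem \ref{thm-Ck-versions}). The hypothesis is $\sigma \in \ell^2(\cals_{q+\frac{1}{2}}(\bbr^d;\bbr^d))$, which means that for each $j \in \bbn$ and each $i=1,\ldots,d$ we have $\sigma_i^j \in \cals_{q+\frac{1}{2}}(\bbr^d)$. Since $q > \frac{d}{4}$ by the standing assumption, we have $q+\frac{1}{2} > \frac{d}{4}+\frac{1}{2}$, so Theorem \ref{thm-Ck-versions} is applicable at the level of $C^1$-regularity and yields $\cals_{q+\frac{1}{2}}(\bbr^d) \subset C_0^1(\bbr^d)$. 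This immediately gives $\sigma^j \in C_0^1(\bbr^d)$ for each $j \in \bbn$, settling the first claim.

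For the second claim, first I would record the norm estimate furnished by Theorem \ref{thm-Ck-versions}, namely that there is a universal constant $C>0$ such that $\| g \|_{C^1} \leq C \| g \|_{q+\frac{1}{2}}$ for every $g \in \cals_{q+\frac{1}{2}}(\bbr^d)$; in particular the sup-norms of both $\sigma^j$ and its first derivatives $\partial_l \sigma^j$ are controlled by $\| \sigma^j \|_{q+\frac{1}{2},d}$. The next step is to bound the summands of the series $\sum_{j=1}^\infty D\sigma^j(x) \sigma^j(x)$ uniformly in $x$. Using the Cauchy-Schwarz inequality over $j$, together with the pointwise estimate $\| D\sigma^j(x)\sigma^j(x) \| \leq \| D\sigma^j \|_\infty \| \sigma^j \|_\infty \leq C^2 \| \sigma^j \|_{q+\frac{1}{2},d}^2$, one obtains a bound of the form
\begin{align*}
\sum_{j=1}^\infty \| D\sigma^j(x) \sigma^j(x) \| \leq C^2 \sum_{j=1}^\infty \| \sigma^j \|_{q+\frac{1}{2},d}^2 = C^2 \| \sigma \|_{q+\frac{1}{2},\ell^2}^2 < \infty,
\end{align*}
which shows that the series converges absolutely and uniformly in $x \in \bbr^d$, so the mapping is well-defined.

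For continuity I would argue by uniform convergence: each summand $x \mapsto D\sigma^j(x)\sigma^j(x)$ is continuous because $\sigma^j \in C_0^1(\bbr^d)$, and the uniform bound above (with the $j$-th term dominated by the summable sequence $C^2 \| \sigma^j \|_{q+\frac{1}{2},d}^2$) lets me apply the Weierstrass $M$-test, so the limit function is continuous as well. The main subtlety to be careful about is the bookkeeping for the $\bbr^d$-valued structure: $\sigma^j$ is a vector field, so $D\sigma^j(x)$ is a $d \times d$ Jacobian matrix and one must make sure the componentwise $C^1$-bounds assemble correctly into the stated estimate on $\| \sigma^j \|_{q+\frac{1}{2},d}$; this is the only place requiring attention, and it is entirely routine once the Sobolev embedding is invoked componentwise. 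No genuine obstacle arises, since the argument parallels the summability computations already carried out in Lemma \ref{lemma-A-bar-q}.
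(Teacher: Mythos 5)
Your proof is correct and follows essentially the same route as the paper: the Sobolev embedding theorem for Hermite Sobolev spaces (Theorem \ref{thm-Ck-versions}) gives $\sigma^j \in C_0^1(\bbr^d)$ together with the uniform bound $\| D\sigma^j(x)\sigma^j(x) \| \leq C^2 \| \sigma^j \|_{q+\frac{1}{2},d}^2$, and summability of $\| \sigma^j \|_{q+\frac{1}{2},d}^2$ yields the well-definedness and continuity of the series. The only cosmetic difference is that you conclude via the Weierstrass $M$-test where the paper cites Lebesgue's dominated convergence theorem, which in this setting amounts to the same uniform-convergence argument.
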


\begin{proof}
Let $j \in \bbn$ be arbitrary. By the Sobolev embedding theorem for Hermite Sobolev spaces (Theorem \ref{thm-Ck-versions}) we have $\sigma^j \in C_0^1(\bbr^d)$, and for each $x \in \bbr^d$ we have
\begin{align*}
\| \sigma^j(x) \| + \| D \sigma^j(x) \| &\leq C \| \sigma^j \|_{\cals_{q + \frac{1}{2}}(\bbr^d;\bbr^d)}
\end{align*}
with a universal constant $C > 0$, and hence
\begin{align*}
\| D \sigma^j(x) \sigma^j(x) \| \leq \| D \sigma^j(x) \| \, \| \sigma^j(x) \| \leq C^2 \| \sigma^j \|_{\cals_{q + \frac{1}{2}}(\bbr^d;\bbr^d)}^2.
\end{align*}
Hence, the statement follows from Lebesgue's dominated convergence theorem.
\end{proof}

\begin{remark}
If $\sigma \in \ell^2(\cals_{q + \frac{1}{2}}(\bbr^d;\bbr^d))$, then by Lemma \ref{lemma-inv-b-c} the It\^{o} SDE (\ref{SDE-b-sigma}) can equivalently be expressed by the Stratonovich SDE (\ref{SDE-Strat}), where the continuous mapping $c : \bbr^d \to \bbr^d$ is given by
\begin{align}\label{c-Strat}
c = b - \frac{1}{2} \sum_{j=1}^{\infty} D \sigma^j \cdot \sigma^j.
\end{align}
\end{remark}

\begin{proposition}\label{prop-decomp-A-sigma}
Suppose that $\sigma \in \ell^2(\cals_{q + \frac{1}{2}}(\bbr^d;\bbr^d))$. Then we have the decomposition
\begin{align}\label{decomp-A-sigma}
\sum_{j=1}^{\infty} D A^j \cdot A^j|_{\calm} = \sum_{j=1}^{\infty} \bar{A}^j(A^j(\cdot),\cdot)|_{\calm} + \psi_* \bigg( \sum_{j=1}^{\infty} D \sigma^j \cdot \sigma^j|_{\caln} \bigg).
\end{align}
\end{proposition}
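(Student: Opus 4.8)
The plan is to split each summand $D A^j(y) A^j(y)$ by the product rule into two bilinear expressions in $\bar{A}^j$, to recognise one of them as the term already appearing on the right-hand side of (\ref{decomp-A-sigma}), and to identify the remaining one with the push-forward $\psi_*(D\sigma^j \cdot \sigma^j|_{\caln})$ through an explicit computation at the Dirac distributions constituting $\calm$. First I would recall from Lemma \ref{lemma-A-bar-q} that, under the standing hypothesis $\sigma \in \ell^2(\cals_{q+\frac{1}{2}}(\bbr^d;\bbr^d))$, we have $A^j \in C^1(H_0;H)$ with derivative $D A^j = B^j \in L(H_0,H_0;H)$ satisfying $B^j(y,z) = \bar{A}^j(y,z) + \bar{A}^j(z,y)$. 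Since every $y \in \calm \subset G$ has $A^j(y) \in H_0$, substituting $z = A^j(y)$ gives, for each $y \in \calm$,
$$D A^j(y) A^j(y) = \bar{A}^j(A^j(y),y) + \bar{A}^j(y,A^j(y)).$$
Both $y \mapsto \sum_{j=1}^{\infty} D A^j(y) A^j(y)$ (Lemma \ref{lemma-A-bar-q}) and $y \mapsto \sum_{j=1}^{\infty}\bar{A}^j(A^j(y),y)$ (the lemma stated just before Remark \ref{rem-non-smoothness}) are well-defined and continuous on $G$, so the series $\sum_{j=1}^{\infty}\bar{A}^j(y,A^j(y))$ converges too, and the claim reduces to proving $\sum_{j=1}^{\infty}\bar{A}^j(\cdot,A^j(\cdot))|_{\calm} = \psi_*\big(\sum_{j=1}^{\infty} D\sigma^j \cdot \sigma^j|_{\caln}\big)$.

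Next I would verify this reduced identity pointwise. Fix $y = \delta_x \in \calm$ with $x := \psi^{-1}(y) \in \caln$, and write $\langle \sigma^j, z\rangle := (\langle \sigma_i^j, z\rangle)_{i=1,\ldots,d} \in \bbr^d$. By Proposition \ref{prop-orbit-in-Sp} the orbit map satisfies $D\psi(x) v = -\sum_{i=1}^d v_i\, \partial_i \delta_x$, whence for $z \in H_0$
$$\bar{A}^j(\delta_x,z) = -\sum_{i=1}^d \langle \sigma_i^j,z\rangle\, \partial_i \delta_x = D\psi(x)\,\langle \sigma^j,z\rangle.$$
Using $\langle \sigma_k^j, \delta_x\rangle = \sigma_k^j(x)$ I have $A^j(\delta_x) = -\sum_{k=1}^d \sigma_k^j(x)\, \partial_k \delta_x \in H_0$, and since the adjoint of $\partial_k$ under the dual pairing is $-\partial_k$ and $\langle f,\delta_x\rangle = f(x)$ (Lemmas \ref{lemma-delta-distr} and \ref{lemma-tau-properties}),
$$\langle \sigma_i^j, A^j(\delta_x)\rangle = \sum_{k=1}^d \sigma_k^j(x)\, \partial_k \sigma_i^j(x) = \big(D\sigma^j(x)\sigma^j(x)\big)_i.$$
Thus $\langle\sigma^j,A^j(\delta_x)\rangle = D\sigma^j(x)\sigma^j(x)$, and consequently $\bar{A}^j(\delta_x,A^j(\delta_x)) = D\psi(x)\big(D\sigma^j(x)\sigma^j(x)\big) = \psi_*(D\sigma^j\cdot\sigma^j)(\delta_x)$. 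Summing over $j$ and using Lemma \ref{lemma-inv-b-c} (which gives convergence of $\sum_j D\sigma^j\cdot\sigma^j$) together with the continuity and linearity of $D\psi(x)$ delivers the reduced identity, hence (\ref{decomp-A-sigma}).

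The only genuine obstacle is the careful bookkeeping of the dual pairings among $\cals_{q+\frac{1}{2}}(\bbr^d)$, $\cals_{-(q+\frac{1}{2})}(\bbr^d)$ and $\cals_{-q}(\bbr^d)$: at each step one must confirm that the pairing $\langle\cdot,\cdot\rangle$ is legitimate — in particular that $A^j(\delta_x)$ genuinely lands in $H_0 = \cals_{-(q+\frac{1}{2})}(\bbr^d)$, so that $\langle \sigma_i^j, A^j(\delta_x)\rangle$ is meaningful precisely under the strengthened hypothesis $\sigma \in \ell^2(\cals_{q+\frac{1}{2}}(\bbr^d;\bbr^d))$ — and that the integration-by-parts relation for $\partial_k$ is applicable. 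All convergence questions are already settled by Lemmas \ref{lemma-A-bar-q} and \ref{lemma-inv-b-c}, so no further analytic difficulty arises.
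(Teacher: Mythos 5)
Your proof is correct and follows essentially the same route as the paper's: both split $D A^j(y) A^j(y) = \bar{A}^j(A^j(y),y) + \bar{A}^j(y,A^j(y))$ via the Leibniz rule and then identify $\bar{A}^j(y,A^j(y))$ at $y = \delta_x$ with $\psi_*(D\sigma^j \cdot \sigma^j)(y)$ through the same duality computation $\la \sigma_i^j, A^j(\delta_x)\ra = (D\sigma^j(x)\sigma^j(x))_i$ and Proposition \ref{prop-orbit-in-Sp}. Your reorganization through the intermediate identity $\bar{A}^j(\delta_x,z) = D\psi(x)\la\sigma^j,z\ra$ and the explicit convergence bookkeeping are cosmetic differences only.
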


\begin{proof}
Let $j \in \bbn$ be arbitrary. By the Leibniz rule we have
\begin{align*}
D A^j(y) z = \bar{A}^j(y,z) + \bar{A}^j(z,y), \quad y,z \in G,
\end{align*}
and hence
\begin{align*}
D A^j(y) A^j(y) = \bar{A}^j(y,A^j(y)) + \bar{A}^j(A^j(y),y), \quad y \in G.
\end{align*}
Now, let $y \in \calm$ be arbitrary. Then we have $y = \delta_x$, where $x := \psi^{-1}(y) \in \caln$. Therefore, by duality we obtain
\begin{align*}
\bar{A}^j(y,A^j(y)) &= - \sum_{i=1}^d \la \sigma_{i}^j, A^j(y) \ra \partial_i y = \sum_{i=1}^d \sum_{k=1}^d \la \sigma_{i}^j, \la \sigma_{k}^j, y \ra \partial_k y \ra \partial_i y
\\ &= - \sum_{i=1}^d \sum_{k=1}^d \la \partial_k \sigma_{i}^j, y \ra \la \sigma_{k}^j, y \ra \partial_i y = - \sum_{i=1}^d \sum_{k=1}^d \partial_k \sigma_{i}^j(x) \sigma_{k}^j(x) \partial_i y
\\ &= - \sum_{i=1}^d \la e_i, D \sigma^j(x) \sigma^j(x) \ra \partial_i y.
\end{align*}
Therefore, for all $y \in \calm$ we obtain
\begin{align*}
\sum_{j=1}^{\infty} D A^j(y) A^j(y) = \sum_{j=1}^{\infty} \bar{A}^j(A^j(y),y) - \sum_{i=1}^d \bigg\la e_i, \sum_{j=1}^{\infty} D \sigma^j(x) \sigma^j(x) \bigg\ra \partial_i y.
\end{align*}
Consequently, using Proposition \ref{prop-orbit-in-Sp} completes the proof.
\end{proof}

\begin{proposition}\label{prop-connection-b-c}
Suppose that $\sigma \in \ell^2(\cals_{q + \frac{1}{2}}(\bbr^d;\bbr^d))$. If conditions (\ref{conditions-1}) and (\ref{conditions-2}) are fulfilled, then the following statements are equivalent:
\begin{enumerate}
\item[(i)] The submanifold $\caln$ is locally invariant for the SDE (\ref{SDE-b-sigma}).

\item[(ii)] We have $\sum_{j=1}^{\infty} D \sigma^j \cdot \sigma^j|_{\caln} \in \Gamma(T \caln)$.

\item[(iii)] We have $c|_{\caln} \in \Gamma(T \caln)$, where the continuous mapping $c : \bbr^d \to \bbr^d$ is given by (\ref{c-Strat}).
\end{enumerate}
If any of the previous conditions is fulfilled, then we have
\begin{align*}
L|_{\calm} - \frac{1}{2} \sum_{j=1}^{\infty} \bar{A}^j(A^j(\cdot),\cdot)|_{\calm} = \psi_* b|_{\caln} \in \Gamma(T \calm),
\\ L|_{\calm} - \frac{1}{2} \sum_{j=1}^{\infty} D A^j \cdot A^j|_{\calm} = \psi_* c|_{\caln} \in \Gamma(T \calm),
\end{align*}
and the difference is given by
\begin{align*}
&\bigg( L|_{\calm} - \frac{1}{2} \sum_{j=1}^{\infty} \bar{A}^j(A^j(\cdot),\cdot)|_{\calm} \bigg) - \bigg( L|_{\calm} - \frac{1}{2} \sum_{j=1}^{\infty} D A^j \cdot A^j|_{\calm} \bigg) 
\\ &= \frac{1}{2} \psi_* \bigg( \sum_{j=1}^{\infty} D \sigma^j \cdot \sigma^j|_{\caln} \bigg) \in \Gamma(T \calm).
\end{align*}
\end{proposition}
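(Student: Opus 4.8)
The plan is to first settle the equivalence of (ii) and (iii) by a one-line linearity argument, then to link both to (i) through the criterion already established in Theorem \ref{thm-inv-b}, and finally to read off the three displayed identities from the decomposition results of the preceding lemmas. The substance of the proof has been front-loaded into Proposition \ref{prop-decomp-A-sigma} and Lemma \ref{lemma-inv-b}, so what remains is essentially assembly.

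First I would prove (ii) $\Leftrightarrow$ (iii). By definition $c = b - \frac{1}{2}\sum_{j=1}^{\infty} D\sigma^j \cdot \sigma^j$, and by Lemma \ref{lemma-inv-b-c} this series is well-defined and continuous, so that $c|_{\caln} = b|_{\caln} - \frac{1}{2}\sum_{j=1}^{\infty} D\sigma^j\cdot\sigma^j|_{\caln}$. Since $\Gamma(T\caln)$ is a linear subspace of the space of all mappings $\caln \to \bbr^d$ and $b|_{\caln} \in \Gamma(T\caln)$ by hypothesis (\ref{conditions-1}), it follows that $c|_{\caln} \in \Gamma(T\caln)$ if and only if $\sum_{j=1}^{\infty} D\sigma^j\cdot\sigma^j|_{\caln} \in \Gamma(T\caln)$, which is exactly (ii).

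Next I would prove (i) $\Leftrightarrow$ (ii). By Lemma \ref{lemma-inv-b-c} we have $\sigma^j \in C_0^1(\bbr^d)$, so the decomposition (\ref{zerl-general}) of Proposition \ref{prop-decomp-Damir-scale}, applied in the finite-dimensional setting $G = H_0 = H = \bbr^d$, $\calm = \caln$, with $A = B = \sigma^j$, yields $[\sigma^j|_{\caln}, \sigma^j|_{\caln}]_{\caln} = [D\sigma^j\cdot\sigma^j|_{\caln}]_{\Gamma(T\caln)}$ for each $j \in \bbn$. Summing over $j$, with convergence of the series again guaranteed by Lemma \ref{lemma-inv-b-c}, gives $\sum_{j=1}^{\infty}[\sigma^j|_{\caln},\sigma^j|_{\caln}]_{\caln} = [\sum_{j=1}^{\infty} D\sigma^j\cdot\sigma^j|_{\caln}]_{\Gamma(T\caln)}$. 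Hence the invariance criterion (\ref{quad-v-zero}) of Theorem \ref{thm-inv-b} holds if and only if $\sum_{j=1}^{\infty} D\sigma^j\cdot\sigma^j|_{\caln} \in \Gamma(T\caln)$, which establishes (i) $\Leftrightarrow$ (ii) and completes the chain of equivalences. For the additional statements, assuming any of the conditions holds, the first identity is precisely Lemma \ref{lemma-inv-b}(3), and since $b|_{\caln}\in\Gamma(T\caln)$, Lemma \ref{lemma-eq-loc-fields}(1) gives $\psi_* b|_{\caln}\in\Gamma(T\calm)$. For the second identity I would substitute the decomposition (\ref{decomp-A-sigma}) into $L|_{\calm} - \frac{1}{2}\sum_{j=1}^{\infty} DA^j\cdot A^j|_{\calm}$, obtaining $\psi_* b|_{\caln} - \frac{1}{2}\psi_*(\sum_{j=1}^{\infty} D\sigma^j\cdot\sigma^j|_{\caln})$, which by linearity of $\psi_*$ equals $\psi_* c|_{\caln}$ and lies in $\Gamma(T\calm)$ by (iii) and Lemma \ref{lemma-eq-loc-fields}(1). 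The difference formula is read directly off (\ref{decomp-A-sigma}), and its tangency follows from (ii) together with Lemma \ref{lemma-eq-loc-fields}(1).

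The main obstacle is bookkeeping rather than analysis: one must justify interchanging the infinite series with the quotient-class bracket $[\cdot,\cdot]_{\caln}$ and with the push-forward $\psi_*$. This rests on the $H$-convergence of the relevant series, supplied by Lemmas \ref{lemma-inv-b-c} and \ref{lemma-A-bar-q}, and on the continuity of the fixed linear map $D\psi(x)$, which lets $\psi_*$ commute with convergent sums. No genuinely new estimate is required.
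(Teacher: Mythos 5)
Your proof is correct; it reaches the same conclusions as the paper but routes the chain of equivalences differently. The paper's own proof works upstairs in the Hermite--Sobolev picture: Lemma \ref{lemma-A-bar-q} verifies the smoothness and convergence hypotheses of Theorem \ref{thm-main-2} for the SPDE on $\calm = \{\delta_x : x \in \caln\}$, Theorem \ref{thm-M-N} (implicitly) converts (i) into local invariance of $\calm$, and the displayed identities --- obtained exactly as you obtain them, from Lemma \ref{lemma-inv-b} and the decomposition (\ref{decomp-A-sigma}) --- translate the resulting tangency condition (\ref{tang-L-2}) into $\psi_* c|_{\caln} \in \Gamma(T \calm)$, hence into (iii) via Lemma \ref{lemma-tang-embedded}; so in the paper the ``additional statements'' do double duty inside the equivalence proof. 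You instead stay downstairs in $\bbr^d$: Theorem \ref{thm-inv-b} reduces (i) to the bracket condition (\ref{quad-v-zero}), and the finite-dimensional case of the decomposition (\ref{zerl-general}) (applicable because Lemma \ref{lemma-inv-b-c} gives $\sigma^j \in C_0^1(\bbr^d)$ and convergence of $\sum_{j} D\sigma^j \cdot \sigma^j$) identifies $[\sigma^j|_{\caln},\sigma^j|_{\caln}]_{\caln}$ with $[D\sigma^j \cdot \sigma^j|_{\caln}]_{\Gamma(T\caln)}$, yielding (i) $\Leftrightarrow$ (ii); the step (ii) $\Leftrightarrow$ (iii) is the same linearity argument in both treatments, and your derivation of the three identities coincides with the paper's. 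What each buys: your route is more elementary for the equivalences, needing only the finite-dimensional Sobolev-embedding input of Lemma \ref{lemma-inv-b-c} rather than the bilinear-operator estimates of Lemma \ref{lemma-A-bar-q}, and it cleanly decouples the equivalences from the identities; the paper's route exhibits the proposition as an immediate corollary of the general Stratonovich-form criterion (Theorem \ref{thm-main-2}) and keeps the SPDE--SDE correspondence, the theme of the section, in the foreground. One small point worth making explicit in your summation bookkeeping: tangency of the limit $\sum_{j} \varphi_*(D a^j \cdot a^j)$ follows because the tangent spaces $T_x \caln$ are finite-dimensional and hence closed in $\bbr^d$.
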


\begin{proof}
Noting Lemma \ref{lemma-A-bar-q}, the equivalences (i) $\Leftrightarrow$ (ii) $\Leftrightarrow$ (iii) are a consequence of Theorem \ref{thm-main-2}. The additional statements follow from Lemma \ref{lemma-inv-b} and the decomposition (\ref{decomp-A-sigma}) from Proposition \ref{prop-decomp-A-sigma}.
\end{proof}

Consequently, we see the following connection between the coefficients of the SDE (\ref{SDE-b-sigma}) and the associated SPDE (\ref{SPDE}). The vector field in (\ref{L-vf-A2}) corresponds to the drift $b$, and the vector field in (\ref{tang-L-2}) corresponds to the Stratonovich corrected drift $c$. Furthermore, we have computed the difference between these two vector fields, which in the general situation has been determined in Remark \ref{rem-difference}.

\subsection{Submanifolds given by zeros of smooth functions}\label{sub-sec-zeros}

Now, let $\caln$ be a $(d-n)$-dimensional $C^2$-submanifold of $\bbr^d$ for some $n \in \bbn$ such that $n < d$. We assume there exist an open subset $O \subset \bbr^d$ and a mapping $f : \bbr^d \to \bbr^n$ such that
\begin{align}\label{N-zeros}
\caln = \{ x \in O : f(x) = 0 \}.
\end{align}
Concerning the components of $f$ we assume that $f_k \in \cals_{q+1}(\bbr^d)$ for all $k=1,\ldots,n$. Recalling that $q > \frac{d}{4}$, by the Sobolev embedding theorem for Hermite Sobolev spaces (Theorem \ref{thm-Ck-versions}) we have $f \in C^2(\bbr^d;\bbr^n)$. We also assume that $Df(x)\bbr^d = \bbr^n$ for all $x \in \caln$. Then, by Lemma \ref{lemma-tang-nullstellen} we have
\begin{align}\label{N-zeros-tang}
T_x \caln = \ker D f(x) \quad \text{for each $x \in \caln$.}
\end{align}
We define the operator $\call : C^2(\bbr^d) \to C(\bbr^d)$ as
\begin{align*}
(\call g)(x) := \sum_{i=1}^d b_i(x) \partial_i g(x) + \frac{1}{2} \sum_{i,j=1}^d ( \sigma(x) \sigma(x)^{\top} )_{ij} \partial_{ij}^2 g(x), \quad x \in \bbr^d,
\end{align*}
and for each $j \in \bbn$ we define the operator $\cala^j : C^2(\bbr^d) \to C(\bbr^d)$ as
\begin{align*}
(\cala^j g)(x) := \sum_{i=1}^d \sigma_{i}^j(x) \partial_i g(x), \quad x \in \bbr^d.
\end{align*}

\begin{theorem}\label{thm-nullstellen}
The following statements are equivalent:
\begin{enumerate}
\item[(i)] The submanifold $\caln$ is locally invariant for the SDE (\ref{SDE-b-sigma}).

\item[(ii)] For all $k = 1,\ldots,n$ we have
\begin{align}\label{cond-1}
\call f_k |_{\caln} &= 0,
\\ \label{cond-2} \cala^j f_k |_{\caln} &= 0, \quad j \in \bbn.
\end{align}
\end{enumerate}
\end{theorem}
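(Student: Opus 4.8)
The plan is to reduce Theorem~\ref{thm-nullstellen} to the already-established link (Theorem~\ref{thm-M-N}) between invariance of $\caln$ for the SDE~(\ref{SDE-b-sigma}) and invariance of $\calm = \{\delta_x : x \in \caln\}$ for the SPDE~(\ref{SPDE}), and then to the tangency characterization of Theorem~\ref{thm-SPDE}. However, since we are in the purely finite dimensional setting, the cleanest route is to work directly with the submanifold $\caln \subset \bbr^d$ and the geometric criterion of Proposition~\ref{prop-L-tang} and Theorem~\ref{thm-SPDE} applied with $G = H = \bbr^d$. The key observation is that conditions (\ref{cond-1}) and (\ref{cond-2}) are nothing but the statement that the Stratonovich-corrected drift and the diffusion vector fields are tangent to $\caln$, expressed through the defining functions $f_k$ via the tangent-space description $T_x \caln = \ker Df(x)$ from (\ref{N-zeros-tang}).

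First I would translate (\ref{cond-2}) into a tangency statement. For each $j \in \bbn$ and $x \in \caln$, the quantity $(\cala^j f_k)(x) = \sum_{i=1}^d \sigma_i^j(x) \partial_i f_k(x) = \langle Df_k(x), \sigma^j(x) \rangle$ is precisely the $k$-th component of $Df(x)\sigma^j(x)$. Hence (\ref{cond-2}) for all $k$ is equivalent to $\sigma^j(x) \in \ker Df(x) = T_x \caln$, i.e.\ to (\ref{conditions-2}). So the diffusion tangency condition is captured exactly by (\ref{cond-2}). The drift condition is more delicate, because $\call$ contains the second-order term $\tfrac12 \sum_{i,j} (\sigma\sigma^\top)_{ij} \partial_{ij}^2 f_k$, which is not the action of a vector field; this is where the curvature of $\caln$ enters and where the Stratonovich correction appears.

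The main step, then, is to show that under (\ref{conditions-2}) the full condition $\call f_k|_{\caln} = 0$ for all $k$ is equivalent to the invariance criterion of Proposition~\ref{prop-L-tang}, namely $\sum_{j=1}^\infty [\sigma^j|_{\caln}, \sigma^j|_{\caln}]_{\caln} = [0]_{\Gamma(T\caln)}$ together with $b|_{\caln} \in \Gamma(T\caln)$. The cleanest way I would do this is to differentiate the identity $f_k \circ \gamma \equiv 0$ twice along curves, as in the proof of Lemma~\ref{lemma-tang-nullstellen}. Fix a local parametrization $\phi : V \to U \cap \caln$ and let $a^j := \phi_*^{-1}(\sigma^j|_{\caln})$; then differentiating $f_k(\phi(x)) = 0$ twice in the direction $a^j(x)$ and using the second-order chain rule (Lemma~\ref{lemma-second-order-chain}) yields
\begin{align*}
0 = D^2 f_k(\phi(x))\big(\phi_* a^j, \phi_* a^j\big) + Df_k(\phi(x))\, \phi_{**}(a^j, a^j).
\end{align*}
Summing over $j$ and recalling that $\phi_* a^j = \sigma^j|_{\caln}$ and that $\sum_j D^2 f_k(\sigma^j, \sigma^j) = \sum_{i,l}(\sigma\sigma^\top)_{il}\partial_{il}^2 f_k$, one identifies $Df_k \cdot \sum_j \phi_{**}(a^j,a^j) = -\sum_{i,l}(\sigma\sigma^\top)_{il}\partial_{il}^2 f_k|_{\caln}$. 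Since $\sum_j \phi_{**}(a^j,a^j)$ is a local representative of $\sum_j [\sigma^j|_\caln,\sigma^j|_\caln]_\caln$ (Definition~\ref{def-A-Strat}), the condition $\sum_j[\sigma^j,\sigma^j]_\caln = [0]$ means this representative lies in $T\caln = \ker Df$, i.e.\ $\sum_{i,l}(\sigma\sigma^\top)_{il}\partial_{il}^2 f_k|_{\caln} = 0$ for all $k$. Combining with the first-order condition $Df_k \cdot b|_\caln = \sum_i b_i \partial_i f_k|_\caln = 0$, which is equivalent to $b|_\caln \in \Gamma(T\caln)$ by (\ref{N-zeros-tang}), exactly recovers $\call f_k|_\caln = 0$.

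Assembling the pieces: (\ref{cond-2}) $\Leftrightarrow$ (\ref{conditions-2}), and given (\ref{conditions-2}), condition (\ref{cond-1}) $\Leftrightarrow$ the conjunction of $b|_\caln \in \Gamma(T\caln)$ and $\sum_j [\sigma^j|_\caln,\sigma^j|_\caln]_\caln = [0]_{\Gamma(T\caln)}$, which by Proposition~\ref{prop-L-tang} (applied with $G = H = \bbr^d$, $\calm$ replaced by $\caln$) is equivalent to local invariance of $\caln$ for the SDE~(\ref{SDE-b-sigma}). The hard part I anticipate is bookkeeping the interchange of the infinite sum over $j$ with differentiation and with the duality pairing, so that the term-by-term identity above is legitimate; this is where I would invoke the continuity and summability estimates already proved in Lemma~\ref{lemma-A-bar-q} and Lemma~\ref{lemma-inv-b-c} to justify that $\sum_j D^2 f_k(\sigma^j,\sigma^j)$ converges and equals $\sum_{i,l}(\sigma\sigma^\top)_{il}\partial_{il}^2 f_k$ pointwise on $\caln$. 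Once that convergence is secured, the equivalence is a direct computation, and invoking Proposition~\ref{prop-L-tang} closes the argument.
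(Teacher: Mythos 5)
Your reduction breaks at the middle step, where you claim that, under (\ref{conditions-2}), condition (\ref{cond-1}) is equivalent to the conjunction of $b|_{\caln} \in \Gamma(T \caln)$ and $\sum_{j=1}^{\infty} [\sigma^j|_{\caln},\sigma^j|_{\caln}]_{\caln} = [0]_{\Gamma(T \caln)}$ --- and with it the appeal to Proposition \ref{prop-L-tang}. Write $\call = \call_1 + \call_2$ for the first and second order parts of $\call$ (as the paper does after the theorem). Your own curve computation gives $Df_k \cdot b|_{\caln} = \call_1 f_k|_{\caln}$ and $Df_k \cdot \sum_j \phi_{**}(a^j,a^j)|_{\caln} = -2\, \call_2 f_k|_{\caln}$, so via (\ref{N-zeros-tang}) the conjunction you state is equivalent to $\call_1 f_k|_{\caln} = 0$ \emph{and} $\call_2 f_k|_{\caln} = 0$ holding separately; but (\ref{cond-1}) only asserts $\call_1 f_k + \call_2 f_k = 0$ on $\caln$, and nothing lets you split this sum. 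The paper's own Example \ref{example-Stroock} (see Corollary \ref{cor-unit-sphere}) is a concrete counterexample: for $\bbs^{d-1}$ with $b(x) = -\tfrac{d-1}{2}x$ and $\sigma(x) = \Id - x x^{\top}$, conditions (\ref{cond-1}) and (\ref{cond-2}) hold and the sphere is invariant, yet $b$ is normal to the sphere, so $b|_{\bbs^{d-1}} \notin \Gamma(T \bbs^{d-1})$ and $\call_1 f|_{\bbs^{d-1}} = -(d-1) \neq 0$. For the same reason Proposition \ref{prop-L-tang} cannot be invoked at all: drift tangency (\ref{condition-SPDE-1}) is a standing \emph{hypothesis} of that proposition, not part of the equivalence it asserts, and statement (ii) of the theorem does not supply it.

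The repair is to keep the drift and the correction term coupled: apply Theorem \ref{thm-SPDE} with $G = H = \bbr^d$ (the paper notes this covers finite dimensional SDEs), whose condition (\ref{tang-L}) is the single quotient equation $[b|_{\caln}]_{\Gamma(T \caln)} - \tfrac{1}{2}\sum_j [\sigma^j|_{\caln},\sigma^j|_{\caln}]_{\caln} = [0]_{\Gamma(T \caln)}$. By (\ref{N-zeros-tang}) and your identity, any local representative satisfies $Df_k \cdot \big( b - \tfrac{1}{2}\sum_j \phi_{**}(a^j,a^j) \big) = \call_1 f_k + \call_2 f_k = \call f_k$ on $\caln$, so under (\ref{cond-2}) $\Leftrightarrow$ (\ref{tang-A}) one gets (\ref{tang-L}) $\Leftrightarrow$ (\ref{cond-1}), and Theorem \ref{thm-SPDE} then yields both implications of the theorem at once. (For the summability you should not cite Lemmas \ref{lemma-A-bar-q} and \ref{lemma-inv-b-c}, which assume $\sigma \in \ell^2(\cals_{q+\frac{1}{2}}(\bbr^d;\bbr^d))$ --- an assumption Theorem \ref{thm-nullstellen} does not make; since $\sigma(y) \in \ell^2(\bbr^d)$ and $D^2 f_k(y)$ is a fixed bounded bilinear form, the series $\sum_j D^2 f_k(y)(\sigma^j(y),\sigma^j(y))$ converges absolutely by Cauchy--Schwarz.) So repaired, your argument becomes a correct, purely finite dimensional proof, genuinely different from the paper's: the paper proves (i) $\Rightarrow$ (ii) by applying It\^{o}'s formula to $f_k(X)$ and the uniqueness of the semimartingale decomposition, and proves (ii) $\Rightarrow$ (i) by lifting to Hermite Sobolev spaces, where (\ref{cond-1}) and (\ref{cond-2}) become the linear kernel conditions $\la f_k, L(y) \ra = \la f_k, A^j(y) \ra = 0$ on $\calm = \{ \delta_x : x \in \caln \}$, combined with the tangent space characterization of Lemma \ref{lemma-tangent-L-M} and the transfer result Theorem \ref{thm-M-N}.
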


Before we provide the proof of Theorem \ref{thm-nullstellen}, let us state some consequences. Note that we can decompose the operator $\call$ as $\call = \call_1 + \call_2$, where the first order operator $\call_1 : C^2(\bbr^d) \to C(\bbr^d)$ is given by
\begin{align*}
(\call_1 g)(x) := \sum_{i=1}^d b_i(x) \partial_i g(x), \quad x \in \bbr^d,
\end{align*}
and the second order operator $\call_2 : C^2(\bbr^d) \to C(\bbr^d)$ is given by
\begin{align*}
(\call_2 g)(x) := \frac{1}{2} \sum_{i,j=1}^d ( \sigma(x) \sigma(x)^{\top} )_{ij} \partial_{ij}^2 g(x), \quad x \in \bbr^d.
\end{align*}

\begin{proposition}
Conditions (\ref{conditions-1}) and (\ref{conditions-2}) are satisfied if and only if for all $k=1,\ldots,n$ we have
\begin{align*}
\call_1 f_k |_{\caln} &= 0,
\\ \cala^j f_k |_{\caln} &= 0, \quad j \in \bbn,
\end{align*}
and in this case, the following statements are equivalent:
\begin{enumerate}
\item[(i)] $\caln$ is locally invariant for the SDE (\ref{SDE-b-sigma}).

\item[(ii)] We have $\call_2 f_k |_{\caln} = 0$ for all $k=1,\ldots,n$.
\end{enumerate}
\end{proposition}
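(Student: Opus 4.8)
The plan is to reduce everything to the tangency characterization $T_x \caln = \ker Df(x)$ from (\ref{N-zeros-tang}) together with Theorem \ref{thm-nullstellen}, so that no genuinely new computation is needed. First I would establish the claimed reformulation of conditions (\ref{conditions-1}) and (\ref{conditions-2}). Fix $x \in \caln$. By (\ref{N-zeros-tang}) a vector $v \in \bbr^d$ lies in $T_x \caln$ if and only if $Df(x) v = 0$, that is, $\sum_{i=1}^d v_i \, \partial_i f_k(x) = 0$ for every $k = 1,\ldots,n$. Applying this with $v = b(x)$ shows that $b|_{\caln} \in \Gamma(T \caln)$ is equivalent to $\call_1 f_k|_{\caln} = 0$ for all $k$, since by definition $(\call_1 f_k)(x) = \sum_{i=1}^d b_i(x) \partial_i f_k(x)$. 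Likewise, applying it with $v = \sigma^j(x)$ shows that $\sigma^j|_{\caln} \in \Gamma(T \caln)$ for all $j$ is equivalent to $\cala^j f_k|_{\caln} = 0$ for all $j$ and all $k$, because $(\cala^j f_k)(x) = \sum_{i=1}^d \sigma_i^j(x) \partial_i f_k(x)$. This yields the first assertion of the proposition.

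Next, assuming these conditions hold, I would invoke Theorem \ref{thm-nullstellen}, which states that $\caln$ is locally invariant for the SDE (\ref{SDE-b-sigma}) if and only if $\call f_k|_{\caln} = 0$ and $\cala^j f_k|_{\caln} = 0$ for all $k = 1,\ldots,n$ and all $j \in \bbn$. Under the standing assumption the conditions $\cala^j f_k|_{\caln} = 0$ are already available, so local invariance is equivalent to $\call f_k|_{\caln} = 0$ for all $k$. Using the decomposition $\call = \call_1 + \call_2$ together with $\call_1 f_k|_{\caln} = 0$ (which holds by the first part), one obtains $\call f_k|_{\caln} = \call_2 f_k|_{\caln}$ on $\caln$. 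Hence $\call f_k|_{\caln} = 0$ for all $k$ is equivalent to $\call_2 f_k|_{\caln} = 0$ for all $k$, which is precisely the asserted equivalence (i) $\Leftrightarrow$ (ii).

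The argument is essentially bookkeeping once (\ref{N-zeros-tang}) and Theorem \ref{thm-nullstellen} are in hand, so I do not expect a serious obstacle. The only point requiring care is to match the coordinate expressions: one must read off that the $k$-th component of $Df(x) b(x)$ is exactly $(\call_1 f_k)(x)$ and that the $k$-th component of $Df(x) \sigma^j(x)$ is exactly $(\cala^j f_k)(x)$, so that membership of $b(x)$ and of $\sigma^j(x)$ in $\ker Df(x)$ translates verbatim into the vanishing of the first-order operators applied to the $f_k$. The heavier analytic content, namely relating stochastic invariance to the pointwise vanishing of $\call f_k$ and $\cala^j f_k$ on $\caln$, is entirely delegated to Theorem \ref{thm-nullstellen}, which in turn rests on the link to the SPDE (\ref{SPDE}) furnished by Theorem \ref{thm-M-N}.
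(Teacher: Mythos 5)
Your proof is correct and follows exactly the paper's argument: the first equivalence is the pointwise translation of $T_x \caln = \ker Df(x)$ (Lemma \ref{lemma-tang-nullstellen}, i.e. (\ref{N-zeros-tang})) applied to $b(x)$ and $\sigma^j(x)$, and the equivalence (i) $\Leftrightarrow$ (ii) then follows from Theorem \ref{thm-nullstellen} together with the decomposition $\call = \call_1 + \call_2$. The paper's own proof is just a one-line citation of these same two ingredients; your write-up merely makes the bookkeeping explicit.
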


\begin{proof}
The first equivalence follows from Lemma \ref{lemma-tang-nullstellen}, and in this case, the equivalence (i) $\Leftrightarrow$ (ii) is a consequence of Theorem \ref{thm-nullstellen}.
\end{proof}

\begin{corollary}[Unit sphere]\label{cor-unit-sphere}
Let $d \geq 2$ be arbitrary, and consider the unit sphere $\bbs^{d-1} = \{ x \in \bbr^d : \| x \| = 1 \}$. Then the following statements are equivalent:
\begin{enumerate}
\item[(i)] $\bbs^{d-1}$ is globally invariant for the SDE (\ref{SDE-b-sigma}).

\item[(ii)] $\bbs^{d-1}$ is locally invariant for the SDE (\ref{SDE-b-sigma}).

\item[(iii)] For each $x \in \bbs^{d-1}$ we have
\begin{align}\label{sphere-b}
&\la x,b(x) \ra + \frac{1}{2} \tr \big( \sigma(x) \sigma(x)^{\top} \big) = 0,
\\ \label{sphere-sigma} &\la x,\sigma^j(x) \ra = 0, \quad j \in \bbn.
\end{align}
\end{enumerate}
\end{corollary}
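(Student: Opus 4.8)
The plan is to exhibit the sphere as the zero set of a single function lying in the Hermite Sobolev space $\cals_{q+1}(\bbr^d)$ and then to read off conditions (\ref{sphere-b})--(\ref{sphere-sigma}) directly from Theorem \ref{thm-nullstellen}. Here the codimension is $n = 1 < d$, and $\bbs^{d-1}$ is a $(d-1)$-dimensional $C^2$-submanifold of $\bbr^d$, so the structural hypotheses of Theorem \ref{thm-nullstellen} will be in place once a suitable defining function is found.

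The first point to settle is that the geometrically natural choice $g(x) = \|x\|^2 - 1$ is not admissible: by the Sobolev embedding theorem for Hermite Sobolev spaces (Theorem \ref{thm-Ck-versions}) every element of $\cals_{q+1}(\bbr^d)$ lies in $C_0^1(\bbr^d)$ and hence vanishes at infinity, whereas a polynomial does not. I would therefore fix radii $0 < r_1 < 1 < r_2$, set $O := \{ x \in \bbr^d : r_1 < \|x\| < r_2 \}$, choose a cut-off $\rho \in C_c^\infty(\bbr^d)$ with $\rho \equiv 1$ on $\overline{O}$, and define $f := g \cdot \rho$. Then $f \in C_c^\infty(\bbr^d) \subset \cals(\bbr^d) \subset \cals_{q+1}(\bbr^d)$, while $f = g$ throughout $O$; consequently $\caln = \bbs^{d-1} = \{ x \in O : f(x) = 0 \}$ and $Df(x) = 2x \neq 0$ on the sphere, so that $Df(x)\bbr^d = \bbr$ there. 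Since $f$ agrees with $g$ on the open set $O$, all derivatives of $f$ coincide with those of $g$ on $\bbs^{d-1}$, and a direct computation using $\partial_i g = 2x_i$ and $\partial_{ij}^2 g = 2\delta_{ij}$ yields, for $x \in \bbs^{d-1}$,
\begin{align*}
(\call f)(x) &= 2 \la x, b(x) \ra + \tr\big( \sigma(x) \sigma(x)^\top \big),
\\ (\cala^j f)(x) &= 2 \la x, \sigma^j(x) \ra, \quad j \in \bbn.
\end{align*}

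With this in hand the equivalence (ii) $\Leftrightarrow$ (iii) becomes immediate: conditions (\ref{cond-1})--(\ref{cond-2}) of Theorem \ref{thm-nullstellen} read $\call f|_{\caln} = 0$ and $\cala^j f|_{\caln} = 0$, and after dividing by the harmless factor $2$ these are precisely (\ref{sphere-b}) and (\ref{sphere-sigma}). For the remaining link, (i) $\Rightarrow$ (ii) is trivial, and (ii) $\Rightarrow$ (i) follows from Proposition \ref{prop-M-N}(2): the sphere is compact, hence closed as a subset of $\bbr^d$, so local invariance upgrades to global invariance.

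The only genuine obstacle is the one already flagged, namely that the natural defining polynomial lies outside the relevant Hermite Sobolev space; the remedy is to localize by a compactly supported cut-off that is identically one near the sphere, which leaves the zero set, the gradient, and the Hessian unchanged on $\bbs^{d-1}$ and therefore does not affect the values of $\call f$ and $\cala^j f$ there. Everything else is a routine verification of the hypotheses of Theorem \ref{thm-nullstellen} together with the elementary derivative computation above.
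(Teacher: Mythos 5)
Your proof is correct and follows essentially the same route as the paper: the paper likewise replaces $\|x\|^2-1$ by a compactly supported smooth modification (via Lemma \ref{lemma-fct-comp-supp}, with $O=\{\|x\|<2\}$ instead of your annulus) to land in the Hermite Sobolev space, computes the same derivatives to match conditions (\ref{cond-1})--(\ref{cond-2}) of Theorem \ref{thm-nullstellen} with (\ref{sphere-b})--(\ref{sphere-sigma}), and deduces (i) $\Leftrightarrow$ (ii) from closedness of the sphere via Proposition \ref{prop-M-N}. The differences are purely cosmetic.
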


\begin{proof}
(i) $\Leftrightarrow$ (ii): Since $\bbs^{d-1}$ is a closed subset of $\bbr^d$, this equivalence follows from Proposition \ref{prop-M-N}.

\noindent(ii) $\Leftrightarrow$ (iii): By Lemma \ref{lemma-fct-comp-supp} there exists a function $f \in \cals_q(\bbr^d)$ such that
\begin{align*}
f(x) = \| x \|^2 - 1, \quad x \in O,
\end{align*}
where $O \subset \bbr^d$ denotes the open set $O = \{ x \in \bbr^d : \| x \| < 2 \}$. Furthermore, the unit sphere $\bbs^{d-1}$ is a $(d-1)$-dimensional submanifold having the representation
\begin{align*}
\bbs^{d-1} = \{ x \in O : f(x) = 0 \}.
\end{align*}
For each $x \in O$ we obtain the partial derivatives
\begin{align*}
\partial_i f(x) &= 2 x_i, \quad i=1,\ldots,d,
\\ \partial_{ij}^2 f(x) &= 2 \delta_{ij}, \quad i,j=1,\ldots,d,
\end{align*}
which in particular shows that $Df(x)\bbr^d = \bbr$ for all $x \in \bbs^{d-1}$. Therefore, applying Theorem \ref{thm-nullstellen} completes the proof.
\end{proof}

\begin{example}[Stroock’s representation of spherical Brownian motion]\label{example-Stroock}
Let $\bbs^{d-1}$ be the unit sphere in $\bbr^d$, and consider the $\bbr^d$-valued Stratonovich SDE
\begin{align}
\left\{
\begin{array}{rcl}
dX_t & = & ( \Id - X_t X_t^{\top} ) \circ dW_t
\\ X_0 & = & x_0
\end{array}
\right.
\end{align}
with an $\bbr^d$-valued Wiener process $W$; see \cite[Example 3.3.2]{Hsu}. With our notation, the volatilities $\sigma^1,\ldots,\sigma^d : \bbr^d \to \bbr^d$ are given by
\begin{align*}
\sigma^j(x) = ( \delta_{ij} - x_i x_j )_{i=1,\ldots,d} = e_i - x_j x, \quad j=1,\ldots,d.
\end{align*}
Let us compute the corresponding It\^{o} dynamics. For this purpose, let $x \in \bbr^d$ be arbitrary. Then we have
\begin{align*}
\partial_i \sigma^j(x) = - \delta_{ij} x - x_j e_i, \quad i,j=1,\ldots,d,
\end{align*}
and hence, for each $j=1,\ldots,d$ we obtain
\begin{align*}
D \sigma^j(x) \sigma^j(x) &= \sum_{i=1}^d \sigma_{ij}(x) \partial_i \sigma^j(x) = -\sum_{i=1}^d ( \delta_{ij} - x_i x_j ) (\delta_{ij} x + x_j e_i)
\\ &= -\sum_{i=1}^d ( \delta_{ij} x + \delta_{ij} x_j e_i - x_i x_j \delta_{ij} x - x_i x_j^2 e_i )
\\ &= - x - x_j e_j + x_j^2 x + x_j^2 \sum_{i=1}^d x_i e_i = - x - x_j e_j + 2 x_j^2 x.
\end{align*}
Therefore, we have
\begin{align*}
\sum_{j=1}^d D \sigma^j(x) \sigma^j(x) = - d x - x + 2 \| x \|^2 x = - ( d + 1 - 2 \| x \|^2 ) x.
\end{align*}
In particular, for $x \in \bbs^{d-1}$ we obtain
\begin{align*}
\frac{1}{2} \sum_{j=1}^d D \sigma^j(x) \sigma^j(x) = -\frac{d-1}{2} x.
\end{align*}
Therefore, we may alternatively consider the $\bbr^d$-valued It\^{o} SDE
\begin{align}\label{SDE-unit-shpere}
\left\{
\begin{array}{rcl}
dX_t & = & -\frac{d-1}{2} X_t dt + ( \Id - X_t X_t^{\top} ) dW_t
\\ X_0 & = & x_0,
\end{array}
\right.
\end{align}
cf., for example, equation (2.1) in \cite{Mijatovic}. Using Corollary \ref{cor-unit-sphere}, we will show that the unit sphere $\bbs^{d-1}$ is globally invariant for the SDE (\ref{SDE-unit-shpere}). First, note that the SDE (\ref{SDE-unit-shpere}) is of the form (\ref{SDE-b-sigma}). Let $O \subset \bbr^d$ be the open set $O = \{ x \in \bbr^d : \| x \| < 2 \}$. By virtue of Lemma \ref{lemma-fct-comp-supp} there exist $b_i \in \cals_q(\bbr^d)$, $i=1,\ldots,d$ such that
\begin{align*}
b(x) = -\frac{d-1}{2} x, \quad x \in O,
\end{align*}
where $b = (b_i)_{i=1,\ldots,d}$, and there exist $\sigma_{ij} \in \cals_q(\bbr^d)$, $i,j = 1,\ldots,d$ such that
\begin{align*}
\sigma(x) = \Id - x x^{\top}, \quad x \in O,
\end{align*}
where $\sigma = (\sigma_{ij})_{i,j=1,\ldots,d}$. Hence, we may assume that the coefficients $b : \bbr^d \to \bbr^d$ and $\sigma : \bbr^d \to \bbr^{d \times d}$ of the SDE (\ref{SDE-b-sigma}) are given by these mappings with components from $\cals_q(\bbr^d)$. Now, let $x \in \bbs^{d-1}$ be arbitrary. Since the matrix $\sigma(x)$ is symmetric, taking into account the identification $\bbr^d \cong \bbr^{d \times 1}$ we have
\begin{align*}
\sigma(x)^{\top} x = \sigma(x) x = (\Id - x x^{\top})x = x - x x^{\top} x = x (1 - x^{\top} x) = x (1 - \| x \|^2) = 0.
\end{align*}
Furthermore, since $x^{\top} x = \| x \|^2 = 1$, we obtain
\begin{align*}
\sigma(x) \sigma(x)^{\top} &= \sigma(x)^2 = (\Id - x x^{\top})^2 = \Id - 2 x x^{\top} + x x^{\top} x x^{\top}
\\ &= \Id - x x^{\top} = \sigma(x).
\end{align*}
Therefore, we have
\begin{align*}
\tr \big( \sigma(x) \sigma(x)^{\top} \big) = \tr \big( \Id - x x^{\top} \big) = d - \| x \|^2 = d - 1,
\end{align*}
and hence
\begin{align*}
\la x,b(x) \ra + \frac{1}{2} \tr \big( \sigma(x) \sigma(x)^{\top} \big) = -\frac{d-1}{2} + \frac{d-1}{2} = 0.
\end{align*}
Consequently, by Corollary \ref{cor-unit-sphere} the unit sphere $\bbs^{d-1}$ is globally invariant for the SDE (\ref{SDE-unit-shpere}).
\end{example}

\begin{remark}
Suppose that the submanifold $\caln$ is globally invariant for the SDE (\ref{SDE-b-sigma}), and that its complement $\bbr^d \setminus \caln$ consists of two connected components $\caln_1$ and $\caln_2$. Then the two sets $\caln_1 \cup \caln$ and $\caln_2 \cup \caln$ are also globally invariant for the SDE (\ref{SDE-b-sigma}), and the submanifold $\caln$ is an absorbing set in the sense that for each $y_0 \in \bbr^d$ we have $Y \in \caln$ up to an evanescent set on $\IL \tau,\infty \IL$, where $Y$ denotes any weak solution to the SDE (\ref{SDE-b-sigma}) with $Y_0 = y_0$, and $\tau$ denotes the stopping time $\tau := \inf \{ t \in \bbr_+ : Y_t \in \caln \}$. Some examples for the submanifold $\caln$ are as follows:
\begin{itemize}
\item Let $\caln$ be a $(d-1)$-dimensional affine hyperplane. Then there are $\eta \in \bbr^d$ and $b \in \bbr$ such that
\begin{align*}
\caln = \{ x \in \bbr^d : \la x,\eta \ra = b \}.
\end{align*}
By Corollary \ref{cor-affine} and Proposition \ref{prop-M-N} the affine hyperplane $\caln$ is globally invariant for the SDE (\ref{SDE-b-sigma}) if and only if conditions (\ref{conditions-1}) and (\ref{conditions-2}) are fulfilled. Its complement $\bbr^d \setminus \caln$ consists of the two connected components
\begin{align*}
\caln_1 = \{ x \in \bbr^d : \la x,\eta \ra < b \} \quad \text{and} \quad \caln_2 = \{ x \in \bbr^d : \la x,\eta \ra > b \}.
\end{align*}
\item Let $\caln = \bbs^{d-1}$ be the unit sphere in $\bbr^d$. By Corollary \ref{cor-unit-sphere} the unit sphere $\caln$ is globally invariant for the SDE (\ref{SDE-b-sigma}) if and and only if conditions (\ref{sphere-b}) and (\ref{sphere-sigma}) are fulfilled for each $x \in \caln$. Its complement $\bbr^d \setminus \caln$ consists of the two connected components
\begin{align*}
\caln_1 = \{ x \in \bbr^d : \| x \| < 1 \} \quad \text{and} \quad \caln_2 = \{ x \in \bbr^d : \| x \| > 1 \}.
\end{align*}
\item More generally, let $\caln$ be a $(d-1)$-dimensional submanifold of $\bbr^d$ which is compact and connected. By the Jordan-Brouwer separation theorem its complement $\bbr^d \setminus \caln$ consists of two connected components $\caln_1$ and $\caln_2$.
\end{itemize}
\end{remark}

Now, we approach the proof of Theorem \ref{thm-nullstellen}. Recall that $\psi \in C^2(\bbr^d;H)$ denotes the orbit map $\psi = \xi_{\Phi}$ with $\Phi = \delta_0$. Thus, we have $\psi(x) = \delta_x$ for all $x \in \bbr^d$, and by Proposition \ref{prop-psi-measure} the mapping $\psi$ is a $C^2$-immersion, and $\psi : \bbr^d \to \psi(\bbr^d)$ is a homeomorphism. By Examples \ref{ex-manifolds-HS-spaces} the set
\begin{align}\label{K-psi-O}
\calk := \psi(O) = \{ \delta_x : x \in O \}
\end{align}
is a $d$-dimensional $(G,H_0,H)$-submanifold of class $C^2$ with one chart. Furthermore, by Examples \ref{ex-manifolds-HS-spaces} the set
\begin{align}\label{M-equal-psi-N}
\calm := \psi(\caln) = \{ \delta_x : x \in \caln \}
\end{align}
is a $(d-n)$-dimensional $(G,H_0,H)$-submanifold of class $C^2$, which is induced by $(\psi,\caln)$, and obviously we have $\calm \subset \calk$.

\begin{lemma}\label{lemma-K-inv}
The submanifold $\calk$ is locally invariant for the SPDE (\ref{SPDE}).
\end{lemma}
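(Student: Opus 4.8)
The lemma asserts that the submanifold $\calk = \psi(O) = \{ \delta_x : x \in O \}$ is locally invariant for the SPDE (\ref{SPDE}) with coefficients given by (\ref{def-L}) and (\ref{def-A}). The natural strategy is to invoke Theorem \ref{thm-inv-embedded} (or equivalently Theorem \ref{thm-SPDE-group}, which is its specialization to orbit maps of group actions), which reduces local invariance of an induced submanifold $\calm = \psi(\caln)$ for the SPDE to local invariance of $\caln$ for an associated finite-dimensional SDE. Here the relevant base manifold is the open set $O \subset \bbr^d$ itself, which is a $d$-dimensional $C^2$-submanifold of $\bbr^d$, and $\psi(x) = \delta_x$ is a $C^2$-immersion with $\psi : \bbr^d \to \psi(\bbr^d)$ a homeomorphism by Proposition \ref{prop-psi-measure} applied to the Dirac measure $\mu = \delta_0$.

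**The main idea.** First I would observe that $O$, being an open subset of $\bbr^d$, is trivially locally invariant for any $\bbr^d$-valued SDE with continuous coefficients: this is exactly the content of Lemma \ref{lemma-V-SDE}, which shows every open subset of $\bbr^m$ is locally invariant for an SDE of the type (\ref{SDE}). Concretely, around any $x_0 \in O$ one chooses a compact convex neighborhood $K \subset O$, modifies the coefficients by composing with the orthogonal projection onto $K$ to obtain bounded continuous coefficients, invokes existence of a global weak solution (Remark \ref{rem-global-weak-solution}), and stops it upon exiting $K$. The remaining task is then to identify the associated SDE on $O$ via Theorem \ref{thm-SPDE-group}: its coefficients $\bar{b}$ and $\bar{\sigma}$ are the unique solutions of (\ref{L-Rajeev}) and (\ref{A-Rajeev}). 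By the computation recorded in Remark \ref{rem-delta-0} (the case $\Phi = \delta_0$, $\caln = \bbr^d$), with $\psi(x) = \delta_x$ these coefficients are simply $\bar{b} = b$ and $\bar{\sigma} = \sigma$, which are continuous and bounded by the Sobolev embedding theorem for Hermite Sobolev spaces. Hence the associated SDE on $O$ is precisely the SDE (\ref{SDE-b-sigma}) restricted to the open set $O$.

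**Assembling the proof.** Combining these two observations: since $O$ is locally invariant for the SDE (\ref{SDE-b-sigma}) by Lemma \ref{lemma-V-SDE} (as $O$ is open in $\bbr^d$), Theorem \ref{thm-SPDE-group} yields that $\calk = \psi(O)$ is locally invariant for the SPDE (\ref{SPDE}). Alternatively, and perhaps more directly, one can invoke Theorem \ref{thm-M-N-HS-spaces}, which establishes the equivalence between local invariance of $\calm = \psi(\caln)$ for the SPDE and local invariance of $\caln$ for the SDE (\ref{SDE-bar-2}) with coefficients (\ref{bar-sigma}), (\ref{bar-b}); taking $\caln = O$ and noting that $\bar{b} = b$, $\bar{\sigma} = \sigma$ in the $\delta_0$ case reduces the statement to local invariance of the open set $O$, which holds by Lemma \ref{lemma-V-SDE}.

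**Expected obstacle.** No genuine obstacle arises here, since all the hard analytic work has been done upstream. The only point requiring mild care is the verification that $O$ (rather than a lower-dimensional manifold $\caln$) fits the hypotheses of the framework of Section \ref{sec-examples-HS}: one must confirm that $\psi$ restricted to $O$ is a $C^2$-immersion and a homeomorphism onto its image, and that $\calk = \psi(O)$ is a $d$-dimensional $(G,H_0,H)$-submanifold of class $C^2$ with one chart. These facts are already supplied by Examples \ref{ex-manifolds-HS-spaces} with $k=2$ and the discussion preceding this lemma (equation (\ref{K-psi-O})), so the proof is genuinely just a one-line application of Theorem \ref{thm-M-N-HS-spaces} together with Lemma \ref{lemma-V-SDE}.
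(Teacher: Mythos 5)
Your proposal is correct and follows essentially the same route as the paper: the paper's proof simply notes that the open set $O$ is locally invariant for the SDE (\ref{SDE-b-sigma}) and then cites Theorem \ref{thm-M-N} (whose own proof is exactly your reduction via Remark \ref{rem-delta-0} and Theorem \ref{thm-M-N-HS-spaces}). Your additional explicit appeal to Lemma \ref{lemma-V-SDE} just makes visible the step the paper leaves implicit.
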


\begin{proof}
Since the open subset $O$ is locally invariant for the SDE (\ref{SDE-b-sigma}), this is an immediate consequence of Theorem \ref{thm-M-N}.
\end{proof}

For the next auxiliary result note that $f_k \in \cals_{-p}(\bbr^d)$ for all $k=1,\ldots,n$.

\begin{lemma}\label{lemma-tangent-L-M}
The following statements are true:
\begin{enumerate}
\item We have $\calm = \calk \cap \bigcap_{k=1}^n \ker( \la f_k, \cdot \ra )$.

\item For each $y \in \calm$ we have $T_y \calm = T_y \calk \cap \bigcap_{k=1}^n \ker( \la f_k, \cdot \ra )$.
\end{enumerate}
\end{lemma}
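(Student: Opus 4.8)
The statement is Lemma~\ref{lemma-tangent-L-M}, which characterizes the induced submanifold $\calm = \{\delta_x : x \in \caln\}$ and its tangent spaces as the intersection of $\calk = \{\delta_x : x \in O\}$ with the kernels of the functionals $\la f_k,\cdot\ra$. The plan is to exploit the fact that $\psi(x) = \delta_x$ is a homeomorphism onto its image together with the duality pairing on $\cals_{-p}(\bbr^d) \times \cals_p(\bbr^d)$, so that $\la f_k, \delta_x \ra = f_k(x)$ for all $x \in \bbr^d$ (this is the evaluation property of the Dirac distribution, Lemma~\ref{lemma-delta-distr}).

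\medskip

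\noindent\textbf{Part (1).} First I would translate membership in each kernel into a pointwise condition on $x$. For $y \in \calk$ we have $y = \delta_x$ with $x \in O$, and by Lemma~\ref{lemma-delta-distr} we compute $\la f_k, y \ra = \la f_k, \delta_x \ra = f_k(x)$. Hence $y \in \ker(\la f_k,\cdot\ra)$ for all $k=1,\ldots,n$ if and only if $f_k(x) = 0$ for all $k$, i.e.\ $f(x) = 0$. Combined with $x \in O$, the defining representation $\caln = \{x \in O : f(x) = 0\}$ from (\ref{N-zeros}) yields $x \in \caln$, that is $y = \delta_x \in \psi(\caln) = \calm$. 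The reverse inclusion is immediate since every $\delta_x$ with $x \in \caln$ satisfies $f_k(x) = 0$. This proves the set identity.

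\medskip

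\noindent\textbf{Part (2).} For the tangent spaces I would use Lemma~\ref{lemma-tang-embedded}, which gives $T_y \calm = D\psi(x) T_x \caln$ and $T_y \calk = D\psi(x) T_x O = D\psi(x)\bbr^d$, where $x := \psi^{-1}(y) \in \caln$. Since $\psi$ is a $C^2$-immersion on $\caln$ (indeed on all of $\bbr^d$, by Proposition~\ref{prop-psi-measure}), the map $D\psi(x)$ is injective, so it suffices to characterize which elements of $D\psi(x)\bbr^d$ lie in each kernel $\ker(\la f_k,\cdot\ra)$. For $v \in \bbr^d$, I would compute $\la f_k, D\psi(x)v \ra$: by Proposition~\ref{prop-orbit-in-Sp} we have $D\psi(x)v = -\sum_{i=1}^d v_i \partial_i \delta_x$, and by duality $\la f_k, -\sum_i v_i \partial_i \delta_x \ra = \sum_i v_i \la \partial_i f_k, \delta_x \ra = \sum_i v_i \partial_i f_k(x) = Df_k(x)v$. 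Thus $D\psi(x)v \in \ker(\la f_k,\cdot\ra)$ for all $k$ exactly when $Df(x)v = 0$, i.e.\ $v \in \ker Df(x) = T_x\caln$ by (\ref{N-zeros-tang}). Intersecting $T_y\calk = D\psi(x)\bbr^d$ with all the kernels therefore yields $D\psi(x)(\ker Df(x)) = D\psi(x)T_x\caln = T_y\calm$, which is the claim.

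\medskip

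\noindent\textbf{Main obstacle.} The calculations themselves are routine once the two key evaluation identities are in place, namely $\la f_k,\delta_x\ra = f_k(x)$ and $\la \partial_i f_k, \delta_x \ra = \partial_i f_k(x)$. The one point requiring care is the integration-by-parts/duality step $\la f_k, \partial_i \delta_x \ra = -\la \partial_i f_k, \delta_x \ra$: I must confirm that the regularity assumptions place both $f_k$ and its derivatives in the appropriate Hermite Sobolev spaces so that the dual pairing and Proposition~\ref{prop-orbit-in-Sp} apply. Since $f_k \in \cals_{q+1}(\bbr^d)$ with $q > \frac{d}{4}$ and $\delta_x \in \cals_p(\bbr^d) = \cals_{-(q+1)}(\bbr^d)$, the pairing $\cals_{-p}(\bbr^d) \times \cals_p(\bbr^d)$ is exactly the one under which these identities hold, so the regularity bookkeeping goes through without additional hypotheses. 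No step presents a genuine difficulty beyond this verification.
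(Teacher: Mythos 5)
Your proof is correct and follows essentially the same route as the paper's: part (1) via the evaluation identity $\la f_k,\delta_x\ra = f_k(x)$ together with the representation (\ref{N-zeros}) of $\caln$, and part (2) via Lemma \ref{lemma-tang-embedded}, the derivative formula $D\psi(x)v = -\sum_{i=1}^d v_i\,\partial_i \delta_x$ from Proposition \ref{prop-orbit-in-Sp}, the duality computation $\la f_k, D\psi(x)v\ra = Df_k(x)v$, and the kernel characterization (\ref{N-zeros-tang}) of $T_x\caln$. Your added regularity check that $f_k \in \cals_{q+1}(\bbr^d) = \cals_{-p}(\bbr^d)$ and $\partial_i\delta_x \in \cals_p(\bbr^d)$ make the pairings legitimate is exactly the bookkeeping the paper leaves implicit in its remark preceding the lemma.
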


\begin{proof}
Let $y \in \calk$ be arbitrary. Setting $x := \psi^{-1}(y) \in O$, we have $y = \delta_x$. We have $y \in \calm$ if and only if $x \in \caln$, and by (\ref{N-zeros}) we have $x \in \caln$ if and only if $f_k(x) = 0$ for all $k=1,\ldots,n$. This is equivalent to $\la f_k,\delta_x \ra = 0$ for all $k=1,\ldots,n$, which is satisfied if and only if $y \in \bigcap_{k=1}^n \ker( \la f_k, \cdot \ra )$, proving the first statement.

For the proof of the second statement, let $y \in \calm$ be arbitrary. Setting $x := \psi^{-1}(y) \in \caln$, we have $y = \delta_x$. By Lemma \ref{lemma-tang-embedded} we have
\begin{align*}
T_y \calm = D \psi(x) T_x \caln \subset D \psi(x) \bbr^d = T_y \calk.
\end{align*}
Let $w \in T_y \calk$ be arbitrary. There is a unique vector $v \in \bbr^d$ such that $w = D \psi(x) v$. We have $w \in T_y \calm$ if and only if $v \in T_x \caln$. By Proposition \ref{prop-orbit-in-Sp} and by duality, for each $k=1,\ldots,n$ we have
\begin{align*}
\la f_k,w \ra &= \la f_k, D \psi(x) v \ra = - \bigg\la f_k, \sum_{i=1}^d v_i \partial_i y \bigg\ra = - \sum_{i=1}^d v_i \la f_k, \partial_i y \ra
\\ &= \sum_{i=1}^d v_i \la \partial_i f_k, y \ra = \sum_{i=1}^d v_i \partial_i f_k(x) = D f_k(x) v.
\end{align*}
Therefore, by (\ref{N-zeros-tang}) we have $v \in T_x \caln$ if and only if $w \in \bigcap_{k=1}^n \ker( \la f_k, \cdot \ra )$, completing the proof.
\end{proof}

Now, we are ready to provide the proof of Theorem \ref{thm-nullstellen}.

\begin{proof}[Proof of Theorem \ref{thm-nullstellen}]
(i) $\Rightarrow$ (ii): Let $x \in \caln$ be arbitrary. There exist a global weak solution $X$ to the SDE (\ref{SDE}) with $X_0 = x$ and a positive stopping time $\tau > 0$ such that $X^{\tau} \in \caln$ up to an evanescent set. Let $k = 1,\ldots,n$ be arbitrary. By It\^{o}'s formula (see \cite[Thm. 2.3.1]{fillnm}) we have $\bbp$-almost surely
\begin{align*}
f_k(X_{t \wedge \tau}) &= f_k(x) + \int_0^{t \wedge \tau} (\call f_k)(X_s) ds + \int_0^{t \wedge \tau} (\cala f_k)(X_s) dW_s, \quad t \in \bbr_+.
\end{align*}
where the continuous mapping $\cala f_k : \bbr^d \to \ell^2(\bbr^d)$ is given by $\cala f_k = (\cala^j f_k)_{j \in \bbn}$. Noting that $f_k(X^{\tau}) = 0$, we deduce (\ref{cond-1}) and (\ref{cond-2}).

\noindent(ii) $\Rightarrow$ (i): Our strategy is to prove that the submanifold $\calm$ defined in (\ref{M-equal-psi-N}) is locally invariant for the SPDE (\ref{SPDE}) with coefficients (\ref{def-L}) and (\ref{def-A}), and then to apply Theorem \ref{thm-M-N} in order to deduce that the submanifold $\caln$ is locally invariant for the SDE (\ref{SDE-b-sigma}). First, note that for all $y \in \calm$ and all $k=1,\ldots,n$ we have
\begin{align}\label{ker-A}
\la f_k,A^j(y) \ra &= 0, \quad j \in \bbn,
\\ \label{ker-L} \la f_k,L(y) \ra &= 0.
\end{align}
Indeed, let $y \in \calm$ be arbitrary. Setting $x := \psi^{-1}(y) \in \caln$, we have $y = \delta_x$. Thus, taking into account the definitions (\ref{def-L}) and (\ref{def-A}) of the coefficients, by duality for all $k=1,\ldots,n$ we obtain
\begin{align*}
\la f_k,A^j(y) \ra &= - \sum_{i=1}^d \la \sigma_{ij},y \ra \la f_k,\partial_i y \ra = \sum_{i=1}^d \la \sigma_{ij},y \ra \la \partial_i f_k, y \ra
\\ &= \sum_{i=1}^d \sigma_{ij}(x) \partial_i f_k(x) = \cala^j f_k(x) = 0, \quad j \in \bbn
\end{align*}
as well as
\begin{align*}
\la f_k,L(y) \ra &= \frac{1}{2} \sum_{i,j=1}^d ( \langle \sigma,y \rangle \langle \sigma,y \rangle^{\top} )_{ij} \la f_k, \partial_{ij}^2 y \ra - \sum_{i=1}^d \langle b_i,y \rangle \la f_k, \partial_i y \ra
\\ &= \frac{1}{2} \sum_{i,j=1}^d ( \sigma(x) \sigma(x)^{\top} )_{ij} \partial_{ij}^2 f_k(x) + \sum_{i=1}^d b_i(x) \partial_i f_k(x) = \call f_k(x) = 0.
\end{align*}
Now, let $y \in \calm$ be arbitrary. Setting $x := \psi^{-1}(y) \in \caln$, we have $y = \delta_x$. Let $\varphi : V \to W \cap \caln$ be a local parametrization around $x := \psi^{-1}(y) \in \caln$ with $W \subset O$. By Lemma \ref{lemma-induced} there exists an open neighborhood $U \subset H$ of $y$ such that $\phi := \psi \circ \varphi : V \to U \cap \calm$ is a local parametrization around $y$. Hence, the mapping $\psi|_{W \cap \caln} : W \cap \caln \to U \cap \calm$ is a homeomorphism, and noting (\ref{K-psi-O}) the mapping $\psi|_{O} : O \to \calk$ is a homeomorphism. Since $W \subset O$, it follows that the mapping $\psi|_W : W \to U \cap \calk$ is a local parametrization of $\calk$ around $y$. By Lemma \ref{lemma-K-inv} the submanifold $\calk$ is locally invariant for the SPDE (\ref{SPDE}). Therefore, by Proposition \ref{prop-inv-para-2} there are continuous mappings $\bar{b} : W \to \bbr^d$ and $\bar{\sigma} : W \to \ell^2(\bbr^d)$ which are the unique solutions of the equations
\begin{align*}
A^j|_{U \cap \calk} &= \psi_* \bar{\sigma}^j, \quad j \in \bbn,
\\ L|_{U \cap \calk} &= \psi_* \bar{b} + \frac{1}{2} \sum_{j=1}^{\infty} \psi_{**}(\bar{\sigma}^j,\bar{\sigma}^j).
\end{align*}
In particular, we have
\begin{align}\label{tang-U-K}
A^j|_{\calm} \in \Gamma(T \calk_U), \quad j \in \bbn,
\end{align}
where $\calk_U := U \cap \calk$. From these equations, it follows that
\begin{align}\label{restrict-1}
A^j|_{U \cap \calm} &= \psi_* \bar{\sigma}^j|_{W \cap \caln}, \quad j \in \bbn,
\\ \label{restrict-2} L|_{U \cap \calm} &= \psi_* \bar{b}|_{W \cap \caln} + \frac{1}{2} \sum_{j=1}^{\infty} \psi_{**}(\bar{\sigma}^j |_{W \cap \caln},\bar{\sigma}^j |_{W \cap \caln}).
\end{align}
Let $j \in \bbn$ be arbitrary. Noting (\ref{ker-A}) and (\ref{tang-U-K}), by Lemma \ref{lemma-tangent-L-M} we obtain
\begin{align*}
A^j|_{\calm} \in \Gamma(T \calm_U), 
\end{align*}
where $\calm_U := U \cap \calm$. Therefore, taking into account (\ref{restrict-1}), by Lemma \ref{lemma-tang-embedded} we deduce that
\begin{align*}
\bar{\sigma}^j|_{W \cap \caln} \in \Gamma(T \caln_W), 
\end{align*}
where $\caln_W := W \cap \caln$. Hence there is a continuous mappings $a : V \to \ell^2(\bbr^m)$ whose components are the unique solutions to the equations
\begin{align}\label{restrict-3}
\bar{\sigma}^j|_{W \cap \caln} = \varphi_* a^j, \quad j \in \bbn.
\end{align}
Taking into account Lemma \ref{lemma-push-chain-rule}, by (\ref{restrict-1}) and (\ref{restrict-3}) we obtain
\begin{align*}
A^j|_{U \cap \calm} = \psi_* \bar{\sigma}^j|_{W \cap \caln} = \psi_* \varphi_* a^j = \phi_* a^j, \quad j \in \bbn.
\end{align*}
Furthermore, taking into account Lemma \ref{lemma-push-chain-rule}, by (\ref{restrict-2}) and (\ref{restrict-3}) we have
\begin{equation}\label{restrict-4}
\begin{aligned}
L|_{U \cap \calm} &= \psi_* \bar{b}|_{W \cap \caln} + \frac{1}{2} \sum_{j=1}^{\infty} \psi_{**}(\varphi_* a^j,\varphi_* a^j)
\\ &= \psi_* \bar{b}|_{W \cap \caln} + \frac{1}{2} \sum_{j=1}^{\infty} \big( \phi_{**}(a^j,a^j) - \psi_* \varphi_{**}(a^j,a^j) \big)
\\ &= \psi_* \bigg( \bar{b}|_{W \cap \caln} - \frac{1}{2} \sum_{j=1}^{\infty} \varphi_{**}(a^j,a^j) \bigg) + \frac{1}{2} \sum_{j=1}^{\infty} \phi_{**}(a^j,a^j).
\end{aligned}
\end{equation}
Taking into account Lemma \ref{lemma-tangent-L-M}, we have $\phi(V) \subset \bigcap_{k=1}^n \ker(\la f_k,\cdot \ra)$, and hence
\begin{align*}
(\phi_{**}(a^j,a^j))(U \cap \calm) \subset \bigcap_{k=1}^n \ker(\la f_k,\cdot \ra) \quad \text{for all $j \in \bbn$.}
\end{align*}
Thus, noting (\ref{ker-L}), by Lemma \ref{lemma-tangent-L-M} we obtain
\begin{align*}
L|_{U \cap \calm} - \frac{1}{2} \sum_{j=1}^{\infty} \phi_{**}(a^j,a^j) \in \Gamma(T \calm_U).
\end{align*}
Therefore, by Lemma \ref{lemma-tang-embedded} we deduce that
\begin{align*}
\bar{b}|_{W \cap \caln} - \frac{1}{2} \sum_{j=1}^{\infty} \varphi_{**}(a^j,a^j) \in \Gamma (T \caln_W).
\end{align*}
Hence, there is a continuous mapping $\ell : V \to \bbr^m$ which is the unique solution to the equation
\begin{align}\label{restrict-5}
\bar{b}|_{W \cap \caln} - \frac{1}{2} \sum_{j=1}^{\infty} \varphi_{**}(a^j,a^j) = \varphi_* \ell.
\end{align}
Therefore, using Lemma \ref{lemma-push-chain-rule}, by (\ref{restrict-4}) and (\ref{restrict-5}) we obtain
\begin{align*}
L|_{U \cap \calm} - \frac{1}{2} \sum_{j=1}^{\infty} \phi_{**}(a^j,a^j) &= \psi_* \bigg( \bar{b}|_{W \cap \caln} - \frac{1}{2} \sum_{j=1}^{\infty} \varphi_{**}(a^j,a^j) \bigg) = \psi_* \varphi_* \ell = \phi_* \ell.
\end{align*}
Now, by Proposition \ref{prop-inv-para-2} we deduce that the submanifold $\calm$ is locally invariant for the SPDE (\ref{SPDE}). Consequently, by Theorem \ref{thm-M-N} it follows that the submanifold $\caln$ is locally invariant for the SDE (\ref{SDE-b-sigma}).
\end{proof}

\begin{appendix}

\section{Multi-parameter strongly continuous groups}\label{app-groups}

In this appendix we provide the required results about multi-parameter strongly continuous groups. For this purpose, we begin with reviewing one-parameter strongly continuous groups. Let $H$ be a separable Hilbert space. A family $T = (T(t))_{t \in \bbr}$ of continuous linear operators $T(t) \in L(H)$ is called a \emph{strongly continuous group} (or a \emph{$C_0$-group}) on $H$ if the following conditions are fulfilled:
\begin{enumerate}
\item $T(0) = \Id$.

\item We have $T(t+s) = T(t) T(s)$ for all $t,s \in \bbr$.

\item For each $x \in H$ the \emph{orbit map}
\begin{align*}
\xi_x : \bbr \to H, \quad \xi_x(t) := T(t) x
\end{align*}
is continuous.
\end{enumerate}
Let $T$ be a $C_0$-group on $H$. Recall that the generator $A : H \supset D(A) \to H$ is the operator
\begin{align*}
Ax := \dot{\xi}_x(0) = \lim_{h \to 0} \frac{T(h) x - x}{h}, \quad x \in D(A),
\end{align*}
where the domain is given by
\begin{align*}
D(A) := \{ x \in H : \xi_x \in C^1(\bbr;H) \}.
\end{align*}

\begin{remark}\label{rem-growth}
According to the generation theorem for groups (see \cite[p. 79]{Engel-Nagel}) an operator $(A,D(A))$ generates a $C_0$-group $T$ if and only if $(A,D(A))$ and $(-A,D(A))$ generate $C_0$-semigroups $T_+$ and $T_-$, and in this case there are $M \geq 1$ and $w \in \bbr$ such that we have the growth estimate
\begin{align*}
\| T(t) \| \leq M e^{w|t|} \quad \text{for all $t \in \bbr$.}
\end{align*}
\end{remark}

\begin{lemma}\cite[Lemma II.1.3]{Engel-Nagel}\label{lemma-T-A-comm}
Let $x \in D(A)$ be arbitrary. Then for each $t \in \bbr$ we have $T(t)x \in D(A)$ and
\begin{align*}
\frac{d}{dt} T(t) x = T(t) Ax = A T(t) x.
\end{align*}
\end{lemma}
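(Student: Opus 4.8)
The final statement to prove is Lemma \ref{lemma-T-A-comm}: for $x \in D(A)$ and $t \in \bbr$, we have $T(t)x \in D(A)$ together with the commutation formula $\frac{d}{dt}T(t)x = T(t)Ax = AT(t)x$.

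The plan is to work directly from the definition of the generator in terms of difference quotients, exploiting the group law $T(t+s) = T(t)T(s)$. First I would fix $x \in D(A)$ and $t \in \bbr$ and examine the difference quotient $\frac{1}{h}\bigl(T(h)T(t)x - T(t)x\bigr)$ as $h \to 0$, which is the candidate for computing $A\,T(t)x$. Using the group law in the form $T(h)T(t) = T(t)T(h)$ (commutativity of the group, which follows from $T(h)T(t) = T(h+t) = T(t+h) = T(t)T(h)$), I would rewrite this quotient as $T(t)\cdot\frac{1}{h}\bigl(T(h)x - x\bigr)$. Since $x \in D(A)$, the inner quotient converges to $Ax$ in $H$, and since $T(t) \in L(H)$ is a bounded (hence continuous) operator, I can pull the limit through $T(t)$ to conclude that $\frac{1}{h}\bigl(T(h)T(t)x - T(t)x\bigr) \to T(t)Ax$. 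This simultaneously shows that the relevant limit exists — so that $T(t)x \in D(A)$ — and that $A\,T(t)x = T(t)Ax$.

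Next I would address the time-derivative statement $\frac{d}{dt}T(t)x = T(t)Ax$. Here I would consider the orbit map $\xi_x(t) = T(t)x$ and compute its derivative by taking the quotient $\frac{1}{h}\bigl(\xi_x(t+h) - \xi_x(t)\bigr) = \frac{1}{h}\bigl(T(t+h)x - T(t)x\bigr)$. Splitting $T(t+h) = T(h)T(t)$ gives the right derivative as the $h \downarrow 0$ limit $T(t)Ax$ by the computation above; for the two-sided derivative I would also use $T(t+h) = T(t)T(h)$, noting that $\frac{1}{h}(T(h)x - x) \to Ax$ holds as a genuine two-sided limit because $x \in D(A)$ means $\xi_x \in C^1(\bbr;H)$, so $\dot{\xi}_x(0) = Ax$ exists as a bilateral limit. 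Thus $\frac{d}{dt}T(t)x = T(t)\dot{\xi}_x(0) = T(t)Ax$, and combined with the first paragraph this equals $A\,T(t)x$, yielding the full chain of equalities.

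This argument is essentially routine, so there is no serious obstacle; the one point requiring a little care is ensuring that the limits are genuine two-sided limits in $t$ and $h$ rather than one-sided, which is precisely why membership $x \in D(A)$ (equivalently $\xi_x \in C^1(\bbr;H)$, using that we are dealing with a \emph{group} and not merely a semigroup) is the hypothesis being invoked. The boundedness of $T(t)$ is what permits interchanging $T(t)$ with the limit, and the commutativity $T(h)T(t) = T(t)T(h)$ is what makes $T(t)x$ land back in $D(A)$ with $AT(t)x = T(t)Ax$. Since this is the statement of \cite[Lemma II.1.3]{Engel-Nagel} cited in the excerpt, I would in practice simply invoke that reference, but the self-contained argument above is short enough to reproduce.
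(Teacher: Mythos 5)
Your proof is correct, and since the paper offers no proof of this lemma beyond the citation of \cite[Lemma II.1.3]{Engel-Nagel}, your self-contained argument is exactly the standard one behind that reference: commute $T(h)$ and $T(t)$ via the group law, use boundedness of $T(t)$ to pull the limit of the difference quotient through, and use that $x \in D(A)$ gives a genuine two-sided limit.

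One point deserves a word of care. This paper defines $D(A) := \{ x \in H : \xi_x \in C^1(\bbr;H) \}$, i.e.\ membership requires the \emph{entire} orbit to be continuously differentiable, whereas your step ``the limit $\lim_{h \to 0} \tfrac{1}{h}\bigl(T(h)T(t)x - T(t)x\bigr)$ exists, so $T(t)x \in D(A)$'' appeals to the Engel--Nagel-style definition of the generator (existence of the difference quotient at $0$). In the paper's setting the cleanest way to close this is the shift identity
\begin{align*}
\xi_{T(t)x}(s) = T(s)T(t)x = T(s+t)x = \xi_x(s+t), \quad s \in \bbr,
\end{align*}
so $\xi_{T(t)x} = \xi_x(\cdot + t) \in C^1(\bbr;H)$ automatically, giving $T(t)x \in D(A)$ and $A T(t)x = \dot{\xi}_{T(t)x}(0) = \dot{\xi}_x(t) = \frac{d}{dt}T(t)x$. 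Alternatively, your own difference-quotient computation, run at an arbitrary base point $s$ rather than only at $0$, yields the derivative $T(s+t)Ax$, which is continuous in $s$ by strong continuity, so the orbit is $C^1$. With either one-line addition your argument is complete; the remaining ingredients you identify (boundedness of $T(t)$, commutativity from the group law, two-sidedness of the limits) are exactly the right ones.
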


Now, let $d \in \bbn$ be a positive integer.

\begin{definition}\label{def-commutative}
A family $T_1,\ldots,T_d$ of $C_0$-groups on $H$ is called \emph{commutative} if for every permutation $\pi : \{ 1,\ldots,d \} \to \{ 1,\ldots,d \}$ and every $t \in \bbr$ we have
\begin{align*}
T_1(t) \circ \ldots \circ T_d(t) = T_{\pi(1)}(t) \circ \ldots \circ T_{\pi(d)}(t). 
\end{align*}
\end{definition}

\begin{lemma}\label{lemma-commutative}
Let $T_1,\ldots,T_d$ be commutative $C_0$-groups. Then for every permutation $\pi : \{ 1,\ldots,d \} \to \{ 1,\ldots,d \}$ and all $t_1,\ldots,t_d \in \bbr$ we have
\begin{align*}
T_1(t_1) \circ \ldots \circ T_d(t_d) = T_{\pi(1)}(t_{\pi(1)}) \circ \ldots \circ T_{\pi(d)}(t_{\pi(d)}). 
\end{align*}
\end{lemma}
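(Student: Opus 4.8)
The plan is to reduce the assertion to pairwise commutation of the groups at arbitrary times, and then to recover the general permutation identity by a sorting argument. Suppose for the moment that $T_i(s) T_j(t) = T_j(t) T_i(s)$ for all $i \neq j$ and all $s,t \in \bbr$. Regarding each factor $T_k(t_k)$ of the product $T_1(t_1) \circ \ldots \circ T_d(t_d)$ as a single token, the product $T_{\pi(1)}(t_{\pi(1)}) \circ \ldots \circ T_{\pi(d)}(t_{\pi(d)})$ is merely a reordering of the same tokens. Since every permutation of $\{ 1,\ldots,d \}$ is a product of transpositions of adjacent positions, and each such transposition swaps two neighbouring tokens $T_a(t_a)$ and $T_b(t_b)$ with $a \neq b$ (legitimate by the assumed pairwise commutation), the two products coincide. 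Hence the entire content of the lemma lies in establishing pairwise commutation at arbitrary times.

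First I would extract pairwise commutation at equal times directly from Definition \ref{def-commutative}. Applying the defining identity to the transposition that interchanges two adjacent indices $i$ and $i+1$, and then cancelling the factors common to both sides — which is permitted because each $T_k(t)$ is invertible with inverse $T_k(-t)$ — gives $T_i(t) T_{i+1}(t) = T_{i+1}(t) T_i(t)$; a successive-swap argument using these adjacent relations (the case $d=2$ being immediate) upgrades this to $T_i(t) T_j(t) = T_j(t) T_i(t)$ for every pair $i \neq j$ and every $t \in \bbr$.

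It then remains to pass from equal times to arbitrary times. Fixing $i \neq j$ and $s \in \bbr$, I would consider the conjugated family $S(t) := T_i(s) T_j(t) T_i(-s)$, $t \in \bbr$. It is again a $C_0$-group, and since conjugation by the bounded invertible operator $T_i(s)$ transforms generators accordingly, its generator is $T_i(s) A_j T_i(-s)$ on the domain $T_i(s) D(A_j)$, where $A_j$ denotes the generator of $T_j$. As a $C_0$-group is uniquely determined by its generator, the desired identity $S = T_j$, which is precisely $T_i(s) T_j(t) = T_j(t) T_i(s)$ for all $t$, is equivalent to the commutation $T_i(s) A_j = A_j T_i(s)$, including invariance of $D(A_j)$ under $T_i(s)$.

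The main obstacle is exactly this commutation of $T_i(s)$ with the generator $A_j$: the naive route of differentiating the equal-time relation $T_i(t) T_j(t) = T_j(t) T_i(t)$ in $t$ only reproduces the commutation one is trying to establish, and for unbounded generators equal-time commutation of the groups is genuinely more delicate than strong commutativity of the generators. In the concrete situations where the lemma is applied this step is, however, transparent: the groups are the coordinate translation groups $\tau^1,\ldots,\tau^d$, whose generators act as the mutually commuting differential operators $-\partial_1,\ldots,-\partial_d$ (Theorem \ref{thm-Sp-D-1}), so $T_i(s)$ and $A_j$ commute on the invariant core $\cals_{p+\frac{1}{2}}(\bbr^d)$ by the elementary fact that translation in one coordinate commutes with differentiation in another, after which the identity extends to all of $D(A_j)$ by density and the uniqueness of generators. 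Alternatively one may appeal to results on commuting $C_0$-semigroups from \cite{Engel-Nagel}.
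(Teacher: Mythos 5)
Your proposal has a genuine gap at precisely the point you yourself flag as ``the main obstacle'': the passage from equal-time commutation $T_i(t)T_j(t)=T_j(t)T_i(t)$ to commutation at arbitrary times $T_i(s)T_j(t)=T_j(t)T_i(s)$. The conjugation reformulation via $S(t):=T_i(s)T_j(t)T_i(-s)$ is correct as far as it goes, but as you admit it only restates the problem; and your resolution --- verifying that $T_i(s)$ commutes with the generator $A_j$ --- is carried out only for the coordinate translation groups on Hermite Sobolev spaces. Lemma \ref{lemma-commutative} is stated (and belongs to the general theory of Appendix \ref{app-groups}, which Section \ref{sec-ql-group} invokes for arbitrary multi-parameter $C_0$-groups on $H$) for general commutative $C_0$-groups, so proving it only for translations does not prove the lemma; nor does the closing appeal to unnamed ``results on commuting $C_0$-semigroups from \cite{Engel-Nagel}''. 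Moreover, your diagnostic claim that this step is ``genuinely more delicate'' for unbounded generators is mistaken: the step has an elementary proof that never touches the generators, and it is exactly this argument (in the spirit of the statement of \cite{Engel-Nagel} cited in the paper's one-line proof) that is missing from your write-up.

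Here is the missing idea. Fix $i\neq j$ and $t\neq 0$. By equal-time commutation at time $t/N$, the invertible operators $T_i(t/N)$ and $T_j(t/N)$ commute, hence so do all their integer powers; by the group law this gives $T_i(\tfrac{m}{N}t)\,T_j(\tfrac{k}{N}t)=T_j(\tfrac{k}{N}t)\,T_i(\tfrac{m}{N}t)$ for all $m,k\in\bbz$, $N\in\bbn$, i.e.\ commutation at all rationally related times. Now given $s\in\bbr$, choose rationals $q_n\to s/t$; then $T_i(q_n t)$ commutes with $T_j(t)$ for every $n$, and letting $n\to\infty$, the strong continuity of the orbit maps of $T_i$ (applied at $x$ and at $T_j(t)x$) together with the boundedness of the fixed operator $T_j(t)$ yields $T_i(s)T_j(t)x=T_j(t)T_i(s)x$ for all $x\in H$. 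With this in hand, your reduction of the permutation identity to pairwise commutation via transpositions is fine. A secondary, repairable slip: from Definition \ref{def-commutative} you extract only the adjacent-index relations $T_i(t)T_{i+1}(t)=T_{i+1}(t)T_i(t)$ and then claim a ``successive-swap argument'' gives $T_i(t)T_j(t)=T_j(t)T_i(t)$ for all $i\neq j$; commutation of adjacent pairs does not imply commutation of all pairs in a general group (think of a Heisenberg-type example), so this upgrade is invalid as stated. The correct route is to apply the defining identity to the two permutations listing the indices as $(i,j,\text{rest})$ and $(j,i,\text{rest})$ with the same fixed order of the remaining indices, and cancel the common invertible right factor.
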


\begin{proof}
The proof is analogous to that of Statement I.5.15 on page 44 in \cite{Engel-Nagel}.
\end{proof}

\begin{definition}
A family $T = (T(t))_{t \in \bbr^d}$ of continuous linear operators $T(t) \in L(H)$ is called a \emph{multi-parameter strongly continuous group} (or a \emph{multi-parameter $C_0$-group}) on $H$ if the following conditions are fulfilled:
\begin{enumerate}
\item $T(0) = \Id$.

\item We have $T(t+s) = T(t) T(s)$ for all $t,s \in \bbr^d$.

\item For each $x \in H$ the \emph{orbit map}
\begin{align*}
\xi_x : \bbr^d \to H, \quad \xi_x(t) := T(t) x
\end{align*}
is continuous.
\end{enumerate}
\end{definition}

Let $T = (T(t))_{t \in \bbr^d}$ be a multi-parameter $C_0$-group on $H$. For each $i=1,\ldots,d$ we define the family $T_i = (T_i(t))_{t \in \bbr}$ of continuous linear operators $T_i(t) \in L(H)$ as
\begin{align*}
T_i(t) := T(t e_i), \quad t \in \bbr.
\end{align*}
Then $T_1,\ldots,T_d$ are commutative $C_0$-groups on $H$, and we have
\begin{align}\label{group-repr}
T(t) = T_1(t_1) \circ \ldots \circ T_d(t_d), \quad t \in \bbr^d.
\end{align}

\begin{remark}
As a consequence of Remark \ref{rem-growth}, there are constants $M \geq 1$ and $w \in \bbr$ such that
\begin{align*}
\| T(t) \| \leq M e^{w \| t \|} \quad \text{for all $t \in \bbr^d$.}
\end{align*}
\end{remark}

For the next result, let $A_i : H \supset D(A_i) \to H$, $i=1,\ldots,d$ be closed operators. Setting $A := (A_1,\ldots,A_d)$, we define the \emph{domain}
\begin{align*}
D(A) := \bigcap_{i=1}^d D(A_i).
\end{align*}
Inductively, for each $n \geq 2$ we define the \emph{higher-order domain}
\begin{align*}
D(A^n) := \{ x \in D(A^{n-1}) : A^{\alpha} x \in D(A) \text{ for all } \alpha \in \{ 1,\ldots,d \}^{n-1} \},
\end{align*}
where we use the notation
\begin{align*}
A^{\alpha} x = A_{\alpha_1} \circ \ldots \circ A_{\alpha_{n-1}} x.
\end{align*}
Furthermore, we agree on the notation $D(A^0) := H$. Then for each $n \in \bbn_0$ the space $D(A^n)$ equipped with the graph norm
\begin{align}\label{norm-D-An}
\| x \|_{D(A^n)} = \sqrt{ \sum_{m=0}^n \sum_{\alpha \in \{ 1,\ldots,d \}^m} \| A^{\alpha} x \|_H^2}, \quad x \in D(A^n)
\end{align}
is a separable Hilbert space. Indeed, the completeness is a consequence of the closed graph theorem, and the separability follows from considering the linear isometry
\begin{align*}
D(A^n) \to \bigoplus_{m=0}^n \bigoplus_{\alpha \in \{ 1,\ldots,d \}^m} H, \quad x \mapsto \big( A^{\alpha}x \big)_{m=0,\ldots,n \atop \alpha \in \{ 1,\ldots,d \}^m}.
\end{align*}
Furthermore, by the definition of the norm (\ref{norm-D-An}) the following result is obvious.

\begin{proposition}\label{prop-domain-scale}
For each $n \in \bbn$ the pair $(D(A^n),D(A^{n-1}))$ consists of separable Hilbert spaces with continuous embedding.
\end{proposition}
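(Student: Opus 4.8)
The plan is to check directly the two defining properties of a pair of continuously embedded normed spaces, exploiting the explicit, additive form of the graph norm (\ref{norm-D-An}); the Hilbert space structure and separability of each $D(A^n)$ have already been recorded in the paragraph preceding the statement, so those do not need to be reproved.

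First I would note the set inclusion $D(A^n) \subset D(A^{n-1})$, which is immediate from the recursive definition of the higher-order domain: by construction every element of $D(A^n)$ already belongs to $D(A^{n-1})$. Next I would compare the two norms. Writing out (\ref{norm-D-An}) for the indices $n$ and $n-1$, the sum defining $\| \cdot \|_{D(A^n)}^2$ consists of all the summands of $\| \cdot \|_{D(A^{n-1})}^2$ together with the additional nonnegative block indexed by $m=n$. Consequently, for every $x \in D(A^n)$,
\begin{equation*}
\| x \|_{D(A^n)}^2 = \| x \|_{D(A^{n-1})}^2 + \sum_{\alpha \in \{ 1,\ldots,d \}^n} \| A^{\alpha} x \|_H^2 \geq \| x \|_{D(A^{n-1})}^2,
\end{equation*}
so that $\| x \|_{D(A^{n-1})} \leq \| x \|_{D(A^n)}$. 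This shows that the embedding operator $\Id : (D(A^n),\| \cdot \|_{D(A^n)}) \to (D(A^{n-1}),\| \cdot \|_{D(A^{n-1})})$ is continuous, in fact a contraction, which is exactly condition (2) in the definition of continuously embedded normed spaces, with constant $K = 1$.

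There is no genuine obstacle here, which is precisely why the paper labels the result obvious: everything reduces to the observation that the graph norms are nested and additive in the degree $m$. The only point deserving a moment of care is to keep the argument logically clean by importing the separable-Hilbert-space assertion from the preceding discussion rather than re-deriving it, so that the proof consists solely of the trivial inclusion and the one-line norm estimate above.
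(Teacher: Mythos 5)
Your proposal is correct and matches the paper's argument: the paper simply declares the result obvious from the definition of the graph norm (\ref{norm-D-An}), and your write-up spells out exactly that reasoning — the set inclusion built into the recursive definition of $D(A^n)$, the nested additive structure of the norms giving $\| x \|_{D(A^{n-1})} \leq \| x \|_{D(A^n)}$, and the separable Hilbert space structure imported from the preceding paragraph.
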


Now, we return to the multi-parameter group $T$. For each $i=1,\ldots,d$ let $(A_i,D(A_i))$ be the generator of $T_i$. We call $A = (A_1,\ldots,A_d)$ the \emph{generator} of $T$

\begin{lemma}\label{lemma-domain-multi-dense}
The subspace $D(A)$ is dense in $H$.
\end{lemma}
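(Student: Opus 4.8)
The plan is to establish density by a mollification argument, constructing explicit approximants that lie in $D(A)$.

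First I would fix $x \in H$ and, for a test function $\varphi \in C_c^{\infty}(\bbr^d)$, define the smoothed element
\begin{align*}
x_{\varphi} := \int_{\bbr^d} \varphi(t) \, T(t) x \, dt
\end{align*}
as a Bochner integral in $H$. This is well defined because the map $t \mapsto \varphi(t) T(t) x$ is continuous (the orbit map $\xi_x$ is continuous and $\varphi$ is continuous) and compactly supported; in particular, using the growth estimate $\| T(t) \| \leq M e^{w \| t \|}$ stated above, the integrand is bounded on the compact support of $\varphi$ by $C \| x \|_H$ for some constant $C > 0$.

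Next I would show that $x_{\varphi} \in D(A_i)$ for every $i = 1,\ldots,d$. Using the representation (\ref{group-repr}) we have $T_i(h) T(t) = T(t + h e_i)$, so for $h \neq 0$ the difference quotient is
\begin{align*}
\frac{T_i(h) x_{\varphi} - x_{\varphi}}{h} = \frac{1}{h} \int_{\bbr^d} \varphi(t) \big( T(t + h e_i) x - T(t) x \big) \, dt = \int_{\bbr^d} \frac{\varphi(t - h e_i) - \varphi(t)}{h} \, T(t) x \, dt,
\end{align*}
where the second equality uses the translation substitution $t \mapsto t - h e_i$. For $|h| \leq 1$ all integrands are supported in a fixed compact set $K$, on which $\| T(t) x \|_H \leq C' \| x \|_H$, while $h^{-1}(\varphi(\cdot - h e_i) - \varphi) \to -\partial_i \varphi$ uniformly with bound $\sup |\partial_i \varphi|$ by the mean value theorem. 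Hence dominated convergence gives
\begin{align*}
A_i x_{\varphi} = \lim_{h \to 0} \frac{T_i(h) x_{\varphi} - x_{\varphi}}{h} = - \int_{\bbr^d} \partial_i \varphi(t) \, T(t) x \, dt.
\end{align*}
Since this holds for each $i$, we obtain $x_{\varphi} \in \bigcap_{i=1}^d D(A_i) = D(A)$.

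Finally I would let $\varphi$ run through an approximate identity $(\varphi_{\epsilon})_{\epsilon > 0}$, that is $\varphi_{\epsilon} \geq 0$, $\int_{\bbr^d} \varphi_{\epsilon} = 1$ and $\supp \varphi_{\epsilon} \subset B_{\epsilon}(0)$. Because $\int \varphi_{\epsilon} = 1$,
\begin{align*}
\| x_{\varphi_{\epsilon}} - x \|_H = \bigg\| \int_{\bbr^d} \varphi_{\epsilon}(t) \big( T(t) x - x \big) \, dt \bigg\|_H \leq \int_{\bbr^d} \varphi_{\epsilon}(t) \, \| T(t) x - x \|_H \, dt,
\end{align*}
and by continuity of the orbit map $\xi_x$ at the origin (with $T(0) x = x$) the right-hand side tends to $0$ as $\epsilon \to 0$. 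Thus every $x \in H$ is an $H$-limit of elements $x_{\varphi_{\epsilon}} \in D(A)$, which proves density. The only genuinely technical point is the justification of differentiation under the integral sign in the second step, where the compact support of $\varphi$ together with the local uniform boundedness of $(T(t))$ is essential, furnishing the dominating function $C' \| x \|_H \, \bbI_K$. An alternative avoiding smooth test functions is the iterated box average $x_{\epsilon} = \epsilon^{-d} \int_{[0,\epsilon]^d} T(s) x \, ds$, which lands in each $D(A_i)$ by Fubini and commutativity; I prefer the mollifier version since the derivative then falls cleanly onto $\varphi$.
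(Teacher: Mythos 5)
Your proof is correct. The paper itself disposes of this lemma in one line, by citing \cite[II.2.7]{Engel-Nagel}, so what you have done is reconstruct the classical argument behind that citation: mollification of orbits, i.e.\ the density of the G\aa{}rding-type space of smoothed vectors $x_\varphi = \int_{\bbr^d} \varphi(t) T(t) x \, dt$. Your argument is sound in all the key steps: the Bochner integral is well defined by strong continuity and compact support; the identity $T_i(h) x_\varphi = \int \varphi(t - h e_i) T(t) x \, dt$ follows from the group law $T(h e_i) T(t) = T(t + h e_i)$ and translation invariance of Lebesgue measure; the difference quotients of $\varphi$ converge uniformly on a fixed compact set, so the limit passes inside the integral; and the approximate identity step uses exactly the strong continuity of the orbit map at $0$. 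One point worth making explicit: you establish existence of $\lim_{h \to 0} h^{-1}(T_i(h) x_\varphi - x_\varphi)$, whereas the paper defines $D(A_i)$ by requiring $\xi_{x_\varphi}^i \in C^1(\bbr;H)$; these are equivalent by the standard one-parameter argument (differentiability at $0$ propagates via $T_i(s_0 + h) = T_i(s_0) T_i(h)$ to differentiability everywhere with continuous derivative $T_i(s_0) A_i x_\varphi$), so no gap, but a sentence noting this would align your conclusion with the paper's definition. The genuine content of your approach, which the citation hides, is that mollification produces vectors lying \emph{simultaneously} in all $D(A_i)$ — density of each $D(A_i)$ separately would not give density of the intersection $D(A)$ — and your construction handles this multi-parameter aspect directly; the paper's citation buys brevity at the cost of opacity on exactly this point.
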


\begin{proof}
This is a consequence of \cite[II.2.7]{Engel-Nagel}.
\end{proof}

\begin{proposition}\label{prop-adjoint-group}
The adjoint group $T^* = (T(t)^*)_{t \in \bbr^d}$ is also a multi-parameter $C_0$-group on $H$, and for each $i = 1,\ldots,d$ the generator of $T_i^*$ is given by $A_i^*$.
\end{proposition}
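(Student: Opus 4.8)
The plan is to reduce everything to the one-parameter groups $T_1,\ldots,T_d$ and then to invoke the classical fact that on a reflexive Banach space the adjoint of a $C_0$-semigroup is again strongly continuous with generator the adjoint operator. Recall from (\ref{group-repr}) that $T(t) = T_1(t_1) \circ \ldots \circ T_d(t_d)$ for each $t \in \bbr^d$. Taking Hilbert space adjoints and using that the adjoint reverses the order of composition, we obtain
\begin{align*}
T(t)^* = T_d(t_d)^* \circ \ldots \circ T_1(t_1)^*, \quad t \in \bbr^d.
\end{align*}
Writing $T_i^*(t) := T_i(t)^*$ for $t \in \bbr$ and $i = 1,\ldots,d$, it therefore suffices to understand each family $T_i^* = (T_i(t)^*)_{t \in \bbr}$ and then to glue these pieces back together.

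First I would show that for each $i = 1,\ldots,d$ the family $T_i^*$ is a one-parameter $C_0$-group on $H$ with generator $A_i^*$. The algebraic identities $T_i(0)^* = \Id$ and $T_i(t+s)^* = (T_i(t) T_i(s))^* = T_i(s)^* T_i(t)^*$ are immediate; since $T_i(t)$ and $T_i(s)$ commute (both equal $T_i(t+s)$ in either order), so do their adjoints, which yields the homomorphism property. For strong continuity and the generator I would appeal to the theory of adjoint semigroups: by Remark \ref{rem-growth} both $(A_i, D(A_i))$ and $(-A_i, D(A_i))$ generate $C_0$-semigroups, and because $H$ is a Hilbert space, hence reflexive, the corresponding adjoint semigroups are again strongly continuous with generators $A_i^*$ and $-A_i^* = (-A_i)^*$ respectively (see \cite{Engel-Nagel}). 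Consequently $T_i^*$ is a $C_0$-group with generator $A_i^*$.

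It then remains to assemble $T^* = (T(t)^*)_{t \in \bbr^d}$ into a multi-parameter $C_0$-group. We have $T(0)^* = \Id$, and for $t,s \in \bbr^d$ the group law $T(t+s) = T(t) T(s) = T(s) T(t)$ shows that $T(t)$ and $T(s)$ commute, whence $T(t+s)^* = T(s)^* T(t)^* = T(t)^* T(s)^*$. For the orbit maps I would use the factorization of $T(t)^*$ together with a standard telescoping estimate: since each factor $T_i^*$ is strongly continuous and the operator norms $\| T_i(t)^* \| = \| T_i(t) \|$ are bounded on compact sets, the map $t \mapsto T(t)^* x$ is continuous on $\bbr^d$ for every $x \in H$. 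Finally, since $T(t e_i)^* = T_i(t)^* = T_i^*(t)$, the one-parameter group associated with $T^*$ in the $i$-th coordinate is precisely $T_i^*$, whose generator is $A_i^*$; this gives the asserted description of the generator.

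The only genuinely nontrivial step is the strong continuity of the adjoint groups $T_i^*$ and the identification of their generators as $A_i^*$, since for a general Banach space the adjoint of a $C_0$-semigroup is merely weak-$*$ continuous. The decisive input is the reflexivity of $H$, which forces the sun-dual to coincide with the full dual space; all remaining points are routine algebraic manipulations together with the telescoping continuity argument already used to build multi-parameter groups from their one-parameter components.
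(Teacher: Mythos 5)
Your proof is correct and takes essentially the same route as the paper: the paper's entire proof is a citation of \cite[Cor. 1.10.6]{Pazy}, i.e.\ the classical fact that on a Hilbert space the adjoint of a $C_0$-semigroup is again strongly continuous with generator $A^*$, which is exactly the key input you invoke (via reflexivity and the sun-dual theory in \cite{Engel-Nagel}). The coordinate-wise reduction, the commutativity/order-reversal bookkeeping, and the telescoping continuity argument you write out are the routine assembly steps the paper leaves implicit.
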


\begin{proof}
This is a consequence of \cite[Cor. 1.10.6]{Pazy}.
\end{proof}

\begin{proposition}\label{prop-D-An-multi}
Let $n \in \bbn_0$ be arbitrary. Then the following statements are true:
\begin{enumerate}
\item We have
\begin{align*}
D(A^n) = \{ x \in H : \xi_x \in C^n(\bbr^d;H) \}.
\end{align*}
\item Let $x \in D(A^n)$ be arbitrary. Then for all $m \in \bbn_0$ with $m \leq n$ and all $\alpha \in \{ 1,\ldots,d \}^m$ we have
\begin{align*}
\frac{d^{\alpha}}{d t_{\alpha}} T(t)x = A^{\alpha} T(t) x = T(t) A^{\alpha} x, \quad t \in \bbr^d.
\end{align*}
\end{enumerate}
\end{proposition}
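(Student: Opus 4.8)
The plan is to prove both statements simultaneously by induction on $n$, treating $n=0$ as trivial (there $D(A^0)=H$ and every orbit map is continuous by the very definition of a multi-parameter $C_0$-group, while the derivative formula for $m=0$ is the tautology $T(t)x=T(t)x$). Before starting the induction I would record one preliminary commutation fact. Since $T_1,\dots,T_d$ are commutative $C_0$-groups (Definition \ref{def-commutative}, Lemma \ref{lemma-commutative}), each generator $A_i$ commutes with every operator $T_j(s)$: for $x\in D(A_i)$ the difference quotient $\tfrac1h(T_i(h)x-x)$ converges, and pulling the bounded operator $T_j(s)$ through this limit shows $T_j(s)x\in D(A_i)$ and $A_iT_j(s)x=T_j(s)A_ix$. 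Combining this over the factorization $T(t)=T_1(t_1)\circ\dots\circ T_d(t_d)$ from (\ref{group-repr}), and invoking Lemma \ref{lemma-T-A-comm}, I obtain that for $x\in D(A_i)$ the partial derivative $\partial_i\xi_x(t)=T(t)A_ix=A_iT(t)x=\xi_{A_ix}(t)$ exists, the final equality identifying it as the orbit map of the single vector $A_ix$. Iterating the commutation also yields that $T(t)$ preserves each $D(A^n)$ with $A^\alpha T(t)x=T(t)A^\alpha x$, which supplies the second equality in statement (2).

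Next I would establish the base case $n=1$, i.e. $D(A)=\bigcap_i D(A_i)=\{x:\xi_x\in C^1(\bbr^d;H)\}$. If $x\in D(A)$, then by the preliminary fact each first partial $\partial_i\xi_x=\xi_{A_ix}$ exists and is continuous, and the standard vector-valued result that continuity of all first-order partials upgrades a map to Fr\'echet $C^1$ gives $\xi_x\in C^1$. Conversely, if $\xi_x\in C^1$, then for each $i$ the partial $\partial_i\xi_x(0)=\lim_{h\to0}\tfrac1h(T_i(h)x-x)$ exists, and by the group law (cf. Lemma \ref{lemma-T-A-comm}) existence of this limit at $0$ already forces the one-parameter orbit map $\xi_x^i$ to lie in $C^1(\bbr;H)$, which is exactly the condition $x\in D(A_i)$; ranging over $i$ gives $x\in D(A)$.

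For the induction step I would assume the statement for $n-1$ and use the algebraic description $D(A^n)=\{x\in D(A^{n-1}):A^\beta x\in D(A)\text{ for all }\beta\in\{1,\dots,d\}^{n-1}\}$. By the induction hypothesis, $x\in D(A^{n-1})$ is equivalent to $\xi_x\in C^{n-1}$, and for such $x$ and $|\beta|=n-1$ one has the bridge identity $\partial^\beta\xi_x=\xi_{A^\beta x}$. Since $\xi_x\in C^n$ holds precisely when every $\partial^\beta\xi_x$ with $|\beta|=n-1$ is of class $C^1$, applying the base case to each vector $A^\beta x$ shows this is equivalent to $A^\beta x\in D(A)$ for all such $\beta$; hence $\xi_x\in C^n$ iff $x\in D(A^n)$, proving (1). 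Differentiating the bridge identity once more and using the $n=1$ formula applied to $A^\beta x$ gives $\partial_i\partial^\beta\xi_x(t)=T(t)A_iA^\beta x=A_iA^\beta T(t)x$, which is (2) for $|\alpha|=n$; the cases $m<n$ are inherited from the induction hypothesis, and symmetry of mixed partials together with commutativity of the $A_i$ on $D(A^n)$ makes the formula independent of the chosen ordering of $\alpha$.

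The main obstacle I anticipate is not conceptual but the careful handling of the analytic bridge between the purely algebraic definition of the higher-order domains $D(A^n)$ and the joint smoothness of $\xi_x$: the key technical input is the vector-valued multivariable theorem that continuity of all first partials implies $C^1$, iterated to pass between $\xi_x\in C^n$ and the $C^1$-regularity of the orbit maps $\xi_{A^\beta x}$. The commutation of the generators $A_i$ with the full group $T(t)$ requires a short argument as above but is entirely standard, so once the bridge identity $\partial^\beta\xi_x=\xi_{A^\beta x}$ is in place the induction closes with only index bookkeeping.
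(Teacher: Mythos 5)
Your proof is correct and follows essentially the same route as the paper: the paper's own proof is precisely the one-line remark that the result follows by induction on $n$ from Lemma \ref{lemma-T-A-comm} and the representation (\ref{group-repr}), which is exactly the induction you carry out. Your writeup simply supplies the details the paper leaves implicit — the commutation of $A_i$ with each $T_j(s)$, the bridge identity $\partial^{\beta}\xi_x = \xi_{A^{\beta}x}$, and the vector-valued criterion that continuity of all first-order partials yields Fr\'echet $C^1$.
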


\begin{proof}
Taking into account Lemma \ref{lemma-T-A-comm} and the representation (\ref{group-repr}) of the group $T$, this follows by induction on $n$.
\end{proof}

\begin{proposition}\label{prop-degrees-smoothness}
Let $n \in \bbn_0$ and $x \in D(A^n)$ be arbitrary. Then the following statements are true:
\begin{enumerate}
\item We have
\begin{align*}
\xi_x \in \bigcap_{k=0}^n C^k(\bbr^d;D(A^{n-k})).
\end{align*}

\item In particular, we have $\xi_x \in C^n(\bbr^d;H)$, and for each $m \in \bbn_0$ with $m \leq n$ we have
\begin{align*}
D^m \xi_x(t) v = \sum_{\alpha \in \{ 1,\ldots,d \}^m} A^{\alpha} \xi_x(t) v_{\alpha}, \quad t \in \bbr^d \text{ and } v \in (\bbr^d)^m, 
\end{align*}
where we use the notation $v_{\alpha} := v_{\alpha_1} \cdot \ldots \cdot v_{\alpha_m}$.
\end{enumerate}
\end{proposition}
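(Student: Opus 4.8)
The plan is to derive both statements from the already-established Proposition \ref{prop-D-An-multi} together with the linear isometry used to define the norm on the scaled spaces, rather than running a fresh induction over the multi-parameter structure.

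First I would treat statement (1). Fixing $k \in \{0,\ldots,n\}$, recall the isometry $\iota : D(A^{n-k}) \to \bigoplus_{l=0}^{n-k} \bigoplus_{\beta \in \{1,\ldots,d\}^l} H$, $y \mapsto (A^\beta y)_{l,\beta}$, introduced in the discussion preceding Proposition \ref{prop-domain-scale}; since $D(A^{n-k})$ is complete, its image is a closed subspace of the finite direct sum. A curve is of class $C^k$ into $D(A^{n-k})$ if and only if its composition with $\iota$ is of class $C^k$ into the direct sum (the non-trivial direction using that the image is closed, so that all derivatives stay inside it and $\iota^{-1}$ may be applied), and the latter holds precisely when each coordinate curve $t \mapsto A^\beta \xi_x(t)$ is of class $C^k$ into $H$. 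Now for $x \in D(A^n)$ and $|\beta| = l \le n-k$, Proposition \ref{prop-D-An-multi} yields $A^\beta \xi_x(t) = A^\beta T(t)x = \partial^\beta \xi_x(t)$; since $\xi_x \in C^n(\bbr^d;H)$, this partial derivative belongs to $C^{n-l}(\bbr^d;H) \subseteq C^k(\bbr^d;H)$ because $l \le n-k$. Hence every coordinate curve is $C^k$ into $H$, and $\xi_x \in C^k(\bbr^d;D(A^{n-k}))$ follows.

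For statement (2), the membership $\xi_x \in C^n(\bbr^d;H)$ is the case $k=n$ of (1), where $D(A^0)=H$ (or, equivalently, the first part of Proposition \ref{prop-D-An-multi}). It then remains to convert iterated partial derivatives into the Fréchet derivative: for a $C^m$ map on $\bbr^d$ the $m$-th derivative evaluated on $v \in (\bbr^d)^m$ equals $\sum_{\alpha \in \{1,\ldots,d\}^m} \partial^\alpha \xi_x(t)\, v_\alpha$, with $v_\alpha = v_{\alpha_1}\cdots v_{\alpha_m}$ as in the statement. Substituting the identity $\partial^\alpha \xi_x(t) = A^\alpha \xi_x(t)$ from Proposition \ref{prop-D-An-multi} produces the asserted formula.

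The coordinate-wise characterization of vector-valued $C^k$-regularity and the multilinear expansion of $D^m$ are routine, so the one point deserving care is the first step of (1): one must be sure that the $H$-valued smoothness of the orbit maps $t\mapsto A^\beta\xi_x(t)$ genuinely assembles into $D(A^{n-k})$-valued smoothness of $\xi_x$. This is exactly where the closedness of $\iota(D(A^{n-k}))$ enters, guaranteeing that the derivatives of the composed curve remain in the scaled space; the commutation $A^\beta T(t) = T(t)A^\beta$ from Proposition \ref{prop-D-An-multi} is what identifies each coordinate curve with the orbit map of $A^\beta x$ and thereby supplies the required regularity.
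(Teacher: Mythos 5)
Your proof is correct and follows the same route as the paper, whose entire proof is the one-line remark that the result ``is a consequence of Proposition \ref{prop-D-An-multi}''; you have simply made explicit how that deduction works, using the isometry $y \mapsto (A^{\beta}y)_{\beta}$ onto a closed subspace of $\bigoplus H$ to reduce $D(A^{n-k})$-valued $C^k$-regularity to $H$-valued regularity of the coordinate curves, which Proposition \ref{prop-D-An-multi} supplies via $A^{\beta}\xi_x = \partial^{\beta}\xi_x$ (equivalently, $A^{\beta}\xi_x = \xi_{A^{\beta}x}$). The points you flag as needing care (closedness of the image so that derivatives stay in the subspace, and the conversion of iterated partials into the Fr\'echet derivative $D^m$) are handled correctly.
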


\begin{proof}
This is a consequence of Proposition \ref{prop-D-An-multi}.
\end{proof}

Now, let $V \subset \bbr^m$ be an open set for some $m \in \bbn$, and let $\Phi : V \to \bbr^{m \times d}$ be a continuous mapping. A mapping $\varphi : V \to \bbr^d$ of class $C^1$ is called a \emph{primitive} of $\Phi$ if $\nabla \varphi = \Phi$, where we use the notation
\begin{align*}
\nabla \varphi =
\left(
\begin{array}{ccc}
\partial_1 \varphi_1 & \cdots & \partial_1 \varphi_d
\\ \vdots & \ddots & \vdots
\\ \partial_m \varphi_1 & \cdots & \partial_m \varphi_d
\end{array}
\right).
\end{align*}
In this case, we have
\begin{align*}
\Phi_{ij} = \partial_i \varphi_j \quad \text{for all $i=1,\ldots,m$ and $j=1,\ldots,d$.}
\end{align*}

\begin{proposition}\label{prop-solution-of-PDE}
Let $V \subset \bbr^m$ be an open, connected set, let $t_0 \in V$ be arbitrary, and let $\Phi : V \to \bbr^{m \times d}$ be a continuous mapping having a primitive $\varphi : V \to \bbr^d$. Let $x_0 \in D(A)$ be arbitrary, and let $\phi \in C^1(V;H)$ be a $D(A)$-valued solution to the PDE
\begin{align}\label{PDE-para}
\left\{
\begin{array}{rcl}
\nabla \phi(t) & = & \Phi(t) A \phi(t), \quad t \in V,
\\ \phi(t_0) & = & x_0.
\end{array}
\right.
\end{align}
Then we have $\phi = \xi_{x_0} \circ \varphi$, where the primitive $\varphi : V \to \bbr^d$ is chosen such that $\varphi(t_0) = 0$.
\end{proposition}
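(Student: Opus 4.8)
The plan is to show directly that the \emph{conjugated} curve $u(t) := T(-\varphi(t)) \phi(t)$, defined for $t \in V$, is constant and equal to $x_0$; once this is established, $\phi(t) = T(\varphi(t)) x_0 = \xi_{x_0}(\varphi(t))$ for every $t \in V$, which is the claim. Since $V$ is open and connected, it suffices to prove that $u$ is everywhere differentiable with $\partial_i u \equiv 0$ for each $i = 1, \ldots, m$, because a continuous $H$-valued function on an open connected subset of $\bbr^m$ whose partial derivatives vanish identically is constant (apply the one-dimensional fact that a differentiable Banach-space valued function with zero derivative is constant along each axis-parallel segment, then use connectedness). Continuity of $u$ follows from continuity of $\varphi$ and $\phi$ together with the joint strong continuity of $(s,h) \mapsto T(s) h$.

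First I would record the two ingredients about the group that drive the computation. Since $x_0$ and, more importantly, each value $\phi(t)$ lies in $D(A) = \bigcap_i D(A_i)$, Proposition \ref{prop-degrees-smoothness} (with $n=1$) gives that the orbit map $\xi_h \in C^1(\bbr^d; H)$ for every $h \in D(A)$, with $D\xi_h(s) v = \sum_{j=1}^d A_j T(s) h \, v_j$; and Proposition \ref{prop-D-An-multi} (equivalently Lemma \ref{lemma-T-A-comm} together with the commutativity of $T_1, \ldots, T_d$) yields the commutation relation $T(s) A_j h = A_j T(s) h$ for all $s \in \bbr^d$ and $h \in D(A)$. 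I will also use the growth bound from Remark \ref{rem-growth}, namely $\|T(s)\| \le M e^{w\|s\|}$, which makes $\|T(-\varphi(t))\|$ locally bounded in $t$.

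The heart of the argument is the product-rule computation of $\partial_i u$. Writing the difference quotient in direction $e_i$ as
\[
\frac{u(t + \tau e_i) - u(t)}{\tau} = T(-\varphi(t+\tau e_i)) \frac{\phi(t+\tau e_i) - \phi(t)}{\tau} + \frac{\big( T(-\varphi(t+\tau e_i)) - T(-\varphi(t)) \big) \phi(t)}{\tau},
\]
I would pass to the limit $\tau \to 0$ in each term separately. In the first term the difference quotient of $\phi$ converges to $\partial_i \phi(t)$ in $H$, while the operators $T(-\varphi(t+\tau e_i))$ are uniformly bounded (growth bound plus continuity of $\varphi$) and converge strongly to $T(-\varphi(t))$, so the term tends to $T(-\varphi(t)) \partial_i \phi(t)$. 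In the second term, since $\phi(t) \in D(A)$, the expression equals the difference quotient of $\xi_{\phi(t)}$ at the points $-\varphi(t+\tau e_i)$ and $-\varphi(t)$, which by the chain rule converges to $-\sum_{j=1}^d \partial_i \varphi_j(t)\, A_j T(-\varphi(t)) \phi(t)$. Now I insert the PDE (\ref{PDE-para}) in the form $\partial_i \phi(t) = \sum_{j=1}^d \Phi_{ij}(t) A_j \phi(t)$ together with $\Phi_{ij} = \partial_i \varphi_j$, and move $T(-\varphi(t))$ past $A_j$ using the commutation relation; the two resulting sums cancel, giving $\partial_i u(t) = 0$.

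Finally, connectedness of $V$ forces $u$ to be constant, and evaluating at $t_0$ gives $u(t_0) = T(-\varphi(t_0)) \phi(t_0) = T(0) x_0 = x_0$, since $\varphi(t_0) = 0$ and $\phi(t_0) = x_0$; hence $u \equiv x_0$ and $\phi = \xi_{x_0} \circ \varphi$. The main obstacle is the rigorous justification of the product rule for $\partial_i u$: one is differentiating a strongly continuous, only locally bounded operator family simultaneously against a $t$-dependent vector that merely takes values in the graph-norm domain $D(A)$, so the two limits must be handled by the split above rather than by a naive product rule, relying on strong convergence plus uniform boundedness for the first term and on the $C^1$-regularity of the orbit map (Proposition \ref{prop-degrees-smoothness}) for the second. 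If one prefers to avoid differentiating on the open set $V$ directly, the same computation can be carried out along an arbitrary $C^1$-curve $\gamma \colon [0,1] \to V$ joining $t_0$ to a given point, reducing matters to the uniqueness of a one-dimensional linear ODE $c'(r) = \sum_j (\varphi \circ \gamma)_j'(r) A_j c(r)$ in $H$; such a curve exists because an open connected subset of $\bbr^m$ is path-connected.
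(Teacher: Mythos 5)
Your proof is correct and takes essentially the same route as the paper: the paper fixes $s \in V$ and shows that $F(t) := T(\varphi(s)-\varphi(t))\phi(t)$ has vanishing gradient (via the Leibniz rule, the commutation of $T$ with $A$ on $D(A)$ from Proposition \ref{prop-D-An-multi}, and the PDE), hence is constant on the connected set $V$, and then evaluates at $t=s$ and $t=t_0$; your function $u(t) = T(-\varphi(t))\phi(t)$ equals $T(-\varphi(s))F(t)$, so the cancellation is identical. The only differences are cosmetic: you invert the group at the end rather than building the evaluation point into the definition, and your difference-quotient splitting (uniform boundedness plus strong convergence for one term, $C^1$-regularity of the orbit map for the other) spells out rigorously the product-rule step that the paper states tersely.
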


\begin{proof}
Let $s \in V$ be arbitrary. The mapping $F : V \to H$ given by
\begin{align*}
F(t) := T( \varphi(s) - \varphi(t) ) \phi(t) = \xi_{\phi(t)}( \varphi(s) - \varphi(t) ), \quad t \in V
\end{align*}
is of class $C^1$. By the Leibniz rule, Proposition \ref{prop-D-An-multi} and the PDE (\ref{PDE-para}), for each $t \in V$ we obtain
\begin{align*}
\nabla F(t) &= \nabla T(\varphi(s) - \varphi(t)) \phi(t)
\\ &= - T(\varphi(s)-\varphi(t)) \Phi(t) A \phi(t) + T(\varphi(s)-\varphi(t)) \nabla \phi(t) = 0.
\end{align*}
Therefore, and since $V$ is connected, by \cite[Cor. 2.4.9]{Abraham} we deduce that $F$ is constant. In particular, we obtain
\begin{align*}
\phi(s) = T(0) \phi(s) = F(s) = F(t_0) = \xi_{\phi(t_0)}( \varphi(s) - \varphi(t_0) ) = \xi_{x_0}(\varphi(s)),
\end{align*}
completing the proof.
\end{proof}

\section{Hermite Sobolev spaces}\label{app-HS-spaces}

In this appendix we provide the required results about Hermite Sobolev spaces. References on this topic are, for example, \cite{Ito}, \cite{Kallianpur-Xiong} and \cite{Bhar-thesis}.

We fix a positive integer $d \in \bbn$. Let $\cals(\bbr^d)$ be the Schwartz space of rapidly decreasing functions. Note that $C_c^{\infty}(\bbr^d) \subset \cals(\bbr^d)$, where $C_c^{\infty}(\bbr^d)$ denotes the space of all $C^{\infty}$-functions $\varphi : \bbr^d \to \bbr$ with compact support.

\begin{lemma}\label{lemma-fct-comp-supp}
Let $f \in C^{\infty}(\bbr^d)$ be arbitrary. For each compact set $K \subset \bbr^d$ there exists a function $g \in C_c^{\infty}(\bbr^d)$ such that $f|_K = g|_K$.
\end{lemma}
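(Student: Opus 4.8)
The plan is to reduce the statement to the construction of a smooth, compactly supported cutoff function that equals $1$ on $K$; once such a function is available, the desired $g$ is obtained simply by multiplication. Suppose I have produced a function $\chi \in C_c^{\infty}(\bbr^d)$ with $0 \leq \chi \leq 1$ and $\chi \equiv 1$ on a neighborhood of $K$. Then I would set $g := \chi f$. As a product of the smooth function $f \in C^{\infty}(\bbr^d)$ and the smooth compactly supported function $\chi$, the function $g$ is smooth and satisfies $\supp(g) \subset \supp(\chi)$, which is compact; hence $g \in C_c^{\infty}(\bbr^d)$. Moreover, for every $x \in K$ we have $\chi(x) = 1$, and therefore $g(x) = \chi(x) f(x) = f(x)$, which is exactly the required identity $f|_K = g|_K$. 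Thus everything comes down to the existence of $\chi$.

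To construct $\chi$, I would use mollification. Since $K$ is compact it is bounded, so for every $r > 0$ the closed neighborhood $K_{r} := \{ x \in \bbr^d : \operatorname{dist}(x,K) \leq r \}$ is again compact. I would fix some $\epsilon > 0$ and choose a standard mollifier $\eta \in C_c^{\infty}(\bbr^d)$ with $\eta \geq 0$, $\supp(\eta) \subset \{ x \in \bbr^d : |x| \leq \epsilon \}$ and $\int_{\bbr^d} \eta(y)\,dy = 1$, and then define $\chi := \bbI_{K_{2\epsilon}} * \eta$, the convolution of the indicator function of $K_{2\epsilon}$ with $\eta$. The standard properties of convolution with a smooth, compactly supported kernel yield $\chi \in C^{\infty}(\bbr^d)$ with $0 \leq \chi \leq 1$, and from $\supp(\chi) \subset K_{2\epsilon} + \supp(\eta) \subset K_{3\epsilon}$ it follows that $\chi$ has compact support.

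The one point that still needs verification is that $\chi \equiv 1$ on a neighborhood of $K$, and this is where the support condition on $\eta$ enters. For any $x$ with $\operatorname{dist}(x,K) \leq \epsilon$ and any $y$ with $|y| \leq \epsilon$, the triangle inequality gives $\operatorname{dist}(x - y, K) \leq 2\epsilon$, so $x - y \in K_{2\epsilon}$ and hence $\bbI_{K_{2\epsilon}}(x - y) = 1$ throughout $\supp(\eta)$; consequently $\chi(x) = \int_{\bbr^d} \bbI_{K_{2\epsilon}}(x - y)\eta(y)\,dy = \int_{\bbr^d} \eta(y)\,dy = 1$. Thus $\chi \equiv 1$ on $K_{\epsilon}$, which contains the open $\epsilon$-neighborhood of $K$, and in particular $\chi = 1$ on $K$. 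I do not expect any genuine obstacle here: the whole argument is the standard smooth-Urysohn construction, and the only care required is in the bookkeeping of the radii $\epsilon$, $2\epsilon$ and $3\epsilon$ so that $\chi$ is simultaneously compactly supported and identically $1$ near $K$.
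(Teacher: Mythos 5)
Your proof is correct and follows essentially the same route as the paper: both arguments reduce the claim to finding a smooth, compactly supported cutoff $\chi$ with $\chi \equiv 1$ on $K$ and then set $g := \chi f$. The only difference is that the paper obtains the cutoff by citing a smooth Urysohn-type theorem (Boothby, Thm.\ II.5.1), whereas you construct it explicitly by mollifying the indicator of a fattened neighborhood of $K$, which makes your argument self-contained; the bookkeeping with the radii $\epsilon$, $2\epsilon$, $3\epsilon$ is handled correctly.
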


\begin{proof}
There exists $N > 0$ such that $K \cap A = \emptyset$, where $A \subset \bbr^d$ denotes the closed set $A := \{ x \in \bbr^d : \| x \| \geq N \}$. By \cite[Thm. II.5.1]{Boothby} there exists a smooth function $\varphi \in C^{\infty}(\bbr^d;[0,1])$ such that $\varphi|_K \equiv 1$ and $\varphi|_A \equiv 0$. We define the product $g := \varphi \cdot f$, which concludes the proof.
\end{proof}

Let $\cals'(\bbr^d)$ be the dual space of the Schwartz space, the so-called space of tempered distributions. Then $(\cals'(\bbr^d),\cals(\bbr^d),\la \cdot,\cdot \ra)$ is a dual pair, where we use the notation
\begin{align}\label{duality-Schwartz}
\la \Phi,\varphi \ra := \Phi(\varphi) \quad \text{for all $\Phi \in \cals'(\bbr^d)$ and 
$\varphi \in \cals(\bbr^d)$.}
\end{align}
Assigning
\begin{align*}
\la \varphi,\cdot \ra_{L^2} \in \cals'(\bbr^d) \quad \text{for each $\varphi \in \cals(\bbr^d)$,}
\end{align*}
by identification we have $\cals(\bbr^d) \subset \cals'(\bbr^d)$, and the bilinear mapping $\la \cdot,\cdot \ra : \cals'(\bbr^d) \times \cals(\bbr^d) \to \bbr$ extends $\la \cdot,\cdot \ra_{L^2} : \cals(\bbr^d) \times \cals(\bbr^d) \to \bbr$. Let $p \in \bbr$ be arbitrary. On the Schwartz space $\cals(\bbr^d)$ we define the inner product $\la \cdot,\cdot \ra_p$ as
\begin{align}\label{norm-S-p}
\la \Phi,\varphi \ra_p := \sum_{k=0}^{\infty} \sum_{|n| = k} (2k + d)^{2p} \la \Phi,h_n \ra_{L^2} \la h_n,\varphi \ra_{L^2}, \quad \Phi,\varphi \in \cals(\bbr^d),
\end{align}
where $(h_n)_{n \in \bbn_0^d}$ are the Hermite functions. The \emph{Hermite Sobolev space} $\cals_p(\bbr^d)$ is defined as the completion of the pre-Hilbert space $(\cals(\bbr^d),\| \cdot \|_p)$. Then $\cals_p(\bbr^d)$ is a separable Hilbert space with orthonormal basis $(h_n^p)_{n \in \bbn_0^d}$, where $h_n^p = (2k+d)^{-p} h_n$ for each $k \in \bbn_0$ and each $n \in \bbn_0$ with $|n| = k$. Again by identification, we have
\begin{align*}
\cals(\bbr^d) \subset \cals_p(\bbr^d) \subset \cals'(\bbr^d).
\end{align*}
For each $p \in \bbr$ the Schwartz space $\cals(\bbr^d)$ is a dense subspace of $\cals_p(\bbr^d)$.

\begin{lemma}\label{lemma-scale-pq}
For all $p,q \in \bbr$ with $p \leq q$ we have $\cals_q(\bbr^d) \subset \cals_p(\bbr^d)$ and
\begin{align*}
\| \Phi \|_p \leq \| \Phi \|_q \quad \text{for each $\Phi \in \cals_q(\bbr^d)$.}
\end{align*}
In particular, the pair $(\cals_q(\bbr^d),\cals_p(\bbr^d))$ consists of separable Hilbert spaces with continuous embedding.
\end{lemma}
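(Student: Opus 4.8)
The plan is to first prove the norm inequality on the dense subspace $\cals(\bbr^d)$ by a termwise comparison of the defining series, and then transport both the inequality and the set inclusion to the completions.

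For the elementary estimate I would note that, since $d \in \bbn$, we have $2k + d \geq 1$ for every $k \in \bbn_0$, so the function $t \mapsto (2k+d)^t$ is nondecreasing; hence $(2k+d)^{2p} \leq (2k+d)^{2q}$ whenever $p \leq q$. Applying this weight by weight in the series (\ref{norm-S-p}) gives, for every $\varphi \in \cals(\bbr^d)$,
\begin{align*}
\| \varphi \|_p^2 = \sum_{k=0}^{\infty} \sum_{|n| = k} (2k + d)^{2p} |\la \varphi,h_n \ra_{L^2}|^2 \leq \sum_{k=0}^{\infty} \sum_{|n| = k} (2k + d)^{2q} |\la \varphi,h_n \ra_{L^2}|^2 = \| \varphi \|_q^2 .
\end{align*}
In particular every $\| \cdot \|_q$-Cauchy sequence in $\cals(\bbr^d)$ is also $\| \cdot \|_p$-Cauchy. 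This part is genuinely a one-line comparison.

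The substantive step is to promote this to the abstract completions, since a priori an element of $\cals_q(\bbr^d)$ is not literally an element of $\cals_p(\bbr^d)$. I would proceed as follows. Let $\Phi \in \cals_q(\bbr^d)$ and, using the stated $\| \cdot \|_q$-density of $\cals(\bbr^d)$, choose $(\varphi_m) \subset \cals(\bbr^d)$ with $\varphi_m \to \Phi$ in $\cals_q(\bbr^d)$. The sequence is $\| \cdot \|_q$-Cauchy, hence $\| \cdot \|_p$-Cauchy, so by completeness of $\cals_p(\bbr^d)$ it converges to some $\Psi \in \cals_p(\bbr^d)$. The main obstacle is to identify $\Psi$ with $\Phi$, and I would resolve this inside the common ambient space $\cals'(\bbr^d)$: a Cauchy--Schwarz estimate shows $|\la \varphi,\phi \ra| \leq \| \varphi \|_r \, \| \phi \|_{-r}$ for every $\varphi,\phi \in \cals(\bbr^d)$ and every $r \in \bbr$, so the distributional pairing $\la \cdot,\phi \ra$ is continuous for both $\| \cdot \|_q$ and $\| \cdot \|_p$ and extends to the respective completions — this is exactly the identification $\cals_r(\bbr^d) \subset \cals'(\bbr^d)$ recorded before the lemma. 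Consequently $\la \varphi_m,\phi \ra$ converges both to $\la \Phi,\phi \ra$ and to $\la \Psi,\phi \ra$ for every test function $\phi$, forcing $\Phi = \Psi$ in $\cals'(\bbr^d)$. Thus $\Phi \in \cals_p(\bbr^d)$, and letting $m \to \infty$ in $\| \varphi_m \|_p \leq \| \varphi_m \|_q$ yields $\| \Phi \|_p \leq \| \Phi \|_q$.

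The final clause is then immediate: the above gives $\cals_q(\bbr^d) \subset \cals_p(\bbr^d)$ as sets, and the embedding $\Id : (\cals_q(\bbr^d),\| \cdot \|_q) \to (\cals_p(\bbr^d),\| \cdot \|_p)$ satisfies $\| \Id \| \leq 1$, hence is continuous; since each $\cals_r(\bbr^d)$ is a separable Hilbert space by construction, the pair consists of separable Hilbert spaces with continuous embedding in the sense of the definition. I expect the only delicate point to be the identification of the two limits in $\cals'(\bbr^d)$; a shortcut would be to argue directly on Hermite coefficients $a_n := \la \Phi,h_n \ra$, writing $\| \Phi \|_r^2 = \sum_{k} \sum_{|n|=k} (2k+d)^{2r} |a_n|^2$ so that finiteness for $q$ forces finiteness for $p$ by the same weight comparison, but this rests on the identical identification of $\cals_r$-elements with their coefficient sequences.
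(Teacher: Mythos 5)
Your proof is correct and follows essentially the same route as the paper, whose entire proof is the one-line observation that the claim is a consequence of the definition (\ref{norm-S-p}) of the norms, i.e., the termwise weight comparison $(2k+d)^{2p} \leq (2k+d)^{2q}$. The additional work you do --- passing from the dense subspace $\cals(\bbr^d)$ to the completions and identifying the two limits inside $\cals'(\bbr^d)$ --- is precisely what the paper leaves implicit in the identification $\cals(\bbr^d) \subset \cals_p(\bbr^d) \subset \cals'(\bbr^d)$, so it is a filling-in of detail rather than a different method.
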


\begin{proof}
This is a consequence of (\ref{norm-S-p}).
\end{proof}

Lemma \ref{lemma-scale-pq} shows that the family $(\cals_p(\bbr^d))_{p \in \bbr}$ is a scale of separable Hilbert spaces. Moreover, we have the identities
\begin{align*}
\cals(\bbr^d) = \bigcap_{p \in \bbr} \cals_p(\bbr^d), \quad L^2(\bbr^d) = \cals_0(\bbr^d), \quad \cals'(\bbr^d) = \bigcup_{p \in \bbr} \cals_p(\bbr^d),
\end{align*}
and for every $p \geq 0$ we have the representation
\begin{align*}
\cals_p(\bbr^d) = \{ f \in L^2(\bbr^d) : \| f \|_p < \infty \}.
\end{align*}
The topology on $\cals(\bbr^d)$ is finer than that on $\cals_p(\bbr^d)$ for every $p \in \bbr$, and for any $p \in \bbr$ the topology on $\cals_p(\bbr^d)$ is finer than that on $\cals'(\bbr^d)$. Furthermore, for each $p \in \bbr$ the space $\cals_{-p}(\bbr^d)$ is dual to $\cals_p(\bbr^d)$. More precisely, we have the following result. 

\begin{lemma}\label{lemma-duality-pairing}
For each $p \in \bbr$ there is a unique continuous bilinear mapping ${}_{-p} \la \cdot,\cdot \ra_p : \cals_{-p}(\bbr^d) \times \cals_p(\bbr^d) \to \bbr$ extending $\la \cdot,\cdot \ra_{L^2} : \cals(\bbr^d) \times \cals(\bbr^d) \to \bbr$. Furthermore, the following statements are true:
\begin{enumerate}
\item For each $p \in \bbr$ the triplet $(\cals_{-p}(\bbr^d),\cals_p(\bbr^d),\la \cdot,\cdot \ra)$ is a dual pair.

\item For each $p \in \bbr$ we have
\begin{align*}
|{}_{-p} \la \Phi,\varphi \ra_p| \leq \| \Phi \|_{-p} \| \varphi \|_p \quad \text{for all $\Phi \in \cals_{-p}(\bbr^d)$ and $\varphi \in \cals_p(\bbr^d)$.}
\end{align*}
\item For each $p \in \bbr$ we have
\begin{align*}
{}_{-p} \la \Phi,\varphi \ra_p = {}_p\la \varphi,\Phi \ra_{-p} \quad \text{for all $\Phi \in \cals_{-p}(\bbr^d)$ and $\varphi \in \cals_p(\bbr^d)$.}
\end{align*}
\item For all $p,q \in \bbr$ with $p \leq q$ we have
\begin{align*}
{}_{-p} \la \Phi,\varphi \ra_p = {}_{-q}\la \Phi,\varphi \ra_q \quad \text{for all $\Phi \in \cals_{-p}(\bbr^d)$ and $\varphi \in \cals_q(\bbr^d)$.}
\end{align*}
\item For each $p \in \bbr$ we have
\begin{align*}
{}_{-p} \la \Phi,\varphi \ra_p = \la \Phi,\varphi \ra \quad \text{for all $\Phi \in \cals_{-p}(\bbr^d)$ and $\varphi \in \cals(\bbr^d)$,}
\end{align*}
where $\la \cdot,\cdot \ra$ denotes the dual pair from (\ref{duality-Schwartz}).
\end{enumerate}
\end{lemma}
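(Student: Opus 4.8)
The plan is to build the pairing explicitly from the orthonormal bases $(h_n^{-p})_{n\in\bbn_0^d}$ of $\cals_{-p}(\bbr^d)$ and $(h_n^p)_{n\in\bbn_0^d}$ of $\cals_p(\bbr^d)$, and then to read off all assertions from the resulting series. For $\Phi\in\cals_{-p}(\bbr^d)$ and $\varphi\in\cals_p(\bbr^d)$ I would write $\Phi_n:=\la\Phi,h_n^{-p}\ra_{-p}$ and $\varphi_n:=\la\varphi,h_n^p\ra_p$ for their Fourier coefficients and define
\begin{align*}
{}_{-p}\la\Phi,\varphi\ra_p := \sum_{n\in\bbn_0^d}\Phi_n\varphi_n.
\end{align*}
Using $h_n^{\pm p}=(2k+d)^{\mp p}h_n$ (for $|n|=k$) together with the definition (\ref{norm-S-p}) of the inner products, a short computation gives, for $\Phi,\varphi\in\cals(\bbr^d)$, the identities $\Phi_n=(2k+d)^{-p}\la\Phi,h_n\ra_{L^2}$ and $\varphi_n=(2k+d)^{p}\la\varphi,h_n\ra_{L^2}$, whence $\Phi_n\varphi_n=\la\Phi,h_n\ra_{L^2}\la h_n,\varphi\ra_{L^2}$ is independent of $p$ and $\sum_n\Phi_n\varphi_n=\la\Phi,\varphi\ra_{L^2}$ by Parseval in $L^2(\bbr^d)=\cals_0(\bbr^d)$. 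This already shows the series extends $\la\cdot,\cdot\ra_{L^2}$. By the Cauchy--Schwarz inequality and Parseval for the two orthonormal bases,
\begin{align*}
\bigg|\sum_{n\in\bbn_0^d}\Phi_n\varphi_n\bigg|\le\bigg(\sum_{n\in\bbn_0^d}|\Phi_n|^2\bigg)^{1/2}\bigg(\sum_{n\in\bbn_0^d}|\varphi_n|^2\bigg)^{1/2}=\|\Phi\|_{-p}\|\varphi\|_p,
\end{align*}
so the series converges absolutely and defines a continuous bilinear form with the bound in statement (2); uniqueness of the extension follows from the density of $\cals(\bbr^d)$ in both $\cals_{-p}(\bbr^d)$ and $\cals_p(\bbr^d)$.

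With the form in hand, statements (1)--(3) are immediate from the series representation. Choosing $\varphi=h_n^p$ isolates ${}_{-p}\la\Phi,h_n^p\ra_p=\Phi_n$, so the vanishing of the pairing in one argument forces all Fourier coefficients, hence the element, to vanish; this gives non-degeneracy in both slots and proves (1). The displayed estimate is (2). Symmetry (3) is visible from the symmetric shape of $\sum_n\Phi_n\varphi_n$: applying the definition with $p$ replaced by $-p$ produces exactly the same products, so ${}_{-p}\la\Phi,\varphi\ra_p={}_p\la\varphi,\Phi\ra_{-p}$.

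For the consistency statements (4) and (5) I would argue by density and continuity. The key auxiliary fact is that the \emph{absolute} Hermite coefficient $c_n(\Phi):=(2k+d)^p\Phi_n$ of $\Phi\in\cals_{-p}(\bbr^d)$ does not depend on the scale: if $p\le q$ then $\cals_{-p}(\bbr^d)\subset\cals_{-q}(\bbr^d)$ by Lemma \ref{lemma-scale-pq}, and approximating $\Phi$ by Schwartz functions in $\|\cdot\|_{-p}$ (hence also in the weaker $\|\cdot\|_{-q}$) shows $(2k+d)^p\la\Phi,h_n^{-p}\ra_{-p}=(2k+d)^q\la\Phi,h_n^{-q}\ra_{-q}$. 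Feeding this into the two series representations, both ${}_{-p}\la\Phi,\varphi\ra_p$ and ${}_{-q}\la\Phi,\varphi\ra_q$ collapse to $\sum_n c_n(\Phi)c_n(\varphi)$ for $\varphi\in\cals_q(\bbr^d)$, which proves (4). Finally, for (5) I would note that, for fixed $\varphi\in\cals(\bbr^d)$, both $\Phi\mapsto{}_{-p}\la\Phi,\varphi\ra_p$ and the Schwartz pairing $\Phi\mapsto\la\Phi,\varphi\ra$ are continuous on $\cals_{-p}(\bbr^d)$ (the latter because the topology of $\cals_{-p}(\bbr^d)$ is finer than that of $\cals'(\bbr^d)$), and they agree on the dense subspace $\cals(\bbr^d)$, where both equal $\la\Phi,\varphi\ra_{L^2}$; continuity then extends the equality to all of $\cals_{-p}(\bbr^d)$.

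The routine parts are the two coefficient computations and the Cauchy--Schwarz estimate. The one place that needs genuine care is the scale-independence of the coefficients underlying (4): one must check that passing an element of $\cals_{-p}(\bbr^d)$ into the larger space $\cals_{-q}(\bbr^d)$ does not alter the relevant coefficients, which is precisely what the approximation argument above secures, and this is the main obstacle of the proof.
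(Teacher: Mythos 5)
Your proposal is correct and takes essentially the same route as the paper: the paper's proof is exactly the ONB-based Cauchy--Schwarz estimate $|\la \Phi,\varphi \ra_{L^2}| \leq \| \Phi \|_{-p} \| \varphi \|_p$ on $\cals(\bbr^d) \times \cals(\bbr^d)$ followed by density of the Schwartz space (together with Lemma \ref{lemma-scale-pq} for the consistency statements), which is the core of your argument as well. The only difference is presentational: you realize the unique continuous extension explicitly as the coefficient series $\sum_n \Phi_n \varphi_n$ instead of invoking abstract extension by continuity, which has the small advantage of yielding non-degeneracy in (1) and the scale-consistency in (4) by direct inspection of the coefficients.
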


\begin{proof}
Using the ONBs $(h_n^{-p})_{n \in \bbn_0^d}$ and $(h_n^p)_{n \in \bbn_0^d}$ of $\cals_{-p}(\bbr^d)$ and $\cals_p(\bbr^d)$, we obtain
\begin{align*}
|\la \Phi,\varphi \ra_{L^2}| \leq \| \Phi \|_{-p} \| \varphi \|_p \quad \text{for all $\Phi,\varphi \in \cals(\bbr^d)$,}
\end{align*}
which completes the proof. Note that for the last two statements we also use Lemma \ref{lemma-scale-pq} and the statements preceding this result.
\end{proof}

\begin{remark}\label{rem-dual-pairing}
In the sequel, we will simply write $\la \Phi,\varphi \ra$ whenever $\Phi \in \cals_{-p}(\bbr^d)$ and $\varphi \in \cals_p(\bbr^d)$ for some $p \in \bbr$, which is justified by the last two statements of Lemma \ref{lemma-duality-pairing}.
\end{remark}

Let $i \in \{ 1,\ldots,d \}$ be arbitrary. We extend the \emph{differential operator} $\partial_i : \cals(\bbr^d) \to \cals(\bbr^d)$ to an operator $\partial_i : \cals'(\bbr^d) \to \cals'(\bbr^d)$ by duality as
\begin{align*}
\la \partial_i \Phi, \varphi \ra := - \la \Phi, \partial_i \varphi \ra \quad \text{for all $\Phi \in \cals'(\bbr^d)$ and $\varphi \in \cals(\bbr^d)$.}
\end{align*}

\begin{lemma}\cite[Lemma 2.11.4]{Bhar-thesis}\label{lemma-diff-continuous}
For each $i=1,\ldots,d$ and each $p \in \bbr$ we have
\begin{align*}
\partial_i|_{\cals_{p+\frac{1}{2}}(\bbr^d)} \in L \big( \cals_{p+\frac{1}{2}}(\bbr^d),\cals_p(\bbr^d) \big).
\end{align*}
\end{lemma}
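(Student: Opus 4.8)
The plan is to prove the operator-norm estimate $\|\partial_i \varphi\|_p \le C \|\varphi\|_{p+\frac{1}{2}}$ on the dense subspace $\cals(\bbr^d) \subset \cals_{p+\frac12}(\bbr^d)$ and then extend by continuity. First I would observe that $\partial_i$ maps $\cals(\bbr^d)$ into itself, so once the estimate is established it extends uniquely to a bounded operator $T \in L(\cals_{p+\frac12}(\bbr^d),\cals_p(\bbr^d))$. To identify $T$ with the distributional operator $\partial_i|_{\cals_{p+\frac12}(\bbr^d)}$, I would take $\Phi \in \cals_{p+\frac12}(\bbr^d)$ and a sequence $\varphi_n \in \cals(\bbr^d)$ with $\varphi_n \to \Phi$ in $\cals_{p+\frac12}(\bbr^d)$; since the embedding $\cals_{p+\frac12}(\bbr^d) \hookrightarrow \cals'(\bbr^d)$ is continuous and $\partial_i$ is continuous on $\cals'(\bbr^d)$, we get $\partial_i \varphi_n \to \partial_i \Phi$ in $\cals'(\bbr^d)$, while $\partial_i\varphi_n = T\varphi_n \to T\Phi$ in $\cals_p(\bbr^d) \hookrightarrow \cals'(\bbr^d)$; uniqueness of limits in $\cals'(\bbr^d)$ then forces $T\Phi = \partial_i\Phi$.

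The heart of the matter is the explicit action of $\partial_i$ on the Hermite basis. Writing the $d$-dimensional Hermite functions as products $h_n = \prod_{l=1}^d h_{n_l}$ and invoking the ladder (creation/annihilation) relations $a_i = \tfrac{1}{\sqrt2}(x_i+\partial_{x_i})$, $a_i^* = \tfrac{1}{\sqrt2}(x_i-\partial_{x_i})$ for the one-dimensional Hermite functions, one obtains the recurrence
\begin{align*}
\partial_i h_m = \tfrac{1}{\sqrt 2}\big( \sqrt{m_i}\, h_{m - e_i} - \sqrt{m_i + 1}\, h_{m + e_i} \big), \quad m \in \bbn_0^d,
\end{align*}
where $e_i$ is the $i$-th unit vector and $h_{m-e_i} := 0$ if $m_i = 0$. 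Since $h_m \in \cals(\bbr^d)$, the definition of the distributional derivative gives, for every $\varphi \in \cals(\bbr^d)$,
\begin{align*}
\la \partial_i \varphi, h_m \ra_{L^2} = - \la \varphi, \partial_i h_m \ra_{L^2} = -\tfrac{1}{\sqrt 2}\big( \sqrt{m_i}\, \la \varphi, h_{m-e_i} \ra_{L^2} - \sqrt{m_i+1}\, \la \varphi, h_{m+e_i} \ra_{L^2} \big).
\end{align*}

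Using the representation (\ref{norm-S-p}) of the norm via the orthonormal basis together with $(a-b)^2 \le 2a^2 + 2b^2$, I would then estimate
\begin{align*}
\|\partial_i \varphi\|_p^2 &= \sum_{m \in \bbn_0^d} (2|m|+d)^{2p}\, |\la \partial_i\varphi, h_m\ra_{L^2}|^2 \\
&\le \sum_{m \in \bbn_0^d} (2|m|+d)^{2p} \big( m_i\, |\la\varphi,h_{m-e_i}\ra_{L^2}|^2 + (m_i+1)\, |\la\varphi,h_{m+e_i}\ra_{L^2}|^2 \big).
\end{align*}
After the index shifts $n = m - e_i$ and $n = m + e_i$ in the two resulting sums (the terms with $m_i=0$ drop out), each summand carries a weight $(2|n| \pm 2 + d)^{2p}$ times a factor $n_i+1$ or $n_i$. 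The two elementary facts I would use are that $(2|n|\pm 2 + d)^{2p} \le C_p\, (2|n|+d)^{2p}$ for a constant depending only on $p$ and $d$, and that $n_i + 1 \le |n| + 1 \le 2|n| + d$, so that $(n_i+1)(2|n|+d)^{2p} \le (2|n|+d)^{2p+1} = (2|n|+d)^{2(p+\frac12)}$. Combining these bounds collapses both sums into $C\,\|\varphi\|_{p+\frac12}^2$, which is the desired estimate.

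The main obstacle is not conceptual but lies in the careful bookkeeping of the two items just mentioned: verifying that the $\sqrt{m_i}$ growth produced by differentiation is \emph{exactly} compensated by the half-unit shift $p \mapsto p + \frac12$ in the Hermite Sobolev scale, and that the reindexing of the weights introduces only a constant depending on $p$ and $d$ --- in particular handling the case of negative $p$, where the comparison of $(2|n|\pm2+d)^{2p}$ with $(2|n|+d)^{2p}$ reverses direction and one must bound the ratio $(2|n|\pm2+d)/(2|n|+d)$ away from both $0$ and $\infty$. Once the recurrence for $\partial_i h_m$ and these two weight estimates are in place, the remainder is a routine rearrangement of absolutely convergent series.
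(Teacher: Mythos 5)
Your proof is correct. The paper gives no argument for this lemma at all --- it is quoted directly from the cited reference \cite[Lemma 2.11.4]{Bhar-thesis} --- and your argument is the standard one behind that citation: the Hermite ladder recurrence $\partial_i h_m = \tfrac{1}{\sqrt2}\big(\sqrt{m_i}\,h_{m-e_i} - \sqrt{m_i+1}\,h_{m+e_i}\big)$, the coefficient estimate it yields, the reindexing with the two-sided weight comparison $(2|n|\pm 2+d)^{2p} \le C_{p,d}\,(2|n|+d)^{2p}$ (valid for either sign of $p$ since the ratio $(2|n|\pm2+d)/(2|n|+d)$ lies in $[1/3,3]$), and the absorption of the factor $n_i+1 \le 2|n|+d$ into the half-unit shift of the exponent, followed by the routine density/extension step identifying the bounded extension with the distributional $\partial_i$ via convergence in $\cals'(\bbr^d)$.
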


\begin{lemma}\label{lemma-diff-closed}
For each $i=1,\ldots,d$ and each $p \in \bbr$ the operator $\partial_i : \cals_p(\bbr^d) \supset \cals_{p+\frac{1}{2}}(\bbr^d) \to \cals_p(\bbr^d)$ is densely defined and closed, and we have $\cals(\bbr^d) \subset D(\partial_i^*)$.
\end{lemma}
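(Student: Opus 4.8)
The plan is to treat the three assertions---dense domain, closedness, and $\cals(\bbr^d) \subset D(\partial_i^*)$---separately, leaning on the identification of $\partial_i$ with the generator of the translation group furnished by Theorem \ref{thm-Sp-D-1}.

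First I would dispose of density: since $\cals(\bbr^d) \subset \cals_{p+\frac{1}{2}}(\bbr^d)$ and $\cals(\bbr^d)$ is dense in $\cals_p(\bbr^d)$, the domain $\cals_{p+\frac{1}{2}}(\bbr^d)$ is dense in $\cals_p(\bbr^d)$. For closedness the idea is not to argue directly on $\cals_{p+\frac{1}{2}}(\bbr^d)$ but to recognise the operator. By Theorem \ref{thm-Sp-D-1} we have $A_{p,i} \Phi = -\partial_i \Phi$ for every $\Phi \in \cals_{p+\frac{1}{2}}(\bbr^d)$, where $A_{p,i}$ is the generator of the one-parameter translation $C_0$-group $\tau^i$ on $\cals_p(\bbr^d)$. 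Since generators of $C_0$-groups are closed, $-A_{p,i}$ is a closed operator on $\cals_p(\bbr^d)$ extending $\partial_i$, and Theorem \ref{thm-Sp-D-1} moreover asserts that $\cals_{p+\frac{1}{2}}(\bbr^d)$ is a core for it; this is precisely the closed operator whose existence is demanded by Assumption \ref{ass-quasi-semi-lin}. Thus $\partial_i$ is identified with the closed, densely defined operator $-A_{p,i}$, with $\cals_{p+\frac{1}{2}}(\bbr^d)$ as a natural dense core.

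It then remains to show $\cals(\bbr^d) \subset D(\partial_i^*)$, the adjoint being taken with respect to the Hilbert space $(\cals_p(\bbr^d), \la \cdot,\cdot \ra_p)$. Fixing $\zeta \in \cals(\bbr^d)$, I must produce $\Theta \in \cals_p(\bbr^d)$ with $\la \partial_i \Phi, \zeta \ra_p = \la \Phi, \Theta \ra_p$ for all $\Phi$ in the domain. The clean route is to relate the weighted inner product to the dual pairing: writing $\la \Psi_1, \Psi_2 \ra_p = \la \mathbf{H}^{2p} \Psi_1, \Psi_2 \ra$ and using that $\partial_i$ is the negative of its own transpose under the dual pairing while $\mathbf{H}^{2p}$ preserves $\cals(\bbr^d)$, one finds the explicit candidate $\Theta = -\mathbf{H}^{-2p} \partial_i \mathbf{H}^{2p} \zeta$, which lies in $\cals(\bbr^d) \subset \cals_p(\bbr^d)$. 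Alternatively, and more elementarily, I would insert the three-term recurrence for $\partial_i h_n$ into the coefficient expansion (\ref{norm-S-p}) of $\la \partial_i \Phi, \zeta \ra_p$, reindex the resulting series by $n = m \pm e_i$, and read off the Hermite coefficients of $\Theta$ directly; the rapid decay of the coefficients of $\zeta$ forces $\Theta \in \cals(\bbr^d)$. Either way the functional $\Phi \mapsto \la \partial_i \Phi, \zeta \ra_p = \la \Phi, \Theta \ra_p$ is $\| \cdot \|_p$-continuous, so $\zeta \in D(\partial_i^*)$ with $\partial_i^* \zeta = \Theta$.

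I expect the adjoint statement to be the main obstacle, precisely because the adjoint is formed with respect to the weighted inner product $\la \cdot,\cdot \ra_p$ rather than the $L^2$-pairing, so the formal adjoint $-\partial_i$ must be conjugated by the Hermite weights $\mathbf{H}^{2p}$. The technical heart is therefore the bookkeeping controlling this conjugation: in the coefficient approach one checks that the weight ratios $(2|n| \pm 2 + d)^{2p}/(2|n|+d)^{2p}$ stay bounded and that the polynomial factors $\sqrt{n_i}$ produced by the recurrence are absorbed by the rapid decay of the coefficients of $\zeta$; in the operator approach one justifies that $\mathbf{H}^{\pm 2p}$ and $\partial_i$ leave $\cals(\bbr^d)$ invariant and transpose as claimed under the dual pairing. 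A minor point worth flagging is that for $d \geq 2$ the domain of the closure $-A_{p,i}$ is strictly larger than $\cals_{p+\frac{1}{2}}(\bbr^d)$, so the closedness assertion is to be read for this closed extension, of which $\cals_{p+\frac{1}{2}}(\bbr^d)$ is a core---exactly the form in which it enters Assumption \ref{ass-quasi-semi-lin}.
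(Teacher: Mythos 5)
The paper offers no internal argument for this lemma at all: its proof is a one-line citation of \cite[Lemmas 2.11.5 and 3.2.1]{Bhar-thesis}. So your self-contained route cannot be compared with the paper's step by step, and must be judged on its own terms; two of your three parts hold up. Density is immediate, as you say. The adjoint part is also correct: writing $\la u,v \ra_p = \la \mathbf{H}^{2p} u, v \ra$ and using that $\partial_i$ is skew and $\mathbf{H}^{2p}$ symmetric for the dual pairing gives $\partial_i^* \zeta = -\mathbf{H}^{-2p} \partial_i \mathbf{H}^{2p} \zeta \in \cals(\bbr^d)$ for every $\zeta \in \cals(\bbr^d)$, hence the required $\| \cdot \|_p$-continuity. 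Two caveats: the paper defines only integer powers of $\mathbf{H}$ (Lemma \ref{lemma-Hermite}), so for non-integer $2p$ you must first define $\mathbf{H}^{2p}$ spectrally through the Hermite expansion; your coefficient-level alternative avoids this and is fully justified, since the weight ratios $(2|n| \pm 2 + d)^{2p}(2|n|+d)^{-2p}$ are bounded and the factors $\sqrt{n_i}$ are absorbed by the rapid decay of the coefficients of $\zeta$. Note also that $D(\partial_i^*)$ is unchanged if $\partial_i$ is replaced by its closure, so this part is immune to the issue discussed next.

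The closedness claim is where your proposal does not prove the statement as written, and what you present as a closing ``minor point'' is in fact the crux, and it is understated: the operator with domain exactly $\cals_{p+\frac{1}{2}}(\bbr^d)$ is \emph{not} closed for any $d \geq 1$ and any $p \in \bbr$, not only for $d \geq 2$, so the defect lies in the lemma's literal formulation rather than in your argument. To see the failure already for $d=1$, take $-\frac{3}{4} \leq p < -\frac{1}{4}$ and $\Phi = 1$, the constant function, with Hermite coefficients $c_{2k} = \la 1, h_{2k} \ra_{L^2}$ (odd ones vanish). Since $\partial 1 = 0$, these satisfy $\sqrt{2k+2}\, c_{2k+2} = \sqrt{2k+1}\, c_{2k}$, whence $c_{2k}^2 \sim C k^{-1/2}$; thus $1 \in \cals_p(\bbr)$ while $1 \notin \cals_{p+\frac{1}{2}}(\bbr)$. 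The truncations $\Psi_N = \sum_{k \leq N} c_{2k} h_{2k} \in \cals(\bbr) \subset \cals_{p+\frac{1}{2}}(\bbr)$ satisfy $\Psi_N \to 1$ in $\cals_p(\bbr)$, and the recurrence forces all but the boundary term of $\partial \Psi_N$ to cancel, leaving $\partial \Psi_N = -\sqrt{(2N+1)/2}\, c_{2N}\, h_{2N+1}$, whose $\cals_p$-norm is of order $N^{p+1/4} \to 0$. So $(1,0)$ is a limit of graph elements although $1$ lies outside the domain. Replacing $c_{2k}$ by $k^{-s}$ with $p + \frac{1}{2} < s \leq p + 1$ gives such examples for every $p$, and tensoring with $h_0$ in the remaining variables gives them for every $d$. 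Consequently, what you prove via Theorem \ref{thm-Sp-D-1} is the correct, repaired statement, and it is the only provable one: $\partial_i|_{\cals_{p+1/2}(\bbr^d)}$ is densely defined and closable, its closure is the group generator $-A_{p,i}$, $\cals_{p+\frac{1}{2}}(\bbr^d)$ is a core for it, and $\cals(\bbr^d) \subset D(\partial_i^*)$. One last correction: your remark that this is ``exactly the form'' demanded by Assumption \ref{ass-quasi-semi-lin} is too generous. It delivers conditions (2), (3) and (5) there, but condition (4) requires $G = \cals_{p+\frac{1}{2}}(\bbr^d)$ to \emph{equal} the intersection of the domains of the closed extensions, and the examples above, each of which is mapped into $\cals_p(\bbr^d)$ by every $\partial_i$ and is a graph limit of truncations, show that this intersection is strictly larger than $\cals_{p+\frac{1}{2}}(\bbr^d)$; so the strict inclusion is not harmless for the paper's quasi-semilinear example either.
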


\begin{proof}
This is a consequence of \cite[Lemma 2.11.5 and Lemma 3.2.1]{Bhar-thesis}.
\end{proof}

Let $i \in \{ 1,\ldots,d \}$ be arbitrary. We extend the \emph{multiplication operator} $M_i : \cals(\bbr^d) \to \cals(\bbr^d)$ given by
\begin{align*}
(M_i \varphi)(x) := x_i \varphi(x) \quad \text{for all $x \in \bbr^d$}
\end{align*}
to an operator $M_i : \cals'(\bbr^d) \to \cals'(\bbr^d)$ by duality as
\begin{align*}
\la M_i \Phi, \varphi \ra := \la \Phi, M_i \varphi \ra \quad \text{for all $\Phi \in \cals'(\bbr^d)$ and $\varphi \in \cals(\bbr^d)$.}
\end{align*}

\begin{lemma}\cite[Example 2.11.9]{Bhar-thesis}
For each $i=1,\ldots,d$ and each $p \in \bbr$ we have
\begin{align*}
M_i|_{\cals_{p+\frac{1}{2}}(\bbr^d)} \in L \big( \cals_{p+\frac{1}{2}}(\bbr^d),\cals_p(\bbr^d) \big).
\end{align*}
\end{lemma}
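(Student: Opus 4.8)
The plan is to work in the Hermite function basis and reduce the claim to a weighted $\ell^2$ estimate that follows from the three-term recurrence for multiplication by the coordinate $x_i$. Recall that the $d$-dimensional Hermite functions factor as $h_n = \prod_{j=1}^d h_{n_j}$ for $n \in \bbn_0^d$, and that the one-dimensional recurrence yields, for each $i = 1,\ldots,d$,
\begin{align*}
M_i h_n = \sqrt{\tfrac{n_i+1}{2}}\, h_{n+e_i} + \sqrt{\tfrac{n_i}{2}}\, h_{n-e_i}, \quad n \in \bbn_0^d,
\end{align*}
where $e_i \in \bbn_0^d$ denotes the $i$-th unit multi-index and the second summand is read as zero when $n_i = 0$. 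First I would fix $\varphi \in \cals(\bbr^d)$, write $c_n := \la h_n,\varphi \ra_{L^2}$, and use the recurrence together with the symmetry of $M_i$ on $\cals(\bbr^d)$ (multiplication by the real function $x_i$) to compute the Hermite coefficients of $M_i\varphi$ as
\begin{align*}
\la h_m, M_i \varphi \ra_{L^2} = \sqrt{\tfrac{m_i}{2}}\, c_{m-e_i} + \sqrt{\tfrac{m_i+1}{2}}\, c_{m+e_i}, \quad m \in \bbn_0^d.
\end{align*}

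Next, recalling from (\ref{norm-S-p}) that the induced norm satisfies $\| \psi \|_q^2 = \sum_m (2|m|+d)^{2q} |\la h_m,\psi \ra_{L^2}|^2$, I would insert the coefficient formula into $\|M_i\varphi\|_p^2$ and apply the elementary inequality $(a+b)^2 \le 2a^2 + 2b^2$ to obtain
\begin{align*}
\|M_i\varphi\|_p^2 \le \sum_m (2|m|+d)^{2p}\big( m_i\,|c_{m-e_i}|^2 + (m_i+1)\,|c_{m+e_i}|^2 \big).
\end{align*}
Reindexing the two resulting sums by $n = m - e_i$ and $n = m + e_i$ respectively turns the right-hand side into
\begin{align*}
\sum_n (2(|n|+1)+d)^{2p}(n_i+1)\,|c_n|^2 + \sum_n (2(|n|-1)+d)^{2p}\,n_i\,|c_n|^2,
\end{align*}
where in the second sum only indices with $n_i \ge 1$ contribute, so in particular the $|n| = 0$ term vanishes.

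The remaining task is to dominate each summand by a constant multiple of $(2|n|+d)^{2p+1}|c_n|^2$, which is precisely the weight of $\|\varphi\|_{p+\frac12}^2$. Here I would use that $n_i + 1 \le |n| + 1 \le 2|n| + d$ and $n_i \le |n| \le 2|n|+d$ (using $d \ge 1$), so the grading factors are absorbed into one power of $(2|n|+d)$; and that, for the contributing indices, the shifted weights satisfy $(2(|n|\pm 1)+d)/(2|n|+d) = 1 \pm 2/(2|n|+d)$, a ratio that lies in a fixed interval $[c_1,c_2]$ with $c_1,c_2 > 0$ depending only on $d$, whence $(2(|n|\pm 1)+d)^{2p} \le C(2|n|+d)^{2p}$ with $C = C(p,d)$ regardless of the sign of $p$. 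Combining these estimates gives $\|M_i\varphi\|_p^2 \le C\|\varphi\|_{p+\frac12}^2$ for all $\varphi \in \cals(\bbr^d)$. Since $\cals(\bbr^d)$ is dense in $\cals_{p+\frac12}(\bbr^d)$, the operator $M_i$ extends uniquely to a bounded operator $\cals_{p+\frac12}(\bbr^d) \to \cals_p(\bbr^d)$; finally, this extension coincides with the distributional multiplication operator, because both are continuous maps from $\cals_{p+\frac12}(\bbr^d)$ into $\cals'(\bbr^d)$ (using Lemma \ref{lemma-scale-pq}) and they agree on the dense subspace $\cals(\bbr^d)$.

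The only genuinely delicate point is the uniform control of the shifted weights together with the grading factors, that is, verifying that the ratio $(2(|n|\pm1)+d)^{2p}/(2|n|+d)^{2p}$ stays bounded above uniformly in $n$ for every real $p$ and that the boundary index $|n|=0$ causes no trouble; everything else is bookkeeping. This argument is entirely parallel to the proof for the differential operator $\partial_i$ underlying Lemma \ref{lemma-diff-continuous}, the only difference being the sign pattern in the three-term recurrence, which is immaterial for the modulus estimates above.
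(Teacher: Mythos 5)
The paper does not prove this lemma at all; it simply cites \cite[Example 2.11.9]{Bhar-thesis}, so your proposal is being compared against a citation rather than an argument. Your proof is correct and self-contained: the three-term recurrence $M_i h_n = \sqrt{(n_i+1)/2}\,h_{n+e_i} + \sqrt{n_i/2}\,h_{n-e_i}$, the coefficient computation via symmetry of $M_i$ on $\cals(\bbr^d)$, the reindexing, and the uniform control of the shifted weights $(2(|n|\pm 1)+d)^{2p}/(2|n|+d)^{2p}$ are exactly the points that need checking, and you handle each of them correctly --- in particular the observation that the base ratio lies in a fixed interval $[1/3,3]$ on the contributing indices (so that raising to the power $2p$ stays bounded for \emph{every} real $p$, positive or negative), the vanishing of the $|n|=0$ term in the downshifted sum, and the final density argument identifying the bounded extension with the distributional multiplication operator. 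This is the standard argument (essentially the same Hermite-basis computation carried out in the cited thesis, and the exact analogue of the proof of Lemma \ref{lemma-diff-continuous} for $\partial_i$), so nothing further is needed.
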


The \emph{Hermite operator} $\mathbf{H} : \cals'(\bbr^d) \to \cals'(\bbr^d)$ is defined as
\begin{align*}
\mathbf{H} := |x|^2 - \Delta := \sum_{i=1}^d (M_i^2 - \partial_i^2).
\end{align*}

\begin{lemma}\cite[Prop. 3.1]{Rajeev-Than-2003}\label{lemma-Hermite}
For each $p \in \bbr$ the Hermite operator
\begin{align*}
\mathbf{H}|_{\cals_{p+1}(\bbr^d)} \in L \big( \cals_{p+1}(\bbr^d),\cals_p(\bbr^d) \big) 
\end{align*}
is an isometric isomorphism.
\end{lemma}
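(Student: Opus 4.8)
The plan is to diagonalize $\mathbf{H}$ with respect to the Hermite basis and to read off both the isometry and the invertibility directly from the resulting multiplication of coefficients by eigenvalues. First I would recall the eigenvalue relation
\[
\mathbf{H} h_n = (2|n| + d)\, h_n, \quad n \in \bbn_0^d,
\]
which is the harmonic oscillator identity: in dimension one the Hermite functions satisfy $(M_1^2 - \partial_1^2) h_k = (2k+1) h_k$, and the $d$-dimensional Hermite functions are tensor products of the one-dimensional ones, so summing the coordinatewise contributions yields the eigenvalue $\sum_{i=1}^d (2 n_i + 1) = 2|n| + d$.

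Next I would encode the scale $(\cals_p(\bbr^d))_{p \in \bbr}$ through Hermite coefficients. For $\Phi \in \cals'(\bbr^d)$ set $c_n := \la \Phi, h_n \ra$, using that $h_n \in \cals(\bbr^d)$ and the dual pairing of Lemma \ref{lemma-duality-pairing}. By the very definition (\ref{norm-S-p}) of the inner product, one has $\Phi \in \cals_p(\bbr^d)$ if and only if $\sum_{k=0}^{\infty} \sum_{|n|=k} (2k+d)^{2p} |c_n|^2 < \infty$, in which case this sum equals $\| \Phi \|_p^2$; equivalently, the $(c_n)$ are the coordinates of $\Phi$ in the orthonormal basis $(h_n^p)_{n \in \bbn_0^d}$ up to the weights $(2k+d)^p$.

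The heart of the argument is that $\mathbf{H}$ acts on coefficients as multiplication by the eigenvalues. Since the operators $M_i$ and $\partial_i$ on $\cals'(\bbr^d)$ are defined by duality and $\mathbf{H}$ is symmetric on $\cals(\bbr^d)$ with respect to $\la \cdot, \cdot \ra_{L^2}$, I would compute
\[
\la \mathbf{H}\Phi, h_n \ra = \la \Phi, \mathbf{H} h_n \ra = (2|n|+d)\, c_n,
\]
so that $\mathbf{H}\Phi$ has Hermite coefficients $\big( (2k+d) c_n \big)$. Combining this with the previous step gives, for every $\Phi \in \cals_{p+1}(\bbr^d)$,
\[
\| \mathbf{H}\Phi \|_p^2 = \sum_{k=0}^{\infty} \sum_{|n|=k} (2k+d)^{2p} (2k+d)^2 |c_n|^2 = \sum_{k=0}^{\infty} \sum_{|n|=k} (2k+d)^{2(p+1)} |c_n|^2 = \| \Phi \|_{p+1}^2,
\]
which is exactly the isometry property and in particular shows that $\mathbf{H}$ maps $\cals_{p+1}(\bbr^d)$ into $\cals_p(\bbr^d)$. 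For surjectivity I would run the computation backwards: given $\Psi \in \cals_p(\bbr^d)$ with coefficients $(b_n)$, define $\Phi$ via the coefficients $b_n / (2|n|+d)$; since $2|n|+d \geq d > 0$, the weights only improve, so $\| \Phi \|_{p+1} = \| \Psi \|_p < \infty$, whence $\Phi \in \cals_{p+1}(\bbr^d)$ with $\mathbf{H}\Phi = \Psi$.

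I expect the main obstacle to be the rigorous justification of the coefficient computation $\la \mathbf{H}\Phi, h_n \ra = (2|n|+d)\, c_n$: one must verify that the duality-defined operators $M_i, \partial_i : \cals'(\bbr^d) \to \cals'(\bbr^d)$ assemble into an operator $\mathbf{H}$ that is genuinely adjoint to its restriction $\mathbf{H} : \cals(\bbr^d) \to \cals(\bbr^d)$ under the $L^2$-pairing, and that the Hermite expansion $\Phi = \sum_n c_n h_n$ converges in a topology allowing $\mathbf{H}$ to be applied termwise. Once this bookkeeping is settled, the remainder reduces to elementary manipulation of the weights $(2k+d)$, and the surjectivity and isometry claims follow immediately.
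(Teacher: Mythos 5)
Your proof is correct, and it is worth noting that the paper does not actually prove this lemma at all: it simply cites \cite[Prop. 3.1]{Rajeev-Than-2003}, so your argument supplies what the paper delegates to the literature, and your diagonalization route is the standard proof behind the cited result. The eigenvalue relation $\mathbf{H} h_n = (2|n|+d)\,h_n$, the coefficient description of the spaces $\cals_p(\bbr^d)$, and the weight shift $(2k+d)^{2p}(2k+d)^2 = (2k+d)^{2(p+1)}$ indeed give isometry, injectivity, and surjectivity in one stroke. Two bookkeeping points deserve to be made explicit. First, the symmetry $\la \mathbf{H}\Phi,\varphi \ra = \la \Phi,\mathbf{H}\varphi \ra$ for $\Phi \in \cals'(\bbr^d)$ and $\varphi \in \cals(\bbr^d)$ follows immediately from the paper's duality conventions: each $\partial_i$ carries a minus sign, so $\partial_i^2$ is self-dual under the pairing, as is $M_i^2$; consequently your key identity $\la \mathbf{H}\Phi, h_n \ra = (2|n|+d)\,\la \Phi, h_n \ra$ requires no termwise application of $\mathbf{H}$ to a Hermite expansion, and the ``main obstacle'' you flag at the end is not actually present in your own argument. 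Second, you should state explicitly that the abstract completion $\cals_{p+1}(\bbr^d)$ is identified with the set of $\Phi \in \cals'(\bbr^d)$ whose coefficients $c_n = \la \Phi, h_n \ra$ satisfy $\sum_{n} (2|n|+d)^{2(p+1)} |c_n|^2 < \infty$, and similarly for $\cals_p(\bbr^d)$; this identification, which the paper takes for granted when it writes $\cals(\bbr^d) \subset \cals_p(\bbr^d) \subset \cals'(\bbr^d)$, is precisely what lets you conclude that $\mathbf{H}\Phi \in \cals_p(\bbr^d)$ from finiteness of the weighted sum, and, in the surjectivity step, that the sequence $b_n/(2|n|+d)$ defines an element of $\cals_{p+1}(\bbr^d)$ that $\mathbf{H}$ maps onto the given $\Psi$. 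With these two standard points in place, your argument is complete.
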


Let $x \in \bbr^d$ be arbitrary. We extend the \emph{translation operator} $\tau_x : \cals(\bbr^d) \to \cals(\bbr^d)$ given by
\begin{align*}
(\tau_x \varphi)(y) := \varphi(y-x) \quad \text{for all $y \in \bbr^d$}
\end{align*}
to an operator $\tau_x : \cals'(\bbr^d) \to \cals'(\bbr^d)$ by duality as
\begin{align*}
\la \tau_x \Phi, \varphi \ra := \la \Phi, \tau_{-x} \varphi \ra \quad \text{for all $\Phi \in \cals'(\bbr^d)$ and $\varphi \in \cals(\bbr^d)$.}
\end{align*}

\begin{lemma}\label{lemma-tau-properties}
The following statements are true:
\begin{enumerate}
\item For each $p \in \bbr$ there exists a polynomial $P_k$ of degree $k = 2([|p|] + 1)$ such that for all $\Phi \in \cals_p(\bbr^d)$ and $x \in \bbr^d$ we have
\begin{align*}
\| \tau_x \Phi \|_p \leq P_k(|x|) \| \Phi \|_p.
\end{align*}
In particular, we have $\tau_x \Phi \in \cals_p(\bbr^d)$.

\item For each $p \in \bbr$ and every $\Phi \in \cals_p(\bbr^d)$ the map $\bbr^d \to \cals_p(\bbr^d)$, $x \mapsto \tau_x \Phi$ is continuous.

\item For each $x \in \bbr^d$ and each $i=1,\ldots,d$ we have $\tau_x \partial_i = \partial_i \tau_x$.
\end{enumerate}
\end{lemma}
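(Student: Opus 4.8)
The plan is to prove the three assertions in the order (3), (1), (2), since the bound in (1) relies on the commutation relation in (3), and the continuity in (2) relies on (1). Statement (3) is a routine duality computation: for $\Phi\in\cals'(\bbr^d)$ and $\varphi\in\cals(\bbr^d)$ I would write
\[
\la\tau_x\partial_i\Phi,\varphi\ra = \la\partial_i\Phi,\tau_{-x}\varphi\ra = -\la\Phi,\partial_i\tau_{-x}\varphi\ra,
\]
use the elementary identity $\partial_i\tau_{-x}\varphi = \tau_{-x}\partial_i\varphi$ valid for Schwartz functions, and read the chain backwards to recover $\la\partial_i\tau_x\Phi,\varphi\ra$. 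As $\varphi$ is arbitrary, this yields $\tau_x\partial_i = \partial_i\tau_x$ on $\cals'(\bbr^d)$.

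For (1) the key is a conjugation identity for the Hermite operator. On $\cals(\bbr^d)$ a direct computation gives $M_i\tau_x = \tau_x(M_i + x_i)$, hence $\tau_{-x}M_i\tau_x = M_i + x_i$, while (3) gives $\tau_{-x}\partial_i\tau_x = \partial_i$. Substituting into $\mathbf{H} = \sum_i(M_i^2 - \partial_i^2)$ yields
\[
\tau_{-x}\mathbf{H}\tau_x = \mathbf{H} + 2\sum_{i=1}^d x_i M_i + |x|^2 =: A_x,
\]
an identity I would first verify on $\cals(\bbr^d)$ and then extend to the scale spaces by continuity. Now let $p$ be a nonnegative integer. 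Since $\mathbf{H}^p:\cals_p(\bbr^d)\to\cals_0(\bbr^d)$ is an isometric isomorphism (Lemma \ref{lemma-Hermite}) and $\tau_x$ is unitary on $\cals_0(\bbr^d) = L^2(\bbr^d)$, I obtain $\|\tau_x\Phi\|_p = \|\mathbf{H}^p\tau_x\Phi\|_0 = \|\tau_x A_x^p\Phi\|_0 = \|A_x^p\Phi\|_0$. Expanding $A_x^p$ into the finite sum of ordered products of the operators $\mathbf{H}$, $2x_iM_i$ and $|x|^2$, I bound each product as a map $\cals_p(\bbr^d)\to\cals_0(\bbr^d)$ using that $\mathbf{H}$ drops one degree in the scale, each $M_i$ drops half a degree (boundedness of the multiplication operators $M_i:\cals_{q+\frac12}\to\cals_q$), and the scalar factors drop none; since each $M_i$ contributes a factor $\le 2|x|$ and each scalar factor a factor $|x|^2$, the total power of $|x|$ is at most $2p$. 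Summing gives $\|\tau_x\Phi\|_p \le P(|x|)\|\Phi\|_p$ with $\deg P \le 2p$.

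To reach arbitrary real $p$ I would first obtain the negative integer case by duality: under the pairing of Lemma \ref{lemma-duality-pairing} the transpose of $\tau_x$ acting on $\cals_{-p}(\bbr^d)$ is $\tau_{-x}$ acting on $\cals_p(\bbr^d)$, so $\|\tau_x\|_{\cals_{-p}\to\cals_{-p}} = \|\tau_{-x}\|_{\cals_p\to\cals_p}$, which reduces to the nonnegative integer bound (note $|-x| = |x|$). For general real $p$, set $n := [|p|]+1$, so that $p\in(-n,n)$, and interpolate the Hilbert scale between the endpoints $\cals_{-n}(\bbr^d)$ and $\cals_{n}(\bbr^d)$, whose norms are powers of $\mathbf{H}$ applied in $L^2$; complex interpolation gives $[\cals_{-n},\cals_n]_\theta = \cals_p$ for $\theta = (p+n)/(2n)$ together with $\|\tau_x\|_{\cals_p\to\cals_p} \le \|\tau_x\|_{\cals_{-n}}^{1-\theta}\|\tau_x\|_{\cals_n}^{\theta}$. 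By the weighted arithmetic-geometric mean inequality the right-hand side is dominated by the sum of the two endpoint polynomial bounds, a polynomial of degree at most $2n = 2([|p|]+1)$; this is the claimed $P_k$, and finiteness of the bound gives $\tau_x\Phi\in\cals_p(\bbr^d)$. I expect this last reduction, producing the precise degree $2([|p|]+1)$ for non-integer $p$ rather than merely a qualitative polynomial, to be the main obstacle, since it is exactly the step that forces the interpolation argument.

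Finally, for (2) I would reduce to continuity at the origin via $\tau_x\Phi - \tau_{x_0}\Phi = (\tau_{x-x_0}-\Id)\tau_{x_0}\Phi$ and the bound from (1), so that it suffices to show $\|\tau_h\Psi-\Psi\|_p\to0$ as $h\to0$ for every $\Psi\in\cals_p(\bbr^d)$. For $\Psi\in\cals(\bbr^d)$ the fundamental theorem of calculus gives the identity $\tau_h\Psi - \Psi = -\sum_i h_i\int_0^1\tau_{sh}\partial_i\Psi\,ds$, and applying Minkowski's integral inequality to the weighted $\ell^2$-norm (\ref{norm-S-p}) together with the bound from (1) yields
\[
\|\tau_h\Psi-\Psi\|_p \le |h|\,\Big(\max_{s\in[0,1]}P_k(s|h|)\Big)\sum_{i=1}^d\|\partial_i\Psi\|_p \xrightarrow[h\to0]{} 0,
\]
since $\partial_i\Psi\in\cals(\bbr^d)\subset\cals_p(\bbr^d)$. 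Because $\cals(\bbr^d)$ is dense in $\cals_p(\bbr^d)$ and $\sup_{|h|\le1}\|\tau_h\|_{\cals_p\to\cals_p}<\infty$ by (1), a standard $3\epsilon$-argument then extends the convergence to all $\Psi\in\cals_p(\bbr^d)$, completing the proof.
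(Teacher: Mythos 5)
Your proof is essentially correct, but it takes a completely different route from the paper, because the paper does not prove this lemma at all: its ``proof'' consists of three citations, namely \cite[Thm. 2.1]{Rajeev-Than-2003} for statement (1), the proof of \cite[Prop. 3.1]{Rajeev-Than-2008} for statement (2), and \cite[Lemma 2.11.7.iii]{Bhar-thesis} for statement (3). What you gain is a self-contained argument inside the framework the paper already sets up: the duality computation for (3) is exactly right; the conjugation identity $\tau_{-x}\mathbf{H}\tau_x=\mathbf{H}+2\sum_{i}x_iM_i+|x|^2$, combined with the isometry $\|\cdot\|_p=\|\mathbf{H}^p\cdot\|_0$ from Lemma \ref{lemma-Hermite}, unitarity of $\tau_x$ on $L^2(\bbr^d)=\cals_0(\bbr^d)$, and the mapping property $M_i\in L(\cals_{q+\frac{1}{2}}(\bbr^d),\cals_q(\bbr^d))$, gives the polynomial bound of degree at most $2p$ for nonnegative integer $p$; the transpose argument handles negative integers (the pairing of Lemma \ref{lemma-duality-pairing} is an isometric duality, so transposed operators have equal norms); and interpolation of the Hilbert scale generated by $\mathbf{H}$, which is diagonal in the Hermite basis, legitimately yields the bound of degree $2([|p|]+1)$ for fractional $p$. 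The paper's approach buys brevity and relies on the original sources; yours buys transparency at the cost of importing one nontrivial external theorem (complex interpolation of Hilbert scales), which you correctly identify as the crux for non-integer $p$.

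Three technical points should be tightened if this is written out. First, in the integer case the chain $\|\tau_x\Phi\|_p=\|\mathbf{H}^p\tau_x\Phi\|_0=\|A_x^p\Phi\|_0$ as written presupposes $\tau_x\Phi\in\cals_p(\bbr^d)$, which is part of the conclusion; either run the computation for $\Phi\in\cals(\bbr^d)$ first and extend by density, or work directly in $\cals'(\bbr^d)$, where $\tau_x$, $M_i$, $\partial_i$, $\mathbf{H}$ are all defined and the commutation relations hold, and read off $\tau_x\Phi\in\cals_p(\bbr^d)$ from $A_x^p\Phi\in\cals_0(\bbr^d)$ together with Lemma \ref{lemma-Hermite}. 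Second, the Bochner integral $\int_0^1\tau_{sh}\partial_i\Psi\,ds$ in your proof of (2) requires strong measurability of the integrand, which is not yet known to be continuous at that stage; weak measurability (pair against test functions and use dominated convergence) plus separability of $\cals_p(\bbr^d)$ and Pettis' theorem repairs this, or one can bypass the integral identity entirely by noting that translation is continuous on $\cals(\bbr^d)$ in the Schwartz topology and the embedding $\cals(\bbr^d)\hookrightarrow\cals_p(\bbr^d)$ is continuous, after which your density and uniform-boundedness argument finishes the proof unchanged. Third, the identity $[\cals_{-n}(\bbr^d),\cals_n(\bbr^d)]_{\theta}=\cals_p(\bbr^d)$ with equal norms should be explicitly attributed to the standard interpolation theorem for Hilbert scales of a positive self-adjoint operator; it is a fair citation, but it is the one step of your argument that is not elementary.
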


\begin{proof}
(1) See \cite[Thm. 2.1]{Rajeev-Than-2003}. \\
\noindent (2) The continuity follows from the proof of \cite[Prop. 3.1]{Rajeev-Than-2008}. \\
\noindent (3) See \cite[Lemma 2.11.7.iii]{Bhar-thesis}.
\end{proof}

\begin{lemma}\label{lemma-tau-group}
For each $p \in \bbr$ the family $\tau = (\tau_x)_{x \in \bbr^d}$ is a multi-parameter $C_0$-group on $\cals_p(\bbr^d)$.
\end{lemma}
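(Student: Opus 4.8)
The plan is to verify the three defining properties of a multi-parameter $C_0$-group directly for $\tau = (\tau_x)_{x \in \bbr^d}$ acting on $\cals_p(\bbr^d)$, drawing on the properties already collected in Lemma \ref{lemma-tau-properties}. First I would check that each $\tau_x$ is a well-defined continuous linear operator on $\cals_p(\bbr^d)$: linearity is immediate from the duality definition of $\tau_x$, and boundedness is exactly the content of Lemma \ref{lemma-tau-properties}(1), which gives the estimate $\| \tau_x \Phi \|_p \leq P_k(|x|) \| \Phi \|_p$ with $\tau_x \Phi \in \cals_p(\bbr^d)$. Hence $\tau_x \in L(\cals_p(\bbr^d))$ for every $x \in \bbr^d$.

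Next I would verify the algebraic group axioms. The identity $\tau_0 = \Id$ follows since $(\tau_0 \varphi)(y) = \varphi(y)$ on the Schwartz space and then extends to all of $\cals_p(\bbr^d)$ by duality and density of $\cals(\bbr^d)$. For the homomorphism property $\tau_{x+s} = \tau_x \tau_s$ for all $x,s \in \bbr^d$, I would first establish it on $\cals(\bbr^d)$ by the elementary computation $(\tau_x \tau_s \varphi)(y) = (\tau_s \varphi)(y-x) = \varphi(y-x-s) = (\tau_{x+s}\varphi)(y)$, then transfer it to $\cals'(\bbr^d)$ via the duality definition, and finally restrict to $\cals_p(\bbr^d)$. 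Since both sides are bounded operators on $\cals_p(\bbr^d)$ agreeing on the dense subspace $\cals(\bbr^d)$, they coincide.

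Finally, the strong continuity requirement is that for each fixed $\Phi \in \cals_p(\bbr^d)$ the orbit map $\bbr^d \to \cals_p(\bbr^d)$, $x \mapsto \tau_x \Phi$ is continuous; this is precisely Lemma \ref{lemma-tau-properties}(2). Thus all three defining conditions hold. I expect no serious obstacle here, as every nontrivial analytic input (the polynomial bound on the operator norm and the continuity of the orbit maps) has already been isolated in Lemma \ref{lemma-tau-properties}; the only mild care needed is the routine density-and-boundedness argument used to lift the group law from $\cals(\bbr^d)$ to $\cals_p(\bbr^d)$. The main conceptual point to state explicitly is that the $\| \cdot \|_p$-continuity of each $\tau_x$ together with density of $\cals(\bbr^d)$ legitimizes all the extensions, so that the group identities verified on Schwartz functions are valid on the whole Hermite Sobolev space.
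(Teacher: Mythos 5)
Your proposal is correct and follows essentially the same route as the paper, which simply declares the lemma an immediate consequence of Lemma \ref{lemma-tau-properties}: the operator bound and orbit-map continuity come from that lemma, and the algebraic group axioms are the routine density-and-duality verification you describe. You have merely spelled out the details that the paper leaves implicit.
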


\begin{proof}
This is an immediate consequence of Lemma \ref{lemma-tau-properties}.
\end{proof}

\begin{lemma}\label{lemma-Taylor}
Let $p \in \bbr$, $\Phi \in \cals_{p+\frac{1}{2}}(\bbr^d)$ and $i=1,\ldots,d$ be arbitrary. Then there exists a continuous mapping $R : \bbr \times \bbr \to \cals_p(\bbr^d)$ with $R(x,0) = 0$ for all $x \in \bbr$ such that
\begin{align*}
\tau_{x+h}^i \Phi = \tau_x^i \Phi - h \partial_i \tau_x^i \Phi - h R(x,h), \quad x,h \in \bbr
\end{align*}
in the space $\cals_p(\bbr^d)$, where we use the notation $\tau_x^i := \tau_{x e_i}$ for each $x \in \bbr$.
\end{lemma}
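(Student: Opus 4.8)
The plan is to derive the stated expansion from a first-order integral (Taylor) identity for the orbit map valued in $\cals_p(\bbr^d)$, and to read off the remainder $R$ from it. Writing $u(t) := \tau_t^i \partial_i \Phi \in \cals_p(\bbr^d)$ — which is continuous in $t$ by Lemmas \ref{lemma-diff-continuous} and \ref{lemma-tau-properties} — I would first establish the identity
\begin{equation*}
\tau_{x+h}^i \Phi - \tau_x^i \Phi = -\int_0^h u(x+s)\, ds \quad \text{in } \cals_p(\bbr^d), \qquad x,h \in \bbr, \tag{$*$}
\end{equation*}
where the right-hand side is a Bochner integral of a continuous $\cals_p$-valued function. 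Granting $(*)$, I would define
\begin{equation*}
R(x,h) := \int_0^1 \big( u(x+\theta h) - u(x) \big)\, d\theta ,
\end{equation*}
so that $R(x,0) = 0$ at once, and the substitution $s = \theta h$ gives $h R(x,h) = \int_0^h (u(x+s) - u(x))\, ds = (\tau_x^i\Phi - \tau_{x+h}^i\Phi) - h\,u(x)$ by $(*)$. Since $u(x) = \tau_x^i \partial_i\Phi = \partial_i \tau_x^i \Phi$ by Lemma \ref{lemma-tau-properties}(3), rearranging yields precisely $\tau_{x+h}^i\Phi = \tau_x^i\Phi - h\,\partial_i\tau_x^i\Phi - h R(x,h)$. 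Joint continuity of $R$ on $\bbr \times \bbr$ then follows from the representation $R(x,h) = \int_0^1 u(x+\theta h)\,d\theta - u(x)$, using continuity of $u$ and dominated convergence, the integrand being bounded on compact sets of $(x,h,\theta)$ via the polynomial estimate of Lemma \ref{lemma-tau-properties}(1).

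The hard part is $(*)$. The difficulty is that $\Phi$ carries only half a derivative of extra regularity, so I cannot invoke differentiability of the orbit map in $\cals_p$ — indeed, that differentiability is exactly what Theorem \ref{thm-Sp-D-1} establishes using the present lemma, so the argument must remain elementary and avoid any reference to the generator. My plan is to prove $(*)$ first with a Schwartz function $\varphi \in \cals(\bbr^d)$ in place of $\Phi$, by testing against the Schwartz space. For $\psi \in \cals(\bbr^d)$, using Fubini (justified by the rapid decay of $\psi$ and $\partial_i\varphi$) together with the elementary one-dimensional identity $\int_0^h \partial_i\varphi(w - s e_i)\, ds = \varphi(w) - \varphi(w - h e_i)$, I would compute
\begin{equation*}
\Big\la \psi, \int_0^h \tau_{x+s}^i \partial_i\varphi\, ds \Big\ra = \int_0^h \la \psi, \tau_{x+s}^i \partial_i\varphi \ra\, ds = \la \psi, \tau_x^i\varphi - \tau_{x+h}^i\varphi \ra ,
\end{equation*}
and since $\cals(\bbr^d)$ is dense in $\cals_{-p}(\bbr^d)$, this gives $(*)$ for $\varphi$. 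I would then extend $(*)$ to arbitrary $\Phi \in \cals_{p+\frac{1}{2}}(\bbr^d)$ by approximation: choosing Schwartz functions $\varphi_n \to \Phi$ in $\cals_{p+\frac{1}{2}}(\bbr^d)$, the left-hand side converges in $\cals_p(\bbr^d)$ by boundedness of $\tau_x^i$ and $\tau_{x+h}^i$ on $\cals_{p+\frac12}(\bbr^d)$ (Lemma \ref{lemma-tau-properties}(1)) and the embedding of Lemma \ref{lemma-scale-pq}, while the right-hand side converges because $\partial_i\varphi_n \to \partial_i\Phi$ in $\cals_p(\bbr^d)$ by Lemma \ref{lemma-diff-continuous} and $\sup_{s \in [0,h]}\|\tau_{x+s}^i(\partial_i\varphi_n - \partial_i\Phi)\|_p \le \big(\sup_{s\in[0,h]} P_k(|x+s|)\big)\,\|\partial_i\varphi_n - \partial_i\Phi\|_p \to 0$.

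Finally I would assemble the pieces: identity $(*)$, the definition of $R$, and the commutation $\partial_i\tau_x^i = \tau_x^i\partial_i$ together give the stated equation for all $x,h \in \bbr$ (the case $h=0$ being trivial since then both sides reduce to $\tau_x^i\Phi$), while the reparametrized formula delivers $R(x,0) = 0$ and the joint continuity of $R$. The only steps demanding genuine care are the Fubini interchange in the Schwartz case and the uniform-in-$s$ bound used in the density argument, both of which are routine consequences of the polynomial operator-norm estimate for $\tau_x^i$ from Lemma \ref{lemma-tau-properties}(1).
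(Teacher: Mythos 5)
Your proof is correct, and it ultimately runs on the same machinery as the paper's: duality against Schwartz test functions, classical one-variable calculus applied to those test functions, the polynomial operator bounds and continuity statements of Lemma \ref{lemma-tau-properties}, and in fact the very same remainder, since your $R(x,h) = \int_0^1 \big( \tau^i_{x+\theta h} \partial_i \Phi - \tau^i_x \partial_i \Phi \big) d\theta$ coincides with the paper's $\int_0^1 \big( \partial_i \tau^i_{x+th} \Phi - \partial_i \tau^i_x \Phi \big) dt$ via the commutation $\tau^i_y \partial_i = \partial_i \tau^i_y$. The organization differs in one genuine respect: you first establish the fundamental-theorem-of-calculus identity $(*)$ for the orbit map --- proved for Schwartz functions by testing plus Fubini, and then extended to general $\Phi \in \cals_{p+\frac{1}{2}}(\bbr^d)$ by a density argument --- and afterwards recover the Taylor form by the substitution $s = \theta h$; the paper instead applies Taylor's formula with integral remainder to the test function $\varphi$, reads the result as an identity in $\cals_{-p}(\bbr^d)$, and pairs it with $\Phi$, which yields the statement for arbitrary $\Phi$ in a single step with no approximation. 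Your density step is sound (the uniform-in-$s$ bound $\sup_{s} \| \tau^i_{x+s}(\partial_i \varphi_n - \partial_i \Phi) \|_p \leq \sup_s P_k(|x+s|) \, \| \partial_i \varphi_n - \partial_i \Phi \|_p$ is exactly what is needed), but it is avoidable: in your computation of $\la \psi, \int_0^h \tau^i_{x+s} \partial_i \Phi \, ds \ra$ you can keep $\Phi$ general, move the pairing through the Bochner integral, and use duality termwise, $\la \psi, \tau^i_{x+s} \partial_i \Phi \ra = - \la \Phi, \partial_i \tau^i_{-(x+s)} \psi \ra$, so that the one-dimensional identity applied to $\psi$ gives $(*)$ for arbitrary $\Phi$ directly --- this is in essence what the paper's dualization does. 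What your route buys is conceptual clarity: identity $(*)$ says the orbit map is an antiderivative of $-u$, which is precisely the $C^1$-property that Theorem \ref{thm-Sp-D-1} later extracts from this lemma; what the paper's route buys is brevity, dispensing with both the approximation argument and the reparametrization.
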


\begin{proof}
let $\varphi \in \cals(\bbr^d)$ be arbitrary. By Taylor's formula, for all $x_0 \in \bbr^d$ we obtain
\begin{align*}
\varphi(x_0+(x+h) e_i) &= \varphi(x_0+x e_i) + h \partial_i \varphi(x_0+x e_i)
\\ &\quad + h \int_0^1 \big( \partial_i \varphi(x_0+(x+t h) e_i) - \partial_i \varphi(x_0+x e_i) \big) dt, \quad x,h \in \bbr.
\end{align*}
Therefore, we obtain the equation
\begin{align*}
\tau_{-(x+h)}^i \varphi = \tau_{-x}^i \varphi + h \tau_{-x}^i \partial_i \varphi + h \int_0^1 \big( \tau_{-(x+th)}^i \partial_i \varphi - \tau_{-x}^i \partial_i \varphi \big) dt, \quad x,h \in \bbr
\end{align*}
in the space $\cals_{-p}(\bbr^d)$, where the integral is an $\cals_{-p}(\bbr^d)$-valued Bochner integral, which is well-defined by virtue of Lemma \ref{lemma-tau-properties}. Now, applying $\la \Phi,\cdot \ra$ we obtain
\begin{align*}
\la \tau_{x+h}^i \Phi,\varphi \ra &= \la \tau_x^i \Phi,\varphi \ra - \la h \partial_i \tau_x^i \Phi, \varphi \ra
\\ &\quad - \bigg\la h \int_0^1 \big( \partial_i \tau_{x+th}^i \Phi - \partial_i \tau_x^i \Phi \big) dt, \varphi \bigg\ra, \quad x,h \in \bbr,
\end{align*}
where the integral is a $\cals_p(\bbr^d)$-valued Bochner integral, which is well-defined by virtue of Lemma \ref{lemma-tau-properties}. The mapping $R : \bbr \times \bbr \to \cals_p(\bbr^d)$ defined as
\begin{align*}
R(x,h) := \int_0^1 \big( \partial_i \tau_{x+th}^i \Phi - \partial_i \tau_x^i \Phi \big) dt, \quad x,h \in \bbr
\end{align*}
is continuous by Lemma \ref{lemma-tau-properties} and Lebesgue's dominated convergence theorem. Furthermore, we have $R(x,0) = 0$ for all $x \in \bbr$. Since $\varphi \in \cals(\bbr^d)$ was arbitrary, the claimed identity follows.
\end{proof}

For each $x \in \bbr^d$ we define the \emph{Dirac distribution} $\delta_x \in \cals'(\bbr^d)$ as
\begin{align*}
\la \delta_x, \varphi \ra := \varphi(x) \quad \text{for all $\varphi \in \cals(\bbr^d)$.}
\end{align*}

\begin{lemma}\label{lemma-delta-distr}
The following statements are true:
\begin{enumerate}
\item We have $\tau_x \delta_y = \delta_{x+y}$ for all $x,y \in \bbr^d$.

\item In particular, we have $\tau_x \delta_0 = \delta_x$ for all $x \in \bbr^d$.

\item For each $p < -\frac{d}{4}$ and every $x \in \bbr^d$ we have $\delta_x \in \cals_p(\bbr^d)$.

\item For each $p < -\frac{d}{4}$ the function $\bbr^d \to \cals_p(\bbr^d)$, $x \mapsto \delta_x$ is continuous and we have $\lim_{\| x \| \to \infty} \| \delta_x \|_p = 0$.
\end{enumerate}
\end{lemma}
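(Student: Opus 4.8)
The plan is to establish the four assertions in order, since the later ones rest on the earlier ones. For the first statement I would argue directly from the defining duality of the translation operator on $\cals'(\bbr^d)$: for arbitrary $x,y \in \bbr^d$ and $\varphi \in \cals(\bbr^d)$ we have
\begin{align*}
\la \tau_x \delta_y, \varphi \ra = \la \delta_y, \tau_{-x}\varphi \ra = (\tau_{-x}\varphi)(y) = \varphi(y+x) = \la \delta_{x+y},\varphi \ra,
\end{align*}
and since this holds for every $\varphi \in \cals(\bbr^d)$ it gives $\tau_x \delta_y = \delta_{x+y}$ in $\cals'(\bbr^d)$. The second statement is the special case $y=0$.

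For the third statement I would simply observe that $\delta_x$ is the unit point mass at $x$, hence a finite signed measure on $(\bbr^d,\calb(\bbr^d))$, so that Lemma~\ref{lemma-distr-int-ex} immediately yields $\delta_x \in \cals_p(\bbr^d)$ for every $p < -\tfrac{d}{4}$. The continuity claim in the fourth statement then follows by combining the previous items: by the second statement $\delta_x = \tau_x \delta_0$, by the third statement $\delta_0 \in \cals_p(\bbr^d)$, and hence by the continuity of orbit maps of the translation group (Lemma~\ref{lemma-tau-properties}(2)) the map $x \mapsto \tau_x \delta_0 = \delta_x$ is continuous from $\bbr^d$ into $\cals_p(\bbr^d)$.

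The genuinely delicate point, which I expect to be the main obstacle, is the decay $\lim_{\|x\|\to\infty}\|\delta_x\|_p = 0$; note that the norm $\| \cdot \|_p$ is not translation invariant, so the polynomial bound of Lemma~\ref{lemma-tau-properties}(1) is of no use here (it even grows in $|x|$). My plan is to use the duality of the pair $(\cals_{-p}(\bbr^d),\cals_p(\bbr^d))$ from Lemma~\ref{lemma-duality-pairing}, which identifies $\cals_p(\bbr^d)$ isometrically with the dual of $\cals_{-p}(\bbr^d)$ and gives
\begin{align*}
\|\delta_x\|_p = \sup_{\varphi \in \cals_{-p}(\bbr^d),\, \|\varphi\|_{-p}\le 1} |\la \delta_x,\varphi \ra| = \sup_{\|\varphi\|_{-p}\le 1} |\varphi(x)|,
\end{align*}
where the identity $\la \delta_x,\varphi \ra = \varphi(x)$ extends from $\varphi \in \cals(\bbr^d)$ to $\varphi \in \cals_{-p}(\bbr^d)$ by density together with the boundedness of point evaluation on $\cals_{-p}(\bbr^d)$, which holds because $-p > \tfrac{d}{4}$. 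It then suffices to show that the image of the closed unit ball $B$ of $\cals_{-p}(\bbr^d)$ is uniformly vanishing at infinity inside $C_0(\bbr^d)$. For this I would pick $q$ with $\tfrac{d}{4} < q < -p$ and factor the inclusion as $\cals_{-p}(\bbr^d) \hookrightarrow \cals_q(\bbr^d) \hookrightarrow C_0(\bbr^d)$, where the second map is continuous by the Sobolev embedding theorem for Hermite Sobolev spaces (Theorem~\ref{thm-Ck-versions}), and the first map is compact because in the orthonormal basis $(h_n^{-p})$ it acts diagonally with singular values $(2|n|+d)^{\,q-(-p)} \to 0$ as $|n| \to \infty$, as read off from (\ref{norm-S-p}) and the description of the bases $(h_n^p)$. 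Consequently the composite inclusion $\cals_{-p}(\bbr^d) \hookrightarrow C_0(\bbr^d)$ is compact, so the image of $B$ is relatively compact in $C_0(\bbr^d)$; and every relatively compact subset of $C_0(\bbr^d)$ is uniformly vanishing at infinity (cover it by finitely many small balls and exploit that each center lies in $C_0(\bbr^d)$). This gives $\sup_{\varphi \in B}|\varphi(x)| \to 0$, that is $\|\delta_x\|_p \to 0$.

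The subtlety to watch is matching the sharp threshold $p<-\tfrac{d}{4}$: the factorization above requires a $C_0$-embedding already at exponents $q$ just above $\tfrac{d}{4}$, so I would invoke the $k=0$ case of Theorem~\ref{thm-Ck-versions}. Should that borderline embedding be unavailable, an alternative and threshold-sharp route is to insert the explicit norm formula $\|\delta_x\|_p^2 = \sum_{k=0}^{\infty}\sum_{|n|=k}(2k+d)^{2p}\,h_n(x)^2$ coming from (\ref{norm-S-p}), recognize $\sum_{k}\sum_{|n|=k} h_n(x)^2 e^{-(2k+d)t}$ as the diagonal of the Hermite heat semigroup via Mehler's formula, use the subordination identity $(2k+d)^{2p} = \Gamma(-2p)^{-1}\int_0^\infty t^{-2p-1}e^{-(2k+d)t}\,dt$, and pass to the limit by dominated convergence; there the condition $p<-\tfrac{d}{4}$ enters precisely as the integrability of the dominating heat-kernel integral near $t=0$.
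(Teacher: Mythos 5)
Your treatment of statements (1) and (2) by direct duality computation is correct, and your continuity argument for (4) --- writing $\delta_x = \tau_x \delta_0$ and invoking Lemma \ref{lemma-tau-properties} --- is exactly the role that lemma plays in the paper's own (citation-based) proof. The genuine gap lies in where you get statement (3) and the decay in (4): both of your primary routes are circular within this paper's development. For (3), you invoke Lemma \ref{lemma-distr-int-ex}; but the paper's proof of that lemma begins by using Lemma \ref{lemma-delta-distr} itself (continuity and boundedness of $x \mapsto \delta_x$ are needed to make sense of the Bochner integral $\int_{\bbr^d} \delta_y \, \mu(dy)$), so the Dirac case cannot be recovered from the signed-measure case. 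For the decay, you invoke the Sobolev embedding Theorem \ref{thm-Ck-versions} (the $k=0$ case, both to extend point evaluation to $\cals_{-p}(\bbr^d)$ and to embed $\cals_q(\bbr^d)$ into $C_0(\bbr^d)$); but Theorem \ref{thm-Ck-versions} is proved via Lemmas \ref{lemma-Sp-continuous} and \ref{lemma-Sp-continuous-k}, whose proofs explicitly invoke Lemma \ref{lemma-delta-distr} --- indeed precisely the continuity and vanishing-at-infinity statements you are trying to establish. Neither tool is available at this point in the logical chain; the paper avoids the issue by outsourcing exactly these facts to the external references \cite[Lemma 2.11.15]{Bhar-thesis} and \cite[Thm. 4.1]{Rajeev-Than-2008}.

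To be clear, your compactness mechanism itself is sound mathematics: the diagonal embedding $\cals_{-p}(\bbr^d) \hookrightarrow \cals_q(\bbr^d)$ with singular values $(2k+d)^{p+q} \to 0$ is indeed compact, relatively compact subsets of $C_0(\bbr^d)$ do vanish uniformly at infinity, and the isometric duality $\| \delta_x \|_p = \sup_{\| \varphi \|_{-p} \leq 1} |\varphi(x)|$ is correct. The argument fails only for want of a non-circular $C_0$-embedding. The repair is the route you already sketch as a fallback: the explicit formula $\| \delta_x \|_p^2 = \sum_{k \geq 0} (2k+d)^{2p} \sum_{|n|=k} h_n(x)^2$, Mehler's formula for $\sum_{|n|=k} h_n(x)^2 w^k$, and the subordination identity for $(2k+d)^{2p}$. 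That computation is self-contained, yields statement (3) (finiteness of the norm) and the decay simultaneously (dominated convergence as $\| x \| \to \infty$), and shows exactly where $p < -\frac{d}{4}$ enters, namely as integrability of $t^{-2p-1-d/2}$ near $t=0$. This is essentially the content of the external reference the paper cites; promoting it from fallback to the main argument for (3) and for the decay in (4) would make your proof correct and self-contained, whereas as written the proposal rests on two results that sit downstream of the lemma in the paper.
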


\begin{proof}
This is a consequence of \cite[Lemma 2.11.15]{Bhar-thesis}, \cite[Thm. 4.1]{Rajeev-Than-2008} and Lemma \ref{lemma-tau-properties}.
\end{proof}

More generally, let $\mu$ be a finite signed measure on $(\bbr^d,\calb(\bbr^d))$. Then we define a tempered distribution, again denoted by $\mu$, by duality as
\begin{align*}
\langle \mu,\varphi \rangle := \int_{\bbr^d} \varphi(y) \mu(dy) \quad \text{for all $\varphi \in \cals(\bbr^d)$.}
\end{align*}
Note that $\mu \in \cals'(\bbr^d)$, because $|\langle \mu,\varphi \rangle| \leq \| \varphi \|_{\infty} |\mu|$ for all $\varphi \in \cals(\bbr^d)$.

\begin{lemma}\label{lemma-distr-int-ex}
Let $\mu$ be a finite signed measure on $(\bbr^d,\calb(\bbr^d))$. Then the following statements are true:
\begin{enumerate}
\item We have
\begin{align*}
\langle \tau_x \mu,\varphi \rangle = \int_{\bbr^d} \varphi(y+x) \mu(dy) \quad \text{for all $x \in \bbr^d$ and $\varphi \in \cals(\bbr^d)$.}
\end{align*}

\item For each $p < -\frac{d}{4}$ we have $\mu \in \cals_p(\bbr^d)$.

\item For each $p < -\frac{d}{4}$ the function $\bbr^d \to \cals_p(\bbr^d)$, $x \mapsto \tau_x \mu$ is continuous and bounded.
\end{enumerate}
\end{lemma}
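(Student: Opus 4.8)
The plan is to prove the three statements in order, with the first being immediate from the definitions and the second and third both resting on a single representation of $\mu$ as a Bochner integral of Dirac distributions.

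For the first statement, I would simply unwind the definitions: by the definition of the translation operator $\tau_x$ on $\cals'(\bbr^d)$ by duality we have $\la \tau_x\mu,\varphi\ra = \la \mu, \tau_{-x}\varphi\ra$, and since $(\tau_{-x}\varphi)(y) = \varphi(y+x)$, the defining integral for $\mu$ as a tempered distribution gives $\la \mu,\tau_{-x}\varphi\ra = \int_{\bbr^d}\varphi(y+x)\,\mu(dy)$. No estimates are needed here.

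For the second statement, the key idea is to write $\mu$ as the $\cals_p(\bbr^d)$-valued Bochner integral $\int_{\bbr^d}\delta_y\,\mu(dy)$. By Lemma \ref{lemma-delta-distr} the map $y\mapsto\delta_y$ is continuous into $\cals_p(\bbr^d)$ (hence strongly measurable, $\cals_p(\bbr^d)$ being separable) and satisfies $\lim_{\|y\|\to\infty}\|\delta_y\|_p=0$; thus $C:=\sup_{y}\|\delta_y\|_p<\infty$, and since $|\mu|$ is finite we obtain $\int_{\bbr^d}\|\delta_y\|_p\,|\mu|(dy)\le C\,|\mu|(\bbr^d)<\infty$, so the Bochner integral $\nu:=\int_{\bbr^d}\delta_y\,\mu(dy)\in\cals_p(\bbr^d)$ exists. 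I would then identify $\nu$ with $\mu$ by testing against $\varphi\in\cals(\bbr^d)$: because the duality pairing $\cals_p(\bbr^d)\times\cals_{-p}(\bbr^d)\to\bbr$ is continuous (Lemma \ref{lemma-duality-pairing}), it commutes with the Bochner integral, giving $\la\nu,\varphi\ra=\int_{\bbr^d}\la\delta_y,\varphi\ra\,\mu(dy)=\int_{\bbr^d}\varphi(y)\,\mu(dy)=\la\mu,\varphi\ra$. Since $\nu$ and $\mu$ agree on all of $\cals(\bbr^d)$, they coincide as tempered distributions, so $\mu=\nu\in\cals_p(\bbr^d)$.

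For the third statement I would use the same representation together with $\tau_x\delta_y=\delta_{x+y}$ from Lemma \ref{lemma-delta-distr} and the linearity and continuity of $\tau_x$ on $\cals_p(\bbr^d)$ to get $\tau_x\mu=\int_{\bbr^d}\delta_{x+y}\,\mu(dy)$. Boundedness then follows at once, since $\|\tau_x\mu\|_p\le\int_{\bbr^d}\|\delta_{x+y}\|_p\,|\mu|(dy)\le C\,|\mu|(\bbr^d)$ uniformly in $x$; note that the naive bound from Lemma \ref{lemma-tau-properties}(1) only gives polynomial growth in $|x|$, so this uniform bound genuinely uses the finiteness of $\mu$ and the decay of $\|\delta_z\|_p$. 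Continuity is then immediate from Lemma \ref{lemma-tau-properties}(2) applied to $\mu\in\cals_p(\bbr^d)$; alternatively, it follows by dominating $\|\delta_{x_n+y}-\delta_{x+y}\|_p\le 2C$ and invoking dominated convergence for the Bochner integral as $x_n\to x$. The routine parts are the definitional computation and the norm estimates; the only point requiring care — the main obstacle, such as it is — is the justification of the Bochner integral representation, namely the strong measurability and integrability of $y\mapsto\delta_y$ and, above all, the interchange of the duality pairing with the Bochner integral used to identify $\nu$ with $\mu$. Once the uniform bound $\sup_z\|\delta_z\|_p<\infty$ is extracted from Lemma \ref{lemma-delta-distr}, everything else reduces to standard Bochner integration arguments.
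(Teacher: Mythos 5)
Your proof is correct and follows essentially the same route as the paper: part (1) by unwinding the duality definition, and parts (2) and (3) via the Bochner integral representation $\mu = \int_{\bbr^d} \delta_y \, \mu(dy)$ in $\cals_p(\bbr^d)$, using the continuity and boundedness of $y \mapsto \delta_y$ from Lemma \ref{lemma-delta-distr} and dominated convergence (the paper also uses dominated convergence for continuity, whereas your primary suggestion of invoking Lemma \ref{lemma-tau-properties}(2) for $\mu \in \cals_p(\bbr^d)$ is an equally valid shortcut). Your additional care about strong measurability, the interchange of the duality pairing with the Bochner integral, and the observation that the polynomial bound of Lemma \ref{lemma-tau-properties}(1) would not suffice are all accurate refinements of the same argument.
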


\begin{proof}
Let $x \in \bbr^d$ and $\varphi \in \cals(\bbr^d)$ be arbitrary. By duality we have
\begin{align*}
\langle \tau_x \mu,\varphi \rangle = \langle \mu, \tau_{-x} \varphi \rangle = \int_{\bbr^d} \varphi(y+x) \mu(dy).
\end{align*}
Now, let $p < -\frac{d}{4}$ be arbitrary. By Lemma \ref{lemma-delta-distr} the function $\bbr^d \to \cals_p(\bbr^d)$, $x \mapsto \delta_x$ is continuous and bounded. Therefore, the Bochner integral
\begin{align*}
\Phi := \int_{\bbr^d} \delta_y \mu(dy) \in \cals_p(\bbr^d)
\end{align*}
is well-defined. For every $\varphi \in \cals(\bbr^d)$ we obtain
\begin{align*}
\langle \Phi,\varphi \rangle = \int_{\bbr^d} \langle \delta_y, \varphi \rangle \mu(dy) = \int_{\bbr^d} \varphi(y) \mu(dy).
\end{align*}
Therefore, we have $\Phi = \mu$, and hence $\mu \in \cals_p(\bbr^d)$. Noting that
\begin{align*}
\tau_x \mu = \int_{\bbr^d} \delta_{y+x} \mu(dy) \quad \text{for all $x \in \bbr^d$,}
\end{align*}
the continuity and the boundedness of $x \mapsto \tau_x \mu$ follow from Lemma \ref{lemma-delta-distr} and Lebesgue's dominated convergence theorem.
\end{proof}

Let $f : \bbr^d \to \bbr$ be a polynomial of several variables. Then we define a tempered distribution, again denoted by $f$, by duality as
\begin{align*}
\la f,\varphi \ra := \int_{\bbr^d} f(y) \varphi(y) dy \quad \text{for all $\varphi \in \cals(\bbr^d)$.}
\end{align*}
Note that $f \in \cals'(\bbr^d)$, because the polynomial $f$ is slowly increasing.

\begin{lemma}\cite[Example 2.11.18]{Bhar-thesis}\label{lemma-polynomial}
Let $f : \bbr^d \to \bbr$ be a polynomial of several variables with degree $n \in \bbn_0$. Then we have $f \in \cals_p(\bbr^d)$ for each $p < -\frac{d}{4} - \frac{n}{2}$.
\end{lemma}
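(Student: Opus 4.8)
The plan is to reduce the claim to individual monomials and then build these up from the constant function by repeatedly applying the multiplication operators $M_i$, for which the paper already supplies the crucial mapping property.

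First I would write $f = \sum_{|\alpha| \le n} a_\alpha\, y^\alpha$ as a finite linear combination of monomials $y^\alpha = y_1^{\alpha_1}\cdots y_d^{\alpha_d}$ with $|\alpha| \le n$. Since $\cals_p(\bbr^d)$ is a vector space, it suffices to show that each monomial satisfies $y^\alpha \in \cals_p(\bbr^d)$ whenever $p < -\frac{d}{4} - \frac{|\alpha|}{2}$: indeed, if $p < -\frac{d}{4} - \frac{n}{2}$ then $-\frac{d}{4} - \frac{|\alpha|}{2} \ge -\frac{d}{4} - \frac{n}{2} > p$ for every $\alpha$ with $|\alpha| \le n$, and a finite sum of elements of $\cals_p(\bbr^d)$ again lies in $\cals_p(\bbr^d)$. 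I would prove the monomial claim by induction on $|\alpha|$.

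At the level of tempered distributions one has $M_i \mathbf{1} = y_i$ and, inductively, $M_1^{\alpha_1}\cdots M_d^{\alpha_d}\, \mathbf{1} = y^\alpha$, where $\mathbf{1}$ denotes the constant function $1$; this follows directly from the duality definition $\la M_i \Phi, \varphi \ra = \la \Phi, M_i \varphi \ra$. For the induction step I would write $y^{\alpha + e_i} = M_i\, y^\alpha$. By the multiplication-operator lemma (\cite[Example 2.11.9]{Bhar-thesis}) the map $M_i|_{\cals_{p+\frac{1}{2}}(\bbr^d)} \in L(\cals_{p+\frac{1}{2}}(\bbr^d), \cals_p(\bbr^d))$ is bounded, so each application of $M_i$ lowers the admissible index by exactly $\frac{1}{2}$. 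Concretely, if $p < -\frac{d}{4} - \frac{|\alpha|+1}{2}$ then $p + \frac{1}{2} < -\frac{d}{4} - \frac{|\alpha|}{2}$, so by the induction hypothesis $y^\alpha \in \cals_{p+\frac{1}{2}}(\bbr^d)$, whence $y^{\alpha+e_i} = M_i\, y^\alpha \in \cals_p(\bbr^d)$. Thus everything reduces to the base case $|\alpha| = 0$.

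The base case, namely $\mathbf{1} \in \cals_p(\bbr^d)$ for every $p < -\frac{d}{4}$, is the main obstacle, being the only step not reducible to an already-stated mapping property. I would prove it directly from the definition (\ref{norm-S-p}) of the norm and the identification $\cals_p(\bbr^d) \subset \cals'(\bbr^d)$. Using the product structure $h_m = \prod_{i=1}^d h_{m_i}$ of the Hermite functions one gets $\la \mathbf{1}, h_m \ra = \prod_{i=1}^d \int_{\bbr} h_{m_i}(t)\, dt$, which vanishes unless every $m_i$ is even, while for $m_i = 2 j_i$ a generating-function computation together with Stirling's formula gives the decay $\big| \int_{\bbr} h_{2j}(t)\, dt \big| = O(j^{-1/4})$. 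Inserting this into $\| \mathbf{1} \|_p^2 = \sum_{m} (2|m| + d)^{2p}\, |\la \mathbf{1}, h_m \ra|^2$ and comparing the resulting series with the integral of the homogeneous function $|x|^{2p} \prod_i x_i^{-1/2}$ of degree $2p - \frac{d}{2}$ over the cone $\bbr_+^d$, the tail converges precisely when $2p - \frac{d}{2} < -d$, i.e. when $p < -\frac{d}{4}$, while the finitely many small-index terms are harmless; this yields $\| \mathbf{1} \|_p < \infty$ and completes the proof. As a cross-check, since the Fourier transform acts isometrically on each $\cals_p(\bbr^d)$ and $\widehat{\delta_0}$ is a constant multiple of $\mathbf{1}$, the base case is equivalent to $\delta_0 \in \cals_p(\bbr^d)$ for $p < -\frac{d}{4}$, which is Lemma \ref{lemma-delta-distr}.
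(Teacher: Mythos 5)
Your proposal is correct, and it is worth noting that the paper itself offers no proof of this lemma at all: it simply cites \cite[Example 2.11.18]{Bhar-thesis}. So your argument supplies what the paper delegates to the literature. The structure is sound: the reduction to monomials is legitimate since $p < -\tfrac{d}{4}-\tfrac{n}{2}$ implies $p < -\tfrac{d}{4}-\tfrac{|\alpha|}{2}$ for every $|\alpha| \le n$ and $\cals_p(\bbr^d)$ is a vector space; the induction step correctly uses the mapping property $M_i|_{\cals_{p+\frac{1}{2}}(\bbr^d)} \in L(\cals_{p+\frac{1}{2}}(\bbr^d),\cals_p(\bbr^d))$ together with the duality identity $M_i\, y^{\alpha} = y^{\alpha+e_i}$ in $\cals'(\bbr^d)$, each application costing exactly $\tfrac{1}{2}$ in the index; and your base case computation is right. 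Indeed $\la \mathbf{1},h_m\ra$ vanishes unless all $m_i$ are even, $\big|\int_{\bbr} h_{2j}\big| = \sqrt{2\pi}\,|h_{2j}(0)|$ with $|h_{2j}(0)| = \sqrt{\binom{2j}{j}/4^j}\,\pi^{-1/4} \sim \pi^{-1/2} j^{-1/4}$, and the resulting series $\sum_{j\in\bbn_0^d}(4|j|+d)^{2p}\prod_i(1+j_i)^{-1/2}$ converges precisely for $p<-\tfrac{d}{4}$ (grouping by $|j|=k$ gives $\sum_k (1+k)^{2p} k^{\frac{d}{2}-1}$). One remark: your closing ``cross-check'' is actually the cleanest route to the base case and could replace the Stirling computation entirely. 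Since the Hermite functions are eigenfunctions of the Fourier transform with unimodular eigenvalues, the Fourier transform preserves the norm (\ref{norm-S-p}) on the complexification of $\cals_p(\bbr^d)$, and $\widehat{\delta_0} = (2\pi)^{-d/2}\,\mathbf{1}$; hence $\mathbf{1} \in \cals_p(\bbr^d)$ for $p < -\tfrac{d}{4}$ follows at once from Lemma \ref{lemma-delta-distr}, which the paper already has available. If you promote that to the main argument, only the complexification caveat needs to be stated explicitly, since the paper's spaces are real.
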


\begin{lemma}\label{lemma-pol-lin-growth}
Let $p < -\frac{d}{4} - \frac{1}{2}$ be arbitrary, and let $f : \bbr^d \to \bbr$ be a polynomial of several variables with $\deg(f) \leq 1$. Then there is a constant $K > 0$ such that
\begin{align*}
\| \tau_x f \|_p \leq K ( 1 + \| x \| ), \quad x \in \bbr^d. 
\end{align*}
\end{lemma}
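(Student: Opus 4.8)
The plan is to exploit the special feature of affine polynomials: translating such a polynomial changes only its constant term, so that $\tau_x f$ differs from $f$ by a multiple of the constant function, the multiple depending \emph{linearly} on $x$. This is what allows the sharp linear growth, in contrast to the general polynomial-in-$|x|$ estimate of Lemma \ref{lemma-tau-properties}. First I would write $f$ in the form $f(y) = c_0 + \sum_{i=1}^d c_i y_i$ with coefficients $c_0, c_1, \ldots, c_d \in \bbr$, and set $c := (c_1, \ldots, c_d) \in \bbr^d$. Using the duality defining $\tau_x$ together with a change of variables (equivalently, the pointwise description of the translation), one computes $\tau_x f(y) = f(y - x) = f(y) - \la c, x \ra$ for all $y \in \bbr^d$, where $\la c, x \ra = \sum_{i=1}^d c_i x_i$. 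Denoting by $\mathbf{1}$ the constant polynomial $\mathbf{1}(y) \equiv 1$, this yields the distributional identity
\[
\tau_x f = f - \la c, x \ra \, \mathbf{1} \quad \text{in } \cals'(\bbr^d).
\]

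Next I would check that both terms on the right-hand side represent elements of $\cals_p(\bbr^d)$ with norms independent of $x$. Since $\deg f \leq 1$ and $p < -\tfrac{d}{4} - \tfrac{1}{2}$, Lemma \ref{lemma-polynomial} applied with $n = 1$ gives $f \in \cals_p(\bbr^d)$; since $\deg \mathbf{1} = 0$ and $p < -\tfrac{d}{4} - \tfrac{1}{2} < -\tfrac{d}{4}$, the same lemma applied with $n = 0$ gives $\mathbf{1} \in \cals_p(\bbr^d)$. In particular $\| f \|_p$ and $\| \mathbf{1} \|_p$ are finite constants. Applying the triangle inequality in $\cals_p(\bbr^d)$ together with the Cauchy--Schwarz bound $|\la c, x \ra| \leq \| c \| \, \| x \|$ then gives
\[
\| \tau_x f \|_p \leq \| f \|_p + |\la c, x \ra| \, \| \mathbf{1} \|_p \leq \| f \|_p + \| c \| \, \| \mathbf{1} \|_p \, \| x \|.
\]
Choosing $K := \max \{ \| f \|_p, \, \| c \| \, \| \mathbf{1} \|_p \}$ yields the claimed estimate $\| \tau_x f \|_p \leq K (1 + \| x \|)$ for all $x \in \bbr^d$.

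There is essentially no serious obstacle here: the argument is a short computation followed by the triangle inequality. The one point requiring care is the initial observation that translating an affine function produces only a constant correction, which is what decouples the $x$-dependence into a single linear factor; without it one would be forced to use the cruder degree-$2([|p|]+1)$ growth bound of Lemma \ref{lemma-tau-properties}. The hypothesis $p < -\tfrac{d}{4} - \tfrac{1}{2}$ is used precisely to guarantee, via Lemma \ref{lemma-polynomial}, that both $f$ (degree $1$) and the constant function $\mathbf{1}$ (degree $0$) belong to $\cals_p(\bbr^d)$, which is the crux of the estimate.
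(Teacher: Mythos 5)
Your proof is correct and follows essentially the same route as the paper's: writing $f(y) = c_0 + \sum_{i=1}^d c_i y_i$, observing that $\tau_x f = f - \big( \sum_{i=1}^d c_i x_i \big) \bbI$ (verified by duality against Schwartz test functions), and then applying the triangle inequality in $\cals_p(\bbr^d)$. Your explicit appeal to Lemma \ref{lemma-polynomial} to guarantee $f, \bbI \in \cals_p(\bbr^d)$ is a point the paper leaves implicit, but the argument is the same.
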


\begin{proof}
There are $c_0,c_1,\ldots,c_d \in \bbr$ such that
\begin{align*}
f(y) = c_0 + \sum_{i=1}^d c_i y_i, \quad y \in \bbr^d.
\end{align*}
For each $x \in \bbr^d$ we have
\begin{align*}
\tau_x f = f - \bigg( \sum_{i=1}^d c_i x_i \bigg) \bbI.
\end{align*}
Indeed, for all $\varphi \in \cals(\bbr^d)$ we have
\begin{align*}
\la \tau_x f,\varphi \ra &= \la f, \tau_{-x} \varphi \ra = \int_{\bbr^d} f(y) \varphi(y+x) dy = \int_{\bbr^d} f(y-x) \varphi(y) dy 
\\ &= \int_{\bbr^d} \bigg( c_0 + \sum_{i=1}^d c_i (y_i - x_i) \bigg) \varphi(y) dy = \int_{\bbr^d} \bigg( f(y) - \sum_{i=1}^d c_i x_i \bigg) \varphi(y) dy
\\ &= \bigg\la f - \bigg( \sum_{i=1}^d c_i x_i \bigg) \bbI, \varphi \bigg\ra.
\end{align*}
Therefore, for each $x \in \bbr^d$ we obtain
\begin{align*}
\| \tau_x f \|_p \leq \| f \|_p + \bigg( \sum_{i=1}^d |c_i| \, |x_i| \bigg) \| \bbI \|_p,
\end{align*}
proving the claimed linear growth condition.
\end{proof}

\begin{lemma}\label{lemma-L2-bounded}
Let $b,\varphi \in L^2(\bbr^d)$ be arbitrary. Then there is a constant $K > 0$ such that
\begin{align*}
| \la b, \tau_x \varphi \ra | \leq K, \quad x \in \bbr^d.
\end{align*}
\end{lemma}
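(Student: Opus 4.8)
The plan is to observe that since $b, \varphi \in L^2(\bbr^d) = \cals_0(\bbr^d)$, the dual pairing $\la b, \tau_x \varphi \ra$ is nothing but the $L^2$-inner product. Indeed, for $p = 0$ the space $\cals_{-0}(\bbr^d) = \cals_0(\bbr^d) = L^2(\bbr^d)$ is self-dual, and by Lemma \ref{lemma-duality-pairing} the bilinear pairing $\la \cdot, \cdot \ra$ on $\cals_0(\bbr^d) \times \cals_0(\bbr^d)$ extends $\la \cdot,\cdot \ra_{L^2}$, hence coincides with it. Moreover, by Lemma \ref{lemma-distr-int-ex} (or directly from the definition of $\tau_x$ on $\cals'(\bbr^d)$), the distribution $\tau_x \varphi$ is represented by the translated function $y \mapsto \varphi(y-x)$, which again lies in $L^2(\bbr^d)$. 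Therefore
\begin{align*}
\la b, \tau_x \varphi \ra = \int_{\bbr^d} b(y) \varphi(y-x) \, dy, \quad x \in \bbr^d.
\end{align*}

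First I would apply the Cauchy-Schwarz inequality to this integral, obtaining
\begin{align*}
| \la b, \tau_x \varphi \ra | \leq \| b \|_{L^2} \, \| \tau_x \varphi \|_{L^2}.
\end{align*}
Next I would use that translation is an isometry on $L^2(\bbr^d)$, so that $\| \tau_x \varphi \|_{L^2} = \| \varphi \|_{L^2}$ for every $x \in \bbr^d$. Combining these two steps yields
\begin{align*}
| \la b, \tau_x \varphi \ra | \leq \| b \|_{L^2} \, \| \varphi \|_{L^2}, \quad x \in \bbr^d,
\end{align*}
so that the claim holds with the constant $K := \| b \|_{L^2} \, \| \varphi \|_{L^2}$, which is independent of $x$.

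There is essentially no hard step here; the only point requiring a moment of care is the identification of the abstract dual pairing $\la \cdot,\cdot \ra$ with the concrete $L^2$-inner product in the critical case $p = 0$, together with the representation of $\tau_x \varphi$ as an honest translated $L^2$-function rather than merely a tempered distribution. Once this identification is in place, the bound is immediate from Cauchy-Schwarz and translation invariance of the Lebesgue measure. This is why the statement is purely at the level of a lemma: the translation-invariance of the $L^2$-norm is exactly what produces the $x$-uniform bound, in contrast with the genuinely $x$-dependent polynomial growth estimates appearing for general $\cals_p(\bbr^d)$ in Lemma \ref{lemma-tau-properties}.
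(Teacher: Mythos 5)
Your proof is correct and is essentially identical to the paper's: both identify $L^2(\bbr^d) = \cals_0(\bbr^d)$, bound $|\la b, \tau_x \varphi \ra| \leq \| b \|_{L^2} \| \tau_x \varphi \|_{L^2}$ by Cauchy--Schwarz (the $p=0$ case of the duality estimate), and conclude via the translation isometry $\| \tau_x \varphi \|_{L^2} = \| \varphi \|_{L^2}$, yielding the constant $K = \| b \|_{L^2} \| \varphi \|_{L^2}$. The extra care you take in identifying the abstract pairing with the $L^2$-inner product is implicit in the paper's one-line argument, so there is no substantive difference.
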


\begin{proof}
Recalling that $L^2(\bbr^d) = \cals_0(\bbr^d)$, for each $x \in \bbr^d$ we have
\begin{align*}
| \la b, \tau_x \varphi \ra | \leq \| b \|_{L^2} \| \tau_x \varphi \|_{L^2} = \| b \|_{L^2} \| \varphi \|_{L^2},
\end{align*}
completing the proof.
\end{proof}

For the next result, we recall that $\cals_p(\bbr^d) \subset L^2(\bbr^d)$ for each $p \geq 0$. Also recall that $C_0(\bbr^d)$ denotes the space of all continuous functions $f : \bbr^d \to \bbr$ such that $\lim_{\| x \| \to \infty} f(x) = 0$. Equipped with the supremum norm, this space is a Banach space.

\begin{lemma}\label{lemma-Sp-continuous}
Let $p > \frac{d}{4}$ and $f \in \cals_p(\bbr^d)$ be arbitrary, and define the mapping
\begin{align*}
g : \bbr^d \to \bbr, \quad g(x) := \la \delta_x,f \ra.
\end{align*}
Then we have $g \in C_0(\bbr^d)$ and $f = g$ almost everywhere.
\end{lemma}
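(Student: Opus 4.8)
The plan is to prove the statement in two parts: first that $g \in C_0(\bbr^d)$, and second that $f = g$ almost everywhere. The essential tool will be Lemma \ref{lemma-delta-distr}, which guarantees that for $p < -\frac{d}{4}$ the Dirac distributions $\delta_x$ live in $\cals_p(\bbr^d)$ and that $x \mapsto \delta_x$ is continuous with $\| \delta_x \|_p \to 0$ as $\| x \| \to \infty$. Since here $p > \frac{d}{4}$, we have $-p < -\frac{d}{4}$, so $\delta_x \in \cals_{-p}(\bbr^d)$ for each $x \in \bbr^d$, and the pairing $\la \delta_x, f \ra$ is the well-defined duality pairing on $\cals_{-p}(\bbr^d) \times \cals_p(\bbr^d)$ from Lemma \ref{lemma-duality-pairing}.

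First I would establish $g \in C_0(\bbr^d)$. By Lemma \ref{lemma-delta-distr}(4), applied with the exponent $-p < -\frac{d}{4}$, the map $\bbr^d \to \cals_{-p}(\bbr^d)$, $x \mapsto \delta_x$ is continuous and satisfies $\lim_{\| x \| \to \infty} \| \delta_x \|_{-p} = 0$. Using the continuity of the bilinear pairing and the estimate $|\la \delta_x, f \ra| \leq \| \delta_x \|_{-p} \| f \|_p$ from Lemma \ref{lemma-duality-pairing}(2), continuity of $g$ follows from continuity of $x \mapsto \delta_x$, and the decay $g(x) \to 0$ as $\| x \| \to \infty$ follows from $\| \delta_x \|_{-p} \to 0$. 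Hence $g \in C_0(\bbr^d)$.

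Next I would show $f = g$ almost everywhere. Since $p \geq 0$ (as $p > \frac{d}{4} > 0$), we have $\cals_p(\bbr^d) \subset L^2(\bbr^d)$, so $f$ is a genuine square-integrable function, and the pairing $\la \delta_x, f \ra$ should reduce to the pointwise value of (the continuous representative of) $f$. The cleanest route is to test against Schwartz functions: for $\varphi \in \cals(\bbr^d)$ one computes
\begin{align*}
\int_{\bbr^d} g(x) \varphi(x)\, dx = \int_{\bbr^d} \la \delta_x, f \ra \varphi(x)\, dx = \la f, \varphi \ra = \int_{\bbr^d} f(x) \varphi(x)\, dx,
\end{align*}
where the middle equality uses a Fubini/Bochner-integral argument to interchange the pairing in $f$ with the integration in $x$, recognizing that $\int_{\bbr^d} \delta_x \varphi(x)\, dx = \varphi$ as an $\cals_{-p}(\bbr^d)$-valued Bochner integral (compare the argument in Lemma \ref{lemma-distr-int-ex}, where a signed measure $\mu$ is represented as $\int \delta_y\, \mu(dy)$). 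Since $g \in C_0(\bbr^d) \subset L^2_{\loc}$ and $f \in L^2(\bbr^d)$ agree when integrated against every $\varphi \in \cals(\bbr^d)$, and $\cals(\bbr^d)$ separates points of $L^1_{\loc}$, the fundamental lemma of the calculus of variations gives $f = g$ almost everywhere.

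The main obstacle I expect is justifying the interchange $\int_{\bbr^d} \la \delta_x, f \ra \varphi(x)\, dx = \la f, \int_{\bbr^d} \delta_x \varphi(x)\, dx \ra$, i.e.\ identifying the $\cals_{-p}(\bbr^d)$-valued Bochner integral $\int_{\bbr^d} \delta_x \varphi(x)\, dx$ with $\varphi$ itself. One must check the Bochner integral is well-defined, which follows from the boundedness of $x \mapsto \| \delta_x \|_{-p}$ (Lemma \ref{lemma-delta-distr}(4)) together with the integrability of $\varphi$, and then verify the identification by pairing against an arbitrary test function in $\cals_p(\bbr^d)$ and using that $\la \delta_x, \cdot \ra$ evaluates at $x$. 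Once this representation is in hand, the continuity of the pairing lets the integral pass through, and the remainder of the argument is routine.
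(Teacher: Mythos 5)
Your proposal is correct, and its first half (showing $g \in C_0(\bbr^d)$ via Lemma \ref{lemma-delta-distr} together with the pairing estimate of Lemma \ref{lemma-duality-pairing}) coincides with the paper's argument. For the identification $f = g$ almost everywhere, however, you take a genuinely different route. The paper argues by approximation: it chooses a sequence $(\varphi_n)_{n \in \bbn} \subset \cals(\bbr^d)$ with $\varphi_n \to f$ in $\cals_p(\bbr^d)$, uses the continuous embedding $\cals_p(\bbr^d) \subset L^2(\bbr^d)$ (valid since $p > \frac{d}{4} > 0$) to get $L^2$-convergence, extracts a subsequence converging almost everywhere, and passes to the limit in $\varphi_{n_k}(x) = \la \delta_x, \varphi_{n_k} \ra \to \la \delta_x, f \ra = g(x)$, exploiting continuity of the dual pairing in its second argument. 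You instead argue by duality: you represent each test function $\varphi \in \cals(\bbr^d)$ as the $\cals_{-p}(\bbr^d)$-valued Bochner integral $\int_{\bbr^d} \delta_x \, \varphi(x)\,dx$ (this is legitimate: $\varphi(x)\,dx$ is a finite signed measure, so this is precisely the construction in the proof of Lemma \ref{lemma-distr-int-ex}, and the integral is well defined because $x \mapsto \| \delta_x \|_{-p}$ is continuous and bounded by Lemma \ref{lemma-delta-distr} while $\varphi$ is integrable), then pull the continuous linear functional $\la \cdot, f \ra$ through the Bochner integral to obtain $\int_{\bbr^d} g(x) \varphi(x)\,dx = \la \varphi, f \ra = \int_{\bbr^d} f(x) \varphi(x)\,dx$ (the last equality using that the dual pairing extends the $L^2$ inner product, Lemma \ref{lemma-duality-pairing}), and conclude by the fundamental lemma of the calculus of variations, since both $f$ and $g$ are locally integrable. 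Both arguments are sound. The paper's is lighter: it needs only density of $\cals(\bbr^d)$ in $\cals_p(\bbr^d)$, the embedding into $L^2(\bbr^d)$, and the standard subsequence fact, with no vector-valued integration. Yours carries the overhead of justifying the Bochner integral and its identification with $\varphi$, but it avoids subsequence extraction, stays entirely at the level of weak identities, and reuses the measure-representation technique the paper itself develops in Lemma \ref{lemma-distr-int-ex}, so it fits naturally into the surrounding toolkit.
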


\begin{proof}
By Lemma \ref{lemma-delta-distr} we have $g \in C_0(\bbr^d)$. Furthermore, there exists a sequence $(\varphi_n)_{n \in \bbn} \subset \cals(\bbr^d)$ such that $\varphi_n \to f$ in $\cals_p(\bbr^d)$. Thus, we also have $\varphi_n \to f$ in $L^2(\bbr^d)$, and hence there is a subsequence $(n_k)_{k \in \bbn}$ such that $\varphi_{n_k} \to f$ almost everywhere. Therefore, for almost all $x \in \bbr^d$ we obtain
\begin{align*}
f(x) = \lim_{k \to \infty} \varphi_{n_k}(x) = \lim_{k \to \infty} \la \delta_x, \varphi_{n_k} \ra = \la \delta_x,f \ra = g(x),
\end{align*}
completing the proof.
\end{proof}

More generally, for each $k \in \bbn_0$ the space $C_0^k(\bbr^d)$ denotes the space of all $f \in C^k(\bbr^d)$ such that $D^{\alpha} f \in C_0(\bbr^d)$ for each $\alpha \in \bbn_0^d$ with $|\alpha| \leq k$. The space $C_0^k(\bbr^d)$ endowed with the norm
\begin{align*}
\| f \|_{C_0^k(\bbr^d)} = \sum_{|\alpha| \leq k} \| D^{\alpha} f \|_{\infty}
\end{align*}
is a Banach space.

\begin{lemma}\label{lemma-Sp-continuous-k}
Let $k \in \bbn_0$ and $p > \frac{d}{4} + \frac{k}{2}$ as well as $f \in \cals_p(\bbr^d)$ be arbitrary, and define the mapping
\begin{align*}
g : \bbr^d \to \bbr, \quad g(x) := \la \delta_x,f \ra.
\end{align*}
Then the following statements are true:
\begin{enumerate}
\item We have $f = g$ almost everywhere.

\item We have $g \in C_0^k(\bbr^d)$.

\item For each $\alpha \in \bbn_0^d$ with $|\alpha| \leq k$ we have $D^{\alpha} g(x) = \la \delta_x, \partial^{\alpha} f \ra$ for all $x \in \bbr^d$.
\end{enumerate}
\end{lemma}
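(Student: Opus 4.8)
The plan is to reduce everything to the regularity of the orbit map $\xi_f$ of the translation group, for which Proposition \ref{prop-orbit-in-Sp} already supplies both the smoothness and an explicit derivative formula. Throughout set $p' := p - \frac{k}{2}$, so that the hypothesis $p > \frac{d}{4} + \frac{k}{2}$ becomes $p' > \frac{d}{4}$, and note that $f \in \cals_p(\bbr^d) = \cals_{p' + \frac{k}{2}}(\bbr^d)$ while $\delta_x \in \cals_{-p'}(\bbr^d)$ for every $x \in \bbr^d$ by Lemma \ref{lemma-delta-distr}. Statement (1) requires no new work: since $p > \frac{d}{4}$, it is precisely the content of Lemma \ref{lemma-Sp-continuous}.

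For (2) and (3), first I would rewrite $g$ in terms of the orbit map. Using $\delta_x = \tau_x \delta_0$ (Lemma \ref{lemma-delta-distr}) together with the defining duality relation $\la \tau_x \Phi, \varphi \ra = \la \Phi, \tau_{-x} \varphi \ra$, which extends from the Schwartz pairing to the pairing $\cals_{-p'}(\bbr^d) \times \cals_{p'}(\bbr^d)$ by density and the continuity estimate of Lemma \ref{lemma-duality-pairing}, one obtains
\[
g(x) = \la \tau_x \delta_0, f \ra = \la \delta_0, \tau_{-x} f \ra = \ell\big( \Psi(x) \big), \qquad x \in \bbr^d,
\]
where $\ell := \la \delta_0, \cdot \ra \in L(\cals_{p'}(\bbr^d),\bbr)$ is bounded (again Lemma \ref{lemma-duality-pairing}) and $\Psi(x) := \tau_{-x} f = \xi_f(-x)$. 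By Proposition \ref{prop-orbit-in-Sp}, applied with parameter $p'$ and $n = k$, we have $\xi_f \in C^k(\bbr^d; \cals_{p'}(\bbr^d))$, and composing with the linear reflection $x \mapsto -x$ keeps $\Psi \in C^k(\bbr^d; \cals_{p'}(\bbr^d))$. Since $\ell$ is linear and continuous, the chain rule gives $g = \ell \circ \Psi \in C^k(\bbr^d)$ with $D^m g(x) = \ell \circ D^m \Psi(x)$ for $m \leq k$. Feeding in the explicit formula for $D^m \xi_f$ from Proposition \ref{prop-orbit-in-Sp} and tracking the two factors $(-1)^m$ (one from the reflection, one from the formula, which cancel) yields, for every multi-index $\alpha \in \bbn_0^d$ with $|\alpha| \leq k$,
\[
D^\alpha g(x) = \la \delta_0, \tau_{-x} \partial^\alpha f \ra = \la \delta_x, \partial^\alpha f \ra, \qquad x \in \bbr^d,
\]
which is (3); here $\partial^\alpha f \in \cals_{p - \frac{|\alpha|}{2}}(\bbr^d) \subset \cals_{p'}(\bbr^d)$ by Lemma \ref{lemma-diff-continuous} and Lemma \ref{lemma-scale-pq}, so the right-hand pairing is legitimate.

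Finally, for (2) it remains to check that each $D^\alpha g$ with $|\alpha| \leq k$ lies in $C_0(\bbr^d)$. Continuity is already contained in $g \in C^k$; for the decay at infinity I would estimate, using Lemma \ref{lemma-duality-pairing},
\[
|D^\alpha g(x)| = |\la \delta_x, \partial^\alpha f \ra| \leq \| \delta_x \|_{-p'} \, \| \partial^\alpha f \|_{p'},
\]
and invoke Lemma \ref{lemma-delta-distr}, which gives $\| \delta_x \|_{-p'} \to 0$ as $\| x \| \to \infty$ (valid since $-p' < -\frac{d}{4}$). Hence $D^\alpha g \in C_0(\bbr^d)$ for all $|\alpha| \leq k$, that is $g \in C_0^k(\bbr^d)$. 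The only genuinely delicate point is the bookkeeping in the second paragraph: one must verify that the translation–duality identity holds at the level of the Hermite Sobolev pairing, and that the sign factors coming from the reflection and from the derivative formula of Proposition \ref{prop-orbit-in-Sp} combine correctly; everything else is a direct application of the cited results.
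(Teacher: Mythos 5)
Your proof is correct, and it takes a genuinely different route from the paper. The paper proceeds by induction on $k$: the base case is Lemma \ref{lemma-Sp-continuous}, and the inductive step differentiates on the \emph{Dirac} side of the pairing, applying the Taylor expansion of Lemma \ref{lemma-Taylor} to $\Phi = \delta_0$ in the negative-index space $\cals_{-p+\frac{k}{2}-\frac{1}{2}}(\bbr^d)$, translating by $\tau_x$, and passing to the limit in the difference quotient to get $D^{\alpha}g(x) = -\la \partial_i \delta_x, \partial^{\beta} f\ra = \la \delta_x, \partial^{\alpha} f\ra$. You instead differentiate on the \emph{test-function} side: writing $g = \la \delta_0, \cdot\,\ra \circ \Psi$ with $\Psi(x) = \tau_{-x}f$, you invoke the $C^k$-regularity of the orbit map (Proposition \ref{prop-orbit-in-Sp}, applied at level $p' = p - \frac{k}{2}$) and the chain rule for a fixed bounded linear functional, with no induction. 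The two extra ingredients your route needs both check out: (a) the identity $\la \tau_x \Phi, \varphi\ra = \la \Phi, \tau_{-x}\varphi\ra$ on $\cals_{-p'}(\bbr^d) \times \cals_{p'}(\bbr^d)$ follows, as you say, from the Schwartz-level identity by density, since both sides are continuous bilinear forms by Lemma \ref{lemma-tau-properties}(1) and Lemma \ref{lemma-duality-pairing}(2) (the paper never needs this because it keeps $\delta_x$ in the left slot throughout); (b) the two factors $(-1)^m$ — from the reflection and from the derivative formula in Proposition \ref{prop-orbit-in-Sp} — indeed cancel. Your decay argument at infinity ($\|\delta_x\|_{-p'} \to 0$ via Lemma \ref{lemma-delta-distr}(4)) coincides with the paper's. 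Note that there is no circularity: Proposition \ref{prop-orbit-in-Sp} rests on Theorem \ref{thm-Sp-D-1} and Lemma \ref{lemma-Taylor}, not on this lemma or on Theorem \ref{thm-Ck-versions}. So in the end both proofs trace back to the same Taylor expansion (Lemma \ref{lemma-Taylor}); the paper applies it directly to $\delta_0$, while you consume it pre-packaged through the group-smoothness machinery applied to $f$, which buys a shorter, induction-free argument at the cost of the duality-extension bookkeeping.
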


\begin{proof}
By Lemma \ref{lemma-Sp-continuous} we have $f = g$ almost everywhere. We prove the remaining statements by induction on $k \in \bbn_0$. For $k=0$ these follow from Lemma \ref{lemma-Sp-continuous}. We proceed with the induction step $k-1 \to k$. Let $\beta \in \bbn_0^d$ with $|\beta| = k-1$ be arbitrary, and let $i=1,\ldots,d$ be arbitrary. We set $\alpha := \beta + e_i$. Let $x \in \bbr^d$ be arbitrary. By induction hypothesis we have
\begin{align*}
D^{\beta} g(x) = \la \delta_x, \partial^{\beta} f \ra.
\end{align*}
Hence, for each $h \in \bbr$ with $h \neq 0$ we have
\begin{align*}
\frac{D^{\beta} g(x + h e_i) - D^{\beta} g(x)}{h} = \bigg\la \frac{\delta_{x + h e_i} - \delta_x}{h}, \partial^{\beta} f \bigg\ra.
\end{align*}
Note that $\delta_0 \in \cals_{-p+\frac{k}{2}}(\bbr^d)$. Hence, by Lemma \ref{lemma-Taylor} (applied with $\Phi = \delta_0$ and $x = 0$) we have
\begin{align*}
\delta_{h e_i} - \delta_0 = - h \partial_i \delta_0 - h R(h), \quad h \in \bbr
\end{align*}
in the space $\cals_{-p+\frac{k}{2}-\frac{1}{2}}(\bbr^d)$, where $R : \bbr \to \cals_{-p+\frac{k}{2}-\frac{1}{2}}(\bbr^d)$ is a continuous function such that $R(0) = 0$. Therefore, applying the translation operator $\tau_x$ on both sides, we obtain
\begin{align*}
\delta_{x + h e_i} - \delta_x = - h \partial_i \delta_x - h \tau_x R(h), \quad h \in \bbr
\end{align*}
in the space $\cals_{-p+\frac{k}{2}-\frac{1}{2}}(\bbr^d)$. It follows that
\begin{align*}
\lim_{h \to 0} \frac{\delta_{x + h e_i} - \delta_x}{h} = -\partial_i \delta_x
\end{align*}
in the space $\cals_{-p+\frac{k}{2}-\frac{1}{2}}(\bbr^d)$. Therefore, we obtain
\begin{align*}
D^{\alpha} g(x) = \lim_{h \to 0} \frac{D^{\beta} g(x + h e_i) - D^{\beta} g(x)}{h} = - \la \partial_i \delta_x, \partial^{\beta} f \ra = \la \delta_x, \partial^{\alpha} f \ra,
\end{align*}
and by Lemma \ref{lemma-delta-distr} we deduce that $D^{\alpha} g \in C_0(\bbr^d)$.
\end{proof}

\begin{theorem}[Sobolev embedding theorem for Hermite Sobolev spaces]\label{thm-Ck-versions}
For each $k \in \bbn_0$ and $p > \frac{d}{4} + \frac{k}{2}$ the pair $(\cals_p(\bbr^d), C_0^k(\bbr^d))$ consists of continuously embedded Banach spaces.
\end{theorem}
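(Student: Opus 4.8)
The plan is to prove that for each $k \in \bbn_0$ and $p > \frac{d}{4} + \frac{k}{2}$ the embedding $\cals_p(\bbr^d) \hookrightarrow C_0^k(\bbr^d)$ is well-defined and continuous, by directly invoking the pointwise identification already established in Lemma \ref{lemma-Sp-continuous-k}. The key insight is that the hard analytic work has already been done: Lemma \ref{lemma-Sp-continuous-k} tells us that for $f \in \cals_p(\bbr^d)$ the function $g(x) := \la \delta_x, f \ra$ satisfies $f = g$ almost everywhere, lies in $C_0^k(\bbr^d)$, and has derivatives given by $D^{\alpha} g(x) = \la \delta_x, \partial^{\alpha} f \ra$ for all $\alpha \in \bbn_0^d$ with $|\alpha| \leq k$. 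So the only remaining task is to upgrade this to a \emph{continuous} linear embedding, which amounts to a norm estimate.

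First I would fix $f \in \cals_p(\bbr^d)$ and let $g$ denote its continuous version as in Lemma \ref{lemma-Sp-continuous-k}, so that $g \in C_0^k(\bbr^d)$ and $f$ is identified with $g$ as an element of $C_0^k(\bbr^d)$. The embedding map $\cals_p(\bbr^d) \to C_0^k(\bbr^d)$, $f \mapsto g$, is manifestly linear. To estimate its norm, I would bound each term $\| D^{\alpha} g \|_{\infty}$ appearing in $\| g \|_{C_0^k(\bbr^d)} = \sum_{|\alpha| \leq k} \| D^{\alpha} g \|_{\infty}$. Using the pointwise formula $D^{\alpha} g(x) = \la \delta_x, \partial^{\alpha} f \ra$ together with the duality estimate from Lemma \ref{lemma-duality-pairing}(2), for each $x \in \bbr^d$ we have
\begin{align*}
|D^{\alpha} g(x)| = |\la \delta_x, \partial^{\alpha} f \ra| \leq \| \delta_x \|_{-(p - \frac{|\alpha|}{2})} \, \| \partial^{\alpha} f \|_{p - \frac{|\alpha|}{2}}.
\end{align*}
Here I am viewing $\delta_x \in \cals_{-(p - |\alpha|/2)}(\bbr^d)$ and $\partial^{\alpha} f \in \cals_{p - |\alpha|/2}(\bbr^d)$; both memberships are legitimate since $p - |\alpha|/2 > d/4 \geq 0$ guarantees $\delta_x \in \cals_{-(p - |\alpha|/2)}(\bbr^d)$ by Lemma \ref{lemma-delta-distr}(3), and $\partial^{\alpha}$ maps $\cals_p(\bbr^d)$ boundedly into $\cals_{p - |\alpha|/2}(\bbr^d)$ by iterating Lemma \ref{lemma-diff-continuous}. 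By Lemma \ref{lemma-delta-distr}(4) the quantity $\sup_{x} \| \delta_x \|_{-(p - |\alpha|/2)}$ is finite (indeed the map $x \mapsto \delta_x$ is continuous and vanishes at infinity, hence bounded), giving a constant $C_{\alpha} > 0$ with $\| D^{\alpha} g \|_{\infty} \leq C_{\alpha} \| \partial^{\alpha} f \|_{p - |\alpha|/2} \leq C_{\alpha}' \| f \|_p$, where the last step uses boundedness of $\partial^{\alpha}$ and Lemma \ref{lemma-scale-pq}.

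Summing over the finitely many multi-indices $\alpha$ with $|\alpha| \leq k$ then yields $\| g \|_{C_0^k(\bbr^d)} \leq K \| f \|_p$ with $K := \sum_{|\alpha| \leq k} C_{\alpha}'$, which is exactly the continuity of the embedding. I do not anticipate a serious obstacle here, since all the substantive regularity and decay facts are packaged in the earlier lemmas; the main point requiring care is the bookkeeping of the Sobolev indices, namely checking that the differentiated distribution $\partial^{\alpha} f$ lands in a space $\cals_{p - |\alpha|/2}(\bbr^d)$ whose dual index $-(p - |\alpha|/2)$ is still small enough (i.e. $< -d/4$) for $\delta_x$ to belong to it — this is precisely guaranteed by the hypothesis $p > \frac{d}{4} + \frac{k}{2}$, since $|\alpha| \leq k$ forces $p - |\alpha|/2 \geq p - k/2 > d/4$. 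Once this index arithmetic is verified, the set-theoretic inclusion $\cals_p(\bbr^d) \subset C_0^k(\bbr^d)$ (via the identification $f = g$) combined with the norm bound completes the proof.
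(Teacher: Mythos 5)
Your proposal is correct and follows essentially the same route as the paper's own proof: both rest on Lemma \ref{lemma-Sp-continuous-k} for the identification $f = g$ with $D^{\alpha} g(x) = \la \delta_x, \partial^{\alpha} f \ra$, then obtain the norm bound $\| g \|_{C_0^k(\bbr^d)} \leq C \| f \|_p$ from the duality estimate combined with the boundedness of $\partial^{\alpha} : \cals_p(\bbr^d) \to \cals_{p-\frac{|\alpha|}{2}}(\bbr^d)$ (Lemma \ref{lemma-diff-continuous}) and the uniform bound on $\| \delta_x \|_{-p+\frac{|\alpha|}{2}}$ (Lemma \ref{lemma-delta-distr}). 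Your index bookkeeping, in particular the observation that $p - \frac{|\alpha|}{2} > \frac{d}{4}$ for all $|\alpha| \leq k$, is exactly what the paper's inequality chain implicitly uses.
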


\begin{remark}
Note that the inclusion $\cals_p(\bbr^d) \subset C_0^k(\bbr^d)$ in Theorem \ref{thm-Ck-versions} is meant in the sense that for each $f \in \cals_p(\bbr^d)$ there exists a version $g \in C_0^k(\bbr^d)$ such that $f = g$ almost everywhere.
\end{remark}

\begin{proof}[Proof of Theorem \ref{thm-Ck-versions}]
Let $f \in \cals_p(\bbr^d)$ be arbitrary, and define the mapping
\begin{align*}
g : \bbr^d \to \bbr, \quad g(x) := \la \delta_x,f \ra.
\end{align*}
By Lemma \ref{lemma-Sp-continuous-k} we have $f = g$ almost everywhere, $g \in C_0^k(\bbr^d)$ and
\begin{align*}
\| g \|_{C_0^k(\bbr^d)} &= \sum_{|\alpha| \leq k} \sup_{x \in \bbr^d} | D^{\alpha} g(x) | = \sum_{|\alpha| \leq k} \sup_{x \in \bbr^d} | \la \delta_x, \partial^{\alpha} f \ra |
\\ &\leq \sum_{|\alpha| \leq k} \sup_{x \in \bbr^d} \| \delta_x \|_{-p+\frac{|\alpha|}{2}} \| \partial^{\alpha} f \|_{p-\frac{|\alpha|}{2}} \leq C \| f \|_p
\end{align*}
with a finite constant $C > 0$, which does not depend on $f$. Note that for the last inequality we have used Lemmas \ref{lemma-diff-continuous} and \ref{lemma-delta-distr}.
\end{proof}

Recall that for $n \in \bbn_0$ the Sobolev space $W^n(\bbr^d)$ consists of all $f \in L^2(\bbr^d)$ such that the weak derivative $D^{\beta} f \in L^2(\bbr^d)$ exists for all $\beta \in \bbn_0^d$ with $|\beta| \leq n$.

\begin{proposition}\label{prop-Sobolev-embedding}
Let $m \in \bbn_0$ be arbitrary, and let $f \in W^{2m}(\bbr^d)$ be such that $x^{\alpha} D^{\beta} f \in L^2(\bbr^d)$ for all $\alpha,\beta \in \bbn_0^d$ with $|\alpha| + |\beta| \leq 2m$. Then we have $f \in \cals_m(\bbr^d)$.
\end{proposition}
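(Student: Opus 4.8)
The plan is to reduce the claim to the single statement that $\mathbf{H}^m f \in L^2(\bbr^d)$, where $\mathbf{H}$ is the distributional Hermite operator $\sum_{i=1}^d (M_i^2 - \partial_i^2)$ from the appendix. Recall that for $p \ge 0$ the space $\cals_p(\bbr^d)$ consists precisely of those $f \in L^2(\bbr^d)$ with $\| f \|_p < \infty$, and that by the definition (\ref{norm-S-p}) of the norm one has $\| f \|_m^2 = \sum_{k=0}^{\infty} \sum_{|n|=k} (2k+d)^{2m} |\la f,h_n \ra_{L^2}|^2$. Since the Hermite functions satisfy the eigenrelation $\mathbf{H} h_n = (2|n|+d) h_n$, this quantity is exactly $\| \mathbf{H}^m f \|_{L^2}^2$ once we know that $\mathbf{H}^m f \in L^2(\bbr^d)$; hence it suffices to establish that membership, after which $f \in \cals_m(\bbr^d)$ follows at once.

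First I would show, by induction on $m \in \bbn_0$, that the tempered distribution $\mathbf{H}^m f$ is a finite linear combination $\sum_{|\alpha|+|\beta| \le 2m} c_{\alpha\beta}\, M^{\alpha} \partial^{\beta} f$ with constant coefficients, where $M^{\alpha} \partial^{\beta} f$ is just $x^{\alpha} D^{\beta} f$ in the notation of the statement, and the multi-indices run over $|\alpha|+|\beta| \le 2m$. The base case $m=0$ is trivial. For the step I apply $\mathbf{H} = \sum_i (M_i^2 - \partial_i^2)$ to a term $M^{\alpha} \partial^{\beta} f$: the multiplication part raises the $x$-degree by two, whereas for the derivative part the Leibniz rule gives $\partial_i^2(M^{\alpha} \partial^{\beta} f) = \sum_{j=0}^2 \binom{2}{j} (\partial_i^j M^{\alpha})\, \partial^{\beta + (2-j)e_i} f$, and since $[\partial_i, M_i] = \Id$ every commutation lowers the total order by two; thus each resulting summand still satisfies $|\alpha|+|\beta| \le 2m$. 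Because $f \in W^{2m}(\bbr^d)$ ensures that the weak derivatives $\partial^{\beta} f \in L^2(\bbr^d)$ exist for $|\beta| \le 2m$, and the hypothesis guarantees $x^{\alpha} \partial^{\beta} f \in L^2(\bbr^d)$ for $|\alpha|+|\beta| \le 2m$, each summand lies in $L^2(\bbr^d)$, whence $\mathbf{H}^m f \in L^2(\bbr^d)$.

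It then remains to identify $\| \mathbf{H}^m f \|_{L^2}$ with $\| f \|_m$. For this I would test $\mathbf{H}^m f$ against the basis $(h_n)$: using that $M_i$ is symmetric and $\partial_i$ antisymmetric for the $\cals'(\bbr^d)$--$\cals(\bbr^d)$ pairing, the operator $\mathbf{H}^m$ is symmetric, so that $\la \mathbf{H}^m f, h_n \ra_{L^2} = \la f, \mathbf{H}^m h_n \ra = (2|n|+d)^m \la f, h_n \ra_{L^2}$, where the last equality uses $h_n \in \cals(\bbr^d)$ together with the eigenrelation. Parseval's identity for the orthonormal basis $(h_n)$ of $L^2(\bbr^d)$ then yields $\| \mathbf{H}^m f \|_{L^2}^2 = \sum_n (2|n|+d)^{2m} |\la f,h_n \ra_{L^2}|^2 = \| f \|_m^2$, which is finite, giving $f \in \cals_m(\bbr^d)$. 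The main obstacle is the combinatorial bookkeeping in the inductive normal-form expansion together with the care needed to verify that the distributionally defined operator $\mathbf{H}^m$ really acts on $f$ as the concrete polynomial-coefficient differential operator whose individual summands the hypothesis controls; once the expansion with the order bound $|\alpha|+|\beta| \le 2m$ is established, the remaining computations are routine.
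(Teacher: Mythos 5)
Your proof is correct, and its core is the same as the paper's: expand $\mathbf{H}^m f$ as a finite linear combination $\sum_{|\alpha|+|\beta|\le 2m} c_{\alpha\beta}\, x^{\alpha} \partial^{\beta} f$, observe that the hypotheses place every summand in $L^2(\bbr^d)$, and conclude via the identity $\| f \|_m = \| \mathbf{H}^m f \|_{L^2}$. The differences lie at the two ends of this argument. Where the paper simply asserts the expansion ``with suitable constants,'' you supply the inductive Leibniz/commutator bookkeeping, which is a genuine gain in completeness. Conversely, for the final identity the paper invokes Lemma \ref{lemma-Hermite} (the isometric isomorphism $\mathbf{H} : \cals_{p+1}(\bbr^d) \to \cals_p(\bbr^d)$), while you rederive it by testing against the Hermite basis, using the symmetry of $\mathbf{H}$ for the duality pairing, the eigenrelation $\mathbf{H} h_n = (2|n|+d) h_n$, and Parseval. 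Your route is more elementary and self-contained: it sidesteps the small scale-juggling needed to apply Lemma \ref{lemma-Hermite} before one knows $f \in \cals_m(\bbr^d)$ (one must first place $f$ in $\cals_0(\bbr^d) = L^2(\bbr^d)$, use surjectivity of $\mathbf{H}^m : \cals_m(\bbr^d) \to \cals_0(\bbr^d)$ and injectivity of $\mathbf{H}^m$ on $\cals_0(\bbr^d)$ to upgrade the regularity of $f$), at the cost of importing the classical eigenrelation, which the paper never states explicitly but which underlies the definition (\ref{norm-S-p}) in any case. One small point you should make explicit: you identify the distributional derivative $\partial^{\beta} f$ (defined in the appendix by duality against $\cals(\bbr^d)$) with the weak derivative $D^{\beta} f$ without comment; this is exactly the first step of the paper's proof, a one-line computation testing against $C_c^{\infty}(\bbr^d)$ combined with density of $C_c^{\infty}(\bbr^d)$ in $\cals(\bbr^d)$, and without it the summands $x^{\alpha} \partial^{\beta} f$ in your expansion are not yet known to be the $L^2$ functions $x^{\alpha} D^{\beta} f$ that the hypothesis controls.
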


\begin{proof}
For each $\beta \in \bbn_0^d$ with $|\beta| \leq 2m$ we have $D^{\beta} f = \partial^{\beta} f$. Indeed, using Lemma \ref{lemma-duality-pairing} for each $\varphi \in C_c^{\infty}(\bbr^d)$ we obtain
\begin{align*}
\la \partial^{\beta} f, \varphi \ra &= (-1)^{|\beta|} \la f, \partial^{\beta} \varphi \ra = (-1)^{|\beta|} \la f, D^{\beta} \varphi \ra = (-1)^{|\beta|} \la f, D^{\beta} \varphi \ra_{L^2} 
\\ &= \la D^{\beta} f, \varphi \ra_{L^2} = \la D^{\beta} f, \varphi \ra.
\end{align*}
Moreover, we have
\begin{align*}
\mathbf{H}^m f = \sum_{|\alpha|+|\beta| \leq 2m} C_{\alpha \beta} x^{\alpha} \partial^{\beta} f
\end{align*}
with suitable constants $C_{\alpha \beta} \in \bbr$ for all $\alpha,\beta \in \bbn_0^d$ with $|\alpha|+|\beta| \leq 2m$. By Lemma \ref{lemma-Hermite} we obtain
\begin{align*}
\| f \|_m = \| \mathbf{H}^m f \|_{L^2} \leq \sum_{|\alpha|+|\beta| \leq 2m} C_{\alpha \beta} \| x^{\alpha} \partial^{\beta} f \|_{L^2} < \infty,
\end{align*}
completing the proof.
\end{proof}

\begin{corollary}\label{cor-Sobolev-embedding}
Let $m \in \bbn_0$ be arbitrary. If $f \in W^{2m}(\bbr^d)$ has compact support, then we have $f \in \cals_m(\bbr^d)$.
\end{corollary}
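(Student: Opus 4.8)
The plan is to deduce this directly from Proposition \ref{prop-Sobolev-embedding}. That proposition already gives the conclusion $f \in \cals_m(\bbr^d)$ under the single hypothesis that $x^{\alpha} D^{\beta} f \in L^2(\bbr^d)$ for all multi-indices $\alpha,\beta \in \bbn_0^d$ with $|\alpha| + |\beta| \leq 2m$. Hence all that remains is to verify this integrability condition from the two assumptions we are given: that $f \in W^{2m}(\bbr^d)$ and that $f$ has compact support. First I would fix a compact set $K \subset \bbr^d$ with $\supp(f) \subset K$ and choose $N > 0$ such that $K \subset \{ x \in \bbr^d : \| x \| \leq N \}$.

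Next I would argue as follows. Let $\alpha,\beta \in \bbn_0^d$ with $|\alpha| + |\beta| \leq 2m$ be arbitrary; in particular $|\beta| \leq 2m$, so by the definition of $W^{2m}(\bbr^d)$ the weak derivative $D^{\beta} f \in L^2(\bbr^d)$ exists. The key observation is that $D^{\beta} f$ also has support contained in $\supp(f) \subset K$, so that $x^{\alpha} D^{\beta} f$ vanishes (almost everywhere) outside $K$. On $K$ the polynomial $x^{\alpha}$ is bounded by $N^{|\alpha|}$, whence
\begin{align*}
\| x^{\alpha} D^{\beta} f \|_{L^2}^2 = \int_K |x^{\alpha}|^2 \, |D^{\beta} f|^2 \, dx \leq N^{2|\alpha|} \| D^{\beta} f \|_{L^2}^2 < \infty.
\end{align*}
Thus $x^{\alpha} D^{\beta} f \in L^2(\bbr^d)$ for every admissible pair $(\alpha,\beta)$, which is precisely the hypothesis of Proposition \ref{prop-Sobolev-embedding}, and the conclusion $f \in \cals_m(\bbr^d)$ follows.

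The only point requiring a word of justification — and the main (though minor) obstacle — is the claim that $\supp(D^{\beta} f) \subset \supp(f)$ for the weak derivatives. This is the standard locality property of weak differentiation: if $f$ vanishes almost everywhere on an open set $O$, then for any test function $\varphi \in C_c^{\infty}(O)$ one has $\la D^{\beta} f, \varphi \ra = (-1)^{|\beta|} \la f, D^{\beta} \varphi \ra = 0$, so $D^{\beta} f$ vanishes almost everywhere on $O$ as well; applying this with $O = \bbr^d \setminus \supp(f)$ yields the desired support inclusion. I expect this to be entirely routine, so the corollary should follow in a few lines once the support-preservation remark is in place.
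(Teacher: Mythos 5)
Your proof is correct and follows exactly the paper's route: the paper disposes of this corollary as an ``immediate consequence'' of Proposition \ref{prop-Sobolev-embedding}, and what you have written is precisely the routine verification (locality of weak derivatives plus boundedness of $x^{\alpha}$ on the compact support) that makes the hypothesis $x^{\alpha} D^{\beta} f \in L^2(\bbr^d)$ hold. Nothing is missing; your support-preservation remark is the only point needing justification, and you justify it correctly.
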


\begin{proof}
This is an immediate consequence of Proposition \ref{prop-Sobolev-embedding}.
\end{proof}

\begin{remark}
Let $m,k \in \bbn_0$ be such that $m > \frac{d}{4} + \frac{k}{2}$. Moreover, let $f \in W^{2m}(\bbr^d)$ be arbitrary.
\begin{enumerate}
\item By the classical Sobolev embedding theorem we have $f \in C^k(\bbr^d)$. 

\item If moreover $x^{\alpha} D^{\beta} f \in L^2(\bbr^d)$ for all $\alpha,\beta \in \bbn_0^d$ with $|\alpha| + |\beta| \leq 2m$, then by Proposition \ref{prop-Sobolev-embedding} and the Sobolev embedding theorem for Hermite Sobolev spaces (Theorem \ref{thm-Ck-versions}) we even have $f \in C_0^k(\bbr^d)$.
\end{enumerate}
\end{remark}

\end{appendix}

\end{document}